  \newcolumntype{C}{>{\centering\arraybackslash}X}
\theoremstyle{plain}
\newtheorem{thm}{Theorem}[subsection]
\newtheorem{cor}[thm]{Corollary}
\newtheorem{prop}[thm]{Proposition}
\newtheorem{lem}[thm]{Lemma}
\newtheorem*{thm*}{Theorem} 
\theoremstyle{definition}
\newtheorem{rem}[thm]{Remark}
\newtheorem{defn}[thm]{Definition}
\newcommand{\targetsection}{\textbf{Int.Frc.(1)}}
\newcommand{\wcomp}{\textbf{Int.Frc.(2)}}
\newcommand{\ore}{\textbf{Int.Frc.(3)}}
\newcommand{\mN}{{\mathbb N}}
\newcommand{\mC}{{\mathbb C}}
\newcommand{\mD}{{\mathbb D}}
\newcommand{\mX}{{\mathbb X}}
\newcommand{\cA}{{\mathcal A}}
\newcommand{\cP}{{\mathcal P}}
\newcommand{\cC}{{\mathcal C}}
\newcommand{\cE}{{\mathcal E}}
\newcommand{\cJ}{{\mathcal J}}
\newcommand{\cod}{\mathrm{cod}}
\newcommand{\dom}{\mathrm{dom}}
\newcommand{\mCW}{\mathbb{C}[W^{-1}]}
\newcommand{\mDW}{\mathbb{D}[W^{-1}]}
\newcommand{\spn}{\text{spn}}
\newcommand{\csp}{\text{csp}}
\newcommand{\slb}{\text{sb}}
\newcommand{\Top}{\mathbf{Top}}
\newcommand{\Cat}{\mathbf{Cat}}
\newcommand{\Set}{{\mathbf{Set}}}
\newcommand{\opl}{\mbox{\scriptsize op$\ell$}}
\newcommand{\noi}{{\noindent}}
\newsavebox{\pullback}
\sbox\pullback{%
\begin{tikzpicture}%
\draw (0,0) -- (1ex,0ex);%
\draw (1ex,0ex) -- (1ex,1ex);%
\end{tikzpicture}}
\newsavebox{\pushout}
\sbox\pushout{%
\begin{tikzpicture}%
\draw (0,0) -- (0ex,1ex);%
\draw (0ex,1ex) -- (1ex,1ex);%
\end{tikzpicture}}
\newsavebox{\urpushout}
\sbox\urpushout{%
\begin{tikzpicture}%
\draw (0,0) -- (1ex,0ex);%
\draw (0ex,0ex) -- (0ex,1ex);%
\end{tikzpicture}}
\newsavebox{\vpullback}
\sbox\vpullback{%
\begin{tikzpicture}%
\draw (0ex,1ex) -- (1ex, 0ex);%
\draw (1ex, 0ex) -- (2ex,1ex);%
\end{tikzpicture}}
\newsavebox{\vpushout}
\sbox\vpushout{%
\begin{tikzpicture}%
\draw (0ex,1ex) -- (1ex, 1ex);%
\draw (1ex, 1ex) -- (2ex,1ex);%
\end{tikzpicture}}
\newcounter{sarrow}
\newsavebox{\urpullback}
\sbox\urpullback{%
\begin{tikzpicture}%
\draw (-1ex,0ex) -- (-1ex,-1ex);%
\draw (0ex,-1ex) -- (-1ex,-1ex);%
\end{tikzpicture}}
\newsavebox{\dlpullback}
\sbox\dlpullback{%
\begin{tikzpicture}%
\draw (0ex, 1ex) -- (1ex,1ex);%
\draw (1ex,1ex) -- (1ex,0ex);%
\end{tikzpicture}}
\begin{document}

\title{Pseudocolimits of Small Filtered Diagrams of Internal Categories}
\author{Deni Salja}
\date{}

\maketitle 

\begin{abstract}
Pseudocolimits are formal gluing constructions that combine objects in a category indexed by a pseudofunctor. When the objects are categories and the domain of the pseudofunctor is small and filtered it is known \cite[Expos\'e 6]{SGA4} that the pseudocolimit can be computed by taking the Grothendieck construction of the pseudofunctor and inverting the class of cartesian arrows with respect to the canonical fibration. In this thesis we present a set of conditions on an ambient category $\cE$ for defining the Grothendieck construction as an oplax colimit and another set of conditions on $\cE$ along with conditions on an internal category, $\mC$, in $\Cat(\cE)$ and a map $w : W \to \mC_1$ that allow us to translate the axioms for a category of (right) fractions, and construct an internal category of (right) fractions. We combine these results in a suitable context to compute the pseudocolimit of a small filtered diagram of internal categories.
\end{abstract}

\section*{Acknowledgements}
This is a reformatted version of a M.Sc. Thesis in Mathematics submitted to and defended at Dalhousie University in August 2022 under the supervision of Dr. Pronk. The original can be found at \url{http://hdl.handle.net/10222/81928}. Special thanks to Dr. Pronk for her invaluable feedback and support over the last two years; to Dr. Pronk, Dr. Par\'e, and Dr. Sellinger for their additional feedback and interesting discussions; and to Dr. Faridi for chairing my defence. I'd also like to acknowledge that this two-year research project was funded by the Natural Sciences and Engineering Research Council (NSERC) Canadian Graduate Scholarship - Master's program as well as the Nova Scotia Graduate Scholarship - Master's program. Finally I'd like to thank my family and friends for their patience and support over the two years this project took me to fathom and complete.  

\tableofcontents


\addcontentsline{toc}{section}{Introduction}
\section*{Introduction}

The term `Grothendieck construction' is used to describe a correspondence between pseudofunctors $\cA \to \Cat$, and fibrations over $\cA$. For a given pseudofunctor $\cA \to \Cat$ the domain of the corresponding fibration is often the {\em category of elements}. This happens to be the oplax colimit of the pseudofunctor \cite{GrothLaxColim}, whose construction can be thought of as `bundling up' the original diagram in $\Cat$ into a single category. The pseudocolimit of the diagram can be computed by formally inverting some arrows in the category of elements; more precisely, by localizing with respect to the cartesian arrows of the associated fibration. When the indexing category, $\cA$, is filtered the Grothendieck construction satisfies the Gabriel-Zisman axioms \cite{GabZis} for a (left) category of fractions and the pseudocolimit is given by a category of (left) fractions. A weaker set of axioms is given in \cite{ThreeFsforBiCats} which allows for localization with respect to a smaller class of arrows.

Having colimits in a category is important for understanding how to glue diagrams in that category together into a single representing object. The usual Grothendieck construction, as an oplax colimit, is a gluing construction for categories indexed by a pseudofunctor and this has been translated in various other settings such as ($\infty,1$)-cats \cite{GrothLaxColim}, enriched categories \cite{EnrichedGroth}, and bicategories by \cite{ThreeFsforBiCats}. The geometric realization of the Grothendieck construction for a diagram of small categories has also been studied as a homotopy colimit by Thomason in \cite{Thomason} and for a diagram of quasi categories in \cite{Sharma}. In the first part of this thesis we will translate the usual Grothendieck construction into the language of internal category theory in order to compute an oplax colimit of a small diagram of internal categories. The fibration perspective of the Grothendieck construction has already been developed for monoidal categories \cite{MonGroth}, 2-cats and bicategories \cite{Buckley}, and ($\infty,1$)-categories \cite{GrothLaxColim} as well but we focus more on the colimit perspective because it is not possible to view the indexing category as an internal category in general. 

A different setting in which this would be useful is to describe the tom Dieck fundamental group for a space equipped with a group action. This is a category enriched in topological spaces that is the oplax colimit of fundamental groups of fixed point sets of all the subgroup actions \cite{tomDieck}. This is also useful for computing atlas groupoids for orbifolds, which are pseudocolimits of categories internal to $\Top$ \cite{Sibih}. Yet another relevant setting is for double categories, which are internal categories in $\Cat$. Such a construction here would allow us to compute oplax colimits of diagrams of double categories indexed by small categories. 

To replicate these colimit constructions we need an ambient category, $\cE$, with sufficient structure. In particular, to define an internal category of elements for a pseudofunctor $D : \cA^{op} \to \Cat(\cE)$ we need that $\cE$ has pullbacks along certain source and target maps of the internal categories in the image of $D$, has disjoint coproducts of these pullbacks and of the objects of objects for the internal categories in the diagram, and that these commute with one another. This allows us to construct an oplax colimit of $D$, by Theorem~\ref{thm IntGroth is OpLaxColim} which we restate here:

\begin{thm*}[The Internal Category of Elements, $\mD$, as an oplax colimit]
Let $\cE$ admit an internal category of elements of $D : \cA \to \Cat(\cE)$, as in Definition~\ref{def E admits an internal category of elements of D}.Let $\mD$ denote the internal category of elements. Then for every internal category $\mX \in \Cat(\cE)$, the category of lax natural transformations $D \implies \Delta \mX$ and their modifications is isomorphic to the category of internal functors $\mD \to \mX$ and their internal natural transformations.
\[[D , \Delta \mX]_{\ell} \cong \Cat(\cE)(\mD , \mX) \]
\end{thm*}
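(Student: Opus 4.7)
The plan is to construct explicit functors in both directions between $[D, \Delta \mX]_{\ell}$ and $\Cat(\cE)(\mD, \mX)$ and to verify they are mutually inverse. The guiding principle is that both sides should encode exactly the same data, expressed either as a coherent family indexed by $\cA$ or as a single internal functor out of the `glued' internal category $\mD$, so the isomorphism should unfold by applying the universal properties of the coproducts and pullbacks used to construct $\mD$ in $\cE$.

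First I would define a functor $\Phi : [D, \Delta \mX]_{\ell} \to \Cat(\cE)(\mD, \mX)$. Given a lax natural transformation $\sigma : D \implies \Delta \mX$ with components $\sigma_A : D(A) \to \mX$ in $\Cat(\cE)$ and structure 2-cells $\sigma_f$ for each morphism $f : A \to B$ of $\cA$, I would use the identification $\mD_0 \cong \coprod_A D(A)_0$ to assemble the object-parts $\{(\sigma_A)_0\}$ into a single map $F_0 : \mD_0 \to \mX_0$. Since $\mD_1$ is by construction a coproduct indexed by morphisms of $\cA$ of pullbacks capturing the morphisms of $\mD$ lying over each $f$, I would combine the action of each $(\sigma_A)_1$ on the identity-summands with the structure 2-cells $\sigma_f$, viewed as maps into $\mX_1$, on the transition summands to produce $F_1 : \mD_1 \to \mX_1$. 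Compatibility with source, target, identities, and composition in $\mD$ will unpack directly into the unit and composition axioms defining $\sigma$ as a lax transformation. On morphisms, a modification $\Xi$ with components $\Xi_A : D(A)_0 \to \mX_1$ assembles via the coproduct into a single map $\mD_0 \to \mX_1$, and the modification axiom becomes internal naturality with respect to $\Phi(\sigma)$ and $\Phi(\tau)$.

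In the reverse direction, I would define $\Psi$ by precomposing an internal functor $F : \mD \to \mX$ with the canonical insertion functors $D(A) \to \mD$ that form the universal oplax cocone exhibiting $\mD$ as the Grothendieck construction. The components of $\Psi(F)$ are these precompositions; the structure 2-cells are obtained by whiskering $F$, or a given internal natural transformation, with the 2-cell data of that universal cocone. The lax coherence axioms for $\Psi(F)$ reduce tautologically to coherences that already hold inside $\mD$.

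The main obstacle will be verifying bijectivity at the morphism-summand level, i.e.\ that the family $\{\sigma_f\}$ is really in bijection with the restriction of $F_1$ to the transition summands of $\mD_1$, and consequently that $\Phi$ and $\Psi$ are mutually inverse. This reduces to a careful diagram chase through the coproduct-of-pullbacks description of $\mD_1$, showing that functoriality of $F_1$ restricted to each summand is exactly the lax coherence equation for $\sigma$ at the corresponding morphism of $\cA$. The admissibility hypothesis on $\cE$ from Definition~\ref{def E admits an internal category of elements of D}, namely that the relevant disjoint coproducts exist and commute with the pullbacks defining $\mD_1$, is precisely what makes these universal factorizations unique, so that the translation between the two sides is genuinely invertible and natural.
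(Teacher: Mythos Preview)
Your approach is essentially the paper's: it constructs the canonical lax cocone $\ell : D \Rightarrow \Delta\mD$, defines $\Psi$ by whiskering with $\ell$, defines $\Phi$ by assembling components via the universal properties of the coproducts $\mD_0$ and $\mD_1$, and checks that these are inverse bijections on 1-cells and on 2-cells, with functoriality verified in one direction. One point to sharpen: your description of $F_1$ as using $(\sigma_A)_1$ on ``identity-summands'' and $\sigma_f$ on ``transition summands'' is not quite how the paper's $\mD_1$ is organized---each summand $D_\varphi$ (for every $\varphi:A\to B$, identity or not) already encodes a cartesian arrow followed by a vertical one, so on $D_\varphi$ the map into $\mX_1$ is the composite $\langle \pi_0\,\sigma_\varphi,\ \pi_1\,(\sigma_B)_1\rangle\, c$, using both the 2-cell and the functor part on every summand.
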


The pullback and coproduct commutativity is an extensivity property and is relied on heavily to define the internal category structure and prove the required properties are satisfied and makes the construction unlikely to work for arbitrary diagrams of categories internal to non-extensive categories such as the category of vector spaces. The internal category of fractions requires a special class of epimorphisms in the ambient category to locally witness internalized versions of category of fractions axioms in the sense of admitting lifts that define local sections. Part of what makes these epimorphisms special is that the local data they witness (on their codomains) can be combined to give global definitions of structure on their codomains provided the pieces of local data satisfy a kind of compatibility/descent condition. 

In the contexts we describe we show that the constructions used and the result stated in \cite{SGA4} can be translated into the language of internal categories. The class of epimorphisms we require for our internal category of fractions are coequalizers of their kernel pairs, stable under pullback, and closed under composition. Such a class always exists in any category, namely the identity arrows, but it is not always possible to get an internal fractions construction with this class. The Internal Fractions Axioms are described in Definition~\ref{def Internal Fractions Axioms} in terms of certain lifts of these epimorphisms and a section of an induced target structure map. Asking for sections in settings where continuity is important can be a strong condition. For example, when working with an arbitrary internal category in $\Top$ asking for continuous global sections is generally too much when the axiom of choice is being used, but there are nice classes of effective epimorphisms given by open surjections or \'etale surjections which give us local sections instead of global sections. In Section \ref{S IntFrc Applied to IntGroth} we show that the internal category of elements, when it exists in a suitable context, satisfies the Internal Fractions Axioms with respect to the object representing the canonical cleavage of the cartesian arrows by global sections, meaning it only requires identities for the class of epimorphisms in Definition~\ref{defn candidate context for internal fractions and covers}. We prove this in the main theorem of this thesis which we restate here:

\begin{thm*} 
Let $D : \cA^{op} \to \Cat(\cE)$ be a pseudofunctor for which $\cE$ admits an internal category of elements \ref{def E admits an internal category of elements of D} and let $w : W \to \mD_1$ be the object of a canonical cleavage of the internal category of elements as defined in Section \ref{SS canonical cleavage of Cart arr's in Groth}. If the pair $(\mD, W)$ satisfies the Internal Fractions Axioms in Definition~\ref{def Internal Fractions Axioms}, then for any $\mX$ in $\Cat(\cE)$ there is an isomorphism

\[ [D , \Delta \mX]_{ps} \cong [\mDW , \mX]^{\cE}\]

\noi between the category of pseudonatural transformations $D \implies \Delta \mX$ (and their modifications) and the category of internal functors $\mDW \to \mX$.
\end{thm*}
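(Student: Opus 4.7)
The plan is to realize the claimed isomorphism as a composite of two bijections mediated by an intermediate category $\Cat(\cE)_W(\mD,\mX)$, whose objects are internal functors $F : \mD \to \mX$ sending each arrow of $\mD$ classified by $w : W \to \mD_1$ to an internal isomorphism in $\mX$, and whose morphisms are all internal natural transformations between such functors. I would establish
\[ [D,\Delta\mX]_{ps} \;\cong\; \Cat(\cE)_W(\mD,\mX) \;\cong\; [\mDW,\mX]^{\cE}. \]

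For the left-hand isomorphism I would restrict the bijection of Theorem~\ref{thm IntGroth is OpLaxColim}, which identifies $[D,\Delta\mX]_{\ell}$ with $\Cat(\cE)(\mD,\mX)$, to the subcategory of pseudonatural transformations. A lax natural transformation $\alpha : D \implies \Delta\mX$ is pseudonatural exactly when each structure 2-cell $\alpha_f$ is invertible. Unpacking the correspondence of Theorem~\ref{thm IntGroth is OpLaxColim}, the 2-cell $\alpha_f$ is encoded by the image under the associated internal functor $F$ of the canonical cartesian lift of $f$, and these lifts are precisely what the cleavage object $w : W \to \mD_1$ classifies. Hence $\alpha$ is pseudonatural if and only if $F$ sends $W$-arrows to internal isomorphisms, i.e.\ iff $F$ lies in $\Cat(\cE)_W(\mD,\mX)$. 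The correspondence on morphisms (modifications to internal natural transformations) is inherited unchanged from the oplax colimit theorem, since the side conditions distinguishing pseudo-modifications from modifications of lax transformations become vacuous under invertibility of the 2-cell data.

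For the right-hand isomorphism I would invoke the universal property of the internal category of fractions established in the internal fractions chapter, which, under the Internal Fractions Axioms on $(\mD,W)$, produces a canonical localization functor $\mD \to \mDW$ that is universal among internal functors out of $\mD$ sending $W$-arrows to internal isomorphisms. Precomposition with this localization yields the bijection on objects, and extends to hom-categories because every internal natural transformation between $W$-inverting functors factors uniquely through the localization, a consequence of the explicit construction of the arrow object of $\mDW$ as a quotient of spans whose backward leg lies in $W$. Combining the two isomorphisms and verifying their naturality in $\mX$ produces the claimed identification.

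The main obstacle I anticipate is the first step: making precise, within $\cE$, how the cleavage object $W$ carries the 2-cell data of a lax natural transformation. This requires dissecting the proof of Theorem~\ref{thm IntGroth is OpLaxColim} to see that $W$ represents exactly the arrows whose images under $F$ encode the $\alpha_f$, and then using the stability properties of the epimorphism class in Definition~\ref{defn candidate context for internal fractions and covers} to conclude that the external, pointwise condition of invertibility of the 2-cells of $\alpha$ corresponds to the single internal condition that $F \circ w$ factors through the subobject of internal isomorphisms of $\mX_1$.
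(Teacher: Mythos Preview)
Your proposal is correct and follows essentially the same two-step factorization as the paper: first identify $[D,\Delta\mX]_{ps}$ with the full subcategory $[\mD,\mX]^{\cE}_W$ of $W$-inverting internal functors by restricting the oplax-colimit equivalence, then apply the universal property of the internal localization (Theorem~\ref{thm 2-dimensional universal property of localization}) to obtain $[\mD,\mX]^{\cE}_W \cong [\mDW,\mX]^{\cE}$.

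One small redirection on your anticipated obstacle: the stability properties of the epimorphism class from Definition~\ref{defn candidate context for internal fractions and covers} are not what drives the first step. The argument is instead cofiberwise over the coproduct $W = \coprod_\varphi W_\varphi$. The paper shows (Lemma~\ref{Lem induced internal functor inverts object of cartesian arrows}) that on each cofiber the restriction $w_\varphi(\theta_x)_1$ simplifies, via the identity law and the fact that $w_\varphi\pi_1$ factors through $e$, to $w_\varphi\pi_0 x_\varphi$, whose inverse is supplied directly by $x_\varphi^{-1}$; the universal property of the coproduct then globalizes. Conversely (Lemma~\ref{lem internal functors inverting W correspond to pseudonatural transoformations via the grothendieck construction equivalence}), the components $\ell_\varphi$ of the canonical oplax cocone factor through $w$ as $\ell_\varphi^w \iota_\varphi^w w$, so any $F$ inverting $w$ automatically makes each whiskered $\ell_\varphi F$ invertible. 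No appeal to a ``subobject of internal isomorphisms'' is needed; the inverse is exhibited explicitly.
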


We begin in Chapter \ref{Ch notation and a word on internal cats} with a few words on notation and internal categories. In Chapter \ref{Chaper Internal Grothendieck} we present a context, $\cE$, in which we can define an internal category of elements for a pseudofunctor $\cA \to \cE$. We then define said internal category of elements, $\mD$, and show it is a(n) (op)lax colimit. The context for an internal (right) category of fractions and its definition are given in Chapter \ref{Ch Internal Fractions} and an isomorphism of categories that describes the universal property of the internal localization is proven at the end in Section \ref{S UP of Internal Fractions}. Chapter \ref{Ch pseudocolims} describes a setting in which we can compute the pseudocolimit of $D$ as the localization of the internal category of elements with respect to the  canonical cleavage of the cartesian arrows object, $w : W \to \mD_1$. 

\addcontentsline{toc}{section}{Notation and A Word on Internal Categories}
\section*{Notation and A Word on Internal Categories}\label{Ch notation and a word on internal cats}

Composition is written diagrammatically, so that `$f$ followed by $g$' is written by juxtaposition as `$fg$.'  Internal categories are denoted with blackboard bold font, $\mC, \mD, \mX$, and we use $\cA$ to denote the indexing category for pseudofunctors we will consider. We assume $\cA$ is small for Chapter~\ref{Chaper Internal Grothendieck} and will assume it is also cofiltered in Chapter \ref{Ch pseudocolims}. 

The main point of this thesis is to take a technique for computing pseudocolimits of small filtered diagrams of categories, give an internal category theoretic version of it, and show that it satisfies the universal property of a pseudocolimit of a small filtered diagram of internal categories. Internal categories are defined in Section B2.3 in \cite{Johnstone} when working in an ambient category with all pullbacks. Some of the ambient categories we wish to consider in future applications of this thesis, such as the category of smooth manifolds, do not have all pullbacks however. In this thesis, we do note assume the existence of all pullbacks in an ambient category, rather we make the existence of the necessary pullbacks and structure maps part of the definition of an internal category. 
\begin{defn}
\label{def internal category when you dont have all pullbacks in the ambient category}
An internal category, $\mC$ in $\cE$, consists of the following data. 

\begin{itemize} 
\item An object of objects, $\mC_0 \in \cE_0$.
\item An object of arrows, $\mC_1 \in \cE_0$.
\item Structure maps 

\[\begin{tikzcd}
\mC_1 \rar[shift left, "s"] \rar[shift right, "t"'] & \mC_0 \rar["e"] & \mC_1
\end{tikzcd} \]

\noi in $\cE_1$ such that $e$ is a common section of $s$ and $t$. 

\item The iterated pullbacks of composable chains of arrows, $\mC_n = \mC_1 \tensor[_t]{\times}{_s} \dots \tensor[_t]{\times}{_s} \mC_1$ in $\cE_1$. 

\item A composition structure map $c : \mC_2 \to \mC_1$ such that the squares 

\[\begin{tikzcd}[column sep = large, row sep = large]
\mC_2 \rar["c"] \dar["\pi_0"'] & \mC_1 \dar["s"] \\
\mC_1 \rar["s"'] & \mC_1
\end{tikzcd}\qquad \qquad 
\begin{tikzcd}[column sep = large, row sep = large]
\mC_2 \rar["c"] \dar["\pi_1"'] & \mC_1 \dar["t"] \\
\mC_1 \rar["t"'] & \mC_1
\end{tikzcd}\]

\noi commute in $\cE$, along with the associativity diagram, 

\[ \begin{tikzcd}
\mC_3 \rar["1 \times c"] \dar["c \times 1"'] & \mC_2 \dar["c"] \\
\mC_2 \rar["c"'] & \mC_1
\end{tikzcd}, \]

\noi and the identity law diagrams

\[\begin{tikzcd}
\mC_1 \ar[dr, "1_{\mC_1}"'] \rar["(1_{\mC_1} {,} te)"] & \mC_2 \dar["c"] & \mC_1 \lar["(s e{,} 1_{\mC_1})"'] \ar[dl, "1_{\mC_1}"] \\
& \mC_1 & 
\end{tikzcd}.\]
\end{itemize}
\end{defn}

\section{Internal Category of Elements}\label{Chaper Internal Grothendieck}

In this chapter we define (the category of elements for) the (internal) Grothendieck construction of a pseudofunctor $D : \cA \to \Cat(\cE)$ where $\cE$ is an extensive category and show that it is the oplax colimit of $D$. Section \ref{S Def Int Groth} defines the internal category structure of the (internal) Grothendieck construction, $\mD$, of $D$. Section \ref{S Assoc and Id Laws Int Groth} proves the associativity and identity laws for composition and shows it is an internal category. Section \ref{S Int Groth as Oplax Colim} shows the existence of a canonical lax natural transformation from $D \implies \Delta \mD$, and then proves the universal properties that show $\mD$ is the oplax colimit of $D$. We often consider how the usual proofs and definitions look in the case $\cE = \Set$ to help our readers follow the internalized definitions and results for an arbitrary extensive category $\cE$.

\subsection{Internal Category Structure}\label{S Def Int Groth}

\phantomsection
\subsubsection*{Classical}
\addcontentsline{toc}{subsubsection}{Classical}

Let $\cA$ be a small category and let $\Cat(\Set)$ denote the 2-category of small categories, strict functors, and natural transformations. This is a fully faithful subcategory of $\Cat$ so for every pseudofunctor

\begin{center}
\begin{tikzcd}
\cA \rar["D"] & \Cat(\Set) 
\end{tikzcd},
\end{center}

\noi the \textit{Grothendieck construction} of $D$ is the strict 2-pullback of $D$ along the canonical projection, $\pi$, from the lax-pointed 2-category of categories, $\Cat_{*, \ell}$ \cite{Johnstone}. 

\begin{center}
\begin{tikzcd}
\int F \rar[] \dar["P"'] \arrow[dr, phantom, "\usebox\pullback" , very near start, color=black] 
& \Cat_{*, \ell} \dar["\pi"] \\
 \cA \rar["D"'] & \Cat
\end{tikzcd}
\end{center}

\noi The objects of $\int D$ are pairs $(A,a)$, where $A \in \cA_0$ and $a \in (DA)_0$. The morphisms are pairs $(\varphi , f) : (A,a) \to (B,b)$ where $\varphi : A \to B $ is an arrow in $\cA$ and $f : \varphi(f)(a) \to b$ is an arrow in $D(B)$. Let $\delta_A : D(1_A) \implies 1_{D(A)}$ and $\delta_{\varphi ; \psi} : D(\varphi \psi) \implies D(\varphi) D(\psi)$ denote the identity and composition natural isomorphisms associated to $A \in \cA_0$ and an arbitrary composable pair $\varphi, \psi$ in $\cA_1$ respectively. The identity morphisms in $\mD$ are the pairs $(1_A , \delta_{A,a}) : (A,a) \to (A,a)$ where $\delta_{A,a} \in $ is the $a$-indexed component the natural transformaiton $\delta_A$. Two morphisms $( \varphi,f) , (\psi.g)$ are composable when $\cod ( D( \psi )_1 (f) ) = \dom (g)$ in $D(C)$. Then

\[ f : D(\varphi)_0(a) \to b \quad , \quad g : D(\psi)_0(b) \to d \]\

\noi and the following diagram

\[ \begin{tikzcd}[]
D(\psi)_0 \left( D(\varphi_0)(a)) \right) \ar[d, "D(\psi)_1(f)"'] \ar[from = r , "\delta_{\varphi ; \psi, a}"' , "\cong "] &D( \varphi \psi)_0 (a) \dar[dotted]\\
D(\psi)_0 (b) \rar["g"'] & d
\end{tikzcd}
\]

\noi defines the composite

\[ (\varphi,f) (\psi,g) := (\varphi \psi ,\delta_{\psi, a} D(\psi)_1(f) g ). \] 


\noi The so-called category of elements, $\int D$ is an oplax colimit of $D$ in $\Cat$ \cite{GrothLaxColim}. Next we translate this into the language of internal categories. 

\phantomsection
\subsubsection*{Internal}
\addcontentsline{toc}{subsubsection}{Internal}

Let $D: \cA \to \Cat(\cE)$ be a pseudofunctor. In this section we give a suitable context, $\cE$, for defining an internal category of elements, $\mD$, in $\Cat(\cE)$ for a pseudofunctor, $\cA \to \Cat(\cE)$, that is inspired by the usual category of elements. The context given in the following definition can be thought of as a kind of `local extensivity' condition on $\cE$ with respect to the pseudofunctor $D$.

\begin{defn}\label{def E admits an internal category of elements of D} 
We say $\cE$ {\em admits an internal category of elements of} $D : \cA \to \Cat(\cE)$ if

\begin{enumerate}
 \item for every $\varphi : A \to B$ in $\cA$, the pullback 
 
 \begin{center}
\begin{tikzcd}
	D_\varphi \dar["\pi_0"'] \rar["\pi_1"] & D(B)_1 \dar["s"] \\
	D(A)_0 \rar["D(\varphi)_0"'] & D(B)_0 
\end{tikzcd}
\end{center}
 
 \noi exists in $\cE$. 
 
 \item For any composable chain of maps $\varphi_i : A_i \to A_{i+1}$ in $\cA$, where $1 \leq i \leq n$ for an arbitrary $n \in \mN$, the pullback

 \[ D_{\varphi_1 ; \dots ; \varphi_n} = D_{\varphi_1} \tensor[_{\pi_1 t}]{\times}{_{\pi_0}} \dots \tensor[_{\pi_1 t}]{\times}{_{\pi_0}} D_{\varphi_{n+1}}\]
 
 \noi exists in $\cE$.
 
 \item Let $\cA_0$ denote the objects of $\cA$ and let $\cA_n$ denote the composable paths of length $n \geq 1$ in $\cA$. The coproducts
 
 \[ \mD_0 = \coprod_{A \in \cA_0} D(A)_0\]
 \noi and 
 \[ \mD_{\coprod (n) } = \coprod_{ (\varphi_i)_{i=1}^n \in \cA_n} D_{\varphi_1 ; ... ; \varphi_n} \]

 \noi exist for all $n \geq 1$ and are disjoint, with coprojections: 
  \[\iota_{\varphi_1 ; ... ; \varphi_n} : D_{\varphi_1 ; ... ; \varphi_n} \to  \mD_{\coprod (n) } \]
  
  \item The coproducts, $\mD_{\coprod(n)}$, are stable under pullbacks of source and target in the sense that 
  
  \[ \mD_{\coprod(n)} \cong \mD_1 \tensor[_t]{\times}{_s} \mD_1 \tensor[_t]{\times}{_s} \dots \tensor[_t]{\times}{_s} \mD_1 = \mD_n\] 
  
  \noi where $s , t : \mD_1 \to \mD_0$ are uniquely induced by the source and target maps. 
  
\begin{center}
\begin{tikzcd}[column sep = large, row sep = large]
\mD_2 \arrow[dr, phantom, "\usebox\pullback" , very near start, color=black] \dar["\rho_0"'] \rar["\rho_1"] & \mD_1 \dar[dotted, "s"'] & D_\varphi \lar[tail, "\iota_\varphi"'] \ar[dl, "\pi_0 \iota_A"] \\
\mD_1 \rar[dotted, "t"] & \mD_0 \\
D_\varphi \uar[tail, "\iota_\varphi"] \ar[ur,"\pi_1 t \iota_B"'] 
\end{tikzcd}
\end{center}
  \end{enumerate}
\end{defn}

\noi These conditions allow us to define the objects and structure maps of our internal category of elements, $\mD$. The last condition above should be thought of as an extensivity condition that allows us to to define the composition structure and prove $\mD$ is an internal category. 

For the rest of this section we will assume that $\cE$ admits an internal category of elements. 

Define the object of objects to be
\[ \mD_0 := \coprod_{A \in \cA_0} D(A)_0.\]

\noi \begin{rem}When $\cE = \Set$, we can think of the elements of $\mD_0$ as elements $a \in D(A)_0$ for each $A \in \cA_0$. This implies that every element of $\mD_0$ can be represented as a pair $(A,a)$ where $A \in \cA_0$ and $a \in D(A)_0$. 
\end{rem}\

\noi For any $\varphi : A \to B$ in $\cA_1$, we have the pullback 

\begin{center}
\begin{tikzcd}
	D_\varphi \dar["\pi_0"'] \rar["\pi_1"] & D(B)_1 \dar["s"]  \\
	D(A)_0 \rar["D(\varphi)_0"'] & D(B)_0 
\end{tikzcd}
\end{center}
 
\noi which is used to define the object of arrows:

\[ \mD_1 := \coprod_{\varphi \in \cA_1} D_\varphi \]

\begin{rem} When $\cE = \Set$ an arbitrary element of $\mD_1$ is an element of $D_\varphi$ for some unique $\varphi \in \cA_1$. In this case elements of $D_\varphi$ are pairs $(x, f)$ where $x \in D(A)_0$, $f \in D(B)_1$ and $D(\varphi)_0(x) = s(f)$. In this way every element of $\mD_1$ can be represented by a pair $(\varphi, f)$ where $f : D(\varphi)(x) \to y$ in $D(B)$. 
\end{rem}\

\noi To define source and target maps for $\mD$, it suffices to define them on the components, $D_\varphi$, for each $\varphi \in \cA_1$. Let 
\[ s_\varphi , t_\varphi : D_\varphi \to \mD_0 \]

\noi be defined as the composites on the top and left in the following diagram. 
 
\begin{center}
\begin{tikzcd}
	D_\varphi \arrow[dr, phantom, "\usebox\pullback" , very near start, color=black] \dar[dd, bend right = 50, "s_\varphi "'] \rar[rrr, bend left, "t_\varphi"]\dar["\pi_0"'] \rar["\pi_1"] & D(B)_1 \dar["s"] \rar["t"] & D(B)_0 \rar[tail, "\iota_B"] & \mD_0 \\
	D(A)_0 \dar[tail, "\iota_A"] \rar["D(\varphi)_0"'] & D(B)_0 \\
	\mD_0 & &
\end{tikzcd}
\end{center}

\noi These induce the source and target maps $s, t : \mD_1 \to \mD_0$ by the universal property of the coproduct $\mD_1$. Their pullback defines the object of composable arrows, $\mD_2$. 

\begin{center}
\begin{tikzcd}[column sep = large, row sep = large]
\mD_2 \arrow[dr, phantom, "\usebox\pullback" , very near start, color=black] \dar["\rho_0"'] \rar["\rho_1"] & \mD_1 \dar[dotted, "s"'] & D_\varphi \lar[tail, "\iota_\varphi"'] \ar[dl, "s_\varphi"] \\
\mD_1 \rar[dotted, "t"] & \mD_0 \\
D_\varphi \uar[tail, "\iota_\varphi"] \ar[ur,"t_\varphi"'] 
\end{tikzcd}
\end{center}

\noi For any $\varphi, \psi \in \cA_1$ we also pull $t_\varphi$ back along $s_\psi$ and denote the object $D_{\varphi ; \psi}$. If $\varphi$ and $\psi$ are not composable in $\cA$ then $s_\psi$ and $t_\varphi$ land in different components of $\mD_0$. In that case $D_{\varphi ; \psi}$ is trivial because coproducts are disjoint in $\cE$.

Using the universal property of the coproduct $\mD_2$ we describe composition in $\mD$ by defining composition on the cofibers $D_{\varphi ; \psi}$. Suppose $\varphi \in \cA(A, B)$ and $\psi \in \cA(B,C)$. Then $\varphi \psi \in \cA(A, C)$ and the outsides of the following diagrams commute

\[\label{dgm cofiber comp 1}
\begin{tikzcd}[]
D_{\varphi ; \psi} \ar[dr, dotted, "c'_{\delta ; (\varphi ; \psi)}"] \ar[d, "p_0"'] \ar[r, "p_0"]& D_\varphi  \rar["\pi_1"] & D(B)_1 \dar["D(\psi)_1"] \\
D_\varphi \dar["\pi_0"'] & D(C)_2 \rar["q_1 "] \dar["q_0"'] & D(C)_1 \dar["s"]  \\
D(A)_0 \rar["\delta_{\varphi , \psi}"'] & D(C)_1 \rar["t"'] & D(C)_0 \tag{$\star \star$}
\end{tikzcd} \]
\noi and 
\[\label{dgm cofiber comp 2}
\begin{tikzcd}[]
D_{\varphi ; \psi} \ar[dr, dotted, "c_{\varphi ; \psi}'"] \ar[d, "p_0"'] \ar[r, "p_1"]& D_\psi  \ar[dr, bend left, "\pi_1"] & \\
D_\varphi \dar["\pi_1"'] & D(C)_2 \rar["q_1 "] \dar["q_0"'] & D(C)_1 \dar["s"]  \\
D(B)_1 \rar["D(\psi)_1"'] & D(C)_1 \rar["t"'] & D(C)_0
\end{tikzcd}. \tag{$\star \star$}
\]

\noi To see the first diagram commutes we can check directly that 

\begin{align*}
p_0 \pi_0 \delta_{\varphi , \psi} t 
&= p_0 \pi_0 D(\varphi)_0 D(\psi)_0 & (\text{Def. } \delta_{\varphi , \psi}) \\
&= p_0 \pi_1 s D(\psi)_0 & (\text{Def. } D_\varphi)\\
&= p_0 \pi_1 D(\psi)_1 s& (D(\psi)\text{ an internal functor })
\end{align*}

\noi and to see the second square commutes it suffices to show that the canonical monic $\iota_C : D(C)_0 \rightarrowtail \mD_0$ coequalizes both sides of the diagram. 

\begin{align*}
p_0 \pi_1 D(\psi)_1 t \iota_C = 
&= p_0 \pi_1 t D(\psi)_0 \iota_C \\
&= p_0 \pi_1 t \iota_B \chi_\psi \\
&= p_0 t_\varphi \chi_\psi \\
&= p_1 s_\psi \chi_\psi \\
&= p_1 \pi_0 \iota_B \chi_\psi \\
&= p_1 \pi_0 D(\psi)_0 \iota_C \\
&= p_1 \pi_1 s \iota_C \\
\end{align*}

\noi Since $\iota_C$ is monic, we can conclude that the outer squares above commute and induce the maps $c'_{\varphi ; \psi}$ and $c'_{\delta ; (\varphi ; \psi)}$ by diagrams (\ref{dgm cofiber comp 1}) and (\ref{dgm cofiber comp 2}). Let $q_{01}, q_{12} : D(C)_3 \to D(C)_2$ denote the pullback projections of $D(C)_3$. Notice that 

\begin{align*}
 c'_{\varphi ; \psi} q_0 = c'_{\delta ; (\varphi ; \psi)} q_1
\end{align*}\

\noi so there is a unique map 

\begin{center}
\begin{tikzcd}[]
& D_{\varphi ; \psi} \ar[dl, bend right, "c'_{\delta ; (\varphi ; \psi)} "' ] \ar[dr, bend left, "c'_{\varphi ; \psi}"] \dar[dotted, "c'_{\delta ; \varphi ; \psi}"] &\\
D(C)_2 & D(C)_3 \lar["q_{01}"] \rar["q_{12}"'] & D(C)_2 
\end{tikzcd}
\end{center}

\noi which we can postcompose with triple-composition in $D(C)$ (given by associativity). Notice that 

\begin{align*}
p_0 \pi_0 D(\varphi \psi)_0 
&= p_0 \pi_0 \delta_{\varphi , \psi} s &\text{(Def. } \delta_{\varphi , \psi}) \\
&= c'_{\delta ;(\varphi ;\psi)} q_0 s &\text{(Def. } c'_{\delta ; (\varphi ; \psi)})\\
&= c'_{\delta ; \varphi ;\psi} q_{01} q_0 s &\text{(Def. } c'_{\delta ; \varphi ; \psi})\\
&= c'_{\delta ; \varphi ; \psi } c s & \text{(source-composite law in $D(C)$)}
\end{align*}\

\noi so there exists a unique `cofiber-wise composition' map as shown in the following diagram.

\begin{center}
\begin{tikzcd}[]
D_{\varphi ; \psi} \ar[dr, dotted, "c_{\varphi ; \psi}"] \ar[r, "c'_{\delta ;\varphi ; \psi}"] \ar[d, "p_0"'] & D(C)_3 \ar[dr, bend left, "c"] & \\
D_\varphi \ar[dr, bend right = 30, "\pi_0"'] & D_{\varphi \psi} \rar["\pi_1"] \dar["\pi_0"']\arrow[dr, phantom, "\usebox\pullback" , very near start, color=black] & D(C)_1 \dar["s"]\\
& D(A)_0 \rar["D(\varphi \psi)_0"'] &D (C)_0 
\end{tikzcd}
\end{center}\

\noi Define composition in $\mD$ as the universal map out of the coproduct $\mD_2$ induced by the family of maps $\{ c_{\varphi ; \psi} \iota_{\varphi \psi} \}_{ (\varphi, \psi) \in \cA_2}$. 

\begin{center}
\begin{tikzcd}[column sep = large, row sep = large]
\mD_2 \rar[dotted, "c"] & \mD_1 \dar[""] \\
D_{\varphi ; \psi} \rar["c_{\varphi, \psi}"'] \uar[tail, "\iota_{\varphi ; \psi}"] & D_{\varphi \psi} \uar[tail, "\iota_{\varphi \psi}"'] 
\end{tikzcd}
\end{center}

The identity structure map 

\[\epsilon : \mD_0 \to \mD_1\]

\noi is defined as the universal map out of the coproduct $\mD_0$ induced by a family of unique maps $\epsilon_A$

\begin{center}
\begin{tikzcd}[]
\mD_0 \rar[dotted, "\epsilon"] & \mD_1 \\
D(A)_0 \uar[tail, "\iota_A"] \rar["\epsilon_A"']& D_{1_A} \uar[tail, "\iota_{1_A}"'] 
\end{tikzcd}
\end{center}

\noi where $\epsilon_A = \langle 1_{D(A)_0} , \delta_A \rangle$ is induced by the components $\delta_{A} : D(A)_0 \to D(A)_1$ of the natural isomorphism $D(1_A) \cong 1_{D(A)}$ and the identity map $1_{D(A)_0}$. This is well-defined because

\[\delta_A s = D(1_A)_0\]\

\noi by definition of $\delta_A$:

\begin{center}
\begin{tikzcd}[]
D(A)_0 \ar[ddr, bend right, equals] \ar[drr, bend left, "\delta_A"] \ar[dr, dotted, "\epsilon_A"] & & &\\
& D_{1_A} \rar["\pi_1"] \dar["\pi_0"'] & D(A)_1 \dar["s"] &\\
& D(A)_0 \rar["D(1_A)_0"'] & D(A)_0 &
\end{tikzcd}
\end{center}\

\noi 
The following commuting diagram shows that the source and target maps of $\mD$ are compatible with these identity maps. 

\begin{center}
\begin{tikzcd}[]
&\mD_0 \rar["\epsilon"]  & \mD_1 \dar["s"] && \\
& & \mD_0 \ar[from = ul, equals] \ar[from = ddl, crossing over, tail, "\iota_A"']&& \\
D(A)_0 \ar[uur, tail, "\iota_A"] \rar["\epsilon_A"] \ar[dr, equals] & D_{1_A} \ar[d, near start, "\pi_0"] \ar[uur, crossing over, tail, "\iota_{1_A}"] & & \\
& D(A)_0 & & & 
\end{tikzcd}\qquad \qquad 
\begin{tikzcd}[]
&\mD_0 \rar["\epsilon"]  & \mD_1 \dar["t"] && \\
& & \mD_0 \ar[from = ul, equals] \ar[from = ddl, crossing over, tail, "\iota_A"']&& \\
D(A)_0 \ar[uur, tail, "\iota_A"] \rar["\epsilon_A"] \ar[dr, equals] & D_{1_A} \ar[d, near start, "\pi_1 t"] \ar[uur, crossing over, tail, "\iota_{1_A}"] & & \\
& D(A)_0 & & & 
\end{tikzcd}
\end{center}

\noi The top squares commute by definition of $\epsilon$, the bottom squares commute trivially, and the squares on the right commute by definition of $s_{1_A}$ and $t_{1_A}$ respectively. The left front triangle commutes by definition of $\epsilon_A$ and the right front triangle commutes by definition of $\delta_A$. More precisely, on the left we have

\[\epsilon_A \pi_1 t = \delta_A t = 1_{D(A)_0}. \]

\subsection{Associativity and Identity Laws}\label{S Assoc and Id Laws Int Groth}

\phantomsection
\subsubsection*{Classical} 
\addcontentsline{toc}{subsubsection}{Classical}

When $\cE = \Set$, the identity arrows of the usual Grothendieck construction are pairs $(1_A , \delta_{A,a})$ for each object $(A,a)$ where $A \in \cA$ and $a \in D(A)_0$. The coherence law between the natural isomorphisms, $\delta_{1_A ; \varphi} , \delta_A$ and $\delta_{\varphi ; 1_B} , \delta_B$ respectively, says that pasting the 2-cells $\delta_{1_{A} ; \varphi}$ and $\delta_A$ and the 2-cells $\delta_{\varphi ; 1_B}$ and $\delta_B$ is equal to $1_{D(\varphi)}$ respectively. This means that at the level of components we have a commuting diagram 

\begin{center}
\begin{tikzcd}[]
D(\varphi)(a) \rar["\delta_{\varphi ; 1_B , a}"] \dar["\delta_{1_A ; \varphi , a} "'] \ar[dr, equals] & \left( D(\varphi) D(1_B) \right) (a) \dar["\delta_{B , D(\varphi)(a)}"] \\
\left( D(1_A) D(\varphi) \right) (a) \rar["D(\varphi) (\delta_{A,a})"'] & D(\varphi)(a) 
\end{tikzcd}
\end{center}

\noi in $D(B)$ for each $a \in D(A)_0$. The upper and lower triangles are necessary for proving the right and left identity laws for the Grothendieck construction respectively. For example, the lower coherence precisely cancels the isomorphisms we pick up in our definitions of identity and composition in order for the left identity law to hold.

\begin{align*}
(1_A , \delta_{A,a}) (\varphi , f ) = \left( 1_A \varphi , \delta_{1_A ; \varphi , a} D(\varphi)(\delta_{A,a}) f \right) = (\varphi , f)
\end{align*}\

The associativity law relies on the other coherence law for $D$ that gives the following commutating squares for every composable triple, $\varphi , \psi$, and $\gamma$, involving the components

\[ \begin{tikzcd}[column sep = large, row sep = large]
D(\varphi \psi \gamma) (a) \rar["\delta_{\varphi ; \psi \gamma ,a}"] \dar["\delta_{\varphi \psi ; \gamma , a}"'] & D(\varphi) D(\psi \gamma)(a) \dar["\delta_{\psi; \gamma , D(\varphi)(a))}"] \\
D(\varphi \psi) D(\gamma) (a) \rar["D(\gamma)(\delta_{\varphi ; \psi , a})"'] & D(\varphi)D(\psi)D(\gamma)(a) 
\end{tikzcd}. \]

\phantomsection
\subsubsection*{Internal} 
\addcontentsline{toc}{subsubsection}{Internal}

Now we give internal translations of the proofs of the associativity and identity laws for the Grothendieck construction. The following proposition states that the identity map $\epsilon : \mD_1 \to \mD_0$ of the internal category of elements satisfies the identity laws. 

\begin{prop}[Identity Laws for $\mD$] \label{prop id laws for int groth}
Given the following pullbacks, 

\begin{center}
\begin{tikzcd}[]
& & \mD_1 \times_{\mD_0} \mD_0 \arrow[dr, phantom, "\usebox\pullback" , very near start, color=black]\rar[] \dar["\rho_0"'] & \mD_0 \dar[equals] & \\
&\mD_0 \times_{\mD_0} \mD_1 \rar["\rho_1"] \dar[]  \arrow[dr, phantom, "\usebox\pullback" , very near start, color=black]& \mD_1 \dar["s"] \rar["t"'] & \mD_0 & \\
& \mD_0 \rar[equals] & \mD_0& & 
\end{tikzcd}
\end{center}

\noi let $\langle \rho_0 \epsilon {,} \rho_1 \rangle$ and $\langle \rho_0 {,} \rho_1 \epsilon \rangle$ be the universal maps induced by the pairs of pullback projections with $\epsilon$ postcomposed respectively. Then the following diagram commutes.

\begin{center}
\begin{tikzcd}[]
\mD_0 \times_{\mD_0} \mD_1 \rar["\langle \rho_0 \epsilon {,} \rho_1 \rangle"] \ar[dr, "\rho_1"'] & \mD_2 \dar["c"] & \mD_1 \times_{\mD_0} \mD_0 \lar["\langle \rho_0 {,} \rho_1 \epsilon \rangle"'] \ar[dl, "\rho_0"] \\
& \mD_1 & 
\end{tikzcd}
\end{center}

\end{prop}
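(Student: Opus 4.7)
The plan is to reduce the identity laws for the internal Grothendieck construction to identity laws on each cofiber $D_\varphi$ of $\mD_1$, and then to a coherence computation in each $D(B)$ using the unit constraints of the pseudofunctor $D$.

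First, I would decompose both pullbacks via the extensivity condition (4) of Definition~\ref{def E admits an internal category of elements of D}. Since $\mD_1 \cong \coprod_{\varphi \in \cA_1} D_\varphi$ and $\mD_0 \cong \coprod_{A \in \cA_0} D(A)_0$, stability of coproducts under pullback gives
\[ \mD_0 \times_{\mD_0} \mD_1 \;\cong\; \coprod_{\varphi : A \to B} D(A)_0 \times_{D(A)_0} D_\varphi \;\cong\; \coprod_{\varphi : A \to B} D_\varphi \]
and similarly $\mD_1 \times_{\mD_0} \mD_0 \cong \coprod_\varphi D_\varphi$, because the trivial components corresponding to non-matching indices vanish (coproducts are disjoint). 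Under these isomorphisms the pullback projections $\rho_1$ and $\rho_0$ reduce on each cofiber to the identity of $D_\varphi$, so it suffices to verify cofiberwise that $\langle \rho_0\epsilon, \rho_1\rangle c$ and $\langle \rho_0, \rho_1 \epsilon\rangle c$ both restrict to $\iota_\varphi : D_\varphi \to \mD_1$ on the $\varphi$-cofiber.

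Second, I would trace the two composites through the definition of $c$ on cofibers. On the $\varphi$-cofiber of $\mD_0 \times_{\mD_0} \mD_1$, postcomposition with $\epsilon$ lands in the $1_A$-cofiber $D_{1_A}$, so the pair lies in the cofiber $D_{1_A;\varphi}$ and is mapped by $c$ to $D_{1_A \varphi} = D_\varphi$ via $c_{1_A;\varphi}$. Unpacking the definition of $c_{1_A;\varphi}$ via $c'_{\delta; 1_A; \varphi}$, its $D(B)_1$-component becomes the triple composite in $D(B)$ of $\delta_{1_A; \varphi, a}$, $D(\varphi)_1(\delta_{A,a})$, and the original arrow part. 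The coherence law of the pseudofunctor, namely the commuting triangle $\delta_{1_A;\varphi,a} \cdot D(\varphi)_1(\delta_{A,a}) = 1_{D(\varphi)(a)}$ from the Classical discussion, translated internally as an equality of maps into $D(B)_2$ then into $D(B)_1$ via $c$, causes this triple composite to collapse to the original arrow part $\pi_1$. A symmetric argument using the other unit coherence $\delta_{\varphi;1_B,a} \cdot \delta_{B,D(\varphi)(a)} = 1_{D(\varphi)(a)}$ handles $\langle \rho_0, \rho_1 \epsilon\rangle c$.

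Third, I would assemble these component equalities. Since every cofiber computation gives $\iota_\varphi$, and since both $\langle \rho_0 \epsilon, \rho_1\rangle c$ and $\rho_1$ (respectively $\langle \rho_0, \rho_1\epsilon \rangle c$ and $\rho_0$) agree after precomposing with every coprojection $\iota_\varphi$, the universal property of the coproduct forces the two global maps to coincide. The main obstacle will be the bookkeeping in the second step: one must write the internal version of the unit coherence as an equality of arrows into $D(B)_3$ (or into $D(B)_2$ after suitable projection), and verify that the factorization through $c'_{\delta;1_A;\varphi}$ followed by triple composition $c: D(B)_3 \to D(B)_1$ really does collapse to $\pi_1$. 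This amounts to an extended diagram chase using naturality of $\delta_A$, the internal functoriality of $D(\varphi)$, and the identity laws in the internal category $D(B)$, all packaged via the universal property of the pullback $D_\varphi$.
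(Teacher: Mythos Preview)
Your proposal is correct and follows essentially the same approach as the paper: reduce to cofibers via the extensivity assumption, identify the relevant cofiber composite as $c_{1_A;\varphi}$ (respectively $c_{\varphi;1_B}$), and collapse the triple composite in $D(B)$ using the internalized unit coherence $\langle \delta_{1_A;\varphi}, \delta_A D(\varphi)_1\rangle c = e_A D(\varphi)_1$ together with the universal property of $D_\varphi$. The paper carries out exactly this bookkeeping, only giving the full details for the left triangle and leaving the right one as symmetric.
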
\

\begin{proof}
For each $A \in \cA_0$ and each $\varphi \in \cA_1$, we have the following commuting diagram 
\begin{center}
\begin{tikzcd}[column sep = small]

 & \mD_1 \times_{\mD_0} \mD_0 \arrow[dr, phantom, "\usebox\pullback" , very near start, color=black]\rar[] \dar["\rho_0"'] 
 & \mD_0 \dar[equals] \rar["\epsilon"]
 & \mD_1 
 & 
 &
 \\

\mD_0 \times_{\mD_0} \mD_1 \rar["\rho_1"] \dar[]  \arrow[dr, phantom, "\usebox\pullback" , very near start, color=black]
& \mD_1 \dar["s"] \rar[near start, "t"'] & \mD_0 &&&& \\

 \mD_0 \rar[equals] \dar["\epsilon"']  & \mD_0 & & &&\\

\mD_1 & & D_\varphi \tensor[_{\pi_1 t} ]{\times}{_1} D(B)_0 \ar[uuul, tail, dotted, crossing over] \arrow[dr, phantom, "\usebox\pullback" , very near start, color=black]\rar[] \dar["p_0"'] & D(B)_0 \dar[equals] \ar[uuul, tail, crossing over, dotted] \rar[rr, "\epsilon_A"] &&D_{1_B} \ar[uuull, tail, dotted] \\

&D(A)_0 \tensor[_1]{\times}{_{\pi_0}} D_\varphi  \ar[uuul, tail, dotted, crossing over] \dar[]  \arrow[dr, phantom, "\usebox\pullback" , very near start, color=black] \ar[r, crossing over, "p_1"]& D_\varphi \ar[uuul, tail, dotted] \dar["\pi_0"] \rar["\pi_1 t"'] & D(B)_0 \ar[uuul, tail, dotted] &\\

& D(A)_0 \dar["\epsilon_A"] \ar[uuul, tail, dotted] \rar[equals] & D(A)_0 \ar[uuul, tail, dotted]& &&\\

& D_{1_A} \ar[uuul, tail, dotted] & & & & 
\end{tikzcd}
\end{center}\

\noi where the dotted arrows are all coproduct monos by stability of coproducts under pullback. Let $\langle p_0 \epsilon_A , p_1 \rangle $ and $\langle p_1 , p_0 \epsilon_A \rangle$ be universal maps out of $D_{1_A ; \varphi}$ and $D_{\varphi ; 1_B}$ induced by the pairs $p_0 \epsilon_A , p_1$ and $p_0 , p_1 \epsilon_B$ respectively. We have a similar diagrams for each of the triangles in the proposition.

\begin{center}
\begin{tikzcd}[]
& & & \mD_0 \times_{\mD_0} \mD_1 \rar["\langle \rho_0 \epsilon {,} \rho_1 \rangle"] \ar[dr, "\rho_1"'] 
& \mD_2 \dar["c"] 
 & & \\
& & & & \mD_1 \\
 D(A)_0 \times_{D(A)_0} D_\varphi \rar["\langle p_0 \epsilon_A {,} p_1 \rangle"] \ar[dr, "p_1"'] 
 \ar[uurrr, dotted, tail, crossing over] 
& D_{1_A ; \varphi} \dar[ "c_{1_A ; \varphi}"]  \ar[uurrr, dotted, tail, crossing over] &&& \\
& D_\varphi  \ar[uurrr, dotted, tail, crossing over] & & & 
\end{tikzcd}
\end{center}

\noi and so it suffices to show the component triangles commute. Each case similarly follows by the universal property of the pullback $D_\varphi$ so we only show the proof for the diagram above. By the pullback square defining $D(A)_0 \times_{D(A)_0} D_\varphi$ above we have that 
\begin{align*}
p_0 D(\varphi)_0 = p_1 \pi_0 D(\varphi)_0 = p_1 \pi_1 s .
\end{align*}

\noi This induces the unique map in the following commuting diagram. 

\begin{center}
\begin{tikzcd}[]
&D(A)_0 \times_{D(A)_0} D_\varphi \ar[dr, dotted, ""] \rar[rr, bend left, "p_1"] && D_\varphi \dar[ bend left, "\pi_1"] \\ 
& & D_\varphi \arrow[dr, phantom, "\usebox\pullback" , very near start, color=black] \ar[dd, bend right = 60, "s_\varphi"'] \dar["\pi_0"'] \rar["\pi_1"] & D(B)_1 \dar["s"] \\
& & D(A)_0 \arrow[from = luu,  crossing over, bend right, "p_0"'] \rar["D(\varphi)_0"'] \dar[tail, "\iota_A"] & D(B)_0 \dar[tail, "\iota_B"] \\
& & \mD_0 \rar["\chi_\varphi"'] & \mD_9 \\
\end{tikzcd}
\end{center}

\noi It suffices to check that plugging in $\langle p_0 \epsilon_A , p_1 \rangle c_{1_A ; \varphi} $ and $p_1$ as the dotted arrow both make the triangles above commute. By definition of $D(A)_0 \times_{D(A)_0} D_\varphi $ we have that 

\[ p_1 \pi_0 = p_0 \]

\noi and we tautologically know $p_1 \pi_1 = p_1 \pi_1$ so we only need to check what happens when postcomposing $\langle p_0 \epsilon_A , p_1 \rangle c_{1_A ; \varphi} $ with $\pi_0$ and $\pi_1$. First notice that 

\begin{align*}
\langle \epsilon_A , 1_{D_\varphi} \rangle c_{1_A ; \varphi} \pi_0 \iota_A 
&=\langle \epsilon_A , 1_{D_\varphi} \rangle c_{1_A ; \varphi} s_\varphi & \text{(Def : }s_\varphi ) \\
&=\langle \epsilon_A , 1_{D_\varphi} \rangle p_0 s_{1_A} &\text{(Lemma } \ref{Lem source and target of component composite} ) \\
&= p_0 \epsilon_A s_{1_A} & \text{(Def : }\langle \epsilon_A , 1_{D_\varphi} \rangle) \\
&= p_0 \epsilon_A \pi_0 \iota_A&\text{(Def : }s_\varphi ) \\
&= p_0 \iota_A &\text{(Def : } \epsilon_A )
\end{align*}

\noi implies that 

\[ \langle p_0 \epsilon_A , p_1 \rangle c_{1_A ; \varphi} \pi_0 = p_0\]

\noi since $\iota_A$ is monic. Now by definition of $e_B$ 

\[ s e_B t = s \]

\noi and this induces a unique map $\langle s e_B , 1 \rangle D(B)_1 \to D(B)_2$ which factors uniquely as 

\[ \langle s e_B , 1 \rangle = \langle s , 1 \rangle \langle e_B , 1 \rangle . \]

\noi Recall that composition in $\mD$ is defined cofiber-wise and is in part induced by composition in the internal category associated to the codomain of a composable pair in $\cA$. It's reasonable to think that the family of composable pairs of arrows in $D(B)$ indexed by the composite $\langle p_0 \epsilon_A , p_1 \rangle c'_{1_A ; \varphi }$ should simplify in terms of identity maps, $e_B$, at the internal level. More precisely, we claim that

\[\langle p_0 \epsilon_A , p_1 \rangle c'_{1_A ; \varphi } = p_1 \pi_1 \langle s e_B , 1\rangle \]

\noi and for this we use the universal property of the pullback $D(B)_2$. By pasting commuting squares we have that 

\begin{center}
\begin{tikzcd}[]
D(A)_0 \times_{D(A)_)} D_\varphi \dar[dd, "p_0"'] \ar[dr, dotted, "\langle p_0 \epsilon_A {,} p_1 \rangle "] \ar[rr, bend left = 20, "p_1"] & & D_\varphi \ar[ddr, bend left, "\pi_1"] & & \\
& D_{1_A ; \varphi} \ar[ddl, "p_0"'] \ar[ur, "p_1"] \ar[dr, dotted, "c'_{1_A ; \varphi}"] && & \\
D(A)_0 \ar[d, "\epsilon_A"'] & & D(B)_2 \rar["q_1 "] \dar["q_0 "'] & D(B)_1 \dar["s"] & \\
D_{1_A} \rar["\pi_1"'] &D(A)_1 \rar["D(\varphi)_1"'] & D(B)_1 \rar["t"'] & D(B)_0 & \\
\end{tikzcd}
\end{center}

\noi commutes. Now 

\[ p_1 \pi_1 \langle s e_B , 1\rangle q_1 = p_1 \pi_1 \]

\noi and 

\begin{align*}
p_1 \pi_1 \langle s e_B , 1\rangle q_0 
&= p_1 \pi_1 s e_B & (\text{Def } \langle s e_B , 1 \rangle )\\\
&= p_1 \pi_0 D(\varphi)_0 e_B &(\text{Def } D_\varphi )\\
&= p_0 D(\varphi)_0 e_B &(\text{Def } D(A)_0 \times_{D(A)_0} D_\varphi )\\
&= p_0 e_A D(\varphi)_1 &(\text{(internal) functoriality})\\
&= p_0 \epsilon_A \pi_1 D(\varphi)_1 & (\text{Def } \epsilon_A)\\
\end{align*}

\noi so by uniqueness we have that 

\[ \langle p_0 \epsilon_A , p_1 \rangle c'_{1_A ; \varphi } = p_1 \pi_1 \langle s e_B , 1\rangle. \]\

\noi This allows us to consider the following cone 

\begin{center}
\begin{tikzcd}[]
D(A)_0 \times_{D(A)_)} D_\varphi \dar[dd, "p_0"'] \ar[dr, dotted, "\langle p_0 \epsilon_A {,} p_1 \rangle "] \ar[rrr, bend left = 15, "p_1"] & & & D_\varphi \ar[ddr, bend left=20, "\pi_1"] & \\
& D_{1_A ; \varphi} \ar[dl, "p_0 \pi_0"'] \ar[ddr, bend right, near start, "c'_{\delta ; (1_A ; \varphi)} "'] \ar[drr, bend left, "c'_{1_A ; \varphi} "] \ar[dr, dotted, "c'_{\delta ; 1_A ; \varphi}"] && & \\
D(A)_0 \ar[ddrr, bend right, "\delta_{1_A ; \varphi} "'] & & D(B)_3 \rar["q_{12} "] \dar["q_{01} "'] & D(B)_2 \dar["q_0"] \rar["q_1"] & D(B)_1 \dar["s"] \\
 && D(B)_2\rar["q_1"'] \dar["q_0"'] & D(B)_1 \dar["s"] \rar["t"'] & D(B)_0 \\
&& D(B)_1 \rar["t"'] &D(B)_0 & \\

\end{tikzcd}
\end{center}

\noi which is construcucted by pasting commuting squares and triangles. Notice that

\[\langle p_0 \epsilon_A , p_1 \rangle c'_{\delta_A ; 1_A ; \varphi } = \langle p_0 \delta_{1_A;\varphi} , p_0 \delta_A D(\varphi)_1 , p_1 \pi_1 \rangle \]

\noi where the left and right components can be seen in the commuting diagram above and the middle component is verified by checking that

\begin{align*}
\langle p_0 \epsilon_A , p_1 \rangle c'_{\delta ; (1_A ; \varphi)} q_1 
= \langle p_0 \epsilon_A , p_1 \rangle p_0 \pi_1 D(\varphi)_1 
= \langle p_0 \epsilon_A , p_1 \rangle c'_{1_A ; \varphi} q_0.
\end{align*}

\noi In fact

\[ \langle p_0 \epsilon_A , p_1 \rangle p_0 \pi_1 D(\varphi)_1= p_0 \epsilon_A \pi_1 D(\varphi)_1= p_0 \delta_A D(\varphi)_1\]\

\noi shows what the middle component must be in the composable triple. After forming the composite of this triple in $D(B)$ we should have the coherence isomorphisms canceling by the coherence law for $\delta_{1_A ; \varphi}$ and $\delta_A$, and we formalize this internally by using associativity in $D(B)$ first along with the coherence law for the structure isomorphisms of $D$ that say 

\[\langle \delta_{1_A;\varphi} , \delta_A D(\varphi)_1 \rangle c = e_A D(\varphi)_1 \]
\noi and 
\begin{align*}
\langle p_0 \delta_{1_A;\varphi} , \delta_A D(\varphi)_1 , p_1 \pi_1 \rangle c 
&= \langle \langle p_0 \delta_{1_A;\varphi} , p_0 \delta_A D(\varphi)_1 \rangle c , p_1 \pi_1 \rangle c \\
&= \langle p_0 \langle \delta_{1_A;\varphi} , \delta_A D(\varphi)_1 \rangle c , p_1 \pi_1 \rangle c \\
&= \langle p_0 e_A D(\varphi)_1 , p_1 \pi_1 \rangle c \\
&= \langle p_0 D(\varphi)_0 e_B , p_1 \pi_1 \rangle c \\
&= \langle p_1 \pi_0 D(\varphi)_0 e_B , p_1 \pi_1 \rangle c & \text{(Def. } D(A)_0 \times_{D(A)_0} D_\varphi ) \\
&= \langle p_1 \pi_1 s e_B , p_1 \pi_1 \rangle c \\ 
&= p_1 \pi_1 \langle s e_B , 1 \rangle c \\
&= p_1 \pi_1 \langle s , 1 \rangle \langle e_B , 1 \rangle c \\
&= p_1 \pi_1 \langle s , 1 \rangle \\
&= p_1 \pi_1 .
\end{align*}

\noi and now we can put our calculations above together to see that 

\begin{align*}
\langle p_0 \epsilon_A , p_1 \rangle c_{1_A ; \varphi} \pi_1 
&= \langle p_0 \epsilon_A , p_1 \rangle c'_{\delta ; 1_A ; \varphi } c & \text{(Def: } c_{1_A ; \varphi}) \\
&= \langle p_0 \delta_{1_A ; \varphi } , p_1 \pi_1 e_B , p_1 \pi_1 \rangle c & (\text{above}) \\
&= p_1 \pi_1 & (\text{above})
\end{align*}

\noi and by the universal property of $D_\varphi$ we can conclude

\[ \langle p_0 \epsilon_A , p_1 \rangle c_{1_A ; \varphi} = p_1. \]

\end{proof}

To see that $\mD$ is an internal category in $\cE$ with the structure defined above it only remains to show that composition is associative. This proof is long and technical, follows by a similar pattern to the proof for the identity laws, and ultimately relies on proving associativity on the cofibers of the composable triples coproduct and using the universal property of coproducts. 

\begin{prop}\label{Prop composition is associative}
Composition in $\mD$ is associative. 
\end{prop}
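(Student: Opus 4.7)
The plan is to mimic the strategy used for the identity laws in Proposition~\ref{prop id laws for int groth}: reduce the equality of two maps $\mD_3 \to \mD_1$ to cofiber-wise equalities, then verify them via the universal property of the pullbacks $D_{\varphi\psi\gamma}$. First I would establish that $\mD_3 = \mD_1 \tensor[_t]{\times}{_s} \mD_1 \tensor[_t]{\times}{_s} \mD_1$ decomposes as
\[
\mD_3 \cong \coprod_{(\varphi,\psi,\gamma) \in \cA_3} D_{\varphi;\psi;\gamma}
\]
by iterating condition~(4) in Definition~\ref{def E admits an internal category of elements of D}, and that the two maps $c \times 1,\, 1 \times c : \mD_3 \to \mD_2$ restrict along each $\iota_{\varphi;\psi;\gamma}$ to cofiber-level maps $c'_{01}, c'_{12} : D_{\varphi;\psi;\gamma} \to D_{\varphi\psi\gamma}$; here I would use associativity in $\cA$ (so that $(\varphi\psi)\gamma = \varphi(\psi\gamma)$) to conclude that the two a priori distinct target cofibers are literally the same object. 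By the universal property of the coproduct $\mD_3$, associativity of $c$ then reduces to establishing $c'_{01}\, c_{\varphi\psi;\gamma} = c'_{12}\, c_{\varphi;\psi\gamma}$ on each cofiber.

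Next I would verify this cofiber-wise equality using the universal property of the pullback $D_{\varphi\psi\gamma}$, checking separately post-composition with $\pi_0$ and with $\pi_1$. The $\pi_0$ check is routine: both sides compose to $p_{01} p_0 \pi_0$ using the source/target behaviour of $c_{\varphi;\psi}$ (the source of a composite equals the source of the first factor, which can be extracted exactly as the identity-law proof extracted source data on cofibers). The $\pi_1$ check is the substantive one: on each side one unfolds the composite through the defining diagrams for $c_{\varphi;\psi}$ and the intermediate maps $c'_{\delta;\varphi;\psi}$ until it appears as a three-fold composition in $D(Z)$, where $Z$ is the terminal vertex of $\varphi,\psi,\gamma$, built from the coherence isomorphisms $\delta_{-;-}$ and the $D(-)_1$-images of the arrow components.

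The main obstacle will be the $\pi_1$ computation, which internalizes the classical cocycle identity
\[
\delta_{\varphi\psi;\gamma,a}\, D(\gamma)(\delta_{\varphi;\psi,a}) \;=\; \delta_{\varphi;\psi\gamma,a}\, \delta_{\psi;\gamma,\,D(\varphi)(a)}
\]
modulo whiskering. To handle it I would (a) use associativity of composition in $D(Z)$ together with internal functoriality of $D(\gamma)_1$ on pullback projections to regroup each triple composite into a five-fold composite whose middle factor is $\delta_{\varphi;\psi} D(\gamma)_1$ on one side and $D(\varphi)_0 \delta_{\psi;\gamma}$ on the other; (b) apply the internal coherence axiom for the composition isomorphisms of $D$ to rewrite $\langle \delta_{\varphi\psi;\gamma},\, \delta_{\varphi;\psi} D(\gamma)_1 \rangle c$ as $\langle \delta_{\varphi;\psi\gamma},\, D(\varphi)_0 \delta_{\psi;\gamma} \rangle c$; and (c) apply internal naturality of $\delta_{\psi;\gamma}$ to swap $D(\varphi)_0 \delta_{\psi;\gamma}$ past $\pi_1 D(\psi)_1 D(\gamma)_1$, producing the factor $\pi_1 D(\psi\gamma)_1$ that appears on the right-hand side. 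Once both sides have been rewritten as the same iterated composite in $D(Z)$, uniqueness of the induced map into $D_{\varphi\psi\gamma}$ gives $c'_{01}\, c_{\varphi\psi;\gamma} = c'_{12}\, c_{\varphi;\psi\gamma}$, and the universal property of the coproduct $\mD_3$ promotes this to associativity of $c$ on $\mD$.
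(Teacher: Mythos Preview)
Your proposal is correct and follows essentially the same approach as the paper. The paper's appendix proof (Proposition~\ref{Prop composition is associative - appendix}) carries out exactly this plan: it decomposes $\mD_3$ as the coproduct of the $D_{\varphi;\psi;\gamma}$, reduces to the cofiber equality $c'_{01}\,c_{\varphi\psi;\gamma} = c'_{12}\,c_{\varphi;\psi\gamma}$, checks it on $\pi_0$ and $\pi_1$, and handles the $\pi_1$ side via the three auxiliary Lemmas~\ref{Lem composing first two maps in a triple in Groth Const}--\ref{Lem Coherence + Naturality computation for associativity}, which package precisely your steps (a)--(c) (functoriality plus associativity, the coherence cocycle for $\delta$, and naturality of $\delta_{\psi;\gamma}$).
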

\begin{proof}
By extensivity of $\cE$ it suffices to show that cofiber composition is associative and this is shown using several lemmas along with the universal property of each cofiber of $\mD_1$. A complete proof can be seen in the appendix, precisely in Proposition~\ref{Prop composition is associative - appendix}. 
\end{proof}

\noi This brings us to the main theorem of this section. 

\begin{thm}[The Internal Category $\mD$] \label{thm int groth is an internal cat}
The objects, $(\mD_0, \mD_1)$, along with the structure maps $s , t : \mD_1 \to \mD_0$ and $c : \mD_2 \to \mD_1$ defined above form an internal category in $\cE$. 
\end{thm}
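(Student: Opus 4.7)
The plan is to assemble the pieces already in place and verify the remaining structural axioms from Definition~\ref{def internal category when you dont have all pullbacks in the ambient category}. Associativity of $c$ has been established in Proposition~\ref{Prop composition is associative} and the identity laws in Proposition~\ref{prop id laws for int groth}, so what remains is (a) to show that $\epsilon : \mD_0 \to \mD_1$ is a common section of $s$ and $t$, and (b) to check the two squares expressing compatibility of composition with source and target. Both verifications will be reduced, via the universal property of the coproducts $\mD_0$, $\mD_1$ and $\mD_2$, to cofiber-wise checks that ultimately come from the internal category structure on each $D(A)$ and from naturality of the coherence isomorphisms $\delta_A, \delta_{\varphi ; \psi}$.

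For (a), I would use the universal property of $\mD_0 = \coprod_{A \in \cA_0} D(A)_0$ to reduce to a per-component identity $\iota_A \epsilon s = \iota_A = \iota_A \epsilon t$. These are already visible in the two commuting diagrams displayed immediately after the definition of $\epsilon$: on the left one has $\epsilon_A \pi_0 = 1_{D(A)_0}$ by the construction of $\epsilon_A$, and on the right one has $\epsilon_A \pi_1 t = \delta_A t = 1_{D(A)_0}$ by the definition of $\delta_A$ as (a component of) the structure isomorphism $D(1_A) \cong 1_{D(A)}$. The coproduct-universal property then forces $\epsilon s = \epsilon t = 1_{\mD_0}$.

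For (b), I would use the extensivity hypothesis in Definition~\ref{def E admits an internal category of elements of D}(4) so that $\mD_2 \cong \coprod_{(\varphi,\psi) \in \cA_2} D_{\varphi ; \psi}$, and verify the required identities on each cofiber. Concretely, for a composable pair $\varphi \in \cA(A,B)$, $\psi \in \cA(B,C)$, I need
\[
c_{\varphi;\psi}\, s_{\varphi\psi} \;=\; p_0\, s_\varphi,
\qquad
c_{\varphi;\psi}\, t_{\varphi\psi} \;=\; p_1\, t_\psi.
\]
Unpacking $s_{\varphi\psi} = \pi_0 \iota_A$ and $t_{\varphi\psi} = \pi_1 t \iota_C$ and using that $\iota_A$ and $\iota_C$ are monic (disjointness of coproducts), these reduce to $c_{\varphi;\psi} \pi_0 = p_0 \pi_0$ and $c_{\varphi;\psi} \pi_1 t = p_1 \pi_1 t$ respectively. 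The first is the left leg of the pullback square that defines $c_{\varphi;\psi}$ into $D_{\varphi\psi}$. For the second I would rewrite $c_{\varphi;\psi} \pi_1 = c'_{\delta;\varphi;\psi}\, c$, then use the source/target-compatibility of composition in the internal category $D(C)$ (i.e.\ $c\cdot t = q_{12} q_1 \cdot t$) together with the defining property $c'_{\delta;\varphi;\psi} q_{12} q_1 = c'_{\varphi;\psi} q_1 = p_1 \pi_1$ to conclude. The coproduct-universal property of $\mD_2$ then glues these cofiber-wise identities into the global squares.

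The main obstacle is essentially bookkeeping rather than mathematical difficulty: all that is needed is to trace which pullback projections and coproduct injections live on which object, and to reduce every check to either the axioms of an internal category in $\cE$ applied to some $D(C)$, or to the definitions of $s_\varphi, t_\varphi, \epsilon_A, c_{\varphi;\psi}$ already given in Section~\ref{S Def Int Groth}. This parallels exactly the set-theoretic verification for the classical category of elements, where $\dom((\varphi,f)(\psi,g)) = \dom(\varphi,f)$ and $\cod((\varphi,f)(\psi,g)) = \cod(\psi,g)$, with the disjointness of the cofiber coproduct playing the role of ``arrows labelled by different morphisms of $\cA$ are distinguishable.''
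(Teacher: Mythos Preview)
Your proposal is correct and follows essentially the same approach as the paper. The paper's proof is terser, simply citing Propositions~\ref{prop id laws for int groth} and~\ref{Prop composition is associative} and noting that the required objects and pullbacks exist by hypothesis; the additional checks you enumerate for (a) and (b) are exactly what the paper establishes in the displayed diagrams immediately following the definition of $\epsilon$ and in Lemma~\ref{Lem source and target of component composite} of the appendix, respectively, so you are not doing anything new but rather making explicit what the paper leaves implicit in the surrounding text.
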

\begin{proof}
The required objects, structure maps, and pullbacks exist when $\cE$ admits an internal category of elements. The associativity and identity laws follow from Propositions \ref{Prop composition is associative} and \ref{prop id laws for int groth}. 
\end{proof}

\section{Internal Category of Elements as an Oplax Colimit}\label{S Int Groth as Oplax Colim}

In Sections \ref{SS canonical transformation 1-cells} and \ref{SS 2-cells of l} we define the 1-cells and 2-cells of a canonical lax natural transformation from a small diagram of internal categories that admits an internal category of elements into the constant functor on said internal category of elements
\[ \ell : D \implies \Delta \mD .\]

\noi In Section \ref{SS UP for 1-cells of laxtransfm} we prove that a lax transformation $D \implies \Delta X$ corresponds uniquely to an internal functor $\mD \to X$. Section \ref{SS UP of 2-cells for laxtransfm} shows modifications of lax transformations $D \implies \Delta X$ correspond uniquely to internal natural transformations of internal functors $\mD \to X$. Section \ref{SS IntCatofEls is OpLaxColim} combines these results with functoriality to give an equivalence of categories that establishes $\mD$ as the oplax colimit of $D$.

\subsection{Canonical Transformation 1-cells}\label{SS canonical transformation 1-cells}

\phantomsection
\subsubsection*{Classical}
\addcontentsline{toc}{subsubsection}{Classical}

For a diagram of small categories $D : \cA \to \Cat(\Set)$, for each $A \in \cA$ there is a functor $\ell_A: D(A) \to \mD$. On an arbitrary object $a \in D(A)_0$, it is defined as

\[ \ell_A(a) = (A,a). \]\

\noi For an arrow $f \in D(A)(a,b)$, it is defined as

\[ \ell_A(f) = (1_A, \delta_{A, a} f )\]\

\noi because $s(\delta_{A,a} f) = s(\delta_{A,a}) = D(1_A)(a)$. 

\begin{center}
\begin{tikzcd}[]
D(1_A)(a) \ar[dr, ""'] \rar["\delta_{A,a}", "\cong"'] & a \dar["f"] \\
& b
\end{tikzcd}.
\end{center}\

\noi Identities are preserved by the identity law in $\cA$ in the left component along with coherence in the right component, for any $\varphi : A \to B$ in $\cA$ and any $f: D(\varphi)(a) \to b$ in $D(B)_1$

\begin{align*}
\ell_A(1_a) (\varphi , f)
&=(1_A , \delta_{A,a})(\varphi , f) \\
&= (1_A \varphi , \delta_{1_A ; \varphi , a } D(\varphi) (\delta_{A,a}) f ) & \text{Def.} \\
&= (\varphi , f) & \text{Coherence}\\
&= (\varphi 1_B , \delta_{\varphi ; 1_B , a} \delta_{B, D(\varphi)(a)} f &\text{Coherence} \\ 
&= (\varphi 1_B , \delta_{\varphi ; 1_B, a} D(1_B)(f) \delta_{B,b}) & \text{Naturality}\\
&= (\varphi , f) (1_B , \delta_{B,b}) & \text{Def.} \\
&= (\varphi , f) \ell_B(1_b) . 
\end{align*} 

\noi Similarly, composition is preserved in the left component because it is defined as composition in $\cA$, and in the right component we only need naturality of the identity coherence isomorphism. For any $f : a \to b$ and $g : b \to c$ in $D(A)_1$, we have that 

\begin{align*}
\ell_A (f) \ell_A (g)
&=(1_A , \delta_{A,a} f) ( 1_A , \delta_{A,b} g) &\text{Def.}\\
&=(1_A 1_A , \delta_{1_A ; 1_A , a} D(1_A)(\delta_{A,a} f) \delta_{A,b} g) & \text{Def.} \\
&= \left( 1_A , \delta_{1_A ; 1_A , a} D(1_A)(\delta_A) D(1_A) (f) \delta_{B,b} g \right) & \text{Functoriality} \\
&= \left( 1_A , \delta_{1_A ; 1_A , a} D(1_A)(\delta_A) \delta_{A,a} f g \right) &\text{Naturality }\delta_A \\
&= \left( 1_A , \delta_{1_A ; 1_A , a} \delta_{A, D(1_A)(a)} \delta_{A,a} f g \right) &\text{Naturality} \delta_A \\
&= \left( 1_A , \delta_{A,a} f g \right) &\text{Coherence} \\
&= (\ell_A)(fg) & \text{Def.}
\end{align*}\

\phantomsection
\subsubsection*{Internal}
\addcontentsline{toc}{subsubsection}{Internal}

\noi The definitions and proofs above can be internalized within an arbitrary extensive category $\cE$ as follows. For each $A \in \cA_0$, notice that 

\[ s \delta_A t = s 1_{D(A)_0} = s = 1_{D(A)_1} s \]

\noi so there exists a unique map $\langle s \delta_A , 1_{D(A)_1} \rangle : D(A)_1 \to D(A)_2$ in $\cE$. Now 

\[ \langle s \delta_A , 1_{D(A)_1} \rangle c s = \langle s \delta_A , 1_{D(A)_1} \rangle q_0 s = s \delta_A s = s D(1_A)_0 \]\

\noi induces a unique map $(\ell_A)'_1 := \langle s , \langle s \delta_A , 1_{D(A)_1} \rangle c \rangle$ which we can use to define $\ell_A = \left( (\ell_A)_0 , (\ell_A)_1 \right)$: 

\[ \begin{tikzcd}[]
D(A)_0 \rar[rr,"(\ell_A)_0 {:}= \iota_A "] && \mD_0 & & D(A)_1 \ar[drr, "(\ell_A)_1"'] \rar[rr,"(\ell_A)'_1 " ] & & D_{1_A} \dar[tail, "\iota_A"] \\
& && &&& \mD_1 
\end{tikzcd}. \]

\begin{lem}\label{lem ell_A preserved ids}
Identities are preserved by $\ell_A$. That is, the diagram

\begin{center}
\begin{tikzcd}[]
D(A)_0 \dar["(\ell_A)_0"'] \rar["e_A"] & D(A)_1 \dar["(\ell_A)_1"] \\
\mD_0 \rar["\epsilon"'] & \mD_1 
\end{tikzcd}
\end{center}

\noi commutes in $\cE$. 
\end{lem}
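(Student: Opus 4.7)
The plan is to reduce the commutativity of the square to an equation inside the pullback $D_{1_A}$ and then invoke the identity laws for the internal category $D(A)$. First I would unfold the defining formulas: $(\ell_A)_0 = \iota_A$ and $(\ell_A)_1 = (\ell_A)'_1 \iota_{1_A}$ with $(\ell_A)'_1 = \langle s, \langle s\delta_A, 1_{D(A)_1}\rangle c\rangle$, and, by the definition of $\epsilon$ out of the coproduct, $\iota_A \epsilon = \epsilon_A \iota_{1_A}$ on the component indexed by $A$. The square therefore becomes $e_A (\ell_A)'_1 \iota_{1_A} = \epsilon_A \iota_{1_A}$. Because $\cE$ is extensive, the coproduct injection $\iota_{1_A}$ is monic, so it suffices to prove the simpler equality $e_A (\ell_A)'_1 = \epsilon_A$ as maps $D(A)_0 \to D_{1_A}$.

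Next I would verify this equality by checking both legs of the pullback $D_{1_A}$, with projections $\pi_0$ and $\pi_1$. On the $\pi_0$-leg, $e_A (\ell_A)'_1 \pi_0 = e_A s = 1_{D(A)_0} = \epsilon_A \pi_0$, using that $e_A$ is a common section of $s$ and $t$ and that $\epsilon_A$ has first component the identity. On the $\pi_1$-leg, I need to show $e_A \, \langle s\delta_A, 1_{D(A)_1}\rangle c = \delta_A$, and this is the heart of the argument.

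For the $\pi_1$-leg I would first compute $e_A \langle s\delta_A, 1_{D(A)_1}\rangle$ by the universal property of $D(A)_2$: its two projections are $e_A s \delta_A = \delta_A$ and $e_A \cdot 1_{D(A)_1} = e_A$, so the map equals $\langle \delta_A, e_A\rangle$. The only substantive step is then to show $\langle \delta_A, e_A\rangle c = \delta_A$. I expect to derive this by precomposing the right identity law $\langle 1_{D(A)_1}, te_A\rangle c = 1_{D(A)_1}$ with $\delta_A$, using $\delta_A t = 1_{D(A)_0}$ (which holds because $\delta_A : D(1_A) \Rightarrow 1_{D(A)}$ is a natural isomorphism to the identity) to rewrite $\delta_A \langle 1_{D(A)_1}, te_A\rangle = \langle \delta_A, \delta_A t e_A\rangle = \langle \delta_A, e_A\rangle$. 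Postcomposing with $c$ then yields $\delta_A$, as required.

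The main obstacle, modest as it is, is bookkeeping: one must keep straight the various pullbacks ($D_{1_A}$, $D(A)_2$) and the monicity of $\iota_{1_A}$ provided by extensivity, and make sure the triangle $\delta_A t = 1_{D(A)_0}$ is invoked at the right spot to collapse the right-identity paste to $\langle \delta_A, e_A\rangle$. Once both pullback-legs match those of $\epsilon_A$, uniqueness finishes the proof.
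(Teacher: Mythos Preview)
Your proposal is correct and follows essentially the same approach as the paper: both reduce to showing $e_A (\ell_A)'_1 = \epsilon_A$ by checking the two pullback projections of $D_{1_A}$, using $e_A s = 1_{D(A)_0}$ on $\pi_0$ and the right identity law in $D(A)$ to get $\langle \delta_A, e_A\rangle c = \delta_A$ on $\pi_1$. The only cosmetic difference is that you invoke monicity of $\iota_{1_A}$ to cancel it, whereas the paper simply proves $e_A (\ell_A)'_1 = \epsilon_A$ first and then post-composes with $\iota_{1_A}$; the monicity step is therefore unnecessary but harmless.
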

\begin{proof}
First compute 

\begin{align*}
e_A(\ell_A)'_1 \pi_0 
= e_A s 
= 1_{D(A)_0}
\end{align*}

\noi and 

\begin{align*}
e_A (\ell_A)'_1 \pi_1 
&= e_A \langle s \delta_A , 1_{D(A)_1} \rangle c\\
&= \langle e_A s \delta_A , e_A 1_{D(A)_1} \rangle c\\
&= \langle \delta_A , e_A \rangle c \\
&= \delta_A 
\end{align*}\

\noi by the identity law in D(A) and then see 

\[ e_A (\ell_A)'_1 = \langle 1_{D(A)_0} , \delta_A \rangle = \epsilon_A,\]\

\noi by definition of $\epsilon_A$. Now post-composing with $\iota_{1_A}$ and using the equality above along with the definitions of $\epsilon_A$ and $\epsilon$ gives 

\begin{align*}
e_A (\ell_A)_1 
= e_A(\ell_A)'_1 \iota_{1_A} 
= \epsilon_A \iota_{1_A} 
= \iota_A \epsilon 
\end{align*}\

\noi as required. 
\end{proof}

\noi Due to extensivity and our definition of $\mD$ involving coproducts, in order to prove composition is preserved by $\ell_A$ we need to prove that composition is preserved by $\ell_A'$ at the level of cofibers. This is done in Lemma~\ref{Lem 1-cells preserve composition at cofiber level} in the appendix. 

\begin{lem}\label{lem ell_A preserves comp}
For each $A \in \cA_0$, composition is preserved by $\ell_A$. That is, the diagram 

\begin{center}
\begin{tikzcd}[]
D(A)_2 \rar["c"] \dar["\langle q_0 (\ell_A)_1 {,} q_1 (\ell_A)_1 \rangle "'] & D(A)_1 \dar["(\ell_A)_1"] & \\
\mD_2 \rar["c"'] & \mD_1
\end{tikzcd}
\end{center}

\noi commutes in $\cE$. 
\end{lem}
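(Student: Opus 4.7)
The plan is to imitate the cofiber-level strategy used in the proof of the identity laws (Proposition~\ref{prop id laws for int groth}): reduce the global square to a statement about a single cofiber of $\mD_2$ and $\mD_1$, then verify that statement using the universal property of the relevant pullback together with coherence, naturality, and functoriality of the pseudofunctor data.

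First I would observe that for every $f \in D(A)_1$ we have $(\ell_A)'_1\pi_0 = s \iota_A \cdot (\text{something in the }1_A\text{ component})$, so that $(\ell_A)_1$ factors through the coproduct coprojection $\iota_{1_A} : D_{1_A} \rightarrowtail \mD_1$. Consequently the pairing $\langle q_0(\ell_A)_1 , q_1(\ell_A)_1\rangle : D(A)_2 \to \mD_2$ factors through the coproduct coprojection $\iota_{1_A;1_A} : D_{1_A ; 1_A} \rightarrowtail \mD_2$, via a unique map $\widetilde{c} : D(A)_2 \to D_{1_A ; 1_A}$ characterized by $\widetilde{c}\, p_0 = q_0 (\ell_A)'_1$ and $\widetilde{c}\, p_1 = q_1 (\ell_A)'_1$. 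Using the definition of internal composition in $\mD$ (as the universal map out of $\mD_2$ induced by the cofiber composites $c_{\varphi;\psi}\iota_{\varphi\psi}$), together with the universal property of coproducts and the definition of $(\ell_A)_1 = (\ell_A)'_1\iota_{1_A}$, the global square collapses to the cofiber square
\[
\begin{tikzcd}
D(A)_2 \rar["c"] \dar["\widetilde{c}"'] & D(A)_1 \dar["(\ell_A)'_1"] \\
D_{1_A ; 1_A} \rar["c_{1_A ; 1_A}"'] & D_{1_A}
\end{tikzcd}
\]

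Next I would verify this square by invoking the universal property of the pullback $D_{1_A}$, i.e.\ checking the two legs agree after postcomposing with $\pi_0$ and with $\pi_1$. The $\pi_0$ component is an easy source computation, using Lemma on sources/targets of cofiber composites ($c_{1_A;1_A}\, s_{1_A} = p_0 s_{1_A}$) and the fact that $(\ell_A)'_1\pi_0 = s$. The $\pi_1$ component is the real content: one must show
\[
c \cdot \langle s\delta_A , 1_{D(A)_1}\rangle c \;=\; \widetilde{c}\, c'_{\delta ; 1_A ; 1_A}\, c,
\]
and to do this I would first unpack $\widetilde{c}\, c'_{\delta ; 1_A ; 1_A}$ as a triple pairing into $D(A)_3$ using the defining diagrams $(\star\star)$ of $c'_{\delta;(1_A;1_A)}$ and $c'_{1_A;1_A}$. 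This produces a triple whose middle two entries involve the coherence isomorphisms $\delta_{1_A ; 1_A}$ and $D(1_A)_1(\delta_A)$, exactly as in the classical computation.

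The technical payoff is then a coherence-and-naturality cancellation mirroring the classical step $\delta_{1_A;1_A,a}\, D(1_A)(\delta_{A,a}\, f)\, \delta_{A,b}\, g = \delta_{A,a}\, f g$. Internally this is executed by combining the coherence identity $\langle \delta_{1_A;1_A}, \delta_A D(1_A)_1\rangle c = e_A D(1_A)_1$ (which reproduces the proof of the identity law), the naturality square for $\delta_A : D(1_A) \Rightarrow 1_{D(A)}$ applied to the arrow produced by $(\ell_A)'_1$, the internal functoriality of $D(1_A)$ (so that $D(1_A)_1$ preserves the internal composition $c$ in $D(A)$), and several reassociations using associativity in $D(A)$. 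The main obstacle is the same as in Proposition~\ref{prop id laws for int groth}: keeping track of a triple composite in $D(A)_3$ and orchestrating coherence, naturality, and functoriality in the right order so that the $\delta$'s cancel and one is left with $\langle s\delta_A , 1\rangle c$ applied to the composite $c$. Once this cancellation is carried out at the cofiber level, composition preservation of $\ell_A$ follows by the universal property of $\mD_2$ and $\mD_1$ as coproducts. A fully expanded version of this calculation I would defer to the appendix as Lemma~\ref{Lem 1-cells preserve composition at cofiber level}, in parallel with the identity-law proof.
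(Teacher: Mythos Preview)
Your proposal is correct and follows essentially the same approach as the paper: factor $\langle q_0(\ell_A)_1, q_1(\ell_A)_1\rangle$ through $\iota_{1_A;1_A}$ via the pairing $\langle q_0(\ell_A)'_1, q_1(\ell_A)'_1\rangle$ (your $\widetilde{c}$), reduce to the cofiber square $\widetilde{c}\, c_{1_A;1_A} = c\,(\ell_A)'_1$, and defer that cofiber-level calculation to the appendix as Lemma~\ref{Lem 1-cells preserve composition at cofiber level}. Your sketch of the $\pi_1$ verification via coherence, naturality of $\delta_A$, and functoriality of $D(1_A)$ also matches the appendix proof.
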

\begin{proof}
First notice that 

\begin{align*}
\langle q_0 (\ell_A)'_1 {,} q_1 (\ell_A)'_1 \rangle \iota_{1_A; 1_A} \rho_0 
&=\langle q_0 (\ell_A)'_1 {,} q_1 (\ell_A)'_1 \rangle p_0 \iota_{1_A} & \text{Def. } D_{1_A ; 1_A} \\
&= q_0 (\ell_A)'_1 \iota_{1_A} & \\
&= q_0 (\ell_A)_1 & \text{Def. } 
\end{align*}

and 

\begin{align*}
\langle q_0 (\ell_A)'_1 {,} q_1 (\ell_A)'_1 \rangle \iota_{1_A; 1_A} \rho_1
&= 
\langle q_0 (\ell_A)'_1 {,} q_1 (\ell_A)'_1 \rangle p_1 \iota_{1_A} \\
&= q_1 (\ell_A)'_1 \iota_{1_A} \\
&= q_1 (\ell_A)_1 
\end{align*}\

\noi so by the universal property of $\mD_2$,

\[\langle q_0 (\ell_A)_1 {,} q_1 (\ell_A)_1 \rangle = \langle q_0 (\ell_A)'_1 {,} q_1 (\ell_A)'_1 \rangle \iota_{1_A; 1_A}. \]\

\noi Use the equation above along with Lemma~\ref{Lem 1-cells preserve composition at cofiber level},

\begin{align*}
\langle q_0 (\ell_A)_1 {,} q_1 (\ell_A)_1 \rangle c 
&= \langle q_0 (\ell_A)'_1 {,} q_1 (\ell_A)'_1 \rangle \iota_{1_A; 1_A} c & \\
&= \langle q_0 (\ell_A)'_1 {,} q_1 (\ell_A)'_1 \rangle c_{1_A; 1_A} \iota_{1_A}& \text{Def. } c \\
&= c (\ell_A)'_1 \iota_{1_A}& \text{Lemma }\ref{Lem 1-cells preserve composition at cofiber level} \\
&= c (\ell_A)_1 & \text{Def. } (\ell_A)_1 
\end{align*}\

\noi to see the square in question commutes. 
\end{proof}

\noi The following proposition is the main result of this subsection. 

\begin{prop}\label{prop 1-cells ell_A are internal functors}
For each $A \in \cA_0$, $\ell_A : D(A) \to \mD$ is an internal functor.
\end{prop}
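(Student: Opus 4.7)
The plan is to verify the four defining diagrams of an internal functor, namely compatibility of $(\ell_A)_0$ and $(\ell_A)_1$ with source, target, identity, and composition structure maps. Two of these, the identity law and the composition law, are exactly the content of Lemmas~\ref{lem ell_A preserved ids} and \ref{lem ell_A preserves comp}, so the remaining work is the source and target compatibility, which I expect to follow by a short chase through the factoring $(\ell_A)_1 = (\ell_A)'_1 \iota_{1_A}$ and the definition $(\ell_A)'_1 = \langle s {,} \langle s \delta_A , 1_{D(A)_1} \rangle c \rangle$.

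For source compatibility, I would use that the source map $s_{\mD} : \mD_1 \to \mD_0$ is induced out of the coproduct by the component maps $s_\varphi = \pi_0 \iota_{\mathrm{dom}(\varphi)}$, so in particular $\iota_{1_A} s_{\mD} = s_{1_A} = \pi_0 \iota_A$. Then
\[ (\ell_A)_1 s_{\mD} = (\ell_A)'_1 \iota_{1_A} s_{\mD} = (\ell_A)'_1 \pi_0 \iota_A = s_{D(A)} \iota_A = s_{D(A)} (\ell_A)_0 \]
by the definition of $(\ell_A)'_1$ (whose first pullback component is $s_{D(A)}$) and the definition $(\ell_A)_0 = \iota_A$.

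For target compatibility, the analogous identity $\iota_{1_A} t_{\mD} = t_{1_A} = \pi_1 t \iota_A$ reduces the claim to computing $(\ell_A)'_1 \pi_1 t$. Here $(\ell_A)'_1 \pi_1 = \langle s \delta_A , 1_{D(A)_1} \rangle c$, and post-composing with $t$ and using the target-composite law in $D(A)$ together with $q_1 t = t$ on the second component of $D(A)_2$ gives $\langle s \delta_A , 1_{D(A)_1} \rangle q_1 t = t_{D(A)}$. Hence $(\ell_A)_1 t_{\mD} = t_{D(A)} \iota_A = t_{D(A)} (\ell_A)_0$.

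The proposition then follows by collecting these four compatibilities. I do not anticipate any real obstacle here: the substantive calculations, which involve coherence and naturality of the $\delta$'s inside the cofiber $D_{1_A}$, have already been absorbed into Lemmas~\ref{lem ell_A preserved ids} and \ref{lem ell_A preserves comp} (and the appendix Lemma on cofiber-level composition), so this proposition is essentially a bookkeeping step that assembles those results together with the easy source/target identities above.
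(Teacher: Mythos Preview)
Your proposal is correct and takes essentially the same approach as the paper: invoking Lemmas~\ref{lem ell_A preserved ids} and \ref{lem ell_A preserves comp} for the identity and composition axioms. You are in fact slightly more thorough than the paper, which omits the source/target compatibility check entirely and just cites the two lemmas; your explicit verification of those two easy diagrams is a harmless addition.
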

\begin{proof}
It preserves identities by Lemma~\ref{lem ell_A preserved ids} and it preserves composition by Lemma~\ref{lem ell_A preserves comp}. 
\end{proof}

\subsection{2-cells of Canonical Lax Transformation} 

\phantomsection
\subsubsection*{Classical}
\addcontentsline{toc}{subsubsection}{Classical}

When $\cE = \Set$, for each $\varphi \in \cA(A,B)$ the natural transformation $\ell_\varphi$ is defined with components 

\[ \ell_{\varphi , a} := (\varphi , 1_{D(\varphi)(a)})  \]\

\noi such that for any $f : a \to b$ in $D(A)$, the square 

\begin{center}
\begin{tikzcd}[column sep = large]
a \dar["(1_A{,} \delta_{A,a} f) "'] \rar["(\varphi {,} 1_{D(\varphi)(a)}) "] & D(\varphi) (a) \dar["(1_B{ ,} \delta_{B, D(\varphi)(a)} D(\varphi)(f))"] \\
b \rar["(\varphi {,} 1_{D(\varphi)(b)})"'] & D(\varphi)(b) 
\end{tikzcd}
\end{center}

\noi commutes. This calculation looks like 

\begin{align*}
& \ \ \ \ (1_A,\delta_{A,a} f) (\varphi {,} 1_{D(\varphi)(b)}) \\
&=\left( \varphi {,} \delta_{1_A ; \varphi, a} D(\varphi)(\delta_{A,a} f) 1_{D(\varphi)(b)} \right) &\\
&=\left( \varphi {,} \delta_{1_A ; \varphi, a} D(\varphi)(\delta_{A,a}) D(\varphi)(f) \right) & \text{Functoriality} \\
&=\left( \varphi 1_B {,} \delta_{\varphi ; 1_B , a} \delta_{B,D(\varphi)(a)} D(\varphi)(f) \right) & \text{Coherence} \\
&=\left( \varphi 1_B {,} \delta_{\varphi ; 1_B , a} D(1_B) (1_{D(\varphi)(a)})\delta_{B,D(\varphi)(a)} D(\varphi)(f) \right) & \text{Functoriality} \\
&= (\varphi , 1_{D(\varphi)(a)})(1_B, \delta_{B, D(\varphi)(a)} D(\varphi)(f)) & \text{Def.} 
\end{align*}\

\noi and can all be internalized to an arbitrary extensive category $\cE$. Note that the class of cartesian arrows in the usual Grothendieck contruction are pairs $(\varphi , f) : (A,a) \to (B, b)$ such that $f$ is an isomorphism. The components of $\ell_\varphi$ are a special subclass of these which are actually a set when $\cA$ is small and these are typically called the canonical cleavage of the cartesian arrows. 

\phantomsection
\subsubsection*{Internal} 
\addcontentsline{toc}{subsubsection}{Internal}

Define the internal natural transformation $\ell_\varphi : \ell_A \implies D(\varphi) \ell_B$ as the composite 
 
\begin{center}
\begin{tikzcd}[column sep = large] 
D(A)_0 \rar[rr, "\langle 1_{D(A)_0} {,} D(\varphi)_0 e_B \rangle"] \ar[drr, "\ell_\varphi"'] && D_\varphi \dar[ tail, "\iota_\varphi"]\\
&& \mD_1 
\end{tikzcd}
\end{center}

\noi and we can immediately check 

\begin{align*}
\ell_\varphi s 
&=\langle 1_{D(A)_0} {,} D(\varphi)_0 e_B \rangle \iota_\varphi s \\
&=\langle 1_{D(A)_0} {,} D(\varphi)_0 e_B \rangle \pi_0 \iota_A & \text{Def. } s_\varphi \\
&=1_{D(A)_0} \iota_A \\
&=(\ell_A)_0 
\end{align*}

\noi and

\begin{align*}
\ell_\varphi t
&=\langle 1_{D(A)_0} {,} D(\varphi)_0 e_B \rangle \iota_\varphi t \\
&=\langle 1_{D(A)_0} {,} D(\varphi)_0 e_B \rangle \pi_1 t \iota_B & \text{Def. } t_\varphi \\
&=D(\varphi)_0 e_B t \iota_B \\
&= D(\varphi)_0 1_{D(B)_0} \iota_B] & \text{Def. } e_B\\
&=D(\varphi)_0(\ell_B)_0 &\text{Def. } \ell_B\\
&= \left( D(\varphi) \ell_B \right)_0 &\text{Functoriality}.
\end{align*}\

\noi This shows us that $\ell_\varphi$ is well-defined in terms of its source and target. Now we need to check that it satisfies the naturality square. This is done in the proof of the following proposition as a big calculation that involves manipulating pairing maps of pullbacks. References to a few side calculations appearing as lemmas in Section \ref{SS 2-cells of l} of the appendix are included on the side along with references to definitions, internal category structure laws, functoriality, and coherences. 

\begin{prop}[(Internal) Naturality of $\ell_\varphi$]
For each $\varphi : A \to B$, the map $\ell_\varphi : D(A)_1 \to \mD_1$ defines an internal natural transformation, $\ell_A \implies D(\varphi) \ell_B$ in the sense that the diagram, 
\[\begin{tikzcd}[column sep = huge]
D(A)_1 \rar["\langle s \ell_\varphi {,} D(\varphi)_1 (\ell_B)_1 \rangle"] \dar["\langle (\ell_A)_1 {,} t \ell_\varphi \rangle"'] & \mD_2 \dar["c"] \\
\mD_2 \rar["c"'] & \mD_1
\end{tikzcd}\]
\noi commutes in $\cE$. 

\end{prop}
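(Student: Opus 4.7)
The plan is to exploit extensivity once more and reduce the naturality square, which lives in $\mD_1$, to an equality in a single cofiber $D_\varphi$. In the classical case, the top-right and bottom-left composites both land in the component indexed by $\varphi = 1_A \varphi = \varphi 1_B$ and both represent the morphism $(\varphi, D(\varphi)(f))$; our job is to internalize this observation and track the necessary coherences.

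First I would show that $\langle (\ell_A)_1, t \ell_\varphi \rangle$ factors through the cofiber inclusion $\iota_{1_A ; \varphi} : D_{1_A ; \varphi} \hookrightarrow \mD_2$, and dually that $\langle s \ell_\varphi , D(\varphi)_1 (\ell_B)_1 \rangle$ factors through $\iota_{\varphi ; 1_B} : D_{\varphi ; 1_B} \hookrightarrow \mD_2$. Both factorizations follow from the fact that $(\ell_A)_1$ factors as $(\ell_A)'_1 \iota_{1_A}$, that $\ell_\varphi$ factors as $\langle 1_{D(A)_0}, D(\varphi)_0 e_B \rangle \iota_\varphi$, and from the universal properties of the respective pullbacks (as in the computation used to establish Proposition~\ref{prop id laws for int groth}). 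Once inside these cofibers, the cofiber composition maps $c_{1_A ; \varphi}$ and $c_{\varphi ; 1_B}$ both land in $D_\varphi$ (since $1_A \varphi = \varphi = \varphi 1_B$ in $\cA$), so the naturality square is equivalent to an equality of two maps $D(A)_1 \to D_\varphi$ postcomposed with $\iota_\varphi$.

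Next I would verify this equality using the universal property of the pullback $D_\varphi$, by comparing the two composites after postcomposition with $\pi_0 : D_\varphi \to D(A)_0$ and with $\pi_1 : D_\varphi \to D(B)_1$. The $\pi_0$-projection is essentially trivial: both composites deliver the source map $s : D(A)_1 \to D(A)_0$, as one sees using Lemma~\ref{Lem source and target of component composite} to read off the source of a cofiber composite. For the $\pi_1$-projection the internalized right-hand component, on one side, is a triple composite in $D(B)$ of the form $\langle \delta_{1_A ; \varphi} , D(\varphi)_1 \delta_A , D(\varphi)_1 f \rangle$ preceded by the appropriate pairing, which by the coherence identity $\langle \delta_{1_A ; \varphi} , \delta_A D(\varphi)_1 \rangle c = e_A D(\varphi)_1$ (combined with functoriality $e_A D(\varphi)_1 = D(\varphi)_0 e_B$ and the left identity law in $D(B)$) reduces to $D(\varphi)_1 f$; on the other side, the triple composite $\langle \delta_{\varphi ; 1_B} , D(1_B)_1 D(\varphi)_1 f , \delta_B \rangle$ reduces, via naturality of $\delta_B$ and the coherence identity $\langle \delta_{\varphi ; 1_B} , D(\varphi)_1 \delta_B \rangle c = D(\varphi)_1$ together with the right identity law in $D(B)$, to $D(\varphi)_1 f$ as well.

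The main obstacle, as in the identity law proof, will be bookkeeping: tracking the pairings into $D(B)_2$ and $D(B)_3$ and rewriting them using associativity in $D(B)$ so that the two coherence identities can be applied at the right places. Once those rewrites are set up the computation is forced. I would carry it out as a single display of equalities for each of the two composites, mirroring the format used in Proposition~\ref{prop id laws for int groth}, cite the coherence and naturality calculations as separate lemmas (analogous to those already proved for associativity) if they become too bulky, and conclude by invoking the universal properties of $D_\varphi$, of the cofiber inclusion $\iota_\varphi$, and finally of the coproducts $\mD_1$ and $\mD_2$.
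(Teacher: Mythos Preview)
Your proposal is correct and follows essentially the same route as the paper: factor each leg through the cofiber inclusions $\iota_{1_A;\varphi}$ and $\iota_{\varphi;1_B}$, apply the cofiber compositions $c_{1_A;\varphi}$ and $c_{\varphi;1_B}$ to land in $D_\varphi$, and then compare the two resulting maps using the pullback projections together with the unit coherences $\langle \delta_{1_A;\varphi},\delta_A D(\varphi)_1\rangle c$ and $\langle \delta_{\varphi;1_B},D(\varphi)_0\delta_B\rangle c$. The only difference is presentational: the paper runs the argument as a single chain of equalities transforming $\langle(\ell_A)_1,t\ell_\varphi\rangle c$ directly into $\langle s\ell_\varphi,D(\varphi)_1(\ell_B)_1\rangle c$ (packaging the cofiber-composition computations as the side Lemmas~\ref{Lem sidecalc 1}--\ref{Lem sidecalc 6}), whereas you propose to reduce each side separately to the common form $\langle s, D(\varphi)_1\rangle$ in $D_\varphi$; the actual manipulations and the coherences invoked are the same.

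Two small things to clean up when you write it out: your second coherence should read $\langle \delta_{\varphi;1_B}, D(\varphi)_0\,\delta_B\rangle c = D(\varphi)_0 e_B$ (with $D(\varphi)_0$, not $D(\varphi)_1$, since $\delta_B$ has domain $D(B)_0$), and the naturality you need for $\delta_B$ is the internal one, $\langle s\delta_B,1\rangle c = \langle D(1_B)_1, t\delta_B\rangle c$, which lets you commute it past $D(\varphi)_1$ applied to the arrow.
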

\begin{proof}
\begin{align*}
& \ \ \ \ \langle (\ell_A)_1 , t \ell_\varphi \rangle c \\
&= \langle (\ell_A)'_1 \iota_{1_A} 
\ , \ 
 t \langle 1_{D(A)_0} , D(\varphi)_0 e_B \rangle \iota_\varphi\rangle c \\
&= \langle (\ell_A)'_1 
\ ,\
 t \langle 1_{D(A)_0} , D(\varphi)_0 e_B \rangle \rangle \iota_{1_A ; \varphi} c & \text{Lemma } \ref{Lem sidecalc 1} \\
&= \langle (\ell_A)'_1 
\ , \
 t \langle 1_{D(A)_0} , D(\varphi)_0 e_B \rangle c_{1_A ; \varphi} \iota_\varphi & \text{Def. } c\\
&=\langle s 
\ , \
\langle s \delta_{1_A ; \varphi} , s \delta_A D(\varphi)_1 , D(\varphi)_1 \rangle c \rangle \iota_\varphi & \text{Lemma } \ref{Lem sidecalc 3} \\
&= \langle s 
\ , \
  \langle s \langle \delta_{1_A ; \varphi} , \delta_A D(\varphi)_1 \rangle c , D(\varphi)_1 \rangle c \rangle \iota_\varphi & \text{Assoc. } \\
&= \langle s 
\ , \
  \langle s \langle \delta_{\varphi ; 1_B} , D(\varphi)_0 \delta_B \rangle c , D(\varphi)_1 \rangle c \rangle \iota_\varphi & \text{Coherence. } \\
&= \langle s 
\ , \
  \langle s \delta_{\varphi ; 1_B} , s D(\varphi)_0 \delta_B , D(\varphi)_1 \rangle c \rangle \iota_\varphi & \text{Assoc.} \\
&= \langle s 
\ , \
  \langle s \delta_{\varphi ; 1_B} , s D(\varphi)_0 \langle D(1_B)_0 , \delta_B \rangle p_1 , D(\varphi)_1 \rangle c \rangle \iota_\varphi & \text{} \\
&= \langle s 
\ , \
  \langle s \delta_{\varphi ; 1_B} , s D(\varphi)_0 \langle D(1_B)_0 , \delta_B \rangle \langle p_0 e_B , p_1 \rangle c , D(\varphi)_1 \rangle c \rangle \iota_\varphi & \text{Id.-Law} \\
&= \langle s 
\ , \
  \langle s \delta_{\varphi ; 1_B} , s D(\varphi)_0 \langle D(1_B)_0 e_B , \delta_B\rangle c , D(\varphi)_1 \rangle c \rangle \iota_\varphi & \text{} \\ 
&= \langle s 
\ , \
  \langle s \delta_{\varphi ; 1_B} , s D(\varphi)_0 \langle e_B D(1_B)_1 , \delta_B\rangle c , D(\varphi)_1 \rangle c \rangle \iota_\varphi & \text{Func'y } D(1_B) \\ 
&= \langle s 
\ , \
  \langle s \delta_{\varphi ; 1_B} , s D(\varphi)_0e_B D(1_B)_1 , \langle s D(\varphi)_0 \delta_B , D(\varphi)_1 \rangle c \rangle c \rangle \iota_\varphi & \text{Assoc. } \\ 
&= \langle s 
\ , \
  \langle s \delta_{\varphi ; 1_B} , s D(\varphi)_0e_B D(1_B)_1 , \langle D(\varphi)_1 s \delta_B , D(\varphi)_1 \rangle c \rangle c \rangle \iota_\varphi & \text{Def. } D(\varphi) \\ 
&= \langle s 
\ , \
  \langle s \delta_{\varphi ; 1_B} , s D(\varphi)_0e_B D(1_B)_1 , D(\varphi)_1 \langle s \delta_B ,  1_{D(B)_1} \rangle c \rangle c \rangle \iota_\varphi & \text{Factor} \\ 
&= \langle s 
\ , \
  \langle s \delta_{\varphi ; 1_B} , s D(\varphi)_0 e_B D(1_B)_1 , D(\varphi)_1 (\ell_B)'_1 \pi_1 \rangle c \rangle \iota_\varphi & \text{Def. } (\ell_B)'_1  \\ 
&= \langle 
  s \langle 1_{D(A)_0} , e_A D(\varphi)_1 \rangle 
  \ , \ 
  \langle s D(\varphi)_0 , D(\varphi)_1 (\ell_B)'_1 \pi_1 \rangle \rangle c_{\varphi ; 1_B} \iota_\varphi & \text{Lemma } \ref{Lem sidecalc 5} \\ 
&= \langle 
  s \langle 1_{D(A)_0} , e_A D(\varphi)_1 \rangle 
  \ , \ 
  \langle D(\varphi)_1 s , D(\varphi)_1 (\ell_B)'_1 \pi_1 \rangle \rangle c_{\varphi ; 1_B} \iota_\varphi & \text{Def. } D(\varphi) \\ 
&= \langle 
  s \langle 1_{D(A)_0} , e_A D(\varphi)_1 \rangle 
  \ , \ 
 D(\varphi)_1 \langle s , (\ell_B)'_1 \pi_1 \rangle \rangle c_{\varphi ; 1_B} \iota_\varphi & \text{Factor} \\ 
&= \langle 
  s \langle 1_{D(A)_0} , e_A D(\varphi)_1 \rangle 
  \ , \ 
 D(\varphi)_1 (\ell_B)'_1 \rangle c_{\varphi ; 1_B} \iota_\varphi & \text{Uniqueness} \\ 
&= \langle 
  s \langle 1_{D(A)_0} , e_A D(\varphi)_1 \rangle 
  \ , \ 
 D(\varphi)_1 (\ell_B)'_1 \rangle \iota_{\varphi ; 1_B} c & \text{Def. } c \\ 
&= \langle 
  s \langle 1_{D(A)_0} , e_A D(\varphi)_1 \rangle \iota_\varphi 
  \ , \ 
 D(\varphi)_1 (\ell_B)'_1 \iota_{1_B} \rangle c & \text{Lemma} \ref{Lem sidecalc 6} \\ 
&= \langle s \ell_\varphi , D(\varphi)_1 (\ell_B)_1 \rangle & \text{Def. } \ell_\varphi , (\ell_B)_1
\end{align*}
\end{proof}

\subsection{Universal Property for 1-cells}\label{SS UP for 1-cells of laxtransfm}

\phantomsection
\subsubsection*{Classical}
\addcontentsline{toc}{subsubsection}{Classical}
Suppose $\cE = \Set$ and $x : D \implies \Delta X$ is a lax natural transformation. That is, there are functors 

\[ x_A : D(A) \to X \]\

\noi for each $A \in \cA_0$ and natural transformations 

\[ x_\varphi : x_A \to D(\varphi) x_B \]\

\noi for each $\varphi : A \to B$ in $\cA$ that are coherent with respect to the pseudofunctor's structure isomorphisms. More concretely, for each $a \in D(A)_0$ and each composable $\varphi, \psi \in \cA$ where $A = \dom(\varphi)$ we have

\[ x_{1_A, a } = x_A ( \delta^{-1}_{A,a}) \quad , \quad x_{\varphi \psi, a} x_C(\delta_{\varphi ; \psi , a}) = x_{\varphi ,a } x_{\psi , D(\varphi)(a)} . \]\

\noi Then we can define a functor $\theta : \mD \to X$ on an object $(A,a)$ in $\mD$ as

\[ \theta \left( (A,a) \right) = x_A \left( (A,a) \right) \]\

\noi and on a morphism $(\varphi , f ) : (A,a) \to (B,b)$ in $\mD$ as 

\[ \theta \left( (\varphi , f ) \right) = x_{\varphi , a} x_B(f). \]\

\noi Identities are preserved by this assignment because the diagram 

\begin{center}
\begin{tikzcd}[]
x_A (a) \ar[rr, bend right=50, "\theta((1_A {,}\delta_{A,a}))"'] \ar[rr, bend left =50, "x_A (\delta_{A,a}^{-1} \delta_{A,a}) = x_A (1_a) = 1_{x_A(a)}"] \rar["x_A (\delta_{A,a}^{-1}) "]
& x_A (D(1_A)(a)) \rar["x_A (\delta_{A,a})"] 
& x_A(a) 
\end{tikzcd}
\end{center}

\noi commutes in $X$ and composition is preserved by the following commuting diagram in $X$.

\begin{center}
\begin{tikzcd}[]
&&x_C \left( D(\varphi \psi) (a) \right) \ar[ddrr, bend left = 20,"x_C (\delta_{\varphi ; \psi , a})"]&&\\
&&&&\\
x_A(a) \ar[uurr, bend left=20, "x_{\varphi \psi , a}"] \dar[dd,"\theta( (\varphi {,} f) ) "'] \rar[rr, "x_{\varphi , a}"] && x_B (D(\varphi)(a)) \rar[rr, "x_{\psi , D(\varphi)(a)}"] \ar[ddll, "x_B(f)"]&& x_C\left( \left( D(\varphi)D(\psi) \right) (a) \right) \ar[ddll , near end, "x_C ( D(\psi)(f) )"] \ar[ddddllll, bend left = 40, "x_C(D(\psi)(f) g )"] \\
&&&&\\
x_A(b) \dar[dd,"\theta( (\psi {,} g )) "'] \rar[rr,"x_{\psi , a}"] && x_C \left( D(\psi)(b) \right) \ar[ddll, "x_C(g)"] && \\
&&&&\\
x_A(c) &&& & 
\end{tikzcd}
\end{center}\

\noi The top square commutes by coherence, the triangles on the left commute by definition of $\theta$, the middle square commutes by naturality of $x_\psi$ and by functoriality of $x_C$ we know the bottom right triangle commutes so we can see 

\begin{align*}
\theta((\varphi , f)) \theta((\psi , g))
&=x_{\varphi \psi , a} x_C(\delta_{\varphi ; \psi , a} ) x_C (D(\psi) (f) g) & \text{Def.} \theta \\
&= x_{\varphi \psi , a} x_C(\delta_{\varphi ; \psi , a} D(\psi) (f) g) & \text{Functoriality } x_C \\ 
&= \theta( (\varphi \psi , \delta_{\varphi ; \psi , a} D(\psi) (f) g )) & \text{Def. } \theta \\
&= \theta ( (\varphi ,f) (\psi , g) ). &
\end{align*}

\noi Notice that 
\[ \theta(\ell_A(a)) = \theta( (A,a)) = x_A(a)  \]

\noi and 
\begin{align*}
\theta(\ell_A(f)) 
&= \theta( (1_A , \delta_{A,a} f) ) \\
&= x_{1_A,a} x_A(\delta_{A,a} f) \\
&= x_A(\delta_{A,a}^{-1}) x_A(\delta_{A,a} f) & \text{Coherence} \\
&= x_A(\delta_{A,a}^{-1}\delta_{A,a} f) & \text{Functoriality} \\
&= x_A(f) & \text{Functoriality} \\
\end{align*}\

\noi so $\ell_A \theta = x_A$ for each $A$ in $\cA$. Moreover, for any functor $\omega : \mD \to X$ and any $\varphi : A \to B$ in $\cA$ one can get a natural transformation 
\[ \omega_{\varphi} : \ell_A \omega \implies D(\varphi) \ell_B \omega \]

\noi by whiskering. Componentwise this amounts to defining

\[ \omega_{\varphi , a } := \omega ( \ell_{\varphi,a}). \]

\noi Functoriality of $\omega$ makes $\omega_\varphi$ coherent with respect to composition in $\cA$, 

\begin{align*}
\omega_{\varphi \psi ,a} 
&= \omega (\ell_{\varphi \psi , a} ) \\
&= \omega ( \ell_{\varphi , a} \ell_{\psi , D(\varphi)(a) } \ell_C ( \delta^{-1}_{\varphi ; \psi , a} ) )\\
&= \omega ( \ell_{\varphi , a}) \omega (\ell_{\psi , D(\varphi)(a) } ) \omega (\ell_C ( \delta^{_1}_{\varphi ; \psi , a} ) )\\
&= \omega_{\varphi , a} \omega_{\psi , D(\varphi)(a)} \left( (\ell_C \omega) (\delta_{\varphi ; \psi , a} ) \right)^{-1},
\end{align*}

\noi and with respect to identities in $\cA$ 

\begin{align*}
\omega_{1_A,a} 
= \omega( \ell_{1_A , a}) 
= \omega ( \ell_A (\delta_{A,a}^{-1}) ) 
= (\ell_A \omega) (\delta_{A,a}^{-1}). 
\end{align*}

\noi The assignments above are inverses to one another. On one hand if we start with a family of natural transformations $\{x_\varphi : x_A \implies D(\varphi) x_B \}_{\varphi \in \cA(A,B)}$, consider the induced functor $\theta$ and its induced lax natural transformation. For each $\varphi : A \to B$ in $\cA$, the natural transformation 

\[ \theta_\varphi : \ell_A \omega \implies D(\varphi) \ell_B \omega \]

\noi has precisely the same components as the natural transformation $x_\varphi$ from the family we started with since $\ell_A \omega = x_A$ for each $A $ in $\cA$. 

\[ \theta_{\varphi , a} = \theta (\ell_{\varphi , a} ) = \theta (( \varphi , 1_{D(\varphi)(a)} )) = x_{\varphi , a } x_B (1_{ D(\varphi)(a)}) = x_{\varphi, a} \]\

\noi Now if we start with a functor $\omega : \mD \to X$, get the natural transformations $\omega_\varphi : \ell_A \omega \implies D(\varphi) \ell_B \omega$, and then let $\theta_{\omega} : \mD \to X$ be the induced functor, we have that for each $(A,a) \in D(A)$, 

\begin{align*}
\theta_{\omega } ((A,a))
 :=( \ell_A \omega) ((A,a)) 
 = ( \omega ( \ell_A (a) ) 
 = \omega ((A,a)) 
\end{align*}\

\noi as well as 

\begin{align*}
\theta_{\omega} ((\varphi,f))
&= \omega_{\varphi , a} \left( (\ell_B \omega)(f) \right) \\
&= \omega ( \ell_{ \varphi , a} ) \left( (\ell_B \omega)(f) \right) \\
&= \omega \left( \ell_{\varphi , a} \ell_B (f) \right) \\
&= \omega \left( (\varphi , 1_{D(\varphi)(a)} ) (1_B , \delta_{B,D(\varphi)(a)} f)\right) \\
&= \omega \left( (\varphi 1_B , \delta_{\varphi ; 1_B , a} D(1_B) (1_{D(\varphi)(a)}) \delta_{B,D(\varphi)(a)} f)\right) \\
&= \omega \left( (\varphi , \delta_{\varphi ; 1_B , a} \delta_{B,D(\varphi)(a)} f)\right) \\
&= \omega \left( (\varphi  f)\right) \\
\end{align*}

\noi where the last equality is by coherence. This shows that $\theta_\omega = \omega$ and it follows that the assignments 

\[ \{ x_\varphi : x_A \implies D(\varphi) x_B \}_{\varphi \in \cA(A,B)} \mapsto (\theta : \mD \to X) \]\

\noi and 

\[ (\omega: \mD \to X) \mapsto \{ \omega_\varphi : \ell_A \omega \implies D(\varphi) \ell_B \omega \}_{\varphi \in \cA(A,B)}\]\

\noi are inverses of one another. In particular, every functor $\mD \to X$ corresponds uniquely to a lax natural transformation $D \implies \Delta X$. 



\phantomsection
\subsubsection*{Internal}
\addcontentsline{toc}{subsubsection}{Internal}

Let $\cE$ be an extensive category and let $\mX$ be an arbitary internal category in $\cE$ with $\{ x_A : D(A) \to \mD\}$ an $\cA_0$-indexed family of internal functors and $\{ x_\varphi : x_A \implies D(\varphi) x_B\}$ an $\cA_1$-indexed family of internal natural transformations that satisfy the following (internalized) coherences with respect to the pseudofunctor isomorphisms

\[ \langle x_{1_A} , \delta_A (x_A)_1 \rangle c = e_A (x_A)_1 \qquad , \qquad \langle x_{\varphi \psi} , \delta_{\varphi ; \psi} x_C \rangle c = \langle x_\varphi , D(\varphi)_0 x_\psi \rangle c .\]\

\noi Define $\theta_0$ to be uniquely induced by the maps $\{ x_A \}_{A \in \cA_0}$ 

\begin{center}
\begin{tikzcd}[]
\mD_0 \rar[rr, dotted, "\theta_0"] && \mX_0 \\
&D(A)_0 \ar[ul, "\iota_A"] \ar[ur, "(x_A)_0"] &
\end{tikzcd}
\end{center}\

\noi and define $\theta_1$ to be uniquely induced by the family of composites

\begin{center}
\begin{tikzcd}
\mD_1 \rar[rr,dotted, "\theta_1"] && \mX_1 \\
&&\\
D_\varphi \uar[uu,"\iota_\varphi"] \rar[rr, "\langle \pi_0 x_\varphi {,} \pi_1 (x_B)_1 \rangle "'] && \mX_2 \uar[uu, "c"'] 
\end{tikzcd}.
\end{center}

\noi on each component. The following lemma shows $\theta = (\theta_0, \theta_1)$ preserves identities and will be used later to conclude $\theta$ is an internal functor. 

\begin{lem}\label{Lem universal 1-cells preserves identities} 
The assignment $ (\theta_0 , \theta_1) : \mD \to \mX$ preserves identities. 
\end{lem}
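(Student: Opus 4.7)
The plan is to use the universal property of the coproduct $\mD_0 = \coprod_{A \in \cA_0} D(A)_0$: since identity preservation amounts to the commutativity of the square
\[\begin{tikzcd}
\mD_0 \rar["\epsilon"] \dar["\theta_0"'] & \mD_1 \dar["\theta_1"] \\
\mX_0 \rar["e"'] & \mX_1,
\end{tikzcd}\]
it suffices to verify $\iota_A \epsilon \theta_1 = \iota_A \theta_0 e$ in $\cE$ for each $A \in \cA_0$.

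Next I would unwind the relevant definitions on the left-hand side. By the definition of $\epsilon$, precomposing $\iota_A$ gives $\iota_A \epsilon = \epsilon_A \iota_{1_A}$, and by the definition of $\theta_1$ on the cofiber $D_{1_A}$, one has $\iota_{1_A} \theta_1 = \langle \pi_0 x_{1_A}, \pi_1 (x_A)_1 \rangle c$. Combining these and using $\epsilon_A \pi_0 = 1_{D(A)_0}$ and $\epsilon_A \pi_1 = \delta_A$ (both immediate from the definition of $\epsilon_A$), the pairing map absorbs $\epsilon_A$ and the left-hand side reduces to $\langle x_{1_A}, \delta_A (x_A)_1 \rangle c$.

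Now the coherence hypothesis $\langle x_{1_A}, \delta_A (x_A)_1 \rangle c = e_A (x_A)_1$ collapses this to $e_A (x_A)_1$, and a single application of (internal) functoriality of $x_A$, namely $e_A (x_A)_1 = (x_A)_0 e$, gives exactly $\iota_A \theta_0 e$ on the right. This establishes the equality for each $A$ and so the required square commutes.

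The calculation is almost purely bookkeeping; there is no substantive obstacle, only the need to carefully track which coproduct injection and which pullback projection each map factors through. The one point where attention is required is that the coherence for $x_{1_A}$ must be read at the level of the components $\pi_0, \pi_1$ of $D_{1_A}$ rather than on $\mD_1$ directly, which is why the reduction through $\epsilon_A$ is essential before invoking it.
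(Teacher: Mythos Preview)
Your proposal is correct and follows essentially the same approach as the paper: reduce to cofibers via the coproduct universal property, unwind $\epsilon_A$ and the cofiber definition of $\theta_1$ to obtain $\langle x_{1_A}, \delta_A (x_A)_1 \rangle c$, apply the identity coherence for $x$, and finish with functoriality of $x_A$. The paper packages these same steps into a single large commuting diagram (and routes through the trivial identity $e_A s = 1_{D(A)_0}$), but the logical content is identical to your direct calculation.
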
 
\begin{proof}
By the universal property of the coproduct, $\mD_1$, it suffices to see the following diagram commutes. 

\begin{center}
\begin{tikzcd}[row sep = huge, column sep = huge]
\mD_0 \rar[dotted, "\theta_0"] \rar[rrrr, bend left=20, dotted, ""] & \mX_0 \rar[rr, bend left= 15, equals] \rar["e"] & \mX_1 \rar["s"] & \mX_0 \rar["e"] & \mX_1 \\
D(A)_0  \ar[rr, bend right = 10, equals] \uar["\iota_A"] \ar[ur, "(x_A)_0"] \ar[r, "e_A"] \dar["\iota_A"'] &D(A)_1 \rar["s"]  \ar[ur, "(x_A)_1"] & D(A)_0 \ar[ur, "(x_A)_0"] \rar[" \langle {1_{D(A)_0} , \delta_A} \rangle"']\ar[rr, bend left = 10 ,"\langle x_{1_A} {,} \delta_A (x_A)_1 \rangle "] & D_{1_A} \ar[dl, "\iota_{1_A}"] \rar["\langle \pi_0 x_{1_A} {,} \pi_1 (x_A)_1 \rangle "'] & \mX_2 \ar[d, "c"] \uar["c"] \\
\mD_0 \rar[rr, dotted, "\epsilon"] \rar[rrrr, bend right = 20, dotted, ""] && \mD_1 \rar[rr, "\theta_1"] && \mX_1
\end{tikzcd}
\end{center}\

\noi The top left two squares commute by definition of $\theta_0$ and $e$. They show that the composite $\theta_0 e$ is uniquely induced by the family of maps $\{ e_A (x_A)_1 \}_{A \in \cA_0}$. The other two squares on the top commute by functoriality of $x_A$ and coherence. In the middle on the left we have the source-identity coherence in $D(A)$, and a short calculation on the right using the universal property of $\mX_2$ shows that 

\[ \langle x_{1_A} , \delta_A (x_A)_1 \rangle = \langle 1_{D(A)_0} , \delta_A \rangle \langle \pi_0 x_{1_A} , \pi_1 (x_A)_1 \rangle .\]

\noi Recall that $\epsilon_A := \langle 1_D(A)_0 , \delta_A \rangle$ to see that the bottom left square commutes by definition of $\epsilon$ and the bottom right square commutes by definition of $\theta_1$. Together they show that $\epsilon \theta_1$ is uniquely induced by $\{ \epsilon_A \langle \pi_0 \iota_{1_A} , \pi_A (x_A)_1 \rangle c\}_{A \in \cA_0}$. Commutativity above shows 

\[ \epsilon_A \langle \pi_0 \iota_{1_A} , \pi_A (x_A)_1 \rangle c = e_A s \langle x_{1_A} , \delta_A (x_A)_1 \rangle c = e_A (x_A)_1 \]\

\noi and therefore 

\[ \epsilon \theta_1 = \theta_0 e\]

\noi by uniqueness. 
\end{proof} 

\begin{lem}\label{Lem universal 1-cells preserves composition}
The assignment $(\theta_0 , \theta_1) : \mD \to \mX$ preserves composition. 
\end{lem}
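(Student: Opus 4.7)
The plan is to parallel the strategy of Lemma~\ref{lem ell_A preserves comp}. By extensivity of $\cE$ and the decomposition $\mD_2 \cong \coprod_{(\varphi,\psi) \in \cA_2} D_{\varphi;\psi}$, it suffices to check the composition-preservation square after precomposing with each cofiber inclusion $\iota_{\varphi;\psi} : D_{\varphi;\psi} \hookrightarrow \mD_2$. Fix composable $\varphi \in \cA(A,B)$, $\psi \in \cA(B,C)$.

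For the ``outer-composition'' (right-hand) side, I would begin by showing that $\iota_{\varphi;\psi} \langle \rho_0 \theta_1, \rho_1 \theta_1\rangle = \langle p_0 \iota_\varphi \theta_1, p_1 \iota_\psi \theta_1\rangle$ via the universal property of $\mX_2$, in the same manner as the opening of the proof of Lemma~\ref{lem ell_A preserves comp}. Unfolding each $\iota_{-}\theta_1$ by the cofiber-wise definition of $\theta_1$ and repeatedly applying associativity in $\mX$ expresses $\iota_{\varphi;\psi} \langle \rho_0 \theta_1, \rho_1 \theta_1\rangle c$ as the four-fold composite $\langle p_0\pi_0 x_\varphi,\, p_0\pi_1 (x_B)_1,\, p_1\pi_0 x_\psi,\, p_1\pi_1 (x_C)_1\rangle c$ in $\mX_1$.

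For the ``inner-composition'' (left-hand) side, the identity $\iota_{\varphi;\psi} c = c_{\varphi;\psi} \iota_{\varphi\psi}$ coming from the definition of $c$ on $\mD$ reduces the problem to evaluating $c_{\varphi;\psi} \langle \pi_0 x_{\varphi\psi}, \pi_1 (x_C)_1\rangle c$. Expanding $c_{\varphi;\psi}$ through $c'_{\delta;\varphi;\psi}$ and composition in $D(C)$ (as set up in Section~\ref{S Def Int Groth}), then pushing $(x_C)_1$ inside via preservation of composition by the internal functor $x_C$, gives after associativity the four-fold composite $\langle p_0\pi_0 x_{\varphi\psi},\, p_0\pi_0 \delta_{\varphi;\psi}(x_C)_1,\, p_0\pi_1 D(\psi)_1 (x_C)_1,\, p_1\pi_1 (x_C)_1\rangle c$.

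Matching the two composites uses precisely the two assumed data of the lax family: the composition coherence $\langle x_{\varphi\psi}, \delta_{\varphi;\psi}(x_C)_1\rangle c = \langle x_\varphi, D(\varphi)_0 x_\psi\rangle c$ is applied (after grouping the first two factors by associativity) to convert the $x_{\varphi\psi}$/$\delta_{\varphi;\psi}$ head into the $x_\varphi$/$x_\psi$ head, and the internal naturality square $\langle s x_\psi, D(\psi)_1 (x_C)_1\rangle c = \langle (x_B)_1, t x_\psi\rangle c$ for $x_\psi : x_B \implies D(\psi) x_C$ is applied with $\pi_1 : D_\varphi \to D(B)_1$ to commute $(x_B)_1$ past the $D(\psi)_1 (x_C)_1$ factor. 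After these two rewrites, the defining pullback equalities $\pi_0 D(\varphi)_0 = \pi_1 s$ for $D_\varphi$ and $p_0 \pi_1 t = p_1 \pi_0$ for $D_{\varphi;\psi}$ align the two expressions. The main obstacle is the combinatorial bookkeeping of iterated pullback pairings in $\mX$; I would follow the pattern of Proposition~\ref{Prop composition is associative} and its supporting appendix lemmas (in particular Lemma~\ref{Lem 1-cells preserve composition at cofiber level}), isolating the coherence and naturality rewrites as short side computations on the cofiber before concluding by the universal property of $\mD_2$.
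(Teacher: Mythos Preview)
Your proposal is correct and follows essentially the same approach as the paper: reduce to cofibers via the coproduct decomposition of $\mD_2$, expand $c_{\varphi;\psi}$ through $c'_{\delta;\varphi;\psi}$ and functoriality of $x_C$, then use the composition coherence $\langle x_{\varphi\psi}, \delta_{\varphi;\psi}(x_C)_1\rangle c = \langle x_\varphi, D(\varphi)_0 x_\psi\rangle c$, internal naturality of $x_\psi$, and the defining pullback equalities for $D_\varphi$ and $D_{\varphi;\psi}$ to match the two sides. The paper's written computation additionally inserts a cancelling pair $\delta_{\varphi;\psi}^{(-1)}(x_C)_1,\ \delta_{\varphi;\psi}(x_C)_1$ as an intermediate identity, but this is a redundant bookkeeping step and your more direct route via the coherence axiom is cleaner and equivalent.
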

\begin{proof}
By definition of composition in $\mD$ and the map $\theta_1$, the composite $c \theta_1 : \mD_2 \to \mX_2$ is uniquely induced by the family of maps $ c_{\varphi ; \psi} \langle \pi_2 x_{\varphi \psi} , \pi_1 (x_C)_1 \rangle c$, where $\varphi : A \to B$ and $\psi: B \to C$ are composable morphisms in $\cA$.

\noi By the universal property of the coproduct $\mD$ it suffices to show that 

\[c_{\varphi ; \psi} \langle \pi_2 x_{\varphi \psi} , \pi_1 (x_C)_1 \rangle c = \langle p_0 \langle \pi_0 x_{\varphi} , \pi_1 (x_B)_1 \rangle \ , \ p_1 \langle \pi_0 x_\psi, \pi_1 (x_C)_1 \rangle \rangle c \]\

\noi so that the middle pentagon (disguised as a triangle) in the diagram 

\begin{center}
\begin{tikzcd}[column sep = huge, row sep = huge]
\mD_2 \rar[rr,rr, dotted, bend left=20] \rar[rr,dotted, "c"] && \mD_1 \rar[rr,dotted, "\theta_1"] && \mX_1 \\
D_{\varphi ; \psi } \ar[drr, near end, "\langle p_0 \langle \pi_0 x_{\varphi} {,} \pi_1 (x_B)_1 \rangle \ {,} \ p_1 \langle \pi_0 x_\psi {,} \pi_1 (x_C)_1 \rangle \rangle "] 
\rar[rr,"c_{\varphi ; \psi}"'] 
\uar[tail, "\iota_{\varphi ; \psi}"] 
\dar[dd, tail, "\iota_{\varphi ; \psi}"'] &&
 D_{\varphi \psi} \uar["\iota_{\varphi \psi}"'] 
\rar[rr, "\langle \pi_0 x_{\varphi \psi} {,} \pi_1 (x_C)_1 \rangle"'] & & \mX_2 \uar["c"'] \dar[dd,"c"] \\
& & \mX_2 \ar[drr, "c"] \dar[equals] && \\
\mD_2 \rar[rr, dotted, "\langle \rho_0 \theta_1 {,} \rho_1 \theta_1 \rangle "'] && \mX_2 \rar[rr,"c"']& & \mX_1 
\end{tikzcd}
\end{center}\

\noi commutes. The remaining squares and triangle in the diagram above commute by definition or by the identity law for composition in $\cE$. Let $\delta_{\varphi ; \psi}^{(-1)} : D(A)_0 \to D(C)_1$ denote the `family of inverse coherence isomorphisms' associated to $\varphi , \psi$. In particular, 

\[ \langle \delta_{\varphi ; \psi}^{(-1)} , \delta_{\varphi ; \psi} \rangle c = D(\varphi)_0 D(\psi)_0 e_C \]

\begin{align*}
& \ \ \ c_{\varphi ; \psi} \langle \pi_0 x_{\varphi ; \psi} , \pi_1 (x_C)_1 \rangle c \\
&= \langle p_0 \pi_0 x_\varphi ,
\ p_0 \pi_0 D(\varphi)_0 x_\psi , \ 
p_0 \pi_0 \delta_{\varphi ; \psi}^{(-1)} (x_C)_1 , \
p_0 \pi_0 \delta_{\varphi ; \psi} (x_C)_1 , & \\
& \ \ \ \ 
p_0 \pi_1 D(\psi)_1 (x_C)_1 , \ 
p_1 \pi_1 (x_C)_1 \rangle c \\
&= \langle p_0 \pi_0 x_\varphi ,
\ p_0 \pi_0 D(\varphi)_0 x_\psi ,
\ p_0 \pi_0D(\varphi)_0 D(\psi)_0 (x_C)_0 e , &\\
& \ \ \ \ 
p_0 \pi_1 D(\psi)_1 (x_C)_1 , \
p_1 \pi_1 (x_C)_1 \rangle c \\
&= \langle p_0 \pi_0 x_\varphi , \ 
p_0 \pi_0 D(\varphi)_0 x_\psi , \ 
p_0 \pi_1 D(\psi)_1 (x_C)_1 , \ 
p_1 \pi_1 (x_C)_1 \rangle c \\
&= \langle p_0 \pi_0 x_\varphi , \
p_0 \pi_1 s x_\psi \ , \ p_0 \pi_1 D(\psi)_1 (x_C)_1 , \
p_1 \pi_1 (x_C)_1 \rangle c & \text{Id. Law} \\
&= \langle p_0 \pi_0 x_\varphi , \ 
p_0 \pi_1 (x_B)_1 \ , \ p_0 \pi_1 t x_\psi , \ 
p_1 \pi_1 (x_C)_1 \rangle c & \text{Naturality } x_\psi \\
&= \langle p_0 \pi_0 x_\varphi \ , \ p_0 \pi_1 (x_B)_1 , \
p_1 \pi_0 x_\psi , \ 
p_1 \pi_1 (x_C)_1 \rangle c & \text{Def. } D_{\varphi ; \psi}\\
&= \langle \langle p_0 \langle \pi_0 x_\varphi , \ 
\pi_1 (x_B)_1\rangle c ,
\ p_1 \langle \pi_0 x_\psi , \
\pi_1 (x_C)_1\rangle c \rangle c & \text{Assoc. \& Factor}
\end{align*}

\end{proof}

\begin{prop}
The assignment $\theta = (\theta_0 , \theta_1 ) : \mD \to \mX$ is an internal functor.
\end{prop}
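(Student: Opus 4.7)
The plan is to verify the four axioms that constitute an internal functor: compatibility with source, target, identity, and composition. The preceding two lemmas, Lemma~\ref{Lem universal 1-cells preserves identities} and Lemma~\ref{Lem universal 1-cells preserves composition}, already establish the identity and composition compatibilities ($\theta_0 e = \epsilon \theta_1$ and $c \theta_1 = \langle \rho_0 \theta_1, \rho_1 \theta_1 \rangle c$). So the only remaining obligations are the source and target axioms, $\theta_1 s = s \theta_0$ and $\theta_1 t = t \theta_0$.

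First I would reduce both of these to cofiber-wise checks using the universal property of the coproduct $\mD_1 = \coprod_{\varphi \in \cA_1} D_\varphi$. For source compatibility it suffices to verify $\iota_\varphi \theta_1 s = \iota_\varphi s \theta_0$ for each $\varphi : A \to B$ in $\cA$. By the definition of $\theta_1$ the left-hand side unfolds to $\langle \pi_0 x_\varphi , \pi_1 (x_B)_1 \rangle c s$; applying the source-composition law in $\mX$ collapses this to $\pi_0 x_\varphi s$, and since $x_\varphi$ is an internal natural transformation with source $(x_A)_0$ this equals $\pi_0 (x_A)_0$. The right-hand side unfolds as $s_\varphi \theta_0 = \pi_0 \iota_A \theta_0 = \pi_0 (x_A)_0$ by the definition of $s_\varphi$ and $\theta_0$. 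The target compatibility will proceed symmetrically: the left-hand side collapses via the target-composition law in $\mX$ to $\pi_1 (x_B)_1 t = \pi_1 t (x_B)_0$ using internal functoriality of $x_B$, while the right-hand side becomes $t_\varphi \theta_0 = \pi_1 t \iota_B \theta_0 = \pi_1 t (x_B)_0$ by the definitions of $t_\varphi$ and $\theta_0$.

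Having matched both composites on every cofiber, the universal property of the coproduct immediately upgrades these to the global identities $\theta_1 s = s \theta_0$ and $\theta_1 t = t \theta_0$. Combined with the two preceding lemmas, all four internal functor axioms hold for the pair $(\theta_0, \theta_1)$, and the proposition follows.

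I expect no serious obstacle here. The main technical work was already discharged in Lemmas~\ref{Lem universal 1-cells preserves identities} and~\ref{Lem universal 1-cells preserves composition}, where the pseudofunctor coherences had to be wrestled against the cofiber-wise definitions of $\epsilon$ and $c$ in $\mD$. By contrast, source and target compatibility require only the unit/counit of pullback data ($s_\varphi = \pi_0 \iota_A$, $t_\varphi = \pi_1 t \iota_B$), the source/target laws for composition in $\mX$, internal functoriality of $x_B$, and the source/target of the internal natural transformation $x_\varphi$, all of which are immediate from the definitions set up in Section~\ref{S Def Int Groth} and the hypotheses on the family $\{x_A, x_\varphi\}$.
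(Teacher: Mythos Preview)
Your proposal is correct and follows the same approach as the paper, which simply cites Lemmas~\ref{Lem universal 1-cells preserves identities} and~\ref{Lem universal 1-cells preserves composition} and declares the result immediate. You are more thorough in that you explicitly verify the source and target compatibilities $\theta_1 s = s\theta_0$ and $\theta_1 t = t\theta_0$, which the paper leaves implicit; your cofiber-wise argument for these is correct and uses exactly the ingredients you list.
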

\begin{proof}
Immediate from Lemmas \ref{Lem universal 1-cells preserves identities} and \ref{Lem universal 1-cells preserves composition}.
\end{proof}\

On the other hand, given an internal functor $\omega : \mD \to \mX$, for each $\varphi : A \to B$ in $\cA$ define 

\[ \omega_A = \ell_A \omega \quad \text{ and }\quad \omega_\varphi := \ell_\varphi \omega_1. \]

\noi Notice that $\omega_A$ is an internal functor (by definition of internal functor composition) and we have that the source of $\omega_\varphi$ is $\omega_A$

\begin{align*}
\omega_\varphi s = \ell_\varphi \omega_1 = \ell_\varphi s \omega_0 = (\ell_A)_0 \omega_0 = (\ell_A \omega)_0 = (\omega_A)_0,
\end{align*}

\noi its target is $D(\varphi) \omega_B$ 

\begin{align*}
\omega_\varphi t = \ell_\varphi \omega_1 t = \ell_\varphi t \omega_0 = (D(\varphi)\ell_B)_0 \omega_0 = D(\varphi) (\ell_B \omega)_0 = D(\varphi) (\omega_B)_0 ,
\end{align*}\

\noi and the (family of) naturality square(s) 

\begin{align*}
\langle s \omega_\varphi , (D(\varphi)\omega_B)_1 \rangle c 
&=\langle s \ell_\varphi \omega_1, (D(\varphi)\ell_B \omega)_1 \rangle c \\
&= \langle s \ell_\varphi \omega_1, (D(\varphi)\ell_B)_1 \omega_1 \rangle c \\
&=\langle s \ell_\varphi, (D(\varphi)\ell_B)_1 \rangle c \omega_1 & \text{Functoriality } \omega\\
&= \langle (\ell_A)_1 , t \ell_\varphi \rangle c \omega_1 \\
&= \langle (\ell_A)_1 \omega_1 , t \ell_\varphi \omega_1 \rangle c \\
&= \langle \omega_A , t \omega_\varphi \rangle c 
\end{align*}

\noi commutes so $\omega_\varphi : \omega_A \implies D(\varphi) \omega_B$ is an internal natural transformation. Putting these families of $\cA_0$-indexed internal functors and $\cA_1$-indexed internal natural transformations together precisely defines an internal lax transformation 

\[ \omega^* : D \implies \Delta \mX \]

\noi where $\Delta \mX : \cA \to \Cat(\cE)$ is the constant functor on $\mX$. 

\begin{prop}\label{Prop 1-cell correspondence}
For each $\mX \in \Cat(\cE)$, lax transformations $D \implies \Delta \mX$ correspond uniquely to internal functors $\mD \to \mX$. 
\end{prop}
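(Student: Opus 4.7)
The plan is to verify that the two constructions described above---the assignment $(\{x_A\},\{x_\varphi\})\mapsto \theta$ and the assignment $\omega \mapsto \omega^{*} = (\{\ell_A\omega\},\{\ell_\varphi \omega_1\})$---are mutually inverse. Since these are already shown to land in the correct hom-categories, this suffices for the proposition.

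For the first round trip, starting from $(\{x_A\},\{x_\varphi\})$ and forming $\theta$, I need $\ell_A\theta = x_A$ and $\ell_\varphi \theta_1 = x_\varphi$. On objects, $(\ell_A)_0\theta_0 = \iota_A\theta_0 = (x_A)_0$ is immediate from the universal property defining $\theta_0$. On arrows, expanding $(\ell_A)_1 = (\ell_A)'_1\iota_{1_A}$ and using the definition of $\theta_1$ on the cofiber $D_{1_A}$ gives
\[
(\ell_A)_1\theta_1 = \langle s\, x_{1_A},\, \langle s\delta_A (x_A)_1, (x_A)_1\rangle c\rangle c.
\]
By associativity this equals $\langle s\langle x_{1_A},\delta_A(x_A)_1\rangle c,\, (x_A)_1\rangle c$; the lax transformation identity coherence rewrites the inner composite as $e_A(x_A)_1$; then functoriality of $x_A$ turns $s\, e_A(x_A)_1$ into $(x_A)_1\, s\, e$, and the right identity law in $\mX$ collapses the whole expression to $(x_A)_1$. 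For the 2-cells, $\ell_\varphi\theta_1 = \langle x_\varphi,\, D(\varphi)_0 e_B(x_B)_1\rangle c$; functoriality of $x_B$ rewrites the second component as $x_\varphi\, t\, e$ (using $x_\varphi t = D(\varphi)_0(x_B)_0$), and again the right identity law yields $x_\varphi$.

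For the second round trip, starting from $\omega:\mD\to\mX$ and producing $\theta_{\omega^{*}}$, on objects the map $\theta_{\omega^{*},0}$ is the unique map induced by $\{\iota_A\omega_0\}_{A\in\cA_0}$, so by uniqueness $\theta_{\omega^{*},0}=\omega_0$. On arrows, by the universal property of $\mD_1=\coprod_\varphi D_\varphi$ it is enough to check $\iota_\varphi\theta_{\omega^{*},1} = \iota_\varphi\omega_1$ for each $\varphi : A\to B$. Unwinding the definition of $\theta_{\omega^{*},1}$ on the cofiber $D_\varphi$ and pulling $\omega_1$ through the composition using internal functoriality of $\omega$, this reduces to the single identity
\[
\langle \pi_0\ell_\varphi,\ \pi_1(\ell_B)_1\rangle\, c \;=\; \iota_\varphi \qquad\text{in}\qquad \mD_1.
\]

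The hard part will be this last identity, which is the internal translation of the classical calculation $(\varphi,1_{D(\varphi)(a)})\cdot(1_B,\delta_{B,D(\varphi)(a)} f) = (\varphi,f)$. I would handle it cofiberwise: by the coproduct stability assumption on $\mD_2$, the pairing $\langle \pi_0\ell_\varphi,\pi_1(\ell_B)_1\rangle : D_\varphi\to\mD_2$ factors through the cofiber inclusion $\iota_{\varphi;1_B}:D_{\varphi;1_B}\hookrightarrow\mD_2$ as $\zeta\cdot\iota_{\varphi;1_B}$ for a unique $\zeta:D_\varphi\to D_{\varphi;1_B}$. Composing with $c$ and using the definition $\iota_{\varphi;1_B}c = c_{\varphi;1_B}\iota_\varphi$ reduces the problem to showing $\zeta\, c_{\varphi;1_B}=1_{D_\varphi}$, which in turn is checked against the two projections $\pi_0,\pi_1$ of $D_\varphi$. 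The $\pi_0$ check follows from the source formulas (as in Lemma on source/target of component composites). The $\pi_1$ check expands $c'_{\delta;\varphi;1_B}$ as a triple in $D(B)_3$ and, after invoking functoriality of $D(1_B)$ to replace $D(1_B)_1$ applied to an identity by an internal identity, reduces by associativity and the pseudofunctor identity coherence $\langle\delta_{\varphi;1_B},D(\varphi)_0\delta_B\rangle c = D(\varphi)_1 s e$ (together with the right identity law in $D(B)$) to the projection $\pi_1$. This argument parallels, and reuses the same techniques as, the proof of internal naturality of $\ell_\varphi$ established above. Having verified both round trips, the correspondence is bijective.
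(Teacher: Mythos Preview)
Your proof is correct and follows essentially the same approach as the paper: both round trips are established by the same mechanisms (the lax-transformation identity coherence for the first, and the cofiberwise reduction of $\langle \pi_0\ell_\varphi,\pi_1(\ell_B)_1\rangle c$ to $\iota_\varphi$ via the pseudofunctor unit coherence for the second). One small correction: the coherence you invoke at the end should read $\langle\delta_{\varphi;1_B},D(\varphi)_0\delta_B\rangle c = D(\varphi)_0\, e_B$ (a map $D(A)_0\to D(B)_1$), not $D(\varphi)_1\, s\, e$, but this does not affect the argument.
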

\begin{proof}
Suppose $x : D \implies \Delta \mX$ is a lax transformation. Then for each object $A$ in $\cA$ there exists an internal functor $x_A : D(A) \to \mX$ and for each morphism $\varphi : A \to B$ in $ \cA$ there exists an internal natural transformation $x_\varphi : x_A \implies D(\varphi) x_B$ that is coherent with respect to the composition and identity isomorphisms of $D$. Let $\theta : \mD \to \mX$ be the internal functor constructed above, and then consider the induced lax transformation $\theta^*$. As seen above we have that

\[ \theta_A = \ell_A \theta = x_A \] 

\noi for each $A \in \cA_0$ and for each $\varphi \in \cA_1$ we have 

\[ \theta_\varphi := \ell_\varphi \theta_1 = x_\varphi \]

\noi so $\theta^* = x$. \\
On the other hand, let $\omega^* : D \implies \Delta \mX$ denote the lax transformation constructed as above from an arbitrary internal functor $\omega : \mD \to \mX$. Let $\theta^*$ be the induced internal functor as above, then $\theta^*_0$ is uniquely induced by the $\cA_0$-indexed family of internal functors of $\omega^*$. These are precisely the maps $(\omega_A)_0$ and so $\theta^*_0 = \omega_0$ by uniqueness. Using functoriality of $\omega$ and the definition of $\omega^*$ we can see that $\theta^*_1$ is uniquely induced by the family of maps,$\iota_\varphi \omega_1$. We break the calculation up with terms on separate lines due to their length. We start with the functoriality of $\omega$ giving us

\[ \langle \pi_0 \ell_\varphi \omega_1 , \pi_1 (\ell_B \omega)_1 \rangle c 
= \langle \pi_0 \ell_\varphi , \
\pi_1 (\ell_B)_1 \rangle c \omega_1 \]

\noi and then by definition of $\ell_{\varphi, \ell_B}$ the right-hand side is equal to 
\[ \langle \pi_0 \langle 1_{D(A)_0} , \ 
D(\varphi)_0 e_B \rangle \iota_\varphi , 
\pi_1 \langle s , \ 
\langle s \delta_B , \ 
1_{D(B)_1} \rangle c \rangle \iota_B \rangle 
c \omega_1 .\]

\noi The definition of $\iota_{\varphi ; 1_B}$ says the last term is equal to 
\[ \langle \pi_0 \langle 1_{D(A)_0} , \ 
D(\varphi)_0 e_B \rangle , 
\pi_1 \langle s , \ 
\langle s \delta_B , \ 
1_{D(B)_1} \rangle c \rangle \rangle 
\iota_{\varphi ; 1_B} c \omega_1 \]

\noi which is equal to 
\[ \langle \pi_0 \langle 1_{D(A)_0} , \ 
D(\varphi)_0 e_B \rangle ,
\pi_1 \langle s , \ 
\langle s \delta_B , \ 
1_{D(B)_1} \rangle c \rangle \rangle c_{\varphi ; 1_B} \iota_\varphi \omega_1 \]

\noi by definition of $c_{\varphi ; 1_B} $. The same definition gives that this is equal to

\[ \langle \pi_0 , 
\langle \pi_0 \delta_{\varphi ; 1_B} , \ \pi_0 D(\varphi)_0 D(1_B)_0 e_B , 
\pi_1 \langle s \delta_B, 1_{D(B)_1} \rangle c \rangle c \rangle \iota_\varphi \omega_1 \]

\noi which is equal to 

\[\langle \pi_0 ,
\langle \pi_0 \delta_{\varphi ; 1_B} , 
\pi_1 \langle s \delta_B, 1_{D(B)_1} \rangle c \rangle c \rangle \iota_\varphi \omega_1 \]

\noi by the identity law in $D(B)$. Associativity of internal composition gives that this is equal to

\[ \langle \pi_0 , 
\ \langle \langle \pi_0 \delta_{\varphi ; 1_B} , \pi_1 s \delta_B\rangle c , \pi_1 \rangle c \rangle \iota_\varphi \omega_1 \]

\noi which becomes 
\[ \langle \pi_0 \ , \ \langle \langle \pi_0 \delta_{\varphi ; 1_B} , \pi_0 D(\varphi)_0 \delta_B \rangle c , \pi_1 \rangle c \rangle \iota_\varphi \omega_1 \]

\noi by definition of $D_\varphi$. Factoring pairing maps makes the last term equal to 

\[ \langle \pi_0 \ , \ \langle \pi_0 \langle \delta_{\varphi ; 1_B} , D(\varphi)_0 \delta_B \rangle c , \pi_1 \rangle c \rangle \iota_\varphi \omega_1 \]

\noi which, by coherence of the structure isomorphisms for the pseudofunctor $D$, is equal to 

\[ \langle \pi_0 \ , \ \langle \pi_0 D(\varphi)_0 e_B , \pi_1 \rangle c \rangle \iota_\varphi \omega_1 .\]

\noi The definition of $D_\varphi$ and the identity law in $D(B)$ allows us to see the term above is really the left-hand side of the final equality:
\[ \langle \pi_0 , \ \pi_1 \rangle \iota_\varphi \omega_1 = \iota_\varphi \omega_1\]

\noi By the universal property of the coproduct $\mD_1$ we have
\[ \theta^*_1 = \omega_1 \]
\noi and it follows that $\theta^* = \omega$. 
\end{proof}

\subsection{Universal Property for 2-cells}\label{SS UP of 2-cells for laxtransfm}

\subsubsection*{Classical}
\phantomsection
\addcontentsline{toc}{subsubsection}{Classical}
When $\cE = \Set$ and let $X$ be a small category. Then any natural transformation $\alpha : \theta \implies \omega$ where $\theta, \omega : \mD \to X$ induces a modification 

\[ \tilde{\alpha} : x \Rrightarrow y \]

\noi where $x,y : D \implies \Delta \mX$ are the lax natural transformations corresponding uniquely by Proposition~\ref{Prop 1-cell correspondence} to $\theta$ and $\omega$ respectively. For each $A \in \cA_0$ and $a \in D(A)_0$ we have 

\[ \tilde{\alpha}_{A,a} : x_A(a) \to y_A(a) \]
\noi defined as the component 
\[ \alpha_{\ell_A(a)} : \theta (\ell_A(a)) \to \omega(\ell_A(a)). \]\

\noi For any $g: a \to a'$ in $D(A)_0$, the diagram

\begin{center}
\begin{tikzcd}[]
x_A(a) \rar[equals] \ar[rrr, bend left=20, "\tilde{\alpha}_{A,a}"] \dar["x_A(g)"']& \theta(\ell_A(a)) \dar["\theta(\ell_A(g))"'] \rar["\alpha_{\ell_A(a)}"] & \omega( \ell_A(a))\dar["\omega(\ell_A(g))"] \rar[equals] & y_A(a) \dar["y_A(g)"] \\
x_A(a') \ar[rrr, bend right=20, "\tilde{\alpha}_{A,a'}"'] \rar[equals]& \theta(\ell_A(a')) \rar["\alpha_{\ell_A(a')}"'] & \omega(\ell_A(a')) \rar[equals] & y_A(a') 
\end{tikzcd}
\end{center}\

\noi commutes by definition and naturality of $\alpha$. Similarly, for any $\varphi : A \to B$ in $\cA$ and any $a \in D(A)_0$ we have that the diagram

\begin{center}
\begin{tikzcd}[column sep = large, row sep = large]
x_A(a) \rar[equals] \ar[rrr, bend left=20, "\tilde{\alpha}_{A,a}"] \dar["x_{\varphi, a}"']& (\ell_A \theta)(a) \dar["\theta (\ell_{\varphi , a})"'] \rar["\alpha_{\ell_A(a)}"] & ( \ell_A \omega)(a)\dar["\omega(\ell_{\varphi , a})"] \rar[equals] & y_A(a) \dar["y_{\varphi , a}"] \\
 D(\varphi) x_B (a) \ar[rrr, bend right=20, "\tilde{\alpha}_{A,a}"'] \rar[equals]& D(\varphi) \ell_B \theta(a) \rar["\alpha_{\left(D(\varphi)\ell_B \right)(a)}"'] & D(\varphi)\ell_B \omega(a) \rar[equals] &  D(\varphi)y_B (a) 
\end{tikzcd}
\end{center}\

\noi commutes. It follows that $\tilde{\alpha}$ is a modification $x \Rrightarrow y$. 


Alternatively, given a modification $\gamma: x \Rrightarrow y$ between two lax natural transformations $x , y : D \implies \Delta \mX$, let $\theta ,\omega : \mD \to X$ be the functors uniquely determined by $x$ and $y$ respectively. Then the middle two squares in the following diagram commute by definition of $\gamma$ 

\begin{center}
\begin{tikzcd}[]
\theta((A,a)) \dar[dd,"\theta((\varphi {,} f))"'] \rar[equals]
&x_A(a) \dar["x_{\varphi , a} "'] \rar["\gamma_{A,a}"]
& y_A(a) \dar["y_{\varphi , a}"]
& \omega((A,a))\dar[dd, "\omega((\varphi {,} f) )"] \lar [equals] \\
&x_B (D(\varphi)(a)) \dar["x_B(f)"'] \rar["\gamma_{B, D(\varphi)(a)}"] 
& 
y_B(D(\varphi)(a)) \dar["y_B(f)"] 
& \\
\theta((B,b)) \rar[equals] 
&x_B(b) 
 \rar["\gamma_{B,b}"']
& y_B(b)
&\omega((B,b)) 
\lar [equals]  \\
\end{tikzcd}
\end{center} 

\noi and the left and right squares commute by definition of $\theta$ and $\omega$ respectively. This means 

\[ \overline{\gamma} := \{ \gamma_{A,a} : (A,a) \in \mD \} \]\

\noi is a natural transformation $\theta \implies \omega$. Notice that the lax natural transformation $\overline{\tilde{\alpha}}$ has components 

\[ \overline{\tilde{\alpha}}_{(A,a)} = \tilde{\alpha}_{A,a} = \alpha_{\ell_A(a)} = \alpha_{(A,a)} \]\

\noi so that $\overline{\tilde{\alpha}} = \alpha$. The modification $\tilde{\overline{\gamma}}$ has components 

\[ \tilde{\overline{\gamma}}_{A,a} = \overline{\gamma}_{\ell_A(a)} = \overline{\gamma}_{(A,a)} = \gamma_{A,a} \]

\noi and so $\tilde{\overline{\gamma}} = \gamma$ by definition. The bijection follows and by uniqueness it suffices to see functoriality in one direction. Let $\gamma : x \Rrightarrow y$ and let $\eta : y \Rrightarrow z$ be modifications, then their composite $\gamma \eta$ has components

\begin{center}
\begin{tikzcd}[]
x_A(a)\ar[dr, "(\gamma \eta)_{A,a}"'] \rar["\gamma_{A,a}"] & y_A(a) \dar["\eta_{A,a}"] \\
& z_A(a) 
\end{tikzcd}
\end{center}

\noi and so 

\[ \overline{\gamma \eta} := \{ (\gamma \eta)_{A,a} : (A,a) \in \mD \} = \{ \gamma_{A,a} \eta_{A,a} : (A,a) \in \mD \} =: (\overline{\gamma})( \overline{\eta}) \]\

\noi where the right-hand side is the composite of lax natural transformations. 

\[ \begin{tikzcd}
\theta \rar[Rightarrow, "\overline{\gamma}"] & \omega \rar[Rightarrow, "\overline{\eta}"] & \sigma
\end{tikzcd} \]

\noi and $\theta, \omega, \sigma : \mD \to \mX$ are the internal functors uniquely determined by $x, y, z$ respectively. It follows that composition of modifications is preserved by the bijection above and identities are trivially preserved because 

\[ (1_x)_{A,a} = 1_{x_A(a)} \]

\noi for each $A \in \cA_0$ and each $a \in D(A)_0$ by definition of the identity modification $1_x : x \Rrightarrow x$. 

\phantomsection
\subsubsection*{Internal}
\addcontentsline{toc}{subsubsection}{Internal}

Let $\cE$ be a category that admits an internal category of elements of $D : \cA \to \Cat(\cE)$ and let $\mX$ be an arbitrary internal category in $\cE$. Let $\alpha : \theta \implies \omega$ be an internal natural transformation where $\theta, \omega : \mD \to \mX$ are internal functors and further let $x , y : D \implies \Delta \mX$ denote the unique lax natural transformations induced by $\theta$ and $\omega$ respectively. For each $A$ in $\cA_0$ define 

\begin{center}
\begin{tikzcd}[]
D(A)_0 \rar["(\ell_A)_0"]\ar[dr, "\tilde{\alpha}_A"'] & \mD_0 \dar["\alpha"] \\
& \mX_1 
\end{tikzcd}.
\end{center}

\begin{prop}\label{prop internal internal nat transfms induce a modifiction}
The $\cA_0$-indexed family of maps $ \tilde{\alpha}_A : D(A)_0 \to \mX_1$ defines an internal modification 

\[ \tilde{\alpha} : x \Rrightarrow y.\]
\end{prop}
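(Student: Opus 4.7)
The plan is to verify the three conditions required for $\tilde{\alpha}$ to be an internal modification $x \Rrightarrow y$: (i) source/target compatibility of each $\tilde{\alpha}_A$ with $(x_A)_0$ and $(y_A)_0$; (ii) that each $\tilde{\alpha}_A$ is an internal natural transformation $x_A \implies y_A$; and (iii) the modification coherence indexed by each $\varphi : A \to B$ in $\cA$. The key observation is that all three conditions should follow from the single fact that $\alpha : \theta \implies \omega$ is an internal natural transformation of internal functors $\mD \to \mX$, by precomposing its structural equations with the components of $\ell$ and using the correspondence $x_A = \ell_A \theta$, $y_A = \ell_A \omega$, $x_\varphi = \ell_\varphi \theta_1$, $y_\varphi = \ell_\varphi \omega_1$ from Section~\ref{SS UP for 1-cells of laxtransfm}.

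First I would handle (i). The equations $\alpha s = \theta_0$ and $\alpha t = \omega_0$ hold because $\alpha$ is an internal natural transformation. Precomposing with $(\ell_A)_0$ gives $\tilde{\alpha}_A s = (\ell_A)_0 \theta_0 = (x_A)_0$ and $\tilde{\alpha}_A t = (\ell_A)_0 \omega_0 = (y_A)_0$. For (ii), I need to show $\langle s \tilde{\alpha}_A , (y_A)_1 \rangle c = \langle (x_A)_1 , t \tilde{\alpha}_A \rangle c$ as maps $D(A)_1 \to \mX_1$. The internal naturality of $\alpha$ reads $\langle s \alpha , \omega_1 \rangle c = \langle \theta_1 , t \alpha \rangle c$ in $\cE(\mD_1, \mX_1)$. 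Precomposing with $(\ell_A)_1 : D(A)_1 \to \mD_1$ and using functoriality of $\ell_A$ (so that $(\ell_A)_1 s = s (\ell_A)_0$ and similarly for $t$) together with $(\ell_A)_1 \theta_1 = (x_A)_1$ and $(\ell_A)_1 \omega_1 = (y_A)_1$ yields precisely the desired equation.

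For (iii), the modification coherence is the equation
\[ \langle \tilde{\alpha}_A , y_\varphi \rangle c = \langle x_\varphi , D(\varphi)_0 \tilde{\alpha}_B \rangle c \]
as maps $D(A)_0 \to \mX_1$. This time I precompose the naturality square for $\alpha$ with the 2-cell component $\ell_\varphi : D(A)_0 \to \mD_1$. Since $\ell_\varphi s = (\ell_A)_0$ and $\ell_\varphi t = D(\varphi)_0 (\ell_B)_0$ (as verified in Section~\ref{SS 2-cells of l}), and since $\ell_\varphi \theta_1 = x_\varphi$, $\ell_\varphi \omega_1 = y_\varphi$, the precomposition gives
\[ \langle (\ell_A)_0 \alpha , y_\varphi \rangle c = \langle x_\varphi , D(\varphi)_0 (\ell_B)_0 \alpha \rangle c , \]
which is exactly the required coherence after recognizing $(\ell_A)_0 \alpha = \tilde{\alpha}_A$ and $(\ell_B)_0 \alpha = \tilde{\alpha}_B$.

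I do not expect a significant obstacle: the proposition is essentially a whiskering lemma, and the entire argument reduces to observing that internal naturality of $\alpha$ at the 1-cell $(\ell_A)_1$ yields (ii), while internal naturality at the 2-cell $\ell_\varphi$ yields (iii). The only mild subtlety is keeping track of diagrammatic composition order and ensuring that the pairings $\langle -, - \rangle$ into $\mX_2$ factor through the relevant pullback projections correctly, but this is immediate from the functoriality equations of $\ell_A$ and the source/target identities for $\ell_\varphi$ already established.
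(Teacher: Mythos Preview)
Your proposal is correct and follows essentially the same approach as the paper: both verify source/target, naturality of each $\tilde{\alpha}_A$, and the modification coherence by precomposing the internal naturality square of $\alpha$ with $(\ell_A)_0$, $(\ell_A)_1$, and $\ell_\varphi$ respectively, using the identifications $x_A = \ell_A\theta$, $y_A = \ell_A\omega$, $x_\varphi = \ell_\varphi\theta_1$, $y_\varphi = \ell_\varphi\omega_1$ together with the source/target identities for $\ell_\varphi$.
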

\begin{proof}
First notice that 
\[ \tilde{\alpha}_A s = (\ell_A)_0 \alpha s = (\ell_A)_0 \theta_0 = (\ell_A \theta)_0 = (x_A)_0 \]

\noi and 

\[ \tilde{\alpha}_A t = (\ell_A)_0 \alpha t= (\ell_A)_0 \omega_0 = (\ell_A \omega)_0 = (y_A)_0. \]

\noi Then

\[ (s \tilde{\alpha}_A) t = s (y_A)_0= (y_A)_1 s \]\

\noi and 

\[ (x_A)_1 t = t (x_A)_0 = t \tilde{\alpha}_A s \]\

\noi so there are two composable pairs given by the maps 

\[ \langle s \tilde{\alpha} , (y_A)_1 \rangle , \langle (x_A)_1 , t \tilde{\alpha}_A \rangle : D(A)_1 \to \mX_2\]

\noi which coincide after composition in $\mX$. 

\begin{align*}
\langle (x_A)_1 , t \tilde{\alpha}_A \rangle c 
&=\langle (\ell_A)_1 \theta_1 , t (\ell_A)_0 \alpha \rangle c & \text{Def. } \tilde{\alpha} , x_A\\
&=\langle (\ell_A)_1 \theta_1 , (\ell_A)_1 t \alpha \rangle c & \text{Functoriality} \ell_A\\
&= (\ell_A)_1 \langle\theta_1 , t \alpha \rangle c & \text{Factor}\\
&= (\ell_A)_1 \langle s \alpha , \omega_1 \rangle c & \text{Naturality } \alpha\\
&= \langle (\ell_A)_1 s \alpha , (\ell_A)_1\omega_1 \rangle c & \text{Factor } \\
&= \langle s (\ell_A)_0 \alpha , (\ell_A)_1\omega_1 \rangle c & \text{Functoriality } \\
&=\langle s \tilde{\alpha}_A , (y_A)_1 \rangle c & \text{Def. } \tilde{\alpha} , y_A
\end{align*}\

\noi This shows $\tilde{\alpha}_A : x_A \implies y_A$ is an internal natural transformation for each $A $ in $ \cA_0$. Now since $\ell_\varphi s = (\ell_A)_0$ and $\ell_\varphi t = (D(\varphi) \ell_B)_0$ we have that

\[ \tilde{\alpha} t = (\ell_A \omega)_0 = y_\varphi s \quad , \quad D(\varphi)_0 \tilde{\alpha} s = ( D(\varphi) \ell_A \theta)_0 = x_\varphi t\]\

\noi by definitions of the induced internal natural transformations $x , y : D \implies \Delta \mX$. These give us the other two composable pairs which are equal in $\mX$ after composition. 

\begin{align*}
\langle \tilde{\alpha}_A , y_\varphi \rangle c 
&= \langle (\ell_A)_0 \alpha , \ell_\varphi \omega_1 \rangle c &\text{Def.} \\
&= \langle \ell_\varphi s \alpha , \ell_\varphi \omega_1 \rangle c &\text{Def.} \ell_\varphi \\
&= \ell_\varphi \langle s \alpha , \omega_1 \rangle c &\text{Factor} \\
&= \ell_\varphi \langle \theta_1, t \alpha  \rangle c &\text{Naturality } \alpha \\
&= \langle \ell_\varphi \theta_1,  \ell_\varphi t \alpha  \rangle c &\text{Factor } \\
&= \langle \ell_\varphi \theta_1,  D(\varphi)_0 (\ell_B)_0 \alpha  \rangle c &\text{Def. } \ell_\varphi \\
&= \langle x_\varphi ,  D(\varphi)_0 \tilde{\alpha}_B  \rangle c &\text{Def. } 
\end{align*}\

\noi This last equality shows that the indexing of $\tilde{\alpha}$ is naturally compatible with the components of the natural transformations $x_\varphi$ and $y_\varphi$, for each $\varphi : A \to B$ in $\cA$. It follows that $\tilde{\alpha} : x \Rrightarrow y$ is a modification. 
\end{proof}\

\noi On the other hand, suppose $\gamma : x \Rrightarrow y$ is a modification between two lax natural transformations $x,y : D \implies \Delta \mX$. Let $\theta , \omega : \mD \to \mX$ be the unique internal functors corresponding to $x$ and $y$ respectively. Let $\overline{\gamma}$ be the map uniquely induced by the natural transformations of $\gamma$ indexed by $\cA_0$. 

\begin{center}
\begin{tikzcd}[]
\mD_0 \rar[dotted, "\overline{\gamma}" ] & \mX_1 \\
D(A)_0 \uar[tail, "\iota_A"] \ar[ur, "\gamma_A"'] & \ 
\end{tikzcd}
\end{center}

\begin{prop}\label{prop modifications induce internal nat transfms}
The map $\overline{\gamma}$ is a natural transformation $\theta \implies \omega$. 
\end{prop}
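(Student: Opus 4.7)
The plan is to verify two things: first, that $\overline{\gamma}$ has the correct source and target, namely $\overline{\gamma} s = \theta_0$ and $\overline{\gamma} t = \omega_0$; second, that it satisfies the internal naturality square
\[\langle s \overline{\gamma}, \omega_1 \rangle c = \langle \theta_1, t \overline{\gamma}\rangle c.\]
Both statements will be proven by reducing to the cofibers and invoking the universal properties of the coproducts $\mD_0$ and $\mD_1$.

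For the source/target compatibility, I would precompose with each coproduct inclusion $\iota_A : D(A)_0 \to \mD_0$. Since $\iota_A \overline{\gamma} = \gamma_A$ by definition, and each $\gamma_A : x_A \implies y_A$ is an internal natural transformation, we obtain $\iota_A \overline{\gamma} s = \gamma_A s = (x_A)_0 = (\ell_A \theta)_0 = \iota_A \theta_0$ and similarly $\iota_A \overline{\gamma} t = \iota_A \omega_0$. Uniqueness of the induced map out of the coproduct $\mD_0$ yields the source/target identities.

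For naturality, I would use extensivity to reduce to cofibers: it suffices to show $\iota_\varphi \langle s \overline{\gamma}, \omega_1 \rangle c = \iota_\varphi \langle \theta_1, t \overline{\gamma} \rangle c$ for every $\varphi \in \cA(A,B)$. Unpacking using the definitions $\iota_\varphi s = \pi_0 \iota_A$, $\iota_\varphi t = \pi_1 t \iota_B$, $\iota_\varphi \theta_1 = \langle \pi_0 x_\varphi, \pi_1 (x_B)_1 \rangle c$, and $\iota_\varphi \omega_1 = \langle \pi_0 y_\varphi, \pi_1 (y_B)_1 \rangle c$, together with $\iota_A \overline{\gamma} = \gamma_A$ and $\iota_B \overline{\gamma} = \gamma_B$, and then using associativity to expand pairings into triples, the equation to prove becomes
\[\langle \pi_0 \gamma_A, \pi_0 y_\varphi, \pi_1 (y_B)_1 \rangle c = \langle \pi_0 x_\varphi, \pi_1 (x_B)_1, \pi_1 t \gamma_B \rangle c.\]

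The main step, and the place where the modification hypothesis is actually used, is the chain of rewrites: first apply the modification coherence in the form $\langle \gamma_A, y_\varphi \rangle c = \langle x_\varphi, D(\varphi)_0 \gamma_B \rangle c$ (this is exactly the identity established at the end of the proof of Proposition~\ref{prop internal internal nat transfms induce a modifiction}) to rewrite the first two entries of the LHS triple; then use the pullback equation $\pi_0 D(\varphi)_0 = \pi_1 s$ defining $D_\varphi$ to rewrite $\pi_0 D(\varphi)_0 \gamma_B$ as $\pi_1 s \gamma_B$; finally apply the internal naturality square for $\gamma_B : x_B \implies y_B$, namely $\langle s\gamma_B, (y_B)_1\rangle c = \langle (x_B)_1, t \gamma_B \rangle c$, to the resulting inner pair. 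Re-expanding by associativity gives the RHS triple. By the universal property of the coproduct $\mD_1$, the naturality square then commutes, and $\overline{\gamma}$ is an internal natural transformation $\theta \implies \omega$. The only nontrivial obstacle is keeping the associativity/factoring manipulations organized so that modification-in-$\varphi$ and naturality-in-$B$ are applied in sequence, mirroring the classical two-step argument that composes the two middle squares in the diagram preceding Proposition~\ref{prop modifications induce internal nat transfms}.
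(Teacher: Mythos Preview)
Your proposal is correct and follows essentially the same approach as the paper's proof: reduce to cofibers, unpack the definitions of $\iota_\varphi s$, $\iota_\varphi t$, $\iota_\varphi \theta_1$, $\iota_\varphi \omega_1$, and then chain together associativity, the modification identity $\langle \gamma_A, y_\varphi\rangle c = \langle x_\varphi, D(\varphi)_0 \gamma_B\rangle c$, the pullback relation $\pi_0 D(\varphi)_0 = \pi_1 s$, and naturality of $\gamma_B$. The only cosmetic difference is that the paper runs the chain of equalities from the $\theta$-side to the $\omega$-side while you sketch it in the opposite direction.
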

\begin{proof}
For each $A$ in $\cA_0$, the following diagrams commute by definition of $\theta$ and $\gamma$. 

\begin{center}
\begin{tikzcd}[]
\mD_0 \rar[rr, bend left, dotted, "\theta_0"] \rar[dotted, "\overline{\gamma}"] & \mX_1 \rar["s"] & \mX_0\\
D(A)_0 \uar[tail, "\iota_A"] \ar[ur, "\gamma_A"'] \ar[urr, bend right, "(x_A)_0"']
\end{tikzcd}\qquad \qquad \qquad 
\begin{tikzcd}[]
\mD_0 \rar[rr, bend left, dotted, "\omega"] \rar[dotted, "\overline{\gamma}"] & \mX_1 \rar["t"] & \mX_0\\
D(A)_0 \uar[tail, "\iota_A"] \ar[ur, "\gamma_A"'] \ar[urr, bend right, "(y_A)_0"']
\end{tikzcd}
\end{center}

\noi and so by the universal property of the coproduct $\mD_0$ we have that 

\[ \theta_0 = \overline{\gamma} s \qquad , \qquad \omega_0 = \overline{\gamma} t. \]

\noi Naturality is all that remains to show and this is done using the universal property of the coproduct $\mD_1$. For each $\varphi : A \to B$ in $\cA_1$ we can see 

\[ \iota_\varphi t \overline{\gamma} = \pi_1 t \iota_B \overline{\gamma} = \pi_1 t \gamma_B \]

\noi and 

\[ \iota_\varphi s \overline{\gamma} = \pi_0 \iota_A \overline{\gamma} = \pi_0 \gamma_A\]

\noi and therefore 

\begin{align*}
\iota_\varphi \langle \theta_1 , \ t \overline{\gamma} \rangle c 
&= \langle \iota_\varphi \theta_1 ,\ \iota_\varphi t \overline{\gamma} \rangle c \\
&= \langle \langle \pi_0 x_\varphi , \pi_1 (x_B)_1 \rangle c , \ \pi_1 t \gamma_B \rangle c \\
&= \langle \pi_0 x_\varphi , \ \pi_1 \langle (x_B)_1 ,  t \gamma_B \rangle c \rangle c & \text{Assoc. } \\
&= \langle \pi_0 x_\varphi , \ \pi_1 \langle s \gamma_B , (y_B)_1 \rangle c \rangle c & \text{Nat. } \gamma_B \\
&= \langle \langle \pi_0 x_\varphi ,  \pi_1 s \gamma_B \rangle c , \ \pi_1 (y_B)_1 \rangle c & \text{Assoc. } \\
&= \langle \langle \pi_0 x_\varphi ,  \pi_0 D(\varphi)_0 \gamma_B \rangle c , \ \pi_1 (y_B)_1 \rangle c & \text{Def. } D_\varphi \\
&= \langle \pi_0 \langle x_\varphi , D(\varphi)_0 \gamma_B \rangle c , \ \pi_1 (y_B)_1 \rangle c & \text{Factor.} \\
&= \langle \pi_0 \langle \gamma_A , y_\varphi \rangle c , \ \pi_1 (y_B)_1 \rangle c & \text{Def.} \gamma \\
&= \langle \pi_0 \gamma_A , \ \langle \pi_0 y_\varphi  , \pi_1 (y_B)_1 \rangle c \rangle c & \text{Assoc.} \\
&= \langle \iota_\varphi s \overline{\gamma} , \ \iota_\varphi \omega \rangle c &\text{Def.}\\ 
&= \iota_\varphi \langle s \overline{\gamma} , \omega \rangle c & \text{Factor}.
\end{align*}

\noi By uniqueness we must have that 

\[ \langle \theta_1 \ , \ t \overline{\gamma} \rangle c = \langle s \overline{\gamma} , \omega \rangle c \]

\noi and it follows that $\overline{\gamma} : \theta \implies \omega$ is an internal natural transformation. 
\end{proof}

\begin{prop}\label{Prop 2-cell correspondence}
There is a one-to-one correspondence between modifications of lax natural transformations $D \implies \Delta \mX$ and internal natural transformations between the corresponding internal functors of Proposition~\ref{Prop 1-cell correspondence}. 

\end{prop}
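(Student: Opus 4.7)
The two assignments have already been constructed: Proposition~\ref{prop internal internal nat transfms induce a modifiction} sends an internal natural transformation $\alpha : \theta \implies \omega$ to a modification $\tilde{\alpha} : x \Rrightarrow y$, and Proposition~\ref{prop modifications induce internal nat transfms} sends a modification $\gamma : x \Rrightarrow y$ to an internal natural transformation $\overline{\gamma} : \theta \implies \omega$. The plan is therefore to show that these two assignments are mutually inverse, which reduces entirely to applications of the universal property of the coproduct $\mD_0 = \coprod_{A \in \cA_0} D(A)_0$.

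First I would show $\overline{\tilde{\alpha}} = \alpha$. By construction, $\overline{\tilde{\alpha}}$ is the unique map out of $\mD_0$ whose precomposition with each coprojection $\iota_A$ equals the $A$-component $\tilde{\alpha}_A$ of the modification $\tilde{\alpha}$. But by the definition in Proposition~\ref{prop internal internal nat transfms induce a modifiction} we have $\tilde{\alpha}_A = (\ell_A)_0 \, \alpha = \iota_A \, \alpha$, so $\alpha$ itself satisfies the universal property that uniquely characterizes $\overline{\tilde{\alpha}}$, giving $\overline{\tilde{\alpha}} = \alpha$ by uniqueness.

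Next I would show $\tilde{\overline{\gamma}} = \gamma$. For each $A \in \cA_0$, the $A$-component of $\tilde{\overline{\gamma}}$ is by definition $(\ell_A)_0 \, \overline{\gamma} = \iota_A \, \overline{\gamma}$, and $\overline{\gamma}$ was defined (in Proposition~\ref{prop modifications induce internal nat transfms}) precisely as the unique map with $\iota_A \, \overline{\gamma} = \gamma_A$ for every $A$. Hence the $A$-components of $\tilde{\overline{\gamma}}$ and $\gamma$ agree for every $A$, which is what it means for the two modifications to coincide.

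Together these two verifications establish the bijection. There is no genuine obstacle, because each direction reduces to the universal property of the coproduct; the only thing to be careful about is unwinding definitions so that the expressions $\iota_A \alpha$ and $\iota_A \overline{\gamma}$ actually appear, which follows immediately from $(\ell_A)_0 = \iota_A$. (Functoriality of the assignments, in the sense of preserving composition and identities of modifications, follows by the same coproduct-universality argument componentwise: $(\gamma \cdot \eta)_A = \gamma_A \cdot \eta_A$ on each cofactor, and $(1_x)_A = 1_{x_A}$ on each cofactor, so the induced maps out of $\mD_0$ agree with the pointwise composition and identity in $\mX$.)
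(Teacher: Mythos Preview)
Your proposal is correct and follows essentially the same approach as the paper: both directions reduce to the identity $(\ell_A)_0 = \iota_A$ and the universal property of the coproduct $\mD_0$, giving $\overline{\tilde{\alpha}} = \alpha$ by uniqueness and $\tilde{\overline{\gamma}} = \gamma$ componentwise. Your parenthetical on functoriality anticipates what the paper defers to the subsequent theorem, but the core argument for the bijection is the same.
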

\begin{proof}
The assignments $\overline{( - )}$ and $\tilde{( - )}$ are inverses. For any modification $\gamma : x \implies y$ we have that for each $A \in \cA_0$, 

\[ \tilde{\overline{\gamma}}_A := (\ell_A)_0 \overline{\gamma} = \iota_A \overline{\gamma} = \gamma_A \] 

\noi and so $\tilde{\overline{\gamma}} = \gamma$ by definition. On the other hand for any internal natural transformation $alpha : \theta \implies \omega$, and any $A \in \cA$ we have that 

\[ \iota_A \overline{\tilde{\alpha}} = \tilde{\alpha}_A = (\ell_A)_0 \alpha = \iota_A \alpha \]

\noi and by the universal property of the coproduct $\mD_0$, $\overline{\tilde{\alpha}} = \alpha$. 
\end{proof}

\subsection{Internal Category of Elements as an OpLax Colimit}\label{SS IntCatofEls is OpLaxColim}

In the previous two subsections we've seen that internal functors $\mD \to \mX$ and internal natural transformations between them correspond uniquely to lax natural transformations $D \implies \Delta \mX$ and modifications between them respectively. In this section we put this together as an equivalence of categories that establishes $\mD$ as the oplax colimit of $D$ in $\Cat(\cE)$. \\
\begin{thm}[$\mD$ is the oplax colimit of $D$]\label{thm IntGroth is OpLaxColim}
Let $\cE$ admit an internal category of elements of $D : \cA \to \Cat(\cE)$, as in Definition~\ref{def E admits an internal category of elements of D}. Then for every internal category $\mX \in \Cat(\cE)$, the category of lax natural transformations $D \implies \Delta \mX$ and their modifications is isomorphic to the category of internal functors $\mD \to \mX$ and their internal natural transformations.
\[[D , \Delta \mX]_{\ell} \cong \Cat(\cE)(\mD , \mX) \]
\end{thm}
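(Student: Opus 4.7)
The plan is to assemble the two bijective correspondences already established in Propositions~\ref{Prop 1-cell correspondence} and \ref{Prop 2-cell correspondence} into an isomorphism of categories. Define a functor
\[ \Phi : [D, \Delta \mX]_\ell \to \Cat(\cE)(\mD, \mX) \]
by sending a lax natural transformation $x : D \implies \Delta \mX$ to the internal functor $\theta_x : \mD \to \mX$ constructed in Proposition~\ref{Prop 1-cell correspondence} (using the universal properties of the coproducts $\mD_0$ and $\mD_1$), and by sending a modification $\gamma : x \Rrightarrow y$ to the internal natural transformation $\overline{\gamma} : \theta_x \implies \theta_y$ constructed in Proposition~\ref{prop modifications induce internal nat transfms}. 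In the other direction, define
\[ \Psi : \Cat(\cE)(\mD, \mX) \to [D, \Delta \mX]_\ell \]
by sending an internal functor $\omega : \mD \to \mX$ to the induced lax transformation $\omega^*$ (whiskering $\omega$ against each $\ell_A$ and $\ell_\varphi$), and sending an internal natural transformation $\alpha : \theta \implies \omega$ to the modification $\tilde{\alpha}$ with components $\tilde{\alpha}_A := (\ell_A)_0 \alpha$, as in Proposition~\ref{prop internal internal nat transfms induce a modifiction}.

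Once the object-level and morphism-level assignments are in place, I would verify that $\Phi$ and $\Psi$ are genuine functors. Preservation of identities is essentially definitional: the identity modification $1_x$ has components $e_{x_A(a)}$, and $\overline{1_x}$ is the map out of $\mD_0$ uniquely induced by the family $\{(\ell_A)_0 \theta_0 e\}_{A \in \cA_0}$, which by the universal property of the coproduct $\mD_0$ must coincide with $\theta_0 e$, the identity internal natural transformation on $\theta$. Preservation of composition is where most of the actual work lies: given modifications $\gamma : x \Rrightarrow y$ and $\eta : y \Rrightarrow z$, I would show $\overline{\gamma \eta} = \langle \overline{\gamma}, \overline{\eta} \rangle c$ by precomposing both sides with each coprojection $\iota_A : D(A)_0 \to \mD_0$ and using the definitions of composition of modifications (componentwise composition in $\mX$) together with the universal property of $\mD_0$.

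The final step is to check that $\Phi$ and $\Psi$ are mutually inverse. On objects this is precisely the content of Proposition~\ref{Prop 1-cell correspondence}: $\Psi(\Phi(x)) = x$ follows from $\ell_A \theta_x = x_A$ and $\ell_\varphi (\theta_x)_1 = x_\varphi$, while $\Phi(\Psi(\omega)) = \omega$ follows from the universal-property argument involving the long chain of manipulations of pairing maps and coherence isomorphisms displayed at the end of that proposition. On morphisms this is the content of Proposition~\ref{Prop 2-cell correspondence}: the identities $\tilde{\overline{\gamma}} = \gamma$ and $\overline{\tilde{\alpha}} = \alpha$ both follow from the universal property of the coproduct $\mD_0$ applied after precomposition with $\iota_A$.

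The main obstacle I anticipate is compatibility of the two directions of the correspondence with composition of morphisms and, before that, even making sure that the correspondences on morphisms are well-defined over the correspondences on objects (i.e., that $\Phi(\gamma)$ really goes between $\Phi(x)$ and $\Phi(y)$, and likewise for $\Psi$). Both checks reduce to carefully tracking how the defining families of $\Phi$ and $\Psi$ interact with the coproduct coprojections $\iota_A$, $\iota_\varphi$, and invoking extensivity to descend the verified identities on each cofiber to a single global identity on $\mD_0$ or $\mD_1$. No new coherence arguments should be required beyond those already packaged in the preceding propositions, so the entire proof should reduce to a bookkeeping argument combining Propositions~\ref{Prop 1-cell correspondence} and \ref{Prop 2-cell correspondence} with a short verification of functoriality.
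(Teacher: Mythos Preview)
Your proposal is correct and follows essentially the same approach as the paper: invoke Propositions~\ref{Prop 1-cell correspondence} and~\ref{Prop 2-cell correspondence} for the object- and morphism-level bijections, then verify functoriality of the assignment $\gamma \mapsto \overline{\gamma}$ by checking preservation of identities and composition via precomposition with the coprojections $\iota_A$ and the universal property of the coproduct $\mD_0$. The paper makes the same observation that functoriality only needs to be checked in one direction, and carries out the identity and composition checks exactly as you outline.
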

\begin{proof}
The objects and morphisms are in bijection by Propositions $\ref{Prop 1-cell correspondence}$ and $\ref{Prop 2-cell correspondence}$ respectively. We only need to show composition and identities are preserved in one direction of the 2-cell correspondence. For any lax natural transformation $x : D \implies \Delta \mX$, the identity modification $1_x$ consists by the family of identity internal natural transformations $1_{x_A}$ for each $A \in \cA_0$. Let $\theta : \mD \to \mX$ and $\overline{1_\mX} : \theta \implies \theta$ be the internal functor and internal natural transformation corresponding to $x$ and $1_\mX$ respectively. Then 

\[ \iota_A \overline{1_\mX} := (1_\mX)_A = 1_{x_A} := e_A (x_A)_1 = (x_A)_0 e = \iota_A \theta_0 e \]\

\noi shows $\overline{1_\mX}$ is the identity natural transformation on $\theta$. \\
\noi Let $\gamma : x \Rrightarrow y$ and $\sigma : y \Rrightarrow z$ be modifications of lax natural transformations $x , y, z: D \implies \Delta \mX$. For each $A \in \cA_0$ let $\gamma_A : x_A \implies y_A$ and $\sigma_A : y_A \implies z_A$ be the internal natural transformations defining $\gamma$ and $\sigma$ so the composite

\[ \gamma \sigma : x \Rrightarrow z\]

\noi is defined by the composite of internal natural transformations 

\[ (\gamma \sigma)_A = \gamma_A \sigma_A\]

\noi Internally this is given by the composite 

\begin{center} 
\begin{tikzcd}
D(A)_0 \ar[dr, "\gamma_A \sigma_A"']\rar["\langle \gamma_A { , }\sigma_A \rangle "] & \mX_2 \dar["c"] \\
& \mX_1
\end{tikzcd}
\end{center}\

\noi and now we can see that 

\begin{align*}
\iota_A \overline{\gamma \sigma} 
= (\gamma \sigma)_A 
= \gamma_A \sigma_A 
= \langle \gamma_A , \sigma_A \rangle c 
= \langle \iota_A \overline{\gamma} , \iota_A \overline{\sigma} \rangle c 
= \iota_A \langle \overline{\gamma} , \overline{\sigma} \rangle c .
\end{align*}\

\noi By the universal property of $\mD_0$ we have that 

\[\overline{\gamma \sigma} = \langle \overline{\gamma} , \overline{\sigma} \rangle c \]\

\noi where the right-hand side defines the (horizontal) composition of natural transformations $\overline{\gamma} : \theta \implies \omega$ , $\overline{\sigma} \omega \implies \nu$, where $\theta, \omega, \nu : \mD \to \mX$ correspond to $x,y,$ and $z$ respectively. 
\end{proof}

\section{A Setting for an Internal Category of Fractions}\label{Ch Internal Fractions}

In this chapter we give a suitable context, $\cE$, and conditions on an internal category, $\mC$, and a map $w: W \to \mC_1$ that allow us to express a set of axioms for an internal category of (right) fractions. We show that such a pair $(\mC , w)$ satisfying our Internal Fractions Axioms allows us to define an internal category, $\mCW$, which satisfies an analogous universal property expressed by Theorem~\ref{thm 2-dimensional universal property of localization}. We write $(\mC, W)$ for the pair from now on, as the map $w : W \to \mC_1$ will be fixed and implied by $W$.
The contextual conditions on $(\mC, W)$ allow us to build the objects of diagrams in $\mC$ we need in the Internal Fractions Axioms and also represent the arrows and paths of composable arrows in our internal category of fractions as equivalence classes of spans and paths of composable spans respectively. The contextual conditions on $\cE$ then allow us formulate the Internal Fractions Axioms in terms of lifts of local witnesses to the axioms. This local data can be glued together to give globally defined structure maps provided a gluing condition is satisfied and we use this to define the internal category of fractions, $\mCW$, along with its structure maps and to prove it is an internal category. We will often give representations of our definitions and constructions as they would appear in $\cE = \Set$ to help our readers and we will overload the symbols for structure maps of internal categories, namely $s,t,c,$ and $e$. We will also abuse some notation and language by referring to arrows $W \to \mC_1$ as representing `arrows in $W$' when in general we mean it represents a family of arrows in an internal category $\mC$ indexed by $W$. The symbols, $\pi_i$, will be overloaded and used for the $i$'th (pullback) projection of all pullbacks. Here $i$ stands for `number of components to the right of the left-most component,' naturally. 

In Section \ref{S context for fractions axioms} we describe the conditions we require for the pair $(\mC , W)$ in order to state the Internal Fractions Axioms that culminate to Definition~\ref{def candidate for internal fractions}. In Section \ref{S the axioms (of internal fractions)} we define the structure we need on the ambient category $\cE$ in Definition~\ref{defn candidate context for internal fractions and covers} and present the Internal Fractions Axioms as part of Definition~\ref{def Internal Fractions Axioms}. We define the objects and structure maps for the internal category $\mCW$ in Section \ref{S Defining mCW, the internal cat of fracs}. We show that internal composition is assocaitive and satisfies the identity laws in $\mCW$, making $\mCW$ an internal category, in Section \ref{S internal cat of fracs assoc and id laws}. In Section \ref{S internal localization functor} we define the associated internal localization functor and prove that it inverts $w : W \to \mC_1$ in a suitable sense. In the last section, Section \ref{S UP of Internal Fractions}, we prove the universal property of the internal category of fractions as Theorem~\ref{thm 2-dimensional universal property of localization}.

\subsection{A Suitable Context}\label{S context for fractions axioms} 

This definition of an internal category of fractions requires working in suitably structured ambient category $\cE$ and with suitable internal categories $\mC$ in $\cE$. The conditions on $\cE$ will allow us to define the required objects and structure maps for the internal category of fractions, $\mCW$. The conditions on the internal categories, $\mC$, being considered allow allow us to describe the Internal Fractions Axioms and describe reflexive internal graphs of fractions. The following definitions will be important for defining the context in this section and the Internal Fractions Axioms in the next section.

\begin{defn}\label{def effective epi}
An {\em effective epimorphism} in a category $\cE$ is the coequalizer of its kernel pair. 
\end{defn}

\begin{defn}\label{def universal effective epis}
A class of effective epimorphisms in $\cE$ is called {\em universal} if they are stable under pullback and composition. 
\end{defn}

\noi Universal effective epimorphisms appear in each of the Internal Fractions Axioms in Definition~\ref{def Internal Fractions Axioms} and are used to define the composition, source, and target structure maps for the internal category of fractions. For the rest of this chapter we assume $\cE$ has a class of universal effective epimorphisms, $\cJ$, and we call these epimorphisms {\em covers}. We will see these in the next section when we give the Internal Fractions Axioms in Section \ref{S the axioms (of internal fractions)}.

For the rest of this section we focus on the conditions we will impose on an internal category $\mC$ and an arrow $w: W \to \mC_1$ in $\cE$ in order to construct the building blocks of our internal category of fractions, $\mCW$, as well as the objects of diagrams in $\mC$ that we use to internalize the axioms for a category of fractions. For example, let $\spn$ denote the object of spans in $\mC$ whose left leg is in $W$, 

\begin{center}
\begin{tikzcd}[]
\cdot & \cdot \lar["\circ" marking] \rar & \cdot 
\end{tikzcd},
\end{center}

\noi let $\csp$ denote the object of cospans in $\mC$ whose right leg is in $W$, 

\begin{center}
\begin{tikzcd}[]
\cdot  \rar & \cdot & \cdot \lar["\circ" marking]
\end{tikzcd},
\end{center}

\noi let $W_\triangle$ denote the object of pairs of arrows whose terminal arrow and composite is in $W$, 

\begin{center}
\begin{tikzcd}[]
&\cdot \dar \ar[dl, "\circ" marking] \\
\cdot & \cdot \lar["\circ" marking] 
\end{tikzcd},
\end{center}
and let $\slb$ denote the object of the following commuting {\em sailboat} diagrams (in $\mC$) 

\begin{center}
\begin{tikzcd}[]
&\cdot \dar \ar[dl, "\circ" marking] & \\
\cdot & \cdot \lar["\circ" marking] \rar & \cdot 
\end{tikzcd}
\end{center}

\noi These are all obtained by the following pullbacks in $\cE$ respectively:

\begin{center}
\begin{tikzcd}[]
\spn \rar["\pi_1"] \dar["\pi_0"'] 
 \arrow[dr, phantom, "\usebox\pullback" , very near start, color=black]
 & \mC_1 \dar["s"] \\
W \rar["ws"'] 
& \mC_0 
\end{tikzcd}
\qquad 
\begin{tikzcd}[]
\csp \rar["\pi_1"] \dar["\pi_0"'] 
 \arrow[dr, phantom, "\usebox\pullback" , very near start, color=black]
 & W \dar["wt"] \\
\mC_1 \rar["t"'] 
& \mC_0 
\end{tikzcd}\qquad 
\begin{tikzcd}[]
W_\triangle \dar["\pi_0"'] \rar["\pi_1"] \arrow[dr, phantom, "\usebox\pullback" , very near start, color=black]& W \dar["w"] \\
\mC_1 \tensor[_t]{\times}{_s} W \rar["c"'] & \mC_1
\end{tikzcd}\qquad
\begin{tikzcd}[]
\slb \dar["\pi_0"'] \rar["\pi_1"] \arrow[dr, phantom, "\usebox\pullback" , very near start, color=black] & \mC_1 \dar["s"] \\
W_\triangle \rar["\pi_0 \pi_1 w s"'] & \mC_0 
\end{tikzcd}
\end{center}\

\noi The pullback,

\begin{center}
\begin{tikzcd}[]
W_\circ \dar["\pi_0"'] \rar["\pi_1"] 
 \arrow[dr, phantom, "\usebox\pullback" , very near start, color=black]
 & W \dar["w"]\\ 
\mC_1 \tensor[_t]{\times}{_{ws}} W \tensor[_{wt}]{\times}{_{ws}} W \rar["c"'] 
&\mC_1
\end{tikzcd},
\end{center}

\noi is the object of composable paths of length three where the last two arrows are in $W$ and their composite is again in $W$. In $\cE = \Set$, the elements of this set would be composable pairs of arrows in the image of $w : W \to \mC_1$ along with a pre-composable arrow in $\mC_1$ such that their composition (in $\mC)$ gives an element in the image of $W$. We use this object to express a weak composition axiom for internal fractions. 

The pullback 

\begin{center}
\begin{tikzcd}[]
W_\square \dar["\pi_0"'] \rar["\pi_1"] 
 \arrow[dr, phantom, "\usebox\pullback" , very near start, color=black]
 & \mC_1 \tensor[_t]{\times}{_s} W \dar["c"]\\ 
 W \tensor[_t]{\times}{_s} \mC_1 \rar["c"'] 
&\mC_1
\end{tikzcd}
\end{center}

\noi represents commuting diagrams in $\mC$ that commonly known as {\em Ore squares}: 

\begin{center}
\begin{tikzcd}[]
\cdot \dar["\circ" marking ] \rar[] & \cdot \dar["\circ" marking] \\
\cdot \rar[] & \cdot 
\end{tikzcd}
\end{center}

\noi The arrows marked with $\circ$ denote arrows in the image of $w : W \to \mC_1$. This object is used to express the internal (right) Ore condition. Let $P(\mC)$ denote the object of parallel arrows in $\cC$ given by the pullback of pairing of source and target maps:

\begin{center}
\begin{tikzcd}[]
P(\mC) \rar["\pi_1"] \dar["\pi_0"'] \arrow[dr, phantom, "\usebox\pullback" , very near start, color=black]& \mC_1 \dar["(s{,}t)"] \\
\mC_1 \rar["(s{,}t)"'] & \mC_0 \times \mC_0 
\end{tikzcd}
\end{center}

\noi Let $\cP_{eq}(\mC)$ and $\cP_{cq}(\mC)$ be the objects of equalized and coequalized parallel arrows in $\cC$ (that don't satisfy any kind of internal universal property) given by the equalizers 

\begin{center}
\begin{tikzcd}[column sep = huge]
\cP_{eq}(\mC) \rar[tail, "\iota_{eq}"] & W \tensor[_{wt}]{\times}{_s} P(\mC) \rar[shift left, "(\pi_0 w{,} \pi_1 \pi_0)c"] \rar[shift right, "(\pi_0 w{,} \pi_1 \pi_1)c"'] & \mC_1 
\end{tikzcd}
\end{center}
\noi and 
\begin{center}
\begin{tikzcd}[column sep = huge]
\cP_{cq}(\mC) \rar[tail, "\iota_{eq}"] & P(\mC) \tensor[_t]{\times}{_{ws}} W \rar[shift left, "(\pi_0 \pi_0{,}\pi_1 w)c"] \rar[shift right, "(\pi_0 \pi_1{,}\pi_1 w)c"'] & \mC_1 
\end{tikzcd}
\end{center}

\noi in $\cE$ (that satisfy the usual universal property). Let $\cP(\mC)$ denote the following pullback

\begin{center}
\begin{tikzcd}[]
\cP(\mC) \rar["\pi_1"] \dar["\pi_0"'] \arrow[dr, phantom, "\usebox\pullback" , very near start, color=black] & \cP_{cq}(\mC) \dar["\pi_0"] \\ 
\cP_{eq}(\mC) \rar["\pi_1"'] & P(\mC)
\end{tikzcd}
\end{center}

\noi representing diagrams of the form: 

\begin{center}
\begin{tikzcd}[]
\cdot \rar["\circ" marking]& \cdot \rar[shift left] \rar[shift right] & \cdot \rar["\circ" marking] & \cdot
\end{tikzcd}
\end{center}

\noi where the $\circ$ marked arrows represent arrows indexed by $W$. This will be used for internalizing the so-called `right-cancellability' or `lifting' condition for internal fractions. In this thesis we refer to this as `zippering' in order to avoid confusion with the lifts in the Internal Fractions Axioms and because of the way it applies in proofs related to span composition. Note that these equalizers can be given as pullbacks of pairing maps 

\[ (1 , (\pi_0 \pi_0 , \pi_1 w ) c) , (1, (\pi_0 \pi_1 , \pi_1 w )c) : ( P(\mC) \tensor[_t]{\times}{_{ws}} W) \to (P(\mC) \tensor[_t]{\times}{_{ws}} W) \times \mC_1\]

\[ (1 , (\pi_0 w , \pi_1 \pi_0) c) , (1,(\pi_0 w , \pi_1 \pi_1) c) : ( W \tensor[_{wt}]{\times}{_s} P(\mC) ) \to (W \tensor[_{wt}]{\times}{_s} P(\mC) ) \times \mC_1\]

\noi when they all exist in $\cE$ with the pullback projections being made equal by the identity maps in the left-hand components of the pairing maps above and the equalizer condition being forced by the right-hand components respectively. The constructions above allow us to formalize the axioms for a category of (right) fractions and we now give a name to the collection of internal categories, $\mC$, of $\cE$ and maps, $w : W \to \mC_1$, in $\cE$ for which this happens. The next definition describes a setting in which we can state the Internal Fractions Axioms. We'll be overloading notation for the structure maps of an internal category, and suppress $w : W \to \mC_1$ when describing internal composition with arrows in $\mC$ indexed by $W$.

\begin{defn}\label{def (mC,W) is a pre-candidate for internal fractions}
Let $\mC$ be an internal category in $\cE$ and let $w : W \to \mC_1$ be an arrow in $\cE$. We say the pair $(\mC, W)$ {\em is a pre-candidate for internal fractions} if the following pullbacks

\noindent\begin{tabularx}{\linewidth}{ C C }
\begin{tikzcd}[ampersand replacement=\&]
\csp \rar["\pi_1"] \dar["\pi_0"'] 
 \arrow[dr, phantom, "\usebox\pullback" , very near start, color=black]
 \& W \dar["wt"] \\
\mC_1 \rar["t"'] 
\& \mC_0 
\end{tikzcd}
&
\begin{tikzcd}[ampersand replacement=\&]
\spn \rar["\pi_1"] \dar["\pi_0"'] 
 \arrow[dr, phantom, "\usebox\pullback" , very near start, color=black]
 \& \mC_1 \dar["s"] \\
W \rar["ws"'] 
\& \mC_0 
\end{tikzcd} \\
\begin{tikzcd}[ampersand replacement=\&]
W \tensor[_{wt}]{\times}{_s} \mC_1
\rar["\pi_1"] \dar["\pi_0"'] 
\arrow[dr, phantom, "\usebox\pullback" , very near start, color=black]
\& W \dar["ws"] \\
\mC_1 \rar["t"'] 
\& \mC_0 
\end{tikzcd}
& 
\begin{tikzcd}[ampersand replacement=\&]
\mC_1 \tensor[_t]{\times}{_{ws}} W
\rar["\pi_1"] \dar["\pi_0"'] 
\arrow[dr, phantom, "\usebox\pullback" , very near start, color=black]
\& \mC_1 \dar["s"] \\
W \rar["wt"'] 
\& \mC_0 
\end{tikzcd} \\
\begin{tikzcd}[ampersand replacement=\&]
W_\square \dar["\pi_0"'] \rar["\pi_1"] \arrow[dr, phantom, "\usebox\pullback" , very near start, color=black] \& W \tensor[_{wt}]{\times}{_s} \mC_1 \dar["c"] \\
\mC_1 \tensor[_{t}]{\times}{_{ws}} W \rar["c"'] \& \mC_1 
\end{tikzcd}
&
\begin{tikzcd}[ampersand replacement=\&]
 W \tensor[_t]{\times}{_s} W 
 \arrow[dr, phantom, "\usebox\pullback" , very near start, color=black]
 \rar["\pi_1"] \dar["\pi_0"'] 
\& W \dar["ws"] \\
W \rar["wt"'] \& \mC_0
\end{tikzcd}\\
\begin{tikzcd}[ampersand replacement=\&]
W_\blacktriangle
 \arrow[dr, phantom, "\usebox\pullback" , very near start, color=black]
 \rar["\pi_1"] 
\dar["\pi_0"'] 
\& W \tensor[_t]{\times}{_s} W 
\dar["\pi_0ws"] \\
\mC_1 \rar["t"'] \& \mC_0
\end{tikzcd}
&
\begin{tikzcd}[ampersand replacement=\&]
W_\circ \dar["\pi_0"'] \rar["\pi_1"] \arrow[dr, phantom, "\usebox\pullback" , very near start, color=black]\& W \dar["w"] \\
W_\blacktriangle \rar["c"'] \& \mC_1
\end{tikzcd}\\
\begin{tikzcd}[ampersand replacement=\&]
W_\triangle \dar["\pi_0"'] \rar["\pi_1"] \arrow[dr, phantom, "\usebox\pullback" , very near start, color=black]\& W \dar["w"] \\
\mC_1 \tensor[_t]{\times}{_s} W \rar["c"'] \& \mC_1
\end{tikzcd}
&
\begin{tikzcd}[ampersand replacement=\&]
\slb \dar["\pi_0"'] \rar["\pi_1"] \arrow[dr, phantom, "\usebox\pullback" , very near start, color=black] \& \mC_1 \dar["s"] \\
W_\triangle \rar["\pi_0 \pi_1 w s"'] \& \mC_0 
\end{tikzcd} 
\\
\begin{tikzcd}[ampersand replacement=\&]
P(\mC) \rar["\pi_1"] \dar["\pi_0"'] \arrow[dr, phantom, "\usebox\pullback" , very near start, color=black]
\& \mC_1 \dar["(s{,}t)"] \\
\mC_1 \rar["(s{,}t)"'] 
\& \mC_0 \times \mC_0 
\end{tikzcd} 
&
\begin{tikzcd}[ampersand replacement=\&]
W \tensor[_{wt}]{\times}{_{\pi_0s}} P(\mC) \rar["\pi_1"] \dar["\pi_0"'] \arrow[dr, phantom, "\usebox\pullback" , very near start, color=black]
\& P(\mC) \dar["\pi_0 s"] \\
W \rar["wt"'] 
\& \mC_0
\end{tikzcd} \\
\begin{tikzcd}[ampersand replacement=\&]
 P(\mC) \tensor[_{\pi_0t}]{\times}{_{ws}} W \rar["\pi_1"] \dar["\pi_0"'] \arrow[dr, phantom, "\usebox\pullback" , very near start, color=black]
 \& W \dar["ws"] \\
 P(\mC) \rar["\pi_0 t"'] 
 \& \mC_0
\end{tikzcd} 
&
\begin{tikzcd}[ampersand replacement=\&]
\cP(\mC) \rar["\pi_1"] \dar["\pi_0"'] \arrow[dr, phantom, "\usebox\pullback" , very near start, color=black] 
\& \cP_{cq}(\mC) \dar["\pi_0"] \\ 
\cP_{eq}(\mC) \rar["\pi_1"'] 
\& P(\mC)
\end{tikzcd}
\end{tabularx}

\noi exist in $\cE$ along with the equalizers,

\begin{center}
\begin{tikzcd}[column sep = huge]
\cP_{eq}(\mC) \rar[tail, "\iota_{eq}"] & W \tensor[_{wt}]{\times}{_s} P(\mC) \rar[shift left, "(\pi_0 w{,} \pi_1 \pi_0)c"] \rar[shift right, "(\pi_0 w{,} \pi_1 \pi_1)c"'] & \mC_1 
\end{tikzcd}
\end{center}
\noi and 
\begin{center}
\begin{tikzcd}[column sep = huge]
\cP_{cq}(\mC) \rar[tail, "\iota_{eq}"] & P(\mC) \tensor[_t]{\times}{_{ws}} W \rar[shift left, "(\pi_0 \pi_0{,}\pi_1 w)c"] \rar[shift right, "(\pi_0 \pi_1{,}\pi_1 w)c"'] & \mC_1 
\end{tikzcd}.
\end{center}

\end{defn}\

\noi These pullbacks and equalizers give us the building blocks we need to state the internal fractions axioms and construct an internal category of fractions. For the rest of this chapter we assume that $(\mC, W)$ is a pre-candidate for internal fractions. Our construction requires a bit of `scaffolding' however, in the form of a family of reflexive internal graphs encoding the equivalence relation on spans and paths of composable spans. At this point we can only define the first one given by the two maps $p_0, p_1 : \slb \to \spn$, defined explicitly as the pairing maps

\[ p_0 = (\pi_0 \pi_0 \pi_1 {,} \pi_1) , \quad p_1 = (\pi_0\pi_1 {,} ( \pi_0 \pi_0 \pi_0 {, }\pi_1) c ) \]

\noi by the universal property of the pullback $\spn$. These maps represent projecting two different spans out of a commuting diagrams (in $\mC$) which we call a sailboat. The idea is that coequalizing these will produce equivalence classes of spans that are related by being part of a common sailboat. For example, when $\cE = \Set$, the maps $p_0$ and $p_1$ can be seen to project sailboats in $\slb$ to spans in $\spn$ like this:

\begin{center}
$\left[\begin{tikzcd}[]
&\cdot \dar \ar[dl, "\circ" marking] & \\
\cdot & \cdot \lar["\circ" marking] \rar & \cdot 
\end{tikzcd} \right]$
\begin{tikzcd}[column sep = huge]
\rar[rr, mapsto, "p_0 = (\pi_0 \pi_0 \pi_1 {,} \pi_1) "] && \
\end{tikzcd}
$\left[\begin{tikzcd}[]
\cdot & \cdot \lar["\circ" marking] \rar & \cdot 
\end{tikzcd}\right]$
\end{center}

\begin{center}
$\left[\begin{tikzcd}[]
&\cdot \dar \ar[dl, "\circ" marking] \ar[dr, dotted] & \\
\cdot & \cdot \lar["\circ" marking] \rar & \cdot 
\end{tikzcd} \right]$
\begin{tikzcd}[column sep = huge]
\rar[rr, mapsto, "p_1 = (\pi_0\pi_1 {,} ( \pi_0 \pi_0 \pi_0 {, }\pi_1) c ) "] && \
\end{tikzcd}
$\left[\begin{tikzcd}[]
&\cdot \ar[dr] \ar[dl, "\circ" marking] & \\
\cdot & & \cdot 
\end{tikzcd}\right]$
\end{center}

\noi The dotted arrow on the left-hand side is just emphasizing that the pair is composable (in $\mC$) in order to make the mapping more clear. The arrows in a category of fractions are equivalence classes of spans, where two distinct spans represent the same equivalence class whenever there exists an intermediate span and two sail-boats such that the intermediate span forms the $p_1$ span projection of two different sailboats whose $p_0$ span projections are the original two spans. For $\cE = \Set$ the coequalizer of $p_0$ and $p_1$ describes precisely this set of equivalence classes of spans. The following lemma shows $p_0$ and $p_1$ form a reflexive pair in general. 

\begin{lem}\label{lem p_0 and p_1 are a reflexive pair (base case for paths)}
The parallel pair 

\[ \begin{tikzcd}
\slb \rar[shift left, "p_0"] \rar[shift right, "p_1"'] & \spn 
\end{tikzcd}\]
\noi is reflexive.
\end{lem}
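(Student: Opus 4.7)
The plan is to exhibit a common section $r : \spn \to \slb$ so that $r p_0 = 1_{\spn} = r p_1$. Geometrically, given a span $(v,f) \in \spn$ with $v \in W$ and $f \in \mC_1$ sharing a source, $r$ should collapse the top vertex of the sailboat onto its bottom-center vertex: take the vertical arrow $g_0$ to be the identity arrow at the source of $v$, set $u := v$, and keep $f$ on the right leg. Both projections $p_0$ and $p_1$ then manifestly recover $(v,f)$, using the left identity law of $\mC$ for $p_1$.

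To construct $r$ internally, I will apply pullback universal properties three times. First, the pair $(\pi_0 w s e, \pi_0) : \spn \to \mC_1 \tensor[_t]{\times}{_s} W$ is well-defined since $(\pi_0 w s e) t = \pi_0 w s$, using that $e$ is a common section of $s$ and $t$. Second, this pair together with $\pi_0 : \spn \to W$ assembles into a map $\tau : \spn \to W_\triangle$, because the triangle-commutativity condition $(\pi_0 w s e, \pi_0) c = w \pi_0$ is exactly the left identity law $\langle s e, 1 \rangle c = 1_{\mC_1}$ of $\mC$ precomposed with $\pi_0 w$. Third, the pair $(\tau, \pi_1) : \spn \to W_\triangle \times \mC_1$ factors through $\slb$ because $\tau \pi_0 \pi_1 w s = \pi_0 w s = \pi_1 s$, where the last equality is the defining square of $\spn$. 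This produces $r$.

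Finally, verifying $r p_i = 1_{\spn}$ for $i = 0,1$ reduces by the universal property of $\spn$ to comparing projections. For $p_0$: one has $r p_0 \pi_0 = \tau \pi_0 \pi_1 = \pi_0$ and $r p_0 \pi_1 = \pi_1$ by direct unwinding of the definitions. For $p_1$: the check $r p_1 \pi_0 = \tau \pi_1 = \pi_0$ is immediate, while
\[ r p_1 \pi_1 \;=\; (\pi_0 w s e, \pi_1)\, c \;=\; (\pi_1 s e, \pi_1)\, c \;=\; \pi_1 \]
follows by substituting $\pi_0 w s = \pi_1 s$ from the defining square of $\spn$ and then invoking the left identity law of $\mC$. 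There is no genuine obstacle: once the correct description of $r$ is identified, the argument is diagrammatic bookkeeping across the nested pullbacks defining $\spn$, $W_\triangle$, and $\slb$, with the sole categorical input being the left identity law of the internal category $\mC$.
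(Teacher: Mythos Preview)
Your proof is correct and follows essentially the same approach as the paper: you construct the common section (called $\varphi_s$ there) as the pairing $\big(((\pi_0 w s e,\pi_0),\pi_0),\pi_1\big)$, verify well-definedness via the left identity law of $\mC$ and the defining square of $\spn$, and check $r p_i = 1_{\spn}$ componentwise exactly as the paper does. One minor notational slip: in diagrammatic order the triangle condition should read $(\pi_0 w s e,\pi_0)c = \pi_0 w$, not $w\pi_0$.
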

\begin{proof}
Define a map,
\[ \varphi_s : \spn \to \slb,\]
\noi by the pairing map 

\[ \varphi_s = \big( (( \pi_0 w s e , \pi_0 ) , \pi_0 ), \pi_1 \big). \]

\noi The component 

\[ \varphi_s \pi_0 = (( \pi_0 w s e , \pi_0 ) , \pi_0 ) : \spn \to W_\triangle \] 

\noi is well-defined by the identity law for composition in $\mC$: 

\[ ( \pi_0 w s e , \pi_0w  )c = \pi_0 w (se,1) c = \pi_0 w \]

\noi The other component is well-defined because 

\[ \varphi_s \pi_1 s = \pi_1 s = \pi_0 w s = \varphi_s \pi_0 s. \]

\noi To see that $\varphi_s$ is a common section of $p_0$ and $p_1$ we get

\begin{align*}
  \varphi_s p_0 \pi_0 
  &= \varphi_s \pi_0 \pi_0 \pi_1 
  &\varphi_s p_0 \pi_1 
  &= \varphi_s \pi_1 \\
  &= \pi_0
  &
  &= \pi_1
\end{align*}

\noi by definition and by the identity law in $\mC$ we also get
\begin{align*}
  \varphi_s p_1 \pi_0
  &= \varphi_s \pi_0 \pi_1 
  &\varphi_s p_1 \pi_1 
  &= \varphi_s (\pi_0 \pi_0 \pi_0 , \pi_1) c \\
  &= \pi_0
  &
  &= (\pi_0 w s e , \pi_1) c \\
  &
  &
  &= (\pi_1 s e , \pi_1)c \\
  &
  &
  &= \pi_1 (se, 1) c \\
  &
  &
  &= \pi_1. 
\end{align*}

\noi By the universal property of the pullbacks $\slb$ and $\spn$

\[ \varphi_s p_0 = 1_{\spn} = \varphi_s p_1.\]
\end{proof}

\noi To define the source, target, and composition structure maps, and prove associativity and identity laws we need to reason about paths (or zig-zags more accurately) of composable spans and sailboats. In the following definition we give a sufficient condition for obtaining these as reflexive internal graphs. 

\begin{defn}\label{def (mC, W) admits reflexive graph of fractions}
We say that a pre-candidate for internal fractions, $(\mC , W)$, in $\cE$, {\em admits reflexive graphs of fractions} if the source and target maps on $\spn$ and $\slb$,

\[ \begin{tikzcd}
\spn \ar[dr,"\pi_0 w t "'] 
&& \spn \ar[dl,"\pi_1 t "] \\
&\mC_0 
\end{tikzcd} 
\qquad \qquad 
\begin{tikzcd}
\slb \ar[dr,"\pi_0 \pi_0 \pi_1 w t "'] 
&& \slb \ar[dl,"\pi_1 t "] \\
&\mC_0 
\end{tikzcd}\]

\noi admit pullbacks along one another. 

\end{defn}

\noi The following lemma shows precisely which reflexive graphs are being referred to in Definition~\ref{def (mC, W) admits reflexive graph of fractions}. 

\begin{lem}\label{lem p_0^n and p_1^n are reflexive pairs for all n}
The pairs 

\[ \begin{tikzcd}
\slb \tensor[_t]{\times}{_s} \dots \tensor[_t]{\times}{_s} \slb \rar[shift left, "p_0^n"] \rar[shift right, "p_1^n"'] & \spn \tensor[_t]{\times}{_s} \dots \tensor[_t]{\times}{_s} \spn 
\end{tikzcd}\]

\noi are reflexive for each $n \geq 1$ where $n = 1$ is the case $p_0 , p_1 : \slb \to \spn$ and $p_i^n = (\pi_0 p_i , \pi_1 p_i , ..., \pi_{n-1} p_i)$ is the unique map determined by the iterated pullback projections and the map $p_i$ for $i = 0,1$. 
\end{lem}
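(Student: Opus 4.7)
My plan is to prove the statement by induction on $n$, with the base case $n=1$ supplied by Lemma~\ref{lem p_0 and p_1 are a reflexive pair (base case for paths)}. The key observation is that the common section $\varphi_s : \spn \to \slb$ constructed there already has the property that a sailboat produced from a span shares the same source and target as the original span, so $\varphi_s$ lifts componentwise to the iterated pullbacks appearing in Definition~\ref{def (mC, W) admits reflexive graph of fractions}.

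First I would verify that $\varphi_s$ intertwines the source and target maps. Using the explicit formula $\varphi_s = \big(((\pi_0 w s e , \pi_0), \pi_0), \pi_1\big)$, a short projection-chasing calculation shows
\[ \varphi_s \cdot (\pi_0 \pi_0 \pi_1 w t) = \pi_0 w t \qquad \text{and} \qquad \varphi_s \cdot (\pi_1 t) = \pi_1 t, \]
so the source (resp. target) of the sailboat $\varphi_s(x)$ coincides with the source (resp. target) of the span $x$. Consequently an $n$-tuple of spans that is composable with respect to the source/target pair from Definition~\ref{def (mC, W) admits reflexive graph of fractions} is sent componentwise by $\varphi_s$ to a composable $n$-tuple of sailboats, and the universal property of the iterated pullback produces a unique map
\[ \varphi_s^n := (\pi_0 \varphi_s, \pi_1 \varphi_s, \dots, \pi_{n-1} \varphi_s) : \underbrace{\spn \tensor[_t]{\times}{_s} \dots \tensor[_t]{\times}{_s} \spn}_{n} \longrightarrow \underbrace{\slb \tensor[_t]{\times}{_s} \dots \tensor[_t]{\times}{_s} \slb}_{n}. \]

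Finally I would verify that $\varphi_s^n$ is a common section of $p_0^n$ and $p_1^n$ by reducing to the base case componentwise. For each $i \in \{0,1\}$ and each $0 \leq k < n$, using the defining equations $\varphi_s^n \pi_k = \pi_k \varphi_s$ and $p_i^n \pi_k = \pi_k p_i$ together with $\varphi_s p_i = 1_{\spn}$ from the base case gives
\[ \varphi_s^n p_i^n \pi_k = \varphi_s^n \pi_k p_i = \pi_k \varphi_s p_i = \pi_k, \]
and uniqueness of pairing maps into the iterated pullback $\spn \tensor[_t]{\times}{_s} \dots \tensor[_t]{\times}{_s} \spn$ forces $\varphi_s^n p_i^n = 1$, as required.

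I do not expect any genuine obstacle; the only subtlety is the bookkeeping needed to keep straight the overloaded symbols $\pi_k$ (the iterated pullback projections onto the $k$th copy of $\spn$ or $\slb$) versus the internal projections used inside each copy, and to confirm that the source/target maps picked in Definition~\ref{def (mC, W) admits reflexive graph of fractions} are precisely the ones preserved by $\varphi_s$ so that $\varphi_s^n$ really is well-defined on the iterated pullback.
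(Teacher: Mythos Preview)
Your proposal is correct and follows essentially the same approach as the paper: both construct the common section componentwise from $\varphi_s$ and verify it is a section by reducing to the base case on each factor. The paper packages this as an induction step using the decomposition $\slb^{n+1} \cong \slb^n \tensor[_t]{\times}{_s} \slb$ and the map $\varphi_s^n \times \varphi_s$, whereas you define $\varphi_s^n = (\pi_0\varphi_s,\dots,\pi_{n-1}\varphi_s)$ directly; your explicit check that $\varphi_s$ intertwines the source and target maps is exactly what makes either construction well-defined on the iterated pullback, and the paper leaves this implicit.
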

\begin{proof}
The proof is by induction on the number of pullbacks. The base case, $n = 1$, follows from Lemma~\ref{lem p_0 and p_1 are a reflexive pair (base case for paths)}. Assume $p_0^n$ and $p_1^n$ have a common section $\varphi_s^n$. Let $\slb^n$ denote the $n$-fold pullback of $t,s : \slb \to \mC_0$, similarly for $\spn^n$, for each natural number $n$. The induction step follows from the following commuting diagram:

\[\begin{tikzcd}[column sep = large, row sep = large]
 \spn^{n+1} \rar[dotted, ""] \dar["\cong"'] \rar[rr, bend left, equals]
& \slb^{n+1} \dar["\cong"] \rar[shift left, "p_0^{n+1}"] \rar[shift right, "p_1^{n+1}"'] 
& \spn^{n+1} \dar["\cong"] \\
\spn^n \tensor[_t]{\times}{_s} \spn
\rar["\varphi_s^n \times \varphi_s"'] 
\rar[rr, bend right, equals]
& \slb^n \tensor[_t]{\times}{_s} \slb 
\rar[shift left, "p_0^n \times p_0 "] \rar[shift right, "p_1^n \times p_1"'] 
& \spn^n \tensor[_t]{\times}{_s}\spn 
\end{tikzcd}\]

\noi The bottom commutes by the universal property of the pullbacks in the bottom row above:

\begin{align*}
(\varphi_s^n \times \varphi_s)(p_0^n \times p_0) 
&= (\pi_0 \varphi_s^n , \pi_1 \varphi_s ) ( \pi_0 p_0^n , \pi_1 p_0)\\
&= (\pi_0 \varphi_s^n p_0^n , p_1 \varphi_s p_0) \\
&= (\pi_0 , \pi_1) \\
&= 1\\
&= (\pi_0 , \pi_1) \\
&= (\pi_0 \varphi_s^n p_1^n , p_1 \varphi_s p_1) \\
&= (\pi_0 \varphi_s^n , \pi_1 \varphi_s ) ( \pi_0 p_1^n , \pi_1 p_1)\\
&= (\varphi_s^n \times \varphi_s)(p_1^n \times p_1).
\end{align*} 

\noi This implies the top commutes and we get a reflexive graph:

\[ \begin{tikzcd}
\slb \tensor[_t]{\times}{_s} \dots \tensor[_t]{\times}{_s} \slb \rar[shift left = 5, bend left, "p_0^{n+1}"] \rar[shift right = 5, bend right, "p_1^{n+1}"'] & \spn \tensor[_t]{\times}{_s} \dots \tensor[_t]{\times}{_s} \spn \lar["\varphi_s^n"'] 
\end{tikzcd} \]

\noi The result follows by induction.
\end{proof}\

\noi The arrows and composable paths in the internal category of fractions should be the coequalizers of the internal reflexive graphs in Lemmas \ref{lem p_0 and p_1 are a reflexive pair (base case for paths)} and \ref{lem p_0^n and p_1^n are reflexive pairs for all n} respectively. For this we need to require the existence of these coequalizers and the pullbacks of the induced source and target maps on the coequalizer of $p_0$ and $p_1$. In order for this to define an internal category we need the coequalizers of the higher order reflexive graphs to coincide with the iterated pullbacks of the first coequalizer. The following definition is used to restrict our focus to internal categories for which these coequalizers exist. 

\begin{defn}\label{def (mC, W) admits internal quotient graphs of fractions}
We say $(\mC, W)$ {\em admits internal quotient graphs of fractions} if the coequalizer, 

\[\begin{tikzcd}[column sep = large, row sep = large]
\slb \tensor[_t]{\times}{_s} \dots \tensor[_t]{\times}{_s} \slb \rar[shift left, "p_0^n"] \rar[shift right, "p_1^n"'] & \spn \tensor[_t]{\times}{_s} \dots \tensor[_t]{\times}{_s} \spn \rar[two heads, "q_n"] & \mCW_n
\end{tikzcd},\]

\noi exists in $\cE$ for each $n \geq 1$. 
\end{defn}

\noi The coequalizers in Definition~\ref{def (mC, W) admits internal quotient graphs of fractions} are named suggestively. In particular $\mCW_1$ is how we will define the object of arrows for the internal category of fractions. Using its universal property we can define source and target structure maps by the following lemma. 

\begin{lem}[Source and Target Structure for $\mCW$] \label{lem defining source and target of mCW} The
\textbf{source} and \textbf{target} maps for $\mCW$ are determined by the universal property of the coequalizer $\mCW_1$ and more precisely induced by $s' = \pi_0 w t$ and $t' = \pi_1 t$.

\[
\begin{tikzcd}[]
& \mC_1 \rar["t"] & \mC_0 \\
\slb \rar[r,shift left, "p_0"] \rar[r,shift right, "p_1"'] & \spn \rar[r, two heads, "q"] \ar[d, "\pi_0"'] \ar[dr, "s'"'] \uar["\pi_1"] \ar[ur, "t'"] & \mCW_1 \dar[dotted, "s"] \uar[dotted, "t"']\\
& W \rar["wt"'] & \mC_0 
\end{tikzcd}
\]
\end{lem}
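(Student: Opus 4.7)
The plan is to apply the universal property of the coequalizer $q: \spn \twoheadrightarrow \mCW_1$ from Definition~\ref{def (mC, W) admits internal quotient graphs of fractions}. Concretely, it suffices to show that $s' = \pi_0 w t$ and $t' = \pi_1 t$ each coequalize the parallel pair $p_0, p_1 : \slb \to \spn$; the required factorizations $s, t : \mCW_1 \to \mC_0$ with $q s = s'$ and $q t = t'$ will then be extracted uniquely from the coequalizer property, and their names are justified by the underlying idea that a span $(w_1, f)$ represents a morphism whose source is the target of its $W$-leg and whose target is the target of its $\mC$-leg.

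For the equation $p_0 s' = p_1 s'$, I would first expand both sides using the pairing formulas $p_0 = (\pi_0 \pi_0 \pi_1, \pi_1)$ and $p_1 = (\pi_0 \pi_1, (\pi_0 \pi_0 \pi_0, \pi_1) c)$, reducing the claim to the identity $\pi_0 \pi_0 \pi_1 w t = \pi_0 \pi_1 w t$ of maps $\slb \to \mC_0$. The key input is the defining commutativity of the $W_\triangle$ pullback, namely $\pi_1 w = \pi_0 c : W_\triangle \to \mC_1$, which upon precomposition with $\pi_0 : \slb \to W_\triangle$ yields $\pi_0 \pi_1 w = \pi_0 \pi_0 c$ in $\slb$. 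I would then invoke the internal category structure law $ct = \pi_1 t$ for composition; applied to the overloaded composition $c : \mC_1 \tensor[_t]{\times}{_s} W \to \mC_1$, this gives $c t = \pi_1 w t$ there, and a final precomposition with $\pi_0 \pi_0$ closes the chain.

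For $p_0 t' = p_1 t'$, the analogous expansion reduces the claim to $(\pi_0 \pi_0 \pi_0, \pi_1) c t = \pi_1 t$ on $\slb$, which is immediate from the same structure law $c t = \pi_1 t$ on $\mC_2$. The main obstacle is notational bookkeeping rather than conceptual depth: the symbols $\pi_0, \pi_1$ are overloaded across the nested pullbacks defining $\slb$, $W_\triangle$, $\spn$, and $\mC_1 \tensor[_t]{\times}{_s} W$, and the symbol $c$ is overloaded across multiple composition maps, so at every step one must parse each projection by its domain and codomain and confirm that the correct pullback's defining triangle is being invoked.
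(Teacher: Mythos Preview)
Your proposal is correct and follows essentially the same route as the paper: both verify that $s'$ and $t'$ coequalize $p_0,p_1$ by expanding the pairing formulas for $p_0$ and $p_1$, reducing the source case to $\pi_0\pi_0\pi_1 w t = \pi_0\pi_1 w t$ via the $W_\triangle$ pullback identity $\pi_0 c = \pi_1 w$ together with the target law $ct = \pi_1 w t$, and dispatching the target case directly from $ct = \pi_1 t$ on $\mC_2$. Your remark about the notational bookkeeping being the main hazard is apt and matches the spirit of the paper's presentation.
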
 
\begin{proof}
\noi This is well-defined by the following calculations. 

\begin{align*}
 p_0 s'
 &=(\pi_0 \pi_0 \pi_1 {,} \pi_1) s'
 & p_0t
 &=(\pi_0 \pi_0 \pi_1 , \pi_1) t' \\
 &= (\pi_0 \pi_0 \pi_1 {,} \pi_1) \pi_0 w t 
 & 
 &= (\pi_0 \pi_0 \pi_1 , \pi_1) \pi_1 t \\
 &= \pi_0 \pi_0 \pi_1 w t 
 & 
 &= \pi_1 t \\
 &= \pi_0 \pi_0 c t & &= ( \pi_0 \pi_0 \pi_0, \pi_1 ) c t \\
 &= \pi_0 \pi_1 w t &&= (\pi_0\pi_1 , ( \pi_0 \pi_0 \pi_0 ,\pi_1) c )\pi_1 t \\
 &= (\pi_0\pi_1 , ( \pi_0 \pi_0 \pi_0 , \pi_1) c )\pi_0 w t && = (\pi_0\pi_1 , ( \pi_0 \pi_0 \pi_0 , \pi_1) c )t' \\
 &= (\pi_0\pi_1 , ( \pi_0 \pi_0 \pi_0 , \pi_1) c )s' &&=p_1 t' \\
 &= p_1 s' &&
\end{align*}
\end{proof}

\noi Now we define the pairs, $(\mC, W)$, that admit internal quotient graphs of fractions in $\cE$ for which the pullbacks of the induced source and target maps on the coequalizers $\mCW_1$ exist. Notice this only requires the coequalizer of the pair $p_0 , p_1 : \slb \to \spn$ so these could exist without the other reflexive graphs. Being able to construct proofs for coherences for associativity of composition for longer paths of arrows becomes unclear without the universal property of the coequalizers of the other internal reflexive graphs, $p_0^n $ and $ p_1^n$. 

\begin{defn}\label{def (mC,W) admits paths of fractions}
We say the pair $(\mC, W)$ {\em admits paths of fractions} if the coequalizer $\mCW_1$ exists and $\cE$ admits pullbacks of the induced source or target maps, $s, t : \mCW_1 \to \mC_0$, along one another as well as the source and target maps of $\spn$ and $\slb$. 
\end{defn}

\noi The next definition is the last one in this section and describes all the structure we need for the internal categories we consider. 

\begin{defn}\label{def candidate for internal fractions}
We say the pair $(\mC, W)$ is a {\em candidate for internal fractions} if it satisfies Definitions \ref{def (mC,W) is a pre-candidate for internal fractions}, \ref{def (mC, W) admits internal quotient graphs of fractions}, and \ref{def (mC,W) admits paths of fractions} and the induced left and right product functors on the slice category for each of the source and target maps for $\slb, \spn,$ and $\mCW_1$ 

\[ (-) \times s : , \quad t \times (-) : \cE / \mC_0 \to \cE / \mC_0 \] 

\noi preserve reflexive coequalizers.

\end{defn}

For the rest of this thesis we will assume the pair $(\mC , W)$ is a candidate for internal fractions in $\cE$. The remainder of this chapter consists of general lemmas which are combined to state that the coequalizers

\begin{center}
\begin{tikzcd}[column sep = large ]
\slb \tensor[_t]{\times}{_s} \dots \tensor[_t]{\times}{_s} \slb \rar[shift left, "p_0^n"] \rar[shift right, "p_1^n"'] & \spn \tensor[_t]{\times}{_s} \dots \tensor[_t]{\times}{_s} \spn \rar[r, two heads, "q_n"] & \mCW_n
\end{tikzcd}
\end{center}

\noi exist in $\cE$ for each $n \geq 1$. The following lemmas combine in Proposition~\ref{prop path pb is coequalizer} to show how the coequalizer, $\mCW_2$, of the reflexive pair, $p_0^2$ and $p_1^2$, above coincides with the pullback, $\mCW_1 \tensor[_t]{\times}{_s} \mCW_1$, of the induced source and target maps in Lemma~\ref{lem defining source and target of mCW}.

\begin{lem}\label{lem underlying-structure functor from slice cat reflext coequalizers}
For any category $\cE$, the forgetful functor, $U : \cE / \mC_0 \to \cE$, which maps objects $A \to \mC_0$ in $\cE / \mC_0$ to objects $A$ in $\cE$ and is defined similarly on arrows, reflects coequalizers. 
\end{lem}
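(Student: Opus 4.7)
The plan is a direct unfolding of the slice-category structure. Suppose we have a parallel pair $f,g : (X,x) \rightrightarrows (Y,y)$ in $\cE/\mC_0$ (so $x : X \to \mC_0$, $y : Y \to \mC_0$, $fy = x = gy$) together with a candidate coequalizing morphism $q : (Y,y) \to (Q,q')$ whose image $Uq : Y \to Q$ is a coequalizer of $Uf, Ug$ in $\cE$. We must show $q$ is a coequalizer in $\cE/\mC_0$.

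First I would take an arbitrary cocone $h : (Y,y) \to (Z,z)$ in $\cE/\mC_0$ with $fh = gh$. Applying $U$ yields a cocone in $\cE$ over $Uf, Ug$, so the universal property of $Uq$ produces a unique arrow $k : Q \to Z$ in $\cE$ with $(Uq) \, k = Uh$. The only nontrivial step is to verify that $k$ is actually a morphism in the slice, i.e.\ that $k z = q'$. For this I would postcompose the equation $q k = h$ with $z : Z \to \mC_0$ to obtain $q k z = h z = y$, and observe that $q q' = y$ since $q$ is a morphism in $\cE/\mC_0$. Therefore $q k z = q q'$. Because $Uq$ is a coequalizer it is in particular an epimorphism in $\cE$, so we may cancel $q$ on the left to conclude $k z = q'$, and hence $k$ lifts to a morphism $(Q, q') \to (Z, z)$ in the slice.

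Uniqueness of $k$ in the slice is immediate: any slice morphism $k' : (Q,q') \to (Z,z)$ satisfying $q k' = h$ has underlying arrow $U k'$ satisfying $(Uq)(Uk') = Uh$, and the universal property of $Uq$ in $\cE$ forces $Uk' = k$; since $U$ is faithful (in fact injective on hom-sets over a fixed pair of objects), $k' = k$. This completes the verification that $q$ is a coequalizer in $\cE/\mC_0$.

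The argument is entirely formal and I do not expect any real obstacle; the only thing to be careful about is using epicness of $q$ at the right moment, which is harmless here because coequalizers are always epimorphisms.
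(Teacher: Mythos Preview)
Your proof is correct and follows essentially the same approach as the paper: obtain the factorization $k$ from the universal property of the coequalizer $Uq$ in $\cE$, then check that $k$ lies over $\mC_0$ and is unique. If anything, you are slightly more explicit than the paper about the one nontrivial point, namely that $k z = q'$ follows by cancelling the epimorphism $q$ from $q\,k z = y = q\,q'$.
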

\begin{proof}
Suppose we have a commuting diagram 

\[\begin{tikzcd}[column sep = large]
A \rar[shift left, "f"] \rar[shift right, "g"'] \ar[dr, "a"'] & B \dar["b"] \rar["h"] & C \ar[dl, "c"] \\
& \mC_0 & 
\end{tikzcd}\]

\noi such that 

\[\begin{tikzcd}[column sep = large]
A \rar[shift left, "f"] \rar[shift right, "g"'] & B \rar["h"] & C 
\end{tikzcd}\]

\noi is a coequalizer in $\cE$. Now suppose there exists an arrow $x : X \to \mC_0$ and another arrow $\varphi : B \to X$ such that the diagram 

\[\begin{tikzcd}[column sep = large]
A \rar[shift left, "f"] \rar[shift right, "g"'] \ar[dr, "a"'] & B \dar["b"] \rar["\varphi"] & X \ar[dl, "x"] \\
& \mC_0 & 
\end{tikzcd}\]

\noi commutes in $\cE$. In $\cE$ we get a unique map $\theta : C \to X$ such that $\theta x = c$ by the universal property of the coequalizer:

\[\begin{tikzcd}[column sep = large, row sep = large]
A \rar[shift left, "f"] \rar[shift right, "g"'] \ar[ddrr, bend right, "a"'] & B \ar[ddr,"b"] \rar["h"] \ar[dr, "\varphi"'] & C \dar[dotted, "\theta"] \dar[dd, bend left, "c"] \\
& & X \dar["x"] \\
& & \mC_0 
\end{tikzcd}\]

\noi This implies that for any $\varphi : b \to x$ in $\cE / \mC_0$, there exists a unique $\theta : c \to x$ such that the diagram

\[\begin{tikzcd}[column sep = large]
a \rar[shift left, "f"] \rar[shift right, "g"'] 
& b \rar["h"] \ar[dr, "\varphi"'] 
& c \dar[dotted, "\theta"] \\
& & x
\end{tikzcd}\]

\noi commutes. It follows that $U$ reflects coequalizers. 
\end{proof}

\noi Notice that the proof above holds for reflexive coequalizers as well, since a section for a reflexive pair in $\cE / \mC_0$ is a section of the underlying reflexive pair in $\cE$. Next we restate and prove Lemma 4.7 from \cite{BarrWells}. It will be used in the proofs of Lemma~\ref{lem composable pair pb is coequalizer} and Proposition~\ref{prop path pb is coequalizer} immediately after.

\begin{lem}\label{lem diagonal reflexive coequalizer lemma}
In any category, if the top row and right column are reflexive coequalizers and the middle column is a reflexive parallel pair, then the diagonal is a coequalizer.

\[\begin{tikzcd}[column sep = huge, row sep = huge]
A \rar[shift left, "f"] \rar[shift right,"g"'] 
& B \dar[shift left, "f'"] \dar[shift right,"g'"'] \rar["h"] \lar[]
& C \dar[shift left, "f''"] \dar[shift right, "g''"' ] \\
& B' \rar["h'"'] \uar[]
& C' \dar["h''"] \uar[] \\
&& C''
\end{tikzcd}\]
\end{lem}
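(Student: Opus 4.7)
The plan is to verify that the lower-right diagonal composite $h' h'' : B' \to C''$ is a coequalizer of the middle-column reflexive pair $f', g' : B \to B'$.

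First, I would check the fork condition: combining the commutativity of the two upper-right squares, $f' h' = h f''$ and $g' h' = h g''$, with the equality $f'' h'' = g'' h''$ (which holds because $h''$ is the coequalizer of $(f'', g'')$), one obtains
\[ f' h' h'' = h f'' h'' = h g'' h'' = g' h' h''. \]

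Second, I would establish the universal property by descending through the two given coequalizers in succession. Given a test map $k : B' \to X$ with $f' k = g' k$, set $\tilde k := f' k = g' k : B \to X$. The strategy is to first show $\tilde k$ descends through $h : B \to C$ to a map $\bar k : C \to X$ (using that $h$ is the coequalizer of $(f,g)$), and then to show $\bar k$ descends through the right-column coequalizer $h''$ to produce the sought map $\theta : C'' \to X$. The compatibility data needed at each descent --- namely that $\tilde k$ equalizes $(f,g)$ and that the resulting intermediate map equalizes $(f'', g'')$ --- would be supplied by combining the commutativity of the upper-right squares with the splittings of the two reflexive pairs, which let me transport the relation $f'k = g'k$ across the diagram: the retraction of $(f,g)$ pulls the equation back to $B$ via $A$, and the retraction of $(f'',g'')$ pushes it forward from $C$ to $C'$.

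The main obstacle will be the second descent: checking that $\bar k : C \to X$ actually coequalizes $f'', g''$ so that it factors through $h''$. Here a careful chase is required, using that $h$ is an epimorphism (as a coequalizer) so that equalities on $C$ can be tested after precomposition with $h$, and then invoking the commuting squares $f' h' = h f''$ and $g' h' = h g''$ to reduce the required coequalization back to the hypothesis $f' k = g' k$. Uniqueness of $\theta$ is then immediate because $h' h''$ is a composite of regular epimorphisms, hence itself an epimorphism.
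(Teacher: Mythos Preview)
Your proposal misidentifies the diagonal. The lemma asserts that $h'h''$ is the coequalizer of the \emph{composite} pair $ff',\, gg' : A \to B'$, not of the middle-column pair $f', g' : B \to B'$. This is both what the paper proves and what the subsequent application needs: in the proof that $\mCW_2$ is the pullback $\mCW_1 \tensor[_t]{\times}{_s} \mCW_1$, the diagonal delivers the coequalizer of $(p_0 \times p_0,\, p_1 \times p_1) = (ff', gg')$, whereas the coequalizer of $(f', g') = (p_0 \times 1,\, p_1 \times 1)$ is $q \times 1 : \spn^2 \to \mCW_1 \tensor[_t]{\times}{_s} \spn$, not $q \times q$. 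So the statement you set out to prove is false in general.

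Accordingly, your first descent step breaks: from $f'k = g'k$ alone you cannot conclude $f\tilde k = g\tilde k$, i.e.\ $ff'k = gg'k$. The section $B \to A$ of $(f,g)$ runs the wrong way for this; it lets one deduce $f'k = g'k$ \emph{from} $ff'k = gg'k$, which is precisely how the paper opens its argument. With the correct hypothesis $ff'x = gg'x$ in hand, the two-descent outline you sketch is essentially the paper's proof: one obtains $\theta : C \to X$ with $h\theta = f'x = g'x$, and then---this is the second point you gloss over---one must first precompose with the section $s : C' \to C$ of $(f'',g'')$ to get a map out of $C'$ before invoking the universal property of $h''$; the check that $s\theta$ coequalizes $(f'',g'')$ uses that $h$ is epic together with the serial commutativity $hf'' = f'h'$, $hg'' = g'h'$ and the compatibility of the sections with $h, h'$.
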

\begin{proof}
Let $x : B' \to X$ be any arrow in the category such that 
\[ f f' x = g g' x.\]
\noi We claim the following diagram commutes where the notation for the sections, $s$, is suppressed: 

\[\begin{tikzcd}[column sep = huge, row sep = huge]
A \rar[shift left, "f"] \rar[shift right,"g"'] 
& B \dar[shift left, "f'"] \dar[shift right,"g'"'] \lar[] \rar["h"] 
& C \dar[shift left, "f''"] \dar[shift right,"g''"']  \\
& B' \dar["x"'] \uar[] \rar["h'"' near end]
& \ar[dl,dotted, "s \theta"] C' \dar["h''"] \uar[] \\
&X \ar[from = uur, dotted, "\theta"' near start, crossing over]& C'' \lar[dotted, "\gamma"] 
\end{tikzcd}\] 

\noi Pre-composing the common section of $f$ and $g$ gives 
\[f' x = g'x \]
\noi and induces the unique map $\theta : C \to X$ such that 
\[h \theta = f'x = g'x \]
\noi by the universal property of the coequalizer $C$. Then 
\[h g'' s \theta = g' h' s \theta = g' s h \theta = g' s g' x = g' x \]
\noi and similarly 
\[ h f'' s \theta = f' x \]
\noi which implies 
\[ h g'' s \theta = h f'' s \theta.\]
\noi By the universal property of the coequalizer $C$, we have that 
\[ g'' s \theta = f'' s \theta\] 
\noi which induces the unique map $\gamma : C'' \to X$ such that 
\[ h'' \gamma = s \theta.\] 
\noi Now we can also see that 
\[ h' h'' \gamma = h' s \theta = s h \theta = s f' x = x \]
\noi and it is unique by the universal property of the coequalizer $C''$.  It follows that the diagonal is a coequalizer and it is reflexive with the section given by composing the common section of $f'$ and $g'$ and the common section of $f$ and $g$. 

\end{proof}

\noi We now apply Lemmas \ref{lem underlying-structure functor from slice cat reflext coequalizers} and \ref{lem diagonal reflexive coequalizer lemma} to get the coequalizers we need to from the internal category of fractions. 
\begin{lem}\label{lem composable pair pb is coequalizer}
The pullback 

\[\begin{tikzcd}[column sep = large, row sep = large]
\mCW_1 \tensor[_t]{\times}{_s} \mCW_1 \rar["\pi_1"] \dar["\pi_0"'] & \mCW_1 \dar["s"] \\
\mCW_1 \rar["t"'] & \mC_0
\end{tikzcd} \]

\noi is also a coequalizer 

\[\begin{tikzcd}[column sep = large, row sep = large]
\slb \tensor[_t]{\times}{_s}\slb \rar[shift left, "p_0^2"] \rar[shift right, "p_1^2"'] & \spn \tensor[_t]{\times}{_s} \spn \rar[two heads, "q \times q"] & \mCW_1 \tensor[_t]{\times}{_s} \mCW_1
\end{tikzcd}\]

\noi in $\cE$. 
\end{lem}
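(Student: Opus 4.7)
The plan is to present the desired coequalizer as the diagonal of a grid of reflexive coequalizers, obtained by invoking the product preservation hypothesis in Definition~\ref{def candidate for internal fractions} on each factor of the pullback, and then applying Lemma~\ref{lem diagonal reflexive coequalizer lemma}.

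First I would view the reflexive coequalizer $\slb \rightrightarrows \spn \xrightarrow{q} \mCW_1$ as living in the slice $\cE/\mC_0$, using the target maps as the structure morphisms (compatibility of these targets with $q$ is built into the construction of $t$ on $\mCW_1$ in Lemma~\ref{lem defining source and target of mCW}). Applying the functor $(-) \times_{\mC_0} \slb$, with $\slb$ equipped with its source map (matching the hypothesis's $(-) \times s$ for $X = \slb$), preserves this reflexive coequalizer, giving the top row
\[ \slb \tensor[_t]{\times}{_s} \slb \rightrightarrows \spn \tensor[_t]{\times}{_s} \slb \twoheadrightarrow \mCW_1 \tensor[_t]{\times}{_s} \slb. \]
Analogously, viewing the same coequalizer via source maps and applying $\mCW_1 \times_{\mC_0} (-)$ (with $\mCW_1$ carrying its target map, so this is $t \times (-)$ for $X = \mCW_1$) produces the right column
\[ \mCW_1 \tensor[_t]{\times}{_s} \slb \rightrightarrows \mCW_1 \tensor[_t]{\times}{_s} \spn \twoheadrightarrow \mCW_1 \tensor[_t]{\times}{_s} \mCW_1. \]
The middle column $\spn \tensor[_t]{\times}{_s} \slb \rightrightarrows \spn \tensor[_t]{\times}{_s} \spn$ arises by applying $\spn \times_{\mC_0} (-)$ (again $t \times (-)$, now for $X = \spn$) to the reflexive pair $\slb \rightrightarrows \spn$; since the original pair is reflexive, so is its image.

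Assembling this data into a $3 \times 3$ grid with the top row, right column and middle column as above, Lemma~\ref{lem diagonal reflexive coequalizer lemma} asserts that the diagonal is itself a coequalizer. A routine bookkeeping check identifies the composite of the horizontal top pair as $(p_0 \times 1)(1 \times p_0) = p_0^2$ and $(p_1 \times 1)(1 \times p_1) = p_1^2$, and identifies the composite of the rightmost two coequalizing maps as $q \times q$, yielding precisely the statement of the lemma. Finally, Lemma~\ref{lem underlying-structure functor from slice cat reflext coequalizers} transports the coequalizer from $\cE/\mC_0$ back down to $\cE$.

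The only delicate point I anticipate is bookkeeping: one must match each of the three product functors appearing in the argument with the correct choice among the six product functors granted by Definition~\ref{def candidate for internal fractions}, and check that at each stage the source or target map chosen on $\slb$, $\spn$, or $\mCW_1$ is compatible with the coequalizer legs so that the functor genuinely acts on the whole diagram. Once these compatibilities are lined up, the conclusion follows immediately from the diagonal lemma without any additional calculation.
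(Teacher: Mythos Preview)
Your grid-and-diagonal strategy matches the paper's approach exactly (modulo a harmless flip in which factor you vary first). The gap is in your final sentence. Lemma~\ref{lem underlying-structure functor from slice cat reflext coequalizers} says the forgetful functor $U:\cE/\mC_0\to\cE$ \emph{reflects} coequalizers, not that it preserves them; and preservation is what you need to conclude that the diagonal coequalizer you built in $\cE/\mC_0$ is still a coequalizer in $\cE$. In fact $U$ need not preserve coequalizers in general: one can cook up a small $\cE$ in which a pair $f,g:a\rightrightarrows b$ has a coequalizer $q:b\to c$ in the slice over $X$, while some $h:b\to d$ with $fh=gh$ and no map $d\to X$ fails to factor through $q$ in $\cE$. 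So the descent step, as written, does not go through.

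The paper closes this gap by invoking the standing hypothesis (Definition~\ref{def (mC, W) admits internal quotient graphs of fractions}) that the coequalizer $\mCW_2$ of $p_0^2,p_1^2$ already exists in $\cE$. It then produces comparison maps in both directions: the universal property of $\mCW_2$ in $\cE$ gives $\overline{\pi_0 q}\times\overline{\pi_1 q}:\mCW_2\to\mCW_1\tensor[_t]{\times}{_s}\mCW_1$, and the universal property of the pullback as coequalizer in $\cE/\mC_0$ gives $\gamma$ back, and one checks they are mutually inverse. Equivalently, you can phrase this as ``$U$ creates colimits, and since the coequalizer exists in $\cE$ the slice coequalizer must map down to it''; either way, you must use the assumed existence of $\mCW_2$ in $\cE$, which your proposal never invokes. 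Note also that Lemma~\ref{lem underlying-structure functor from slice cat reflext coequalizers} is used at the \emph{start} of the argument---to lift the known $\cE$-coequalizer $\slb\rightrightarrows\spn\to\mCW_1$ up into $\cE/\mC_0$ so the product functors can act on it---not at the end.
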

\begin{proof}

Since $(\mC , W)$ admits internal quotient graphs of fractions we know that the object $\mCW_1$ is a reflexive coequalizer of $p_0$ and $p_1$. By Lemma~\ref{lem underlying-structure functor from slice cat reflext coequalizers}, the diagrams 

\[ \begin{tikzcd}[column sep = large, row sep = large]
\slb \rar[shift left, "p_0"] \rar[shift right, "p_1"'] 
\ar[dr, "s"'] 
& \spn \rar[two heads, "q"] 
\dar["s"] 
& \mCW_1 \ar[dl, "s"] \\
& \mC_0 & 
\end{tikzcd}\] 
\noi and 
\[ \begin{tikzcd}[column sep = large, row sep = large]
\slb \rar[shift left, "p_0"] \rar[shift right, "p_1"'] 
\ar[dr, "t"'] 
& \spn \rar[two heads, "q"] 
\dar["t"] 
& \mCW_1 \ar[dl, "t"] \\
& \mC_0 & 
\end{tikzcd}\] 
\noi are coequalizers in $\cE / \mC_0$. These coequalizers are preserved by the left and right product functors on $\cE/\mC_0$ induced by the source and target maps for $\slb, \spn,$ and $\mCW_1$ in $\cE / \mC_0$. This means the top row and right column in the following diagram are reflexive coequalizers in $\cE / \mC_0$ 
\[\begin{tikzcd}[column sep = huge, row sep = huge]
\slb \tensor[_t]{\times}{_s} \slb \rar[shift left, "1 
\times p_0"] \rar[shift right,"1 \times p_1"'] 
& \slb \tensor[_t]{\times}{_s} \spn \dar[shift left, "p_0 \times 1"] \dar[shift right,"p_1 \times 1"'] \rar["1 \times q"] \lar[]
& \slb \tensor[_t]{\times}{_s} \mCW_1 \dar[shift left, "p_0 \times 1"] \dar[shift right, "p_1 \times 1"' ] \\
& \spn \tensor[_t]{\times}{_s} \spn \rar["1 \times q"'] \uar[]
& \spn \tensor[_t]{\times}{_s} \mCW_1 \dar["q \times 1 "] \uar[] \\
&& \mCW_1 \tensor[_t]{\times}{_s} \mCW_1
\end{tikzcd}\]

\noi where we suppress the arrows into $\mC_0$ given by the commuting pullback squares. The middle row is a reflexive pair whose coequalizer, $q \times 1$, is just not drawn in the diagram. Lemma~\ref{lem diagonal reflexive coequalizer lemma} says the diagonal is a coequalizer in $\cE / \mC_0$: 

\[\begin{tikzcd}[column sep = large, row sep = large]
\slb \tensor[_t]{\times}{_s}\slb \rar[shift left, "p_0^2"] \rar[shift right, "p_1^2"'] \ar[dr, "\pi_0 t"'] 
& \spn \tensor[_t]{\times}{_s} \spn \rar[two heads, "q \times q"] \dar["\pi_0 t"] 
& \mCW_1 \tensor[_t]{\times}{_s} \mCW_1 \ar[dl, "\pi_0 t"] \\
& \mC_0 & 
\end{tikzcd}\]

\noi Let $q_2 : \spn \tensor[_t]{\times}{_s} \spn \to \mCW_2$ denote the coequalizer of $p_0^2$ and $p_1^2$ in $\cE$. Notice the following diagram commutes in $\cE$ 

\[\begin{tikzcd}[column sep = huge, row sep = huge]
\slb \tensor[_t]{\times}{_s}\slb \rar[shift left, "p_0^2"] \rar[shift right, "p_1^2"'] \ar[d, "\pi_i "'] 
& \spn \tensor[_t]{\times}{_s} \spn \rar[two heads, "q_2"] \dar["\pi_i"] 
\ar[dr, "q \times q"' ]
& \mCW_2 \ar[d,dotted, "\overline{\pi_i q}"] \\
\slb \rar[shift left, "p_0"] \rar[shift right, "p_1"'] & \spn \rar["q"'] & \mCW_1
\end{tikzcd}\]

\noi by the universal property of the coequalizer, $\mCW_2$, in $\cE$. The same universal property induces the following unique map between the coequalizer and the pullback in the following commuting diagram: 

\[\begin{tikzcd}[column sep = huge, row sep = huge]
\slb \tensor[_t]{\times}{_s}\slb \rar[shift left, "p_0^2"] \rar[shift right, "p_1^2"'] \ar[dr, "\pi_0 t"'] 
& \spn \tensor[_t]{\times}{_s} \spn \rar[two heads, "q_2"] \dar["\pi_0 t"] 
\ar[dr, "q \times q"' ]
& \mCW_2 \ar[d,dotted, "\overline{\pi_0 q} \times \overline{\pi_1 q}"] \\
& \mC_0 & \mCW_1 \tensor[_t]{\times}{_s} \mCW_1 \lar["\pi_0 t "]
\end{tikzcd}\]

\noi More precisely, since $\pi_0t$ coequalizes $p_0^2$ and $p_1^2$ above, the universal property of $\mCW_2$ says there is a unique $\overline{\pi_0 t} : \mCW_2 \to \mC_0$ such that $q_2 \overline{\pi_0 t
} = \pi_0 t$. In particular, because the forgetful functor $\cE/\mC_0 \to \cE$ preserves commuting triangles, \[\overline{\pi_0 t} = (\overline{\pi_0 q} \times \overline{\pi_1 q}) \pi_0 t = \overline{\pi_0 q} t \]
Now the diagram

\[
\begin{tikzcd}[column sep = huge, row sep = huge]
\slb \tensor[_t]{\times}{_s}\slb \rar[shift left, "p_0^2"] \rar[shift right, "p_1^2"'] \ar[dr, "\pi_0 t"'] 
& \spn \tensor[_t]{\times}{_s} \spn \rar[two heads, "q \times q"] \dar["\pi_0 t"] 
& \mCW_1 \tensor[_t]{\times}{_s} \mCW_1 \ar[dl, "\pi_0 t" near start] \dar[dd, dotted,"\gamma"] \\
& \mC_0 & \\
& &\mCW_2 \ar[ul, "\overline{\pi_0 t }"] \ar[from = uul, crossing over, "q_2"]\\
\end{tikzcd}\]

\noi commutes in $\cE$ and induces the map $\gamma$ on the right by the universal property of the coequalizer, $\pi_0 t : \mCW_1 \tensor[_t]{\times}{_s} \mCW_1 \to \mC_0$, in $\cE / \mC_0$. In particular we have that $q_2 = (q \times q) \gamma$. Finally we can see

\[ q_2 (\overline{\pi_0 q} \times \overline{\pi_1 q}) \gamma = (q \times q) \gamma = q_2\]
\noi and 
\[ \gamma (\overline{\pi_0 q} \times \overline{\pi_1 q}) \pi_0 t = \gamma \overline{\pi_0 q} t = \pi_0 t .\]

\noi By the universal property of $\mCW_2$ we have that 
\[ (\overline{\pi_0 q} {,} \overline{\pi_1 q}) \gamma = 1_{\mCW_2} \]
\noi and by the universal property of the coequalizer, $\mCW_1 \tensor[_t]{\times}{_s} \mCW_1$, in $\cE / \mC_0$
\[ \gamma (\overline{\pi_0 q} {,} \overline{\pi_1 q}) = 1_{\mCW_1 \tensor[_t]{\times}{_s} \mCW_1}. \]

\noi It follows that

\[ \mCW_2 \cong \mCW_1 \tensor[_t]{\times}{_s} \mCW_1.\]
\end{proof}

\begin{prop}\label{prop path pb is coequalizer}
The paths of composable arrows of length $n$ in $\mCW$ given by pullbacks 

\[ \mCW_1 \tensor[_t]{\times}{_s} ... \tensor[_t]{\times}{_s} \mCW_1 \] 
\noi of $n$ copies of $\mCW_1$ are coequalizers of the parallel pairs

\[\begin{tikzcd}[column sep = large]
\slb \tensor[_t]{\times}{_s} \dots \tensor[_t]{\times}{_s} \slb \rar[shift left, "p_0^n"] \rar[shift right, "p_1^n"'] & \spn \tensor[_t]{\times}{_s} \dots \tensor[_t]{\times}{_s} \spn 
\end{tikzcd},\]

\noi for every $n \geq 2$. 
\end{prop}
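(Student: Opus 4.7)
The plan is to proceed by induction on $n$, taking Lemma~\ref{lem composable pair pb is coequalizer} as the base case $n=2$ and iterating the diagonal-coequalizer argument used there. Throughout I will work in the slice category $\cE / \mC_0$ (using Lemma~\ref{lem underlying-structure functor from slice cat reflext coequalizers} to transport the conclusion back to $\cE$) so that the hypothesis in Definition~\ref{def candidate for internal fractions} — that the left and right product functors on $\cE / \mC_0$ preserve reflexive coequalizers — is directly available.

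For the inductive step, assume the coequalizer description holds for some $n \geq 2$, so that $q_n : \spn^n \to \mCW_1^n$ is a reflexive coequalizer of $p_0^n, p_1^n : \slb^n \to \spn^n$ in $\cE / \mC_0$, where $\spn^n$ and $\slb^n$ denote the $n$-fold pullbacks over source/target. I will then consider the $3 \times 3$ diagram
\[
\begin{tikzcd}[column sep = large, row sep = large]
\slb^n \tensor[_t]{\times}{_s} \slb \rar[shift left] \rar[shift right] \dar[shift left] \dar[shift right]
& \spn^n \tensor[_t]{\times}{_s} \slb \dar[shift left] \dar[shift right] \rar
& \mCW_1^n \tensor[_t]{\times}{_s} \slb \dar[shift left] \dar[shift right] \\
\slb^n \tensor[_t]{\times}{_s} \spn \rar[shift left] \rar[shift right] \dar
& \spn^n \tensor[_t]{\times}{_s} \spn \dar \rar
& \mCW_1^n \tensor[_t]{\times}{_s} \spn \dar \\
\slb^n \tensor[_t]{\times}{_s} \mCW_1 \rar
& \spn^n \tensor[_t]{\times}{_s} \mCW_1 \rar
& \mCW_1^n \tensor[_t]{\times}{_s} \mCW_1
\end{tikzcd}
\]
where the horizontal rows are obtained by applying $\slb^n \tensor[_t]{\times}{_s} (-)$, $\spn^n \tensor[_t]{\times}{_s} (-)$, and $\mCW_1^n \tensor[_t]{\times}{_s} (-)$ to the reflexive coequalizer $\slb \rightrightarrows \spn \twoheadrightarrow \mCW_1$, and the columns are obtained analogously by applying $(-) \tensor[_t]{\times}{_s} \slb$, $(-) \tensor[_t]{\times}{_s} \spn$, $(-) \tensor[_t]{\times}{_s} \mCW_1$ to $\slb^n \rightrightarrows \spn^n \twoheadrightarrow \mCW_1^n$. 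Preservation of reflexive coequalizers by these product functors makes the top row, bottom row, left column, and right column all reflexive coequalizers in $\cE / \mC_0$.

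Applying Lemma~\ref{lem diagonal reflexive coequalizer lemma} to the upper-left $2 \times 2$ block together with the right column (or, symmetrically, with the bottom row) exhibits the diagonal composite $\slb^n \tensor[_t]{\times}{_s} \slb \rightrightarrows \spn^n \tensor[_t]{\times}{_s} \spn \twoheadrightarrow \mCW_1^n \tensor[_t]{\times}{_s} \mCW_1$ as a coequalizer. Unwinding definitions, the parallel pair on the left is exactly $p_0^{n+1}, p_1^{n+1}$ (by Lemma~\ref{lem p_0^n and p_1^n are reflexive pairs for all n}) and the quotient map is $q_n \times q$. Since $\mCW_1^{n+1} = \mCW_1^n \tensor[_t]{\times}{_s} \mCW_1$ by definition, this closes the induction. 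Finally, Lemma~\ref{lem underlying-structure functor from slice cat reflext coequalizers} (applied pointwise) transports the coequalizer statement from $\cE / \mC_0$ back to $\cE$.

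The main technical point to verify carefully is that $p_0^{n+1}, p_1^{n+1}$ really agree with the diagonal pair produced by the $3 \times 3$ lemma; this amounts to matching the pullback projections $(\pi_0 p_i^n, \pi_n p_i)$ with the composite $(1 \times p_i) \circ (p_i^n \times 1) = (p_i^n \times p_i)$, which follows from the universal property of the iterated pullbacks. The only real obstacle is bookkeeping: keeping track of which side of each product functor is being used at each stage and confirming that the reflexivity sections (for both rows and columns) are compatible, so that the hypotheses of Lemma~\ref{lem diagonal reflexive coequalizer lemma} are met. Once that is done, the proof is essentially a mechanical iteration of the $n=2$ argument already carried out in Lemma~\ref{lem composable pair pb is coequalizer}.
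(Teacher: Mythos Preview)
Your proposal is correct and follows essentially the same inductive strategy as the paper: both take Lemma~\ref{lem composable pair pb is coequalizer} as the base case, work in $\cE/\mC_0$ via Lemma~\ref{lem underlying-structure functor from slice cat reflext coequalizers}, and apply Lemma~\ref{lem diagonal reflexive coequalizer lemma} to a product grid built from the preservation hypothesis in Definition~\ref{def candidate for internal fractions}. The only differences are cosmetic: the paper places the single factor on the left ($\slb \tensor[_t]{\times}{_s} \slb^n$) while you place it on the right, and the paper writes out only the portion of the grid that Lemma~\ref{lem diagonal reflexive coequalizer lemma} actually needs rather than the full $3\times 3$. One small slip in your write-up: your description of how the rows and columns are obtained is transposed (applying $\slb^n \tensor[_t]{\times}{_s}(-)$ to $\slb \rightrightarrows \spn \twoheadrightarrow \mCW_1$ produces the left \emph{column} of your diagram, not a row), but the diagram itself and the argument are fine.
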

\begin{proof}
This proof follows by induction on the length of path of composable arrows. Use Lemma~\ref{lem composable pair pb is coequalizer} as the base case. Assume the result holds for paths of length $n$. Then the following diagram is a reflexive coequalizer, 

\[\begin{tikzcd}[column sep = large, row sep = large]
 \slb^n \rar[shift left, "p_0^n"] \rar[shift right,"p_1^n"'] 
& \spn^n \rar["q \times ... \times q"] \lar[]
& \mCW_1 \tensor[_t]{\times}{_s} ... \tensor[_t]{\times}{_s} \mCW_1
\end{tikzcd},\]

\noi where $\slb^n$ and $\spn^n$ are pullbacks defining paths of composable sailboats and spans of length $n$ respectively. On the right we have iterated pullbacks of $n$ copies of $\mCW_1$. By Lemma~\ref{lem underlying-structure functor from slice cat reflext coequalizers}, we can view these as reflexive coequalizers in $\mC_0$ using the induces source and target maps given by taking the left-most or right-most pullback projections and applying the source or target maps on $\slb, \spn$, and $\mCW_1$ respectively. Since $(\mC, W)$ is a candidate for internal fractions, the top row and right column in the following diagram are reflexive coequalizers in $\cE / \mC_0$,

\[\begin{tikzcd}[column sep = huge, row sep = huge]
\slb \tensor[_t]{\times}{_s} \slb^n \rar[shift left, "1 
\times p_0"] \rar[shift right,"1 \times p_1"'] 
& \slb \tensor[_t]{\times}{_s} \spn^n \dar[shift left, "p_0 \times 1"] \dar[shift right,"p_1 \times 1"'] \rar["1 \times q"] \lar[]
& \slb \tensor[_t]{\times}{_s} \mCW_n \dar[shift left, "p_0 \times 1"] \dar[shift right, "p_1 \times 1"' ] \\
& \spn \tensor[_t]{\times}{_s} \spn^n \rar["1 \times q"'] \uar[]
& \spn \tensor[_t]{\times}{_s} \mCW_n \dar["q \times 1 "] \uar[] \\
&& \mCW_1 \tensor[_t]{\times}{_s} \mCW_n
\end{tikzcd},\]

\noi and the middle column is a reflexive pair with common section $\varphi_s \times 1 : \spn \tensor[_t]{\times}{_s} \spn^n \to \slb \tensor[_t]{\times}{_s} \spn^n$. By Lemma~\ref{lem diagonal reflexive coequalizer lemma}, the diagonal is a (reflexive) coequalizer. Similarly to the proof of Lemma \ref{lem composable pair pb is coequalizer}

\[\mCW_{n+1} \cong \mCW_1 \tensor[_t]{\times}{_s} \mCW_n \cong\mCW_1 \tensor[_t]{\times}{_s} ( \mCW_1 \tensor[_t]{\times}{_s} ... \tensor[_t]{\times}{_s} \mCW_1) \]

\noi and we can drop the brackets on the right-hand side because taking pullbacks is associative up to canonical isomorphism.
\end{proof}

\subsection{Internal Fractions Axioms}\label{S the axioms (of internal fractions)}

Here we give an internal description of a weakened version of the axioms in \cite{GabZis} that allow us to define a category of fractions. The conditions for the class of arrows, $W$, which we intend to invert are weakened by not assuming that $W$ contains identities nor that it is closed under composition. Instead we assume that every object in $\mC_0$ is the target of some map in $W$, and that every composable pair in $W$ can be pre-composed by some arrow in $\mC_1$ to give a composite in $W$. The purpose of this, as shown in \cite{ThreeFsforBiCatsII}, is to allow for a smaller class of arrows to be inverted when constructing the category of fractions. In particular, when applied to the Grothendieck construction, this allows us to invert a canonical cleavage of the cartesian arrows in the category of elements rather than all of the cartesian arrows. In Section \ref{S IntFrc Applied to IntGroth} we see how the canonical cleavage of the cartesian arrows is easier to describe internally than all of the cartesian arrows. 

These axioms generally sound like, `for any diagram of a certain shape, there exist some filler arrows that make a larger diagram commute.' Internalizing these statements in a category of spaces like $\Top$ becomes an issue because, while we can form the objects representing such diagrams in $\Set$ and give them topologies, picking out the arrows to fill in the larger diagrams can rarely be done globally and continuously. For topological spaces one might work with effective descent covers to witness local information on a space that, when it satisfies a certain gluing condition, can be pasted together to give global information. On that note we ask that our category $\cE$ has a class of effective epimorphisms, $\cJ$, that are stable under pullback and composition. These allow us to witness the fractions axioms in $\mC$ locally and then construct global maps with them provided their coequalizer condition is satisfied. The coequalizer condition for these amounts to saying the structures we wish to define, like composition of spans for example, are well-defined. Stability under pullback and composition is required in order to witness multiple applications of the Internal Fractions Axioms. 

\begin{defn}\label{defn candidate context for internal fractions and covers}
 We say $(\cE, \cJ)$ is {\em a candidate context for internal fractions} if $\cJ$ is a class of effective epimorphisms that are stable under pullback and composition. We will refer to the elements of $\cJ$ as {\em covers}. 
\end{defn} 

\noi With a candidate context for internal fractions, $(\cE, \cJ)$, and a candidate for internal fractions, $(\mC, W)$, we can state the Internal Fractions Axioms below and ask whether $(\cE, \cJ)$ is a context for internal fractions for a given candidate for internal fractions, $(\mC, W)$, as defined in Definition~\ref{def candidate for internal fractions}.

\begin{defn}[Internal Fractions Axioms]\label{def Internal Fractions Axioms}
Let $(\cE, \cJ)$ be a candidate context for internal fractions, as in Definition~\ref{defn candidate context for internal fractions and covers}, and let $(\mC, W)$ be a candidate for internal fractions in $\cE$, as in Definition~\ref{def candidate for internal fractions}. We say $(\mC, W)$ {\em satisfies the internal (right) fractions axioms} or {\em admits an internal category of fractions} (with respect to $w : W \to \mC_1$) if the following conditions hold. 

\begin{enumerate}[leftmargin = *, label = \textbf{In.Frc(\arabic*)}]
\item The identity map $1_{\mC_0} : \mC_0 \to \mC_0$, admits a lift along $w t : W \to \mC_0$. 
\begin{center}
\begin{tikzcd}[]
& W \dar["wt"] \\
\mC_0 \ar[ur, dotted, "\tau"] \rar[equals] & \mC_0
\end{tikzcd}
\end{center}\

\item There exists a cover \begin{tikzcd} U \rar["/" marking, "u" near start] & W \tensor[_t]{\times}{_s} W \end{tikzcd} that admits a lift, $\omega : U \to W_\circ$, along $\pi_0 \pi_{12} : W_\circ \to W \tensor[_{wt}]{\times}{_{ws}} W$.
\begin{center}
\begin{tikzcd}[]
& W_\circ \dar["\pi_0 \pi_{12}"] \\
U \ar[ur, dotted, "\omega"] \rar["/" marking, "u"' near end ] & W \tensor[_{wt}]{\times}{_{ws}} W
\end{tikzcd}
\end{center}\

\item 
There exists a cover \begin{tikzcd} U \rar["/" marking, "u" near start] & \mC_1 \tensor[_t]{\times}{_{wt}} W\end{tikzcd} that admits a lift, $\theta : U \to W_\square$, along $(\pi_0 \pi_1 , \pi_1 \pi_1) : W_\square \to \mC_1 \tensor[_{t}]{\times}{_{wt}} W$.
\begin{center}
\begin{tikzcd}[]
& W_\square \dar["(\pi_0 \pi_1{ , }\pi_1 \pi_1)"] \\
U \ar[ur, dotted, "\theta"] \rar["/" marking, "u"' near end] & \mC_1 \tensor[_{t}]{\times}{_{wt}} W
\end{tikzcd}
\end{center}\

\item There exists a cover \begin{tikzcd} U \rar["/" marking, "u" near start] & \cP_{cq}\end{tikzcd} that admits a lift, $\delta : U \to \cP(\mC)$, along $\pi_1 : \cP(\mC) \to \cP_{cq}(\mC)$. 
\begin{center}
\begin{tikzcd}[]
& \cP(\mC) \dar["\pi_1"] \\
U \ar[ur, dotted, "\delta"] \rar["/" marking, "u"' near end] & \cP_{cq}(\mC)
\end{tikzcd}
\end{center}\
\end{enumerate}
\end{defn}\

The lifts in the axioms above represent the existence of fillers for diagrams in $\mC$ represented by codomains of the covers. In order to prove our composition is well-defined, associative, and satisfies the identity laws, we want to have a notion of base change. The following lemma shows how this works with stability of covers under pullback. 

\begin{lem}\label{lem extending Int.Frc. covers}
If $u : U \nrightarrow B$ is a cover that admits a lift along $f : A \to B$, then for any map $ g : X \to B$, there exists a cover, $u' : U' \nrightarrow X$, such that the diagram

\begin{center}
  \begin{tikzcd}
  & & A \dar["f"] \\
  U' \ar[urr, bend left, dotted, "\ell"] \rar["/" marking, "u'"' near end] & X \rar["g"'] & B
  \end{tikzcd}
\end{center}

\noi commutes in $\cE$. 
\end{lem}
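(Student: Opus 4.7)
The plan is to pull the given cover $u$ back along $g$, obtaining a cover over $X$, and then compose the original lift with the induced pullback projection to produce the desired $\ell$. This is essentially a base change argument using stability of $\cJ$ under pullback.

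More precisely, let $\ell_0 : U \to A$ be the given lift, so $\ell_0 f = u$. First I would form the pullback
\[
\begin{tikzcd}
U' \rar["\pi_1"] \dar["u'"'] \arrow[dr, phantom, "\usebox\pullback" , very near start, color=black] & U \dar["u"] \\
X \rar["g"'] & B
\end{tikzcd}
\]
in $\cE$. This pullback exists because $\cE$ admits pullbacks along covers (as part of the statement that $\cJ$ is stable under pullback in Definition~\ref{defn candidate context for internal fractions and covers}). Stability under pullback of $\cJ$ then gives that $u' \in \cJ$ is itself a cover.

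Next I would define $\ell := \pi_1 \ell_0 : U' \to A$ and verify the triangle commutes:
\[
\ell f \;=\; \pi_1 \ell_0 f \;=\; \pi_1 u \;=\; u' g,
\]
where the middle equality uses that $\ell_0$ is a lift of $u$ along $f$, and the last equality is the commutativity of the pullback square. This exhibits the required cover $u' : U' \nrightarrow X$ together with a lift $\ell$ such that $u' g = \ell f$, completing the proof.

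There is really no hard step: the content is entirely the stability of $\cJ$ under pullback, packaged as a base-change statement that will be convenient later when the Internal Fractions Axioms must be applied successively to different test objects in the constructions of Sections~\ref{S Defining mCW, the internal cat of fracs} and~\ref{S internal cat of fracs assoc and id laws}.
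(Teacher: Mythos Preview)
Your proof is correct and matches the paper's approach exactly: form the pullback of $u$ along $g$, invoke stability of $\cJ$ under pullback to get the cover $u'$, and compose the pullback projection with the original lift to obtain $\ell$. If anything, your write-up is slightly more explicit than the paper's in checking $\ell f = u' g$.
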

\begin{proof}
Since covers are stable under pullback, taking the pullback of the cover $u : U \nrightarrow B$ along the map $g : X \to B$ gives a cover $u' : U' \nrightarrow X$. Then the desired lift $\ell : U' \to A_i$ is given by post-composing the pullback projection with the lift $\ell$. This is seen in the following commuting diagram:

\begin{center}
   \begin{tikzcd}
  & & A_i \dar["f_i"] \\
  & U \ar[ur, bend left, dotted, ""] \rar["/" marking, "u" near start] & B_i \\
  U' \rar["/" marking, "u'"' near end] \ar[ur, "\pi"] \ar[uurr, bend left, dotted, "\ell"] & X \ar[ur, "g"'] & 
  \end{tikzcd}
\end{center}
\end{proof}

\noi Lemma~\ref{lem extending Int.Frc. covers} allows us to apply the axioms \textbf{In.Frc(1)} - \textbf{In.Frc(4)}, in Definition~\ref{def Internal Fractions Axioms} a little more broadly. To simplify notation in later proofs we will typically suppress the pullbacks in Lemma~\ref{lem extending Int.Frc. covers} and just write $u : U \to X$ for the cover $u': U' \to X$ with lift $\ell$.

\section{Defining the Internal Category of Fractions}\label{S Defining mCW, the internal cat of fracs}

In this section we define structure for an internal category of fractions, $\mCW$, for a pair $(\mC, W)$ that admits an internal category in a context for internal fractions, $(\cE, \cJ)$, as in Definitions \ref{def candidate for internal fractions} and \ref{def Internal Fractions Axioms}. Before we begin we should mention that the proofs in this section and Section \ref{S Assoc and Id Laws Int Groth}, using axioms \textbf{In.Frc(1)} - \textbf{In.Frc(4)}, can be difficult to follow so we have labeled and colour coded diagrams in a particular way. The diagrams labeled with capital letters, $(A), (B), (C), ...$, are representing diagrams in $\mC$ which contain the data of the usual proofs for the case $\cE = \Set$. The `cover diagrams' labeled with stars, $(\star), (\star \star), ...$, describe the corresponding applications of \textbf{In.Frc(1)} - \textbf{In.Frc(4)} whose covers and lifts allow us to witness the arrows represented by the diagrams $(A), (B), (C), ...$. Any reference to these diagrams or equations should be interpreted `locally' within the scope of the proof in which the reference occurs. We use covers to define the composition structure for the internal category of fractions and we also use the fact that they are epimorphisms to show other maps out of $\spn \tensor[_t]{\times}{_s} \spn $ are equal by showing the can be equalized by covers or their composites with other epimorphisms (such as coequalizer maps). 

The composition, source, target, and identity notation for internal categories is being overloaded, as well as notation for pullback and product projections. We have included colours in both kinds of diagrams mentioned above as well as the corresponding equations for the maps in $\cE$ of the star-labeled cover diagrams. The reference scope between these diagrams is contained within respective lemmas and propositions so there should be no issue with re-using labeling and colour patterns for diagrams in different lemma and proposition representing these two things similarly.

The object of objects is $\mC_0$, and the object of arrows is the coequalizer from Lemma~\ref{lem composable pair pb is coequalizer}: 

\[\mCW_0 = \mC_0 ,\qquad \qquad \left( \begin{tikzcd}[]
\slb \rar[shift left, "p_0"] \rar[shift right, "p_1"'] & \spn \rar[two heads, "q"'] & \mCW_1
\end{tikzcd} \right) \]

\noi The source and target maps $s,t : \mCW_1 \to \mCW_0$ are defined by the universal property of $\mCW_1$ as seen in Lemma~\ref{lem defining source and target of mCW}: 

\[
\begin{tikzcd}[]
& \mC_1 \rar["t"] & \mC_0 \\
\slb \rar[r,shift left, "p_0"] \rar[r,shift right, "p_1"'] & \spn \rar[r, two heads, "q"] \ar[d, "\pi_0"'] \uar["\pi_1"] & \mCW_1 \dar[dotted, "s"] \uar[dotted, "t"']\\
& W \rar["wt"'] & \mC_0 
\end{tikzcd}
\]

To define the identity map, $e : \mCW_0 \to \mCW_1 $, it helps to think about the case when $\cE = \Set$ for a moment. In this case, the identity for an object, $a$, in a category of fractions is represented by any span with two of the same legs in $W$: 

\begin{center}
\begin{tikzcd}[]
a & b \lar["\circ" marking, "u"'] \rar["\circ" marking, "u"] & a
\end{tikzcd}
\end{center}

\noi By \textbf{In.Frc(1)}, we have a section, $\alpha$, of the target map $w t : W \to \mC_0$ we can use to define the identity structure map. Take the unique span $\sigma_\alpha = (\alpha, \alpha w) : \mC_0 \to \spn$ induced by $\alpha$ and $\alpha w$ and post-compose it with the coequalizer map to define the identity map for $\mCW$. 

\begin{center}
\begin{tikzcd}[]
\mC_0 \dar[equals] \rar["\sigma_\alpha"]& \spn \dar[two heads, "q"]\\
\mCW_0 \rar[dotted, "e"']& \mCW_1
\end{tikzcd}
\end{center}\

\noi Now we will prove that this definition does not depend on the choice of section, $\alpha : \mC_0 \to W$, of $wt : W \to \mC_0$. 

\begin{prop}\label{Prop identity def doesnt depend on choice of section of wt}
The identity map, $e : \mCW_0 \to \mCW_1 $ does not depend on the section, $\alpha : \mC_0 \to W$. 
\end{prop}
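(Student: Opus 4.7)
Proof plan:

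The plan is to build, on a cover $u : U \twoheadrightarrow \mC_0$, a pair of maps $S_1, S_2 : U \to \slb$ (two ``sailboats'') such that $S_1 p_0 = u \sigma_\alpha$, $S_2 p_0 = u \sigma_\beta$ and $S_1 p_1 = S_2 p_1$. Because $q$ coequalises $p_0, p_1$ and covers are epimorphisms (being coequalisers of their kernel pairs), the chain
\[
u \sigma_\alpha q \;=\; S_1 p_0 q \;=\; S_1 p_1 q \;=\; S_2 p_1 q \;=\; S_2 p_0 q \;=\; u \sigma_\beta q
\]
will then force $\sigma_\alpha q = \sigma_\beta q$ after cancelling the epimorphism $u$.

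The underlying $\Set$-level picture is short and motivates the construction. Writing $\alpha(a) = u \colon b \to a$ and $\beta(a) = v \colon c \to a$, I would first apply \textbf{In.Frc(3)} to the cospan $(u,v)$ (right leg $v\in W$) to obtain an Ore square with left leg $\tilde u \in W$ and top arrow $g$ satisfying $\tilde u u = g v$. Next I would apply \textbf{In.Frc(2)} to the composable pair $(\tilde u, u)\in W\times_t W$, producing a prepending arrow $h\in \mC_1$ and a witness $w'\in W$ with $h\tilde u u = w'$ as an arrow. The key observation is that the same $w'$ also witnesses $hgv = w'$, because $hgv = h(gv) = h(\tilde u u) = w'$ by the Ore equation. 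The two sailboat triangles $(h\tilde u, u, w')$ and $(hg, v, w')$, equipped with extra arrows $uw$ and $vw$ respectively, then project under $p_0$ to $\sigma_\alpha$ and $\sigma_\beta$ and under $p_1$ to the common span $(w', w'w)$.

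To internalise, use Lemma~\ref{lem extending Int.Frc. covers} to pull back the cover of \textbf{In.Frc(3)} along $(\alpha w, \beta) \colon \mC_0 \to \mC_1 \tensor[_t]{\times}{_{wt}} W$ (which is well-defined since $\alpha wt = \beta wt = 1_{\mC_0}$), yielding a cover $u_1 \colon U_1 \twoheadrightarrow \mC_0$ together with $\tilde \alpha \colon U_1 \to W$ and $g \colon U_1 \to \mC_1$ satisfying $\tilde\alpha w \cdot u_1 \alpha w = g \cdot u_1 \beta w$. Then pull back the cover of \textbf{In.Frc(2)} along $(\tilde\alpha, u_1\alpha) \colon U_1 \to W \tensor[_{wt}]{\times}{_{ws}} W$, obtaining a cover $u_2 \colon U_2 \twoheadrightarrow U_1$ and maps $h \colon U_2 \to \mC_1$, $w' \colon U_2 \to W$ with $h \cdot u_2 \tilde\alpha w \cdot u_2 u_1 \alpha w = w' w$. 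Set $u = u_2 u_1$, a cover by closure of $\cJ$ under composition. The universal properties of $W_\triangle$ and $\slb$ assemble the two sailboats $S_1, S_2 \colon U_2 \to \slb$, with triangles $(h \cdot u_2 \tilde \alpha w,\; u_2 u_1 \alpha,\; w')$ and $(h \cdot u_2 g,\; u_2 u_1 \beta,\; w')$ and extra arrows $u_2 u_1 \alpha w$ and $u_2 u_1 \beta w$; the Ore equation is precisely what forces the two triangles to admit the same $W$-witness $w'$. Direct computation of $p_0$ and $p_1$ then gives $S_1 p_0 = u \sigma_\alpha$, $S_2 p_0 = u \sigma_\beta$, and $S_1 p_1 = (w', w' w) = S_2 p_1$, and the chain of equalities above completes the proof.

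The main obstacle is purely bookkeeping: one must verify source/target compatibilities at each stage (that $h \cdot u_2 g$ is legal, that the extra arrows have the correct source in the sense of the $\slb$-pullback, that each assembled tuple really lands in $W_\triangle$ and $\slb$), and must invoke the Ore equation at precisely the right point to see that the two candidate maps for $S_i p_1$ coincide as maps into $\spn$. No axiom beyond \textbf{In.Frc(2)} and \textbf{In.Frc(3)} is used, together with stability of $\cJ$ under pullback and composition.
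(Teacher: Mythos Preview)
Your proposal is correct and follows essentially the same approach as the paper: apply \textbf{In.Frc(3)} to the cospan $(\alpha w,\beta)$, then \textbf{In.Frc(2)} to the resulting composable $W$-pair, and assemble two sailboats sharing a common $p_1$-span. The paper's maps $\theta_{\alpha\beta},\omega_{\alpha\beta},\mu_\alpha,\mu_\beta,\sigma_{\alpha\beta},\varphi_\alpha,\varphi_\beta$ correspond exactly to your $\tilde\alpha,g,h,w'$ and $S_1,S_2$, and the concluding chain of equalities is identical.
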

\begin{proof}
Let $\alpha$ and $\beta$ be two sections of $w t$, and let 

\[ \sigma_\alpha = ( \alpha, \alpha w) \qquad , \qquad \sigma_\beta = (\beta , \beta w) \]

\noi be two spans $\mC_0 \to \spn$. We will show that $\sigma_\alpha q = \sigma_\beta q$ by finding a cover, $u : U \to \mC_0$, to witness a family of intermediate spans $\sigma_{\alpha \beta} : U \to \spn$ for which 

\[ u \sigma_\alpha q = \sigma_{\alpha \beta} q = u \sigma_\beta q.\]

\noi Since $u$ is an epimorphism, this will imply $\sigma_\alpha q = \sigma_\beta q$. Notice that $\alpha (w t) = 1_{\mC_0} = \beta (w t)$ so there is an induced pairing map $(\alpha w , \beta) : \mC_0 \to \csp$ which is a section of both $\pi_0 t$ and $\pi_1 w t$. By \textbf{In.Frc(3)}, there exists a cover, $u_1 : U_0 \to \mC_0$, and a lift, $\theta_{\alpha \beta}$ of $u_1 (\alpha w , \beta) \mC_0 \to \csp$, along the cospan projection, $W_\square \to \csp$, in the bottom right of the following diagram: 

\[\label{dgm covers for independence of id sections lemma}
\begin{tikzcd}[column sep = large, row sep = large]
& W_\circ \rar["(\pi_0 \pi_1 {,} \pi_0 \pi_2)"]
& W \tensor[_t]{\times}{_s} W 
& 
\\
& U
\rar["/" marking, "u_0" near start]
\uar[dotted, "\omega_{\alpha \beta}"] 
& 
U_0
\rar["/" marking, "u_1" near start]
\dar[dotted, "\theta_{\alpha\beta}"] 
\uar["(\theta_{\alpha \beta} \pi_0 \pi_0 {,} u_1 \alpha)"']
& \mC_0 \dar["(\alpha w {,} \beta)"]
\\
& 
& W_\square \rar["(\pi_0 \pi_1 w {,} \pi_1 \pi_1)"']
& \csp
\end{tikzcd} \tag{$\star$}
\]

\noi By definition of $W_\square$, we have that
\[ \theta_{\alpha \beta} \pi_0 \pi_0 w t = \theta_{\alpha \beta} \pi_0 \pi_1 s = u_1 (\alpha w , \beta) \pi_0 s = u_1 \alpha w s \]

\noi inducing the map $U_0 \to W \tensor[_t]{\times}{_s} W $ in the diagram abovel. By \textbf{In.Frc(2)}, there exists a cover, $u_0 : U \to U_0$, and a lift, $\omega_{\alpha \beta} : U \to W_\circ$ to make the square in the upper left of the diagram above commute. When $\cE = \Set$ the process can be represented by the following picture with labels of arrows corresponding to the arrows in the diagram above that would be witnessing those below internally to $\mC$. 

\[\label{Fig id section independence Ore + weak comp}
\begin{tikzcd}[row sep = large, column sep = large]
& & 
\cdot \dar[dotted, "\omega_{\alpha \beta} \pi_0 \pi_0 \pi_0"] \ar[dddll, bend right = 40, "\circ" marking , "\omega_{\alpha \beta} \pi_1"' ] \ar[dddrr, bend left = 40 ] 
& & \\
& &
\cdot \ar[dr, dotted, "u_0 \theta_{\alpha \beta}\pi_1 \pi_0"] \ar[ "\circ" marking, dl, dotted,"u_0 \theta_{\alpha \beta}\pi_0 \pi_0 "'] 
& & \\ 
&\cdot \ar[dr, dotted, "u \alpha w"] \ar[dl, dotted, "\circ" marking, "u \alpha"'] 
&&\cdot \ar[dr, dotted, "u \beta w"] \ar[dl, dotted, "\circ" marking, "u \beta"'] 
& \\
\cdot & & 
\cdot  & & 
\cdot 
\end{tikzcd} \tag{A}
\]

\noi where, since covers are stable under composition, we let $u = u_0 u_1$ denote the composite cover of $u_0$ and $u_1$. Note that by definition of $W_\square$ (ie. the Ore condition)

\begin{align*} 
\omega_{\alpha \beta} \pi_1 
&=(\omega_{\alpha \beta} \pi_0 \pi_0 \pi_0, \ 
u_0 \theta_{\alpha \beta} \pi_0 \pi_0 w,\ 
u \alpha w ) c \\
&=\big( \omega_{\alpha \beta} \pi_0 \pi_0 \pi_0 ,\ 
(u_0 \theta_{\alpha \beta} \pi_0 \pi_0w , \
u \alpha w)c \big) c \\
&= 
\big( \omega_{\alpha \beta} \pi_0 \pi_0 \pi_0 ,\ 
(u_0 \theta_{\alpha \beta} \pi_0\pi_0 w, \
u_0 \theta_{\alpha \beta} \pi_0\pi_1 )c \big) c \\
&= 
\big( \omega_{\alpha \beta} \pi_0 \pi_0 \pi_0 ,\ 
(u_0 \theta_{\alpha \beta} \pi_1\pi_0 , \
u_0 \theta_{\alpha \beta} \pi_1\pi_1 w )c \big) c \\
&= \big( \omega_{\alpha \beta} \pi_0 \pi_0 \pi_0 ,\ 
(u_0 \theta_{\alpha \beta} \pi_1\pi_0 , \
 u \beta \pi_1\pi_1 w )c \big) c \\
 &= ( \omega_{\alpha \beta} \pi_0 \pi_0 \pi_0 ,\ 
u_0 \theta_{\alpha \beta} \pi_1\pi_0 , \
 u \beta \pi_1\pi_1 w )c.
\end{align*}

\noi So the outer span in Diagram (\ref{Fig id section independence Ore + weak comp}) can be represented by the map, $\sigma_{\alpha \beta} : U \to \spn$, defined by the pairing map

\[ \sigma_{\alpha \beta} = \big( \omega_{\alpha \beta} \pi_1 , \ ( \omega_{\alpha \beta} \pi_0 \pi_0 \pi_0 , \
u_0 \theta_{\alpha \beta} \pi_1\pi_0 , \
 u \beta \pi_1\pi_1 w )c \big) \] 

\noi whose right component is the composite 

\[ \begin{tikzcd}[column sep = huge]
U \ar[drrr, "\sigma_{\alpha \beta} \pi_1"'] \rar[rrr,"( \omega_{\alpha \beta} \pi_0 \pi_0 \pi_0 {,} \
u_0 \theta_{\alpha \beta} \pi_1\pi_0 {,} \
 u \beta \pi_1\pi_1 w )c"] 
 &&& \mC_3 \dar["c"] \\
 &&& \mC_1
\end{tikzcd}.\]

\noi This composite represents the internal triple composition in $\mC$ of the arrows represented on the right side of Diagram (\ref{Fig id section independence Ore + weak comp}) above. Now let

\[ \mu_{\alpha} = (\omega_{\alpha \beta} \pi_0 \pi_0 \pi_0, \ 
u_0 \theta_{\alpha \beta} \pi_0 \pi_0)c \qquad , \qquad \mu_\beta = ( \omega_{\alpha \beta} \pi_0 \pi_0 \pi_0 ,\ u_0 \theta_{\alpha \beta} \pi_1\pi_0)c \]

\noi and notice the map $\varphi_\alpha: U \to \slb$ given by 

\begin{align*}
\varphi_\alpha = \big( (( \mu_\alpha , \ u \alpha) , \ \omega_{\alpha \beta} \pi_1) , \ u \alpha w ) 
\end{align*}

\noi is well-defined by associativity of composition and the definitions above. Similarly we can see 

\begin{align*}
   \varphi_\alpha p_0
   &= \varphi_\alpha (\pi_0 \pi_0 \pi_1 , \pi_1) = (u \alpha , u \alpha w) \\
   & = u \sigma_\alpha
\end{align*} 
\noi and 
\begin{align*}
  \varphi_\alpha p_1
  &= \varphi_\alpha (\pi_0 \pi_1 , \ (\pi_0 \pi_0 \pi_0 , \pi_1)c) \\
  &= \big( \omega_{\alpha \beta} \pi_1 , \ (\mu_\alpha , u \alpha w )c \big) \\
  &= (\sigma_{\alpha \beta} \pi_0 , \sigma_{\alpha \beta} \pi_1) \\
  &= \sigma_{\alpha \beta}.
\end{align*} 

\noi On the other hand we have another map $\varphi_\beta: U \to \slb$ given by 

\[ \varphi_\beta = \big( ( ( \mu_\beta , u \beta) , \ \omega_{\alpha \beta} \pi_1 ) , \ u \beta w)\]

\noi for which 

\[ \varphi_\beta p_0 = \varphi_\beta (\pi_0 \pi_0 \pi_1 , \pi_1) = (u \beta , u \beta w) = u \sigma_\beta \] 
\noi and 
\[ \varphi_\beta p_1 = \varphi_\beta (\pi_0 \pi_1 , \ (\pi_0 \pi_0 \pi_0 , \pi_1)c) 
= \big( \omega_{\alpha \beta} \pi_1 , \ (\mu_\beta , u \beta w )c \big) = (\sigma_{\alpha \beta} \pi_0 , \sigma_{\alpha \beta} \pi_1) = \sigma_{\alpha \beta}. \]

\noi From here we can conclude that 

\begin{align*}
u \sigma_\alpha q = \varphi_\alpha p_0 	q = \varphi_\alpha p_1 q = \sigma_{\alpha \beta} q = \varphi_\beta p_1 q = \varphi_\beta p_0 q = u \sigma_\beta q
\end{align*}

\noi and since $u$ is epic 

\[ \sigma_\alpha q = \sigma_\beta q . \] 
\end{proof}

\noi We can immediately see the identity structure map, $\sigma_\alpha q : \mC_0 \to \mCW_1$, is a section of both the source and target maps:

\begin{align*}
\sigma_\alpha q s 
 &= (\alpha , \alpha w) q s 
 &\sigma_\alpha q t' 
 &=(\alpha , \alpha w) q t \\
 &= (\alpha , \alpha w) \hat{s} 
 & 
 &=(\alpha , \alpha w) \hat{t} \\
 &= (\alpha , \alpha w) \pi_0 w t 
 & 
 &=(\alpha , \alpha w) \pi_1 t \\
 &= \alpha w t & &=\alpha w t \\
 &= 1_{\mC_0} & &= 1_{\mC_0} 
 \end{align*}\

\noi The composition structure map needs to be defined out of the following pullback,

\begin{center}
\begin{tikzcd}[]
\mCW_1 \tensor[_t]{\times}{_s} \mCW_1 
\arrow[dr, phantom, "\usebox\pullback" , very near start, color=black] 
\rar["\pi_1"] 
\dar["\pi_0"']
& \mCW_1 \dar["s"] \\
\mCW_1 \rar["t"'] & \mC_0 
\end{tikzcd},
\end{center} \

\noi in order for $\mCW$ to be an internal category. Since $(\mC , W)$ is a candidate for internal fractions, by Lemma~\ref{lem composable pair pb is coequalizer}, this pullback is also the coequalizer of the parallel pair $p_0^2, p_1^2 : \slb \tensor[_t]{\times}{_s} \slb \to \spn \tensor[_t]{\times}{_s} \spn$. When $\cE = \Set$ we can see how $p_0^2$ maps a pair of composable sailboats

\begin{center}
$\left[
\begin{tikzcd}[]
& \cdot \dar \ar[dl, "\circ" marking] & & \cdot \dar \ar[dl, "\circ" marking] & \\
\cdot & \cdot \lar["\circ" marking] \rar & \cdot & \cdot \lar["\circ" marking] \rar & \cdot 
\end{tikzcd} 
\right]$
\end{center}
\noi to the pair of composable spans along the bottom 
\begin{center}
$\left[\begin{tikzcd}[]
\cdot & \cdot \lar["\circ" marking] \rar & \cdot & \cdot \lar["\circ" marking] \rar & \cdot 
\end{tikzcd}\right]$,
\end{center}\

\noi and how $p_1^2$ maps a pair of composable sailboats

\begin{center}
$\left[\begin{tikzcd}[]
&\cdot \dar \ar[dl, "\circ" marking] \ar[dr, dotted] & &\cdot \dar \ar[dl, "\circ" marking] \ar[dr, dotted] & \\
\cdot & \cdot \lar["\circ" marking] \rar & \cdot & \cdot \lar["\circ" marking] \rar & \cdot 
\end{tikzcd} \right]$
\end{center}
\noi to the pair of composable spans 
\begin{center}
$\left[\begin{tikzcd}[]
&\cdot \ar[dr] \ar[dl, "\circ" marking] &&\cdot \ar[dr] \ar[dl, "\circ" marking] & \\
\cdot & & \cdot  & & \cdot 
\end{tikzcd}\right]$
\end{center}

\noi along the top. 

The first thing to do is to use the Internal Fractions Axioms to obtain a cover, $u : U \to \spn \tensor[_t]{\times}{_s} \spn$, of composable spans which witnesses the span composition operation in the form of a map

\begin{center}
 \begin{tikzcd}[]
U \rar["\sigma_\circ "] & \spn 
\end{tikzcd}.
\end{center}

\noi When $\cE = \Set$, span composition for fractions is defined by applying the right Ore condition followed by the weak-composition axiom to get a span whose left leg is in $W$, as shown in the following figure. 

\begin{center}
$\left[\begin{tikzcd}[column sep = large, row sep = large]
&\cdot \ar[dr] \ar[dl, "\circ" marking] &&\cdot \ar[dr] \ar[dl, "\circ" marking] & \\
\cdot & & \cdot  & & \cdot 
\end{tikzcd}\right]$
\end{center}
\begin{center}
\begin{tikzcd}[]
\ar[dd, mapsto, ""]  \\
 \\
\
\end{tikzcd}
\end{center}
\begin{center}
$\left[\begin{tikzcd}[column sep = large, row sep = large]
& & \cdot \dar[dotted] \ar[dddll, bend right, "\circ" marking] \ar[dddrr, bend left] & & \\
& & \cdot \ar[dr, dotted] \ar[dl, dotted, "\circ" marking] & & \\ 
&\cdot \ar[dr, dotted ] \ar[dl, dotted, "\circ" marking] &&\cdot \ar[dr, dotted] \ar[dl, dotted, "\circ" marking] & \\
\cdot & & \cdot  & & \cdot 
\end{tikzcd}\right]$.
\end{center}

\noi To internalize this we construct a diagram of covers below, starting with a map, $\spn \tensor[_t]{\times}{_s} \spn \to \csp$, picking out a cospan whose right leg is in $W$ from a pair of composable spans and apply \ore \ along with Lemma~\ref{lem extending Int.Frc. covers} to get the cover $u_1 : U_0 \to \spn \tensor[_{t'}]{\times}{_{s'}} \spn$ that makes the bottom right square below commute. Next consider the map which picks out the composable pair in $W$ from the Ore-square filler and the left leg of the first span in the original composable pair and apply \wcomp\ along with Lemma~\ref{lem extending Int.Frc. covers} to get the cover $u_0 : U \to U_0$ that makes the top left square below commute. 

\begin{center}
\begin{tikzcd}[column sep = huge ]
 W_\circ \rar[rr, "(\pi_0 \pi_1 {,} \pi_0 \pi_2)"] 
&& W \times_{\mC_0} W 
& \\
U 
\dar["\sigma_\circ"]
\uar["\omega"]
\rar[rr, "/" marking , "u_0" near start]
 &
 & U_0 
 \dar["\theta"']
 \uar["(\theta \pi_0 \pi_0 {, } u_1 \pi_0 \pi_0) "'] 
 \rar["/" marking , "u_1" near start] 
 & \spn \tensor[_t]{\times}{_s} \spn
 \dar["( \pi_0 \pi_1 {, } \pi_1 \pi_0)"] 
 \\
 \spn 
 &
 & W_\square 
 \rar["(\pi_0 \pi_1 {,} \pi_1 \pi_1)"'] 
 & \csp
\end{tikzcd}
\end{center}\

\noi Since covers are stable under composition we can take $u = u_0 u_1 : U \to \spn \tensor[_t]{\times}{_s} \spn$ as our cover, and define $\sigma_\circ : U \to \spn$
 by the pairing map 
\[ \sigma_\circ = \big( \omega \pi_1 , \ (\omega \pi_0 \pi_0 \pi_0 , \ u_0 \theta \pi_1 \pi_0 , \ u \pi_1 \pi_1)c \big). \]\

\noi We claim the construction represented by $\sigma_\circ$ is well-defined on equivalence classes in the sense that for any two choices of fillers for the Ore-square and weak-composition conditions above, there exists a sailboat relating them. Internally this is translated as independence of the choice of filler-arrows in the lifts, $\theta$ and $\sigma$, and is proven in Lemma~\ref{lem defining c'} by finding a cover $\tilde{u} : \tilde{U} \to \ker(u)$ and two families of sailboats, 

\[ \varphi_0 : \tilde{U} \to \slb \qquad , \qquad \varphi_1 : \tilde{U} \to \slb,\]

 \noi which witness commutativity of the square

\begin{center}
\begin{tikzcd}[]
\ker{u} \dar["\pi_0"'] \rar["\pi_1"] & U \dar["\sigma_\circ q"] \\
U \rar["\sigma_\circ q"'] & \mCW_1
\end{tikzcd}
\end{center}

\noi in $\cE$. The proof is rather long and technical but full of colourful pictures. The cover $u$ is an effective epimorphism so it is the coequalizer of its kernel pair and in Lemma~\ref{lem actual definition of c'} we use this universal property to induce a composition map on spans

\[ c' : \spn \tensor[_t]{\times}{_s} \spn \to \mCW_1\]

\noi such that the square

\begin{center}
\begin{tikzcd}[]
U \dar["/" marking, "u"' near end] \ar[r, "\sigma_\circ"] & \spn \dar[two heads, "q"] \\
\spn \tensor[_t]{\times}{_s} \spn \rar[dotted, "c'"'] & \mCW_1
\end{tikzcd}
\end{center}

\noi commutes in $\cE$. Finally, in the proof of Proposition~\ref{prop composition of spans is well-defined} we show how to find an even finer cover $\hat{u} : \hat{U} \to \slb \tensor[_t]{\times}{_s} \slb$ witnessing that the map $c'$ respects the sailboat relation. More precisely, the proof of Proposition~\ref{prop composition of spans is well-defined}, shows how to construct sailboats 

\[ \varphi_i : \hat{U} \to \slb \] 

\noi witnessing equivalences between the spans

\[ \sigma_j : \hat{U} \to \spn \]

\noi so that

\[ \hat{u} p_0 c' = \hat{\pi}_0 \sigma_0 q = \varphi_0 p_0 q = \varphi_0 p_1 q = ... = \varphi_4 p_0 = \hat{\pi}_1 \sigma_0 q = \hat{u} p_1 c' .\]\

\noi Then since $\hat{u}$ is an epimorphism, we can conlude that $p_0 c' = p_1 c'$ and induce the composition map, $c : \mCW_2 \to \mCW_1$. For the rest of this section we prove the lemmas and propositions we required to define composition. 

\begin{lem}\label{lem defining c'}
There is a cover $\tilde{u} : \tilde{U} \to \ker(u)$, together with two maps 

\[ \varphi_0 : \tilde{U} \to \slb \qquad , \qquad \varphi_1 : \tilde{U} \to \slb\]

 \noi, which witness that the composite $\sigma_\circ q$ coequalizes the kernel pair of $u : U \to \spn \tensor[_t
]{\times}{_s} \spn$. That is, the diagram

\begin{center}
\begin{tikzcd}[]
\ker{u} \dar["\pi_0"'] \rar["\pi_1"] & U \dar["\sigma_\circ q"] \\
U \rar["\sigma_\circ q"'] & \mCW_1
\end{tikzcd}
\end{center}

\noi commutes. 
\end{lem}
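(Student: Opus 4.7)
The plan is to compare the two candidate span-composites arising from the two projections $\pi_0,\pi_1:\ker(u)\to U$. Both $u\pi_0$ and $u\pi_1$ present the \emph{same} composable pair of spans in $\spn\tensor[_t]{\times}{_s}\spn$, so the only difference between $\sigma_\circ\pi_0$ and $\sigma_\circ\pi_1$ is the choice of Ore-square filler (encoded by $\theta$) and the choice of weak-composition data (encoded by $\omega$). To witness the two composites as equivalent under the sailboat relation, I pass to a finer cover $\tilde u:\tilde U\to\ker(u)$ carrying enough witness data to build two sailboats running from each composite to a common intermediate span.

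First I would apply \ore\ to compare the two Ore-square fillers. Writing $\theta_i := \pi_i\theta$ and $\omega_i := \pi_i\omega$ for $i=0,1$, both $\theta_0$ and $\theta_1$ fill the same cospan in $\mC_1\tensor[_t]{\times}{_{wt}}W$. Their two ``top arrows'' form a parallel pair in $\mC_1$ with common source and target; pairing this parallel pair with the common $W$-leg of the first input span gives a cospan to which \ore\ applies via Lemma~\ref{lem extending Int.Frc. covers}, producing a cover $u'$ of $\ker(u)$ and a further Ore-square comparing the two fillers. A subsequent application of \wcomp\ ensures that the resulting ``new'' left leg lies in $W$, giving a map $\sigma_{01}:\tilde U\to\spn$ to serve as the intermediate span. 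If needed, a final application of \zip\ refines once more so that the right legs of the comparison are literally equal in $\mC_1$, rather than merely equal after post-composition with a $W$-arrow; composing covers yields the final $\tilde u:\tilde U\to\ker(u)$.

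Next I would bundle the lifts into two pairing maps $\varphi_0,\varphi_1:\tilde U\to\slb$. Each is constructed from the universal properties of $W_\triangle$ and $\slb$: the $W_\triangle$-component repackages the Ore-plus-weak-composition witness (a composable pair in $\mC$ whose composite lies in $W$), while the final $\mC_1$-component is the right leg of $\tilde u\pi_i\sigma_\circ$. The maps $\varphi_0,\varphi_1$ are arranged so that
\[
\varphi_0 p_0 = \tilde u\pi_0\sigma_\circ,\qquad \varphi_0 p_1 \;=\; \sigma_{01} \;=\; \varphi_1 p_1,\qquad \varphi_1 p_0 = \tilde u\pi_1\sigma_\circ.
\]
Post-composing with $q$ and using the coequalizer identity $p_0 q = p_1 q$ then yields
\[
\tilde u\pi_0\sigma_\circ q \;=\; \varphi_0 p_0 q \;=\; \varphi_0 p_1 q \;=\; \sigma_{01}q \;=\; \varphi_1 p_1 q \;=\; \varphi_1 p_0 q \;=\; \tilde u\pi_1\sigma_\circ q,
\]
and since $\tilde u$ is an epimorphism this forces $\pi_0\sigma_\circ q = \pi_1\sigma_\circ q$, as claimed.

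The main obstacle will be the bookkeeping of the pairing maps. Showing that each $\varphi_i$ actually factors through $\slb$ requires verifying the nested pullback conditions defining $W_\triangle$ and $\slb$ on the nose, and showing that the identities $p_0 = (\pi_0\pi_0\pi_1,\pi_1)$ and $p_1 = (\pi_0\pi_1,(\pi_0\pi_0\pi_0,\pi_1)c)$ really evaluate to the claimed spans requires a careful calculation using associativity and the identity laws in $\mC$ together with the defining equalities of $\theta,\omega,\theta_i,\omega_i$. The template for this bookkeeping is already laid out in the proof of Proposition~\ref{Prop identity def doesnt depend on choice of section of wt}, so no genuinely new technique is needed; the novelty lies only in tracking the additional parameter coming from the kernel pair.
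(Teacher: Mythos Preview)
Your high-level strategy is correct and matches the paper: build an intermediate span $\sigma_{01}$ together with sailboats $\varphi_0,\varphi_1$ satisfying $\varphi_i p_0 = \tilde u\,\pi_i\sigma_\circ$ and $\varphi_0 p_1 = \sigma_{01} = \varphi_1 p_1$, then conclude from $p_0 q = p_1 q$ and epiness of $\tilde u$. The gap is in how you construct $\sigma_{01}$ and the sailboats.

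Your claim that the top arrows of $\theta_0,\theta_1$ ``form a parallel pair with common source and target'' is false: two Ore-square fillers of the same cospan share the top-right corner (fixed by the cospan), but their top-left corners are independent, so the top arrows have different sources in general. Your proposed application of \ore\ at that step therefore does not type-check, and the rest of your sketch (one \wcomp, an optional \zip) does not follow. What the paper does instead is apply \ore\ to the cospan formed by the left legs of the two \emph{completed} composites $\pi_0\sigma_\circ$ and $\pi_1\sigma_\circ$ (both in $W$, common target). Expanding the resulting Ore-square equation, the two candidate right legs of the intermediate span agree only after postcomposition with the left leg of the \emph{first} input span (in $W$), forcing an application of \zip; after using the inner Ore-square identity from the original $\theta$, the new parallel pair agrees only after postcomposition with the left leg of the \emph{second} input span (again in $W$), forcing a second \zip. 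One has now prepended a composable chain of three $W$-arrows (two zipper legs and the outer Ore leg) in front of one of the original composite's left legs, and since \wcomp\ handles only composable \emph{pairs} in $W$, three applications of \wcomp\ are required to collapse this chain into a single $W$-arrow for $\sigma_{01}\pi_0$. The correct count is therefore one \ore, two \zip, three \wcomp, and the pairing-map bookkeeping is substantially heavier than the template in Proposition~\ref{Prop identity def doesnt depend on choice of section of wt}.
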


\begin{proof}
\

We are essentially showing that any two choices of fillers above represent equivalent spans. Classically this can be done with the data in the following sketch.

\begin{itemize}
  \item Take two composites (pictured in \textcolor{orange}{orange} and \textcolor{teal}{teal} below) of a single pair of composable spans
  \item \textcolor{olive}{Apply the right Ore condition} (corresponding to \textbf{In.Frc(3)}) on the cospan determined by the left legs of the composites
  \item \textcolor{cyan}{Apply the zippering axiom} (corresponding to \textbf{In.Frc(4)}) to the parallel pair which, after post-composing with the left leg of the first span in the original composable pair, gives the two sides of the commuting Ore-square 
   \item \textcolor{cyan}{Apply zippering} (corresponding to \textbf{In.Frc(4)}) to the parallel pair which is coequalized after post-composing with the left leg of the second span in the original composable pair 
   \item \textcolor{violet}{Apply weak-composition} (corresponding to \textbf{In.Frc(2)}) three times to obtain a span whose left leg is in $W$. 
\end{itemize}

\[\label{dgm witnessing arrows for span comp}
\begin{tikzcd}
	&& \cdot \\
	&& \cdot \\
	\cdot & \cdot & \cdot & \cdot & \cdot & \cdot & \cdot & \cdot \\
	&& \cdot &&&&& \cdot \\
	&& \cdot &&&&& \cdot \\
	&&&&&&& \cdot
	\arrow[from=3-2, to=3-1, "\circ" marking, cyan]
	\arrow[from=3-2, to=3-3]
	\arrow[from=3-4, to=3-3, "\circ" marking, cyan]
	\arrow[from=3-4, to=3-5]
	\arrow[from=2-3, to=3-2, "\circ" marking, color= teal]
	\arrow[from=2-3, to=3-4, color= teal]
	\arrow[from=4-3, to=3-4, color= orange]
	\arrow[from=4-3, to=3-2, "\circ" marking, color= orange]
	\arrow[from=5-3, to=4-3, color= orange]
	\arrow[from=5-3, to=3-1, bend left = 20, "\circ" marking, color= orange]
	\arrow[from=1-3, to=2-3, color= teal]
	\arrow[from=1-3, to=3-1, bend right = 20, "\circ" marking, color= teal]
	\arrow[from=3-6, to=5-3, bend left = 20, "\circ" marking, color = olive]
	\arrow[from=3-6, to=1-3, bend right = 20, color= olive]
	\arrow[from=3-7, to=3-6, "\circ" marking, cyan]
	\arrow[from=3-8, to=3-7, "\circ" marking,cyan]
	\arrow[from=4-8, to=3-8, violet]
	\arrow[from=4-8, to=3-6, "\circ" marking, violet]
	\arrow[from=5-8, to=4-8, violet]
	\arrow[from=6-8, to=5-8, violet]
	\arrow[from=5-8, to=5-3, "\circ" marking, violet]
	\arrow[from=6-8, to=3-1, bend left = 60, "\circ" marking, violet]
\end{tikzcd} \tag{A}\]

\noi To translate this internally to $\cE$, first note that the definition of $\sigma_\circ$ implies $\pi_0 \sigma_\circ$ and $\pi_1 \sigma_\circ$ have the same source.

\[\pi_0 \sigma_\circ s' = \pi_0 u \pi_0 \pi_0 w t = \pi_1 u \pi_0 \pi_0 w t = \pi_1 \sigma_\circ s' \]

\noi Now take covers to witness the application of the axioms above in that order as follows. First take the Ore-square and zippering lifts given by \textbf{In.Frc(3)} and \textbf{In.Frc(4)} respectively: 

\[\label{dgm span composition covers (Ore + zippering)}
\begin{tikzcd}[]
&\cP(\mC)\rar["\pi_1"]
& \cP_{cq}(\mC)
& 
\\
\tilde{U}_2 
\rar["/" marking, "\tilde{u}_3" near start]
\dar[cyan, "\tilde{\delta}_1"'] 
&
\tilde{U}_3 
\rar["/" marking, "\tilde{u}_4" near start]
\uar[cyan, "\tilde{\delta}_0"] 
\dar["\delta_1"] 
& 
\tilde{U}_4 
\rar["/" marking, "\tilde{u}_5" near start]
\uar["\delta_0"'] 
\dar[olive, "\tilde{\theta}"']
& \ker u \dar["(\textcolor{orange}{\pi_0 \sigma_\circ} \pi_0 w {, } \textcolor{teal}{ \pi_1 \sigma_\circ} \pi_0)"]
\\
\cP(\mC)
\rar["\pi_1"']
& \cP_{cq}(\mC)
& W_\square \rar["(\pi_0 \pi_1 {,} \pi_1 \pi_1)"'] 
& \csp
\end{tikzcd}, \tag{$\star$}
\]

\noi and then take the weak-composition lifts. 

\[\label{dgm span composition covers (weak comp part)}
\begin{tikzcd}[]
 W_\circ \rar["\pi_0 \pi_{12}"] 
& W\tensor[_{wt}]{\times}{_{ws}}W 
& W_\circ \rar["\pi_0 \pi_{12}"] 
& W\tensor[_{wt}]{\times}{_{ws}}W \\
\tilde{U}
\rar["/" marking, "\tilde{u}_0" near start]
\uar[violet,"\tilde{\omega}_0"]
& 
\tilde{U}_0 \rar["/" marking, "\tilde{u}_1" near start]
\uar["\omega_0"'] 
\dar[violet,"\tilde{\omega}_1"'] 
& 
\tilde{U}_1 \rar["/" marking, "\tilde{u}_2" near start]
\uar[violet,"\tilde{\omega}_2"] 
\dar["\omega_1"] 
& 
\tilde{U}_2 
\uar["\omega_2"'] 
\\
& W_\circ \rar["\pi_0 \pi_{12}"'] 
& W\tensor[_{wt}]{\times}{_{ws}}W
&
\end{tikzcd} \tag{$\star \star$} 
\]

\noi The first vertical map representing cospans with right legs in $W$, seen on the right-hand side of Diagram (\ref{dgm span composition covers (Ore + zippering)}), is witnessing the following cospan of Diagram (\ref{dgm witnessing arrows for span comp}). 

\[\label{dgm cospan arrows for witnessing Ore-square in span comp}
\begin{tikzcd}
	&& \cdot \\
	&& \\
	\cdot & & \\
	&& \\
	&& \cdot 
	\arrow[from=5-3, to=3-1, bend left = 20, "\circ" marking, color= orange]
	\arrow[from=1-3, to=3-1, bend right = 20, "\circ" marking, color= teal]
\end{tikzcd} \tag{B}\]

\noi Axiom \textbf{In.Frc(3)} along with Lemma~\ref{lem extending Int.Frc. covers} then give the cover and lift 

\[ \begin{tikzcd}
\tilde{U}_4 \dar[olive, "\tilde{\theta}"'] \rar["\scriptstyle{/}" marking, "\tilde{u}_5" near start] & \ker u\\
W_\square &
\end{tikzcd}\]

\noi that make the bottom right square in Diagram (\ref{dgm span composition covers (Ore + zippering)}) commute. The map $\delta_0$ is induced by a map, $\delta_0' : \tilde{U} \to P(\mC) \tensor[_t]{\times}{_{ws}} W$, which can be found by expanding both sides of the commuting Ore square equation 

\[ \begin{tikzcd}[column sep = huge]
\tilde{U}_4 \rar[rrrrr, "
(\textcolor{olive}{\tilde{\theta}} \pi_0 \pi_0 w {,} \ 
\tilde{u}_5 \textcolor{orange}{\pi_0 \sigma_\circ} \pi_0 w ) c = (\textcolor{olive}{\tilde{\theta}} \pi_1 \pi_0 {,} \ 
\tilde{u}_5 \textcolor{teal}{\pi_1 \sigma_\circ} \pi_0 w ) c "] &&&&&
\mC_1 
\end{tikzcd}\]

\noi The arrows involved in this calculation are pictured: 

\[
\label{dgm witnessing Ore-square arrows for defining span comp} \begin{tikzcd}
	&& \cdot \\
	&& \cdot \\
	\cdot & \cdot & & & & \cdot  \\
	&& \cdot &&& \\
	&& \cdot &&& \\
	&&&&&
	\arrow[from=3-2, to=3-1, "\circ" marking, cyan]
	\arrow[from=2-3, to=3-2, "\circ" marking, color= teal]
	\arrow[from=4-3, to=3-2, "\circ" marking, color= orange]
	\arrow[from=5-3, to=4-3, color= orange]
	\arrow[from=5-3, to=3-1, bend left = 20, "\circ" marking, color= orange]
	\arrow[from=1-3, to=2-3, color= teal]
	\arrow[from=1-3, to=3-1, bend right = 20, "\circ" marking, color= teal]
	\arrow[from=3-6, to=5-3, bend left = 20, "\circ" marking, color = olive]
	\arrow[from=3-6, to=1-3, bend right = 20, color= olive]
\end{tikzcd} \tag{C}\]

\noi On the bottom we have 
\begin{align}
  \begin{split}
& \ \ \ \ (\textcolor{olive}{\tilde{\theta}} \pi_0 \pi_0 w , \ 
\tilde{u}_5 \textcolor{orange}{\pi_0 \sigma_\circ} \pi_0 w ) c\\
  &= 
(\textcolor{olive}{\tilde{\theta}} \pi_0 \pi_0 w , \ 
\tilde{u}_5 \textcolor{orange}{\pi_0 \omega} \pi_1 w 
)c\\
  &= 
\big(
\textcolor{olive}{\tilde{\theta}} \pi_0 \pi_0 w , \ 
\tilde{u}_5 \textcolor{orange}{\pi_0}
(\textcolor{orange}{\omega} \pi_0 \pi_0 , \ 
\textcolor{orange}{\omega} \pi_0 \pi_1 w, \ 
\textcolor{cyan}{\omega} \pi_0 \pi_2 w )c
\big) c \\
 &= 
\big(
\textcolor{olive}{\tilde{\theta}} \pi_0 \pi_0 w , \ 
\tilde{u}_5 \textcolor{orange}{\pi_0}
(\textcolor{orange}{\omega} \pi_0 \pi_0 , \ 
\textcolor{orange}{u_0 \theta} \pi_0 \pi_0 w, \ 
\textcolor{cyan}{u \pi_0 \pi_0 w} )c 
\big) c\\
  &= \big(
(\textcolor{olive}{\tilde{\theta}} \pi_0 \pi_0 w , \ 
\tilde{u}_5 \textcolor{orange}{\pi_0 \omega} \pi_0 \pi_0 , \ 
\tilde{u}_5 \textcolor{orange}{\pi_0 u_0 \theta} \pi_0 \pi_0 w 
)c , \ 
\tilde{u}_5 \textcolor{cyan}{\pi_0 u \pi_0 \pi_0 w} 
\big) c
\end{split}
\end{align}
\noi and on the top we have 
\begin{align}
  \begin{split}
& \ \ \ \ (\textcolor{olive}{\tilde{\theta}} \pi_1 \pi_0 , \ 
\tilde{u}_5 \textcolor{teal}{\pi_1 \sigma_\circ} \pi_0 w ) c \\
&= 
(\textcolor{olive}{\tilde{\theta}} \pi_1 \pi_0 w , \ 
\tilde{u}_5 \textcolor{teal}{\pi_1 \omega} \pi_1 w 
)c\\
&=
\big(
\textcolor{olive}{\tilde{\theta}} \pi_1 \pi_0 w , \ 
\tilde{u}_5 \textcolor{teal}{\pi_1}
(\textcolor{teal}{\omega} \pi_0 \pi_0 , \ 
\textcolor{teal}{\omega} \pi_0 \pi_1 w, \ 
\textcolor{cyan}{\omega} \pi_0 \pi_2 w )c
\big) c \\
&= 
\big(
\textcolor{olive}{\tilde{\theta}} \pi_1 \pi_0 w , \ 
\tilde{u}_5 \textcolor{teal}{\pi_1}
(\textcolor{teal}{\omega} \pi_0 \pi_0 , \ 
\textcolor{teal}{u_0 \theta} \pi_0 \pi_0 w, \ 
\textcolor{cyan}{u \pi_0 \pi_0 w} )c 
\big) c\\
&= 
\big(
(\textcolor{olive}{\tilde{\theta}} \pi_1 \pi_0 w , \ 
\tilde{u}_5 \textcolor{teal}{\pi_1 \omega} \pi_0 \pi_0 , \ 
\tilde{u}_5 \textcolor{teal}{\pi_1 u_0 \theta} \pi_0 \pi_0 w 
)c , \ 
\tilde{u}_5 \textcolor{cyan}{\pi_1 u \pi_0 \pi_0 w} 
\big) c .
\end{split}
\end{align}

\noi Since $\pi_0 u = \pi_1 u : \ker u \to \spn \tensor[_t]{\times}{_s} \spn$ by definition of $\ker u$, we have the equality

\[ \begin{tikzcd}[column sep = large]
\tilde{U}_4 \rar[rrrr, "\tilde{u}_5 \textcolor{cyan}{\pi_0 u \pi_0 \pi_0 w} = \tilde{u}_5 \textcolor{cyan}{\pi_1 u \pi_0 \pi_0 w}"] &&&&  \mC_1
\end{tikzcd} \]

\noi between the final components in the bottom lines of calculations (1) and (2) which says there is an arrow in $W$ coequalizing a parallel pair in $\mC$. This determines a unique map, $\delta_0' : \tilde{U}_4 \to P(\mC) \tensor[_{t}]{\times}{_{ws}} W$, by the fact that
\[\delta_0' \pi_0 \pi_1 = (\textcolor{olive}{\tilde{\theta}} \pi_1 \pi_0 w , \ 
\tilde{u}_5 \textcolor{teal}{\pi_1 \omega} \pi_0 \pi_0 , \ 
\tilde{u}_5 \textcolor{teal}{\pi_1 u_0 \theta} \pi_0 \pi_0 w 
)c,\]
\[ \delta_0' \pi_0 \pi_0 = (\textcolor{olive}{\tilde{\theta}} \pi_0 \pi_0 w , \ 
\tilde{u}_5 \textcolor{orange}{\pi_0 \omega} \pi_0 \pi_0 , \ 
\tilde{u}_5 \textcolor{orange}{\pi_0 u_0 \theta} \pi_0 \pi_0 w 
)c, \]
\noi and 
\[ \delta_0' \pi_1 = \tilde{u}_5 \textcolor{cyan}{\pi_0 u \pi_0 \pi_0 } ;\]
\noi and that the equality
\[ \delta_0' (\pi_0 \pi_0 , \pi_1 w) c = \delta_0' (\pi_0 \pi_1 , \pi_1 w) c\]

\noi holds. The map $\delta_0'$ uniquely determines the map $\delta_0 : \tilde{U}_3 \to \cP_{cq}(\mC)$ for which the equalizer diagram

\begin{center}
\begin{tikzcd}[column sep = large]
\cP_{cq}(\mC) \rar[tail, "\iota_{cq}"] & P(\mC) \tensor[_{t}]{\times}{_{ws}} W \rar[shift left, "(\pi_0 \pi_0 {,} \pi_1 w) c"] \rar[shift right, "(\pi_0 \pi_1{ , }\pi_1 w) c"'] & \mC_1 \\
\tilde{U}_4 \ar[ur, "\delta_0'"'] \uar[dotted, "\delta_0"] & & 
\end{tikzcd}
\end{center}

\noi commutes in $\cE$. By \textbf{In.Frc(4)} and Lemma~\ref{lem extending Int.Frc. covers} the cover and lift

\[ \begin{tikzcd}
\cP(\mC) & \\
\tilde{U}_3 \uar[cyan, "\tilde{\delta}_0"] \rar["\scriptstyle{/}" marking, "\tilde{u}_4" near start] & \tilde{U}_4
\end{tikzcd}\]

\noi from Diagram (\ref{dgm span composition covers (Ore + zippering)}) exist. Similarly, the map $\delta_1$ is induced by a map $\delta_1' : P(\mC) \tensor[_t]{\times}{_{ws}} W$. For readability purposes, let $\tilde{u}_{i;j} = \tilde{u}_i \tilde{u}_{i+1} ... \tilde{u}_j$ for $0 \leq i < j \leq 5$ with $\tilde{u} = \tilde{u}_{0;5}$. Applying the zippering axiom and Ore conditions as we did above gives another equation, from the definitions of $\cP$ and $W_\square$, which internally expresses the commutativity in the following picture: 

\[ 
\label{dgm witnessing first zipper for defining span comp} \begin{tikzcd}
	&& \cdot \\
	&& \cdot \\
	 & \cdot & \cdot & \cdot & & \cdot & \cdot  \\
	&& \cdot &&&& \\
	&& \cdot &&&& \\
	&&&&&&
	\arrow[from=3-2, to=3-3]
	\arrow[from=3-4, to=3-3, "\circ" marking, cyan]
	\arrow[from=2-3, to=3-2, "\circ" marking, color= teal]
	\arrow[from=2-3, to=3-4, color= teal]
	\arrow[from=4-3, to=3-4, color= orange]
	\arrow[from=4-3, to=3-2, "\circ" marking, color= orange]
	\arrow[from=5-3, to=4-3, color= orange]
	\arrow[from=1-3, to=2-3, color= teal]
	\arrow[from=3-6, to=5-3, bend left = 20, "\circ" marking, color = olive]
	\arrow[from=3-6, to=1-3, bend right = 20, color= olive]
	\arrow[from=3-7, to=3-6, "\circ" marking, cyan]
\end{tikzcd} \tag{C}\]

\begin{align}\label{eq inside paths of diagram C}
\begin{split}
& \ \ \ \ 
\big(
(
\textcolor{cyan}{\tilde{\delta}_0} \pi_0 \iota_{eq} \pi_0 ,\ 
\tilde{u}_4\textcolor{olive}{\tilde{\theta}} \pi_0 \pi_0 w , \ 
\tilde{u}_{4;5} \textcolor{orange}{\pi_0 \omega} \pi_0 \pi_0 , \ 
\tilde{u}_{4;5} \textcolor{orange}{\pi_0 u_0 \theta} \pi_1 \pi_0 
)c , \ 
\tilde{u}_{4;5} \textcolor{cyan}{\pi_0 u \pi_1 \pi_0 w} 
\big) c\\
&= 
\big(
\textcolor{cyan}{\tilde{\delta}_0} \pi_0 \iota_{eq} \pi_0 ,\ 
\tilde{u}_4\textcolor{olive}{\tilde{\theta}} \pi_0 \pi_0 w , \ 
\tilde{u}_{4;5} \textcolor{orange}{\pi_0 \omega} \pi_0 \pi_0 , \ 
(\tilde{u}_{4;5} \textcolor{orange}{\pi_0 u_0 \theta} \pi_1 \pi_0 , \ 
\tilde{u}_{4;5} \textcolor{cyan}{\pi_0 u \pi_1 \pi_0 w} 
)c
\big) c\\
&= 
\big(
\textcolor{cyan}{\tilde{\delta}_0} \pi_0 \iota_{eq} \pi_0 ,\ 
\tilde{u}_4\textcolor{olive}{\tilde{\theta}} \pi_0 \pi_0 w , \ 
\tilde{u}_{4;5} \textcolor{orange}{\pi_0 \omega} \pi_0 \pi_0 , \ 
(\tilde{u}_{4;5} \textcolor{orange}{\pi_0 u_0 \theta} \pi_0 \pi_0 w , \ 
\tilde{u}_{4;5} \pi_0 u \pi_0 \pi_1 
)c
\big) c\\
&= 
\big(
(\textcolor{cyan}{\tilde{\delta}_0} \pi_0 \iota_{eq} \pi_0 ,\ 
\tilde{u}_4\textcolor{olive}{\tilde{\theta}} \pi_0 \pi_0 w , \ 
\tilde{u}_{4;5} \textcolor{orange}{\pi_0 \omega} \pi_0 \pi_0 , \ 
\tilde{u}_{4;5} \textcolor{orange}{\pi_0 u_0 \theta} \pi_0 \pi_0 w
)c , \ 
\tilde{u}_{4;5} \pi_0 u \pi_0 \pi_1
\big) c\\
&= 
\big(
(\textcolor{cyan}{\tilde{\delta}_0} \pi_0 \iota_{eq} \pi_0 ,\ 
\tilde{u}_4\textcolor{olive}{\tilde{\theta}} \pi_1 \pi_0 , \ 
\tilde{u}_{4;5} \textcolor{teal}{\pi_0 \omega} \pi_0 \pi_0 , \ 
\tilde{u}_{4;5} \textcolor{teal}{\pi_0 u_0 \theta} \pi_0 \pi_0 w
)c , \ 
\tilde{u}_{4;5} \pi_0 u \pi_0 \pi_1 
\big) c\\
&= 
\big(
\textcolor{cyan}{\tilde{\delta}_0} \pi_0 \iota_{eq} \pi_0 ,\ 
\tilde{u}_4\textcolor{olive}{\tilde{\theta}} \pi_1 \pi_0 , \ 
\tilde{u}_{4;5} \textcolor{teal}{\pi_1 \omega} \pi_0 \pi_0 , \ 
(\tilde{u}_{4;5} \textcolor{teal}{\pi_1 u_0 \theta} \pi_0 \pi_0 w , \ 
\tilde{u}_{4;5} \pi_0 u \pi_0 \pi_1 
)c
\big) c\\
&= 
\big(
\textcolor{cyan}{\tilde{\delta}_0} \pi_0 \iota_{eq} \pi_0 ,\ 
\tilde{u}_4\textcolor{olive}{\tilde{\theta}} \pi_1 \pi_0 , \ 
\tilde{u}_{4;5} \textcolor{teal}{\pi_1 \omega} \pi_0 \pi_0 , \ 
(\tilde{u}_{4;5} \textcolor{teal}{\pi_1 u_0 \theta} \pi_1 \pi_0 , \ 
\tilde{u}_{4;5} \textcolor{cyan}{\pi_0 u \pi_1 \pi_0 w } 
)c
\big) c\\
&= 
\big(
(\textcolor{cyan}{\tilde{\delta}_0} \pi_0 \iota_{eq} \pi_0 ,\ 
\tilde{u}_4\textcolor{olive}{\tilde{\theta}} \pi_1 \pi_0 , \ 
\tilde{u}_{4;5} \textcolor{teal}{\pi_1 \omega} \pi_0 \pi_0 , \ 
\tilde{u}_{4;5} \textcolor{teal}{\pi_1 u_0 \theta} \pi_1 \pi_0 
)c, \ 
\tilde{u}_{4;5} \textcolor{cyan}{\pi_0 u \pi_1 \pi_0 w} 
\big) c
\end{split}
\end{align}

\noi The first and last lines in equation~(\ref{eq inside paths of diagram C}) correspond to the concatenations of the `inside' paths in Diagram (\ref{dgm witnessing first zipper for defining span comp}). They imply the existence of a map, $\delta_1' : \tilde{U}_3 \to P(\mC) \tensor[_t]{\times}{_{ws}} W$, uniquely determined by the projections 

\begin{align*}
  \delta_1' \pi_0 \pi_0 &= (
\textcolor{cyan}{\tilde{\delta}_0} \pi_0 \iota_{eq} \pi_0 ,\ 
\tilde{u}_4\textcolor{olive}{\tilde{\theta}} \pi_0 \pi_0 w , \ 
\tilde{u}_{4;5} \textcolor{orange}{\pi_0 \omega} \pi_0 \pi_0 , \ 
\tilde{u}_{4;5} \textcolor{orange}{\pi_0 u_0 \theta} \pi_1 \pi_0 
)c, \\
\delta_1' \pi_0 \pi_1 &= (\textcolor{cyan}{\tilde{\delta}_0} \pi_0 \iota_{eq} \pi_0 ,\ 
\tilde{u}_4\textcolor{olive}{\tilde{\theta}} \pi_1 \pi_0 , \ 
\tilde{u}_{4;5} \textcolor{teal}{\pi_1 \omega} \pi_0 \pi_0 , \ 
\tilde{u}_{4;5} \textcolor{teal}{\pi_1 u_0 \theta} \pi_1 \pi_0 
)c, \\
\delta_1' \pi_1 &= \tilde{u}_{4;5} \textcolor{cyan}{\pi_0 u \pi_1 \pi_0 w }
\end{align*} 

\noi for which

\[ \delta_1' (\pi_0 \pi_0 , \pi_1 w) c = \delta_1' ( \pi_0 \pi_1 , \pi_1 w) c \]

\noi represents the inner cyan-colored arrow in Diagram \ref{dgm witnessing first zipper for defining span comp}. The map $\delta_1'$ induces the unique map $\delta_1$ that makes the following equalizer diagram

\begin{center}
\begin{tikzcd}[column sep = large]
\cP_{cq}(\mC) \rar[tail, "\iota_{cq}"] & P(\mC) \tensor[_t]{\times}{_{ws}} W \rar[shift left, "(\pi_0 \pi_0 {,} \pi_1 w) c"] \rar[shift right, "(\pi_0 \pi_1{ , }\pi_1 w) c"'] & \mC_1 \\
\tilde{U}_3 \ar[ur, "\delta_1'"'] \uar[dotted, "\delta_1"] & & 
\end{tikzcd}
\end{center}

\noi commute in $\cE$. By \textbf{In.Frc(4)} the cover lift 

\[\begin{tikzcd}
\tilde{U}_2 \dar[cyan, "\tilde{\delta}_1"'] \rar["\scriptstyle{/}" marking, "\tilde{u}_3"near start] 
& \tilde{U}_3 \\
\cP(\mC)
\end{tikzcd}\]

\noi in Diagram (\ref{dgm span composition covers (Ore + zippering)}) to make the bottom left square commute. The covers and lifts 

\[\begin{tikzcd}[]
 W_\circ 
& 
& W_\circ 
& 
\\
\tilde{U}
\rar["/" marking, "\tilde{u}_0" near start]
\uar[violet,"\tilde{\omega}_0"]
& 
\tilde{U}_0 \rar["/" marking, "\tilde{u}_1" near start]
\dar[violet,"\tilde{\omega}_1"'] 
& 
\tilde{U}_1 \rar["/" marking, "\tilde{u}_2" near start]
\uar[violet,"\tilde{\omega}_2"] 
& 
\tilde{U}_2 
\\
& W_\circ 
& 
&
\end{tikzcd}\]

\noi in Diagram (\ref{dgm span composition covers (weak comp part)}) are given by \textbf{In.Frc(2)} and Lemma~\ref{lem extending Int.Frc. covers}. It suffices to define $\omega_i : \tilde{U}_i \to W \tensor[_{wt}]{\times}{_{ws}} W$, in Diagram (\ref{dgm span composition covers (weak comp part)}) that pick out composable pairs in $W$. The relevant representative diagram in $\mC$ to keep in mind is: 

\[
\label{dgm witnessing weak comp arrows for defining span comp}\begin{tikzcd}
	&& \\
	&& \\
	\cdot & & & & & \cdot & \cdot & \cdot \\
	&& &&&&& \cdot \\
	&& \cdot &&&&& \cdot \\
	&&&&&&& \cdot
	\arrow[from=5-3, to=3-1, bend left = 20, "\circ" marking, color= orange]
	\arrow[from=3-6, to=5-3, bend left = 20, "\circ" marking, color = olive]
	\arrow[from=3-7, to=3-6, "\circ" marking, cyan]
	\arrow[from=3-8, to=3-7, "\circ" marking,cyan]
	\arrow[from=4-8, to=3-8, violet]
	\arrow[from=4-8, to=3-6, "\circ" marking, violet]
	\arrow[from=5-8, to=4-8, violet]
	\arrow[from=6-8, to=5-8, violet]
	\arrow[from=5-8, to=5-3, "\circ" marking, violet]
	\arrow[from=6-8, to=3-1, bend left = 60, "\circ" marking, violet]
\end{tikzcd} \tag{D} \]

\noi The maps $\omega_i : \tilde{U}_i \to W \tensor[_{wt}]{\times}{_{ws}}$ are defined in sequence as follows. First, the pair of arrows obtained from the two diagram-extension conditions (colored in cyan in Diagram (\ref{dgm witnessing weak comp arrows for defining span comp})) are composable by definition of $\cP(\mC)$: 
\begin{align*}
\textcolor{cyan}{\tilde{\delta}_1} \pi_0 \iota_{eq} \pi_0 w t 
&= \textcolor{cyan}{\tilde{\delta}_1} \pi_0 \iota_{eq} \pi_1 \pi_0 s & \text{Def. } W \tensor[_{wt}]{\times}{_{s}} P(\mC)\\
&= \textcolor{cyan}{\tilde{\delta}_1} \pi_1 \iota_{cq} \pi_0 \pi_0 s & \text{Def. } \cP(\mC)\\
&= \tilde{u}_3 \delta_1 \iota_{cq} \pi_0 \pi_0 s & \text{Def. } \textcolor{cyan}{\tilde{\delta}_1} \\
&= \tilde{u}_3 \delta_1' \pi_0 \pi_0 s & \text{Def. } \delta_1 \\
&= \tilde{u}_3 \textcolor{cyan}{\tilde{\delta}_0} \pi_0 \iota_{eq} \pi_0 s & \text{Def. } \delta_1'
\end{align*}
\noi This uniquely determines the map
\[ \begin{tikzcd} 
\tilde{U}_2
\rar[rrrr, "\omega_2 = (\textcolor{cyan}{\tilde{\delta}_1} \pi_0 \iota_{eq} \pi_0 {,} \tilde{u}_3 \textcolor{cyan}{\tilde{\delta}_0} \pi_0 \iota_{eq} \pi_0) "] 
&&&& 
W \tensor[_{wt}]{\times}{_{ws}} W
\end{tikzcd}\]

\noi which gives the lift $\textcolor{violet}{\tilde{\omega}_2} : \tilde{U}_1 \to W_\circ$ in Diagram (\ref{dgm span composition covers (weak comp part)}). The composite (in $W$) witnessed by $\textcolor{violet}{\tilde{\omega}_2}$ can be composed with the arrow in $W$ (colored \textcolor{olive}{olive} in Diagram (\ref{dgm witnessing weak comp arrows for defining span comp}) ) that filled the Ore square because 

\begin{align*}
\textcolor{violet}{\tilde{\omega}_2} \pi_1 w t 
&= \textcolor{violet}{\tilde{\omega}_2} \pi_0 \pi_{12} \pi_1 w t & \text{Def. } W_\circ\\
&= \tilde{u}_2 \omega_2 \pi_1 w t & \text{Def. } \textcolor{violet}{\tilde{\omega}_2} \\
&= \tilde{u}_{2;3} \textcolor{cyan}{\tilde{\delta}_0} \pi_0 \iota_{eq} \pi_0 w t & \text{Def. } \omega_2 \\
&= \tilde{u}_{2;3} \textcolor{cyan}{\tilde{\delta}_0} \pi_0 \iota_{eq} \pi_1 \pi_0 s & \text{Def. } W \tensor[_{wt}]{\times}{_{s}} P(\mC) \\
&= \tilde{u}_{2;3} \textcolor{cyan}{\tilde{\delta}_0} \pi_1 \iota_{cq} \pi_0 \pi_0 s & \text{Def. } \cP(\mC)\\
&= \tilde{u}_{2;4} \delta_0 \iota_{cq} \pi_0 \pi_0 s & \text{Def. } \\
&= \tilde{u}_{2;4} \delta_0' \pi_0 \pi_0 s & \text{Def. } \delta_0 \\
&= \tilde{u}_{2;4} \textcolor{olive}{\tilde{\theta}} \pi_0 \pi_0 w s & \text{Def. } \delta_0'\\
\end{align*}

\noi and it induces the pairing map

\[ \begin{tikzcd} 
\tilde{U}_2
\rar[rrrr, "\omega_1 = (\textcolor{violet}{\tilde{\omega}_2} \pi_1 {,} \tilde{u}_{2;4} \textcolor{olive}{\tilde{\theta}} \pi_0 \pi_0 ) "] 
&&&& 
W \tensor[_{wt}]{\times}{_{ws}} W
\end{tikzcd}. \]

\noi This gives the cover $\tilde{u}_1 : \tilde{U}_0 \to \tilde{U})1$ and lift $\textcolor{violet}{\tilde{\omega}_1} : \tilde{U}_0 \to W_\circ$ in Diagram (\ref{dgm span composition covers (weak comp part)}). Finally, the composite witnessed by $\textcolor{violet}{\tilde{\omega}_1}$ can be composed with the left leg of the original span (colored in orange in Diagram \ref{dgm witnessing weak comp arrows for defining span comp} and) witnessed by $\tilde{u}_{1;5} \textcolor{orange}{\pi_0 \sigma_\circ} : \tilde{U}_1 \to \spn $ because 

\begin{align*}
\textcolor{violet}{\tilde{\omega}_1} \pi_1 w t 
&= \textcolor{violet}{\tilde{\omega}_1} \pi_0 \pi_{12} \pi_1 w t & \text{Def. } W_\circ \\
&= \tilde{u}_1 \omega_1 \pi_1 w t & \text{Def. } \textcolor{violet}{\tilde{\omega}_1} \\
&= \tilde{u}_{1;4} \textcolor{olive}{\tilde{\theta}} \pi_0 \pi_0 w t & \text{Def. } \Omega_1 \\
&= \tilde{u}_{1;5} \textcolor{orange}{\pi_0 \sigma_\circ} \pi_0 w s & \text{Def. } \textcolor{olive}{\theta}.
\end{align*}

\noi This gives the unique pairing map

\[ \begin{tikzcd} 
\tilde{U}_2
\rar[rrrr, "\omega_0 = (\textcolor{violet}{\tilde{\omega}_1} \pi_1 {,} \tilde{u}_{1;5} \textcolor{orange}{\pi_0 \sigma_\circ} \pi_0 ) "] 
&&&& 
W \tensor[_{wt}]{\times}{_{ws}} W
\end{tikzcd} \]

\noi which induces the cover $u_0 : \tilde{U} \to \tilde{U}_0$ and lift $ \hat{\omega}_0 : \tilde{U} \to W_\circ$ in Diagram (\ref{dgm span composition covers (weak comp part)}). Now let

\[ \begin{tikzcd}[column sep = huge]
\tilde{U} \rar[rrrrr, "\omega = (\textcolor{violet}{\tilde{\omega}_0} \pi_0 \pi_0 {,} \ \tilde{u}_0 \textcolor{violet}{\tilde{\omega}_1} \pi_0 \pi_0 {,} \tilde{u}_{0;1} \textcolor{violet}{\tilde{\omega}_2} \pi_1 w) c "] 
&&&&& \mC_1
\end{tikzcd}\] 

\noi witness the composite(s) of the three vertical violet-colored arrows and the two horizontal cyan-colored arrows in Diagram (\ref{dgm witnessing weak comp arrows for defining span comp}):

\[\begin{tikzcd}
   \cdot & \cdot & \cdot \\
	&& \cdot \\
	&& \cdot \\
	&& \cdot
	\arrow[from=1-2, to=1-1, dashed, "\circ" marking, cyan]
	\arrow[from=1-3, to=1-2, dashed, "\circ" marking,cyan]
	\arrow[from=2-3, to=1-3, dashed, violet]
	\arrow[from=2-3, to=1-1, "\circ" marking, violet]
	\arrow[from=3-3, to=2-3, violet]
	\arrow[from=4-3, to=3-3, violet]
	\arrow[from= 4-3, to=1-1, dotted] 
\end{tikzcd} \]

By zippering we get commutativity of the following piece of Diagram (\ref{dgm witnessing arrows for span comp})

\[\label{dgm witnessing zipper commutativity arrows for span comp}
\begin{tikzcd}
	&& \cdot \\
	&& \cdot \\
	& & & \cdot & \cdot & \cdot & \cdot & \cdot \\
	&& \cdot &&&&& \\
	&& \cdot &&&&& 
	\arrow[from=3-4, to=3-5]
	\arrow[from=2-3, to=3-4, color= teal]
	\arrow[from=4-3, to=3-4, color= orange]
	\arrow[from=5-3, to=4-3, color= orange]
	\arrow[from=1-3, to=2-3, color= teal]
	\arrow[from=3-6, to=5-3, bend left = 20, "\circ" marking, color = olive]
	\arrow[from=3-6, to=1-3, bend right = 20, color= olive]
	\arrow[from=3-7, to=3-6, "\circ" marking, cyan]
	\arrow[from=3-8, to=3-7, "\circ" marking,cyan]
\end{tikzcd} \tag{E}\]

\noi Internally we can use associativity of composition, definitions of the pairing maps involved, and the definition of $\cP(\mC)$ to write this commutativity by the equation: 

\begin{align}\label{eq sigma01 descriptions}
(
\omega, \
\tilde{u}_{0;4} \textcolor{olive}{\tilde{\theta}} \pi_0 \pi_0 w ,\
\tilde{u} \textcolor{orange}{\pi_0 \sigma_\circ} \pi_1 )c 
= 
(
\omega, \
\tilde{u}_{0;4} \textcolor{olive}{\tilde{\theta}} \pi_1 \pi_0 ,\
\tilde{u} \textcolor{teal}{\pi_1 \sigma_\circ} \pi_1 )c .
\end{align} 

\noi or the commuting diagram 

\[\begin{tikzcd}[column sep = huge]
\tilde{U} \dar[dd,"(
\omega {,} \
\tilde{u}_{0;4} \textcolor{olive}{\tilde{\theta}} \pi_0 \pi_0 w {,} \
\tilde{u} \textcolor{orange}{\pi_0 \sigma_\circ} \pi_1 )"'] 
\rar[rr, "(
\omega{,} \
\tilde{u}_{0;4} \textcolor{olive}{\tilde{\theta}} \pi_1 \pi_0 {,}\
\tilde{u} \textcolor{teal}{\pi_1 \sigma_\circ} \pi_1 )"] 
&& \mC_3 \dar[dd,"c"] \\
&&\\
\mC_3 \rar[rr,"c"'] && \mC_1 
\end{tikzcd}. \qquad \qquad \qquad \qquad \qquad \]

\noi For readability we define $\mu_0$ and $\mu_1$ by composition in $\mC$

\[ \begin{tikzcd}
\tilde{U} \rar[rrr, "(\omega{,} \
\tilde{u}_{0;4} \textcolor{olive}{\tilde{\theta}} \pi_0 \pi_0 w)"] 
\ar[drrr, "\mu_0"'] 
&&& \mC_2 \dar["c"] \\
&&& \mC_1 
\end{tikzcd} 
\begin{tikzcd}
\tilde{U} \rar[rrr, "(\omega{,} \
\tilde{u}_{0;4} \textcolor{olive}{\tilde{\theta}} \pi_1 \pi_0 w)"] \ar[drrr, "\mu_1"'] 
&&& \mC_2 \dar["c"] \\
&&& \mC_1 
\end{tikzcd}\]

\noi to internally represent the composites in the following piece of Diagram (\ref{dgm witnessing arrows for span comp}): 

\[\label{dgm witnessing masts of sailboats for defining span comp}
\begin{tikzcd}
	&& \cdot \\
	&& \\
	& & & & & \cdot & \cdot & \cdot \\
	&& &&&&& \cdot \\
	&& \cdot &&&&& \cdot \\
	&&&&&&&\cdot
	\arrow[from=3-6, to=5-3, bend left = 20, "\circ" marking, color = olive]
	\arrow[from=3-6, to=1-3, bend right = 20, color= olive]
	\arrow[from=3-7, to=3-6, "\circ" marking, cyan]
	\arrow[from=3-8, to=3-7, "\circ" marking,cyan]
	\arrow[from=4-8, to=3-8, violet]
	\arrow[from=5-8, to=4-8, violet]
	\arrow[from=6-8, to=5-8, violet]
\end{tikzcd} \tag{E}\]

\noi Note that equation~(\ref{eq sigma01 descriptions}) above gives two descriptions of the right leg of an intermediate span $ \textcolor{violet}{\sigma_{01}} : \tilde{U} \to \spn$ given by the pairing

\[ \textcolor{violet}{\sigma_{01}} = (\textcolor{violet}{\tilde{\omega}_0} \pi_1 , \textcolor{violet}{\sigma_{01}} \pi_1), \]

\noi where the right-hand component can be rewritten as either one of the terms in the following equation:

\[ (\mu_0 , \tilde{u} \textcolor{orange}{\pi_0 \sigma_\circ} \pi_1 )c = \textcolor{violet}{\sigma_{01}} \pi_1 = (\mu_1 , \tilde{u} \textcolor{teal}{\pi_1 \sigma_\circ} \pi_1 )c \]

\noi The different representations of the right leg of this intermediate span can be seen by the two paths in Diagram (\ref{dgm witnessing arrows for span comp}) given by combining Diagrams (\ref{dgm witnessing first zipper for defining span comp}) and (\ref{dgm witnessing masts of sailboats for defining span comp}). Now by expanding internal composition in terms of pairing maps; by associativity of composition in $\mC$; and by the definitions of $W_\circ, \cP(\mC), $ and $W_\square$ we can represent the left leg of the intermediate span $\textcolor{violet}{\sigma_{01}}$ by: 

\[ ( \mu_0 , \tilde{u} \textcolor{orange}{\pi_0 \sigma_\circ} \pi_0 ) c = \textcolor{violet}{\tilde{\omega}_0} \pi_1 = ( \mu_1 , \tilde{u} \textcolor{teal}{\pi_1 \sigma_\circ} \pi_0 ) c .\]

\noi Note that the sources of the composites (in $\mC$) in the previous two equations are the sources of the maps $\mu_0, \mu_1 : \tilde{U} \to \mC_1$, which have a common source in $\omega s : \tilde{U} \to \mC_0$. We can now give well-defined explicit descriptions of the two sailboats, $\tilde{U} \to \slb$, using the universal property of $\slb$. The first sailboat represents picking out the following piece of Diagram (\ref{dgm witnessing arrows for span comp}):

\[\label{dgm witnessing sailboat-0 for span comp}
\begin{tikzcd}
	\cdot & & & \cdot & \cdot & \cdot & \cdot & \cdot \\
	&& \cdot &&&&& \cdot \\
	&& \cdot &&&&& \cdot \\
	&&&&&&& \cdot
	\arrow[from=1-4, to=1-5]
	\arrow[from=2-3, to=1-4, color= orange]
	\arrow[from=3-3, to=2-3, color= orange]
	\arrow[from=3-3, to=1-1, bend left = 20, "\circ" marking, color= orange]
	\arrow[from=1-6, to=3-3, bend left = 20, "\circ" marking, color = olive]
	\arrow[from=1-7, to=1-6, "\circ" marking, cyan]
	\arrow[from=1-8, to=1-7, "\circ" marking,cyan]
	\arrow[from=2-8, to=1-8, violet]
	\arrow[from=3-8, to=2-8, violet]
	\arrow[from=4-8, to=3-8, violet]
	\arrow[from=4-8, to=1-1, bend left = 60, "\circ" marking, violet]
\end{tikzcd} \tag{F}\]

\noi This is determined uniquely by the components in the pairing map:
\[ \varphi_0 = \big( ( ( 
\mu_0 , \ 
\tilde{u} \textcolor{orange}{\pi_0 \sigma_\circ} \pi_0) ,\ 
\textcolor{violet}{\tilde{\omega}_0} \pi_1 ),\ 
\tilde{u} \textcolor{orange}{\pi_0 \sigma_\circ} \pi_1 
\big) .
\]

\noi By definition of $\varphi_0$ we can compute

\[ \varphi_0 p_0 = \varphi_0 (\pi_0 \pi_0 \pi_1 , \pi_1) = (\tilde{u} \textcolor{orange}{\pi_0 \sigma_\circ} \pi_0 , \ \tilde{u} \textcolor{orange}{\pi_0 \sigma_\circ} \pi_1 ) = \tilde{u} \textcolor{orange}{\pi_0 \sigma_\circ} \]
\noi and additionally with the definition of \textcolor{violet}{$\sigma_{01}$} we can see 

\begin{align*}
   \varphi_0 p_1
   &= \varphi_0 (\pi_0 \pi_1, (\pi_0 \pi_0 \pi_0, \pi_1)c ) \\
   &= \big(\textcolor{violet}{\tilde{\omega}_0} \pi_1 , \ 
(\mu_0 , \ \tilde{u} \textcolor{orange}{\pi_0 \sigma_\circ} \pi_1 )c \big) \\
  &= (\textcolor{violet}{\tilde{\omega}_0} \pi_1 , \ \textcolor{violet}{\sigma_{01}} \pi_1 )\\
  &= \textcolor{violet}{\sigma_{01}}.
\end{align*} 

\noi The second sailboat represents picking out the following piece of Diagram (\ref{dgm witnessing arrows for span comp}): 

\[\label{dgm witnessing sailboat-1 for span comp}
\begin{tikzcd}
	&& \cdot \\
	&& \cdot \\
	\cdot &&& \cdot & \cdot & \cdot & \cdot & \cdot \\
	&&&&&&& \cdot \\
	&&&&&&& \cdot \\
	&&&&&&& \cdot
	\arrow[from=3-4, to=3-5]
	\arrow[from=2-3, to=3-4, color= teal]
	\arrow[from=1-3, to=2-3, color= teal]
	\arrow[from=1-3, to=3-1, bend right = 20, "\circ" marking, color= teal]
	\arrow[from=3-6, to=1-3, bend right = 20, color= olive]
	\arrow[from=3-7, to=3-6, "\circ" marking, cyan]
	\arrow[from=3-8, to=3-7, "\circ" marking,cyan]
	\arrow[from=4-8, to=3-8, violet]
	\arrow[from=5-8, to=4-8, violet]
	\arrow[from=6-8, to=5-8, violet]
	\arrow[from=6-8, to=3-1, bend left = 60, "\circ" marking, violet]
\end{tikzcd} \tag{G}\]

\noi This one is uniquely determined by the pairing map: 
\[ \varphi_1 = \big( ( ( \mu_1 , \ \tilde{u} \textcolor{teal}{\pi_1 \sigma_\circ} \pi_0) ,\ \textcolor{violet}{\tilde{\omega}_0} \pi_1 ),\ \tilde{u} \textcolor{teal}{\pi_1 \sigma_\circ} \pi_1 \big) .
\]

\noi By definition of $\varphi_1$ we get

\begin{align*}
   \varphi_1 p_0 
   &= \varphi_1 (\pi_0 \pi_0 \pi_1 , \pi_1) \\
   &= (\tilde{u} \textcolor{teal}{\pi_1 \sigma_\circ} \pi_0 , \ \tilde{u} \textcolor{teal}{\pi_1 \sigma_\circ} \pi_1 )\\
   &= \tilde{u} \textcolor{teal}{\pi_1 \sigma_\circ}
\end{align*}  
\noi and by definition of \textcolor{violet}{$\sigma_{01}$}
\begin{align*}
   \varphi_0 p_1
   &= \varphi_0 (\pi_0 \pi_1, (\pi_0 \pi_0 \pi_0, \pi_1)c ) \\
   &= \big(\textcolor{violet}{\tilde{\omega}_0} \pi_1 , \ 
(\mu_0 , \ \tilde{u} \textcolor{orange}{\pi_0 \sigma_\circ} \pi_1 )c \big)\\
&= (\textcolor{violet}{\tilde{\omega}_0} \pi_1 , \ \textcolor{violet}{\sigma_{01}} \pi_1 ) \\
&= \textcolor{violet}{\sigma_{01}}.
\end{align*}

\noi Putting the previous few computations together we can see 
\begin{align*}
\tilde{u} \textcolor{orange}{\pi_0 \sigma_\circ} q 
&= \varphi_0 p_0 q & \text{Def. } \varphi_0 \\
&= \varphi_0 p_1 q & \text{Def. } q\\
&= \textcolor{violet}{\sigma_{01}} q & \text{Def. } \varphi_0 \\
&= \varphi_1 p_1 q & \text{Def. } \varphi_1 \\
&= \varphi_1 p_0 q & \text{Def. } q \\
& = \tilde{u} \textcolor{teal}{\pi_1 \sigma_\circ} q & \text{Def. } \varphi_1
\end{align*}
\noi and since $\tilde{u}$ is epic: 
\[ \pi_0 \sigma_\circ q = \pi_1 \sigma_\circ q \]

\noi That is, the diagram

\begin{center}
\begin{tikzcd}[]
\ker{u} \dar["\pi_0"'] \rar["\pi_1"] & U \dar["\sigma_\circ q"] \\
U \rar["\sigma_\circ q"'] & \mCW_1
\end{tikzcd}
\end{center}

\noi commutes. 
\end{proof}

\begin{lem}\label{lem actual definition of c'}
There exists a unique `composition on representatives' map $c' : \spn \tensor[_t]{\times}{_{s}} \spn \to \mCW_1$ such that the diagram 

\[ \begin{tikzcd}
U \rar["\scriptstyle{/}" marking, "u" near start] \dar["\sigma_\circ"'] & \spn \tensor[_t]{\times}{_{s}} \spn \dar[dotted, "c'"]\\
\spn \rar[two heads, "q"'] & \mCW_1
\end{tikzcd}\]

\noi commutes in $\cE$. 
\end{lem}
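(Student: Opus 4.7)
The plan is to invoke the universal property of $u$ as an effective epimorphism. Recall that the context $(\cE, \cJ)$ is a candidate context for internal fractions (Definition~\ref{defn candidate context for internal fractions and covers}), so every cover in $\cJ$ is by definition an effective epimorphism, that is, the coequalizer of its own kernel pair. In particular, the cover $u : U \to \spn \tensor[_t]{\times}{_s} \spn$ fits into a coequalizer diagram
\[
\begin{tikzcd}
\ker u \rar[shift left, "\pi_0"] \rar[shift right, "\pi_1"'] & U \rar["u"] & \spn \tensor[_t]{\times}{_s} \spn
\end{tikzcd}
\]
in $\cE$.

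Next, I would invoke Lemma~\ref{lem defining c'}, whose conclusion is exactly that the composite $\sigma_\circ q : U \to \mCW_1$ coequalizes the kernel pair of $u$. Concretely, the auxiliary cover $\tilde{u} : \tilde U \to \ker u$ together with the sailboats $\varphi_0, \varphi_1$ produced there witnesses the equality $\pi_0 \sigma_\circ q = \pi_1 \sigma_\circ q$; since $\tilde u$ is in particular an epimorphism (covers being effective epimorphisms), this equality holds already on $\ker u$. Thus $\sigma_\circ q$ factors uniquely through the coequalizer $u$, which yields the required arrow $c' : \spn \tensor[_t]{\times}{_s} \spn \to \mCW_1$ with $u c' = \sigma_\circ q$; uniqueness is immediate from $u$ being epic.

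The only thing worth being careful about is that all of the real work has already been absorbed into Lemma~\ref{lem defining c'}, so at this stage there is essentially no obstacle: the proof is a one-line application of a universal property. If I wanted to make the argument fully self-contained within this lemma, the main (already handled) difficulty would have been to check that $\sigma_\circ q$ does not depend on the choice of Ore-square and weak-composition lifts used to define $\sigma_\circ$, but that is precisely what Lemma~\ref{lem defining c'} accomplishes via the two witnessing sailboats $\varphi_0, \varphi_1$ and the intermediate span $\sigma_{01}$.
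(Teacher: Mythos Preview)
Your proposal is correct and matches the paper's own proof essentially verbatim: the paper simply invokes the universal property of $u$ as the coequalizer of its kernel pair together with Lemma~\ref{lem defining c'} to conclude that $\sigma_\circ q$ factors uniquely through $u$. Your additional remarks about where the real work lies are accurate but go beyond what the paper records here.
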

\begin{proof}
This follows by the universal property of $u$ being the coequalizer of its kernel pair and Lemma~\ref{lem defining c'} showing that $\sigma_\circ q : \spn \tensor[_t]{\times}{_{s}} \spn \to \mCW_1$ also coequalizes the kernel pair of $u$. 
\end{proof}

\noi Having defined composition on representative spans by a map $c' : \spn \tensor[_t]{\times}{_{s}} \spn \to \mCW_1$, the next thing to do is to check it is well-defined. This is translated internally by the following proposition. 

\begin{prop}\label{prop composition of spans is well-defined}
The composition operation on spans,
\[ c' : \spn \tensor[_t]{\times}{_{s}} \spn \to \mCW_1, \]

\noi is well-defined on equivalence classes in the sense that the square

\begin{center}
\begin{tikzcd}[]
\slb \tensor[_{t}]{\times}{_{s}} \slb \rar["p_1^2"] \dar["p_0^2"'] & \spn \tensor[_{t}]{\times}{_{s}} \spn \dar["c' "] \\
\spn \tensor[_{t}]{\times}{_{s}} \spn \rar["c' "'] & \mCW_1
\end{tikzcd}
\end{center}

\noi commutes in $\cE$. 
\end{prop}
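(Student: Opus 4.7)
The plan is to mimic the strategy used in Lemma~\ref{lem defining c'} and Lemma~\ref{lem actual definition of c'}, but now one level up: instead of showing that two choices of Ore/weak-composition fillers produce equivalent span composites, we must show that composing sailboat-equivalent spans produces equivalent composites. Concretely, I would look for a cover $\hat{u} : \hat{U} \to \slb \tensor[_t]{\times}{_s} \slb$, together with projections $\hat{\pi}_0, \hat{\pi}_1 : \hat{U} \to U$ (where $u : U \to \spn \tensor[_t]{\times}{_s} \spn$ is the cover built in Lemma~\ref{lem defining c'}) such that $\hat{u} p_i^2 u = \hat{\pi}_i \sigma_\circ$ (or rather such that these maps agree after hitting $c'$). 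Since $u$ and $\hat{u}$ are epimorphisms, it will then suffice to realize $\hat{\pi}_0 \sigma_\circ q$ and $\hat{\pi}_1 \sigma_\circ q$ as equal in $\mCW_1$ via an explicit chain of sailboats.

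First I would apply the Internal Fractions Axioms to the pair of sailboats indexed by $\slb \tensor[_t]{\times}{_s} \slb$: the Ore condition \ore\ on the cospan formed by the left $W$-legs of the two $p_0^2$-output spans, then the weak composition axiom \wcomp\ to assemble the resulting triangles into spans with left leg in $W$. By Lemma~\ref{lem extending Int.Frc. covers} and stability of covers under composition and pullback, this produces the desired cover $\hat{u} : \hat{U} \to \slb \tensor[_t]{\times}{_s} \slb$ refining both the cover used for $p_0^2 u$-composition and the cover used for $p_1^2 u$-composition. In particular, on $\hat{U}$ we can witness simultaneously the composites represented by $\hat{u} p_0^2 \sigma_\circ$ and $\hat{u} p_1^2 \sigma_\circ$, together with auxiliary spans interpolating between them.

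Next I would construct an explicit chain of spans $\sigma_0, \sigma_1, \ldots, \sigma_n : \hat{U} \to \spn$ with $\sigma_0 = \hat{\pi}_0 \sigma_\circ$ and $\sigma_n = \hat{\pi}_1 \sigma_\circ$, plus sailboats $\varphi_k : \hat{U} \to \slb$ satisfying $\varphi_k p_0 = \sigma_k$ and $\varphi_k p_1 = \sigma_{k+1}$ (or the reverse on some steps). The intermediate spans encode the following classical picture: starting from the composite of the $p_0^2$-spans, use the masts of the two sailboats (which are already in $W$) to build a zig-zag of span-composites that lands at the composite of the $p_1^2$-spans. Each link in the chain is witnessed by a sailboat, obtained by pasting together one of the original sailboats with the Ore-square and weak-composition fillers from $\hat{u}$; associativity of composition in $\mC$ and the defining pullback equations for $\slb, W_\triangle, W_\square, W_\circ$ make each $\varphi_k$ a well-defined map into $\slb$.

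Once the chain exists, the computation is formal:
\[
\hat{u} p_0^2 c' = \hat{\pi}_0 \sigma_\circ q = \sigma_0 q = \varphi_0 p_0 q = \varphi_0 p_1 q = \sigma_1 q = \cdots = \sigma_n q = \hat{\pi}_1 \sigma_\circ q = \hat{u} p_1^2 c',
\]
where the first and last equalities use Lemma~\ref{lem actual definition of c'}, and every intermediate equality uses either the defining equation $p_0 q = p_1 q$ for the coequalizer $q$, or the definitions of the $\varphi_k$. Since $\hat{u}$ is a cover and covers are epimorphisms, we conclude $p_0^2 c' = p_1^2 c'$, which is exactly the required commutativity. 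The main obstacle I expect is combinatorial rather than conceptual: producing a chain of sailboats short enough that every individual $\varphi_k$ can be written down cleanly using the already-constructed lifts, while making sure that the source/target compatibility needed for each pullback into $\slb$ holds on the nose — this is where the bookkeeping, as in Lemma~\ref{lem defining c'}, becomes heavy and must be managed carefully with colour-coded diagrams like diagram~(\ref{dgm witnessing arrows for span comp}).
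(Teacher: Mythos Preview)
Your proposal is correct and follows essentially the same approach as the paper. The paper's proof of this proposition is a two-line appeal to Lemma~\ref{lem witnessing c' well-defined}, and your sketch is precisely an outline of that lemma: construct a cover $\hat u$ of $\slb \tensor[_t]{\times}{_s} \slb$ refining both pullbacks of $u$ along $p_0^2$ and $p_1^2$, build a finite chain of sailboats linking $\hat\pi_0\sigma_\circ$ to $\hat\pi_1\sigma_\circ$, and cancel $\hat u$.

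One small clarification on the combinatorics you anticipate: the paper organises the chain via a single \emph{comparison} composable pair $\gamma$ obtained by mixing the first span of $p_0^2$ with the second span of $p_1^2$, and then proves $\sigma_0 \sim \sigma_\gamma$ and $\sigma_1 \sim \sigma_\gamma$ separately, each via one further intermediate span $\sigma_{0,\gamma}$, $\sigma_{1,\gamma}$ built using \ore, two applications of \zip, and three applications of \wcomp; this yields exactly four sailboats. Also, note that the mast of a sailboat is an arbitrary arrow of $\mC$, not necessarily in $W$; it is the diagonal ``sail'' that lies in $W$, so the zig-zag is driven by the sails and the Ore/zipper fillers rather than the masts themselves.
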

\begin{proof}
By Lemma~\ref{lem witnessing c' well-defined} 

\[ \hat{u} p_0^2 c' = \varphi_0 p_0 q = \varphi_3 p_0 q = \hat{u} p_1^2 c'\]

\noi and since $\hat{u}$ is epic 

\[p_0^2 c' = p_1^2 c' \]
\end{proof}

\noi A direct consequence of Proposition~\ref{prop composition of spans is well-defined} is it induces a unique composition map $c : \mCW_2 \to \mCW_1$ such that the diagram

\[ \begin{tikzcd}
\spn \tensor[_t]{\times}{_{s}} \spn \rar[two heads, "q_2"] \ar[dr, "c'"'] & \mCW_2 \dar[dotted, "c"] \\
& \mCW_1
\end{tikzcd}\]

\noi commutes in $\cE$, by the universal property of the coequalizer $\mCW_2$. Lemma~\ref{lem witnessing c' well-defined} is doing all the heavy lifting for showing that $c'$ is well-defined and subsequently defining the composition map $c : \mCW_2 \to \mCW_1$. We now prove this lengthy and technical lemma. 

\begin{lem}\label{lem witnessing c' well-defined}
There exists a cover $\hat{U} \to \slb \tensor[_{t}]{\times}{_s} \slb$, and four families of sailboats, $\varphi_i: \hat{U} \to \slb$ for $0 \leq i \leq 3$, such that the diagram

\begin{center}
\begin{tikzcd}[]
\slb^2 \ar[r, shift left, "p_0^2"] \ar[r, shift right, "p_1^2"'] 
& \spn^2 \ar[ddr, "c'"] 
& \\
\hat{U} 
\uar["/" marking, "\hat{u}" near start] 
\dar[shift left, "\varphi_0"] 
\dar[shift right, "\varphi_3"'] 
 \ar[r, shift left, "\hat{\pi}_0"] \ar[r, shift right, "\hat{\pi}_1"'] 
 & U 
 \uar["/" marking, "u"'near start] 
 \dar["\sigma_\circ"] & \\
 \slb \ar[r, "p_0"] 
 & \spn 
 \rar[two heads, "q"'] 
 & \mCW 
\end{tikzcd}
\end{center}

\noi commutes in the sense that 
\[ \varphi_0 p_0 q = \hat{\pi}_0 \sigma_\circ q = \hat{\pi_0} u c' = \hat{u} p_0^2 c' , \]
\[ \varphi_4 p_0 q = \hat{\pi}_1 \sigma_\circ q = \hat{\pi_1} u c' = \hat{u} p_1^2 c', \]
\noi and the sailboats glue together along comparison spans 
\[ \varphi_0 p_0 q = \varphi_0 p_1 q = \varphi_1 p_1 q = \varphi_1 p_0 q = \varphi_2 p_0 q = \varphi_2 p_1 q = \varphi_3 p_1 q = \varphi_3 p_0 q .\]

\end{lem}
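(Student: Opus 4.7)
The plan is to construct a sufficiently fine cover $\hat{u} : \hat{U} \to \slb \tensor[_t]{\times}{_s} \slb$ together with four sailboats $\varphi_0, \varphi_1, \varphi_2, \varphi_3 : \hat{U} \to \slb$ whose $p_0, p_1$ projections chain through five spans, so that the outer two agree with $\hat{\pi}_0 \sigma_\circ$ and $\hat{\pi}_1 \sigma_\circ$ respectively, and then to read off the required equality $p_0^2 c' = p_1^2 c'$ from the fact that $\hat{u}$ is a (regular) epimorphism. The five intermediate spans correspond to the zig-zag pattern dictated by the lemma statement, namely $\varphi_0 p_0 q = \varphi_0 p_1 q = \varphi_1 p_1 q = \varphi_1 p_0 q = \varphi_2 p_0 q = \varphi_2 p_1 q = \varphi_3 p_1 q = \varphi_3 p_0 q$.

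First, I would build $\hat{U}$ by pulling back the cover $u : U \to \spn \tensor[_t]{\times}{_s} \spn$ used in Lemma~\ref{lem actual definition of c'} along $p_0^2$ and independently along $p_1^2$, and then taking the fibered product of the two resulting covers over $\slb \tensor[_t]{\times}{_s} \slb$. Because covers are stable under pullback and composition, the result $\hat{u}$ is a cover, and it comes equipped with lifts $\hat{\pi}_0, \hat{\pi}_1 : \hat{U} \to U$ that witness $\sigma_\circ$ simultaneously on the bottom pair of spans (image of $p_0^2$) and the top pair of spans (image of $p_1^2$). Lemma~\ref{lem actual definition of c'} then immediately gives the endpoint identifications $\hat{u} p_0^2 c' = \hat{\pi}_0 \sigma_\circ q$ and $\hat{u} p_1^2 c' = \hat{\pi}_1 \sigma_\circ q$.

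Next, the three intermediate spans are produced by successive applications of \textbf{In.Frc(2)}, \textbf{In.Frc(3)}, and \textbf{In.Frc(4)}, applied to the combined data of (i) the masts of the two input sailboats (the vertical arrows $\pi_0 \pi_0 \pi_0 : \slb \to \mC_1$) and (ii) the Ore-square, weak-composition and zippering fillers already built into $\sigma_\circ$. The intuition, mirroring the proof of Lemma~\ref{lem defining c'}, is that each input sailboat encodes exactly the data needed to transport a span along its mast, and that conjugating the span-composition procedure by these masts yields four new sailboats relating the five spans of the zig-zag. Each $\varphi_i$ is then assembled as an explicit pairing map into $\slb$, using associativity of composition in $\mC$ and the defining pullbacks of $W_\triangle$, $W_\circ$, $W_\square$ and $\cP(\mC)$, so that the pairs $(\varphi_i p_0, \varphi_i p_1)$ match the prescribed adjacent spans in the chain.

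The main obstacle will be the combinatorial bookkeeping: each $\varphi_i$ must simultaneously be shown to factor through $W_\triangle$ (so that the left leg of the resulting representative span lies in $W$) and to satisfy both projection equations identifying $\varphi_i p_0$ and $\varphi_i p_1$ with the prescribed adjacent spans of the zig-zag. Each such verification is routine but lengthy, following exactly the pattern already used to establish $\varphi_0 p_0 = \tilde{u} \pi_0 \sigma_\circ$ and $\varphi_0 p_1 = \sigma_{01}$ in Lemma~\ref{lem defining c'}, and appealing repeatedly to the $W_\circ, W_\square, \cP(\mC)$ pullback definitions together with associativity in $\mC$. Once all four sailboats and the chained equalities are produced, post-composition with $q$ collapses the entire zig-zag of spans to a single element of $\mCW_1$, and the conclusion $p_0^2 c' = p_1^2 c'$ then follows from $\hat{u}$ being epic, finishing the proof of Proposition~\ref{prop composition of spans is well-defined}.
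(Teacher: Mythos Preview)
Your overall strategy matches the paper's: pull back $u$ along $p_0^2$ and $p_1^2$, take the common refinement to obtain the cover $\hat{u}$ with lifts $\hat{\pi}_0,\hat{\pi}_1$, and then build a chain of five spans linked by four sailboats so that the endpoints are $\hat{\pi}_0\sigma_\circ$ and $\hat{\pi}_1\sigma_\circ$. The endpoint identifications via Lemma~\ref{lem actual definition of c'} and the final cancellation by epicness of $\hat{u}$ are exactly right.

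The gap is in how the three intermediate spans are produced. Your phrase ``conjugating the span-composition procedure by these masts'' is not concrete enough, and in fact the paper does not work directly with the masts in this way. The key construction you are missing is a \emph{mixed} composable pair $\gamma : \bar{U} \to \spn \tensor[_t]{\times}{_s} \spn$, whose first leg is the first span of $p_0^2$ (the bottom) and whose second leg is the second span of $p_1^2$ (the top); these are composable because sailboats preserve source and target. Its composite $\sigma_\gamma$ (computed by the same recipe as $\sigma_\circ$) is the \emph{middle} span of the five. The remaining two intermediate spans $\sigma_{0,\gamma}$ and $\sigma_{1,\gamma}$ are then obtained by running the comparison argument of Lemma~\ref{lem defining c'} \emph{twice in parallel}: once for the cospan of left legs of $\sigma_0$ and $\sigma_\gamma$, once for that of $\sigma_1$ and $\sigma_\gamma$, each time applying \textbf{In.Frc(3)} once, \textbf{In.Frc(4)} twice, and \textbf{In.Frc(2)} three times. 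The sailboat masts enter only inside the second zippering step (they show up in the coequalized-parallel-pair data $\rho_0',\rho_1'$), not as a standalone transport. Without the intermediate $\sigma_\gamma$ there is no common anchor to which both endpoint composites can be compared using the mechanism of Lemma~\ref{lem defining c'}, so this is the ingredient your sketch needs to become a proof.
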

\begin{proof}
The main idea is to use the explicit definition 

\[ \sigma_\circ = \big( \omega \pi_1 , \ (\omega \pi_0 \pi_0 \pi_0 , \ u_0 \theta \pi_1 \pi_0 , \ u \pi_1 \pi_1)c \big)\]

\noi and post-compose it with the maps $p_0^2$ and $p_1^2$ to get two different spans. To show these two spans are equivalent we construct a comparison span from the data involved in each of their constructions and show they're both equivalent to the comparison span. Each of these equivalences in turn requires constructing an additional comparison span and a witnessing sailboat. This accounts for the four sailboats. 

To do this we need a common domain for the covers so take pullbacks of $u : U \to \spn \tensor[_{t}]{\times}{_s} \spn$ along $p_0^2$ and $p_1^2$ to get two covers of $\slb \tensor[_{t}]{\times}{_s}\slb$ 

\[\label{dgm slb proj pb along span comp cover}
\begin{tikzcd}[]
\bar{U}_0 \arrow[dr, phantom, "\usebox\pullback" , very near start, color=black] \dar["/" marking, "\bar{u}_0"' near end] \rar["\pi_1"] & U \dar["/" marking, "u" near start] & \bar{U}_1 \dar["/" marking, "\bar{u}_1" near start] \lar["\pi_1"'] \arrow[dl, phantom, "\usebox\urpullback" , very near start, color=black]\\
 \slb \tensor[_{t}]{\times}{_s} \slb \rar["p_0^2"'] & \spn \tensor[_{t}]{\times}{_s} \spn & \slb \tensor[_{t}]{\times}{_s} \slb \lar["p_1^2"]
\end{tikzcd} \tag{1}
\]

\noi Now take a refinement 
\[ \label{dgm slb proj pb along span comp cover refinement}
\begin{tikzcd}[]
\bar{U} \ar[d,"\pi_0"'] \ar[r, "\pi_1"] \ar[dr, "/" marking, "\bar{u}" near start] & \bar{U}_1 \dar["/" marking,"\bar{u}_1" near start] \\
\bar{U}_0 \rar["/" marking,"\bar{u}_0"' near end] & \slb \tensor[_{t}]{\times}{_s} \slb
\end{tikzcd} \tag{2}
\]

\noi by taking a pullback of $\bar{u}_0$ and $\bar{u}_1$ to get a cover of the pairs of composable sailboats. Note that 
\[ \bar{u} p_0^2 = \pi_0 \pi_0 p_0^2 = \pi_0 \pi_1 u \]
\noi projects out the composable spans represented by 
\begin{center}
$\left[
\begin{tikzcd}[]
\cdot & \cdot \lar["\circ" marking] \rar & \cdot & \cdot \lar["\circ" marking] \rar & \cdot 
\end{tikzcd} 
\right]$
\end{center}
\noi while 
\[ \bar{u} p_1^2 = \pi_1 \pi_0 p_1^2 = \pi_1 \pi_1 u \]
\noi projects out the composable spans represented by
\begin{center}
$\left[
\begin{tikzcd}[]
& \cdot \dar \ar[dl, "\circ" marking] & & \cdot \dar \ar[dl, "\circ" marking] & \\
\cdot & \cdot \rar & \cdot & \cdot \rar & \cdot 
\end{tikzcd} 
\right]$
\end{center}

\noi From this point the usual set-theoretic proof can be translated into a chain of covers and lifts. The outline is that for any pair of composable sailboats, 

\begin{center}
$\left[
\begin{tikzcd}[]
& \cdot \dar \ar[dl, "\circ" marking] & & \cdot \dar \ar[dl, "\circ" marking] & \\
\cdot & \cdot \lar["\circ" marking] \rar & \cdot & \cdot \lar["\circ" marking] \rar & \cdot 
\end{tikzcd} 
\right]$
\end{center}

\noi the composites of the spans represented by $\bar{u} p_0^2$ and $\bar{u} p_1^2$ are equivalent to the composite of a comparison pair of composable spans, 

\begin{center}
$\left[
\begin{tikzcd}[]
&& & \cdot \dar[dotted] \ar[dl, "\circ" marking] \ar[dr,] & \\
\cdot & \cdot \lar["\circ" marking] \rar & \cdot & \cdot \rar[dotted] & \cdot 
\end{tikzcd} 
\right]$
\end{center}

\noi The following figure shows the construction of three different composites being constructed. 

\[ \label{fig intermediate span comp well-defined} \begin{tikzcd}[]
	&& \cdot \\
	& \cdot & \cdot \\
	& \cdot & \cdot & \cdot \\
	\cdot & \cdot & \cdot & \cdot & \cdot \\
	&& \cdot \\
	&& \cdot
	\arrow[from=4-2, to=4-1, "\circ" marking ]
	\arrow[from=4-2, to=4-3]
	\arrow[from=3-4, to=4-4]
	\arrow[from=4-4, to=4-3, "\circ" marking ]
	\arrow[from=4-4, to=4-5]
	\arrow[from=3-2, to=4-2]
	\arrow[from=3-2, to=4-1, "\circ" marking ]
	\arrow[from=3-4, to=4-3, "\circ" marking ]
	\arrow[from=2-3, to=3-4, color = teal]
	\arrow[from=3-3, to=4-2, "\circ" marking, color = purple ]
	\arrow[from=3-3, to=3-4, color = purple]
	\arrow[from=5-3, to=4-2, "\circ" marking , color = orange ]
	\arrow[from=5-3, to=4-4, color = orange ]
	\arrow[from=6-3, to=5-3, color = orange ]
	\arrow[from=1-3, to=2-3, color = teal]
	\arrow[from=2-2, to=3-3, color = purple]
	\arrow[from=2-3, to=3-2, crossing over, "\circ" marking, color = teal]
	\arrow[curve={height=6pt}, from=2-2, to=4-1, "\circ" marking, color = purple]
	\arrow[curve={height=30pt}, from=1-3, to=4-1, "\circ" marking , color = teal]
	\arrow[curve={height=-6pt}, from=6-3, to=4-1, "\circ" marking , color = orange ]
\end{tikzcd} \tag{A}\]

\noi To internalize this we define the maps that pick out each of the three spans and their composites by finding a corresponding cover \begin{tikzcd} \tilde{U} \rar["/" marking, "\tilde{u}" near start] & \bar{U}\end{tikzcd}. Two of the spans can be given in terms of the composition, $\sigma_\circ$, on the cover $U$ but the comparison span needs a finer covering to witness applying the Ore and weak composition conditions to arrows from both of the first two spans. Denote the comparison pair of composable spans by $\gamma$ and define it by the universal property in the following pullback diagram.

\[ \label{dgm intermediate compblespan} 
\begin{tikzcd}[column sep = large, row sep = large]
 \bar{U} \ar[d, "/" marking, "\bar{u}"' near end] \ar[r, "/" marking, "\bar{u}" near end] \ar[dr, dotted, "\gamma"] 
& \slb \tensor[_{t}]{\times}{_s} \slb \rar["p_1^2"] 
& \spn \tensor[_{t}]{\times}{_s} \spn \dar["\pi_1"] 
\\
 \slb \tensor[_{t}]{\times}{_s} \slb \dar[ "p_0^2"'] 
& \spn \tensor[_{t}]{\times}{_s} \spn \dar["\pi_0"'] \rar["\pi_1"] 
& \spn \dar["s"] 
\\
 \spn \tensor[_{t}]{\times}{_s} \spn \rar["\pi_0"'] 
&\spn \rar["t"'] 
& \mC_0 
\end{tikzcd} \tag{3}
\]

\noi The following diagram of covers shows how the intermediate span is constructed by a similar span-composition construction for $\gamma$. Note there is another way to do this by taking a pullback of the pairing map $(\pi_0 p_0, \pi_1 p_1) : \slb \tensor[_t]{\times}{_s} \slb \to \spn \tensor[_t]{\times}{_s}$ along $u : U \to \spn \tensor[_t]{\times}{_s} \spn$ and a refinement with the previous refinement of covers of $\spn \tensor[_t]{\times}{_s} \spn$ above, and then using the span composition $\sigma_\circ : U \to \spn$ to obtain the intermediate span $\textcolor{purple}{\sigma_\gamma}$ in Diagram (\ref{dgm diagram of covers to include composite of intermediate comp'ble spans}) below. Both approaches lead to the same result. 

\[ \label{dgm diagram of covers to include composite of intermediate comp'ble spans}
\begin{tikzcd}[column sep = large ]
 W_\circ \rar[rr, "(\pi_0 \pi_1 {,} \pi_0 \pi_2)"] 
&& W \times_{\mC_0} W 
& \\
\tilde{U} 
\dar[shift left = 1.5, bend left =50, "\sigma_1", color = teal ] 
\dar[color = purple, "\sigma_\gamma"']
\dar[shift right = 1.5, bend right= 50, "\sigma_0"' , color = orange] 
\uar[color = purple, "\omega_\gamma"]
\rar[rr, "/" marking , "\tilde{u}_0" near start]
 &
 & \tilde{U}_0
 \dar[color = purple, "\theta_\gamma"']
 \uar[color = purple, "(\theta_\gamma \pi_0 \pi_0 {, } \tilde{u}_1 \gamma \pi_0 \pi_0) "'] 
 \rar["/" marking , "\tilde{u}_1" near start] 
 & \bar{U} 
 \dar[color = purple, "(\gamma \pi_0 \pi_1 {, }\gamma \pi_1 \pi_0)"] 
 \\
 \spn 
 &
 & W_\square 
 \rar["(\pi_0 \pi_1 {,} \pi_1 \pi_1)"'] 
 & \mC_1 \tensor[_{t}]{\times}{_{wt}} W 
\end{tikzcd} \tag{$\star$}
\]

\noi The left and right curved arrows, $\textcolor{orange}{\sigma_0}$ and $\textcolor{teal}{\sigma_1}$, into $\spn$ in the bottom left corner are defined by applying the composite of spans, $\sigma_\circ$, to the composable spans given by applying $p_0^2$ and $p_1^2$ to the pair of composable sailboats. Since $\sigma_\circ$ is only defined on $U$ we need to pass through the appropriate cover. The colours in the previous diagram and following equations indicate which of the three different span compositions in Figure (\ref{fig intermediate span comp well-defined}) the arrows in the following equations are witnessing. 

\begin{align}\label{def sigma_0} 
\begin{split}
 \textcolor{orange}{\sigma_0} 
 &= \textcolor{orange}{\tilde{u} \pi_0 \pi_1 \sigma_\circ} \\
 &= \textcolor{orange}{\tilde{u} \pi_0 \pi_1 \big( \omega \pi_1 , (\omega \pi_0 \pi_0 {,} u_0 \theta \pi_1 \pi_0 {,} u \pi_1 \pi_1 )c \big)} 
 \end{split}
 \end{align}
 \noi and 
 \begin{align}\label{def sigma_1}
 \begin{split}
 \textcolor{teal}{\sigma_1 }
 &= \textcolor{teal}{\tilde{u} \pi_1 \pi_1 \sigma_\circ} \\
 &= \textcolor{teal}{\tilde{u} \pi_1 \pi_1 \big( \omega \pi_1 , (\omega \pi_0 \pi_0 {,} u_0 \theta \pi_1 \pi_0 {,} u \pi_1 \pi_1 )c \big)}.
 \end{split}
 \end{align}

\noi The arrow into $\spn$ on the bottom left side of the cover diagram is the universal map
\[ \textcolor{purple}{\sigma_\gamma} = \big( \omega_\gamma \pi_1 {,} (\omega_\gamma \pi_0 \pi_0 {,} \tilde{u}' \theta_\gamma \pi_1 \pi_0{,} \tilde{u} \bar{u} p_1^2 \pi_1 \pi_1 )c \big).\]

\noi The data necessary to construct witnessing sailboats for the equivalences between the pairs of spans $\textcolor{orange}{\sigma_0}$, $\textcolor{teal}{\sigma_1}$, and $\textcolor{purple}{ \sigma_\gamma}$ can be obtained by applying the Ore condition, followed by the diagram-extension twice, and then weak composition three times. Internally this corresponds to a chain of six covers and lifts. All of this is color-coded below using \textcolor{olive}{olive} and \textcolor{brown}{brown} for the Ore condition and \textcolor{cyan}{cyan} and \textcolor{violet}{violet} for the zippering and weak composition step(s) that follow. Note that in both cases the first zippering is done to parallel pairs of composites that can be post-composed by the left leg of the bottom left span. The second zipper is done to parallel pairs of composites that can be post-composed with the left leg of the bottom right span in the pair of composale sailboats. Weak composition is then applied three times in to get comparison spans, $\textcolor{cyan}{\sigma_{0, \gamma}}$ and $\textcolor{violet}{\sigma_{1,\gamma}}$, whose left legs are in $W$.

\[ \label{fig Ore + zip + w.c. showing span comp is well def wrt equiv reln}
\begin{tikzcd}
	\cdot & \cdot & \cdot & \cdot \\
	&&& \cdot \\
	&&& \cdot \\
	&& \cdot & \cdot & \cdot \\
	&&& \cdot & \cdot & \cdot \\
	{} && \cdot & \cdot & \cdot & \cdot & \cdot & \cdot & \cdot & \cdot \\
	&&&& \cdot &&&&& \cdot \\
	&&&& \cdot &&&&& \cdot \\
	&&&&&&&&& \cdot
	\arrow[from=6-4, to=6-3,"\circ" marking]
	\arrow[from=6-4, to=6-5]
	\arrow[from=6-6, to=6-5, "\circ" marking]
	\arrow[from=6-6, to=6-7]
	\arrow[from=5-6, to=6-6]
	\arrow[from=5-6, to=6-5, "\circ" marking]
	\arrow[from=5-5, to=6-4, purple, "\circ" marking]
	\arrow[from=5-5, to=5-6, purple]
	\arrow[curve={height=-90pt}, from=9-10, to=6-3, cyan, "\circ" marking]
	\arrow[from=9-10, to=8-10, cyan]
	\arrow[from=8-10, to=7-10, cyan]
	\arrow[from=7-10, to=6-10, cyan]
	\arrow[from=6-10, to=6-9, cyan, "\circ" marking]
	\arrow[from=6-9, to=6-8, cyan, "\circ" marking]
	\arrow[from=7-10, to=6-8, cyan, "\circ" marking]
	\arrow[from=7-5, to=6-4, orange, "\circ" marking]
	\arrow[from=7-5, to=6-6, orange]
	\arrow[from=8-5, to=7-5, orange]
	\arrow[curve={height=-12pt}, from=8-5, to=6-3, orange, "\circ" marking]
	\arrow[curve={height=-18pt}, from=6-8, to=8-5, olive, "\circ" marking]
	\arrow[curve={height=-12pt}, from=8-10, to=8-5, cyan, "\circ" marking]
	\arrow[from=1-4, to=2-4, violet, "\circ" marking]
	\arrow[curve={height=6pt}, from=4-3, to=6-3, teal, "\circ" marking]
	\arrow[from=4-4, to=5-6, teal]
	\arrow[from=4-5, to=5-5, purple, crossing over]
	\arrow[from=4-3, to=4-4, teal]
	\arrow[from=5-4, to=6-3, "\circ" marking]
	\arrow[from=5-4, to=6-4]
	\arrow[from=4-4, to=5-4, teal, "\circ" marking, near start]
	\arrow[curve={height=15pt}, from=4-5, to=6-3, purple, "\circ" marking, crossing over]
	\arrow[curve={height=18pt}, from=6-8, to=4-5, olive]
	\arrow[from=2-4, to=3-4, violet, "\circ" marking]
	\arrow[curve={height=6pt}, from=3-4, to=4-3, brown, "\circ" marking]
	\arrow[curve={height=-6pt}, from=3-4, to=4-5, brown]
	\arrow[curve={height=6pt}, from=1-3, to=3-4, violet, "\circ" marking]
	\arrow[curve={height=6pt}, from=1-2, to=4-3, violet, "\circ" marking]
	\arrow[curve={height=12pt}, from=1-1, to=6-3, violet, "\circ" marking]
	\arrow[from=1-1, to=1-2, violet]
	\arrow[from=1-2, to=1-3, violet]
	\arrow[from=1-3, to=1-4, violet]
\end{tikzcd} \tag{B}
\]

\noi Internally this is given by finding a cover \begin{tikzcd} \hat{U} \rar["/" marking, "\hat{u}" near start] & \tilde{U}\end{tikzcd} that witnesses application of the Ore condition, zippering, and weak composition in that order. The first three covers, $\hat{u}_5, \hat{u}_4$, and $\hat{u}_3$, witness the Ore condition being to each of the two cospans and the two applications of zippering that follow from each Ore-square.

\[\label{cover dgms for showing span comp well def : Ore + zip} 
\begin{tikzcd}[column sep = large]
& \cP(\mC) \rar["\pi_1"] 
& \cP_{cq}(\mC)
&
\\
\hat{U}_3
\rar["/" marking, "\hat{u}_3" near start] 
\dar[shift left, "\delta_{\rho_0}", color = cyan] 
\dar[shift right, "\delta_{\rho_1}"', color = violet] 
&\hat{U}_4
\rar["/" marking, "\hat{u}_4" near start]
\uar[shift right, "\delta_{\lambda_0}"', color = cyan] 
\uar[shift left, "\delta_{\lambda_1}", color = violet] 
\dar[shift left, "\rho_0", color = cyan] 
\dar[shift right, "\rho_1"', color = violet] 
&\hat{U}_5
\rar["/" marking, "\hat{u}_5" near start] 
\dar[shift left, "\theta_{\gamma_0}", color = olive]
 \dar[shift right, "\theta_{\gamma_1}"', color = brown] 
 \uar[shift right, "\lambda_0"', color = cyan] 
 \uar[shift left, "\lambda_1", color = violet] 
&\tilde{U} 
\dar[shift right, "(\sigma_1 \pi_0 w {,} \sigma_\gamma \pi_0)"', color = brown]
 \dar[shift left, "(\sigma_0 \pi_0 w {,} \sigma_\gamma \pi_0 )", color = olive] 
\\ 
\cP(\mC) \rar["\pi_1"'] 
& \cP_{cq}(\mC)
& W_\square \rar["(\pi_0 \pi_1 {,} \pi_1 \pi_1)"'] 
& \mC_1 \tensor[_{t}]{\times}{_{wt}} W 
\end{tikzcd} \tag{$\star \star$}
\]

\noi The covers, $\hat{u}_2, \hat{u}_1,$ and $\hat{u}_0$, witness three applications of weak composition in each case as seen in the following continued sequence of covers:

\[\label{cover dgms for showing span comp well def : weak comp x3} 
\begin{tikzcd}[column sep = large]
W_\circ \rar[r, "(\pi_0 \pi_1 {,} \pi_0 \pi_2)"] 
& W \times_{\mC_0} W 
&
W_\circ \rar[r, "(\pi_0 \pi_1 {,} \pi_0 \pi_2)"] 
& W \times_{\mC_0} W 
& \\
\hat{U} 
\rar["/" marking, "\hat{u}_0" near start] 
\uar[shift right, "\omega_{0,0}"', color = cyan] 
\uar[shift left, "\omega_{1,0}", color = violet] 
\dar[shift left = 1.5,""] 
\dar[shift right= 1.5 ,""']
\dar[shift left=.5] 
\dar[shift right= .5] 
& \hat{U}_1
\rar["/" marking,"\hat{u}_1" near start]
\uar[shift right, "\omega_{0,0}'"', color = cyan] 
\uar[shift left, "\omega_{1,0}'", color = violet] 
\dar[shift left, "\omega_{0,1}", color = cyan] 
\dar[shift right, "\omega_{1,1}"', color = violet] 
& \hat{U}_2
\rar["/" marking,"\hat{u}_2" near start]
\uar[shift right, ""', color = cyan, "\omega_{0, 2}"'] 
\uar[shift left, "", color = violet, "\omega_{1, 2}"] 
\dar[shift right, ""', color = violet, "\omega_{1, 1}'"'] 
\dar[shift left, "", color = cyan, "\omega_{0, 1}'"] 
& \hat{U}_3
\uar[shift right, ""', color = cyan, "\omega_{0,2}'"'] 
\uar[shift left, "", color = violet, "\omega_{1,2}'"] 
 \\ 
\slb &
W_\circ \rar[r, "(\pi_0 \pi_1 {,} \pi_0 \pi_2)"'] 
& W \times_{\mC_0} W 
& 
\end{tikzcd} \tag{$\star \star \star$}
\]

\noi The diagrams above will allow us to extract four sailboats that relate the three spans above through two new intermediate spans, $\textcolor{cyan}{\sigma_{0,\gamma}}$ and $\textcolor{violet}{\sigma_{1,\gamma}}$. All of these will be defined after we justify the maps in Diagrams (\ref{cover dgms for showing span comp well def : Ore + zip}) and (\ref{cover dgms for showing span comp well def : weak comp x3}) above. The maps, $\textcolor{cyan}{\lambda_0}$ and $\textcolor{violet}{\lambda_1}$ are induced by maps $\textcolor{cyan}{\lambda_0'}$ and $\textcolor{violet}{\lambda_1'}$ which pick out two parallel pairs of arrows in $\mC$ along with a post-composable arrow in $W$ that coequalizes them (in $\mC$). The following figure serves as a guide to defining these.

\[\label{fig Ore-squares for showing span comp well def up to equiv}
\begin{tikzcd}
	& \cdot \\
	 \cdot & \cdot & \cdot \\
	& \cdot & \cdot \\
	 \cdot & \cdot & & & & \cdot \\
	&& \cdot &&& \\
	&& \cdot &&& \\
	&&&&&
	\arrow[from=4-2, to=4-1,"\circ" marking]
	\arrow[from=3-3, to=4-2, purple, "\circ" marking]
	\arrow[from=2-3, to=3-3, purple]
	\arrow[from=5-3, to=4-2, orange, "\circ" marking]
	\arrow[from=6-3, to=5-3, orange]
	\arrow[curve={height=-12pt}, from=6-3, to=4-1, orange, "\circ" marking]
	\arrow[curve={height=-18pt}, from=4-6, to=6-3, olive, "\circ" marking]
	\arrow[curve={height=6pt}, from=2-1, to=4-1, teal, "\circ" marking]
	\arrow[from=2-1, to=2-2, teal]
	\arrow[from=3-2 , to=4-1, "\circ" marking]
	\arrow[from=3-2, to=4-2]
	\arrow[from=2-2, to=3-2, teal, "\circ" marking, near start]
	\arrow[curve={height=15pt}, from=2-3, to=4-1, purple, "\circ" marking, crossing over]
	\arrow[curve={height=18pt}, from=4-6, to=2-3, olive]
	\arrow[curve={height=6pt}, from=1-2, to=2-1, brown, "\circ" marking]
	\arrow[curve={height=-6pt}, from=1-2, to=2-3, brown]
\end{tikzcd} \tag{B} \]

\noi The explicit definitions are obtained using the universal property of the equalizer $\cP_{cq}(\mC)$. This is done similarly as in Lemma~\ref{lem defining c'}, by specifying two maps (on the left below) that also equalize the parallel pair on the right below. 

\[ \begin{tikzcd}[column sep = large]
\hat{U}_5 \rar[shift right, cyan, "\lambda_0'"'] \rar[shift left, violet, "\lambda_1'"] & P(\mC) \times_{\mC_0} W \rar[shift left, "(\pi_0\pi_0 {,} \pi_1 w )c"] \rar[shift right, "(\pi_0\pi_1 {,} \pi_1 w )c"'] & \mC_1 
\end{tikzcd}\]

\noi The maps \textcolor{cyan}{$\lambda_0 '$} and \textcolor{violet}{$\lambda_1 '$} are uniquely determined in a similar fashion to $\delta_0'$ in Lemma~\ref{lem defining c'}, namely by descending through the covers above and expanding both sides of the Ore-square equations witnessed. The calculations are lengthy and technical and can be found in Lemma~\ref{appendix lem witness ore squares of span comp well def wrt equiv result (appendix)} of Section \ref{App.S. Defining Span Comp on Reps}. The proof shows that the map $\textcolor{cyan}{\lambda_0'}$ is uniquely determined by the projections 

\begin{align*}
  \textcolor{cyan}{\lambda_0'} \pi_1 
  &= \hat{u}_5 \tilde{u} \pi_0 \pi_0 p_0^2 \pi_0 \pi_0  \\
  \textcolor{cyan}{\lambda_0'} \pi_0 \pi_0 
  &= (\textcolor{olive}{\theta_{\gamma_0}} \pi_0 \pi_0 w \
{,} \ \hat{u}_5 \tilde{u} \textcolor{orange}{\pi_0 \pi_1 \omega} \pi_0 \pi_0 \
{,} \ \hat{u}_5 \tilde{u} \textcolor{orange}{\pi_0 \pi_1 u_0 \theta} \pi_0 \pi_0 w)c \\
  \textcolor{cyan}{\lambda_0'} \pi_0 \pi_1 
  &=(\textcolor{olive}{\theta_{\gamma_0}} \pi_1 \pi_0 w \
{,} \ \hat{u}_5 \textcolor{purple}{\omega_\gamma} \pi_0 \pi_0 \
{,} \ \hat{u}_5 \tilde{u}' \textcolor{purple}{\theta_\gamma} \pi_0 \pi_0 w )c
\end{align*}

\noi and that the equalizer diagram

\[\begin{tikzcd}[column sep = huge]
\cP_{cq} \rar[tail, "\iota_{cq}"] & P(\mC) \tensor[_t]{\times}{_{ws}} W 
\rar[shift left, "(\pi_0 \pi_0 {,} \pi_1 w ) c"] 
\rar[shift right, "(\pi_0 \pi_1 {,} \pi_1 w) c"'] 
& \mC_1\\
\hat{U}_5 \ar[ur, cyan, "\lambda_0'"'] 
\uar[cyan, "\lambda_0"] 
\end{tikzcd}\]

\noi commutes in $\cE$. The map $\textcolor{violet}{\lambda_1'}$, inducing $\textcolor{violet}{\lambda_1}$ in Diagram (\ref{cover dgms for showing span comp well def : Ore + zip}), such that the equalizer diagram

\[\begin{tikzcd}[column sep = huge]
\cP_{cq} \rar[tail, "\iota_{cq}"] & P(\mC) \tensor[_t]{\times}{_{ws}} W \rar[shift left, "(\pi_0 \pi_0 {,} \pi_1 w ) c"] 
\rar[shift right, "(\pi_0 \pi_1 {,} \pi_1 w) c"'] 
& \mC_1\\
\hat{U}_5 \ar[ur, violet, "\lambda_1'"'] 
\uar[violet, "\lambda_1"] 
\end{tikzcd}\]

\noi commutes in $\cE$ can be derived by a similar computation to the one in Lemma~\ref{appendix lem witness ore squares of span comp well def wrt equiv result (appendix)} in the appendix where one replaces $\textcolor{olive}{\theta_{\gamma_0}}$ with $\textcolor{brown}{\theta_{\gamma_1}}$ and factors through the cover $\bar{U}_1$ instead of $\bar{U}_0$ to access the arrows used to construct the span $\textcolor{teal}{\sigma_1}$. The map $\textcolor{violet}{\lambda_1'}$ is uniquely determined by the following maps $\tilde{U}_4 \to \mC_1$: 

\begin{align*} 
\textcolor{violet}{\lambda_1'} \pi_1 &= \hat{u}_5 \tilde{u} \pi_0 \pi_0 p_0^2 \pi_0 \pi_0 , \\
\textcolor{violet}{\lambda_1'} \pi_0 \pi_0 &= (\textcolor{brown}{\theta_{\gamma_1}} \pi_0 \pi_0 w \
{,} \ \hat{u}_5 \tilde{u} \textcolor{teal}{\pi_1 \pi_1 \omega} \pi_0 \pi_0 \
{,} \ \hat{u}_5 \tilde{u} \textcolor{teal}{\pi_1 \pi_1 u_0 \theta} \pi_0 \pi_0 w \
{,} \ \hat{u}_5 \tilde{u} \bar{u} \pi_0 \pi_0 \pi_0 \pi_0 )c,\\
\textcolor{violet}{\lambda_1'} \pi_0 \pi_1 &=( \textcolor{brown}{\theta_{\gamma_1}} \pi_1 \pi_0 w \
{,} \ \hat{u}_5 \textcolor{purple}{\omega_\gamma} \pi_0 \pi_0 \
{,} \ \hat{u}_5 \tilde{u}' \textcolor{purple}{\theta_\gamma} \pi_0 \pi_0 w )c 
\end{align*}

\noi Applying \textbf{In.Frc(4)} along with Lemma~\ref{lem extending Int.Frc. covers} twice gives two covers and two lifts, one for each of $\textcolor{cyan}{\lambda_0}$ and $\textcolor{violet}{\lambda_1}$. Since covers are stable under pullback and composition we can take a common refinement by pulling one cover back along the other and get the cover and two lifts

\[ \begin{tikzcd}
\cP(\mC) & \\
\hat{U}_4 \uar[cyan, shift right, "\delta_{\lambda_0}"']
\uar[violet, shift left, "\delta_{\lambda_1}"]\rar["\scriptstyle{/}" marking, "\hat{u}_4" near start] & \hat{U}_5
\end{tikzcd}\]

\noi in Diagram (\ref{cover dgms for showing span comp well def : Ore + zip}).

The \textcolor{violet}{violet} and \textcolor{cyan}{cyan} arrows in the figure below are witnessed by post-composing the maps, \textcolor{violet}{$\delta_{\lambda_1}$} and \textcolor{cyan}{$\delta_{\lambda_0}$}, with the projection $ \pi_0 : \cP \to \cP_{eq}(\mC)$. There are two pairs of parallel composites that begin at each of these arrows whose codomain is that of the vertical arrow in the second of the composable sailboats. These pairs are determined by the legs of the \textcolor{brown}{brown} and \textcolor{olive}{olive} spans respectively. As a consequence of commutativity of the \textcolor{teal}{teal} and \textcolor{purple}{purple} Ore squares along with the previous diagram extension, both parallel pairs are respectively coequalized after post-composing with the left leg of the bottom span in the second of the composable sailboats. The parallel pairs $\textcolor{cyan}{\rho_0}$ and $\textcolor{violet}{\rho_1}$ can be seen beginning at each $\bullet$ and ending at $\blacksquare$ in the following figure with their common coequalizing arrow in $W$ having domain $\blacksquare$: 

\[\label{fig first zips for span comp well def wrt equiv} \begin{tikzcd}
	 && & \\
	&&& \bullet \\
	&&& \cdot \\
	&& \cdot & \cdot & \cdot \\
	&&& \cdot & \cdot & \cdot \\
	&&& \cdot & \cdot & \blacksquare & & \cdot & \bullet & \\
	&&&& \cdot &&&&& \\
	&&&& \cdot &&&&& \\
	&&&&&&&&& 
	\arrow[from=6-4, to=6-5]
	\arrow[from=6-6, to=6-5, "\circ" marking]
	\arrow[from=5-6, to=6-6]
	\arrow[from=5-6, to=6-5, "\circ" marking]
	\arrow[from=5-5, to=6-4, purple, "\circ" marking]
	\arrow[from=5-5, to=5-6, purple]
	\arrow[from=6-9, to=6-8, cyan, "\circ" marking]
	\arrow[from=7-5, to=6-4, orange, "\circ" marking]
	\arrow[from=7-5, to=6-6, orange]
	\arrow[from=8-5, to=7-5, orange]
	\arrow[curve={height=-18pt}, from=6-8, to=8-5, olive, "\circ" marking]
	\arrow[from=4-4, to=5-6, teal]
	\arrow[from=4-5, to=5-5, purple, crossing over]
	\arrow[from=4-3, to=4-4, teal]
	\arrow[from=5-4, to=6-4]
	\arrow[from=4-4, to=5-4, teal, "\circ" marking, near start]
	\arrow[curve={height=18pt}, from=6-8, to=4-5, olive]
	\arrow[from=2-4, to=3-4, violet, "\circ" marking]
	\arrow[curve={height=6pt}, from=3-4, to=4-3, brown, "\circ" marking]
	\arrow[curve={height=-6pt}, from=3-4, to=4-5, brown]
\end{tikzcd} \tag{C}
\]

\noi Let $\hat{u}_{i;j} = \hat{u}_i \hat{u}_{i+1} ... \hat{u}_j$ for $0 \leq i < j \leq 5$ to make composition of covers a bit easier to read, where $\hat{u} = \hat{u}_{0;5}$. Internally, we use Figure (\ref{fig first zips for span comp well def wrt equiv}) as a blueprint for defining the maps $\textcolor{cyan}{\rho_0'}, \textcolor{violet}{\rho_1'} :\hat{U}_4 \to P_{cq}(\mC)$ in termds of parallel pair of arrows in $\mC$, $\hat{U}_4 \to P(\mC)$, which are coequalized (in $\mC$) by an arrow $\hat{U}_4 \to W$: 

\[ \begin{tikzcd}[column sep = large]
\hat{U}_4 \rar[shift right, cyan, "\rho_0'"'] \rar[shift left, violet, "\rho_1'"] & P(\mC) \times_{\mC_0} W \rar[shift left, "(\pi_0\pi_0 {,} \pi_1 w )c"] \rar[shift right, "(\pi_0\pi_1 {,} \pi_1 w )c"'] & \mC_1 
\end{tikzcd}\]

\noi completely determined by the following maps: 

\begin{align*}
\textcolor{cyan}{\rho_0'} \pi_1 
&= \hat{u}_{4;5} \tilde{u} \bar{u} p_0^2 \pi_1 \pi_0, \\
\textcolor{cyan}{\rho_0'} \pi_0 \pi_0 
&= (
\textcolor{cyan}{\delta_{\lambda_0}} \pi_0 \iota_{eq} \pi_0w , 
\hat{u}_4 \textcolor{olive}{\omega_{\gamma_0} } \pi_1 \pi_0 ,  
\hat{u}_{4;5} \textcolor{purple}{\omega_\gamma} \pi_0 \pi_0 , 
\hat{u}_{4;5} \tilde{u}' \textcolor{purple}{\theta_\gamma} \pi_1 \pi_0 , 
\hat{u}_{4;5} \tilde{u} \bar{u} \pi_1 \pi_0 \pi_0 \pi_0 
)c, \\
\textcolor{cyan}{\rho_0'} \pi_0 \pi_1 
&= (
\textcolor{cyan}{\delta_{\lambda_0}} \pi_0 \iota_{eq} \pi_0 w , 
\hat{u}_4 \textcolor{olive}{\omega_{\gamma_0} } \pi_0 \pi_0 , 
\hat{u}_{4;5} \tilde{u} \textcolor{orange}{\pi_0 \pi_1 \omega} \pi_0 \pi_0 ,
\hat{u}_{4;5} \tilde{u} \textcolor{orange}{\pi_0 \pi_1 u_0 \theta} \pi_1 \pi_0 )c
\end{align*} 
\noi Another lengthy but straight forward computation that comes down to the definition of $\textcolor{cyan}{\delta_{\lambda_0}} \iota_{eq} : \hat{U}_4 \to \cP_{eq}(\mC)$ and the Ore-square can be found in Lemma~\ref{appendix lem def for rho_0' for span comp well def wrt equiv result } of the appendix and shows that 

\begin{align*}
 (\textcolor{cyan} {\rho_0'} \pi_0 \pi_0 , \textcolor{cyan} {\rho_0'} \pi_1) c 
&= (\textcolor{cyan} {\rho_0'} \pi_0 \pi_1 , \textcolor{cyan} {\rho_0'} \pi_1) c
\end{align*}\

\noi implying the equalizer diagram

\begin{center}
\begin{tikzcd}[column sep = large]
\cP_{cq}(\mC) \rar[tail, "\iota_{cq}"] & P(\mC) \tensor[_t]{\times}{_{ws}} W \rar[shift left, "(\pi_0 \pi_0 {,} \pi_1 w) c"] \rar[shift right, "(\pi_0 \pi_1{ , }\pi_1 w) c"'] & \mC_1 \\
\tilde{U}_3 \ar[ur, cyan, "\rho_0'"'] 
\uar[dotted, cyan, "\rho_0"] & & 
\end{tikzcd}
\end{center}

\noi commutes in $\cE$. The map inducing \textcolor{violet}{$\rho_1$} is a map $\textcolor{violet}{\rho_1'} :\hat{U}'' \to P_{ceq}(\mC)$ similarly defined but replacing $\textcolor{cyan}{\delta_{\lambda_0}}$ with $\textcolor{violet}{\delta_{\lambda_1}}$ and $\textcolor{olive}{\theta_{\gamma_0} }$ with $\textcolor{brown}{\theta_{\gamma_1} }$. It is uniquely determined by 

\begin{align*}
   \textcolor{violet}{\rho_1'} \pi_1 
   &= \hat{u}_{4;5} \tilde{u} \bar{u} p_0^2 \pi_1 \pi_0,
\end{align*}

   \begin{align*}
   \textcolor{violet}{\rho_1'} \pi_0 \pi_0 
   &= 
   (\textcolor{violet}{\delta_{\lambda_1}} \pi_0 \iota_{eq} \pi_0w , \
  \hat{u}_4 \textcolor{brown}{\omega_{\gamma_1} } \pi_1 \pi_0 , \\
  & \ \ \ \ 
  \hat{u}_{4;5} \textcolor{purple}{\omega_\gamma} \pi_0 \pi_0 , \
  \hat{u}_{4;5} \tilde{u}' \textcolor{purple}{\theta_\gamma} \pi_1 \pi_0 ,\ 
  \hat{u}_{4;5} \tilde{u} \bar{u} \pi_1 \pi_0 \pi_0 \pi_0 
  )c, 
  \end{align*}
  \begin{align*}
  \textcolor{violet}{\rho_1'} \pi_0 \pi_1 
  &= (
\textcolor{violet}{\delta_{\lambda_1}} \pi_0 \iota_{eq} \pi_0 w , \
\hat{u}_4 \textcolor{brown}{\theta_{\gamma_1} } \pi_0 \pi_0 , \\
& \ \ \ \ 
\hat{u}_{4;5} \tilde{u} \textcolor{teal}{\pi_1 \pi_1 \omega} \pi_0 \pi_0 , \ 
\hat{u}_{4;5} \tilde{u} \textcolor{teal}{\pi_1 \pi_1 u_0 \theta} \pi_1 \pi_0 , \
\hat{u}_{4;5} \tilde{u} \bar{u} \pi_1 \pi_0 \pi_0 \pi_0 
)c
\end{align*} 

\noi and a similar lengthy but straightforward computation found in Lemma~\ref{appendix lem def for rho_1' for span comp well def wrt equiv result} of the appendix shows that
\begin{align*}
 (\textcolor{violet} {\rho_1'} \pi_0 \pi_0 , \textcolor{violet} {\rho_1'} \pi_1) c 
&= (\textcolor{violet} {\rho_1'} \pi_0 \pi_1 , \textcolor{violet} {\rho_1'} \pi_1) c .
\end{align*}\

\begin{center}
\begin{tikzcd}[column sep = large]
\cP_{cq}(\mC) \rar[tail, "\iota_{cq}"] & P(\mC) \tensor[_t]{\times}{_{ws}} W \rar[shift left, "(\pi_0 \pi_0 {,} \pi_1 w) c"] \rar[shift right, "(\pi_0 \pi_1{ , }\pi_1 w) c"'] & \mC_1 \\
\tilde{U}_3 
\ar[ur, violet, "\rho_1"'] 
\uar[dotted, violet, "\rho_1"] & & 
\end{tikzcd}
\end{center}

\noi \noi Applying \textbf{In.Frc(4)} along with Lemma~\ref{lem extending Int.Frc. covers} twice gives two covers and two lifts, one for each of $\textcolor{cyan}{\rho_0}$ and $\textcolor{violet}{\rho_1}$. A common refinement given by pulling one cover back along the other gives the cover and two lifts

\[ \begin{tikzcd}
\hat{U}_3
\dar[cyan, shift left, "\delta_{\rho_0}"]
\dar[violet, shift right, "\delta_{\rho_1}"']
\rar["\scriptstyle{/}" marking, "\hat{u}_3" near start] & \hat{U}_4\\
\cP(\mC) & 
\end{tikzcd}\]

\noi in Diagram (\ref{cover dgms for showing span comp well def : Ore + zip}). It remains to define the `weakly-composable maps' being picked out by $\omega_{i,j}'$ for $i = 0 , 1$ and $j = 0 ,1,2$ in Diagram (\ref{cover dgms for showing span comp well def : weak comp x3}). First we have maps $\hat{U}_3 \to W \tensor[_{wt}]{\times}{_{ws}} W$ given by 

\begin{align*}
\textcolor{cyan}{\omega_{0,2}'} 
&= 
(\textcolor{cyan}{\delta_{\rho_0}} \pi_0 \iota_{eq} \pi_0 , 
 \hat{u}_3 \textcolor{cyan}{\delta_{\lambda_0}} \pi_0 \iota_{eq} \pi_0 ) 
 &
\textcolor{violet}{\omega_{1,2}'} 
&= 
(\textcolor{violet}{\delta_{\rho_1}} \pi_0 \iota_{eq} \pi_0 , 
\hat{u}_3 \textcolor{violet}{\delta_{\lambda_1}} \pi_0 \iota_{eq} \pi_0 ) 
\end{align*}

\noi and by applying \textbf{In.Frc(2)} and Lemma~\ref{lem extending Int.Frc. covers} twice and taking a common refinement of covers we get the cover and lift 

\[ \begin{tikzcd}
W_\circ &\\
\hat{U}_2
\rar["/" marking,"\hat{u}_2" near start]
\uar[shift right, ""', color = cyan, "\omega_{0, 2}"'] 
\uar[shift left, "", color = violet, "\omega_{1, 2}"] 
& \hat{U}_3
\end{tikzcd}\]

\noi in Diagram (\ref{cover dgms for showing span comp well def : weak comp x3}). Next we have maps $\hat{U}_2 \to W \tensor[_{wt}]{\times}{_{ws}} W$ given by 

\begin{align*}
\textcolor{cyan}{\omega_{0,1}'} 
&= 
(\textcolor{cyan}{\omega_{0,2}} \pi_1, 
 \hat{u}_{2;4} \textcolor{olive}{\theta_{\gamma_0}} \pi_0 \pi_0) 
\\
&= 
\big( 
( \textcolor{cyan}{\omega_{0,2}} \pi_0 \pi_0 , 
\hat{u}_2 \textcolor{cyan}{\omega_{0,2}'}c )c , 
 \hat{u}_{2;4} \textcolor{olive}{\theta_{\gamma_0}} \pi_0 \pi_0 ) 
\\
&= 
\big(
(\textcolor{cyan}{\omega_{0,2}} \pi_0 \pi_0 ,
 \hat{u}_2 \textcolor{cyan}{\delta_{\rho_0}} \pi_0 \iota_{eq} \pi_0 , 
 \hat{u}_{2;3} \textcolor{cyan}{\delta_{\lambda_0}} \pi_0 \iota_{eq} \pi_0 )c , \ 
 \hat{u}_{2;4} \textcolor{olive}{\theta_{\gamma_0}} \pi_0 \pi_0 
\big) 
\end{align*}

\noi and 
\begin{align*}
\textcolor{violet}{\omega_{1,1}'} 
&= 
(\textcolor{violet}{\omega_{1,2}} \pi_1
, \hat{u}_{2;4} \textcolor{brown}{\theta_{\gamma_1}} \pi_0 \pi_0 ) 
\\
&= 
\big( 
(\textcolor{violet}{\omega_{1,2}} \pi_0 \pi_0 ,
\hat{u}_2 \textcolor{violet}{\omega_{1,2}'}c )c \ , \
\hat{u}_{2;4} \textcolor{brown}{\theta_{\gamma_1}} \pi_0 \pi_0 ) \\
&= 
\big(
(\textcolor{violet}{\omega_{1,2}} \pi_0 \pi_0 ,
\hat{u}_2\textcolor{violet}{\delta_{\rho_1}} \pi_0 \iota_{eq} \pi_0 , 
\hat{u}_{2;3} \textcolor{violet}{\delta_{\lambda_1}} \pi_0 \iota_{eq} \pi_0 )c , \
\hat{u}_{2;4} \textcolor{brown}{\theta_{\gamma_1}} \pi_0 \pi_0 
\big) 
\end{align*}

\noi that, by applying \textbf{In.Frc(2)} and Lemma~\ref{lem extending Int.Frc. covers} twice and taking a common refinement of covers, gives the cover and lifts

\[\begin{tikzcd}
\hat{U}_1
\rar["/" marking,"\hat{u}_1" near start]
\dar[shift left, "\omega_{0,1}", color = cyan] 
\dar[shift right, "\omega_{1,1}"', color = violet] 
& \hat{U}_2 \\
W_\circ & 
\end{tikzcd}\]

\noi in Diagram (\ref{cover dgms for showing span comp well def : weak comp x3}). Finally, we have maps $\hat{U}_1 \to W \tensor[_{wt}]{\times}{_{ws}} W$ given by 

\begin{align*}
\textcolor{cyan}{\omega_{0,0}'} 
&= 
(\textcolor{cyan}{\omega_{0,1}} \pi_1, 
\hat{u}_{1;5} \textcolor{orange}{\sigma_0} \pi_0 \pi_0 ) \\
&= 
\big(
( \textcolor{cyan}{\omega_{0,1}} \pi_0 \pi_0 , 
\hat{u}_1 \textcolor{cyan}{\omega_{0,1}'}c )c , \ 
\hat{u}_{1;5} \textcolor{orange}{\sigma_0} \pi_0 \pi_0 
\big) \\
&= 
\big(
( \textcolor{cyan}{\omega_{0,1}} \pi_0 \pi_0 , 
 \hat{u}_1\textcolor{cyan}{\omega_{0,2}} \pi_0 \pi_0 ,
 \hat{u}_{1;2} \textcolor{cyan}{\delta_{\rho_0}} \pi_0 \iota_{eq} \pi_0 , 
 \hat{u}_{1;3} \textcolor{cyan}{\delta_{\lambda_0}} \pi_0 \iota_{eq} \pi_0 ,
 \hat{u}_{1;4} \textcolor{olive}{\theta_{\gamma_0}} \pi_0 \pi_0 )c , \\
 & \ \ \ \ 
\hat{u}_{1;5} \textcolor{orange}{\sigma_0} \pi_0 \pi_0 
\big) 
\end{align*}
\noi and 
\begin{align*}
\textcolor{violet}{\omega_{1,0}'} 
&=
(\textcolor{violet}{\omega_{1,1}} \pi_1,
\hat{u}_{1;5} \textcolor{teal}{\sigma_1} \pi_0 ) \\
&= 
\big( 
(\textcolor{violet}{\omega_{1,1}} \pi_0 \pi_0 ,
\hat{u}_1 \textcolor{violet}{\omega_{1,1}'}c )c  , \
\hat{u}_{1;5} \textcolor{teal}{\sigma_1} \pi_0 \big) \\
&= 
\big( 
(\textcolor{violet}{\omega_{1,1}} \pi_0 \pi_0 , 
\hat{u}_1\textcolor{violet}{\omega_{1,2}} \pi_0 \pi_0 ,
\hat{u}_{1;2}\textcolor{violet}{\delta_{\rho_1}} \pi_0 \iota_{eq} \pi_0 , 
\hat{u}_{1;3} \textcolor{violet}{\delta_{\lambda_1}} \pi_0 \iota_{eq} \pi_0 ,
\hat{u}_{1;4} \textcolor{brown}{\theta_{\gamma_1}} \pi_0 \pi_0 )c , \\
& \ \ \ \ 
\hat{u}_{1;5} \textcolor{teal}{\sigma_1} \pi_0 \big) \\
\end{align*}

\noi that, by applying \textbf{In.Frc(2)} and Lemma~\ref{lem extending Int.Frc. covers} twice and taking a common refinement of covers, gives the cover and lifts

\[\begin{tikzcd}
W_\circ & \\
\hat{U} 
\rar["/" marking, "\hat{u}_0" near start] 
\uar[shift right, "\omega_{0,0}"', color = cyan] 
\uar[shift left, "\omega_{1,0}", color = violet] 
& \hat{U}_1 
\end{tikzcd}\]

\noi in Diagram (\ref{cover dgms for showing span comp well def : weak comp x3}). The object $\hat{U}$ witnesses five spans, $\hat{U} \to \spn$ related by four sailboats, $\hat{U} \to \slb$, via the covers, $\hat{u}_{0;j} : \hat{U} \to \hat{U}_{j+1}$, and lifts in Diagrams (\ref{dgm diagram of covers to include composite of intermediate comp'ble spans}), (\ref{cover dgms for showing span comp well def : Ore + zip}), and (\ref{cover dgms for showing span comp well def : weak comp x3}), and the covers and projections in Diagrams (\ref{dgm slb proj pb along span comp cover}) and \ref{dgm slb proj pb along span comp cover refinement}). The original three spans $\hat{u} \textcolor{orange}{ \sigma_0} , \hat{u} \textcolor{purple}{\sigma_\gamma} $, and $\hat{u} \textcolor{teal}{ \sigma_1}$ are immediate; and two intermediate spans, $\textcolor{cyan}{\sigma_{0, \gamma}}$ and $\textcolor{violet}{\sigma_{1,\gamma}}$, defined in technical Lemmas \ref{appendix lem def sigma_{0,gamma}} and \ref{appendix lem def sigma_{1,gamma}} in Section \ref{App.S. Defining Span Comp on Reps} of the appendix. Lemma~\ref{appendix lem defining sailboat varphi_0} in the same section shows that the sailboat $\varphi_0 : \hat{U} \to \slb$, defined by the pairing map

\begin{align*}
\varphi_0 
= 
\big( 
(
(
\mu_0 , \ 
\hat{u} \textcolor{orange}{\sigma_0} \pi_0
) , \
\textcolor{cyan}{\omega_{0,0}} \pi_1 
) , \
\hat{u} \textcolor{orange}{\sigma_0} \pi_1 
\big)
\end{align*}

\noi is well-defined, where

\[ \mu_0 =(\textcolor{cyan}{\omega_0} ,\
\hat{u}_{0;4} \textcolor{olive}{\theta_{\gamma_0}} \pi_0 \pi_0 
)c.\]

\noi The same lemma shows that $\varphi_0 : \hat{U} \to \slb$ relates the spans $\hat{u} \textcolor{orange}{\sigma_0}, \textcolor{cyan}{\sigma_{0,\gamma}} : \hat{U} \to \spn$ in the sense that

\begin{align*}
   \varphi_0 p_0 
   &=\varphi_0 (\pi_0 \pi_0 \pi_1 , \pi_1) 
   & 
   \varphi_0 p_1 
   & =\varphi_0 (\pi_0 \pi_1 , (\pi_0 \pi_0 \pi_0 , \pi_1)c) \\
   &= (\hat{u} \textcolor{orange}{\sigma_0} \pi_0 , \hat{u} \textcolor{orange}{\sigma_0} \pi_1 ) c 
   &&= ( \textcolor{cyan}{\omega_{0,0}} \pi_1 , \textcolor{cyan}{\sigma_{0,\gamma}} \pi_1 )\\
   &= \hat{u} \textcolor{orange}{\sigma_0} 
   &&= \textcolor{cyan}{\sigma_{0,\gamma}}, 
\end{align*} 

\noi and so 

\begin{align}\label{eq sigma_0 equiv sigma_0,gamma} 
  \hat{u} \textcolor{orange}{\sigma_0} q = \varphi_0 p_0 q = \varphi_0 p_1 q = \textcolor{cyan}{\sigma_{0,\gamma}} q . 
\end{align}

\noi Lemma~\ref{appendix lem defining sailboat varphi_{0,gamma}} of Section \ref{App.S. Defining Span Comp on Reps} in the appendix similarly shows that the sailboat, $\varphi_{0,\gamma} : \hat{U} \to \slb$, defined by 

\[ \varphi_{0 , \gamma} = 
\big( 
( 
( \mu_{0, \gamma} , \
\hat{u} \textcolor{purple}{\sigma_\gamma} \pi_0
) , \
\textcolor{cyan}{\omega_{0,0}} \pi_1 ) , \
\hat{u} \textcolor{purple}{\sigma_\gamma} \pi_1 
 \big)
\]
\noi where 
\[ \mu_{0, \gamma} = (\textcolor{cyan}{\omega_0} ,\
\hat{u}_{0;4} \textcolor{olive}{\theta_{\gamma_0}} \pi_1 \pi_0 )c \]
\noi is well-defined and relates $\textcolor{purple}{\sigma_\gamma}$ to $\textcolor{cyan}{\sigma_{0,\gamma}}$ in the sense that 
\begin{align*}
\varphi_{0, \gamma} p_0 &= \hat{u}\textcolor{purple}{\sigma_\gamma} 
& \varphi_{0, \gamma} p_1 &= \textcolor{cyan}{\sigma_{0,\gamma}}. 
\end{align*}
\noi This implies 
\begin{align}\label{eq sigma_0,gamma equiv sigma_gamma}
  \textcolor{cyan}{\sigma_{0,\gamma}} q
  = \varphi_{0, \gamma} p_1 q 
  = \varphi_{0, \gamma} p_0 q
  = \hat{u}\textcolor{purple}{\sigma_\gamma} q .
\end{align}

\noi By Lemma~\ref{appendix lem defining sailboat varphi_1}, the sailboat, $\varphi_1 : \hat{U} \to \slb$, defined by 

\begin{align*}
\varphi_1
= 
\big( 
(
(
\mu_1 , \ 
\hat{u} \textcolor{teal}{\sigma_1} \pi_0
) , \
\textcolor{violet}{\omega_{1,0}} \pi_1 
) , \
\hat{u} \textcolor{teal}{\sigma_1} \pi_1 
\big)
\end{align*}

\noi where 
\[ \mu_1 = (\textcolor{violet}{\omega_1} , \hat{u}_{0;4}\textcolor{brown}{\gamma_1} \pi_0 \pi_0) c \] 
\noi is well-defined and relates the spans $\textcolor{teal}{\sigma_1}, \textcolor{violet}{\sigma_{1,\gamma}} : \hat{U} \to \spn$ in the sense that 

\begin{align*}
  \varphi_1 p_0 &= \hat{u}\textcolor{teal}{\sigma_1} 
  & \varphi_1 p_1 &= \textcolor{violet}{\sigma_{1,\gamma}}
\end{align*}

\noi This implies 

\begin{align}\label{eq sigma_1 equiv sigma_1,gamma}
\hat{u}\textcolor{teal}{\sigma_1} q 
= \varphi_1 p_0 q 
= \varphi_1 p_1 q 
= \textcolor{violet}{\sigma_{1,\gamma}} q
\end{align}

\noi By Lemma~\ref{appendix lem defining sailboat varphi_{1,gamma}}, the sailboat, $\varphi_{1,\gamma} : \hat{U} \to \slb$, defined by 

\[ \varphi_{1 , \gamma} = 
\big( 
( 
( \mu_{1, \gamma} , \
\hat{u} \textcolor{purple}{\sigma_\gamma} \pi_0
) , \
\textcolor{violet}{\omega_{1,0}} \pi_1 ) , \
\hat{u} \textcolor{purple}{\sigma_\gamma} \pi_1 
 \big)
\]

\noi where

\[ \mu_{1, \gamma} = (\textcolor{violet}{\omega_1} ,\
\hat{u}_{0;4} \textcolor{brown}{\theta_{\gamma_1}} \pi_1 \pi_0 )c \]

\noi relates the spans $\textcolor{purple}{\sigma_\gamma} , \textcolor{violet}{\sigma_{1,\gamma}} : \hat{U} \to \spn$ in the sense that 

\begin{align*}
  \varphi_{1,\gamma} p_0 &= \hat{u} \textcolor{purple}{\sigma_\gamma} 
  & \varphi_{1,\gamma} p_1 &= \textcolor{violet}{\sigma_{1,\gamma}}.
\end{align*}

\noi It follows that 

\begin{align}\label{eq sigma_1,gamma equiv sigma_gamma}
  \textcolor{violet}{\sigma_{1,\gamma}} q 
  = \varphi_{1,\gamma} q 
  = \varphi_{1,\gamma} p_0 q 
  = \textcolor{purple}{\sigma_\gamma} q. 
\end{align}

\noi Equations (\ref{eq sigma_0 equiv sigma_0,gamma}), (\ref{eq sigma_0,gamma equiv sigma_gamma}), (\ref{eq sigma_1 equiv sigma_1,gamma}), and (\ref{eq sigma_1,gamma equiv sigma_gamma}) imply

\begin{align*}
\hat{u} \textcolor{orange}{\sigma_0} q 
&= \varphi_0 p_0 q \\
&= \varphi_0 p_1 q \\
&= \textcolor{cyan}{\sigma_{0, \gamma}} q \\
&= \varphi_{0, \gamma} p_1 q \\
&= \varphi_{0, \gamma} p_0 q \\
&= \hat{u} \textcolor{purple}{\sigma_\gamma} q\\
&= \varphi_{1, \gamma}p_0 q \\
&= \varphi_{1, \gamma} p_1 q \\
&= \textcolor{violet}{\sigma_{1, \gamma}} q \\
&= \varphi_1 p_1 q \\
&= \varphi_1 p_0 q \\
&= \hat{u} \textcolor{teal}{\sigma_1} q .
\end{align*}

\noi By the definitions of $c' : \spn^2 \to \mCW_1$ in Lemma (\ref{lem defining c'}) and the spans $\textcolor{teal}{\sigma_1} , \textcolor{orange}{\sigma_0} : \tilde{U} \to \spn$ in (\ref{def sigma_0})and (\ref{def sigma_1}) above along with commutativity of Diagrams (\ref{dgm slb proj pb along span comp cover}), (\ref{dgm slb proj pb along span comp cover refinement}), (\ref{dgm intermediate compblespan}), and (\ref{dgm diagram of covers to include composite of intermediate comp'ble spans}) we can see

\begin{align*}
 \hat{u} \tilde{u} \bar{u} p_1^2 c' 
&= \hat{u} \tilde{u} \pi_1 \pi_0 p_1^2 c' \\
& = \hat{u} \tilde{u} \pi_1 \pi_1 u c'\\
&= \hat{u} \tilde{u} \pi_1 \pi_1 \sigma_\circ q \\
 &= \hat{u} \textcolor{teal}{\sigma_1} q \\
 &= \hat{u} \textcolor{orange}{\sigma_0} q \\
 &= \hat{u} \tilde{u} \pi_0 \pi_1 \sigma_\circ q \\
 &= \hat{u} \tilde{u} \pi_0 \pi_1 u c' \\
 &= \hat{u} \tilde{u} \pi_0 \pi_0 p_0^2 c' \\
 &= \hat{u} \tilde{u} \bar{u} p_0^2 c' .
\end{align*}

\noi where $\hat{u} \tilde{u} \bar{u} : \hat{U} \to \slb \tensor[_t]{\times}{_s} \slb$ is a cover because covers are closed under composition. The result follows by renaming the $\varphi_i : \hat{U} \to \slb$ for $0 \leq i \leq 3$ to match with $\varphi_0, \varphi_{0,\gamma}, \varphi_{1,\gamma}$ and $\varphi_1$ (in that order). The cover $\hat{u} : \hat{U} \to \slb \tensor[_t]{\times}{_s} \slb$ in the statement of the Lemma corresponds to the composite $\hat{u} \tilde{u} \bar{u}$ mentioned above and constructed in the proof. 
\end{proof}

\subsection{Associativity and Identity Laws}\label{S internal cat of fracs assoc and id laws}

This section consists of technical proofs of associativity and identity laws for composition that are required to see that $\mCW$, as defined in Section \ref{S Defining mCW, the internal cat of fracs}, is an internal category in $\cE$. 

\subsubsection{Associativity}\label{SS internal cat of fracs assoc}

The proof for associativity of composition in the internal category of fractions is rather involving so we have given it its own subsection. Before we prove associativity we give a remark about induced projection maps for the quotient objects of the reflexive graphs of fractions and prove a Lemma to give explicit descriptions of the two possible compositions 

\[ 1 \times c , c \times 1 : \mCW_3 \to \mCW_2\]

\noi in terms of the representative composition, $c' : \spn \to \mCW_1$, and the first quotient map $q : \spn \to \mCW_1$. We use these to differentiate between which maps are being composed first for a triple composite in $\mCW$ and then prove that they are equal using the universal property of the coequalizer $\mCW_3.$

\begin{rem}
By definition, the canonical pullback projections commute with the coequalizer diagram maps: 
\begin{center}
\begin{tikzcd}
 \slb^k \ar[r,shift left, "p_0^k"] \ar[r, shift right, "p_1^k"'] \dar["\pi_{i_0, ... , i_j}"'] 
 & \spn^k\dar["\pi_{i_0, ... , i_j}"] 
 \rar[two heads, "q_k"] 
 & \mCW_k \dar["\pi_{i_0, ... , i_j}"] 
\\ 
 \slb^\ell \ar[r,shift left, "p_0^\ell"] \ar[r, shift right, "p_1^\ell"']
 & \spn^\ell
 \rar[two heads, "q_\ell"'] 
 & \mCW_\ell 
\end{tikzcd}.
\end{center}
\end{rem}

\noi Before we can prove associativity we need to define the maps that show up in the statement. We use Proposition~\ref{prop path pb is coequalizer} and the universal property of the coequalizer to do this. 

\begin{lem}\label{lem intermediate composition for composable triples }
Let $c \times 1 = (\pi_{01} c, \pi_2 )$ and $1 \times c = (\pi_0 , \pi_{12} c)$ denote the pairing maps $\mCW_3 \to \mCW_2$, and let 

\[ q \times c' = (\pi_0q {,} \pi_{12} c') \quad \text{ and } \quad c' \times q = (\pi_{01} c' {,} \pi_2 q). \]

\noi The diagram 
\begin{center}
\begin{tikzcd}[]
& \spn^3 \ar[d, two heads, "q_3"] \ar[dl, "q \times c'"'] \ar[dr, "c' \times q"] & \\
\mCW_2 & \mCW_3 \lar[dotted, "1 \times c"] \rar[dotted, "c \times 1"'] & \mCW_2
\end{tikzcd}
\end{center}

\noi commutes in $\cE$. 
\end{lem}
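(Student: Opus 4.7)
The plan is to use the universal property of the pullback $\mCW_2 \cong \mCW_1 \tensor[_t]{\times}{_s} \mCW_1$ from Proposition~\ref{prop path pb is coequalizer} to reduce each of the two claimed equalities $q_3(1 \times c) = q \times c'$ and $q_3 (c \times 1) = c' \times q$ to checking equality after post-composing with each of the two canonical projections $\rho_0, \rho_1 : \mCW_2 \to \mCW_1$.

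The two ingredients I would invoke are: the Remark immediately preceding the lemma, which records that pullback projections commute with the coequalizer maps, giving $q_3 \pi_0 = \pi_0 q$, $q_3 \pi_2 = \pi_2 q$, $q_3 \pi_{01} = \pi_{01} q_2$, and $q_3 \pi_{12} = \pi_{12} q_2$; and the defining relation $q_2 c = c'$ of the internal composition from Proposition~\ref{prop composition of spans is well-defined}, which says that the induced map $c : \mCW_2 \to \mCW_1$ is precisely the unique factorisation of span composition through the coequalizer $q_2$.

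For the left triangle, the projections of $q_3 (1 \times c) = q_3 (\pi_0, \pi_{12} c)$ are $q_3 \pi_0 = \pi_0 q$ and $q_3 \pi_{12} c = \pi_{12} q_2 c = \pi_{12} c'$, which are exactly the projections of $q \times c' = (\pi_0 q, \pi_{12} c')$; so uniqueness in the universal property of $\mCW_2$ yields $q_3 (1 \times c) = q \times c'$. The right triangle is handled symmetrically: the projections of $q_3 (c \times 1) = q_3 (\pi_{01} c, \pi_2)$ are $q_3 \pi_{01} c = \pi_{01} q_2 c = \pi_{01} c'$ and $q_3 \pi_2 = \pi_2 q$, matching the projections of $c' \times q$. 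I do not anticipate any genuine obstacle; the argument is routine bookkeeping once the Remark and the defining equation $q_2 c = c'$ are in hand.
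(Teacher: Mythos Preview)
Your proposal is correct and matches the paper's proof essentially verbatim: the paper computes $q_3(c \times 1) = (q_3 \pi_{01} c, q_3 \pi_2) = (\pi_{01} q_2 c, \pi_2 q) = (\pi_{01} c', \pi_2 q) = c' \times q$ and symmetrically for the other triangle, using exactly the Remark on projections and the relation $q_2 c = c'$ that you identified.
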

\begin{proof}
On the right we have 
\begin{align*}
q_3 (c \times 1) 
= (q_3 \pi_{01} c , q_3 \pi_2 )
= (\pi_{01} q_2 c , \pi_2 q) 
= (\pi_{01} c' , \pi_2 q)
= c' \times q
\end{align*}

\noi and on the left we have 

\begin{align*}
q_3 (1 \times c) 
= (q_3 \pi_0, q_3 \pi_{12} c )
= (\pi_0 q, \pi_{12} q_2 c )
= (\pi_0 q, \pi_{12} c' )
= q \times c'. 
\end{align*}
\end{proof} 

\noi Now we can state and prove the associativity law. This proof is long and technical but follows a similar pattern to the proofs in Section \ref{S Defining mCW, the internal cat of fracs}. Recall that diagrams labeled with capital letters are guides for the usual proofs when $\cE = \Set$ and represent diagrams in the internal category $\mC$, with diagrams labeled with stars giving the internal translation involving covers and lifts from the Internal Fractions Axioms.

\begin{prop}\label{prop assoc law} 
The diagram 

\begin{center}
\begin{tikzcd}[]
\mCW_3 
\dar["1 \times c"']
 \rar["c \times 1"] 
\arrow[dr, phantom, "\usebox \pullback" , very near start, color=black]
& \mCW_2 \dar["c"] \\
\mCW_2 \rar["c"'] 
& \mCW_1
\end{tikzcd}
\end{center}

\noi commutes in $\cE$. 
\end{prop}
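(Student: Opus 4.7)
The plan is to reduce the statement to an equality of two maps $\spn^3 \to \mCW_1$ using the universal property of $\mCW_3$, and then to witness that equality via a chain of sailboats constructed from the Internal Fractions Axioms.

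First, by Proposition~\ref{prop path pb is coequalizer}, the quotient $q_3 : \spn^3 \twoheadrightarrow \mCW_3$ is a (regular) epimorphism, so it suffices to show
\[ q_3\,(c \times 1)\,c = q_3\,(1 \times c)\,c \]
as maps $\spn^3 \to \mCW_1$. By Lemma~\ref{lem intermediate composition for composable triples}, this is equivalent to showing $(c' \times q)\,c = (q \times c')\,c$. I would then post-compose with a cover $u^{(3)} : U^{(3)} \to \spn^3$ fine enough to witness both composites as actual spans rather than equivalence classes. Such a cover can be built by pulling the composition-witnessing cover $u : U \to \spn^2$ back along both projections $\pi_{01}$ and $\pi_{12}$ using Lemma~\ref{lem extending Int.Frc. covers}, then applying \ore and \wcomp once more in each of the two orders to compose the intermediate span with the remaining span. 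This produces two maps $\sigma_L, \sigma_R : U^{(3)} \to \spn$ with
\[ u^{(3)}(c' \times q)\,c = \sigma_L\,q, \qquad u^{(3)}(q \times c')\,c = \sigma_R\,q, \]
so it suffices to show $\sigma_L\,q = \sigma_R\,q$.

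Next, following the strategy of Lemma~\ref{lem witnessing c' well-defined}, I would construct an even finer cover $\hat{u}^{(3)} : \hat{U}^{(3)} \to U^{(3)}$ together with an intermediate comparison span $\sigma_M : \hat{U}^{(3)} \to \spn$ and a family of sailboats relating $\hat{u}^{(3)}\sigma_L$ and $\hat{u}^{(3)}\sigma_R$ to $\sigma_M$. The comparison $\sigma_M$ corresponds classically to the fact that in $\Set$, both $(f \circ g) \circ h$ and $f \circ (g \circ h)$ are computed from the same iterated pullback; internally, $\sigma_M$ arises from one extra application of \ore (to fill the cospan between the left legs of $\sigma_L$ and $\sigma_R$, which have common target by construction), followed by \zip (applied twice, to match with the weak-composition arrows that built $\sigma_L$ and $\sigma_R$ respectively), and three applications of \wcomp to manufacture a left leg landing in $W$. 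This produces sailboats $\varphi_L, \varphi_{L,M}, \varphi_{R,M}, \varphi_R : \hat{U}^{(3)} \to \slb$ satisfying
\[ \varphi_L\,p_0 = \hat{u}^{(3)}\sigma_L, \ \ \varphi_L\,p_1 = \sigma_{L,M}, \ \ \varphi_{L,M}\,p_0 = \sigma_M, \ \ \varphi_{L,M}\,p_1 = \sigma_{L,M}, \]
and symmetrically on the right, so that after post-composition with $q$ the chain $\hat{u}^{(3)}\sigma_L\,q = \sigma_{L,M}\,q = \sigma_M\,q = \sigma_{R,M}\,q = \hat{u}^{(3)}\sigma_R\,q$ yields the desired equality. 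Since $\hat{u}^{(3)}$ is epic, this descends to $\sigma_L\,q = \sigma_R\,q$.

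The main obstacle will be the combinatorial and notational bookkeeping of the witnessing covers and lifts, as in Lemma~\ref{lem witnessing c' well-defined}: constructing the intermediate span $\sigma_M$ requires keeping track of several composable triples in $\mC$, and verifying that each sailboat $\varphi$ satisfies $\varphi p_0 = \cdot$ and $\varphi p_1 = \cdot$ reduces to pullback-universal-property calculations together with associativity and coherence of composition in $\mC$. Each such sailboat is defined by a pairing map analogous to those in the proof of Lemma~\ref{lem witnessing c' well-defined}, and is well-defined because the Ore-square and zippering axioms ensure the middle column of the sailboat triangle actually commutes in $\mC$. Because the structural pattern is identical to the binary case (just iterated once more, and with the added use of associativity of triple pullbacks in $\cE$ to identify the intermediate cover), no genuinely new technique is required; the difficulty is purely in organising the diagram-chases, which I would relegate to the appendix in the style of Lemmas~\ref{appendix lem defining sailboat varphi_0}--\ref{appendix lem defining sailboat varphi_{1,gamma}}.
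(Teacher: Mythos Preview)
Your high-level strategy is correct and matches the paper's: reduce via the epimorphism $q_3$, build a cover of $\spn^3$ witnessing representative spans for the two associations, and relate them through sailboats constructed from the Internal Fractions Axioms. Two points of divergence are worth noting.

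First, the paper uses only \emph{two} sailboats $\varphi_0,\varphi_1:\hat U\to\slb$ sharing a common sail-projection $\varphi_0 p_1=\varphi_1 p_1$ (the pattern of Lemma~\ref{lem defining c'}), rather than the four-sailboat chain through an explicit intermediate $\sigma_M$ that you propose (the pattern of Lemma~\ref{lem witnessing c' well-defined}). Both schemes are available, but since $\sigma_L$ and $\sigma_R$ here arise from the \emph{same} composable triple there is no need for a separate reference composite; the common $p_1$-projection already plays that role. The paper's route is therefore more economical.

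Second, your axiom-application counts are short by one each. For a composable \emph{triple} the paper applies \zip\ \emph{three} times---once for each left leg of the three original spans---and \wcomp\ \emph{four} times, not two and three as you suggest. The reason is that $\sigma_L$ and $\sigma_R$ are each built from two nested layers of Ore and weak-composition data, and after the single Ore step on their left legs you must successively peel back through all three original left legs to obtain matching parallel pairs; two zips only get you partway. With only two zips the final sailboat triangles will not close. This would surface when you try to write the explicit pairing maps defining $\varphi_L,\varphi_R$, but it is worth correcting up front since the bookkeeping is already the main burden of the proof.
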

\begin{proof}
The plan is to show there exists a cover $\hat{U} \to \spn^3$ with two sailboats, $\varphi_i : \hat{U} \to \slb$, with a common sail-projection, $\varphi_0 p_1 = \varphi_1 p_1 $, so that 

\[ \hat{u} q_3 (1 \times c) c = \varphi_0 p_0 q = \varphi_0 p_1 q = \varphi_1 p_1 q = \varphi_1 p_0 q = \hat{u} q_3 (c \times 1) c . \]

\noi The result then follows from the fact that $\hat{u}$ and $q_3$ are epic. First we find representative spans for the equivalence classes of spans being picked out by $(c\times 1) c$ and $(1 \times c) c$, then we build a comparison span and two sailboats witnessing their equivalence. 

\noi Begin by taking pullbacks of the projections, $\pi_{01}, \pi_{12} : \spn^3 \to \spn^2$, along the cover, $u : U \to \spn^2$, that witnesses the span composition construction

\[\label{dgm pb span comp cover(s) along projs}
\begin{tikzcd}[] 
\tilde{U}_0 
\dar["/"marking , "\tilde{u}_{0:0}"' near end] 
\dar[dd, bend right= 70,"/"marking , "\tilde{u}_0"' near end] 
\rar["\tilde{\pi}_{01}"] 
& U \dar["/"marking , "u_{0}" near start] 
& \tilde{U}_1 \dar["/"marking , "\tilde{u}_{0:1}" near start] 
\dar[dd, bend left =70,"/"marking , "\tilde{u}_1" near start] 
\lar["\tilde{\pi}_{12}"'] 
\\ 
\tilde{U}_{0:0} \dar["/"marking , "\tilde{u}_{1:0}"' near end] 
\rar["\tilde{\pi}_{0:01}"] 
& U_0 \dar["/"marking , "u_{1}" near start] 
& \tilde{U}_{0:1} \dar["/"marking , "\tilde{u}_{1:1}" near start] 
\lar["\tilde{\pi}_{0: 12}"']
\\ 
\spn^3 \rar["\pi_{01}"'] 
& \spn^2 
& \spn^3 \lar["\pi_{12}"] 
\end{tikzcd} \tag{1}\]

\noi and since the outer squares are pullbacks, as seen in \cite{CFTWM} we also have 

\[\label{dgm pb span comp composed cover(s) along projs}
\begin{tikzcd}[]
\tilde{U}_0 \dar["/"marking , "\tilde{u}_0"' near end] 
\rar["\tilde{\pi}_{01}"] 
& U \dar["/"marking , "u" near start] 
& \tilde{U}_1 \dar["/"marking , "\tilde{u}_1" near start] 
\lar["\tilde{\pi}_{12}"']
\\ 
\spn^3 \rar["\pi_{01}"'] 
& \spn^2 
& \spn^3 \lar["\pi_{12}"] 
\end{tikzcd}\tag{2}\]

\noi and then taking a common refinement 

\[\label{dgm assoc cover refinement}
\begin{tikzcd}[]
 \tilde{U} \dar["\pi_0"'] \rar["\pi_1"] \ar[dr, "/" marking, "\tilde{u}" near start] & \tilde{U}_1 \dar["/"marking , "\tilde{u}_1" near start] \\
 \tilde{U}_0 \rar["/"marking , "\tilde{u}_0"' near end] & \spn^3 
\end{tikzcd}. \tag{3}\]

\noi This induces two maps, $\sigma_c \times 1$ and $1 \times \sigma_c$, $\tilde{U} \to \spn^2$ defined by

\[ \sigma_c \times 1 = ( \pi_0 \tilde{\pi}_{01} \sigma_c , \tilde{u} \pi_2) \quad \text{ and } \quad 1 \times \sigma_c = (\tilde{u} \pi_0 , \pi_1 \tilde{\pi}_{12} \sigma_c). \]

\noi Taking pullbacks of these induced maps along the composites that make up $u : U \to \spn^2$ once again gives

\[ \label{dgm pb covers along composition of triples maps}
\begin{tikzcd}[] 
\bar{U}_0 
\dar["/"marking , "\bar{u}_{0:0}"' near end] 
\dar[dd, bend right= 70,"/"marking , "\bar{u}_0"' near end] 
\rar["\bar{\pi}_{01}"] 
& U \dar["/"marking , "u_{0}" near start] 
& \bar{U}_1 \dar["/"marking , "\bar{u}_{0:1}" near start] 
\dar[dd, bend left =70,"/"marking , "\bar{u}_1" near start] 
\lar["\bar{\pi}_{12}"'] 
\\ 
\bar{U}_{0:0} \dar["/"marking , "\bar{u}_{1:0}"' near end] 
\rar["\bar{\pi}_{0:01}"] 
& U_0 \dar["/"marking , "u_{1}" near start] 
& \bar{U}_{0:1} \dar["/"marking , "\bar{u}_{1:1}" near start] 
\lar["\bar{\pi}_{0: 12}"']
\\ 
 \tilde{U} \rar["\sigma_c \times 1"'] 
 & \spn^2 
 & \tilde{U} \lar["1 \times \sigma_c"] 
\end{tikzcd} \tag{4}\]

\noi and taking a common refinement of the covers on the left and right

\[ \label{dgm refinement of pb along composition of triples map}
\begin{tikzcd}[]
\bar{U} \rar["\pi_1"] \dar["\pi_0"'] \ar[dr, "/" marking, "\bar{u}" near start] & \bar{U}_1 \dar["/"marking , "\bar{u}_1" near start] \\
\bar{U}_0 \rar["/"marking , "\bar{u}_0"' near end] & \tilde{U}
\end{tikzcd}, \tag{5}\]

\noi gives a cover $\bar{\tilde{u}} = \bar{u} \tilde{u} : \bar{U} \to \spn^3$ that witnesses representatives for the two ways to compose a composable triple of spans. Let 

\[ \sigma_0 : \bar{U} \to \spn \quad \text{and} \quad \sigma_1 : \bar{U} \to \spn \]

\noi be defined by 

\[ \sigma_0 = \pi_0 \bar{\pi}_{01} \sigma_c \quad \text{and} \quad \sigma_1 = \pi_1 \bar{\pi}_{12} \sigma_c . \] 

\noi To see $\sigma_0$ represents the equivalence class of $\bar{\tilde{u}} q_3 (c \times 1) c : \bar{U} \to \mCW_1$ we use the left squares in the diagrams and definitions above along with the definitions of $c' : \spn^2 \to \mCW_1$ and $c : \mCW_2 \to \mCW_1$ to compute

\begin{align*}
\sigma_0 q 
&= \pi_0 \bar{\pi}_{01} \sigma_c q \\
&= \pi_0 \bar{\pi}_{01} u c' \\
&= \pi_0 \bar{u}_0 (\sigma_c \times 1) c' \\
&= \pi_0 \bar{u}_0 (\sigma_c \times 1) q c\\
&= \pi_0 \bar{u}_0 ( \pi_0 \tilde{\pi}_{01} \sigma_c , \tilde{u} \pi_2) q c\\
&= \pi_0 \bar{u}_0 ( \pi_0 \tilde{\pi}_{01} \sigma_c q , \tilde{u} \pi_2 q) c\\
&= \pi_0 \bar{u}_0 ( \pi_0 \tilde{\pi}_{01} u c' , \tilde{u} \pi_2 q) c\\
&= \pi_0 \bar{u}_0 ( \pi_0 \tilde{u}_{0} \pi_{01} c' , \tilde{u} \pi_2 q) c\\
&= \pi_0 \bar{u}_0 ( \pi_0 \tilde{u}_{0} \pi_{01} c' , \tilde{u} \pi_2 q) c\\
&= \bar{u} (\tilde{u} \pi_{01} c' , \tilde{u} \pi_2 q) c \\
&= \bar{u} \tilde{u} (\pi_{01} c' , \pi_2 q) c\\
&= \bar{\tilde{u}} (c' \times q) c \\
&= \bar{\tilde{u}} q_3 (c \times 1) c. 
\end{align*}

\noi A similar computation using the right squares in the diagrams above shows the spans $\sigma_1 : \bar{U} \to \spn$ represent
\[ \sigma_1 q = \bar{\tilde{u}} q_3 (1 \times c) c. \]

\noi To see these representatives are equivalent we will show there exists a cover, $\hat{u} : \hat{U} \to \bar{U}$, along with two sailboats $\varphi_i : \hat{U} \to \slb$ for $i = 0,1$ such that 
\[ \varphi_0 p_0 = \hat{u} \sigma_0 \quad , \quad \varphi_0 p_1 = \varphi_1 p_1 \quad , \quad \text{ and } \quad \varphi_1 p_0 = \hat{u} \sigma_1. \]

.\noi The following algorithm outlines the necessary steps for constructing the cover $\hat{u} : \hat{U} \to \bar{U}$

\begin{itemize}
\item Apply the \textcolor{olive}{Ore-condition} to the cospan consisting of the left legs, $\sigma_0 \pi_0 : \bar{U} \to W$ and $\sigma_1 \pi_0 : \bar{U} \to W$ 
\item Apply \textcolor{cyan}{three zippers}, one for each left leg of each span in the composable triple in order from initial to final. 
\item Apply \textcolor{violet}{weak-composition four times} to get a span whose left leg is in $W$
\end{itemize}

\noi The figure below illustrates it. 

\[\label{fig assoc span comp data} 
\begin{tikzcd}[column sep = scriptsize]
	&& \cdot \\
	&& \cdot \\
	&& \cdot \\
	&& \cdot \\
	\cdot & \cdot & \cdot & \cdot & \cdot & \cdot & \cdot & \cdot & \cdot & \cdot & \cdot \\
	&&&& \cdot &&&&&& \cdot \\
	&&&& \cdot &&&&&& \cdot \\
	&&&& \cdot &&&&&& \cdot \\
	&&&& \cdot &&&&&& \cdot
	\arrow[from=5-2, to=5-1, "\circ" marking]
	\arrow[from=5-2, to=5-3]
	\arrow[from=5-4, to=5-3, "\circ" marking]
	\arrow[from=5-4, to=5-5]
	\arrow[from=5-6, to=5-5, "\circ" marking]
	\arrow[from=5-6, to=5-7]
	\arrow[from=6-5, to=5-4, "\circ" marking, orange]
	\arrow[from=6-5, to=5-6, orange]
	\arrow[from=7-5, to=6-5, orange]
	\arrow[from=4-3, to=5-2, "\circ" marking, teal]
	\arrow[from=4-3, to=5-4, teal]
	\arrow[from=3-3, to=4-3, teal]
	\arrow[from=3-3, to=5-1, "\circ" marking, teal]
	\arrow[from=8-5, to=5-2, "\circ" marking, orange]
	\arrow[from=8-5, to=7-5, orange]
	\arrow[from=7-5, to=5-3, "\circ" marking, orange]
	\arrow[from=2-3, to=3-3, "\circ" marking, teal]
	\arrow[from=2-3, to=5-6, teal]
	\arrow[from=1-3, to=2-3, teal]
	\arrow[curve={height=22 pt}, from=1-3, to=5-1, teal, "\circ" marking]
	\arrow[from=9-5, to=8-5, orange]
	\arrow[from=9-5, to=5-1, "\circ" marking, orange, bend left = 22]
	\arrow[from=5-8, to=1-3, bend right = 30, olive]
	\arrow[curve={height=-30pt}, from=5-8, to=9-5, "\circ" marking, olive]
	\arrow[from=5-9, to=5-8, "\circ" marking, cyan]
	\arrow[from=5-10, to=5-9,"\circ" marking, cyan]
	\arrow[from=7-11, to=6-11, violet]
	\arrow[from=8-11, to=7-11, violet]
	\arrow[from=9-11, to=8-11, violet]
	\arrow[from=6-11, to=5-9,"\circ" marking, violet]
	\arrow[from=7-11, to=5-8,"\circ" marking, violet]
	\arrow[from=8-11, to=9-5, bend left = 20, "\circ" marking, violet]
	\arrow[curve={height=-150pt}, from=9-11, to=5-1,"\circ" marking, violet]
	\arrow[from=5-11, to=5-10,"\circ" marking, cyan]
	\arrow[from=6-11, to=5-11, violet]
\end{tikzcd} \tag{A}\]

\noi Internally taking the Ore-square and zippering three times corresponds to applying \textbf{In.Frc(3)} followed by \textbf{In.Frc(4)} four times to get the chain of covers and lifts: 

\[ \label{dgm assoc covers for Ore + zipx3} 
\begin{tikzcd}[]
\cP(\mC) \rar["\pi_1"] 
& \cP_{cq}(\mC) 
& \cP(\mC) \rar["\pi_1"] 
& \cP_{cq}(\mC) 
& 
\\ 
\hat{U}_3 
\rar["/" marking, "\hat{u}_4" near start]
\uar[dotted, "\delta_2"] 
& \hat{U}_4
 \rar["/" marking, "\hat{u}_5" near start]
\dar[dotted, "\delta_1"'] 
\uar["\delta_2'"']
& \hat{U}_5 
\rar["/" marking, "\hat{u}_6" near start]
\uar[dotted, "\delta_0"'] 
\dar["\delta_1'"]
& \hat{U}_6 \rar["/" marking, "\hat{u}_7" near start] 
\uar["\delta_0'"'] 
\dar[dotted, "\theta_a"'] 
&\bar{U} \dar["(\sigma_0 \pi_0 w {,} \sigma_1 \pi_0)"] 
\\
&\cP(\mC) \rar["\pi_1"'] 
&\cP_{cq}(\mC)
&W_\square \rar[ "(\pi_0 \pi_1 {,} \pi_1 \pi_1)"'] 
&\csp 
\end{tikzcd} \tag{$\star$}\]

\noi where $\delta_0'$ is induced by the map $\delta_0'' : \hat{U}_6 \to P(\mC) \prescript{}{wt}{\times_{ws}} W$ that can be found by taking the equality

\[ \theta_a (\pi_0 \pi_0 , \pi_0 \pi_1) c = \theta_a (\pi_1 \pi_0 , \pi_1 \pi_1) c \]

\noi from the definition of $W_\square$; expanding the second components using the definitions of $\theta_a$, $\sigma_c \times 1$, and $1 \times \sigma_c$ with the definition 

\[\sigma_c = ( \omega \pi_1 , ( \omega \pi_0 \pi_0 , u_0 \theta \pi_1 \pi_0 , u \pi_1 \pi_1)c ) : U \to spn\] 

\noi and finding a common final arrow in $W$ in this expansion process. This arrow is the left leg of the initial span in the original composable triple of spans and can be seen in Figure (\ref{fig assoc span comp data}) above. The composite obtained from the teal upper half of the figure is longer than the other as it requires factoring through two weak-composition triangles. This is formalized by expanding 

\begin{align}
\begin{split}\label{eq assoc proof sidecalc theta_a pi_0pi_1}
\theta_a \pi_0 \pi_1 
&= \bar{u}_7 \sigma_0 \pi_0 w \\
&= \bar{u}_7 \pi_0 \bar{\pi}_{01} \sigma_c \pi_0 w\\ 
&= \bar{u}_7 \pi_0 \bar{\pi}_{01} \omega \pi_1 w\\
&= \bar{u}_7 \pi_0 \bar{\pi}_{01} (\omega \pi_0 \pi_0 , u_0 \theta \pi_0 \pi_0 w , u \pi_0 \pi_0 w) c \\
&= \bar{u}_7 \pi_0
( 
\bar{\pi}_{01} \omega \pi_0 \pi_0 , \ 
 \bar{\pi}_{01} u_0 \theta \pi_0 \pi_0 w , \ 
 \bar{\pi}_{01} u \pi_0 \pi_0 w) c \\
&= \bar{u}_7 \pi_0 
( 
\bar{\pi}_{01} \omega \pi_0 \pi_0 , \ 
\bar{u}_{0:0} \bar{\pi}_{0:01} \theta \pi_0 \pi_0 w , \ 
\bar{u}_{0} (\sigma_c \times 1) \pi_0 \pi_0 w 
) c 
\end{split}
\end{align} 

\noi and similarly 

\begin{align} \label{eq assoc proof sidecalc theta_a pi_1 pi_1}
\begin{split}
\theta_a \pi_1 \pi_1 &= \bar{u}_7 \sigma_1 \pi_0 \\
&\ \ \vdots \\
&=
\bar{u}_7 \pi_1 
\big( 
 \bar{\pi}_{12} \omega \pi_0 \pi_0 , \ 
\bar{u}_{0:1} \bar{\pi}_{0:12} \theta \pi_0 \pi_0 w , \ 
\bar{u}_{1} (1 \times \sigma_c) \pi_0 \pi_0 w \big) c. 
\end{split}
\end{align}

\noi Now recall that 
\[ \sigma_c \times 1 = ( \pi_0 \tilde{\pi}_{01} \sigma_c , \tilde{u} \pi_2) \quad \text{ and } \quad 1 \times \sigma_c = (\tilde{u} \pi_0 , \pi_1 \tilde{\pi}_{12} \sigma_c)\] 
\noi and we have 
\begin{align}\label{eq assoc proof sidecalc for (sigma_c times 1) pi_0 pi_0 w}
\begin{split}
(\sigma_c \times 1) \pi_0 \pi_0 w 
&= \pi_0 \tilde{\pi}_{01} \sigma_c \pi_0 w \\
&= \pi_0 \tilde{\pi}_{01} \omega \pi_1 w \\
&= 
\pi_0 \tilde{\pi}_{01} ( 
 \omega \pi_0 \pi_0 , 
 u_0 \theta \pi_0 \pi_0 w , 
 u \pi_0 \pi_0 w
)c 
\end{split}
\end{align}

\noi where the last map, $\tilde{U} \to \mC_1$ in this composite is: 
\begin{align}\label{eq assoc proof sidecalc (1 times sigma_c) pi_0 pi_0 w}
\begin{split}
\pi_0 \tilde{\pi}_{01} u \pi_0 \pi_0 w 
&= \pi_0 \tilde{u}_0 \pi_{01} \pi_0 \pi_0w \\
&= \tilde{u} \pi_{01} \pi_0 \pi_0 w \\
&= \tilde{u} \pi_0 \pi_0 w \\
&= (1 \times \sigma_c) \pi_0 \pi_0 w 
\end{split}
\end{align}

\noi The definition of the cover $\bar{u}$ from Diagram (\ref{dgm refinement of pb along composition of triples map}) and equation~(\ref{eq assoc proof sidecalc (1 times sigma_c) pi_0 pi_0 w}) imply the final component of the internal composition defining $\theta_a \pi_1 \pi_1$ in equation~(\ref{eq assoc proof sidecalc theta_a pi_1 pi_1}) is the final component of the internal composition defining $\theta_a \pi_0 \pi_1$ in equation~(\ref{eq assoc proof sidecalc theta_a pi_0pi_1}):

\begin{align}\label{eq assoc proof sidecalc final components of theta_a pi_0 pi_1 and theta_a pi_1 pi_1 coincide}
\begin{split}
 \hat{u}_7 \pi_0 \bar{u}_0 \pi_0 \tilde{\pi}_{01} u \pi_0 \pi_0 w 
 &= \hat{u}_7 \pi_1 \bar{u}_1 \pi_0 \tilde{\pi}_{01} u \pi_0 \pi_0 w \\
&= \hat{u}_7 \pi_1 \bar{u}_1 (1 \times \sigma_c) \pi_0 \pi_0 w.
\end{split}
\end{align}

\noi With these calculations and the commuting diagrams defining the covers above we can see there exists a map $\delta_0'' : \hat{U}_6 \to P(\mC) \prescript{}{wt}{\times_{ws}} W $, uniquely determined by the maps 

\begin{align*}
\delta_0'' \pi_1 
= \hat{u}_7 \bar{u} (1 \times \sigma_c) \pi_0 \pi_0 ,
\end{align*}\begin{align*}
\delta_0'' \pi_0 \pi_0
= \big( 
\theta_a \pi_1 \pi_0 , \ 
\bar{u}_7 \pi_1 
( 
 \bar{\pi}_{12} \omega \pi_0 \pi_0 , \ 
\bar{u}_{0:1} \bar{\pi}_{0:12} \theta \pi_0 \pi_0 w
)c 
\big)c ,
\end{align*}
\noi and 
\begin{align*}
\delta_0'' \pi_0 \pi_1 
&= \big( 
\theta_a \pi_0 \pi_0 , \\
& \ \ \ \ 
\bar{u}_7 \pi_0 
( \bar{\pi}_{01} \omega \pi_0 \pi_0 , \ 
\bar{u}_{0:0} \bar{\pi}_{0:01} \theta \pi_0 \pi_0 w , \\
& \ \ \ \ 
\bar{u}_{0} 
( \pi_0 \tilde{\pi}_{01} ( \omega \pi_0 \pi_0 , 
 u_0 \theta \pi_0 \pi_0 w
)c \
)c \
\big) c \\
&= \big( 
\theta_a \pi_0 \pi_0 , \\
& \ \ \ \ \bar{u}_7 \pi_0 
( 
\bar{\pi}_{01} \omega \pi_0 \pi_0 , \ 
\bar{u}_{0:0} \bar{\pi}_{0:01} \theta \pi_0 \pi_0 w , \ 
\bar{u}_{0} (\sigma_c \times 1) \pi_0 \pi_0 w 
) c \big).
\end{align*}

\noi Moreover, the equations five equations above imply

\begin{align}\label{eq assoc proof sidecalc delta_0'' inducing delta_0'}
\begin{split}
\delta_0'' (\pi_0 \pi_0 , \pi_1 ) c 
&= \big( \theta_a \pi_1 \pi_0 , \\
& \ \ \ \ 
\bar{u}_7 \pi_1 
( 
 \bar{\pi}_{12} \omega \pi_0 \pi_0 , \ 
\bar{u}_{0:1} \bar{\pi}_{0:12} \theta \pi_0 \pi_0 w
)c , \\
& \ \ \ \ 
\hat{u}_7 \bar{u} (1 \times \sigma_c) \pi_0 \pi_0
\big) c \\
&= ( \theta_a \pi_1 \pi_0 , \theta_a \pi_1 \pi_1 )c \\
&= ( \theta_a \pi_0 \pi_0 , \theta_a \pi_0 \pi_1 )c \\
&= \big( \theta_a \pi_0 \pi_0 , \\
& \ \ \ \ \bar{u}_7 \pi_0 
( 
\bar{\pi}_{01} \omega \pi_0 \pi_0 , \ 
\bar{u}_{0:0} \bar{\pi}_{0:01} \theta \pi_0 \pi_0 w , \ 
\bar{u}_{0} (\sigma_c \times 1) \pi_0 \pi_0 w 
) c 
\big)c \\
&= \delta_0 '' (\pi_0 \pi_1 , \pi_1) c .
\end{split}
\end{align}

\noi By the universal property of the equalizer,$\cP_{cq}(\mC)$, equation~(\ref{eq assoc proof sidecalc delta_0'' inducing delta_0'}) induces a unique map $\delta_0' : \hat{U}_6 \to \cP_{cq}(\mC)$ such that the diagram 

\begin{center}
\begin{tikzcd}[]
\cP_{cq}(\mC) \rar["\iota_{cq}", tail] &P(\mC) \prescript{}{t}{\times_{ws}} W \\
\hat{U}_6 \ar[ur, "\delta_0''"'] \uar[dotted, "\delta_0'"] 
\end{tikzcd}
\end{center}

\noi commutes in $\cE$. Next, to define the map $\delta_1' : \hat{U}_5 \to \cP_{cq}(\mC)$, we start by considering the definition of the pullback, $\cP ( \mC)$, that says 

\[ \delta_0 \pi_0 \iota_{eq} (\pi_0 , \pi_1 \pi_0 )c = \delta_0 \pi_0 \iota_{eq} (\pi_0 , \pi_1 \pi_1 )c : \hat{U}_5 \to \mC_1 \]

\noi are equal in $\cE$ and represent that same family of arrows in $\mC$. We can post-compose these internally to $\mC$ with the family of arrows

\[ \hat{u}_{6;7} \bar{u} \tilde{u} \pi_0 \pi_1 : \hat{U}_5 \to \mC_1\]

\noi and after re-associating the internal composition to find each of the two Ore-squares arising at the two different covers in the following diagram. 

\[\label{dgm assoc picking ore squares through covers}
\begin{tikzcd}[]
& 
&\tilde{U}_0 
\rar[r, "/" marking, "\tilde{u}_{0:0}" near start] 
& \tilde{U}_{0:0} 
\rar[r,"\tilde{\pi}_{0:01}"] 
& U_0 
\dar["/" marking, "u_1" near start] 
\ar[r, "\theta"] 
& W_\square \dar["\pi_0 \pi_1"] \\
\hat{U}_5 
\ar[drrrrr, bend right = 50, orange, "\square_{0,1}"']
\ar[urrrrr, bend left = 40, teal, "\square_{1,0}"]
\rar["/" marking, "\hat{u}_{6;7}" near start] 
& \bar{U} 
\dar["\pi_1"', orange] 
\rar["/" marking, "\bar{u}" near start] 
& \tilde{U} 
\ar[u, "\pi_0", teal] 
& 
& \spn^2 \rar["/" marking, "\pi_0 \pi_1" near start] 
& \mC_1 \\
& \bar{U}_1 
\rar[rr,"/" marking, "\bar{u}_{0:1}"' near end] 
&
&\bar{U}_{0:1}
\rar["\bar{\pi}_{0:12}"'] 
& U_0 
\uar["/" marking, "u_1" near start] 
\ar[r, "\theta"'] 
& W_\square \uar["\pi_0 \pi_1"'] 
\end{tikzcd}\tag{$\blacksquare$}\]

\noi This diagram commutes in the sense that the two maps on the outside, $\hat{U}_5 \to \mC_1,$ are equal. In the figure above this corresponds to the statement that the two left-most Ore-squares on the top and bottom agree on the projection, $\pi_0 \pi_1 : W_\square \to \mC_1$. Notice $\textcolor{teal}{\square_{1,0}}$ represents one of the two Ore-squares in Diagram \ref{fig assoc span comp data} of the triple composition construction on the top (in teal) while $\textcolor{orange}{\square_{0,1}}$ represents the second of two Ore-squares in the triple composition construction on the bottom (in orange). Now compute the projection 

\begin{align*}
\textcolor{orange}{\square_{0,1}} \pi_1 \pi_1 
&= \hat{u}_{6;7} \textcolor{orange}{\pi_1} \bar{u}_{0:1} \bar{\pi}_{0:12} \theta \pi_1 \pi_1 \\ 
&= \hat{u}_{6;7} \textcolor{orange}{\pi_1} \bar{u}_{0:1} \bar{\pi}_{0:12} u_1 \pi_1 \pi_0 \\ 
&= \hat{u}_{6;7} \textcolor{orange}{\pi_1} \bar{\pi}_{12} u_{0} u_1 \pi_1 \\ 
&= \hat{u}_{6;7} \textcolor{orange}{\pi_1} \bar{\pi}_{12} u \pi_1 \pi_0\\ 
&= \hat{u}_{6;7} \textcolor{orange}{\pi_1} \bar{u}_{1} (1 \times \sigma_c) \pi_1 \pi_0\\ 
&= \hat{u}_{6;7} \bar{u} (1 \times \sigma_c) \pi_1 \pi_0 \\ 
&= \hat{u}_{6;7} \bar{u} \pi_1 \tilde{\pi}_{12} \sigma_c \pi_0 \\ 
&= \hat{u}_{6;7} \bar{u} \pi_1 \tilde{\pi}_{12} \omega \pi_1. 
\end{align*}
 
\noi Since $\omega \pi_1 = (\omega \pi_0 \pi_0 , \omega \pi_0 \pi_1 , \omega \pi_0 \pi_2) c$ with $\omega \pi_0 \pi_2 = u \pi_0 \pi_0$ by definition of $\omega : U \to W_\circ$ we can expand and notice that the final map, 

\[ \hat{u}_{6;7} \bar{u} \pi_1 \tilde{\pi}_{12} \omega \pi_0 \pi_2 : \hat{U}_5 \to W \]

\noi is equal to 

\[ \hat{u}_{6;7} \bar{u} \pi_1 \tilde{\pi}_{12} u \pi_0 \pi_0 = \textcolor{teal}{\square_{1,0}} \pi_1 \pi_1 : \hat{U}_5 \to W. \] 

\noi This induces a unique map $\delta_1'' : \hat{U}_5 \to P(\mC) \prescript{}{t}{\times_{ws}} W$ determined by the map $\hat{U}_5 \to W$ given by

\begin{align*}
\delta_1'' \pi_1 
= \textcolor{teal}{\square_{1,0}} \pi_1 \pi_1 ,
\end{align*}
\noi and the map $\delta_1'' \pi_0 : \hat{U}_5 \to P(\mC) $ whose left projection, $\delta_1'' \pi_0\pi_0$, is the pairing map
\begin{align*}
\big( 
&\delta_0 \iota_{eq} \pi_0 w ,\\
&
\hat{u}_6 \theta_a \pi_1 \pi_0 , \\
&\hat{u}_{6;7} \textcolor{orange}{\pi_1} \bar{\pi}_{12} \omega \pi_0 \pi_0 , \\
&\hat{u}_{6;7} \textcolor{orange}{\pi_1} \bar{\pi}_{12} u_0 \theta \pi_1 \pi_0 ,\\ 
&\hat{u}_{6;7} \bar{u} \textcolor{orange}{\pi_1} \tilde{\pi}_{12} \omega \pi_0 \pi_0 ,\\
&\hat{u}_{6;7} \bar{u} \textcolor{orange}{\pi_1} \tilde{\pi}_{12} u_0 \theta \pi_0 \pi_0 w
\big) c
\end{align*}
\noi and whose right projection, $\delta_1'' \pi_0 \pi_1$, is the pairing map 
\begin{align*}
\big( 
&\delta_0 \iota_{eq} \pi_0 w ,\\ 
&\hat{u}_6 \theta_a \pi_0 \pi_0 w , \\
&\hat{u}_{6;7} \textcolor{teal}{\pi}_0 \bar{\pi}_{01} \omega \pi_0 \pi_0 , \\
&\hat{u}_{6;7} \textcolor{teal}{\pi}_0 \bar{\pi}_{01} u_0 \theta \pi_0 \pi_0 w ,\\
&\hat{u}_{6;7} \bar{u} \textcolor{teal}{\pi}_1 \tilde{\pi}_{01} \omega \pi_0 \pi_0 ,\\ 
&\hat{u}_{6;7} \bar{u} \textcolor{teal}{\pi}_0 \tilde{\pi}_{01} u_0 \theta \pi_1 \pi_0
\big) c . 
\end{align*}

\noi The fact that $\delta_1''$ satisfies the equalizer condition for $\cP_{cq}(\mC)$, namely 
\[\delta_1'' (\pi_0 \pi_0 , \pi_1) c = \delta_1'' (\pi_0 \pi_1 , \pi_1) c,\]
\noi follows from the calculations above. This induces the unique map $\delta_1'$ that makes the following diagram commute:

\begin{center}
\begin{tikzcd}[]
\cP_{cq}(\mC) \rar["\iota_{cq}" ] & P(\mC) \prescript{}{t}{\times_{ws}} W \\
\hat{U}_5 \ar[ur, "\delta_1''"'] \uar[dotted, "\delta_1'"] 
\end{tikzcd}
\end{center}

\noi Finally, the map $\delta_2' : \hat{U}_4 \to \cP_{cq}(\mC)$ is similarly induced by a map $\delta_2'' : \hat{U}_4 \to P(\mC) \prescript{}{t}{\times_{ws}} W$ which can be deduced expanding the right and left-hand sides of the equation 

\[ \delta_1 \iota_{eq} (\pi_0 , \pi_1 \pi_0) c = \delta_1 \iota_{eq} (\pi_0 \pi_1 \pi_1) c \]\

\noi whose common target is the middle object of the original composable triple of spans, post-composing with the arrow

\[ \hat{u}_{5;6} \bar{u} \tilde{u} \pi_1 \pi_1 : \hat{U}_4 \to \mC_1, \]

\noi which is final map given by applying the projection $\pi_0 \pi_1 : W_\square \to \mC_1$ of the Ore-squares 

\[
\begin{tikzcd}[]
&\tilde{U}_0 
\rar[rr, "/" marking, "\tilde{u}_{0:0}" near start] 
&
& \tilde{U}_{0:0} 
\rar[r,"\tilde{\pi}_{0:01}"] 
& U_0 
\dar["/" marking, "u_1" near start] 
\ar[r, "\theta"] 
& W_\square \dar["\pi_1 \pi_1"] 
\\
\hat{U}_4 
\ar[drrrrr, bend right = 40, orange, "\square_{0,0}"']
\ar[urrrrr, bend left = 50, teal, "\square_{1,1}"]
\rar["/" marking, "\hat{u}_{4;7}" near start] 
& \bar{U} 
\ar[u, "\pi_0", teal] 
\rar["/" marking, "\bar{u}" near start] 
& \tilde{U} 
\dar["\pi_1"', orange] 
& 
& \spn^2 \rar["/" marking, "\pi_1 \pi_0" near start] 
& W 
\\
&
& \bar{U}_1 
\rar[r,"/" marking, "\bar{u}_{0:1}"' near end] 
&\bar{U}_{0:1}
\rar["\bar{\pi}_{0:12}"'] 
& U_0 
\uar["/" marking, "u_1" near start] 
\ar[r, "\theta"'] 
& W_\square \uar["\pi_1 \pi_1"'] 
\\
\end{tikzcd}.\tag{$\blacksquare \blacksquare$}\]

\noi Picking out the composable pairs in $W$ to get a cover witnessing a family of spans whose left leg is in $W$ is done in order from right to left in the diagram before. This is identical to how it was done in the proof of Lemma~\ref{lem witnessing c' well-defined} from Section \ref{S Defining mCW, the internal cat of fracs}, except this time an extra zippering step leads to an extra composable pair in $W$. The chain of covers and lifts are given by applying \textbf{In.Frc(2)} four times as seen in diagrams

\[\label{dgm assoc covers weak comp}
\begin{tikzcd}[column sep = large]
W_\circ \rar["(\pi_0 \pi_1 {,} \pi_0 \pi_2)"]
&W \prescript{}{wt}{\times_{ws}} W
&
\\
\hat{U}_1 \rar["/" marking, "\hat{u}_2" near start]
\uar[dotted, "\omega_1"] 
&
\hat{U}_2
\rar["/" marking, "\hat{u}_3" near start]
\dar[dotted, "\omega_0"'] 
\uar["(\omega_0 \pi_1 {,} \hat{u}_{3;5} \delta_0 \iota_{eq} \pi_0) "']
&
\hat{U}_3 \dar["( \delta_2 \iota_{eq} \pi_0 {,} \hat{u}_4 \delta_1 \iota_{eq} \pi_0 )"] 
\\
& W_\circ \rar["(\pi_0 \pi_1 {,} \pi_0 \pi_2)"']
&W \prescript{}{wt}{\times_{ws}} W
\end{tikzcd} \tag{$\star \star$} \]
\noi and 
\[ \label{dgm assoc covers weak comp 2}
\begin{tikzcd}
W_\circ \rar["(\pi_0 \pi_1 {,} \pi_0 \pi_2)"]
&W \prescript{}{wt}{\times_{ws}} W
& \\
\hat{U}
\uar[dotted, "\omega_3"] 
\rar["/" marking, "\hat{u}_0" near start]
&
\hat{U}_0
\rar["/" marking, "\hat{u}_1" near start]
\dar[dotted, "\omega_2"'] 
\uar["(\omega_2 \pi_1 {,} \hat{u}_{1;7} \sigma_0 \pi_0)"']
&
\hat{U}_1 
\dar["(\omega_1 \pi_1 {,} \hat{u}_{2;6} \theta_a \pi_0 \pi_0 )"]
\\
&W_\circ \rar["(\pi_0 \pi_1 {,} \pi_0 \pi_2)"']
&W \prescript{}{wt}{\times_{ws}} W
\end{tikzcd}. \tag{$\star \star \star$}\]
\noi At this point we can define two sailboats, $\varphi_0 , \varphi_1 : \hat{U} \to \slb$, whose deck-projections give the two composite representatives we care for, 
\[ \varphi_0 p_0 = \hat{u} \sigma_0 \quad , \quad \varphi_1 p_0 = \hat{u} \sigma_1\] 

\noi and whose sail-projections agree, 
\[ \varphi_0 p_1 = \varphi_1 p_1\] 
\noi by virtue of zippering. To define these explicitly we first expand both sides of the equation 

\[ \hat{u}_{0; 3} \delta_2 \iota_{eq} (\pi_0 , \pi_1 \pi_0) c = \hat{u}_{0; 3} \delta_2 \iota_{eq} (\pi_0 , \pi_1 \pi_1) c\]

\noi into composites and post-compose both sides with the map represented by

\[ \hat{u} \bar{u} \tilde{u} \pi_2 \pi_1 : \hat{U} \to \mC_1.\]

\noi This gives two equal representations of the right leg of the intermediate span, 
\[ \varphi_0 p_1 \pi_1 = \varphi_1 p_1 \pi_1, \]

\noi and by re-associating the composites in both representations we can get

\[ \varphi_0 p_1 \pi_1 = ( \mu_0 , \hat{u} \sigma_0 \pi_1) c \]
\noi and 
\[ \varphi_1 p_1 \pi_1 = (\mu_1 , \hat{u} \sigma_1 \pi_1) c\]

\noi for two maps, $\mu_0 , \mu_1 : \hat{U} \to \mC_1$, which represent the masts of the sailboats being picked out. This gives the maps $\hat{U} \to \slb$ defined by the pairing maps 

\[ \varphi_0 = \big( ( ( \mu_0 , \hat{u} \sigma_0) , \ \omega_2 \pi_1 ) , \ \hat{u} \sigma_0 \pi_1 \big) \]
\noi 
\[ \varphi_1 = \big( ( ( \mu_1 , \hat{u} \sigma_1) , \ \omega_2 \pi_1 ) , \ \hat{u} \sigma_1 \pi_1 \big) . \]

\noi It follows that 
\[ \hat{u} \sigma_0 q = \varphi_0 p_0q = \varphi_0 p_1 q = \varphi_1 p_1 q = \varphi _1 p_0 q = \hat{u} \sigma_1 q \]

\noi and since $\hat{u}$ is epic, 
\[ \sigma_0 q = \sigma_1 q.\] 
\end{proof}

\subsubsection{Identity Laws}\label{SS internal cat of fracs id laws}
The only conditions left to check in order to see that $\mCW$ is an internal category are the left and right identity laws for composition. This rest of this section is dedicated precisely to this. The identity laws are typically proven, when $\cE = \Set$, by looking at the composites

\[\begin{tikzcd}
	&& \cdot \\
	&& \cdot \\
	& \cdot && \cdot \\
	\cdot && \cdot && \cdot
	\arrow[from=3-2, to=4-1, "\circ" marking, "x"']
	\arrow[from=3-2, to=4-3, "x"]
	\arrow[from=3-4, to=4-3, "\circ" marking, "f"']
	\arrow[from=3-4, to=4-5, "g"]
	\arrow[from=2-3, to=3-2, "\circ" marking, "v"']
	\arrow[from=2-3, to=3-4, "h"]
	\arrow[from=1-3, to=2-3, "u"]
	\arrow[curve={height=30pt}, from=1-3, to=4-1,"\circ" marking, "y"']
\end{tikzcd}\]
\noi and 
\[
\begin{tikzcd}
	&& \cdot \\
	&& \cdot \\
	& \cdot && \cdot \\
	\cdot && \cdot && \cdot
	\arrow[from=3-2, to=4-1, "\circ" marking, "f"']
	\arrow[from=3-2, to=4-3, "g"]
	\arrow[from=3-4, to=4-3, "\circ" marking, "x"']
	\arrow[from=3-4, to=4-5, "x"]
	\arrow[from=2-3, to=3-2, "\circ" marking, "v"']
	\arrow[from=2-3, to=3-4, "h"]
	\arrow[from=1-3, to=2-3, "u"]
	\arrow[curve={height=30pt}, from=1-3, to=4-1, "\circ" marking, "y"']
\end{tikzcd}\]

\noi and producing the sailboats

\[\begin{tikzcd}[column sep = large, row sep = large]
	& \cdot & \\
	\cdot & \cdot & \cdot 
	\arrow[from=1-2, to=2-2, "uh"] 
	\arrow[from=1-2, to=2-1, "\circ" marking, "y"']
	\arrow[from=2-2, to=2-1, "\circ" marking, "f"near start] 
	\arrow[from=2-2, to=2-3, "g"' near start] 
\end{tikzcd}\qquad \qquad 
\begin{tikzcd}[column sep = large, row sep = large]
	& \cdot & \\
	\cdot & \cdot & \cdot 
	\arrow[from=1-2, to=2-2, "uv"] 
	\arrow[from=1-2, to=2-1, "\circ" marking, "y"']
	\arrow[from=2-2, to=2-1, "\circ" marking, "f"near start] 
	\arrow[from=2-2, to=2-3, "g"' near start] 
\end{tikzcd}\] 

\noi that relate each composite to the span $(f,g)$ respectively. Since proving the identity laws requires a lot of source and target maps for different objects and we have been overloading their notation, for the rest of this section we rename the source and target maps for spans and sailboats to keep our calculations somewhat more legible. Let $s', t' : \spn \to \mC_0$ denote the source and target maps for spans, given by the pairing maps $s' = \pi_0 w t$ and $t' = \pi_1 t$ respectively. Also let $s'' , t'' : \slb \to \mC_0$ denote the source and target maps for sailboats given by the pairing maps $s'' = \pi_0 \pi_1 t = \pi_0 \pi_0 \pi_1 t$ and $t'' = \pi_1 t $. Internalizing this will require covers that witness composition of spans along with the canonical left and right identity inclusions,

\[
\begin{tikzcd}[]
\spn \rar["( s' \sigma_\alpha {,} 1)"] & \spn \tensor[_{t'}]{\times}{_{s'}} \spn
\end{tikzcd}\qquad \qquad 
\begin{tikzcd}[]
\spn \rar["( 1 {,} t' \sigma_\alpha)"] & \spn \tensor[_{t'}]{\times}{_{s'}} \spn
\end{tikzcd}
\]
\noi and 
\[
\begin{tikzcd}[]
\slb \rar["( s'' \varphi_\alpha {,} 1)"] & \slb \tensor[_{t''}]{\times}{_{s''}} \slb
\end{tikzcd}\qquad \qquad 
\begin{tikzcd}[]
\slb \rar["( 1 {,} t'' \varphi_\alpha)"] & \slb \tensor[_{t''}]{\times}{_{s''}} \slb
\end{tikzcd},
\]

\noi induced by $\sigma_\alpha = (\alpha, \alpha w)$, $\varphi_\alpha = \big( ( ( \alpha s w e , \alpha), \alpha ) , \alpha w \big)$, and the fact that $\alpha$ is a section of $w t$. The following lemma is used to define the identity inclusions $\mCW_1 \to \mCW_1 \tensor[_t]{\times}{_s} \mCW_1$ used in the identity law statement. 

\begin{lem}\label{lem id span inclusions to composable spans}
The diagrams

\begin{center}
\begin{tikzcd}[]
\slb \dar["(s'' \varphi_\alpha {,} 1)"'] \rar[shift left, "p_0"] \rar[shift right, "p_1"'] & \spn \dar["(s' \sigma_\alpha {,} 1)"] \rar[two heads, "q"] & \mCW_1 \dar[dotted, "(s e {,} 1)"] \\
\slb \tensor[_{t''}]{\times}{_{s''}}\slb \rar[shift left, "p_0^2"] \rar[shift right, "p_1^2"'] & \spn \tensor[_{t'}]{\times}{_{s'}} \spn \rar["q^2"'] & \mCW_1 \tensor[_t]{\times}{_s} \mCW_1
\end{tikzcd} 
\end{center}

\noi and 

\begin{center}
\begin{tikzcd}[]
\slb \dar["( 1 {,} t'' \varphi_\alpha)"'] \rar[shift left, "p_0"] \rar[shift right, "p_1"'] & \spn \dar["(1 {,} t' \sigma_\alpha)"] \rar[two heads, "q"] & \mCW_1 \dar[dotted, "(1 {,} t e)"] \\
\slb \tensor[_{t''}]{\times}{_{s''}}\slb \rar[shift left, "p_0^2"] \rar[shift right, "p_1^2"'] & \spn \tensor[_{t'}]{\times}{_{s'}} \spn \rar["q^2"'] & \mCW_1 \tensor[_t]{\times}{_s} \mCW_1
\end{tikzcd}
\end{center}

\noi commute in the sense that for $i = 0,1$

\[(s'' \varphi_\alpha , 1) p_i^2 = p_i (s' \sigma_\alpha , 1) \qquad , \qquad (t'' \varphi_\alpha , 1) p_i^2 = p_i (t' \sigma_\alpha , 1), \]
\noi which uniquely determines 
\[(s e , 1) \qquad \text{ and } \qquad (1,te)\]
\noi respectively. 
\end{lem}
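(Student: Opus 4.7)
The plan is to produce each induced vertical map $\mCW_1 \to \mCW_1 \tensor[_t]{\times}{_s} \mCW_1$ via the universal property of the coequalizer $q : \spn \to \mCW_1$, which requires only that the middle vertical composite in each diagram coequalizes $p_0, p_1$. So the real content is to verify the left-hand squares commute in the pairing-map sense, and then identify the induced arrow as $(se,1)$ (respectively $(1,te)$).

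First I will check that $(s''\varphi_\alpha,1)$ and $(1,t''\varphi_\alpha)$ land in $\slb \tensor[_{t''}]{\times}{_{s''}} \slb$. Since $\alpha$ is a section of $wt$, one has $t''\varphi_\alpha = \varphi_\alpha \pi_1 t = \alpha w t = 1_{\mC_0}$, and dually $s''\varphi_\alpha$ composed with $t''$ gives the identity, so both pairings are well-defined into the pullback. Second, I will reduce each required identity $(s''\varphi_\alpha,1)p_i^2 = p_i(s'\sigma_\alpha,1)$ to the single equation $\varphi_\alpha p_i = \sigma_\alpha$ for $i=0,1$ by splitting into pullback components: the second projections are equal tautologically, and for the first projections one uses $p_i s' = s''$ (the source of a span projected from a sailboat is the source of the sailboat itself). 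Then $\varphi_\alpha p_i = \sigma_\alpha$ is a direct calculation on $\pi_0,\pi_1$-components of $\sigma_\alpha = (\alpha, \alpha w)$: for $i=0$ it is immediate from the definition of $\varphi_\alpha$ and $p_0 = (\pi_0\pi_0\pi_1,\pi_1)$; for $i=1$ it additionally uses the identity law for composition in $\mC$, namely $(\alpha s w e, \alpha w)c = \alpha w$. This mirrors the reflexivity computation in Lemma~\ref{lem p_0 and p_1 are a reflexive pair (base case for paths)}. The dual identity $\varphi_\alpha p_i = \sigma_\alpha$ for the second diagram is the same calculation.

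From the commutativity of the left square it follows that
\begin{align*}
p_0(s'\sigma_\alpha,1)q^2 = (s''\varphi_\alpha,1)p_0^2 q^2 = (s''\varphi_\alpha,1)p_1^2 q^2 = p_1(s'\sigma_\alpha,1)q^2,
\end{align*}
since $q^2 = q\times q$ coequalizes $p_0^2,p_1^2$ by Lemma~\ref{lem composable pair pb is coequalizer}. So the universal property of $q$ yields a unique $\mCW_1 \to \mCW_1 \tensor[_t]{\times}{_s} \mCW_1$ making the right square commute, and dually for $(1,t''\varphi_\alpha)$.

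Finally I will identify the induced map with $(se,1)$ by projecting: for the first projection, $q \cdot (\text{induced}) \cdot \pi_0 = (s'\sigma_\alpha,1)q^2 \pi_0 = s'\sigma_\alpha q = s' e$ (using $e = \sigma_\alpha q$ from Proposition~\ref{Prop identity def doesnt depend on choice of section of wt}), which factors through $q$ as $se$ since $qs = s'$ by Lemma~\ref{lem defining source and target of mCW}; for the second projection, $q\cdot(\text{induced})\cdot \pi_1 = q$, factoring through $q$ as $1_{\mCW_1}$. The dual check identifies the second induced map as $(1,te)$. No deep obstacle is expected; the only delicate point is bookkeeping the pairing-map identities and remembering that $p_i s' = s''$ on sailboats.
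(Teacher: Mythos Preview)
Your proposal is correct and follows essentially the same route as the paper's proof: reduce the left-square commutativity to $\varphi_\alpha p_i = \sigma_\alpha$ via $p_i s' = s''$ (resp.\ $p_i t' = t''$), invoke the coequalizer property of $q$ (through $p_i^2 q^2 = p_j^2 q^2$) to produce the induced map, and identify it as $(se,1)$ (resp.\ $(1,te)$) by checking the two projections. Your explicit observation that $\varphi_\alpha p_i = \sigma_\alpha$ is an instance of the reflexivity computation from Lemma~\ref{lem p_0 and p_1 are a reflexive pair (base case for paths)} is a nice clarifying remark the paper leaves implicit.
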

\begin{proof}
To see the squares on the left commute first notice that

\[ p_0 s' = p_0 \pi_0 w t = \pi_0 \pi_0 \pi_1 w t = s'' = \pi_0 \pi_1 w t = p_1 \pi_0 w t = p_1 s' \]
\noi and 
\[ p_0 t' = p_0 \pi_1 t = \pi_1 t = t'' = p_1 \pi_1 t = p_1 t'.\] 

\noi Now for $i = 0,1$ we have 

\begin{align*}
(s'' \varphi_\alpha , 1)p_i^2 
&= (s'' \varphi_\alpha , 1)(\pi_0 p_i , \pi_1 p_i) \\
&= \big((s'' \varphi_\alpha , 1)\pi_0 p_i ,  \ (s'' \varphi_\alpha , 1) \pi_1 p_i \big) \\
&= \big(s'' \varphi_\alpha p_i , \ p_i \big) \\
&= \big(s'' (\alpha , \alpha w) , \ p_i \big) \\
&= \big(s'' \sigma_\alpha , \ p_i \big) \\
&= \big(p_i s' \sigma_\alpha , \ p_i \big) \\
&= p_i (s' \sigma_\alpha , 1) 
\end{align*}
\noi and similarly
\begin{align*}
(t'' \varphi_\alpha , 1)p_i^2 
&= p_i (t' \sigma_\alpha , 1) 
\end{align*}
\noi showing that the squares on the left commute as described. Then since $p_0^2 q_2 = p_1^2 q_2$ we get
\[ p_0 (s' \sigma_\alpha , 1) q^2 = p_1 (s' \sigma_\alpha , 1) q^2 \]
\noi and 
\[ p_0 ( 1, t' \sigma_\alpha ) q^2 = p_1 (1, t' \sigma_\alpha ) q^2 \]

\noi inducing the unique vertical maps on the right in the lemma's diagrams by the universal property of the coequalizer $\mCW_1$. Now we show these are precisely $(se, 1), (1, te) : \mCW_1 \to \mCW_1 \tensor[_t]{\times}{_s} \mCW_1$. Notice the outer squares of the following pullback diagrams

\begin{center}
\begin{tikzcd}[]
\spn \ar[ddr, bend right, "s' e"'] \ar[drr, bend left, "q"] \ar[dr, dotted, "(s' \sigma_\alpha {,} 1) q^2"] & & \\
& \mCW_1 \tensor[_t]{\times}{_s} \mCW_1 \arrow[dr, phantom, "\usebox\pullback" , very near start, color=black] \dar["\pi_0^2"'] \rar["\pi_1^2"] & \mCW_1 \dar["s"] \\
& \mCW_1 \rar["t"'] & \mCW_0
\end{tikzcd} \qquad \qquad
\begin{tikzcd}[]
\spn \ar[ddr, bend right, "q"'] \ar[drr, bend left, "t'e"] \ar[dr, dotted, "(1 {,} t' \sigma_\alpha ) q^2"] & & \\
& \mCW_1 \tensor[_t]{\times}{_s} \mCW_1 \arrow[dr, phantom, "\usebox\pullback" , very near start, color=black] \dar["\pi_0^2"'] \rar["\pi_1^2"] & \mCW_1 \dar["s"] \\
& \mCW_1 \rar["t"'] & \mCW_0
\end{tikzcd}
\end{center}

\noi commute because $s' = q s$ implies
\begin{align*}
 e t 
= \sigma_\alpha q t 
= \sigma_\alpha t' 
= \sigma_\alpha \pi_1 t 
= (\alpha , \alpha w) \pi_1 t
= \alpha w t 
= 1_{\mC_0}
\end{align*}
\noi and similarly $t' = qt$ implies 

\[ e s = \sigma_\alpha q s = \sigma_\alpha s' = \alpha w t = 1_{\mC_0}. \]

\noi The triangles in the left diagram commute because 
\[(s' \sigma_\alpha {,} 1) q^2 \pi_0^2 
= (s' \sigma_\alpha {,} 1) \pi_0 q 
= s' \sigma_\alpha q 
= s' e \]
\noi and 
\[ (s' \sigma_\alpha {,} 1) q^2 \pi_1^2 
= (s' \sigma_\alpha {,} 1) \pi_1 q 
=q\]
\noi and a similar calculation shows the triangles on the right commute. 
\noi Then we have 

\[(s' \sigma_\alpha , 1) q^2 = (s' e , q) = (qse , q) = q (se, 1) \]
\noi and 
\[( 1, t' \sigma_\alpha) q^2 = (q, t' e) = (q, qte) = q (1, te). \]
\noi as required.

\end{proof}\

\begin{lem}\label{lem id law on representatives}
The diagram

\begin{center}
\begin{tikzcd}[column sep = large, row sep = large]
\spn \rar["s' \sigma_\alpha {,} 1)"] \ar[dr, two heads, "q"'] & \spn \tensor[_{t'}]{\times}{_{s'}} \spn \dar["c'"] & \spn \lar["(1 {,} t' \sigma_\alpha)"'] \ar[dl, two heads,"q"] \\
& \mCW_1 &
\end{tikzcd}
\end{center}

\noi commutes in $\cE$. 
\end{lem}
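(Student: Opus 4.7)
The plan is to verify the two triangles separately by exhibiting, on suitable covers of $\spn$, explicit sailboats that witness the equivalence between the composite span and the original span. The two cases are dual, so I shall focus on the right identity triangle $(1,t'\sigma_\alpha)c' = q$; the argument for the left identity triangle is analogous, exchanging the roles of source and target and using the fact that $\sigma_\alpha$ is a section of both source and target for the identity inclusion.

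First I would form the pullback cover
\[
\begin{tikzcd}
V \rar["\tilde v"] \dar["/" marking, "v"' near end] \arrow[dr, phantom, "\usebox\pullback" , very near start, color=black]
 & U \dar["/" marking, "u" near start] \\
\spn \rar["(1{,}t'\sigma_\alpha)"'] & \spn \tensor[_{t'}]{\times}{_{s'}} \spn
\end{tikzcd}
\]
so that $v$ is a cover of $\spn$ and $v(1,t'\sigma_\alpha)c' = \tilde v u c' = \tilde v \sigma_\circ q$ by Lemma~\ref{lem actual definition of c'}. Since $v$ is epic, it suffices to show $\tilde v \sigma_\circ q = vq$ in $\cE$, i.e.\ that the composite span $\tilde v \sigma_\circ$ and the original span $v$ represent the same class in $\mCW_1$. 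I would establish this by constructing a sailboat $\varphi:V\to\slb$ with $\varphi p_0 = \tilde v \sigma_\circ$ and $\varphi p_1 = v$.

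The second and main step is the construction of $\varphi$. On $V$ the second span in the pair is the identity span at the target, with both legs equal to $\alpha$ pulled back along $t'$; the Ore square to be filled in the definition of $\sigma_\circ$ is therefore on the cospan $(h,\alpha)$, and the weak composition applied afterwards is on the composable pair given by the new left leg and $v$. Classically the sailboat of interest has the Ore-square filler as its mast, the Ore-square hypotenuse (composed with the weak-composition witness) as its left $W$-leg, and the original $h$ as its sail-right; the commuting triangle in $W_\triangle$ is precisely the weak-composition triangle produced in defining $\sigma_\circ$, and the matching of the $p_1$-projection with $v$ uses naturality together with the fact that $\alpha w$ is a common section of $s$ and $t$ on its image. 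Internally, I would define $\varphi$ as a single pairing map using the same lifts $\omega$ and $\theta$ (or rather, their pullbacks to $V$) that appear in the definition of $\sigma_\circ$, together with the projection $v:V\to\spn$ to supply the sail-right, and check the required equalities $\varphi p_0 = \tilde v \sigma_\circ$ and $\varphi p_1 = v$ directly, much as in Lemma~\ref{lem defining c'} where sailboats of this form are built.

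The main obstacle I anticipate is verifying the $W_\triangle$-coherence inside the pairing map defining $\varphi$: one must check that the candidate hypotenuse, obtained by composing the Ore-square $W$-leg with the weak-composition witness, actually equals the composite of the mast with the triangle's $W$-base. This is a book-keeping exercise identical in flavour to the calculations in Lemma~\ref{lem witnessing c' well-defined}, where the key input is the identity law for $c$ in $\mC$ applied to the trivial factor $\alpha w$ coming from the identity span; since $\alpha$ is a section of $wt$, the various source/target compatibilities collapse and the pairing map is well-defined without needing any further Internal Fractions Axiom beyond \textbf{In.Frc(1)}. Once $\varphi$ is in place, the identity
\[ v q = \varphi p_1 q = \varphi p_0 q = \tilde v \sigma_\circ q = v (1,t'\sigma_\alpha) c'\]
and the fact that $v$ is a cover (hence epic) yield $(1,t'\sigma_\alpha)c' = q$. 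The left identity triangle is handled by the symmetric construction, producing a sailboat whose mast arises from the Ore square on $(\alpha,v)$ and whose sail-right is again the original $h$.
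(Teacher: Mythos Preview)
Your approach is essentially the paper's: pull back the span–composition cover $u$ along the identity inclusion, construct a single sailboat on that cover witnessing the equivalence, and conclude by epicness of the cover. The paper carries out the left triangle with $(s'\sigma_\alpha,1)$ rather than the right one, but explicitly says the other case is similar, so your choice is fine.

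One small inconsistency to fix: the sailboat you describe---Ore filler as mast, the original right leg as the shared $\pi_1$, and the weak-composition hypotenuse as the triangle's $\pi_0\pi_1$---produces $\varphi p_0 = v$ (the original span) and $\varphi p_1 = \tilde v\,\sigma_\circ$ (the composite), not the reverse as you wrote. This is exactly the orientation the paper uses. It makes no difference to the conclusion, since the coequalizer identifies $p_0 q$ and $p_1 q$, but your displayed equalities $\varphi p_0 = \tilde v\,\sigma_\circ$, $\varphi p_1 = v$ should be swapped to match the construction you actually sketch.
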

\begin{proof}
We show the left triangle commutes, the argument for the right triangle is similar. Pullback $u_1$ along $(s' \sigma_\alpha , 1)$ and then pullback along $u_0$ as shown in the diagram below to obtain a cover of $\spn$ that witnesses the entire composition process of an arbitary span and a pre-composable span representing the identity in $\mCW_1$. The following diagram commutes by definition. 

\begin{center}
\begin{tikzcd}[column sep = large ]
U^* \arrow[dr, phantom, "\usebox\pullback" , very near start, color=black]\ar[dd, bend right = 50, "/" marking, "u^*"' near end] \dar["/" marking, "u_0^*" near start] \rar["\pi_1"] & 
U 
\ar[dd, bend left = 50, "/" marking, "u" near start]
\dar["/" marking, "u_0"'near end] 
\rar["\sigma_\circ"] 
& 
\spn 
\dar[dd, two heads, "q"] 
\\
U^*_0 
\arrow[dr, phantom, "\usebox\pullback" , very near start, color=black] 
\dar["/" marking, "u_1^*"near start] 
\rar["\pi_1"] 
& 
U_0 
\dar["/" marking, "u_1"'near end] 
& 
\\
\spn 
\rar["(s' \sigma_\alpha {,} 1)"'] 
& 
\spn \tensor[_{t'}]{\times}{_{s'}} \spn 
\rar["c"'] 
& 
\mCW_1
\end{tikzcd}
\end{center}\

\noi By commutativity of the outer square above and since $u^*$ is epic, it suffices to show 

\[ u^* \pi_1 \sigma_\circ q = u^* q. \] 

\noi This can be done by translating the usual proof of the left identity law for span composition and defining a sailboat $\varphi : U^* \to \slb$ such that 
\[ \varphi p_0 = u^* \quad \text{ and } \quad \varphi p_1 = \pi_1 \sigma_\circ \]
\noi to give 
\[ u^* q =\varphi p_0 q = \varphi p_1 q = \pi_1 \sigma_\circ q. \]

\noi First compute 

\begin{align*}
\pi_1 \sigma_\circ 
&= \pi_1 \big( \omega \pi_1 , \ (\omega \pi_0 \pi_1 , u_0 \theta \pi_1 \pi_0 , \ u \pi_1 \pi_1 )c \big) 
\\
&= 
\big( 
\pi_1 \omega \pi_1 , \ 
(\pi_1 \omega \pi_0 \pi_1 , 
\pi_1 u_0 \theta \pi_1 \pi_0 , \ 
\pi_1 u \pi_1 \pi_1 
)c 
\big) \\
&= 
\big( 
\pi_1 \omega \pi_1 , \ 
(\pi_1 \omega \pi_0 \pi_1 , 
 u^*_0 \pi_1 \theta \pi_1 \pi_0 , \ 
 u^* \pi_1 
)c 
\big) .
\end{align*}

\noi Now notice that 
\begin{align*}
\pi_1 u \pi_0 \pi_0 w 
&= u^* (s' \sigma_\alpha, 1) \pi_0 \pi_0 w \\
&= u^* (s' \sigma_\alpha, 1) \pi_0 \pi_0 w \\
&= u^* s' \sigma_\alpha \pi_0 w \\
&= u^* s' \alpha w \\
&= u^* s' \sigma_\alpha \pi_1 \\
&= u^*_0 u^*_1 (s' \sigma_\alpha , 1) \pi_0 \pi_1\\
&= u^*_0 \pi_1 u_1 \pi_0 \pi_1 \\
&= u^*_0 \pi_1 \theta \pi_0 \pi_1 
\end{align*}

\noi and use this in the third line of the following calculation along with the definition of $W_\square$ (the Ore-condition) in the fifth line. 

\begin{align*}
\pi_1 \omega \pi_1 
&= 
(\pi_1 \omega \pi_0 \pi_0 , \
\pi_1 \omega \pi_0 \pi_1 w, \
\pi_1 \omega \pi_0 \pi_2 w
)c \\
&= 
(\pi_1 \omega \pi_0 \pi_0 , \
\pi_1 u_0 \theta \pi_0 \pi_0 w, \
\pi_1 u \pi_0 \pi_0 w
)c \\
&=
(\pi_1 \omega \pi_0 \pi_0 , \
u^*_0 \pi_1 \theta \pi_0 \pi_0 w, \
u^*_0 \pi_1 \theta \pi_0 \pi_1 
)c \\
&=
\big(
\pi_1 \omega \pi_0 \pi_0 , \
u^*_0 \pi_1 
(\theta \pi_0 \pi_0 w, \
 \theta \pi_0 \pi_1 )c
 \big)c \\
&=
\big(
\pi_1 \omega \pi_0 \pi_0 , \
u^*_0 \pi_1 
(\theta \pi_1 \pi_0 , \
 \theta \pi_1 \pi_1 w)c
 \big)c \\
&= 
(
\pi_1 \omega \pi_0 \pi_0 , \
u^*_0 \pi_1 \theta \pi_1 \pi_0 , \
u^*_0 \pi_1 \theta \pi_1 \pi_1 w
)c \\
&= 
(
\pi_1 \omega \pi_0 \pi_0 , \
u^*_0 \pi_1 \theta \pi_1 \pi_0 , \
u^*_0 \pi_1 u_1 \pi_1 \pi_0 w
)c \\
&= 
(
\pi_1 \omega \pi_0 \pi_0 , \
u^*_0 \pi_1 \theta \pi_1 \pi_0 , \
u^* (s' \sigma_\alpha , 1) \pi_1 \pi_0 w
)c \\
&= 
(
\pi_1 \omega \pi_0 \pi_0 , \
u^*_0 \pi_1 \theta \pi_1 \pi_0 , \
u^* \pi_0 w
)c 
\end{align*}

\noi The last calculation shows that the sailboat $\varphi : U^* \to \slb$ defined by 

\[ \varphi = 
\big( 
( 
( 
(\pi_1 \omega \pi_0 \pi_0 , u_0^* \pi_1 \theta \pi_1 \pi_0 w ) c , \
u^* \pi_0 ) ,\ 
\pi_1 \omega \pi_1 ) ,\ 
u^* \pi_1 \big) 
\]
\noi is well-defined. Clearly we have
\[ \varphi p_0 = \varphi (\pi_0 \pi_0 \pi_1 , \pi_1 ) = (u^* \pi_o , u^* \pi_1) = u^*\]
\noi and the first calculation along with associativity of composition in the last equality below shows us that 
\begin{align*}
   \varphi p_1 
   &= \varphi (\pi_0 \pi_1 , \ (\pi_0 \pi_0 \pi_0 , \pi_1)c) \\
   &= \big( \pi_1 \omega \pi_1 , \ ( (\pi_1 \omega \pi_0 \pi_0 , u^*_0 \pi_1 \theta \pi_1 \pi_0) c , u^* \pi_1\big)\\
   &= \pi_1 \sigma_\circ . 
\end{align*}  
\end{proof}\

\begin{prop}[Identity Laws] \label{Prop Id Laws}
The diagram 

\begin{center}
\begin{tikzcd}[]
\mCW_1 \rar["(se{,} 1)"] \ar[dr, equals]& \mCW_1 \tensor[_t]{\times}{_s} \mCW_1 \dar["c"] & \lar["(1{,} te)"'] \mCW_1 \ar[dl, equals] \\
& \mCW_1 &
\end{tikzcd}
\end{center}

\noi commutes in $\cE$.
\end{prop}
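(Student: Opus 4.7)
The plan is to reduce the identity laws on $\mCW_1$ to the already-established identity laws at the level of representative spans (Lemma~\ref{lem id law on representatives}), and then pass the equalities through the epimorphism $q : \spn \to \mCW_1$ to get the statement on equivalence classes. The key organising fact is that the composition $c : \mCW_2 \to \mCW_1$ is defined so that $q^2 c = c'$, where $q^2 : \spn \tensor[_{t'}]{\times}{_{s'}} \spn \to \mCW_1 \tensor[_t]{\times}{_s} \mCW_1$ is the coequalizer identifying $\mCW_2$ with the pullback of $s,t$ (Lemma~\ref{lem composable pair pb is coequalizer}), together with the identification of $(se,1)$ and $(1,te)$ as the maps uniquely induced by $(s'\sigma_\alpha , 1)$ and $(1, t'\sigma_\alpha)$ on representatives, provided by Lemma~\ref{lem id span inclusions to composable spans}.

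First I would record the two commuting squares obtained from Lemma~\ref{lem id span inclusions to composable spans}, namely
\[ (s'\sigma_\alpha , 1)\, q^2 = q\, (se , 1) \qquad \text{and} \qquad (1, t'\sigma_\alpha)\, q^2 = q\, (1, te). \]
Post-composing each with $c : \mCW_1 \tensor[_t]{\times}{_s} \mCW_1 \to \mCW_1$ and using $q^2 c = c'$ gives
\[ q\, (se , 1)\, c = (s'\sigma_\alpha , 1)\, c' \qquad \text{and} \qquad q\, (1, te)\, c = (1, t'\sigma_\alpha)\, c'. \]
By Lemma~\ref{lem id law on representatives}, both right-hand sides equal $q$. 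Hence
\[ q\, (se , 1)\, c = q = q\cdot 1_{\mCW_1} \qquad \text{and} \qquad q\, (1, te)\, c = q = q \cdot 1_{\mCW_1}. \]

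Since $q$ is a coequalizer, it is in particular an epimorphism, so I may cancel it on the left in both equations to conclude
\[ (se , 1)\, c = 1_{\mCW_1} = (1, te)\, c, \]
which is exactly the commutativity of the triangles in the proposition. Together with Proposition~\ref{prop assoc law}, this completes the verification that $\mCW$ with the structure maps defined in Section~\ref{S Defining mCW, the internal cat of fracs} is an internal category in $\cE$.

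The only potentially delicate point in this argument is the interplay between the two coequalizers $q$ and $q^2$: one must be careful to use $q^2 c = c'$ (rather than attempting to factor $q \times q$ through $q^2$ in some other way) and to invoke the uniqueness clause in Lemma~\ref{lem id span inclusions to composable spans}, which identifies $(se,1)$ and $(1,te)$ precisely as the maps induced from the span-level identity inclusions by the universal property of $\mCW_1$. Once that identification is in hand, the identity laws on $\mCW_1$ fall out by one line of diagram chase per side, since all the real content has already been absorbed into Lemma~\ref{lem id law on representatives}.
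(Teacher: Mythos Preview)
Your proposal is correct and follows essentially the same argument as the paper: both use Lemma~\ref{lem id span inclusions to composable spans} to relate $(se,1)$ and $(1,te)$ to their span-level counterparts, invoke $q^2 c = c'$, apply Lemma~\ref{lem id law on representatives}, and then cancel the epimorphism $q$. The paper phrases the final step as ``by uniqueness'' (i.e.\ the uniqueness clause of the coequalizer), which is equivalent to your explicit cancellation of $q$ as an epimorphism.
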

\begin{proof}
By Lemma~\ref{lem id span inclusions to composable spans}, the diagrams

\[
\begin{tikzcd}[]
\spn \dar["(s' \sigma_\alpha {,} 1)"'] \rar[two heads, "q"] & \mCW_1 \dar[dotted, "(s e {,} 1)"] & \\
\spn \tensor[_{t'}]{\times}{_{s'}} \spn \rar["q^2"'] \ar[dr, "c'"'] & \mCW_1 \tensor[_t]{\times}{_s} \mCW_1 \dar[dotted, "c"] \\
& \mCW_1
\end{tikzcd} \] 
\noi and 
\[
\begin{tikzcd}[]
 \spn \dar["(1 {,} t' \sigma_\alpha)"'] \rar[two heads, "q"] & \mCW_1 \dar[dotted, "(1 {,} t e)"] \\
\spn \tensor[_{t'}]{\times}{_{s'}} \spn \rar["q^2"'] \ar[dr, "c'"'] & \mCW_1 \tensor[_t]{\times}{_s} \mCW_1 \dar[dotted, "c"] \\
& \mCW_1
\end{tikzcd}
\]

\noi commute and by Lemma~\ref{lem id law on representatives} the composites on the left sides are both equal to $q$. It follows that the right-hand sides are identities by uniqueness. 
\end{proof}

\subsection{The Internal Localization Funtor}\label{S internal localization functor}

In this section we define the (internal) localization functor, $L : \mC \to \mCW$, prove it is an internal functor, define what it means for an internal functor to invert an arrow $w : W \to \mC_1$, and then show that $L$ inverts $w : W \to \mC_1$.

\subsubsection{Defining the Internal Functor}
The localizing internal functor, $L : \mC \to \mCW$, is defined on objects to be the identity map, $L_0 = 1_{\mC_0} $, because $\mCW_0 = \mC_0$. On arrows we use the section $\alpha : \mC_0 \to W$ along with the source map and the identity to get a (family of) span(s) which can be mapped to $\mCW_1$ as follows. 

\begin{center}
\begin{tikzcd}[column sep = large]
\mC_1 \ar[dr, "L_1"'] \rar["\big( s \alpha {,} ( s \alpha w {,} 1) c \big)"] & \spn \dar[two heads, "q"] \\
& \mCW_1 
\end{tikzcd}
\end{center}

\noi When $\cE = \Set$ this says $L_1$ maps an arrow $f : a \to b$ in $\mC_1$ to the equivalence class of spans represented by the span

\[ \begin{tikzcd}[]
a & \lar["\circ" marking, "\alpha(a)"'] a \rar[rr, bend left, "\alpha (a) f"] \rar["\circ" marking, "\alpha(a)"'] & a \rar["f"'] & b 
\end{tikzcd}. \] 

\noi Identities are preserved since $e s = 1_{\mC_0}$ and by the identity law, $(1 , te) c = 1_{\mC_1}$, in $\mC$ 

\begin{align*}
e L_1 
&= e \big( s \alpha {,} ( s \alpha w {,} 1) c \big) q \\
&= \big( e s \alpha {,} ( e s \alpha w {,} e ) c \big) q \\
&= \big( \alpha {,} ( \alpha w {,} e ) c \big) q \\
&= \big( \alpha {,} \alpha w ( 1 , t e ) c \big) q \\
&= ( \alpha {,} \alpha w ) q \\
\end{align*}

\noi where the last line is the identity structure map, $e = ( \alpha {,} \alpha w ) q : \mCW_0 \to \mCW_1$, for the internal category $\mCW$. This shows the diagram

\begin{center}
\begin{tikzcd}[]
\mC_0 \dar["e"'] \rar[equals, "L_0"] & \mCW_0 \dar["e"] \\
\mC_1 \rar["L_1"'] & \mCW_1
\end{tikzcd}
\end{center}

\noi commutes in $\cE$ so $L = (L_0, L_1)$ preserves the identity structure. Composition is preserved in a less obvious way. We need Lemma~\ref{lem intLocfunctor preserves comp helper lemma} to see 

\begin{align*}
c L_1 
&= c \big( s \alpha {,} ( s \alpha w {,} 1) c \big) q \\
&= \big( c s \alpha {,} ( c s \alpha w {,} c ) c \big) q \\
&= \big( \pi_0 s \alpha {,} ( \pi_0 s \alpha w {,} c ) c \big) q \\
&= \big( \pi_0 ( s \alpha {,} ( s \alpha w {,} 1) c ), \pi_1 ( s \alpha {,} ( s \alpha w {,} 1) c ) \big) c' \\
&= \big( \pi_0 ( s \alpha {,} ( s \alpha w {,} 1) c ), \pi_1 ( s \alpha {,} ( s \alpha w {,} 1) c ) \big) (q \times q) c \\
&= \big( \pi_0 ( s \alpha {,} ( s \alpha w {,} 1) c ) q , \pi_1 ( s \alpha {,} ( s \alpha w {,} 1) c )q \big) c \\
&= (L_1 \times L_1) c 
\end{align*}
\noi and conclude that the diagram 

\begin{center}
\begin{tikzcd}[]
\mC_2 \dar["c"'] \rar[ "L_1 \times L_1 "] & \mCW_1^2 \dar["c"] \\
\mC_1 \rar["L_1"'] & \mCW_1
\end{tikzcd}
\end{center}

\noi commutes in $\cE$. It follows that $L = (L_0, L_1)$ is an internal functor. 

\begin{lem}\label{lem intLocfunctor preserves comp helper lemma}
The diagram 

\begin{center}
\begin{tikzcd}[column sep = huge, row sep = large]
\mC_2 
\dar["\big( \pi_0 s \alpha {,} ( \pi_0 s \alpha w {,} c ) c \big) "']
\rar[rr,"\big( \pi_0 ( s \alpha {,} ( s \alpha w {,} 1) c ){,}\pi_1 ( s \alpha {,} ( s \alpha w {,} 1) c ) \big)"] 
&& \spn^2
\dar["c'"]
\\
\spn
\rar[rr,two heads, "q"'] 
&& \mCW_1
\end{tikzcd}
\end{center}

\noi commutes in $\cE$. 

\end{lem}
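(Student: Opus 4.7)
The plan is to use the universal property of the cover $u : U \to \spn \tensor[_{t'}]{\times}{_{s'}} \spn$ from Lemma~\ref{lem actual definition of c'} (so that $u c' = \sigma_\circ q$) together with the fact that covers are epic. Denote the top horizontal map by $\Phi$ and the left vertical map by $\Psi$. Pulling $u$ back along $\Phi$ and using stability of covers under pullback gives a cover $\tilde u : \tilde U \to \mC_2$ and a map $\tilde \Phi : \tilde U \to U$ with $\tilde u \Phi = \tilde \Phi u$, so that $\tilde u \Phi c' = \tilde \Phi u c' = \tilde \Phi \sigma_\circ q$. It therefore suffices to construct a sailboat $\varphi : \tilde U \to \slb$ satisfying $\varphi p_0 = \tilde u \Psi$ and $\varphi p_1 = \tilde \Phi \sigma_\circ$, since then
\[
\tilde u \Phi c' = \tilde \Phi \sigma_\circ q = \varphi p_1 q = \varphi p_0 q = \tilde u \Psi q,
\]
and $\Phi c' = \Psi q$ follows because $\tilde u$ is epic.

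Over $\tilde U$, the composable pair $\tilde u \Phi$ has the specific form $(L_1'(f), L_1'(g))$, where $L_1' := (s \alpha, (s \alpha w, 1)\, c) : \mC_1 \to \spn$ is the pre-quotient version of $L_1$ and $f, g$ are the pullback projections $\mC_2 \to \mC_1$. Crucially, the left leg of the second span is the canonical $W$-arrow $t \alpha$ applied to the composable pair. Running the span-composition algorithm of Section~\ref{S Defining mCW, the internal cat of fracs} on this pair produces Ore-square data $\theta : \tilde U \to W_\square$ from \textbf{In.Frc(3)} and weak-composition data $\omega : \tilde U \to W_\circ$ from \textbf{In.Frc(2)}, giving $\tilde \Phi \sigma_\circ$ in the standard form of a span whose left leg is the $W$-witness $\omega \pi_1$ and whose right leg is a triple composite built from the $\mC_1$-component of $\omega$, the $\mC_1$-component of $\theta$, and the right leg of the second span. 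Meanwhile $\tilde u \Psi = (s \alpha,\, (s \alpha w, fg)\, c)$ is the single span representing $L_1$ of the composite $fg$.

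I will construct the sailboat $\varphi$ by taking its mast $\mu : \tilde U \to \mC_1$ to be $\mu := (\omega \pi_0 \pi_0 \pi_0,\, \theta \pi_1 \pi_0)\, c$, with bottom-left $W$-leg $\tilde u \pi_0 s \alpha$, top-left $W$-leg $\omega \pi_1$, and right $\mC_1$-leg $\tilde u (\pi_0 s \alpha w, c)\, c$. Well-definedness of $\varphi$ requires two source/target matching conditions, both following from the definition of $W_\square$ and from the fact that $\tilde \Phi \sigma_\circ$ lands in $\spn$. The equation $\varphi p_0 = \tilde u \Psi$ is immediate from the definition $p_0 = (\pi_0 \pi_0 \pi_1,\, \pi_1)$. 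The equation $\varphi p_1 = \tilde \Phi \sigma_\circ$ reduces, after expanding $p_1 = (\pi_0 \pi_1,\, (\pi_0 \pi_0 \pi_0,\, \pi_1)\, c)$, to one application of the Ore-square identity witnessed by $\theta \in W_\square$, together with associativity of composition in $\mC$.

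The main obstacle is purely bookkeeping: verifying that the right leg of $\tilde \Phi \sigma_\circ$, which is a triple composite in $\mC$, can be re-associated using the Ore-square equation into the form $\mu$ pre-composed with the right leg $\tilde u (s \alpha w, fg)\, c$ of $\tilde u \Psi$. This is the same style of calculation as in Lemma~\ref{lem witnessing c' well-defined}, guided by the set-theoretic picture in which $L_1(f) \cdot L_1(g)$ and $L_1(fg)$ are manifestly joined by a single sailboat whose mast is the Ore-square/weak-composition data.
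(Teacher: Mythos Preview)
Your overall strategy is correct and is considerably more economical than the paper's. The paper builds an elaborate chain of nine additional covers on top of the pullback $\tilde U$, applying two further Ore conditions, two zippers, and five weak-compositions, and then produces \emph{two} sailboats $\varphi,\psi:\hat U\to\slb$ sharing a common $p_1$-projection in order to link $\tilde u\Psi$ and $\pi\sigma_\circ$ through an intermediate span. Your observation that a \emph{single} sailboat on $\tilde U$ already does the job is right: in $\Set$, the mast $hv'$ (weak-composition arrow followed by the Ore $W$-arrow) goes from the apex of $\sigma_\circ$ directly to the apex of $\Psi$, and the Ore identity $v'\cdot(\alpha(a)f)=k\cdot\alpha(b)$ is exactly what makes the right legs match after one re-association. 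This is the same pattern the paper itself uses in the much shorter proof of Lemma~\ref{lem id law on representatives}.

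There is, however, a concrete index error in your mast. You set $\mu=(\omega\pi_0\pi_0\pi_0,\theta\pi_1\pi_0)c$, but $\theta\pi_1\pi_0$ is the $\mC_1$-leg of the Ore square pointing toward the apex of the \emph{second} span (this is the component appearing in the right leg of $\sigma_\circ$), whereas your deck has left $W$-leg $\tilde u\pi_0 s\alpha$, whose source is the apex of the \emph{first} span. With your $\mu$ the $W_\triangle$-condition $(\mu,\tilde u\pi_0 s\alpha w)c=\omega\pi_1 w$ fails, so the pairing map does not land in $\slb$. The correct mast uses the Ore $W$-leg: in the paper's indexing, $\mu=\big(\tilde\Phi\,\omega\pi_0\pi_0,\ \tilde\Phi\,u_0\theta\pi_0\pi_0\,w\big)c$. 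With this $\mu$, the $W_\triangle$-condition is immediate from the defining equation for $\omega\pi_1$, and the verification of $\varphi p_1=\tilde\Phi\sigma_\circ$ proceeds exactly as you outline: expand, regroup $(\theta\pi_0\pi_0 w,\theta\pi_0\pi_1)c=(\theta\pi_1\pi_0,\theta\pi_1\pi_1 w)c$, and re-associate. You should also carry the cover prefixes ($\tilde\Phi$, $u_0$) explicitly, since $\theta$ lives on $U_0$ and $\omega$ on $U$, not on $\tilde U$.
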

\begin{proof}
Internalize the following figure 

\begin{equation}\label{fig the big}
\begin{tikzcd}[column sep = scriptsize]
	&&& \cdot \\
	&&& \cdot \\
	\\
	a & \cdot & a & b & \cdot & b & c & \cdot & \cdot & \cdot \\
	&& \cdot \\
	&&& \cdot &&b & &&& \cdot \\
	&&& \cdot \\
	&& \cdot && a \\
	\\
	&&& \cdot &&&&&& \cdot \\
	\\
	\\
	&&& \cdot
	\arrow[from=4-2, to=4-1, "\circ" marking, "\alpha(a)"']
	\arrow[from=4-2, to=4-3 ,"\alpha(a)"']
	\arrow[from=4-3, to=4-4, "f"']
	\arrow[curve={height=-12pt}, from=4-2, to=4-4, "\alpha(a) f"]
	\arrow[from=4-5, to=4-4, "\circ" marking, "\alpha(b)"']
	\arrow[from=4-5, to=4-6,"\alpha(b)"']
	\arrow[from=4-6, to=4-7,"g"']
	\arrow[curve={height=-12pt}, from=4-5, to=4-7, "\alpha(b) g"]
	\arrow[curve={height=12pt}, from=2-4, to=4-2, "\circ" marking,, teal]
	\arrow[curve={height=-12pt}, from=2-4, to=4-5, teal]
	\arrow[from=1-4, to=2-4, teal]
	\arrow[curve={height=25pt}, from=1-4, to=4-1, "\circ" marking, teal]
	\arrow[from=8-3, to=4-1, "\circ" marking, orange, "\alpha(a)"']
	\arrow[from=8-3, to=8-5, "\alpha(a)"' near end, orange]
	\arrow[from=8-5, to=6-6, orange, "f"']
	\arrow[from= 6-6, to=4-7, "g"', orange]
	\arrow[from=5-3, to=4-2, olive]
	\arrow[from=5-3, to=8-3, olive, "\circ" marking]
	\arrow[from=6-4, to=5-3, olive, "\circ" marking]
	\arrow[from=6-4, to=4-5, olive]
	\arrow[from=7-4, to=6-4, violet]
	\arrow[from=7-4, to=8-3, violet, "\circ" marking]
	\arrow[from=10-4, to=7-4, violet, crossing over]
	\arrow[curve={height=-24pt}, from=10-4, to=4-1, violet, "\circ" marking]
	\arrow[curve={height=-30pt}, from=4-8, to=10-4, olive, "\circ" marking]
	\arrow[curve={height=24pt}, from=4-8, to=1-4, olive]
	\arrow[from=4-9, to=4-8, magenta, "\circ" marking]
	\arrow[from=4-10, to=4-9,magenta, "\circ" marking]
	\arrow[from=6-10, to=4-10, violet]
	\arrow[from=6-10, to=4-8, violet, "\circ" marking]
	\arrow[from=10-10, to=6-10,violet]
	\arrow[from=10-10, to=10-4, "\circ" marking, violet]
	\arrow[curve={height=30pt}, from=13-4, to=10-10, violet]
	\arrow[curve={height=-30pt}, from=13-4, to=4-1,"\circ" marking, violet]
\end{tikzcd} \tag{A} 
\end{equation}

\noi where olive coloured arrows are fillers of Ore-squares, violet coloured arrows $W$-composition fillers, and magenta coloured arrows are zippering fillers. To do this internally, start by taking the pullback

\begin{equation}\label{pb comp + representatives for L_1}
\begin{tikzcd}[column sep = huge, row sep = large]
\tilde{U} \rar[rrr, "\pi"] \dar["/" marking, "\tilde{u}"' near end] \arrow[drrr, phantom, "\usebox\pullback" , very near start, color=black] &&& U \dar["/" marking, "u" near start] \\
\mC_2 
\rar[rrr,"\big( \pi_0 ( s \alpha {,} ( s \alpha w {,} 1) c ){,}\pi_1 ( s \alpha {,} ( s \alpha w {,} 1) c ) \big)"'] 
&&& \spn^2
\end{tikzcd}\tag{0}
\end{equation}

\noi to give a cover witnessing span-composition along with the representative spans for $L_1$. In Figure $(A)$, this gives access to the teal and black coloured arrows. We begin by internalizing the `inner' part of Figure \ref{fig the big} 
\begin{equation*}\label{fig inner part}
\begin{tikzcd}[row sep = scriptsize]
	a & \cdot & a & b & \cdot & b & c \\
	&& \cdot \\
	&&& \cdot &&b & \\
	&&& \cdot \\
	&& \cdot && a \\
	\\
	&&& \cdot &&& \\
	\\
	\\
	&&& \cdot
	\arrow[from=1-2, to=1-1, "\circ" marking, "\alpha(a)"']
	\arrow[from=1-2, to=1-3 ,"\alpha(a)"', near end]
	\arrow[from=1-3, to=1-4, "f"']
	\arrow[curve={height=-12pt}, from=1-2, to=1-4, "\alpha(a) f"]
	\arrow[from=1-5, to=1-4, "\circ" marking, "\alpha(b)"']
	\arrow[from=1-5, to=1-6,"\alpha(b)"']
	\arrow[from=1-6, to=1-7,"g"']
	\arrow[curve={height=-12pt}, from=1-5, to=1-7, "\alpha(b) g"]
	\arrow[from=5-3, to=1-1, "\circ" marking, orange, "\alpha(a)"']
	\arrow[from=5-3, to=5-5, "\alpha(a)"' near end, orange]
	\arrow[from=5-5, to=3-6, orange, "f"']
	\arrow[from= 3-6, to=1-7, "g"', orange]
	\arrow[from=2-3, to=1-2, olive]
	\arrow[from=2-3, to=5-3, olive, "\circ" marking]
	\arrow[from=3-4, to=2-3, olive, "\circ" marking]
	\arrow[from=3-4, to=1-5, olive]
	\arrow[from=4-4, to=3-4, violet]
	\arrow[from=4-4, to=5-3, violet, "\circ" marking]
	\arrow[from=7-4, to=4-4, violet, crossing over]
	\arrow[curve={height=-24pt}, from=7-4, to=1-1, violet, "\circ" marking]
\end{tikzcd}. \tag{B}
\end{equation*}

\noi We can build diagram (\ref{cover dgm - `inner' part}) below by noticing 
\[ \label{eq alpha(a)} \pi u \pi_0 \pi_0 = \tilde{u} \pi_0 s \alpha \tag{a}\]

\noi which means there is a unique map $(\tilde{u} \pi_0 s \alpha w , \pi u \pi_0 \pi_0) : \tilde{U} \to \csp$. Applying the internal Ore condition, \textbf{Int.Frc.(3)}, witnesses the first (family of) Ore-square(s), $\theta_0: \hat{U}_8 \to W_\square$, and using diagram (\ref{pb comp + representatives for L_1}) and equation $(a)$ above we can rewrite 

\[ \hat{u}_9\pi u \pi_0 \pi_1 = \hat{u}_9 \tilde{u} \pi_0 (s \alpha w , 1) c = (\hat{u}_9 \tilde{u} \pi_0 s \alpha w , \hat{u}_9 \tilde{u} \pi_0) c = (\theta_0 \pi_1 \pi_1 w , \hat{u}_9 \tilde{u} \pi_0) c \]

\noi and see its source coincides with the target of $\theta_0 \pi_1 \pi_0 : \hat{U}_8 \to W$ by definition of $W_\square$. This induces the unique map 
\[ (\theta_0 \pi_1 \pi_0 , \pi u \pi_0 \pi_1) : \hat{U}_8 \to \mC_2. \]
\noi Now the target of $\pi u \pi_0 \pi_1 : \tilde{U} \to \mC_1$ is the target of $\pi u \pi_1 \pi_0 : \tilde{U} \to W$ by definition of $\spn^2$, and this gives rise to the map
\[\big( (\theta_0 \pi_1 \pi_0 , \pi u \pi_0 \pi_1)c , \pi u \pi_1 \pi_0 \big) : \hat{U}_8 \to \csp\]

\noi in diagram (\ref{cover dgm - `inner' part}) below. Applying \textbf{Int.Frc.(3)} here witnesses the second (family of) Ore-square(s), $\theta_1 : \hat{U}_7 \to W_\square$. The map representing pairs of composable arrows in $W$, that induce the cover $\tilde{u}_f$ and the lift $\omega_0: \hat{U}_6 \to W_\circ$ by \textbf{Int.Frc.(2)} in diagram (\ref{cover dgm - `inner' part}) below, are pretty self-explanatory. The one inducing $\omega_1 : \hat{U}_5 \to W_\circ $ can be justified by chasing through the already established parts of diagram (\ref{cover dgm - `inner' part}) below. First notice that 
\[ \hat{u}_{7;8} \theta_0 \pi_0 \pi_1 = \hat{u}_{7;9} \tilde{u} \pi_0 s \alpha w \]
\noi and 
\[ \omega_0 \pi_1 = (\omega_0 \pi_0 \pi_0 , \theta_1 \pi_0 \pi_0 , \hat{u}_8 \theta_0 \pi_0 \pi_0 ) c .\]
\noi The target of this last composite is the target of the last arrow which is the source 

\[ \omega_0 \pi_1 w t = \hat{u}_{7;8} \theta_0 \pi_0 \pi_0 w t = \hat{u}_{7;8} \theta_0 \pi_0 \pi_1 s = \hat{u}_{7;9} \tilde{u} \pi_0 s \alpha w s. \]

\noi Applying \textbf{Int.Frc.(3)} induces the map $\omega_1 : \hat{u}_5 \to W_\circ$ and all together we get a commuting diagram of witnesses to the the inner part of Figure (\ref{fig the big}). 

\begin{equation}\label{cover dgm - `inner' part}
\begin{tikzcd}[column sep = large]
W_\square \rar["(\pi_0 \pi_1 {,} \pi_1 \pi_1)"] 
& \csp
&\\
\hat{U}_7
\uar["\theta_1", olive]
\rar["/" marking, "\hat{u}_8" near start] 
& \hat{U}_8
\uar["\big( ( \theta_0 \pi_1 \pi_0 {,} \hat{u}_9 \pi u \pi_0 \pi_1 )c {,} \hat{u}_9 \pi u \pi_1 \pi_0 \big) "']
\dar["\theta_0"', olive]
\rar["/" marking, "\hat{u}_9" near start] 
& \tilde{U} 
\dar["( \tilde{u} \pi_0 s \alpha w{,}\pi u \pi_0 \pi_0)"] 
\\
& W_\square \rar["(\pi_0 \pi_1 {,} \pi_1 \pi_1)"'] 
& \csp \\
& 
& 
\\
W_\circ \rar["(\pi_0 \pi_1 {,} \pi_0 \pi_2)"] 
& W \prescript{}{wt}{\times_{ws}} W
& 
\\
\hat{U}_5
\uar["\omega_1" violet] 
\rar["/" marking, "\hat{u}_6" near start] 
&\hat{U}_6
\uar["(\omega_0 \pi_1 {,} \hat{u}_{7;9} \tilde{u} \pi_0 s \alpha)"'] 
\dar["\omega_0"', violet]
\rar["/" marking, "\hat{u}_7" near start] 
& \hat{U}_7 \dar["(\theta_1 \pi_0 \pi_0 {,} \hat{u}_8 \theta_0 \pi_0 \pi_0)"']
\\
& W_\circ \rar["(\pi_0 \pi_1 {,} \pi_0 \pi_2)"'] 
& W \prescript{}{wt}{\times_{ws}} W
\end{tikzcd} \tag{1}
\end{equation}

\noi Applying \textbf{Int.Frc(3)} once followed by \textbf{Int.Frc(4)} twice gives local witnesses to the existence of the outer Ore-square and zippering arrows in magenta from Figure (\ref{fig the big}). Note that the first and second magenta arrows equalize the parallel pairs obtained by going around either side of the Ore-square and ending at the domains of $\alpha(a)$ and $\alpha(b)$ respectively. 

\begin{equation}\label{fig outer part 1}
\begin{tikzcd}[row sep = scriptsize, column sep = scriptsize]
	&&& \cdot \\
	&&& \cdot \\
	\\
	a & \cdot & a & b & \cdot & b & c & \cdot & \cdot & \cdot \\
	&& \cdot \\
	&&& \cdot &&b & &&& \\
	&&& \cdot \\
	&& \cdot && a \\
	\\
	&&& \cdot &&&&&& 
	\arrow[from=4-2, to=4-1, "\circ" marking, "\alpha(a)"', magenta]
	\arrow[from=4-2, to=4-3 ,"\alpha(a)"', near end]
	\arrow[from=4-3, to=4-4, "f"']
	\arrow[curve={height=-12pt}, from=4-2, to=4-4, "\alpha(a) f"]
	\arrow[from=4-5, to=4-4, "\circ" marking, "\alpha(b)"', magenta]
	\arrow[from=4-5, to=4-6,"\alpha(b)"']
	\arrow[from=4-6, to=4-7,"g"']
	\arrow[curve={height=-12pt}, from=4-5, to=4-7, "\alpha(b) g"]
	\arrow[curve={height=12pt}, from=2-4, to=4-2, "\circ" marking,, teal]
	\arrow[curve={height=-12pt}, from=2-4, to=4-5, teal]
	\arrow[from=1-4, to=2-4, teal]
	\arrow[curve={height=25pt}, from=1-4, to=4-1, "\circ" marking, teal]
	\arrow[from=8-3, to=4-1, "\circ" marking, orange, "\alpha(a)"']
	\arrow[from=8-3, to=8-5, "\alpha(a)"' near end, orange]
	\arrow[from=8-5, to=6-6, orange, "f"']
	\arrow[from= 6-6, to=4-7, "g"', orange]
	\arrow[from=5-3, to=4-2, olive]
	\arrow[from=5-3, to=8-3, olive, "\circ" marking]
	\arrow[from=6-4, to=5-3, olive, "\circ" marking]
	\arrow[from=6-4, to=4-5, olive]
	\arrow[from=7-4, to=6-4, violet]
	\arrow[from=7-4, to=8-3, violet, "\circ" marking]
	\arrow[from=10-4, to=7-4, violet, crossing over]
	\arrow[curve={height=-24pt}, from=10-4, to=4-1, violet, "\circ" marking]
	\arrow[curve={height=-30pt}, from=4-8, to=10-4, olive, "\circ" marking]
	\arrow[curve={height=24pt}, from=4-8, to=1-4, olive]
	\arrow[from=4-9, to=4-8, magenta, "\circ" marking]
	\arrow[from=4-10, to=4-9,magenta, "\circ" marking]
\end{tikzcd} \tag{C}
\end{equation}
 
\noi For the additional Ore-square added in (\ref{fig outer part 1}) recall the construction of the cover $u : u \to \spn^2$ and notice that target of the left leg of the composite coincides with the target of $\omega_1 \pi_1 : \hat{U}_5 \to W$,

\begin{align*}
   \hat{u}_{6;9} \pi \sigma_\circ \pi_0 w t
   &= \hat{u}_{6;9} \pi \omega \pi_1 w t \\
   &= \hat{u}_{6;9} \pi u \pi_0 \pi_0 w t\\
   &= \hat{u}_{6;9} s \alpha w t\\
   &= \omega_1 \pi_0 \pi_2 w t\\
   &= \omega_1 \pi_1 w t .
\end{align*}  

\noi This induces a unique map $(\omega_1 \pi_1 , \hat{u}_{6;9} \pi \sigma_\circ \pi_0 ) : \hat{U}_5 \to \csp$ which in turn gives a witnessing map $\theta_2 : \hat{U}_4 \to W_\square$ in diagram (\ref{cover dgm - `outer' part 1}) below. The map $\lambda' : \hat{U}_4 \to \cP_{cq} (\mC)$ is induced by $\lambda'' : \hat{U} \to P(\mC) \prescript{}{\pi_0 t}{\times_s} W$, which itself is induced by the universal property of the pullback $P(\mC) \prescript{}{\pi_0 t}{\times_s} W$ and can be defined explicitly as a pairing map by expanding each side of the equality determined by commutativity of the last Ore-square. Internally this is captured by the definition of $W_\square$ and the lift $\theta_2$ from \ore. On one side of the equality we have 

\begin{align}
\label{eq a side of outer Ore square}
\begin{split}
& \ \ \ \ 
( \theta_2 \pi_0 \pi_0 , \theta_2 \pi_0 \pi_1) c \\
&= (\theta_2 \pi_0 \pi_0 , \hat{u}_5 \omega_1 \pi_1)c \\
&= \big( \theta_2 \pi_0 \pi_0 , \hat{u}_5 ( \omega_1 \pi_0 \pi_0 , \hat{u}_6 \omega_0 \pi_1 , \hat{u}_{6;9} \tilde{u} \pi_0 s \alpha w )c \big)c \\
&= \big( \theta_2 \pi_0 \pi_0 , \hat{u}_5 \omega_1 \pi_0 \pi_0 , \hat{u}_{5;6} \omega_0 \pi_1 , \hat{u}_{5;9} \tilde{u} \pi_0 s \alpha w  \big)c \\
&=  \big( \theta_2 \pi_0 \pi_0 , \hat{u}_5 \omega_1 \pi_0 \pi_0 , \hat{u}_{5;6} (\omega_0 \pi_0 \pi_0 , \hat{u}_7 \theta_1 \pi_0 \pi_0, \hat{u}_{7;8} \theta_0 \pi_0 \pi_0) c ,  \hat{u}_{5;9} \tilde{u} \pi_0 s \alpha w   \big)c \\
&= \big( \theta_2 \pi_0 \pi_0 , 
\hat{u}_5 \omega_1 \pi_0 \pi_0 , 
\hat{u}_{5;6} \omega_0 \pi_0 \pi_0 , \hat{u}_{5;7 } \theta_1 \pi_0 \pi_0, \hat{u}_{5;8} ( \theta_0 \pi_0 \pi_0 w , 
 \hat{u}_{9} \tilde{u} \pi_0 s \alpha w )c   \big)c\\
 &= \big( \theta_2 \pi_0 \pi_0 , 
\hat{u}_5 \omega_1 \pi_0 \pi_0 , 
\hat{u}_{5;6} \omega_0 \pi_0 \pi_0 , \hat{u}_{5;7 } \theta_1 \pi_0 \pi_0, \hat{u}_{5;8} \theta_0 \pi_1 \pi_0 , 
 \hat{u}_{5;9} \tilde{u} \pi_0 s \alpha w   \big)c
\end{split}
\end{align}

\noi and on the other we have 

\begin{align}
\label{eq other side of outer Ore square}
\begin{split}
& \ \ \ \ ( \theta_2 \pi_1 \pi_0 , \theta_2 \pi_1 \pi_1 w ) c \\
&= (\theta_2 \pi_1 \pi_0 , \hat{u}_{5;9} \pi \sigma_\circ \pi_0 w )c \\
&= (\theta_2 \pi_1 \pi_0 , \hat{u}_{5;9} \pi \omega \pi_1 w )c \\
&= 	\big(\theta_2 \pi_1 \pi_0 ,
 		\hat{u}_{5;9} \pi ( \omega \pi_0 \pi_0 , 
 							 	u_0 \theta \pi_0 \pi_0 w , 
 							 	u \pi_0 w )c 
 		\big)c \\
&= \big(\theta_2 \pi_1 \pi_0 ,
 		\hat{u}_{5;9} \pi \omega \pi_0 \pi_0 , 
 		\hat{u}_{5;9} \pi u_0 \theta \pi_0 \pi_0 w , 
 		\hat{u}_{5;9} \pi u \pi_0 w 
 		\big)c \\
&= \big(\theta_2 \pi_1 \pi_0 ,
 		\hat{u}_{5;9} \pi \omega \pi_0 \pi_0 , 
 		\hat{u}_{5;9} \pi u_0 \theta \pi_0 \pi_0 w , 
 		\hat{u}_{5;9} \tilde{u} \pi_0 s \alpha w 
 		\big)c 
\end{split}.
\end{align}

\noi Notice that the last coordinates of the internal compositions described in the last lines of equations (\ref{eq a side of outer Ore square}) and (\ref{eq other side of outer Ore square}) coincide. Then the map $\lambda'' : \hat{U}_4 \to P(\mC) \prescript{}{t}{\times_{ws}} W$ is determined by the projections

\begin{align*}
\lambda'' \pi_1 &= \hat{u}_{5;9} \tilde{u} \pi_0 s \alpha w\\
\lambda'' \pi_0 \pi_0 &= (\theta_2 \pi_0 \pi_0 w , 
\hat{u}_5 \omega_1 \pi_0 \pi_0 , 
\hat{u}_{5;6} \omega_0 \pi_0 \pi_0 , \hat{u}_{5;7 } \theta_1 \pi_0 \pi_0, \hat{u}_{5;8} \theta_0 \pi_1 \pi_0 )c \\
 \lambda'' \pi_0 \pi_1 &= (\theta_2 \pi_1 \pi_0 ,
 		\hat{u}_{5;9} \pi \omega \pi_0 \pi_0 , 
 		\hat{u}_{5;9} \pi u_0 \theta \pi_0 \pi_0 w) c .
\end{align*}

\noi The left-hand sides of equations (\ref{eq a side of outer Ore square}) and (\ref{eq other side of outer Ore square}) are equal by definition of $W_\square$ and this induces the unique map $\lambda' : \hat{U}_4 \to \cP_{cq}(\mC)$ such that the triangle

\begin{center}
\begin{tikzcd}[ ]
\cP_{cq}(\mC) \rar[tail, "\iota_{cq}"] & P(\mC) \prescript{}{t}{\times_{ws}} W \\
\hat{U}_4 \uar[dotted, "\lambda'"] \ar[ur, "\lambda"'] 
\end{tikzcd}
\end{center}

\noi commutes by the universal property of the equalizer $\cP_{cq}(\mC)$. By definition of the pullback $\cP(\mC)$ we have 

\[ \lambda \pi_0 \iota_{eq} \pi_1 = \lambda \pi_1 \iota_{cq} \pi_0 = \lambda ' \iota_{cq} \pi_0 = \lambda '' \pi_0 ,\] 
\noi so that 
\begin{align} \label{eq pre-zip 0}
(\lambda \pi_0 \iota_{eq} \pi_0 w , \lambda'' \pi_0 \pi_0) c = (\lambda \pi_0 \iota_{eq} \pi_0 w , \lambda'' \pi_0 \pi_1) c. 
\end{align}

\noi Define 

\[ \label{def eta}\eta = ( \theta_2 \pi_0 \pi_0 w , 
\hat{u}_5 \omega_1 \pi_0 \pi_0 , 
\hat{u}_{5;6} \omega_0 \pi_0 \pi_0 , \hat{u}_{5;7 } \theta_1 \pi_1 \pi_0 )c \tag{Def. $\eta$} \] 

\noi and then by definition of the first two Ore-square maps in diagram (\ref{cover dgm - `inner' part}) we have 

\begin{align}\label{eq pre zip 1a}
\begin{split}
& \ \ \ \ \big( \lambda '' \pi_0 \pi_0 , \hat{u}_{5;9} \pi u \pi_0 \pi_1 \big) c \\
&= \big( (\theta_2 \pi_0 \pi_0 w , 
\hat{u}_5 \omega_1 \pi_0 \pi_0 , 
\hat{u}_{5;6} \omega_0 \pi_0 \pi_0 , \hat{u}_{5;7 } \theta_1 \pi_0 \pi_0w , \hat{u}_{5;8} \theta_0 \pi_1 \pi_0 )c, 
\hat{u}_{5;9} \pi u \pi_0 \pi_1 \big) c \\
&= \big( \theta_2 \pi_0 \pi_0 w , 
\hat{u}_5 \omega_1 \pi_0 \pi_0 , 
\hat{u}_{5;6} \omega_0 \pi_0 \pi_0 , \hat{u}_{5;7 } \theta_1 \pi_0 \pi_0 w, \hat{u}_{5;8}( \theta_0 \pi_1 \pi_0 , 
\hat{u} 9 \pi u \pi_0 \pi_1 ) c\big) c \\
&= \big( \theta_2 \pi_0 \pi_0 w , 
\hat{u}_5 \omega_1 \pi_0 \pi_0 , 
\hat{u}_{5;6} \omega_0 \pi_0 \pi_0 , \hat{u}_{5;7 } \theta_1 \pi_0 \pi_0 w, \hat{u}_{5;7} \theta_1 \pi_0 \pi_1 \big) c \\
&= \big( \theta_2 \pi_0 \pi_0 w , 
\hat{u}_5 \omega_1 \pi_0 \pi_0 , 
\hat{u}_{5;6} \omega_0 \pi_0 \pi_0 , \hat{u}_{5;7 } (\theta_1 \pi_0 \pi_0w , \theta_1 \pi_0 \pi_1)c \big) c \\
&= \big( \theta_2 \pi_0 \pi_0 w , 
\hat{u}_5 \omega_1 \pi_0 \pi_0 , 
\hat{u}_{5;6} \omega_0 \pi_0 \pi_0 , \hat{u}_{5;7 } (\theta_1 \pi_1 \pi_0 , \theta_1 \pi_1 \pi_1 w)c \big) c \\
&= \big( \theta_2 \pi_0 \pi_0 w , 
\hat{u}_5 \omega_1 \pi_0 \pi_0 , 
\hat{u}_{5;6} \omega_0 \pi_0 \pi_0 , \hat{u}_{5;7 } \theta_1 \pi_1 \pi_0 , \hat{u}_{5;9 } \pi u \pi_1 \pi_0 w \big) c \\
&= ( \eta , \hat{u}_{5;9 } \pi u \pi_1 \pi_0 w ) c 
\end{split}. 
\end{align}

\noi It will help to define, 

\[ \label{def nu} \nu = (\theta_2 \pi_1 \pi_0 ,
 		\hat{u}_{5;9} \pi \omega \pi_0 \pi_0 , 
 		\hat{u}_{5;9} \pi u_0 \theta \pi_1 \pi_0 w )c \tag{Def. $\nu$}\]

\noi and by definition of the Ore-square in the definition of composition on representative spans, $\sigma_\circ : U \to \spn$, we have
\begin{align}\label{eq pre zip 1b}
\begin{split}
& \ \ \ \ \big( \lambda '' \pi_0 \pi_1 , \hat{u}_{5;9} \pi u \pi_0 \pi_1 \big) c \\
&= \big( (\theta_2 \pi_1 \pi_0 ,
 		\hat{u}_{5;9} \pi \omega \pi_0 \pi_0 , 
 		\hat{u}_{5;9} \pi u_0 \theta \pi_0 \pi_0 w) c , 
\hat{u}_{5;9} \pi u \pi_0 \pi_1 \big) c \\
&= \big( \theta_2 \pi_1 \pi_0 ,
 		\hat{u}_{5;9} \pi \omega \pi_0 \pi_0 , 
 		\hat{u}_{5;9} \pi u_0 ( \theta \pi_0 \pi_0 w , 
 u_1 \pi_0 \pi_1)c \big) c \\
 &= \big( \theta_2 \pi_1 \pi_0 ,
 		\hat{u}_{5;9} \pi \omega \pi_0 \pi_0 , 
 		\hat{u}_{5;9} \pi u_0 ( \theta \pi_0 \pi_0 w , 
\theta \pi_0 \pi_1 )c \big) c \\
 &= \big( \theta_2 \pi_1 \pi_0 ,
 		\hat{u}_{5;9} \pi \omega \pi_0 \pi_0 , 
 		\hat{u}_{5;9} \pi u_0 ( \theta \pi_1 \pi_0 w , 
\theta \pi_1 \pi_1 )c \big) c \\
 &= \big( \theta_2 \pi_1 \pi_0 ,
 		\hat{u}_{5;9} \pi \omega \pi_0 \pi_0 , 
 		\hat{u}_{5;9} \pi u_0 \theta \pi_1 \pi_0 w , 
\hat{u}_{5;9} \pi u \pi_1 \pi_0 w \big) c \\
&= (\nu , \hat{u}_{5;9} \pi u \pi_1 \pi_0 w )c
\end{split}
\end{align}
 
\noi Putting equations (\ref{eq pre-zip 0}), (\ref{eq pre zip 1a}), and (\ref{eq pre zip 1b}) all together gives 

\begin{align}\label{eq rho'' equalizes} 
\begin{split}
(\lambda \pi_0 \iota_{eq} \pi_0 w , \hat{u}_4 \eta , \hat{u}_{4;9} \pi u \pi_1 \pi_0 w ) c 
&=(\lambda \pi_0 \iota_{eq} \pi_0 w , \hat{u}_4 \lambda'' \pi_0 \pi_0 , \hat{u}_{4;9} \pi u \pi_0 \pi_1 )c \\
&= (\lambda \pi_0 \iota_{eq} \pi_0 w , \hat{u}_4 \lambda'' \pi_0 \pi_1 , \hat{u}_{4;9} \pi u \pi_0 \pi_1 ) c \\
&= (\lambda \pi_0 \iota_{eq} \pi_0 w ,  \hat{u}_4 \nu , \hat{u}_{4;9} \pi u \pi_1 \pi_0 w ) c
\end{split}
\end{align}

\noi and induces the unique map $\rho'' : \hat{U}_3 \to P(\mC) \prescript{}{t}{\times_{ws}} W$ which is determined by the projections 

\begin{align*}
\rho'' \pi_1 &= \hat{u}_{4;9} \pi u \pi_1 \pi_0 w , \\
\rho'' \pi_0 \pi_0 &= (\lambda \pi_0 \iota_{eq} \pi_0 w ,  \hat{u}_4 \eta)c , \\
\rho'' \pi_0 \pi_1 &= (\lambda \pi_0 \iota_{eq} \pi_0 w , \hat{u}_4 \nu)c . 
\end{align*}

\noi equation~(\ref{eq rho'' equalizes}) induces the unique map $\rho' : \hat{U}_3 \to \cP_{cq}(\mC)$ such that the triangle 

\begin{center}
\begin{tikzcd}[ ]
\cP_{cq}(\mC) \rar[tail, "\iota_{cq}"] & P(\mC) \prescript{}{t}{\times_{ws}} W \\
\hat{U} \uar[dotted, "\rho'"] \ar[ur, "\rho"'] 
\end{tikzcd}
\end{center}

\noi commutes by the universal property of the equalizer, $\cP_{cq}(\mC)$. 

\begin{equation}\label{cover dgm - `outer' part 1}
\begin{tikzcd}[column sep = large]
&\cP(\mC) \rar["\pi_1"'] 
& \cP_{cq}(\mC)
&
\\
\hat{U}_2
\dar["\rho"', magenta]
\rar["/" marking, "\hat{u}_3" near start] 
& \hat{U}_3
\uar["\lambda", magenta]
\dar["\rho' "]
\rar["/" marking, "\hat{u}_4" near start] 
& \hat{U}_4
\uar["\lambda'"']
\dar["\theta_2"', olive]
\rar["/" marking, "\hat{u}_5" near start] 
& \hat{U}_5
\dar["(\omega_1 \pi_1 {,} \hat{u}_{6;9} \pi \sigma_\circ \pi_0 )"] 
\\
\cP(\mC) \rar["\pi_1"'] 
& \cP_{cq}(\mC)
& W_\square \rar["(\pi_0 \pi_1 {,} \pi_1 \pi_1)"'] 
& \csp 
\end{tikzcd}.\tag{2}
\end{equation}

\noi Applying \textbf{Int.Frc(2)} three times gives a cover that witnesses everything in Diagram \ref{fig the big}, and from there we can find two sailboats, $\varphi , \psi : \hat{U} \to \slb$, along with a comparison span, $\varphi p_1 = \psi p_1$, whose left leg is in $W$. 

\begin{equation}\label{cover dgm - `outer' part 2}
\begin{tikzcd}[column sep = large]
W_\circ \rar["(\pi_0 \pi_1 {,} \pi_0 \pi_2)"] 
& W \prescript{}{wt}{\times_{ws}} W
&W_\circ \rar["(\pi_0 \pi_1 {,} \pi_0 \pi_2)"] 
& W \prescript{}{wt}{\times_{ws}} W \\
\hat{U} 
\dar[shift right, "\psi"', teal] 
\dar[shift left, "\varphi", orange] 
\uar["\omega_4", violet] 
\rar["/" marking, "\hat{u}_0" near start] 
&\hat{U}_0 
\uar["(\omega_3 \pi_1 {,} \hat{u}_{2;5} \omega_1 \pi_1)"'] 
\dar["\omega_3"', violet]
\rar["/" marking, "\hat{u}_1" near start] 
&\hat{U}_1
\uar["\omega_2", violet]
\dar["(\omega_2 \pi_1 {,} \hat{u}_{2;4}\theta_2 \pi_0 \pi_0)"] 
\rar["/" marking, "\hat{u}_2" near start] 
&\hat{U}_2
\uar["(\rho \pi_0 \iota_{eq} \pi_0 {,} \hat{u}_3 \lambda \pi_0 \iota_{eq} \pi_0 )"] 
\\
\slb
& W_\circ \rar["(\pi_0 \pi_1 {,} \pi_0 \pi_2)"'] 
& W \prescript{}{wt}{\times_{ws}} W
&
\end{tikzcd} \tag{3}
\end{equation}

\noi For defining the sailboats above we should notice that commutativity of the first two Ore-squares and weak-composition triangle along with the commuting forks given by zippering imply that the composites of solid arrows in Figure \ref{fig right leg intermediate spans} below are equal.

\begin{equation}\label{fig right leg intermediate spans} 
\begin{tikzcd}[row sep = scriptsize, column sep = scriptsize] 
	&&& \cdot \\
	&&& \cdot \\
	\\
	a & \cdot & a & b & \cdot & b & c & \cdot & \cdot & \cdot \\
	&& \cdot \\
	&&& \cdot &&b & &&& \\
	&&& \cdot \\
	&& \cdot && a \\
	\\
	&&& \cdot &&&&&& 
	\arrow[from=4-2, to=4-1, "\circ" marking, "\alpha(a)"', dotted]
	\arrow[from=4-2, to=4-3 ,"\alpha(a)"', dotted, near end]
	\arrow[from=4-3, to=4-4, "f"',dotted]
	\arrow[curve={height=-12pt}, from=4-2, to=4-4, "\alpha(a) f",dotted]
	\arrow[from=4-5, to=4-4, "\circ" marking, "\alpha(b)"',dotted]
	\arrow[from=4-5, to=4-6,"\alpha(b)"']
	\arrow[from=4-6, to=4-7,"g"']
	\arrow[curve={height=-12pt}, from=4-5, to=4-7, "\alpha(b) g"]
	\arrow[curve={height=-12pt}, from=2-4, to=4-5, teal]
	\arrow[from=1-4, to=2-4, teal]
	\arrow[from=8-3, to=4-1, "\circ" marking, orange, "\alpha(a)"', dotted]
	\arrow[from=8-3, to=8-5, "\alpha(a)"' near end, orange]
	\arrow[from=8-5, to=6-6, orange, "f"']
	\arrow[from= 6-6, to=4-7, "g"', orange]
	\arrow[from=5-3, to=4-2, olive, dotted]
	\arrow[from=5-3, to=8-3, olive, "\circ" marking, dotted]
	\arrow[from=6-4, to=5-3, olive, "\circ" marking, dotted]
	\arrow[from=6-4, to=4-5, olive, dotted]
	\arrow[from=7-4, to=6-4, violet, dotted]
	\arrow[from=7-4, to=8-3, violet, "\circ" marking]
	\arrow[from=10-4, to=7-4, violet, crossing over]
	\arrow[curve={height=-30pt}, from=4-8, to=10-4, olive, "\circ" marking]
	\arrow[curve={height=24pt}, from=4-8, to=1-4, olive]
	\arrow[from=4-9, to=4-8, magenta, "\circ" marking]
	\arrow[from=4-10, to=4-9,magenta, "\circ" marking]
\end{tikzcd}\tag{B}
\end{equation}

\noi This is seen internally by first taking the equation

\[(\rho \pi_0 \iota_{eq} \pi_0 w ,  \rho \pi_0 \iota_{eq} \pi_1 \pi_0) c = (\rho \pi_0 \iota_{eq} \pi_0 w ,  \rho \pi_0 \iota_{eq} \pi_1 \pi_1) c \]

\noi from the definition of the equalizer $\cP_{eq}(\mC)$, post-composing (in $\mC$) on both sides with $\hat{u}_{3;9} \pi u \pi_1 \pi_1 : \hat{U}_3 \to \mC_1$ and using associativity to get the equation

\begin{align}\label{eq final zip}
(\rho \pi_0 \iota_{eq} \pi_0 w , \rho \pi_0 \iota_{eq} \pi_1 \pi_0 , \hat{u}_{3;9} \pi u \pi_1 \pi_1 ) c 
&= (\rho \pi_0 \iota_{eq} \pi_0 w , \rho \pi_0 \iota_{eq} \pi_1 \pi_1 , \hat{u}_{3;9} \pi u \pi_1 \pi_1 ) c, 
\end{align}

\noi and then expanding the latter composites on both sides to get 

\begin{align*}
( \rho \pi_0 \iota_{eq} \pi_1 \pi_0 , \hat{u}_{3;9} \pi u \pi_1 \pi_1 ) c 
&= \big( \rho \pi_1 \iota_{cq} \pi_0 \pi_0 , (\hat{u}_{3;9} \tilde{u} \pi_1 (s \alpha w , 1) c \big)c\\ 
&= (\hat{u}_3 \rho ' \iota_{cq} \pi_0 \pi_0 ,  \hat{u}_{3;9} \tilde{u} \pi_1s \alpha w , \hat{u}_{3;9} \tilde{u} \pi_1 ) c \\
&= (\hat{u}_3 \rho '' \pi_0 \pi_0 ,  \hat{u}_{3;9} \pi u \pi_1 \pi_0 w , \hat{u}_{3;9} \tilde{u} \pi_1 ) c \\
&= \big( \hat{u}_3 (\lambda \pi_0 \iota_{eq} \pi_0 w , \hat{u}_{4} \eta)c,  \hat{u}_{3;9} \pi u \pi_1 \pi_0 w , \hat{u}_{3;9} \tilde{u} \pi_1 \big) c \\
&= ( \hat{u}_3 ( \lambda \pi_0 \iota_{eq} \pi_0 w , \hat{u}_4 \eta,  \hat{u}_{4;9} \pi u \pi_1 \pi_0 w)c , \hat{u}_{3;9} \tilde{u} \pi_1 ) c \\
\end{align*}

\noi from the left-hand side, and 
\begin{align*}
( \rho \pi_0 \iota_{eq} \pi_1 \pi_1 , \hat{u}_{3;9} \pi u \pi_1 \pi_1 ) c
&= ( \rho \pi_1 \iota_{cq} \pi_0 \pi_1 , \hat{u}_{3;9} \tilde{u} \pi_1 (s \alpha w , 1)c ) c \\
&= ( \hat{u}_3 \rho' \iota_{cq} \pi_0 \pi_1 ,  \hat{u}_{3;9} \tilde{u} \pi_1 s \alpha w , \hat{u}_{3;9} \tilde{u} \pi_1 ) c \\  
&= ( \hat{u}_3 \rho'' \pi_0 \pi_1 ,  \hat{u}_{3;9} \pi u \pi_1 \pi_0 w , \hat{u}_{3;9} \tilde{u} \pi_1 ) c \\
&= ( \hat{u}_3 ( \lambda \pi_0 \iota_{eq} \pi_0 w , \hat{u}_4 \nu)c ,  \hat{u}_{3;9} \pi u \pi_1 \pi_0 w , \hat{u}_{3;9} \tilde{u} \pi_1 ) c \\
&= ( \hat{u}_3 ( \lambda \pi_0 \iota_{eq} \pi_0 w , \hat{u}_4 \nu,  \hat{u}_{4;9} \pi u \pi_1 \pi_0 w)c , \hat{u}_{3;9} \tilde{u} \pi_1 ) c
\end{align*}

\noi from the right-hand side. These expansions are used below in equation~(\ref{eq mu_0' to mu_1'}) where we start establishing how the middle arrows for the sailboats picked out by $\varphi, \psi : \hat{U} \to \slb$ coincide. These middle arrows will be picked out by maps, $\mu_0, \mu_1 : \hat{U} \to \mC_1$, which will be internal composites, $\mu_0 = (\omega' ,\hat{u}_{0;2} \mu_0')c$ and $\mu_1 = (\omega', \hat{u}_{0;2} \mu_1')c$, for the arrow $\omega' :\hat{U} \to \mC_1$ that is defined after Figure (\ref{fig - witnessing sailboats}). The map picking out the part of the middle arrows in the sailboats determined by $\varphi : \hat{U} \to \slb$ is

\[ \mu_0' = (\rho \pi_0 \iota_{eq} \pi_0 w , \hat{u}_3 \lambda \pi_0 \iota_{eq} \pi_0 w , \hat{u}_{3;4} \theta_2 \pi_0 \pi_0 w , \hat{u}_{3;5} \omega_1 \pi_0 \pi_0 , \hat{u}_{3;6} \omega_0 \pi_1 w) c \]

\noi and by expanding the composites with the definitions of $\theta_0, \theta_1,$ and $ \omega_0$ in \ref{cover dgm - `inner' part} along with the fact that $\pi u \pi_0 \pi_1 = \tilde{u} \pi_0 (s \alpha w , 1) c$ from $\ref{pb comp + representatives for L_1}$ we can see

\[ (\rho \pi_0 \iota_{eq} \pi_0 w , \hat{u}_3 \lambda \iota_{eq} \pi_0 w , \hat{u}_{3;4} \eta , \hat{u}_{3;9} \pi u \pi_0 \pi_1 ) c 
= \big( \mu_0' , \hat{u}_{3;9} \tilde{u} c ( s \alpha w , 1) c \big)c . \]
\noi The middle of the sailboats being picked out by $\psi$ are given explicitly by
\[ \mu_1' = (\rho \pi_0 \iota_{eq} \pi_0 w , \hat{u}_3 \lambda \pi_0 \iota_{eq} \pi_0 w , \hat{u}_{3;4} \theta_2 \pi_1 \pi_0) c .\]
\noi Putting together equation~(\ref{eq final zip}) with the expansions and definitions of $\mu_0'$ and $\mu_1'$ and the definitions (\ref{def eta}) and (\ref{def nu}) and the definition of $\sigma_\circ : U \to \spn$. 

\begin{align}\label{eq mu_0' to mu_1'}
\begin{split} 
\big( \mu_0' , \hat{u}_{3;9} \tilde{u} c ( s \alpha w , 1) c \big)c 
&= \big( \rho \pi_0 \iota_{eq} \pi_0 w , \hat{u}_3 \lambda \iota_{eq} \pi_0 w , \hat{u}_{3;4} \eta , \hat{u}_{3;9} \pi u \pi_1 \pi_1 )c\\
&= \big( \rho \pi_0 \iota_{eq} \pi_0 w , \hat{u}_3 \lambda \iota_{eq} \pi_0 w , \hat{u}_{3;4} \nu , \hat{u}_{3;9} \pi u \pi_1 \pi_1 )c\\
&= \big(\mu_1' , \hat{u}_{3;9} \pi \sigma_\circ \pi_1)c
\end{split} .
\end{align}

\noi The maps $\varphi$ and $\psi$ picking out the sailboats can be seen in Figure (\ref{fig - witnessing sailboats}) below as the appropriate composites of the solid arrows.

\begin{equation}\label{fig - witnessing sailboats} 
\begin{tikzcd}[row sep = scriptsize, column sep = scriptsize] 
	&&& \cdot \\
	&&& \cdot \\
	\\
	a & \cdot & a & b & \cdot & b & c & \cdot & \cdot & \cdot \\
	&& \cdot \\
	&&& \cdot &&b & &&& \cdot \\
	&&& \cdot \\
	&& \cdot && a \\
	\\
	&&& \cdot &&&&&& \cdot \\
	\\
	\\
	&&& \cdot
	\arrow[from=4-2, to=4-1, "\circ" marking, "\alpha(a)"', dotted]
	\arrow[from=4-2, to=4-3 ,"\alpha(a)"', dotted, near end]
	\arrow[from=4-3, to=4-4, "f"', dotted]
	\arrow[curve={height=-12pt}, from=4-2, to=4-4, "\alpha(a) f", dotted]
	\arrow[from=4-5, to=4-4, "\circ" marking, "\alpha(b)"', dotted]
	\arrow[from=4-5, to=4-6,"\alpha(b)"']
	\arrow[from=4-6, to=4-7,"g"']
	\arrow[curve={height=-12pt}, from=4-5, to=4-7, "\alpha(b) g"]
	\arrow[curve={height=12pt}, from=2-4, to=4-2, "\circ" marking,teal, dotted]
	\arrow[curve={height=-12pt}, from=2-4, to=4-5, teal]
	\arrow[from=1-4, to=2-4, teal]
	\arrow[curve={height=25pt}, from=1-4, to=4-1, "\circ" marking, teal]
	\arrow[from=8-3, to=4-1, "\circ" marking, orange, "\alpha(a)"']
	\arrow[from=8-3, to=8-5, "\alpha(a)"' near end, orange]
	\arrow[from=8-5, to=6-6, orange, "f"']
	\arrow[from= 6-6, to=4-7, "g"', orange]
	\arrow[from=5-3, to=4-2, olive, dotted]
	\arrow[from=5-3, to=8-3, olive, "\circ" marking, dotted]
	\arrow[from=6-4, to=5-3, olive, "\circ" marking, dotted]
	\arrow[from=6-4, to=4-5, olive, dotted]
	\arrow[from=7-4, to=6-4, violet, dotted]
	\arrow[from=7-4, to=8-3, violet, "\circ" marking]
	\arrow[from=10-4, to=7-4, violet, crossing over]
	\arrow[curve={height=-24pt}, from=10-4, to=4-1, violet, "\circ" marking, dotted]
	\arrow[curve={height=-30pt}, from=4-8, to=10-4, olive, "\circ" marking]
	\arrow[curve={height=24pt}, from=4-8, to=1-4, olive]
	\arrow[from=4-9, to=4-8, magenta, "\circ" marking, dotted]
	\arrow[from=4-10, to=4-9,magenta, "\circ" marking, dotted]
	\arrow[from=6-10, to=4-10, violet, dotted]
	\arrow[from=6-10, to=4-8, violet, "\circ" marking]
	\arrow[from=10-10, to=6-10,violet]
	\arrow[from=10-10, to=10-4, "\circ" marking, violet, dotted]
	\arrow[curve={height=30pt}, from=13-4, to=10-10, violet]
	\arrow[curve={height=-30pt}, from=13-4, to=4-1,"\circ" marking, violet]
\end{tikzcd} \tag{C}
\end{equation}

\noi Explicitly define $\omega' : \hat{U} \to \mC_1$ to be the composite of the weak-composition arrows in (\ref{cover dgm - `outer' part 2}), 
\[ \label{def omega'} \omega' = (\omega_4 \pi_0 \pi_0 , \hat{u}_0 \omega_3 \pi_0 , \hat{u}_{0;1} \omega_2 \pi_0) c, \tag{Def. $\omega'$}\]
\noi and then $\mu_0 : \hat{U} \to \mC_1$ by 

\[ \label{def mu_0} \mu_0 = ( \omega' , \hat{u}_{0;2} \mu_0') c. \tag{Def. $\mu_0$} \]

\noi By definition of $\omega_4 : \hat{U} \to W_\circ$ we have
\[ \omega_4 \pi_1 = (\mu_0 , \hat{u} \tilde{u} \pi_0 s \alpha w ) c \]

\noi so the map $\varphi : \hat{U} \to \slb$, that picks out the sailboats in the bottom of Figure (\ref{fig - witnessing sailboats}) (consisting of orange and violet arrows and factoring through the bottom of the olive coloured Ore-square arrows), given by 

\[ \label{def varphi} \varphi = \big( ( (\mu_0, \hat{u} \tilde{u} \pi_0 s \alpha w ), \omega_4 \pi_1), \hat{u} \tilde{u} c ( s \alpha w , 1) c \big) \tag{Def. $\varphi$} \]
\noi is well-defined. Similarly define 

\[ \label{def mu_1} \mu_1 = (\omega' , \hat{u}_{0;2} \mu_1')c \tag{ Def. $\mu_1$ }.\] 

\noi By the first zippering, $\lambda : \hat{U}_3 \to \cP(\mC)$, in (\ref{cover dgm - `outer' part 1}), in particular by $\lambda \pi_0 : \hat{U}_3 \to \cP_{eq}(\mC)$ and the equalizer in its codomain we have that 

\begin{align*} 
& \ \ \ \ (\hat{u}_{0;2}\mu_1' , \hat{u}_{0 ;9} \pi \omega \pi_0 \pi_0 , \hat{u} \pi u_0 \theta \pi_0 \pi_0 w)c \\
&= (\hat{u}_{0;2} \rho \pi_0 \iota_{eq} \pi_0 w , \hat{u}_{0;3} \lambda \pi_0 \iota_{eq} \pi_0 w , \hat{u}_{0;3} \lambda \pi_0 \iota_{eq} \pi_1 \pi_1)c \\
&= (\hat{u}_{0;2}\rho \pi_0 \iota_{eq} \pi_0 w , \hat{u}_{0;3} \lambda \pi_0 \iota_{eq} \pi_0 w , \hat{u}_{0;3} \lambda \pi_0 \iota_{eq} \pi_1 \pi_0)c 
\end{align*}

\noi where the second to last line comes from the definition of $\mu_0'$ and the Ore-square picked out by $\theta_0 : \hat{U}_8 \to W_\square$ and the last line is by definition of $\omega_4 : \hat{U} \to W_\circ$. This allows us to see 

\begin{align*}
& \ \ \ \ (\mu_1 , \hat{u} \pi \sigma_\circ \pi_0 w) c\\
&= (\omega' , \hat{u}_{0;2}\mu_1' , \hat{u} \pi \omega \pi_0 \pi_0 , \hat{u} \pi u_0 \theta \pi_0 \pi_0 w, \hat{u} \pi u \pi_0 \pi_0 w) c \\
&= (\omega' , \hat{u}_{0;2} \mu_1' , \hat{u} \pi \omega \pi_0 \pi_0 \hat{u} \pi u_0 \theta \pi_0 \pi_0 w, \hat{u} \pi u \pi_0 \pi_0 w) c \\
&= (\omega' , \hat{u}_{0;2} \rho \pi_0 \iota_{eq} \pi_0 w , \hat{u}_{0;3} \lambda \pi_0 \iota_{eq} \pi_0 w , \hat{u}_{0;3} \lambda \pi_0 \iota_{eq} \pi_1 \pi_0, \hat{u} \pi u \pi_0 \pi_0 w)c
\\
&= (\omega ' , \hat{u}_{0;2} \mu_0' , \hat{u} \pi u \pi_0 \pi_0 w) \\
&= ( \mu_0, \hat{u} \tilde{u} \pi_0 s \alpha w) \\
&= \omega_4 \pi_1
\end{align*}

\noi so that the map $\psi : \hat{U} \to \slb$ determined by 

\[ \label{def psi} \psi = \big( ( ( \mu_1 , \hat{u} \pi \sigma_\circ \pi_0 ) , \omega_4 \pi_1 ), \hat{u} \pi \sigma_\circ \pi_1 \big) \tag{Def. $\psi$} \] 

\noi is well-defined. Notice the intermediate spans picked out by $\varphi$ and $\psi$ coincide due to the composites in Figure (\ref{fig right leg intermediate spans}) being equal. Formally, by (\ref{def varphi}), (\ref{def psi}), and equation~(\ref{eq mu_0' to mu_1'}), we can see

\begin{align}\label{eq varphi p_1}
\begin{split}
\varphi p_1 
&= \varphi (\pi_0 \pi_1, (\pi_0 \pi_0 \pi_0 , \pi_1)c) \\
&= \big( \omega_4 \pi_1 , (\mu_0, \hat{u}_{3;9} \tilde{u} c ( s \alpha w , 1) c \big) c \\
&= \big( \omega_4 \pi_1 , (\omega' , \hat{u}_{0;2} \mu_0' , \hat{u}_{3;9} \tilde{u} c ( s \alpha w , 1) c \big) c \\
&= \big( \omega_4 \pi_1 , (\omega' , \hat{u}_{0;2} \mu_1' , \hat{u}_{3;9} \tilde{u} \pi \sigma_\circ \pi_1) c \big) c \\ 
&= \big( \omega_4 \pi_1 , (\mu_1, \hat{u} \tilde{u} \pi \sigma_\circ \pi_1) c \big) c\\
&= \psi (\pi_0 \pi_1, (\pi_0 \pi_0 \pi_0 , \pi_1)c) \\
&= \psi p_1
\end{split}
\end{align} 

\noi Also notice that by (\ref{def varphi}), 

\begin{align}\label{eq varphi p_0} 
\begin{split}
\varphi p_0 
&= \varphi ( \pi_0 \pi_0 \pi_1 , \pi_1) \\
&= \big( \hat{u} \tilde{u} \pi_0 s \alpha w , \hat{u} \tilde{u} c ( s \alpha w , 1) c \big) \\
&= \hat{u} \tilde{u} \big( \pi_0 s \alpha w , c ( s \alpha w , 1) c \big) 
\end{split}
\end{align}

\noi and by (\ref{def psi}),

\begin{align}\label{eq psi p_0}
\begin{split}
\psi p_0 
&= \psi ( \pi_0 \pi_0 \pi_1 , \pi_1) \\
&= (\hat{u} \pi \sigma_\circ \pi_0 , \hat{u} \pi \sigma_\circ \pi_1 ) \\
&= \hat{u} \pi \sigma_\circ.
\end{split}
 \end{align}

\noi Putting equations (\ref{eq varphi p_1}), (\ref{eq psi p_0}), and (\ref{eq varphi p_0}) together gives

\begin{align*}
\hat{u} \tilde{u} \big( \pi_0 s \alpha w , c ( s \alpha w , 1) c \big) q 
&= \varphi p_0 q \\
&= \varphi p_1 q\\ 
&= \psi p_1 q \\
&= \psi p_0 q \\
&= \hat{u} \pi \sigma_\circ q \\
&= \hat{u} \pi u c'\\
&= \hat{u} \tilde{u} \big( \pi_0 ( s \alpha {,} ( s \alpha w {,} 1) c ){,}\pi_1 ( s \alpha {,} ( s \alpha w {,} 1) c ) \big) c'
\end{align*}

\noi and since $\hat{u} \tilde{u} : \hat{U} \to \mC_2$ is epic, we get 

\[ \big( \pi_0 s \alpha w , c ( s \alpha w , 1) c \big) q = \big( \pi_0 ( s \alpha {,} ( s \alpha w {,} 1) c ){,}\pi_1 ( s \alpha {,} ( s \alpha w {,} 1) c ) \big) c' \]

\noi as promised. 
\end{proof}

\subsubsection{Inverting the Canonical Cartesian Cleavage}

\noi Now that we know $L : \mC \to \mCW$ is an internal functor, we can show that it satisfies an important property. The rest of this section consists of lemmas leading to Proposition~\ref{prop int loc functor inverts W}, which shows that the localization functor, $L : \mC \to \mCW$, inverts $w : W \to \mC_1$ in the sense of the following definition. 

\begin{defn}\label{def invertibility of internal arrows}
We say a map $x : X \to \mC_1$ is {\em invertible} if there exists a map $x^{-1} : X \to \mC_1$ such that the diagrams 

\[ \begin{tikzcd}
X \ar[drr, bend left, "x^{-1}"] \ar[ddr, bend right, "x"'] \ar[dr, dotted, "(x {,} x^{-1})"] \\
& \mC_2 \dar["\pi_0"'] \rar["\pi_1"] & \mC_1 \dar["s"] \\
& \mC_1 \rar["t"'] & \mC_1 
\end{tikzcd} ,\qquad \qquad 
\begin{tikzcd}
X \ar[drr, bend left, "x"] \ar[ddr, bend right, "x^{-1}"'] \ar[dr, dotted, "(x^{-1} {,} x)"] \\
& \mC_2 \dar["\pi_0"'] \rar["\pi_1"] & \mC_1 \dar["s"] \\
& \mC_1 \rar["t"'] & \mC_1 
\end{tikzcd},\]
\[ \begin{tikzcd}
X \rar["(x{,} x^{-1})"] \dar["x"'] & \mC_2 \dar["c"] \\
\mC_1 \rar["s e "'] & \mC_1 
\end{tikzcd},\qquad \text{ and } \qquad 
\begin{tikzcd}
X \rar["(x^{-1}{,} x)"] \dar["x"'] & \mC_2 \dar["c"] \\
\mC_1 \rar["t e "'] & \mC_1 
\end{tikzcd}\]

\noi commute in $\cE$. In this case we say {\em $x^{-1}$ is an inverse for $x$ in $\mC$}. 
\end{defn}

\noi The next definition describes what it means for an internal functor to invert a class of arrows in its domain. 

\begin{defn}\label{def internal functor inverts a map}
We say an internal functor, $F : \mC \to \mD$ {\em inverts} $x : X \to \mC_1$ if there exists a map $F(x)^{-1} : X \to \mD_1$ such that $F(x)^{-1}$ is an inverse for the composite

\[ \begin{tikzcd}
X \rar["x"] \ar[dr, "F(x)"'] & \mC_1 \dar["F_1"] \\
& \mD_1
\end{tikzcd}\]

\noi in $\mD_1$. 
\end{defn}

\noi One might expect that internal functors preserve inverses, and sure enough the following lemma states and proves this: 

\begin{lem}\label{lem internal functors preserve internal inverses}
If $x : X \to \mC_1$ is an arrow in $\cE$ that has an inverse $x^{-1} : X \to \mC_1$ and $F : \mC \to \mX$ is an internal functor, then the composite 

\[\begin{tikzcd}
X \ar[dr, "F(x)"'] \rar["x"] & \mC_1 \dar["F_1"] \\
& \mD_1 
\end{tikzcd}\]
\noi has an inverse given by
\[\begin{tikzcd}
X \ar[dr, "F(x)^{-1}"'] \rar["x^{-1}"] & \mC_1 \dar["F_1"] \\
& \mD_1 
\end{tikzcd}. \]
\end{lem}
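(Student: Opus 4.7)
The plan is to set $F(x)^{-1} := x^{-1} F_1 : X \to \mD_1$ and verify all four conditions of Definition~\ref{def invertibility of internal arrows} for the pair $(F(x), F(x)^{-1})$ by reducing each to the corresponding condition for $(x, x^{-1})$ in $\mC$ via the functoriality of $F$.

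First I would check the source/target compatibility. The pair $(F(x), F(x)^{-1})$ is composable in $\mD$ provided $x F_1 t_\mD = x^{-1} F_1 s_\mD$. Using that $F$ preserves source and target, i.e.\ $F_1 t_\mD = t_\mC F_0$ and $F_1 s_\mD = s_\mC F_0$, this reduces to $x t_\mC F_0 = x^{-1} s_\mC F_0$, which follows by post-composing the analogous identity for the pair $(x, x^{-1})$ (given by hypothesis) with $F_0$. The symmetric condition for $(F(x)^{-1}, F(x))$ follows in the same way. Consequently we obtain pairing maps $(x F_1, x^{-1} F_1), (x^{-1} F_1, x F_1) : X \to \mD_2$ by the universal property of $\mD_2$, and these factor as $(x, x^{-1})(F_1 \times F_1)$ and $(x^{-1}, x)(F_1 \times F_1)$ respectively, where $F_1 \times F_1 : \mC_2 \to \mD_2$ is the induced map on composable pairs.

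Next I would verify the first composition identity. Using functoriality of $F$ (the commuting square $(F_1 \times F_1) c_\mD = c_\mC F_1$) together with the hypothesis $(x, x^{-1}) c_\mC = x s_\mC e_\mC$, we compute
\begin{align*}
(x F_1, x^{-1} F_1) c_\mD
&= (x, x^{-1})(F_1 \times F_1) c_\mD \\
&= (x, x^{-1}) c_\mC F_1 \\
&= x s_\mC e_\mC F_1 \\
&= x F_1 s_\mD e_\mD,
\end{align*}
where the last line uses that $F$ preserves the source and identity structure maps, i.e.\ $s_\mC F_0 = F_1 s_\mD$ and $e_\mC F_1 = F_0 e_\mD$. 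The remaining identity $(x^{-1} F_1, x F_1) c_\mD = x F_1 t_\mD e_\mD$ is obtained by the same argument with the roles of $x$ and $x^{-1}$ swapped, and using preservation of the target map in place of the source.

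No step poses a real obstacle: this is a routine diagram chase that amounts to the statement that the assignment $y \mapsto y F_1$ is a monoid-object-like map on the arrow object preserving the relevant pullbacks and structure maps. The only mild subtlety is tracking the pairing maps into the pullback $\mD_2$; this is handled uniformly by invoking the universal property of $\mD_2$ and the fact that $F$ preserves source and target, which is precisely what makes $F_1 \times F_1 : \mC_2 \to \mD_2$ well-defined in the first place.
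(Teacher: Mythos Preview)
Your proposal is correct and follows essentially the same approach as the paper: reduce each condition for $(xF_1, x^{-1}F_1)$ to the corresponding condition for $(x, x^{-1})$ via functoriality of $F$. If anything, your version is more thorough, since you explicitly verify the source/target compatibility conditions (the first two diagrams of Definition~\ref{def invertibility of internal arrows}), which the paper's proof leaves implicit and checks only the two composition identities.
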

\begin{proof}
By functoriality we can compute
\begin{align*}
  (F(X) , F(X)^{-1}) c 
  &= (x F_1 , x^{-1} F_1 ) c \\
  &= (x , x^{-1}) c F_1 \\
  &= x s e F_1 \\
  &= x F_1 s e \\
  &= F(X) s e 
\end{align*}
\noi and 
\begin{align*}
  (F(X)^{-1} , F(X)) c 
  &= (x^{-1} F_1 , x F_1 ) c \\
  &= (x^{-1 , x}) c F_1 \\
  &= x t e F_1 \\
  &= x F_1 t e \\
  &= F(X) t e 
\end{align*}
\noi and the result follows from Definition~\ref{def internal functor inverts a map}. 
\end{proof}

\noi The following lemma shows how every span is equivalent to a canonical composite of spans and will be useful for proving our main result, Proposition~\ref{prop int loc functor inverts W}. The idea is that every span, represented as

\[\begin{tikzcd}
c & a \lar["\circ" marking, "v"'] \rar["f"] & b,
\end{tikzcd}\]

\noi is equivalent to a composite of the pair of composable spans, represented as

\[\begin{tikzcd}
c & a \lar["\circ" marking, "v"' near start] \rar[equals] & a & d \lar["\circ" marking, "\alpha(b)"'] \rar[rr,bend left = 40, "\alpha(a) f"] \rar[dashed, "\circ" marking, "\alpha(a)"] & a \rar[dashed, "f"] & b 
\end{tikzcd}\] 

\noi in particular. This is translated to the following statement about internal categories. 

\begin{lem}\label{lem factorization of equiv classes of spans}
The triangle

\begin{center}
\begin{tikzcd}[column sep = huge]
\spn \ar[drrr, two heads, "q"'] \rar[rrr, "\big( (\pi_0 {,} \pi_0 w s e) \ {,} \ (\pi_0 w s \alpha {,} (\pi_0 w s \alpha w {,} \pi_1 )c ) \big)"] &&& \spn^2 \dar["c'"] \\
&&& \mCW_1 
\end{tikzcd}
\end{center}

\noi commutes in $\cE$. 
\end{lem}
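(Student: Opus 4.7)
The plan is to apply the same cover-and-sailboat technique developed throughout Section~\ref{S Defining mCW, the internal cat of fracs}. First, form the pullback
\[\begin{tikzcd}
\tilde U \rar["\pi"] \dar["/" marking, "\tilde u"' near end] & U \dar["/" marking, "u" near start] \\
\spn \rar["\mu"'] & \spn \tensor[_t]{\times}{_s} \spn
\end{tikzcd}\]
where $\mu$ denotes the map $\spn \to \spn^2$ displayed in the statement and $u$ is the cover from the definition of $c'$ (so that $u c' = \sigma_\circ q$). Since $\tilde u$ is epimorphic, the desired equation $\mu c' = q$ reduces to $\tilde u \mu c' = \tilde u q$, which by the pullback amounts to $\pi \sigma_\circ q = \tilde u q$. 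This in turn is implied by exhibiting a single sailboat $\varphi : \tilde U \to \slb$ with $\varphi p_0 = \tilde u$ and $\varphi p_1 = \pi \sigma_\circ$, since both maps then agree after postcomposition with the coequalizer $q$.

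The simplification that makes a single sailboat suffice, as opposed to the four-stage chain of Lemma~\ref{lem witnessing c' well-defined}, is the highly constrained shape of the two spans in $\mu$. The first, $(\pi_0, \pi_0 w s e)$, is a span whose right leg is the identity on its apex, and the second, $(\pi_0 w s \alpha, (\pi_0 w s \alpha w, \pi_1) c)$, is precisely the $L_1$-image of the right leg of the original span. Consequently, the cospan filled by the Ore-square lift $\theta$ on $u$ already contains an identity component, so the $W_\square$-relation packaged into $\theta$ collapses to a commuting triangle whose top edge lies in $W$. The weak-composition triple $\omega$ then provides a $W$-arrow whose image under $w$ is a composite ending precisely with $v = \tilde u \pi_0$.

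Define $\varphi$ by taking the bottom span to be the original span $(\tilde u \pi_0, \tilde u \pi_1)$; the diagonal $W$-arrow to be $\omega \pi_1$ (the left leg of the composite given by $\sigma_\circ$); and the mast in $\mC_1$ to be the composite assembled from the first two entries of the triple defining $\omega \pi_0$ together with the arrow $\tilde u \pi_0 w s \alpha w$ obtained from the left leg of the second span. The sailboat constraint that mast $\cdot \tilde u \pi_0$ equals $w$ of the diagonal follows by unfolding the defining relations of $W_\circ$ and $W_\square$, using crucially that one arm of the Ore-cospan is $\tilde u \pi_0 w s e$, an identity. The equality $\varphi p_0 = \tilde u$ is then immediate from the definitions of $p_0$, while $\varphi p_1 = \pi \sigma_\circ$ amounts to an associativity computation showing that the mast composed with $\tilde u \pi_1$ matches the triple composite forming the right leg of $\sigma_\circ$; here one exploits that the right leg of the second span of $\mu$ has $\alpha w$ as a prefix, mirroring the $\alpha w$ appearing in the mast.

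The main obstacle is purely bookkeeping: carefully threading the pullback projections through the explicit formula for $\sigma_\circ$ and verifying by associativity together with the $W_\circ$- and $W_\square$-relations that the two projections of $\varphi$ are what we claim. No additional applications of \textbf{Int.Frc.(1)}--\textbf{Int.Frc.(4)} beyond those already packaged in $u$, $\theta$ and $\omega$ are required, which is what distinguishes this proof from the more elaborate sailboat-chain arguments of the preceding lemmas.
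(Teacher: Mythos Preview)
Your approach is the paper's: pull back $u$ along the displayed map to get $\tilde u$, then build a single sailboat $\varphi$ with $\varphi p_0=\tilde u$ and $\varphi p_1=\pi\sigma_\circ$, exploiting that one leg of the Ore cospan is the identity $\tilde u\,\pi_0 w s e$ so the $W_\square$-relation collapses. The only slip is in your description of the mast. The paper's mast is just the two-arrow composite $(\pi\omega\pi_0\pi_0,\,\pi u_0\theta\pi_0\pi_0 w)c$; your proposed third factor $\tilde u\,\pi_0 w s\alpha w$ has \emph{target} $\tilde u\,\pi_0 w s$ (since $\alpha$ is a section of $wt$), which coincides with the target of the two-arrow composite, so the three do not chain. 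The arrow $\tilde u\,\pi_0 w s\alpha w=\pi u\,\pi_1\pi_0 w$ enters instead only when verifying $\varphi p_1\pi_1=\pi\sigma_\circ\pi_1$: one inserts the identity $\tilde u\,\pi_0 w s e=\pi u\,\pi_0\pi_1$ into $(\text{mast},\,\tilde u\,\pi_1)c$ and then applies the Ore relation $(\pi u_0\theta\pi_0\pi_0 w,\,\pi u\,\pi_0\pi_1)c=(\pi u_0\theta\pi_1\pi_0,\,\pi u\,\pi_1\pi_0 w)c$ to obtain $(\pi\omega\pi_0\pi_0,\,\pi u_0\theta\pi_1\pi_0,\,\pi u\,\pi_1\pi_1)c=\pi\sigma_\circ\pi_1$. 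With that correction your sketch matches the paper's proof.
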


\begin{proof}
Let $\gamma : \spn \to \spn^2$ be the unique pairing map 
\[\gamma = \big( (\pi_0 {,} \pi_0 w s e) \ {,} \ (\pi_0 w s \alpha {,} (\pi_0 w s \alpha {,} \pi_1 )c ) \big).\] 

\noi Take the pullback of the cover $u : U \to \spn^2$, used to define $c' : \spn^2 \to \mCW_1$ in Lemmas \ref{lem defining c'} and \ref{lem actual definition of c'}, along $\gamma$ to get a cover $\tilde{u} : \tilde{U} \to \spn$ that witnesses the span composition process for the family of composable spans represented by $\gamma$.

\[ \begin{tikzcd}[]
\tilde{U} \arrow[dr, phantom, "\usebox\pullback" , very near start, color=black] \dar["/" marking , "\tilde{u}" near start] \rar["\pi"] & U \dar["/" marking , "u" near start] \\
\spn \rar["\gamma"'] & \spn^2 
\end{tikzcd}\] 

\noi It suffices to construct a (family of) sailboat(s) $\varphi : \tilde{U} \to \slb$ such that 
\[ \varphi p_0 = \tilde{u} \quad \text{ and } \varphi p_1 = \pi \sigma_\circ \]
\noi because that would give 
\[ \tilde{u} q = \varphi p_0 q = \varphi p_1 q = \pi \sigma_\circ q = \pi u c' = \tilde{u} \gamma c' \] 
\noi and since $\tilde{u}$ is epic we could conclude that 
\[ \gamma c' = q\]
\noi as desired. This family of sailboats will be constructed using the definition of the span composition, but let us take a moment to consider how this works when $\cE = \Set$. In this case, for each $f : a \to b$ in $\mC_1$ and $u : a \to c$ in $W$ we have a diagram that looks like

\[\begin{tikzcd}
	&& \cdot \\
	&& \cdot \\
	c & a & a & d & a & b
	\arrow[from=3-2, to=3-1, "\circ" marking, "v"' near start]
	\arrow[from=3-2, to=3-3, equals]
	\arrow[from=3-5, to=3-6, "f"]
	\arrow[from=3-4, to=3-3, "\circ" marking, "\alpha(a)"near start]
	\arrow[from=3-4, to=3-5,"\circ" marking, "\alpha(a)"' near start]
	\arrow[from=2-3, to=3-2, "\circ" marking, "v'"']
	\arrow[from=2-3, to=3-4, "k"]
	\arrow[curve={height=34pt}, from=3-3, to=3-6, "f"]
	\arrow[from=1-3, to=2-3, "h"]
	\arrow[curve={height=6pt}, from=1-3, to=3-1,"\circ" marking, "v''"']
	\arrow[curve={height=-25pt}, from=3-4, to=3-6, "\alpha(a)f"]
\end{tikzcd}\]

\noi which gives rise to the sailboat

\begin{center}
\begin{tikzcd}[column sep = huge, row sep = huge]
	& \cdot \\
	c & a & b \\
	\arrow[from=1-2, to=2-1, "\circ" marking, "v''"']
	\arrow[from=1-2, to=2-3, dotted, "h k \alpha(a) f"]
	\arrow[from=2-2, to=2-1, "\circ" marking, "v"]
	\arrow[from=2-2, to=2-3, "f"]
	\arrow[from=1-2, to=2-2, "hv'"]
\end{tikzcd}
\end{center}

\noi showing that the span $(u, f)$ is equivalent to the composite $(u, 1) * (\alpha(a) , \alpha(a) f)$. The map picking out such sailboats internally, $\varphi : \tilde{U} \to \slb$, can be constructed by picking out the corresponding arrows through the composition process witnessed by the cover $u : U \to \spn^2$. Explicitly, this is given by

\[ \varphi = \big( ( (		(\pi \omega \pi_0 \pi_0 , \pi u_0 \theta \pi_0 \pi_0 w )c ,
									\tilde{u} \pi_0) , 
									\pi \sigma_\circ \pi_0 ), 
									\tilde{u} \pi_1 \big) \]

\noi and this is well-defined because $\tilde{u} \pi_0 = \tilde{u} \gamma \pi_0 \pi_0$ and the definition of $W_\circ$ shows 

\begin{align*}
\big( (\pi \omega \pi_0 \pi_0 , \pi u_0 \theta \pi_0 \pi_0 w )c , \tilde{u} \pi_0 w \big) c  
&= \big( \pi \omega \pi_0 \pi_0 , \pi u_0 \theta \pi_1 \pi_0, \tilde{u} \gamma \pi_0 \pi_0 w \big) c \\
&= \big( \pi \omega \pi_0 \pi_0 , \pi u_0 \theta \pi_0 \pi_0 w, \pi u \pi_0 \pi_0 w \big) c \\
&= \pi \omega \pi_1 w   \\
&= \pi \sigma_\circ \pi_0 w. 
\end{align*}				

\noi We can immediately see that 

\begin{align}\label{eq varphi p0 - factorization lemma for spans}
\varphi p_0 = \varphi (\pi_0 \pi_0 \pi_1 , \pi_1) = ( \tilde{u} \pi_0 , \tilde{u} \pi_1 ) = \tilde{u} 
\end{align}

\noi and by adding an identity map, $\tilde{U} \to \mC_1$, given by 

\[\tilde{u} \pi_0 w s e = \tilde{u} \gamma \pi_0 \pi_1 = \pi u \pi_0 \pi_1 \]

\noi into the following computation we can use the definition of $W_\square$, the fact that 

\[ (\tilde{u} \pi_0 w s \alpha w , \tilde{u} \pi_1 ) c = \tilde{u} \gamma \pi_1 = \pi u \pi_1,\] 

\noi and the definition of $\sigma_\circ : U \to \spn$ in Lemma~\ref{lem defining c'}.

\begin{align*}
\varphi (\pi_0 \pi_0 \pi_0 , \pi_1) c 
&= \big( (\pi \omega \pi_0 \pi_0 , \pi u_0 \theta \pi_0 \pi_0 w )c , \tilde{u} \pi_1 \big) c\\
&= (\pi \omega \pi_0 \pi_0 , \pi u_0 \theta \pi_0 \pi_0 w , \tilde{u} \pi_1 ) c\\
&= (\pi \omega \pi_0 \pi_0 , \pi u_0 \theta \pi_0 \pi_0 w, \tilde{u} \pi_0 w s e , \tilde{u} \pi_1 ) c\\
&= (\pi \omega \pi_0 \pi_0 , \pi u_0 \theta \pi_0 \pi_0 w, \pi u \pi_0 \pi_1 , \tilde{u} \pi_1 ) c\\
&= (\pi \omega \pi_0 \pi_0 , (\pi u_0 \theta \pi_0 \pi_0 w, \pi u \pi_0 \pi_1)c , \tilde{u} \pi_1 ) c\\
&= (\pi \omega \pi_0 \pi_0 , (\pi u_0 \theta \pi_1 \pi_0, \pi u \pi_1 \pi_0 w)c , \tilde{u} \pi_1 ) c\\
&= (\pi \omega \pi_0 \pi_0 , \pi u_0 \theta \pi_1 \pi_0, \tilde{u} \pi_0 w s \alpha w , \tilde{u} \pi_1 ) c\\
&= \pi (\omega \pi_0 \pi_0 , u_0 \theta \pi_1 \pi_0, u \pi_1 ) c\\
&= \pi \sigma_\circ \pi_1. 
\end{align*}

\noi Now we can easily see

\begin{align}\label{eq varphi p1 - factorization lemma for spans}
\begin{split}\varphi p_1 
&= \varphi \big(\pi_0 \pi_1 , (\pi_0 \pi_0 \pi_0 , \pi_1) c \big)\\
&= \big( \pi \sigma_\circ \pi_0 , \pi \sigma_\circ \pi_1) \\
&= \pi \sigma_\circ 
\end{split}
\end{align}

\noi The result follows from equations (\ref{eq varphi p0 - factorization lemma for spans}) and (\ref{eq varphi p1 - factorization lemma for spans}) as discussed at the beginning of this proof. 
\end{proof}

\noi The next lemma is used to give an equivalent representation of the identity spans in $\mCW$ which we use in the proof of Proposition~\ref{prop int loc functor inverts W}. 

\begin{lem} \label{lem equiv id span reps}
The diagram 

\begin{center}
\begin{tikzcd}[]
W \rar["(1 {,}w)"] \dar["wt"'] & \spn \dar[two heads, "q"] \\
\mC_0 \rar["(\alpha {,} \alpha w)q "'] & \mCW_1 
\end{tikzcd}
\end{center}

\noi commutes, where $\alpha : \mC_0 \to W$ is a section of $wt : W \to \mC_0$ from \targetsection. 
\end{lem}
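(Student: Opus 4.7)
The plan mirrors the proof of Proposition~\ref{Prop identity def doesnt depend on choice of section of wt}: I will construct a cover $\tilde{u} : \tilde{U} \to W$ together with two sailboats $\varphi_0, \varphi_1 : \tilde{U} \to \slb$ sharing a common sail projection $\varphi_0 p_1 = \varphi_1 p_1$ and having deck projections $\varphi_0 p_0 = \tilde{u}(1, w)$ and $\varphi_1 p_0 = \tilde{u} \cdot wt(\alpha, \alpha w)$. Then the chain $\tilde{u}(1,w)q = \varphi_0 p_0 q = \varphi_0 p_1 q = \varphi_1 p_1 q = \varphi_1 p_0 q = \tilde{u} \cdot wt(\alpha, \alpha w) q$, combined with the fact that $\tilde{u}$ is a composite of covers (hence epic), will yield $(1, w) q = wt(\alpha, \alpha w) q$.

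To produce the data, first note that $(w, wt\alpha) : W \to \mC_1 \tensor[_t]{\times}{_{wt}} W$ is well-defined because $\alpha$ is a section of $wt$. Apply \textbf{In.Frc(3)} together with Lemma~\ref{lem extending Int.Frc. covers} to this cospan to get a cover $\tilde{u}_0 : \tilde{U}_0 \to W$ and a lift $\theta : \tilde{U}_0 \to W_\square$ with $\theta(\pi_0 \pi_1, \pi_1 \pi_1) = \tilde{u}_0(w, wt\alpha)$. This witnesses a family of Ore squares whose top leg $\theta \pi_0 \pi_0 \in W$ and left leg $\theta \pi_1 \pi_0 \in \mC_1$ satisfy the Ore identity $\theta \pi_1 c = \theta \pi_0 c$. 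Next, apply \textbf{In.Frc(2)} together with Lemma~\ref{lem extending Int.Frc. covers} to the composable $W$-pair $(\theta \pi_0 \pi_0, \tilde{u}_0) : \tilde{U}_0 \to W \tensor[_{wt}]{\times}{_{ws}} W$, producing a further cover $\tilde{u}_1 : \tilde{U} \to \tilde{U}_0$ and a weak-composition lift $\omega : \tilde{U} \to W_\circ$. Set $\tilde{u} := \tilde{u}_1 \tilde{u}_0$.

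Define the sailboats by the pairing maps
\[ \varphi_0 = \big( ( ((\omega \pi_0 \pi_0, \omega \pi_0 \pi_1) c, \tilde{u}), \omega \pi_1), \tilde{u} w \big), \]
\[ \varphi_1 = \big( ( ((\omega \pi_0 \pi_0, \tilde{u}_1 \theta \pi_1 \pi_0) c, \tilde{u} wt \alpha), \omega \pi_1), \tilde{u} wt \alpha w \big). \]
Well-definedness of $\varphi_0$ is immediate from the defining equation of $W_\circ$. The main technical obstacle is verifying both the well-definedness of $\varphi_1$ and the equality $\varphi_0 p_1 = \varphi_1 p_1$: both reduce to showing
\[ (\omega \pi_0 \pi_0, \tilde{u}_1 \theta \pi_1 \pi_0, \tilde{u} wt \alpha w) c = (\omega \pi_0 \pi_0, \omega \pi_0 \pi_1 w, \omega \pi_0 \pi_2 w) c = \omega \pi_1 w, \]
where the first equality applies the Ore identity $\theta \pi_1 c = \theta \pi_0 c$ (pulled back by $\tilde{u}_1$) together with associativity, and the second is the defining equation of $W_\circ$. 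Once this identity is in hand, computing $\varphi_i p_0$ and $\varphi_i p_1$ directly from the pairing maps yields the required equalities and the lemma follows as outlined.
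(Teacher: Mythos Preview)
Your proof is correct and follows essentially the same approach as the paper: apply \textbf{In.Frc(3)} to the cospan $(w, wt\alpha)$, then \textbf{In.Frc(2)} to the resulting composable $W$-pair, and build two sailboats with common $p_1$-projection $(\omega\pi_1, \omega\pi_1 w)$ whose $p_0$-projections are $\tilde{u}(1,w)$ and $\tilde{u}\, wt(\alpha,\alpha w)$. The only cosmetic difference is notation for the covers (the paper writes $\tilde{u}_1 : \tilde{U}_0 \to W$ first and $\tilde{u}_0 : \tilde{U} \to \tilde{U}_0$ second, reversed from yours), and you should insert the suppressed $w$ in $(\omega\pi_0\pi_0, \omega\pi_0\pi_1 w)c$ to make the composite type-check.
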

\begin{proof} 
By \textbf{Int.Frc.(2)} and \textbf{Int.Frc.(3)} there exist covers, $\tilde{u}_0$ and $\tilde{u}_1$, and lifts, $\tilde{\omega}$ and $\tilde{\theta}$, that make the squares in the following diagram commute respectively: 

\begin{center}
\begin{tikzcd}[column sep = huge]
W_\circ \rar["(\pi_0 \pi_1 {,} \pi_0 \pi_2)"] 
& W \prescript{}{wt}{\times_{ws}} W 
& \\
\tilde{U} 
\uar[dotted, "\tilde{\omega}"] 
\rar["/" marking, "\tilde{u}_0" near start] 
& \tilde{U}_0
\uar[""'] 
\dar[dotted, "\tilde{\theta}"']
\rar["/" marking, "\tilde{u}_1" near start] 
& W \dar["(1{,}w t \alpha w)"] \\
&
W_\square \rar["(\pi_0 \pi_1 {,} \pi_1 \pi_1)"'] 
& \csp.
\end{tikzcd}
\end{center}

\noi The map $\tilde{\omega} \pi_1 : \tilde{U} \to W$ results in an intermediate (family of) span(s), $( \tilde{\omega} \pi_1 , \tilde{\omega} \pi_1 w) : \tilde{U} \to \spn$ and by definition of $W_\square$ and the maps in the diagram above we have that 
\[ \tilde{\omega} \pi_1 = ( \tilde{\omega} \pi_0 \pi_0 , \tilde{u}_0 \tilde{\theta} \pi_0 \pi_0 w, \tilde{u} w) . \]

\noi This gives a map $\tilde{U} \to W_\triangle$ and since $\slb = W_\triangle \tensor[_{\pi_0 \pi_1 s}]{\times}{_s} \mC_1$ we can see this determines a sailboat, $\varphi: \tilde{U} \to \slb$, given by the unique pairing map
\[ \varphi = \big( ( ( ( \tilde{\omega} \pi_0 \pi_0 , \tilde{u}_0 \tilde{\theta} \pi_0 \pi_0 w)c ,
								\tilde{u} ), 
								\tilde{\omega} \pi_1),
								\tilde{u} w\big). \]
\noi Similarly, we have 
\[ \tilde{\omega} \pi_1 = ( \tilde{\omega} \pi_0 \pi_0 , \tilde{u}_0 \tilde{\theta} \pi_1 \pi_0 , \tilde{u} w t \alpha w)\]

\noi giving another unique map $\tilde{U} \to W_\triangle$ and determining a sailboat $\psi: \tilde{U} \to \slb$, by the unique pairing map
\[ \psi = \big( ( ( ( \tilde{\omega} \pi_0 \pi_0 , \tilde{u}_0 \tilde{\theta} \pi_1 \pi_0 )c ,
								\tilde{u} w t \alpha ),
								\tilde{\omega} \pi_1),
								\tilde{u} w t \alpha w \big) .\]
								
\noi First we can use the calculations and definitions above (along with the definition of the pullback projections and how they interact with pairing maps) to see 

\begin{align*}
  \varphi p_0 
  &= \varphi (\pi_0 \pi_0 \pi_1 , \pi_1) \\
  &= \tilde{u} (1 , w)
\end{align*} 

\noi and 
\begin{align*}
  \varphi p_1 
  &= \varphi (\pi_0 \pi_1 , (\pi_0 \pi_0 \pi_0 , \pi_1)c) \\
  &= (\tilde{\omega} \pi_1 , \tilde{\omega} \pi_1 w)\\ &= \tilde{\omega} (\pi_1 , \pi_1 w)
\end{align*} 

\noi as well as 
\begin{align*}
  \psi p_0 
  &= \psi (\pi_0 \pi_0 \pi_1 , \pi_1) \\
  &= ( \tilde{u} w t \alpha , 	\tilde{u} w t \alpha w) \\
  &= ( \tilde{u} w t ( \alpha , \alpha w) 
\end{align*}
\noi and 
\begin{align*}
\psi p_1 
&= \psi (\pi_0 \pi_1 , (\pi_0 \pi_0 \pi_0 , \pi_1)c) \\
&= (\tilde{\omega} \pi_1 , \tilde{\omega} \pi_1 w) \\
&= \tilde{\omega} (\pi_1 , \pi_1 w) .
\end{align*}

\noi Putting it all together shows 
\[ \tilde{u} (1 , w)q = \varphi p_0 q = \varphi p_1q = \psi p_1 q = \psi p_0 q = \tilde{u} w t ( \alpha , \alpha w) q \]
\noi and since $\tilde{u}$ is epic we get that 

\[ (1 , w)q = w t ( \alpha , \alpha w) q  \]
\noi as desired. 
\end{proof}

\noi An immediate corollary to Lemma~\ref{lem equiv id span reps} is that the internal localization functor, $L : \mC \to \mCW$, maps the arrows from $w : W \to \mC_1$ to arrows in $\mCW$ that have left inverses.

\begin{cor}\label{cor left inverses for L(w) } 
The map $L_1 : \mC_1 \to \mCW_1$ has left inverses with respect to $w : W \to \mC_1$, in the sense that the diagram 

\begin{center}
\begin{tikzcd}[column sep = huge]
W \rar["\big( (1 {,} wse) q {,} w L_1 \big) "] \dar["(1{,} w)"'] & \mCW_2\dar["c"] \\
\spn \rar["q "'] & \mCW_1 
\end{tikzcd}
\end{center}

\noi commutes. 
\end{cor}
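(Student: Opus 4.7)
The plan is to apply Lemma~\ref{lem factorization of equiv classes of spans} directly to the specific map $(1,w) : W \to \spn$ and to identify the two resulting composable components with $(1,wse)q$ and $wL_1$ respectively. First, recall that Lemma~\ref{lem factorization of equiv classes of spans} gives a factorization map
\[ \gamma = \big( (\pi_0 {,} \pi_0 w s e) \ {,} \ (\pi_0 w s \alpha {,} (\pi_0 w s \alpha w {,} \pi_1 )c ) \big) : \spn \to \spn^2\]
such that $\gamma c' = q$ in $\cE$, where $c' : \spn^2 \to \mCW_1$ is the representative composition constructed in Lemma~\ref{lem actual definition of c'}.

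Next, I would precompose $\gamma$ with $(1,w): W \to \spn$ and simplify each of the two $\spn$-valued components. Since $\pi_0(1,w) = 1_W$ and $\pi_1(1,w) = w$, the first component simplifies to $(1,wse) : W \to \spn$ and the second component simplifies to $\big( ws\alpha , (ws\alpha w , w)c \big) : W \to \spn$. Then I would compare this second component, after postcomposition with $q$, to the definition
\[ L_1 = \big( s\alpha , (s\alpha w, 1)c \big) q \]
of the localization functor and observe that $w L_1 = \big( ws\alpha , (ws\alpha w, w)c \big) q$ by functoriality of the pairing and pullback projections; in particular, $(1,w)\gamma (q \times q) = \big( (1,wse)q , \ wL_1 \big)$ as maps $W \to \mCW_1 \tensor[_t]{\times}{_s} \mCW_1 = \mCW_2$.

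Finally, I would use the fact from Proposition~\ref{prop composition of spans is well-defined} (and the universal property of the coequalizer $\mCW_2$) that $c' = q_2 c = (q \times q) c$, combined with the identity $\gamma c' = q$ from Lemma~\ref{lem factorization of equiv classes of spans}, to compute
\[ \big( (1,wse)q , \ wL_1 \big) c = (1,w) \gamma (q \times q) c = (1,w) \gamma c' = (1,w) q,\]
which is precisely the commutativity of the square in the statement.

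There is really no serious obstacle here: the result is essentially the observation that the canonical factorization of an arbitrary representative span $(v,f)$ as $[v,1] * [\alpha(sf), \alpha(sf)w \cdot f]$, applied to the span $(v,w(v))$, identifies $[v,1]$ with $(1,wse)q$ evaluated at $v$ and $[\alpha,\alpha w \cdot w(v)]$ with $wL_1$ evaluated at $v$. The only thing requiring a little care is making sure the equalities of pairing maps into the iterated pullback $\mCW_2$ really do match term by term, which is a routine application of the universal property of the relevant pullbacks and the factorization $c' = (q \times q) c$.
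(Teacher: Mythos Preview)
Your proposal is correct and follows essentially the same approach as the paper: both precompose the factorization $\gamma c' = q$ from Lemma~\ref{lem factorization of equiv classes of spans} with $(1,w): W \to \spn$, identify the two resulting components of $(1,w)\gamma$ with $(1,wse)$ and $(ws\alpha,(ws\alpha w,w)c)$, recognize the latter (after $q$) as $wL_1$, and conclude via $c' = (q\times q)c$. The paper packages this as a single commuting diagram with labeled triangles, while you write out the chain of equalities directly, but the logical content is the same.
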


\begin{proof}

Consider the following diagram. 

\begin{center}
\begin{tikzcd}[column sep = large]
& & & W \dar[dddlll, bend right = 35, "(1{,} w)"'] \ar[ddd, "\big( ( 1 {,} w s e ) \ {,} \ ( w s \alpha w {,} (w s \alpha {,}w) c ) \big)"' ] \ar[dddr, "\big( (1{,} wse)q {,} w L_1 \big)"] & & \\
& & & & \\
& & & & \\
\spn \ar[drrr, two heads, "q"'] \rar[rrr, "\big( (\pi_0 {,} \pi_0 w s e) \ {,} \ (\pi_0 w s \alpha {,} (\pi_0 w s \alpha w {,} \pi_1 )c ) \big)"]  &&& \spn^2 \dar["c'"] \rar[two heads, "q \times q"] & \mCW_2 \ar[dl, "c"] \\
&&& \mCW_1 
\end{tikzcd}
\end{center}

\noi The bottom left triangle commutes by Lemma~\ref{lem factorization of equiv classes of spans}; the bottom right commutes by definition of $c$; the top left triangle commutes by the universal property of the pullback $\spn^2$; and the top right triangle commutes by the universal property of the pullback $\mCW_2$ along with the definitions of $L_1$ and $q \times q$. More precisely, post-composing the upper right triangle with the projection $\pi_1 : \mCW_2 \to \mCW_1$ gives precisely

\[ ( w s \alpha {,} (w s \alpha {,}w) c ) q = w ( s \alpha {,} ( s \alpha {,}1) c ) q = w L_1. \] 

\noi It follows that the diagram above commutes, in particular the outer square commutes. 
\end{proof}

\noi Next we prove a lemma that shows the internal localization functor, $L : \mC \to \mCW$, maps arrows coming from $w : W \to \mC_1$ to arrows that have right inverses in $\mCW$. 

\begin{lem}\label{lem right inverses for L(W)}
The diagram

\begin{center}
\begin{tikzcd}[column sep = huge]
W \rar["\big( w L_1 {,} (1 {,} wse) q \big) "] \dar["w s(\alpha {,} \alpha w)"'] & \mCW_2\dar["c"] \\
\spn \rar["q "'] & \mCW_1 
\end{tikzcd}
\end{center}

\noi commutes.
\end{lem}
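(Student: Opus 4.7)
The plan is to mimic the style of Corollary~\ref{cor left inverses for L(w) } and Lemma~\ref{lem id law on representatives}: reduce the composition in $\mCW$ to span composition on representatives, take a cover witnessing the composition, and construct a sailboat relating the composite to an identity representative.

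First, observe that $\big( wL_1, (1,wse)q \big) : W \to \mCW_2$ factors through $\spn \tensor[_{t'}]{\times}{_{s'}} \spn$ via the pairing map
$$\gamma' = \big( (ws\alpha,\ (ws\alpha w, w)c),\ (1, wse) \big),$$
whose first component is the representative span of $wL_1$ and second component is $(1, wse)$. Since $c : \mCW_2 \to \mCW_1$ is induced by the span-composition map $c' : \spn \tensor[_{t'}]{\times}{_{s'}} \spn \to \mCW_1$ via the coequalizer $q \times q$, the left-hand side of the claimed square equals $\gamma' c'$. Take the pullback of the cover $u : U \to \spn \tensor[_{t'}]{\times}{_{s'}} \spn$ (from Lemma~\ref{lem actual definition of c'}) along $\gamma'$ to obtain a cover $\tilde{u} : \tilde{U} \to W$ and a projection $\pi : \tilde{U} \to U$, so that $\tilde{u}\, \gamma' c' = \pi \sigma_\circ\, q$.

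It then suffices to construct a sailboat $\varphi : \tilde{U} \to \slb$ with $\varphi p_0 = \pi\sigma_\circ$ and $\varphi p_1 = \tilde{u}\, ws(\alpha, \alpha w)$, for then
$$\tilde{u}\, \gamma' c' \;=\; \pi \sigma_\circ\, q \;=\; \varphi p_0\, q \;=\; \varphi p_1\, q \;=\; \tilde{u}\, ws(\alpha, \alpha w)\, q,$$
and $\tilde{u}$ being a regular epimorphism concludes the argument. The conceptual content of this sailboat, visible in $\cE = \Set$, is that when composing $(\alpha(a), \alpha(a)v)$ with $(v, e_a)$ for $v \in W$ with $w(v) : a \to b$, the Ore square fills canonically by $g = 1_c$ and $u = \alpha(a) \in W$ (where $c = s(w\alpha(a))$), while the right leg of the composite equals $\alpha(a) \cdot e_a = w\alpha(a)$; hence the resulting span is literally the identity representative $(\alpha(a), w\alpha(a))$ at $a$. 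The sailboat has $\pi\sigma_\circ$ as its top deck and $\tilde{u}\, ws(\alpha, \alpha w)$ as its bottom deck, with mast built from $\pi\theta$ and $\pi\omega$ as supplied by the definition of $\sigma_\circ$. The deck identifications rely on: the definition of $W_\circ$ (for the left leg of $\pi\sigma_\circ$), the definition of $W_\square$ (commutativity of the Ore square), the identity law $(1, te)c = 1_{\mC_1}$ in $\mC$ (collapsing the right leg after composition with $wse$), and the fact that $\tilde{u}\, \gamma'$ equals $\pi u$.

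The main obstacle is the combinatorial bookkeeping: one must unfold the pairing maps defining $\slb$ and verify that the chosen $\varphi$ is well-defined and that its two deck projections produce exactly $\pi\sigma_\circ$ and $\tilde{u}\, ws(\alpha, \alpha w)$. This is analogous to, but strictly simpler than, the computations appearing in Lemmas~\ref{lem id law on representatives} and~\ref{lem intLocfunctor preserves comp helper lemma}: no genuinely new lemmas are needed beyond the Internal Fractions Axioms and the internal-category structure laws for $\mC$, because the identity-like nature of $wse$ obviates the need for additional zippering or iterated weak-composition steps.
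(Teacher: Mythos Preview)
Your plan has a genuine gap. The claim that ``the Ore square fills canonically by $g = 1_c$ and $u = \alpha(a) \in W$'' and hence that ``the resulting span is literally the identity representative'' conflates what a canonical choice \emph{would} give with what the span-composition map $\sigma_\circ$ actually produces. In the internal setting, $\sigma_\circ$ is defined through the fixed lifts $\theta : U_0 \to W_\square$ and $\omega : U \to W_\circ$ supplied by the axioms, and these are arbitrary lifts over a cover, not canonical fillings. Even in $\Set$, identities are not assumed to lie in $W$ under the weak axioms, so the ``canonical'' Ore filling you describe is not available.

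Concretely, the right leg of $\pi\sigma_\circ$ is $(\pi\omega\pi_0\pi_0,\ \pi u_0\theta\pi_1\pi_0,\ \tilde u\, wse)c = (\pi\omega\pi_0\pi_0,\ \pi u_0\theta\pi_1\pi_0)c$, which in $\Set$ is the composite $hk$, not $\alpha(a)$. The left leg is $v'' = hv'\alpha(a)$. For a single sailboat with $p_0 = \pi\sigma_\circ$ and $p_1 = \tilde u\, ws(\alpha,\alpha w)$ you would need a mast $m$ with $m\cdot v'' = \alpha(a)$ in $W$ \emph{and} $m\cdot hk = \alpha(a)$; but $v''$ and $hk$ are only known to agree after post-composition with $v$ (from the Ore square $v'\cdot \alpha(a)v = k\cdot v$). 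There is no reason they agree on the nose, so no such $m$ exists in general. This is precisely why the paper's proof invokes \textbf{Int.Frc(4)} (zippering) to obtain an equalizing $W$-arrow $d$, then \textbf{Int.Frc(2)} once more, and finally constructs \emph{two} sailboats $\varphi,\psi$ sharing a common $p_1$-projection $(v''',\ gdhk)$ to bridge $\pi\sigma_\circ$ and the identity representative. Your single-sailboat approach cannot close the argument without these additional steps.
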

\begin{proof}
Using the fact that 
\[ ( w s \alpha {,} (w s \alpha {,}w) c ) q = w ( s \alpha {,} ( s \alpha {,}1) c ) q = w L_1\] 

\noi and the universal property of the pullback $\mCW_2$ we can see the top triangle in the diagram, 

\begin{center}
\begin{tikzcd}[column sep = large]
W
\rar[rrrr, bend left = 40, "(w L_1 {,} ( 1 {,} wse) q \big) "]
 \dar["(w s \alpha {,} w s \alpha w)"']
 \rar[rrr,"\big( ( w s \alpha {,} ( w s \alpha w {,} w ) c) \ {,} \ (1 {,} w s e) \big) "] &&& \spn^2 \dar["c'"] \rar["q \times q"] & \mCW_2 \ar[dl, "c"] \\
\spn \rar[rrr,"q"'] &&& \mCW_1
\end{tikzcd},
\end{center}

\noi commutes. The right triangle commutes by definition so it suffices to show the bottom left square commutes.

\noi Let $\gamma = \big( ( w s \alpha {,} ( w s \alpha w {,} w ) c) \ {,} \ (1 {,} w s e) \big)$ and take the pullback of the cover $u : U \to \spn^2$ along $\gamma$. 

\begin{center}
\begin{tikzcd}[column sep = huge]
\tilde{U} \arrow[dr, phantom, "\usebox\pullback" , very near start, color=black] \rar[r, "\pi"] \dar["/" marking , "\tilde{u}"' near end]& U \dar["/" marking , "u" near start] \\
W \rar[r, "\gamma"']& \spn^2 
\end{tikzcd}
\end{center}

\noi The cover $\tilde{u} : \tilde{U} \to W$ witnesses the composition of the spans being picked out by $\gamma$. Thinking momentarily about the case when $\cE = \Set$ for visualization purposes, this says that for every arrow $v : a \to b$ in $W$ there exists a point in $\tilde{U}$ witnessing the commuting diagram: 

\[ \begin{tikzcd}[column sep = large] 
	&&\cdot \\
	&& \cdot \\
	a & c & a & b & a & a
	\arrow[from=3-3, to=3-4, "\circ" marking,"v" near start]
	\arrow[from=3-2, to=3-1,"\circ" marking, "\alpha(a)"]
	\arrow[from=3-2, to=3-3,"\circ" marking, "\alpha(a)" near end]
	\arrow[from=3-5, to=3-4,"\circ" marking, "v" near end]
	\arrow[from=3-5, to=3-6, equals]
	\arrow[from=2-3, to=3-2,"\circ" marking, "v'"']
	\arrow[curve={height=-6pt}, from=2-3, to=3-5, "k"]
	\arrow[from=1-3, to=2-3,"h"]
	\arrow[curve={height=18pt}, from=3-2, to=3-4, "\alpha(a) v"']
	\arrow[from=1-3, to=3-1, bend right = 10, "\circ" marking, "v''"']
\end{tikzcd}\]

\noi In this case the parallel pair of arrows, $hv' \alpha(a)$ and $hk$ being coequalized by $v : a \to b$ allows us to zipper before applying weak composition for $W$ to get a span whose left leg is in $W$, as pictured in the following diagram:

\[ \begin{tikzcd}[column sep = large] 
	& \cdot &\cdot \\
	&&\cdot \\
	&& \cdot \\
	a & c & a & b & a & a
	\arrow[from=4-3, to=4-4, "\circ" marking,"v" near start]
	\arrow[from=4-2, to=4-1,"\circ" marking, "\alpha(a)"]
	\arrow[from=4-2, to=4-3,"\circ" marking, "\alpha(a)" near end]
	\arrow[from=4-5, to=4-4,"\circ" marking, "v" near end]
	\arrow[from=4-5, to=4-6, equals]
	\arrow[from=3-3, to=4-2,"\circ" marking, "v'"']
	\arrow[curve={height=-6pt}, from=3-3, to=4-5, "k"]
	\arrow[from=2-3, to=3-3,"h"]
	\arrow[curve={height=18pt}, from=4-2, to=4-4, "\alpha(a) v"']
	\arrow[from=2-3, to=4-1, bend right = 10, "\circ" marking, "v''"']
	\arrow[from=1-3, to=2-3, "\circ" marking, "d"]
	\arrow[from=1-2, to=1-3, "g"] 
	\arrow[from=1-2, to=4-1, bend right = 10, "\circ" marking, "v'''"']
\end{tikzcd}\]

\noi The zippering axiom says there exists a map $d$ such that 
\[ d h v' \alpha(a) = d h k \]

\noi and the weak-composition axiom says there exists a map $g$ in the diagram above such that $gd v'' = v'''$ is in $W$. This data gives rise to two sailboats with a common projection, 

\begin{center}
\begin{tikzcd}[column sep = huge, row sep = large]
& \cdot \ar[dl, "v'''"'] \ar[dr, dotted, "gdhk"] \ar[d, "gdhv'"'] & \\
a & c \lar["\alpha(a)"] \rar["\alpha(a)"']  & a \\
\end{tikzcd}\qquad \qquad 
\begin{tikzcd}[column sep = huge, row sep = large]
& \cdot \ar[dl, "v'''"'] \ar[dr, dotted, "gdhk"] \ar[d, "gd'"'] & \\
a & \cdot \lar["v''"] \rar["hk"']  & a \\
\end{tikzcd}
\end{center}

\noi implying that the composite of spans, $(\alpha(a) , \alpha(a) v) * (v , 1_a) = (v'' , hk)$, is equivalent to the span $(\alpha(a), \alpha(a))$ by transitivity. Translating this argument to the internal setting for $\cE$ not necessarily equal to $\Set$ amounts to defining the map $\delta : \tilde{U} \to \cP_{cq}(\mC)$ in the following diagram,

\begin{center}
\begin{tikzcd}[column sep = large]
W_\circ \rar["(\pi_0 \pi_1 {,} \pi_0 \pi_2)"] 
& W \prescript{}{wt}{\times_{ws}} W 
&\\
\hat{U} 
\rar["/" marking, "\hat{u}_0" near start] 
\uar["\hat{\omega}"]
&
\hat{U}_0
\rar["/" marking, "\hat{u}" near start] 
\dar["\hat{\delta}"']
\uar["(\hat{\delta} \pi_0 \iota_{eq} \pi_0 {,} \hat{u}_1 \pi \sigma_\circ \pi_0) "']
& \tilde{U} 
\dar["\delta"] 
\\
& \cP (\mC)\rar["\pi_1"'] 
& \cP_{cq} (\mC) 
\end{tikzcd}
\end{center}

\noi applying \textbf{Int.Frc(4)} to get the cover $\hat{u}_1 : \hat{U}_0 \to \tilde{U}$ and the lift $\hat{\delta} : \hat{U}_1 \to \cP(\mC)$, and then applying by \textbf{Int.Frc(2)} to get the cover $\hat{u}_0 : \hat{U} \to \hat{U}_0$ and the lift $\hat{\omega} : \hat{U} \to W_\circ$. 

The map $\delta : \tilde{U} \to \cP_{cq}(\mC)$ is induced by the universal property of the equalizer $\cP_{cq}(\mC)$ and the map $\delta' : \tilde{U} \to P(\mC) \prescript{}{t}{\times_{ws}} W$. The map $\delta'$ is induced by the universal property of the pullback $P(\mC) \prescript{}{t}{\times_{ws}} W$ and to define it we start by using the definition of $W_\square$ to see

\[ \label{eq inducing delta'} \pi (\omega \pi_0 \pi_0 , u_0 \theta \pi_0 \pi_0 w , u \pi_0 \pi_1)c = \pi (\omega \pi_0 \pi_0, u_0 \theta \pi_1 \pi_0, u \pi_1 \pi_0w) c \tag{$\star$} .\]
\noi Since 
\[ \pi u \pi_0 \pi_1 = \tilde{u} \gamma \pi_0 \pi_1 = \tilde{u} (ws \alpha w, w)c \]
\noi the left-hand side of equation~(\ref{eq inducing delta'}) becomes 
\[ \pi (\omega \pi_0 \pi_0 ,  u_0 \theta \pi_0 \pi_0 w , u \pi_0 \pi_1) = (\pi \omega \pi_0 \pi_0 , \pi u_0 \theta \pi_0 \pi_0 w , \tilde{u} ws \alpha w, \tilde{u} w)c .\]
\noi Rewriting equation~(\ref{eq inducing delta'}) while recalling that $\pi u = \tilde{u} \gamma$ and $\gamma \pi_1 \pi_0 = 1_W$ gives 

\[ \label{eq inducing delta} (\pi \omega \pi_0 \pi_0 , \pi u_0 \theta \pi_0 \pi_0 w , \tilde{u} ws \alpha w, \tilde{u} w)c 
= (\pi\omega \pi_0 \pi_0,\pi u_0 \theta \pi_1 \pi_0, \tilde{u} w) c \tag{$\star \star$} \]

\noi and induces a unique $\delta' : \tilde{U} \to P(\mC) \prescript{}{t}{\times_{ws}} W$ such that 
\begin{align*}
\delta' \pi_1 &= \tilde{u} \\
\delta' \pi_0 \pi_0 &= (\pi \omega \pi_0 \pi_0 , \pi u_0 \theta \pi_0 \pi_0 w , \tilde{u} ws \alpha w)c \\
\delta' \pi_0 \pi_0 &= (\pi\omega \pi_0 \pi_0,\pi u_0 \theta \pi_1 \pi_0)c .
\end{align*}
\noi equation~(\ref{eq inducing delta}) can then be simplified as 
\begin{align*}
\delta' (\pi_0 \pi_0 , \pi_1)c = \delta' (\pi_0 \pi_1 , \pi_1)c
\end{align*}
\noi which induces the unique map $\delta : \tilde{U} \to \cP_{cq}(\mC)$ such that 

\begin{center}
\begin{tikzcd}[]
\cP_{cq}(\mC) \rar[tail, "\iota_{cq}"] & P(\mC) \prescript{}{t}{\times_{ws}} W \\
\tilde{U} \ar[ur, "\delta'"'] \uar[dotted, "\delta"] .
\end{tikzcd}
\end{center}

\noi The two (families of) sailboats, $\varphi , \psi : \hat{U} \to \slb$, with a common projection, $\varphi \pi_1 = \psi \pi_1$, can now be defined. By definition of 

\[ W_\circ = (\mC_1 \tensor[_t]{\times}{_{ws}} W \tensor[_{wt}]{\times}{_{ws}} W ) \tensor[_c]{\times}{_{ws}} W \] 

\noi we have 

\[ \hat{\omega} \pi_1 = ( \hat{\omega} \pi_0 \pi_0 , \hat{u}_0 \hat{\delta} \pi_0 \iota_{eq} \pi_0 w , \hat{u} \pi \sigma_\circ \pi_0 w)c \] 
\noi so let 
\[ \mu_0 = ( \hat{\omega} \pi_0 \pi_0 , \hat{u}_0 \hat{\delta} \pi_0 \iota_{eq} \pi_0 w )c \]
\noi to determine the unique pairing map
\[ \psi = \big( ( ( \mu_0, 
						\hat{u} \pi \sigma_\circ \pi_0 ), 
						\hat{\omega }\pi_1 ),
						\hat{u} \pi ( \omega \pi_0 \pi_0, 
						 u_0 \theta \pi_1 \pi_0)c \big). \]

\noi Notice the last map in the composite
\[ \hat{u} \pi u \pi_1 \pi_1 = \hat{u} \tilde{u} w s e \]
\noi is the identity structure map for $\mC$. The identity laws in $\mCW$ and $\mC$ can then both be used in the final calculation we need to determine the span projections for $\psi : \hat{U} \to \slb$. 

\begin{align*}
(\hat{u} \pi \omega \pi_0 \pi_0, \hat{u} \pi u_0 \theta \pi_1 \pi_0)c 
 &= \hat{u} \pi (\omega \pi_0 \pi_0 , u_0 \theta \pi_1 \pi_0)c \\
 &= (\hat{u} \pi \omega \pi_0 \pi_0 , \hat{u} \pi u_0 \theta \pi_1 \pi_0 )c \\
 &= (\hat{u} \pi \omega \pi_0 \pi_0 , \hat{u} \pi u_0 \theta \pi_1 \pi_0(1, t e)c  )c \\
 &= (\hat{u} \pi \omega \pi_0 \pi_0 , \hat{u} \pi u_0 \theta \pi_1 \pi_0, \hat{u} \pi u_0 \theta \pi_1 \pi_0 t e  )c \\
 &= (\hat{u} \pi \omega \pi_0 \pi_0 , \hat{u} \pi u_0 \theta \pi_1 \pi_0, \hat{u} \pi u_0 \theta \pi_1 \pi_1 w s e  )c \\
 &= (\hat{u} \pi \omega \pi_0 \pi_0 , \hat{u} \pi u_0 \theta \pi_1 \pi_0, \hat{u} \pi u \pi_1 \pi_0 w s e  )c \\
 &= (\hat{u} \pi \omega \pi_0 \pi_0 , \hat{u} \pi u_0 \theta \pi_1 \pi_0, \hat{u} \pi u \pi_1 \pi_1 s e  )c \\
 &= (\hat{u} \pi \omega \pi_0 \pi_0 , \hat{u} \pi u_0 \theta \pi_1 \pi_0, \hat{u} \tilde{ u} w s e s e  )c \\
 &= (\hat{u} \pi \omega \pi_0 \pi_0 , \hat{u} \pi u_0 \theta \pi_1 \pi_0, \hat{u} \tilde{ u} w s e  )c \\
 &= (\hat{u} \pi \omega \pi_0 \pi_0 , \hat{u} \pi u_0 \theta \pi_1 \pi_0, \hat{u} \pi u \pi_1 \pi_1 )c \\
 &= \hat{u} \pi ( \omega \pi_0 \pi_0 , u_0 \theta \pi_1 \pi_0, u \pi_1 \pi_1  )c \\
 &= \hat{u} \pi \sigma_\circ \pi_1 
\end{align*}
\noi Now we can see 
\[ \psi p_0 = \psi (\pi_0 \pi_0 \pi_1 , \pi_1) = (\hat{u} \pi \sigma_\circ \pi_0 , \hat{u} \pi \sigma_\circ \pi_1) = \hat{u} \pi \sigma_\circ \]
\noi and 
\[ \psi p_1 = \psi (\pi_0 \pi_1 , (\pi_0 \pi_0 \pi_0 , \pi_1 )c ) = \big( \hat{\omega }\pi_1 , (\mu_0, \hat{u} \pi \omega \pi_0 \pi_0, \hat{u} \pi u_0 \theta \pi_1 \pi_0)c \big). \]\

\noi For $\varphi : \hat{U} \to \slb$ let 
\[ \mu_1 = (\mu_0 , \hat{u} \pi \omega \pi_0 \pi_0 , \hat{u} \pi u_0 \pi_0 \pi_0w )c \]
\noi and notice on one hand that 
\[ (\mu_1 , \hat{u} \tilde{u} w s \alpha w )c = (\mu_1 , \hat{u} \pi u \pi_0 \pi_0 w) c = ( \mu_0 , \hat{u} \pi \sigma_\circ \pi_0 w)c = \hat{\omega} \pi_1\]
\noi and on the other hand that 
\begin{align*}
(\mu_1 , \hat{u} \tilde{u} w s \alpha w )c 
&= (\hat{\omega} \pi_0 \pi_0 , \delta \pi_0 \iota_{eq} \pi_0 , \delta \pi_0 \iota_{eq} \pi_1 \pi_0)c\\ 
&= (\hat{\omega} \pi_0 \pi_0 , \delta \pi_0 \iota_{eq} \pi_0 , \delta \pi_0 \iota_{eq} \pi_1 \pi_1)c\\ 
&= (\mu_0 , \hat{u} \pi \omega \pi_0 \pi_0, \hat{u} \pi u_0 \theta \pi_1 \pi_0)c 
\end{align*}
\noi Then define

\[ \varphi = \big( ( ( \mu_1 , 
								\hat{u} \tilde{u} w s \alpha ), 
								 \hat{\omega} \pi_1 ), 
								\hat{u} \tilde{u} w s \alpha w \big) \]
								
\noi and we get 

\begin{align*}
  \varphi p_0 
  &= (\pi_0 \pi_0 \pi_1 , \pi_1) \\
  &= (\hat{u} \tilde{u} w s \alpha, \hat{u} \tilde{u} w s \alpha w) 
\end{align*} 
\noi and 
\begin{align*}
  \varphi p_1 
  &= \varphi \big( \pi_0 \pi_1 , (\pi_0 \pi_0 \pi_0 , \pi_1)c \big) \\
  &= \big( \hat{\omega} \pi_1 , \mu_1 , \hat{u} \tilde{u} w s \alpha w )c \big) \\
  &= (\hat{\omega} \pi_1 , (\mu_0 , \hat{u} \pi \omega \pi_0 \pi_0, \hat{u} \pi u_0 \theta \pi_1 \pi_0)c \\
  &= \psi p_1.
\end{align*} 

\noi Combining our computations gives us that
\begin{align*}
  \hat{u} \tilde{u} (ws \alpha , ws \alpha w) q 
  &= \varphi p_0 q = \varphi p_1 q \\
  &= \psi p_1 q = \psi p_0 q \\
  &= \hat{u} \pi \sigma_\circ q \\
  &= \hat{u} \pi u c' \\
  &= \hat{u} \tilde{u} \gamma c' 
\end{align*} 
\noi and since the composite $\hat{u} \tilde{u}$ is epic we can conclude
\[ (ws \alpha , ws \alpha w) q = \gamma c'. \]
\end{proof}

\noi Now we prove the second main result of this section. 

\begin{prop}\label{prop int loc functor inverts W}
The localization (internal) functor, $L : \mC \to \mCW$ inverts $w : W \to \mC_1$. 
\end{prop}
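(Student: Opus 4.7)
The plan is to exhibit $L(w)^{-1} : W \to \mCW_1$ explicitly and check the four commuting diagrams in Definition~\ref{def invertibility of internal arrows} applied to the composite $wL_1 : W \to \mCW_1$. The natural candidate, suggested by the shape of the spans $L_1$ produces, is the map
\[ L(w)^{-1} := (1_W , wse) q : W \to \mCW_1. \]
When $\cE = \Set$, this sends an arrow $v : a \to b$ in $W$ to the equivalence class of the span $\begin{tikzcd}[cramped] b & a \lar["\circ" marking, "v"'] \rar[equals] & a \end{tikzcd}$, which is the classical right-inverse of $L(v)$ in a category of fractions.

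First I would verify the source/target conditions so that the pairings $\big(wL_1, L(w)^{-1}\big)$ and $\big(L(w)^{-1}, wL_1\big)$ land in $\mCW_2$. Since $L_0 = 1_{\mC_0}$, the composite $wL_1$ has source $ws$ and target $wt$; for the proposed inverse, a direct computation using the definitions of the source and target maps on $\mCW_1$ (Lemma~\ref{lem defining source and target of mCW}) gives $L(w)^{-1} s = wt$ and $L(w)^{-1} t = ws$, so both composable pairs exist. This is routine and should take only a couple of lines.

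The left identity law follows from stringing together Corollary~\ref{cor left inverses for L(w) } with Lemma~\ref{lem equiv id span reps}: the former states that $\big(L(w)^{-1}, wL_1\big) c = (1_W, w)q$, and the latter identifies $(1_W, w)q$ with $wt \cdot (\alpha, \alpha w)q = wt \cdot e$, which is exactly $wL_1 \cdot t \cdot e$ as required. The right identity law is essentially the content of Lemma~\ref{lem right inverses for L(W)}, which says $\big(wL_1, L(w)^{-1}\big) c = ws(\alpha, \alpha w)q = ws \cdot e = wL_1 \cdot s \cdot e$.

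Essentially all the hard work has already been absorbed into Lemmas~\ref{lem factorization of equiv classes of spans}, \ref{lem equiv id span reps}, \ref{cor left inverses for L(w) }, and \ref{lem right inverses for L(W)}, each of which required constructing witnessing sailboats through the Internal Fractions Axioms. So the proof of this proposition itself should be short: define $L(w)^{-1}$, check source/target, and assemble the four identities by quoting the preceding results. There is no single step I expect to be obstructive, as long as one is careful to track which of $se$ and $te$ appears on each side of each identity law.
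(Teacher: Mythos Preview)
Your proposal is correct and follows essentially the same approach as the paper: the paper also takes $(1_W, wse)q$ as the candidate inverse, notes the source/target conditions are handled in the proofs of the cited lemma and corollary, and then assembles the two identity laws by combining Corollary~\ref{cor left inverses for L(w) } with Lemma~\ref{lem equiv id span reps} for the $te$ side and Lemma~\ref{lem right inverses for L(W)} for the $se$ side, together with the short computations $wL_1 s = ws$ and $wL_1 t = wt$.
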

\begin{proof}
Consider the composite 

\[ \begin{tikzcd}
W \rar["(1 {,} wse)"] \ar[dr] & \spn \dar[two heads, "q"] \\
& \mCW_1
\end{tikzcd}\]

\noi In the proofs of Lemma~\ref{lem right inverses for L(W)} and Corollary \ref{cor left inverses for L(w) } we have already seen that 

\[ (1,wse) q s = w L_1 t , \qquad (1,wse) q t = w L_1 s\] 

\noi so it suffices to show the last two diagrams from Definition~\ref{def internal functor inverts a map} commute in this setting. 

First note that $e = (\alpha, \alpha w) q : \mC_0 \to \mCW_1$ is the identity structure map on $\mCW$, and that the source map $s : \mCW_1 \to \mC_0$ is uniquely determined by the map $q s = \pi_0 w t : \spn \to \mC_0$. Also recall that 

\[w L_1 = (w s \alpha , (w s \alpha , w ) c ) q \]

\noi and then compute 
\begin{align*}
  w L_1 s e 
  &= w L_1 s (\alpha, \alpha w) q \\
  &= (w s \alpha , (w s \alpha , w ) c ) q s (\alpha, \alpha w) q \\
  &= (w s \alpha , (w s \alpha , w ) c ) \pi_0 w t (\alpha, \alpha w) q \\
  &= w s \alpha w t (\alpha, \alpha w) q \\
  &= w s (\alpha, \alpha w) q .
\end{align*}

\noi We can replace the left and bottom composite in the commuting square of Lemma~\ref{lem right inverses for L(W)} by the last equation to give the commuting square 
\begin{center}
\begin{tikzcd}[column sep = huge]
W \rar["\big( w L_1 {,} (1 {,} wse) q \big) "] 
\dar["w L_1"'] & \mCW_2\dar["c"] \\
\mCW_1 \rar["s e "'] & \mCW_1 
\end{tikzcd}
\end{center} 

\noi in $\cE$ and shows $(1, w s e)q : W \to \mCW_1$ satisfies half of Definition~\ref{def internal functor inverts a map}. For the rest of it we recall that $q t = \pi_1 t : \spn \to \mC_0$ uniquely determines the structure map $t : \mCW_1 \to \mC_0$ and similarly compute 

\begin{align*}
  w L_1 t e 
  &= w L_1 t (\alpha, \alpha w) q \\
  &= (w s \alpha , (w s \alpha , w ) c ) q t (\alpha, \alpha w) q \\
  &= (w s \alpha , (w s \alpha , w ) c ) \pi_1 t (\alpha, \alpha w) q \\
  &= (w s \alpha , w ) c t (\alpha, \alpha w) q \\
  &= w t (\alpha, \alpha w) q .
\end{align*}

\noi Putting this together with Lemma~\ref{lem equiv id span reps} gives

\[ w L_1 t e = w t (\alpha, \alpha w) q = (1,w)q\]

\noi and allows us to rewrite the commuting square in Corollary \ref{cor left inverses for L(w) } as

\begin{center}
\begin{tikzcd}[column sep = huge]
W \rar["\big( (1 {,} wse) q {,} w L_1 \big) "] \dar["w L_1"'] & \mCW_2\dar["c"] \\
\mCW_1 \rar["t e"'] & \mCW_1 
\end{tikzcd}.
\end{center}

\noi This means $(1 {,} wse) q $ inverts $(w L_1)$ by Definition~\ref{def internal functor inverts a map}.
\end{proof}

\subsection{Universal Property of Internal Fractions}\label{S UP of Internal Fractions}

The main result of this section is Theorem~\ref{thm 2-dimensional universal property of localization}, the universal property of internal localization. It is an isomorphism of categories between the category of internal functors, $\mC \to \mD$, that invert $w : W \to \mC_1$ and their natural transformations, and the category of internal functors $\mCW \to \mD$ and their natural transformations. In Section \ref{SS UPIntFrac 1-cells} we prove that the objects in each category uniquely correspond to one another in Proposition~\ref{prop UPIntFrac 1-cell corresondence}, and then in Lemma~\ref{lem int.loc 2-cell correspondence} we show that the 2-cells in each category uniquely correspond to one another. In Lemma~\ref{lem 2-cell correspondence functoriality} we show that the correspondence between natural transformations is functorial, and Theorem~\ref{thm 2-dimensional universal property of localization} follows immediately.

\subsubsection{Correspondence Between 1-cells}\label{SS UPIntFrac 1-cells} 

The results in this subsection come together to prove that for any internal functor $F : \mC \to \mX$ that inverts $w : W \to \mC_1$, there exists a unique internal functor $\lbrack F \rbrack : \mCW \to \mX$ such that the diagram 

\begin{center}
\begin{tikzcd}[]
\mC \ar[dr, "L"'] \rar[rr,"F"] & & \mX \\
& \mCW \ar[ur, dotted, "\lbrack F \rbrack"'] & 
\end{tikzcd}
\end{center}

\noi commutes. First we define $[F]$ and prove it is an internal functor, then we notice how every internal functor $\mCW \to \mX$ corresponds to an internal functor $\mC \to \mX$ that inverts $W$ by pre-composition with $L : \mC \to \mX$. Finally we show that these assignments are inverses to one another to prove the main result of this subsection, Proposition~\ref{prop UPIntFrac 1-cell corresondence}. 

It's clear how to define $[F]$ on objects: 

\begin{center}
\begin{tikzcd}[column sep = large]
\mCW_0 \dar[equals] \rar["\lbrack F \rbrack_0"] & \mX_0 \dar[equals] \\
\mC_0 \ar[r, "F_0"'] & \mX_0 
\end{tikzcd}.
\end{center}

\noi On arrows we use the universal property of the coequalizer $\mCW_1$. By Definition~\ref{def internal functor inverts a map}, there exists a map $F(w)^{-1} : W \to \mX_1$ that inverts $w F_1 : W \to \mX_1$ in $\mX$. That is, the diagrams 

\[ \begin{tikzcd}[]
W\dar["wF_1"'] \rar[rr,"(w F_1 {,} F(w)^{-1})"] && \mX_2 \dar["c"] \\
\mX_1 \rar[rr,"se"'] && \mX_1
\end{tikzcd}\qquad \qquad \qquad 
\begin{tikzcd}[]
W\dar["wF_1"'] \rar[rr,"( F(w)^{-1} {,} w F_1 )"] && \mX_2 \dar["c"] \\
\mX_1 \rar[rr,"te"'] && \mX_1
\end{tikzcd}\]

\noi commute in $\cE$. In particular we have that 

\[ F(w)^{-1} t = w F_1 s \]

\noi and this along with functoriality of $F$ and the definition of $\spn = W \tensor[_{ws}]{\times}{_s} \mC_1$ is enough to see that the outside of the diagram, 

\begin{center}
\begin{tikzcd}[column sep = large]
\spn \dar["\pi_0"'] \rar["\pi_1"] \ar[dr, dotted, "\lbrack F \rbrack'"] & \mC_1 \ar[dr, bend left, "F_1"] & \\
W \ar[dr, bend right, "F(w)^{-1}"'] & \mX_2 \arrow[dr, phantom, "\usebox\pullback" , very near start, color=black]
 \rar["\pi_1"] \dar["\pi_0"'] & \mX_1 \dar["s"] \\
& \mX_1 \rar["t"'] & \mX_0
\end{tikzcd},
\end{center}

\noi commutes and induces the unique map $\lbrack F \rbrack' : \spn \to \mX_2$. This map is used to define $\lbrack F \rbrack_1 : \mCW_1 \to \mX_1$ in the following lemma by the universal property of the coequalizer $\mCW_1$. 

\begin{lem}\label{lem defining Phi_1}
The coequalizer diagram, 

\begin{center}
\begin{tikzcd}[]
\slb \rar[shift left, "p_0"] \rar[shift right, "p_1"'] & \spn \dar["\lbrack F \rbrack'"'] \rar[two heads, "q"] & \mCW_1 \dar[dotted, "\lbrack F \rbrack_1"] \\
& \mX_2 \rar["c"] & \mX_1 \\
\end{tikzcd}
\end{center}

\noi commutes in $\cE$ and uniquely determines the map $\lbrack F \rbrack_1 : \mCW_1 \to \mX_1$. 
\end{lem}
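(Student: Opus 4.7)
By the universal property of the coequalizer $q : \spn \to \mCW_1$, a factorization $\lbrack F \rbrack_1$ with $q\, \lbrack F \rbrack_1 = \lbrack F \rbrack' c$ exists and is unique provided $\lbrack F \rbrack' c$ coequalizes $p_0$ and $p_1$. The entire task thus reduces to verifying $p_0 \, \lbrack F \rbrack' c = p_1 \, \lbrack F \rbrack' c$ in $\cE$. My plan is to unfold both sides via the explicit pairing descriptions of $p_0$, $p_1$, and $\lbrack F \rbrack'$, and then to reduce the resulting identity in $\cE$ to functoriality of $F$, the $W_\triangle$-relation $\pi_0 \pi_0 c = \pi_0 \pi_1 w$ built into $\slb$, and the two inverse identities for $F(w)^{-1}$ from Definition~\ref{def invertibility of internal arrows}.

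Using $\lbrack F \rbrack' \pi_0 = \pi_0 F(w)^{-1}$ and $\lbrack F \rbrack' \pi_1 = \pi_1 F_1$ together with $p_0 = (\pi_0 \pi_0 \pi_1,\, \pi_1)$ and $p_1 = (\pi_0 \pi_1,\, (\pi_0 \pi_0 \pi_0,\, \pi_1)c)$, and distributing $F_1$ across the internal composition by functoriality before re-associating, I would obtain
\[
p_0 \, \lbrack F \rbrack' c = (\pi_0 \pi_0 \pi_1 F(w)^{-1},\ \pi_1 F_1)\,c
\qquad\text{and}\qquad
p_1 \, \lbrack F \rbrack' c = (\pi_0 \pi_1 F(w)^{-1},\ \pi_0 \pi_0 \pi_0 F_1,\ \pi_1 F_1)\,c .
\]
These expressions are the internal avatars of $F(v)^{-1} F(f)$ and $F(u)^{-1} F(m) F(f)$ respectively, where $u = mv$ is the defining relation of $W_\triangle$.

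To equate the two, I would insert the composable pair $(\pi_0 \pi_0 \pi_1 w F_1,\, \pi_0 \pi_0 \pi_1 F(w)^{-1})$ between the middle and right factors of $p_1 \, \lbrack F \rbrack' c$. The source-side inverse identity $(w F_1,\, F(w)^{-1})c = w F_1 s e$ forces this inserted pair to compose to an identity at the source of $F(v)$, so the identity law for composition in $\mX$ leaves the composite unchanged and produces a five-term expression. Next I would collapse the central pair $(\pi_0 \pi_0 \pi_0 F_1,\, \pi_0 \pi_0 \pi_1 w F_1)$: by functoriality it equals $(\pi_0 \pi_0 \pi_0,\, \pi_0 \pi_0 \pi_1 w)c\, F_1 = \pi_0 \pi_0 c\, F_1$, and the $W_\triangle$-relation rewrites this as $\pi_0 \pi_1 w F_1$, the internal representative of $F(u)$. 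Finally the now-leftmost pair $(\pi_0 \pi_1 F(w)^{-1},\, \pi_0 \pi_1 w F_1)$ collapses via the target-side inverse identity $(F(w)^{-1},\, w F_1)c = w F_1 t e$ to an identity at the target of $F(u)$, and a second application of the identity law cancels this leading identity, leaving precisely $(\pi_0 \pi_0 \pi_1 F(w)^{-1},\ \pi_1 F_1)\,c = p_0 \, \lbrack F \rbrack' c$ as desired.

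The main obstacle I anticipate is purely clerical: at each stage one must verify the source-target compatibility of the inserted pairings. This reduces to the structural equations $\pi_0 t = \pi_1 w s$ in $\mC_1 \tensor[_t]{\times}{_s} W$, $\pi_0 \pi_1 w s = \pi_1 s$ in $\slb$, and the source/target behaviour of $F(w)^{-1}$; no conceptual subtlety arises, since the argument is simply the standard extension of a functor along the localization at a category of right fractions realised internally via pairing maps.
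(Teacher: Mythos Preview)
Your proposal is correct and follows essentially the same approach as the paper: both arguments reduce to the single computation $F(u)^{-1}F(m)=F(v)^{-1}$ obtained from the $W_\triangle$-relation $(\pi_0\pi_0\pi_0,\pi_0\pi_0\pi_1 w)c=\pi_0\pi_1 w$, functoriality of $F$, and the two inverse identities for $F(w)^{-1}$. The only cosmetic difference is that the paper isolates the intermediate equality $(p_1\pi_0 F(w)^{-1},\pi_0\pi_0\pi_0 F_1)c = p_0\pi_0 F(w)^{-1}$ first and then substitutes, whereas you achieve the same effect by inserting the identity pair $(\pi_0\pi_0\pi_1 wF_1,\pi_0\pi_0\pi_1 F(w)^{-1})$ and collapsing; the sequence of cancellations is identical.
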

\begin{proof}

\noi The main idea here is that the left legs of the two spans inhabiting a sailboat, represented by $p_0 , p_1 : \slb \to \spn$, are arrows coming from $W$. These are part of a commuting triangle represented by $\pi_0 : \slb \to W_\triangle.$ More precisely, the left leg of the $p_1$ projection factors through the left leg of the $p_0$ projection by the arrow represented by the map $\pi_0 \pi_0 \pi_0 : \slb \to \mC_1$ in $\cE$. This is shown in the following calculation: 

\begin{align*}
  p_1 \pi_0 w
  &= \pi_0 \pi_1 w\\
  &= \pi_0 ( \pi_0 \pi_0 , \pi_0 \pi_1 w) c \\
  &=( \pi_0 \pi_0 \pi_0 , \pi_0 \pi_0 \pi_1 w) c \\
  &= ( \pi_0 \pi_0 \pi_0 , p_0 \pi_0 w) c
\end{align*}

\noi Functoriality of $F$ then gives 

\begin{align*}
  p_1 \pi_0 w F_1 
  = ( \pi_0 \pi_0 \pi_0 F_1 , p_0 \pi_0 w F_1 ) c.
\end{align*}

\noi The internal functor $F$ inverts the arrows coming from $w : W \to \mC_1$ so we can internally post-compose with $ p_0 \pi_0 F(w)^{-1} : \slb \to \mX_1$ to give the following calculation. This calculation uses associativity and the identity laws for internal composition in $\mX$, along with the definitions of $F(w)^{-1}$ and $\slb$ and functoriality of $F$. 

\begin{align*}
(p_1 \pi_0 w F_1 , p_0 \pi_0 F(w)^{-1} ) c 
&= (\pi_0 \pi_0 \pi_0 F_1 
,p_0 \pi_0 w F_1
,p_0 \pi_0 F(w)^{-1} ) c\\
&= (\pi_0 \pi_0 \pi_0 F_1 
,p_0 \pi_0 
(w F_1 , F(w)^{-1} ) c )c \\
&= (\pi_0 \pi_0 \pi_0 F_1 
,p_0 \pi_0 w F_1 s e)c \\
&= (\pi_0 \pi_0 \pi_0 F_1 
, \pi_0 \pi_0 \pi_1 w F_1 s e)c \\
&= (\pi_0 \pi_0 \pi_0 F_1 
, \pi_0 \pi_0 \pi_1 w s F_0 e)c \\
&= (\pi_0 \pi_0 \pi_0 F_1 
, \pi_0 \pi_0 \pi_0 t F_0 e)c \\
&= (\pi_0 \pi_0 \pi_0 F_1 
, \pi_0 \pi_0 \pi_0 F_1 t e)c \\
&= \pi_0 \pi_0 \pi_0 F_1 ( 1 
, t e)c \\
&= \pi_0 \pi_0 \pi_0 F_1 
\end{align*}

\noi A similar internal composition involving the first and last terms in the equation above with $p_1 \pi_0 F(w)^{-1} : \slb \to \mX_1$ gives

\begin{align*}
 ( p_1 \pi_0 F(w)^{-1} , \pi_0 \pi_0 \pi_0 F_1 ) c 
  &= (p_1 \pi_0 F(w)^{-1} , p_1 \pi_0 w F_1 , p_0 \pi_0 F(w)^{-1} ) c \\
  &= (p_1 \pi_0 (F(w)^{-1} ,w F_1)c , p_0 \pi_0 F(w)^{-1} ) c \\
  &= (p_1 \pi_0 w F_1 t e , p_0 \pi_0 F(w)^{-1} ) c \\
  &= (p_1 \pi_0 w t F_0 e , p_0 \pi_0 F(w)^{-1} ) c \\
  &= (p_0 \pi_0 w t F_0 e , p_0 \pi_0 F(w)^{-1} ) c \\
  &= (p_0 \pi_0 w F_1 t e , p_0 \pi_0 F(w)^{-1} ) c \\
  &= (p_0 \pi_0 F(w)^{-1} s e , p_0 \pi_0 F(w)^{-1} ) c \\
  &= p_0 \pi_0 F(w)^{-1} (s e , 1 ) c \\
  &= p_0 \pi_0 F(w)^{-1} 
\end{align*}

\noi Now we can substitute the last equation into the following calculation to see $\lbrack F \rbrack' c$ coequalizes the pair $p_0$ and $p_1$:

\begin{align*}
  p_0 \lbrack F \rbrack' c 
  &= p_0 (\pi_0 F(w)^{-1} , \pi_1 F_1)c \\
  &= (p_0 \pi_0 F(w)^{-1} , p_0 \pi_1 F_1)c \\
  &= \big( 
  ( p_1 \pi_0 F(w)^{-1} , \pi_0 \pi_0 \pi_0 F_1 ) c , 
  p_0 \pi_1 F_1 \big) c\\
  &= ( p_1 \pi_0 F(w)^{-1} , 
  \pi_0 \pi_0 \pi_0 F_1 , 
  p_0 \pi_1 F_1 ) c\\
  &= ( p_1 \pi_0 F(w)^{-1} , 
  (\pi_0 \pi_0 \pi_0 F_1 , 
  p_0 \pi_1 F_1) c ) c\\
  &= ( p_1 \pi_0 F(w)^{-1} , 
  (\pi_0 \pi_0 \pi_0, 
  p_0 \pi_1 ) c F_1 ) c\\
  &= ( p_1 \pi_0 F(w)^{-1} , 
  p_1 \pi_1 F_1 ) c\\
  &= p_1 (\pi_0 F(w)^{-1} , 
  \pi_1 F_1 ) c\\
  &= p_1 \lbrack F \rbrack' c\\
\end{align*}

\noi The existence and uniqueness of the map $\lbrack F \rbrack_1 : \mCW_1 \to \mX_1$ such that $q \lbrack F \rbrack_1 = \lbrack F \rbrack' c$ follows from the universal property of $\mCW_1$. 
\end{proof}

\noi The next step is to show that $\lbrack F \rbrack = (\lbrack F \rbrack_0 , \lbrack F \rbrack_1)$ is an internal functor. First we show identities are preserved by proving the following lemma. 

\begin{lem}\label{lem Phi preserves id}
The diagram 

\[ \begin{tikzcd}[]
\mC_0 \rar["F_0"] \dar["(\alpha {,} \alpha w)q"'] & \mX_0 \dar["e"] \\
\mCW_1 \rar["\lbrack F \rbrack_1"'] & \mX_1
\end{tikzcd}\]

\noi commutes in $\cE$. 
\end{lem}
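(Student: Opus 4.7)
The plan is to unfold both sides of the desired equation through the universal property of the coequalizer $\mCW_1$ and then reduce the resulting expression using the inverse relation defining $F(w)^{-1}$, the section property of $\alpha$, and functoriality of $F$. Throughout, the identity structure map on $\mCW_1$ in the left-hand column is precisely the map $e = (\alpha, \alpha w) q$ constructed in Section \ref{S Defining mCW, the internal cat of fracs}, so composing with $[F]_1$ and applying Lemma \ref{lem defining Phi_1} gives
\[
(\alpha, \alpha w)\, q\, [F]_1 \;=\; (\alpha, \alpha w)\, [F]'\, c.
\]

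First I would compute $(\alpha, \alpha w) [F]'$ by using the defining universal property of $[F]' : \spn \to \mX_2$: its left leg is $\pi_0 F(w)^{-1}$ and its right leg is $\pi_1 F_1$. Pairing these with the components $\alpha, \alpha w$ yields the composable pair $(\alpha F(w)^{-1}, \alpha w F_1) : \mC_0 \to \mX_2$, which is well-defined because $F(w)^{-1} t = w F_1 s$ (the source/target compatibility built into Definition~\ref{def invertibility of internal arrows}) implies $\alpha F(w)^{-1} t = \alpha w F_1 s$. Therefore $(\alpha, \alpha w) q [F]_1 = (\alpha F(w)^{-1}, \alpha w F_1) c$.

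Next I would show $(\alpha F(w)^{-1}, \alpha w F_1) c = F_0 e$. This is where the inverse condition on $F(w)^{-1}$ carries the argument: the second square of Definition~\ref{def invertibility of internal arrows} (applied to $w F_1$ with inverse $F(w)^{-1}$) gives the identity
\[
(F(w)^{-1}, w F_1) c \;=\; w F_1\, t\, e.
\]
Pre-composing with $\alpha : \mC_0 \to W$ and factoring $\alpha$ into both components yields
\[
(\alpha F(w)^{-1}, \alpha w F_1) c \;=\; \alpha w F_1\, t\, e \;=\; \alpha w t\, F_0\, e,
\]
where the last step uses internal functoriality of $F$ (compatibility with the target structure map, $F_1 t = t F_0$).

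Finally, \targetsection{} says $\alpha$ is a section of $wt$, so $\alpha w t = 1_{\mC_0}$ and the above collapses to $F_0 e$, completing the identity $(\alpha, \alpha w) q [F]_1 = F_0 e$. No real obstacle arises here; the only subtlety to watch is applying the correct inverse triangle (the one with target $te$ rather than $se$) and keeping track of the source/target compatibility when forming the pairing into $\mX_2$. The argument is essentially a two-line manipulation once the universal property of $\mCW_1$ is unwound.
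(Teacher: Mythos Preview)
Your proposal is correct and follows essentially the same approach as the paper: unwind $(\alpha,\alpha w)q[F]_1$ via $q[F]_1 = [F]'c$, compute the resulting pairing as $(\alpha F(w)^{-1}, \alpha w F_1)$, then use the inverse relation $(F(w)^{-1}, wF_1)c = wF_1\,te$, functoriality of $F$, and $\alpha wt = 1_{\mC_0}$ to reduce to $F_0 e$. The only nitpick is that the relation you invoke is the fourth diagram of Definition~\ref{def invertibility of internal arrows} rather than the second, but you identify it correctly as the one yielding $te$.
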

\begin{proof}
By the universal property of the pullback $\mX_2$, the definition of $F(w)^{-1}$, functoriality of $F$, and the fact that $\alpha$ is a section of $wt$ we have

\begin{align*}
(\alpha , \alpha w) q \lbrack F \rbrack_1 
&= (\alpha , \alpha w) \lbrack F \rbrack' c \\
&= (\alpha , \alpha w) (\pi_0 F(w)^{-1} , \pi_1 F_1) c \\
&= (\alpha F(w)^{-1} , \alpha w F_1) c \\
&= \alpha ( F(w)^{-1} , w F_1) c \\
&= \alpha w F_1 t e \\
&= \alpha w t F_0 e \\
&= F_0 e 
\end{align*}
\end{proof}

\noi The following lemma shows that $\lbrack F \rbrack$ preserves (internal) composition. When $\cE = \Set$ this is saying that for any pair of composable spans 
\[ \begin{tikzcd}[]
a & b \lar["\circ" marking, "v"] \rar["f"] & c & d \lar["\circ" marking, "v'"] \rar["f'"]& e
\end{tikzcd}\]
\noi with composite span, 
\[ \begin{tikzcd}[]
a & b' \lar["\circ" marking, "v''"] \rar["f''"] & e
\end{tikzcd},\]
\noi the diagram 

\[ \begin{tikzcd}[]
F(a) \rar["F(v)^{-1} "] \dar[dd, "f(v'')^{-1} "'] & F(b) \rar["F(f)"] & F(c) \dar["f(v')^{-1} "] \\
& & F(d) \dar["F(f')"] \\
F(b') \rar[rr, "F(f'')"'] && F(e) 
\end{tikzcd}\]

\noi commutes in $\mX$. To see this we look at the composition data 

\[\begin{tikzcd}
	&& e \\
	& {} & d \\
	a & b & c & {b'} & {a'}
	\arrow["v", from=3-2, to=3-1,"\circ" marking]
	\arrow["f"', from=3-2, to=3-3]
	\arrow["{v_0}"', from=2-3, to=3-2,"\circ" marking]
	\arrow["h", from=1-3, to=2-3]
	\arrow["{v'}", from=3-4, to=3-3,"\circ" marking]
	\arrow["{f'}"', from=3-4, to=3-5]
	\arrow["k", from=2-3, to=3-4]
	\arrow["{v''}"', curve={height=6pt}, from=1-3, to=3-1,"\circ" marking]
	\arrow["{f''}",curve={height=-6pt}, from=1-3, to=3-5]
\end{tikzcd}\]

\noi and apply the functor $F$ to the weak-composition triangle on the left to get the equation 

\[ F(v'') = F(h) F(v_0) F(v).\]

\noi Since $F$ inverts $W$, we can pre-compose both sides by $F(v'')^{-1}$ and post-compose them both by $F(v)^{-1} $ to get the equation 
\[ F(v)^{-1} = F(v'')^{-1} F(h) F(v_0). \] 
\noi Similarly, applying $F$ to the Ore-square gives the equation 
\[ F(v_0) F(f) = F(k) F(v')\]
\noi and post-composing with $F(v')^{-1}$ gives 
\[ F(v_0) F(f) F(v')^{-1} = F(k). \]
\noi Put it all together with functoriality of $F$ to see the square commutes. 

\begin{align*}
 F(v)^{-1} F(f) F(v')^{-1} F(f') 
 &= F(v'')^{-1} F(h) F(v_0) F(f) F(v')^{-1} F(f') \\
 &=F(v'')^{-1} F(h) F(k) F(f')\\
 &=F(v'')^{-1} F(f'')  .
 \end{align*}

\begin{lem}\label{lem Phi preserves composition}
The diagram,

\begin{center}
\begin{tikzcd}[column sep = large]
\mCW_2 \dar["c"'] \rar["\lbrack F \rbrack_1 \times \lbrack F \rbrack_1"] & \mX_2 \dar["c"] \\
\mCW_1 \rar["\lbrack F \rbrack_1"'] & \mX_1
\end{tikzcd}
\end{center}
\noi where $\lbrack F \rbrack_1 \times \lbrack F \rbrack_1 = (\pi_0 \lbrack F \rbrack_1 , \pi_1 \lbrack F \rbrack_1)$ is the unique pairing map, commutes in $\mX$. 
\end{lem}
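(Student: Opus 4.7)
The plan is to reduce the commutativity of this square to a calculation at the representative-span level by successive pre-composition with two epimorphisms, and then to carry out the calculation using the Ore-square axiom \textbf{In.Frc.(3)}, the weak-composition axiom \textbf{In.Frc.(2)}, functoriality of $F$, and the invertibility of $F(w)$ in $\mX$.

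First, I would pre-compose $c\,[F]_1$ and $([F]_1 \times [F]_1)\,c$ with the epimorphism $q \times q : \spn \tensor[_{t'}]{\times}{_{s'}} \spn \twoheadrightarrow \mCW_2$ from Lemma~\ref{lem composable pair pb is coequalizer}. Since $(q \times q) c = c'$ by the universal property defining $c$ in Proposition~\ref{prop composition of spans is well-defined}, and $q [F]_1 = [F]' c$ by Lemma~\ref{lem defining Phi_1}, and since pairing maps compose as $(q \times q)([F]_1 \times [F]_1) = ([F]' c) \times ([F]' c)$, the problem reduces to the equality
\[
c'\, [F]_1 \;=\; (\pi_0 [F]' c \,,\, \pi_1 [F]' c)\, c
\]
as maps $\spn \tensor[_{t'}]{\times}{_{s'}} \spn \to \mX_1$. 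Next, pre-composing with the cover $u : U \to \spn \tensor[_{t'}]{\times}{_{s'}} \spn$ from Lemma~\ref{lem actual definition of c'}, which satisfies $u c' = \sigma_\circ q$, and using that $u$ is epic, the goal becomes
\[
\sigma_\circ\, [F]'\, c \;=\; (u \pi_0 [F]' c \,,\, u \pi_1 [F]' c)\, c
\]
as maps $U \to \mX_1$.

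Then I would unpack both sides using the explicit formula for $\sigma_\circ$ from Lemma~\ref{lem defining c'} and the definition $[F]' = (\pi_0 F(w)^{-1} , \pi_1 F_1)$, using functoriality of $F$ to distribute $F_1$ over the internal triple composite that forms the right leg of $\sigma_\circ$. The left-hand side becomes a four-term internal composite in $\mX_1$ starting with $\omega \pi_1 F(w)^{-1}$ followed by $F_1$ applied to the three factors from the Ore filler and from the right leg of the second input span; the right-hand side is a four-term composite built from $u \pi_0 \pi_0 F(w)^{-1}$, $u \pi_0 \pi_1 F_1$, $u \pi_1 \pi_0 F(w)^{-1}$, and $u \pi_1 \pi_1 F_1$.

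The core of the proof is then the internal translation of the classical diagram chase. The Ore-square commutativity inside $W_\square$, pulled back along $\theta$, is an identity of composites in $\mC_1$; applying $F_1$, using functoriality, and using invertibility of $F(w)$ (in the form $(F(w)^{-1} , w F_1)\, c = w F_1 t e$ and its dual) lets one rewrite the middle subterm informally labelled $F(f_0)\,F(v_1)^{-1}$ on the right-hand side as $F(v_0')^{-1}\,F(f_0')$. The weak-composition identity inside $W_\circ$, pulled back along $\omega$, expresses $\omega \pi_1 w$ as an internal triple composite in $\mC_1$; applying $F_1$ and inverting yields a rewriting of $F(v_0)^{-1}\,F(v_0')^{-1}$ as $F(\omega \pi_1)^{-1}\,F(\omega \pi_0 \pi_0 \pi_0)$. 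A final application of functoriality and associativity identifies the resulting composite with the left-hand side.

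The main obstacle will be the bookkeeping: each substitution above must be carried out internally using pairing maps and the defining equations of $W_\square$ and $W_\circ$, and one must insert identity laws of the form $(F_1 s e , F_1)\, c = F_1$ and inverse laws $(F(w)^{-1} , w F_1)\, c = w F_1 t e$ at the right places to shuttle inverse pairs past each other inside long internal composites. The pattern is structurally similar to the associativity and identity-law proofs for $\mCW$ (Propositions~\ref{prop assoc law} and \ref{Prop Id Laws}), but the argument is substantially simpler here because the hypothesis that $F$ inverts $w$ provides a genuine two-sided inverse to $w F_1$ in $\mX$ directly, so no further refinement of covers beyond $u$ is required.
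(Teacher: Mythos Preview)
Your proposal is correct and takes essentially the same approach as the paper: reduce by pre-composing with the epimorphism $u(q\times q)$, then carry out the calculation using functoriality of $F$, the Ore-square identity from $W_\square$ via $\theta$, the weak-composition identity from $W_\circ$ via $\omega$, and the two-sided invertibility of $F(w)$ in $\mX$. The only difference is organizational: the paper first isolates two key equations---$u\pi_0\pi_0 F(w)^{-1} = (\omega\pi_1 F(w)^{-1}, \omega\pi_0\pi_0 F_1, u_0\theta\pi_0\pi_0 w F_1)c$ from weak composition, and $(u_0\theta\pi_0\pi_0 w F_1, u\pi_0\pi_1 F_1, u\pi_1\pi_0 F(w)^{-1})c = u_0\theta\pi_1\pi_0 F_1$ from the Ore square---and then substitutes them into the main chain, whereas you describe deriving them in-line; the substance is the same.
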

\begin{proof}
Recall that $u : U \nrightarrow \spn \tensor[_t]{\times}{_s} \spn$ is the cover on which we defined composition, with $u = u_0 u_1$ in the diagram constructed by the Internal Fractions Axioms: 

\begin{center}
\begin{tikzcd}[column sep = huge ]
 W_\circ \rar[rr, "(\pi_0 \pi_1 {,} \pi_0 \pi_2)"] 
&& W \times_{\mC_0} W 
& \\
U 
\dar["\sigma_\circ"]
\uar["\omega"]
\rar[rr, "/" marking , "u_0" near start]
 &
 & U_0 
 \dar["\theta"']
 \uar["(\theta \pi_0 \pi_0 {, } u_1 \pi_0 \pi_0) "'] 
 \rar["/" marking , "u_1" near start] 
 & \spn \tensor[_t]{\times}{_s} \spn
 \dar["( \pi_0 \pi_1 {, } \pi_1 \pi_0)"] 
 \\
 \spn 
 &
 & W_\square 
 \rar["(\pi_0 \pi_1 {,} \pi_1 \pi_1)"'] 
 & \csp
\end{tikzcd}
\end{center}\

\noi We use this cover when we need to show certain maps out of $\spn \tensor[_t]{\times}{_s} \spn$ are equal. More precisely, by showing it (or its composition with other epimorphisms) equalizes two maps we are interested in proving are equal. We begin this proof with the weak-composition triangle, $\omega : U \to W_\circ$, and the equation encoding it is commutativity. 
 
 \[ \omega_0 \pi_1 = (\omega_0 \pi_0 \pi_0 , u_0 \theta \pi_0 \pi_0 w , u \pi_0 \pi_0 w ) c \]

\noi By functoriality of $F$ we have 

\[ \omega \pi_1 w F_1 = (\omega_0 \pi_0 \pi_0 F_1 , u_0 \theta \pi_0 \pi_0 w F_1 , u \pi_0 \pi_0 w F_1 ) c .\]

\noi We can internally pre-compose both sides of $\mX$ with the map $\omega \pi_1 F(w)^{-1} : U \to \mX_1$ and internally post-compose with $u \pi_0 \pi_0 F(w)^{-1} : U \to \mX_1$. Before writing down the new equation however we do the following intermediate calculation: 

\begin{align*}
(\omega \pi_1 F(w)^{-1} , \omega \pi_1w F_1)c 
&= \omega \pi_1 (F(w)^{-1} , w F_1)c\\
&= \omega \pi_1 w F_1 t e \\
&= \omega \pi_1 w t e F_1 \\
&= u \pi_0 \pi_0 w t e F_1 \\
&= u \pi_0 \pi_0 w F_1 t e\\
&= u \pi_0 \pi_0 F(w)^{-1} s e 
\end{align*}

\noi Using this along with the definitions of $W_\square$, $\theta$, and $\csp$ gives 

\begin{align*}
( u \pi_0 \pi_0 w F_1 , u \pi_0 \pi_0 F(w)^{-1} ) c 
&= u \pi_0 \pi_0 ( w F_1, F(w)^{-1})c \\
&= u \pi_0 \pi_0 w F_1 s e \\
&= u \pi_0 \pi_0 w s e F_1 \\
&= u_0 u_1 \pi_0 \pi_1 w t e F_1 \\
&= u_0 \theta \pi_0 \pi_0 w t e F_1\\
&= u_0 \theta \pi_0 \pi_0 w F_1 t e. 
\end{align*}

\noi Now the pre/post-composed equation is

\begin{align*}
  & \ \ \ \ ( u \pi_0 \pi_0 F(w)^{-1} s e , u \pi_0 \pi_0 F(w)^{-1})c\\
  & = = (\omega \pi_1 F(w)^{-1} , \omega_0 \pi_0 \pi_0 F_1 , u_0 \theta \pi_0 \pi_0 w F_1, u_0 \theta \pi_0 \pi_0 w F_1 t e )c 
\end{align*} 

\noi where the left side simplifies via the identity law in $\mX$ as 

\[ ( u \pi_0 \pi_0 F(w)^{-1} s e , u \pi_0 \pi_0 F(w)^{-1})c = u \pi_0 \pi_0 F(w)^{-1} (s e , 1 )c = u \pi_0 \pi_0 F(w)^{-1} \]

\noi and the right side simplifies similarly as 

\begin{align*}
& \ \ \ \ (\omega \pi_1 F(w)^{-1} , \omega_0 \pi_0 \pi_0 F_1 , u_0 \theta \pi_0 \pi_0 w F_1, u_0 \theta \pi_0 \pi_0 w F_1 t e)c \\
&= \big(\omega \pi_1 F(w)^{-1} , \omega_0 \pi_0 \pi_0 F_1 , 
u_0 \theta \pi_0 \pi_0 w F_1(1, t e)c \big) c \\
&= \big(\omega \pi_1 F(w)^{-1} , \omega_0 \pi_0 \pi_0 F_1 , 
u_0 \theta \pi_0 \pi_0 w F_1 \big) c
\end{align*}

\noi giving the simplified equation: 

\[ \label{eq F(wc triangle) conj'd simp'd} 
u \pi_0 \pi_0 F(w)^{-1} = \big(\omega \pi_1 F(w)^{-1} , \omega_0 \pi_0 \pi_0 F_1 , 
u_0 \theta \pi_0 \pi_0 w F_1 \big) c \tag{$\star$}. \]

\noi Now take the Ore-square, $u_0 \theta : U \to W_\square$, and the equation describing commutativity, 
\[ (u_0 \theta \pi_0 \pi_0 w , u \pi_0 \pi_1) c = (u_0 \theta \pi_1 \pi_0 , u \pi_1 \pi_0 w)c , \]
\noi and apply $F$ to get the equation 
\[ (u_0 \theta \pi_0 \pi_0 wF_1 , u \pi_0 \pi_1 F_1) c = (u_0 \theta \pi_1 \pi_0 F_1 , u \pi_1 \pi_0 w F_1 )c .\]

\noi Since $F$ inverts $w: W \to \mC_1$ we can post-compose both sides with $u \pi_1 \pi_0 F(w)^{-1} : U \to \mX_1$ to get a new equation. The following computation showing how this is done follows more or less by the definitions of $F(w)^{-1}$ and $\theta$ and functoriality of $F$:

\begin{align*}
( u \pi_1 \pi_0 w F_1 , u \pi_1 \pi_0 F(w)^{-1}) c 
&= u \pi_1 \pi_0 (w F_1 , F(w)^{-1}) c \\
&= u \pi_1 \pi_0 w F_1 s e \\
&= u \pi_1 \pi_0 w s e F_1 \\
&= u_0 \theta \pi_1 \pi_0 t e F_1 \\
&= u_0 \theta \pi_1 \pi_0 F_1 t e 
\end{align*}
\noi Adding internal composition with $u_0 \theta \pi_1 \pi_0 F_1 : U \to \mX_1$ on the right of both side of the internal compositions shown in the last equation and applying the identity law in $\mX$ gives: 

\begin{align*}
(u_0 \theta \pi_1 \pi_0 F_1 , u \pi_1 \pi_0 w F_1 , u \pi_1 \pi_0 F(w)^{-1} )c 
&= (u_0 \theta \pi_1 \pi_0 F_1 , u_0 \theta \pi_1 \pi_0 F_1 t e ) c \\
&= u_0 \theta \pi_1 \pi_0 F_1 (1 , te) c\\
&= u_0 \theta \pi_1 \pi_0 F_1.
\end{align*}

\noi Then by the definition of $W_\square$ we get the equation 

\begin{align*}
& \ \ \ \ (u_0 \theta \pi_0 \pi_0 wF_1 , u \pi_0 \pi_1 F_1 , u \pi_1 \pi_0 F(w)^{-1}) c \\
&= ((u_0 \theta \pi_0 \pi_0 w F_1 , u \pi_0 \pi_1 F_1)c , u \pi_1 \pi_0 F(w)^{-1}) c \\
&= ((u_0 \theta \pi_1 \pi_0 F_1 , u \pi_1 \pi_0 w F_1)c , u \pi_1 \pi_0 F(w)^{-1}) c \\
&= (u_0 \theta \pi_0 \pi_0 wF_1 , u \pi_0 \pi_1 F_1 , u \pi_1 \pi_0 F(w)^{-1}) c \\
&=u_0 \theta \pi_1 \pi_0 F_1. \tag{$\star \star$}
\end{align*}  

\noi which simplifies to the equality

\[ \label{eq F(ore sqr) conj'd simp'd} (u_0 \theta \pi_0 \pi_0 wF_1 , u \pi_0 \pi_1 F_1 , u \pi_1 \pi_0 F(w)^{-1}) c
= u_0 \theta \pi_1 \pi_0 F_1. \tag{$\star \star$} \]

\noi which we use in following calculation that shows $[F]$ preserves composition. By equations (\ref{eq F(ore sqr) conj'd simp'd}) and (\ref{eq F(wc triangle) conj'd simp'd}) along with functoriality of $F$ and the identity law in $\mX$ we have: 

\begin{align*}
 & \ \ \ \ u (q \times q) c \lbrack F \rbrack_1 \\
 &= u c' \lbrack F \rbrack_1 \\
 &= \sigma_c q \lbrack F \rbrack_1 \\
 &= \sigma_c \lbrack F \rbrack ' \\
 &= (\sigma_c \pi_0 F(w)^{-1} , \sigma_c \pi_1 F_1) c  \\
 &= (\omega \pi_1 F(w)^{-1} , \omega \pi_0 \pi_0 F_1 , u_0 \theta \pi_1 \pi_0 F_1, u \pi_1 \pi_1 F_1 ) c \\ 
 &= (\omega \pi_1 F(w)^{-1} , \omega \pi_0 \pi_0 F_1 , u_0 \theta \pi_0 \pi_0 wF_1 , u \pi_0 \pi_1 F_1 , u \pi_1 \pi_0 F(w)^{-1} , u \pi_1 \pi_1 F_1 ) c \\ 
 &= ((\omega \pi_1 F(w)^{-1} , \omega \pi_0 \pi_0 F_1 , u_0 \theta \pi_0 \pi_0 wF_1 )c , u \pi_0 \pi_1 F_1 , u \pi_1 \pi_0 F(w)^{-1} , u \pi_1 \pi_1 F_1 ) c \\ 
 &= (u \pi_0 \pi_0 F(w)^{-1} , u \pi_0 \pi_1 F_1 , u \pi_1 \pi_0 F(w)^{-1} , u \pi_1 \pi_1 F_1 ) c \\ 
&= \big( u \pi_0 (\pi_0 F(w)^{-1} , \pi_1 F_1)c , u \pi_1 (\pi_0 F(w)^{-1} , \pi_1 F_1 )c \big) c \\ 
&= \big( u \pi_0 \lbrack F \rbrack' c , u \pi_1 \lbrack F \rbrack' c \big) c \\ 
&= u \big( \pi_0 q \lbrack F \rbrack_1 , \pi_1 q \lbrack F \rbrack_1 \big) c \\ 
&= u (q \times q) (\lbrack F \rbrack_1 \times \lbrack F \rbrack_1) c .
\end{align*} 

\noi The composite $u (q \times q)$ is epic, so we get 
\[ c \lbrack F \rbrack_1 = (\lbrack F \rbrack_1 \times \lbrack F \rbrack_1) c \] 
\noi as desired. 
\end{proof}

\begin{prop}\label{prop IntFrc 1-cell corresp. well-defined in one direction}
The maps $\lbrack F \rbrack_0 = F_0 : \mC_0 \to \mX_0$ and $\lbrack F \rbrack_1 : \mCW_1 \to \mX_1$ determine an internal functor $\lbrack F \rbrack : \mCW \to \mX$ such that the diagram 
\[ \begin{tikzcd}[]
\mC \rar[rr, "F"] \ar[dr, "L"'] & & \mX\\
& \mCW \ar[ur, "\lbrack F \rbrack"']
\end{tikzcd}\]
\noi commutes in $\cE$. 
\end{prop}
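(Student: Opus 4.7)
The plan is to verify that $\lbrack F \rbrack = (\lbrack F \rbrack_0, \lbrack F \rbrack_1)$ satisfies the defining axioms of an internal functor and then check the triangle on objects and arrows separately. The hardest parts, namely preservation of identities and composition, have already been established in Lemmas~\ref{lem Phi preserves id} and~\ref{lem Phi preserves composition}, so most of the work is already done. What remains is essentially bookkeeping together with one short computation verifying $L \lbrack F \rbrack = F$ on arrows.

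First I would verify source/target compatibility. Since $\lbrack F \rbrack_1 : \mCW_1 \to \mX_1$ is uniquely determined by $q \lbrack F \rbrack_1 = \lbrack F \rbrack' c$ and the source and target maps $s, t : \mCW_1 \to \mC_0$ are themselves induced by $q s = \pi_0 w t$ and $q t = \pi_1 t$, it suffices to check compatibility after pre-composing with $q$. Using the definitions $\lbrack F \rbrack' = (\pi_0 F(w)^{-1}, \pi_1 F_1)$ and the source/target conditions on $F(w)^{-1}$, the identities $\lbrack F \rbrack_1 s = s F_0$ and $\lbrack F \rbrack_1 t = t F_0$ follow from a short diagram chase. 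Preservation of identities is then Lemma~\ref{lem Phi preserves id} applied through the definition $e = (\alpha, \alpha w) q$, and preservation of composition is exactly Lemma~\ref{lem Phi preserves composition}. These together establish that $\lbrack F \rbrack : \mCW \to \mX$ is an internal functor.

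For the commuting triangle, on objects this is immediate: $L_0 = 1_{\mC_0}$ and $\lbrack F \rbrack_0 = F_0$, so $L_0 \lbrack F \rbrack_0 = F_0$. On arrows we compute directly from the definitions of $L_1 = \big( s\alpha, (s\alpha w, 1)c \big) q$ and $\lbrack F \rbrack_1$ via $q \lbrack F \rbrack_1 = \lbrack F \rbrack' c$. Expanding gives
\[ L_1 \lbrack F \rbrack_1 = \big( s\alpha F(w)^{-1}, \, s\alpha w F_1, \, F_1 \big) c. \]
Re-associating and using the identity $(F(w)^{-1}, wF_1) c = wF_1 s e$ from Definition~\ref{def invertibility of internal arrows}, together with functoriality of $F$ and the fact that $\alpha$ is a section of $w t$, this becomes
\[ (s\alpha w F_1 s e, \, F_1) c = (F_1 s e, \, F_1) c = F_1 \]
by the identity law in $\mX$. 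Hence $L_1 \lbrack F \rbrack_1 = F_1$, completing the proof.

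Uniqueness is not claimed in this Proposition but is what allows the forthcoming 1-cell correspondence; the main obstacle, as noted, was entirely contained in the composition lemma, since preserving composition required the carefully constructed $\lbrack F \rbrack'$ to factor through the coequalizer in a manner compatible with the span-composition cover, which is already handled upstream.
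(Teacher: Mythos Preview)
Your approach is essentially identical to the paper's: cite Lemmas~\ref{lem Phi preserves id} and~\ref{lem Phi preserves composition} for functoriality, observe $L_0\lbrack F\rbrack_0 = F_0$ trivially, and then compute $L_1\lbrack F\rbrack_1 = F_1$ by expanding through $\lbrack F\rbrack'$ and cancelling via the invertibility identity and the section property of $\alpha$. Your additional explicit check of source/target compatibility is a reasonable inclusion the paper leaves implicit.

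There is, however, a small slip in your computation that makes one step fail as written. The identity from Definition~\ref{def invertibility of internal arrows} is
\[
(F(w)^{-1},\, wF_1)\,c \;=\; wF_1\, t\, e,
\]
not $wF_1\, s\, e$. With the correct $te$ you obtain $(s\alpha\, wF_1\, te,\, F_1)c$, and then functoriality of $F$ together with $\alpha w t = 1_{\mC_0}$ gives $s\alpha w F_1 t e = s\alpha w t F_0 e = s F_0 e = F_1 s e$, after which the identity law in $\mX$ finishes the argument exactly as you wrote. With your stated $se$ the reduction $s\alpha w F_1 s e = F_1 s e$ would require $\alpha w s = 1$, which is not given ($\alpha$ is a section of $wt$, not $ws$). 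Fix that one symbol and your proof matches the paper's line for line.
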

\begin{proof}
Functoriality follows from Lemma~\ref{lem Phi preserves id} and Lemma~\ref{lem Phi preserves composition}. To see the diagram commutes we can immediately see 

\[ L_0 \lbrack F \rbrack_0 = 1_{\mC_0} F_0 = F_0 \]

\noi and then use the definitions of $L_1$, $\lbrack F \rbrack_1$, and $\lbrack F \rbrack'$, along with functoriality of $F$, the identity law, $(se , 1) c =1$, in $\mX$, and the fact that $\alpha$ is a section of $wt$ to compute

\begin{align*}
L_1 \lbrack F \rbrack_1 
&= (s \alpha , (s \alpha w , 1) c) q \lbrack F \rbrack_1 \\
&= (s \alpha , (s \alpha w , 1) c) \lbrack F \rbrack' c\\
&= (s \alpha , (s \alpha w , 1) c) (\pi_0 F(w)^{-1} , \pi_1 F_1) c\\
&=(s \alpha F(w)^{-1} , (s \alpha w , 1) c F_1) c\\
&=(s \alpha F(w)^{-1}, s \alpha wF_1 , F_1) c\\
&= (s \alpha (F(w)^{-1} , w F_1) c , F_1)c \\
&= (s \alpha w F_1 t e , F_1) c \\
&= (s \alpha w t e F_1 , F_1) c \\
&= (s e F_1 , F_1) c \\
&= (F_1 se , F_1) c\\
&= F_1 (se , 1) c\\
&= F_1. 
\end{align*} 
\end{proof}

\begin{prop}\label{prop UPIntFrac 1-cell corresondence}
Every internal functor $F : \mC \to \mX$ that inverts $w : W \to \mC_1$ corresponds uniquely to an internal functor $ [F] : \mCW \to \mX$.
\end{prop}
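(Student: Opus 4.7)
The plan is to exhibit the assignment $F \mapsto [F]$ of Proposition~\ref{prop IntFrc 1-cell corresp. well-defined in one direction} as having a two-sided inverse given by pre-composition with the internal localization functor $L : \mC \to \mCW$. The forward direction is already established: from any internal functor $F$ that inverts $w$, Lemmas~\ref{lem defining Phi_1}, \ref{lem Phi preserves id}, and \ref{lem Phi preserves composition} together with Proposition~\ref{prop IntFrc 1-cell corresp. well-defined in one direction} produce an internal functor $[F] : \mCW \to \mX$ with $L[F] = F$. For the reverse direction, given any internal functor $G : \mCW \to \mX$, set $F := LG$. Proposition~\ref{prop int loc functor inverts W} exhibits $L$ as inverting $w$ with explicit inverse $(1_W, wse)q$ in $\mCW_1$, and Lemma~\ref{lem internal functors preserve internal inverses} says internal functors preserve internal inverses; hence $F$ inverts $w$ via $F(w)^{-1} = (1_W, wse)q\, G_1$.

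To show the two assignments are mutually inverse, I would prove the stronger statement that any internal functor $H : \mCW \to \mX$ satisfying $LH = F$ must coincide with $[F]$. This settles uniqueness of $[F]$ and, taking $H := [LG]$ with $F := LG$, also shows $[LG] = G$. On objects this is immediate from $L_0 = 1_{\mC_0}$. On arrows, since the coequalizer map $q : \spn \to \mCW_1$ is epic, it suffices to verify $qH_1 = q[F]_1 = [F]' c$, where $[F]' = (\pi_0 F(w)^{-1}, \pi_1 F_1)$ is the map from Lemma~\ref{lem defining Phi_1}.

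The key tool is Lemma~\ref{lem factorization of equiv classes of spans}, which combined with $c' = q_2 \cdot c$ yields the factorization $q = \gamma\,(q \times q)\, c$, where
\[
\gamma \;=\; \big( (\pi_0,\ \pi_0 wse),\ (\pi_0 w s\alpha,\ (\pi_0 w s\alpha w,\ \pi_1)c) \big) : \spn \to \spn \tensor[_t]{\times}{_s} \spn.
\]
The two components of $\gamma$ present every span as the composite of a pseudo-inverse span $(\pi_0, \pi_0 wse)$ and the $L_1$-image of the right leg; indeed $(\pi_0, \pi_0 wse)\, q = \pi_0 \cdot (1_W, wse)\, q$ and $(\pi_0 w s\alpha, (\pi_0 w s\alpha w, \pi_1)c)\, q = \pi_1 L_1$. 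Post-composing with $H_1$ and using $LH = F$ together with Lemma~\ref{lem internal functors preserve internal inverses} converts these to $\pi_0 F(w)^{-1}$ and $\pi_1 F_1$ respectively, and functoriality of $H$ carries composition in $\mCW$ to composition in $\mX$, giving
\[
q H_1 \;=\; \gamma\,(q H_1 \times q H_1)\, c \;=\; (\pi_0 F(w)^{-1},\ \pi_1 F_1)\, c \;=\; [F]'\, c \;=\; q [F]_1.
\]
Since $q$ is epic, $H_1 = [F]_1$, completing the argument. The main technical subtlety is the identification of the first component of $\gamma$ after applying $q$ and $H_1$ as $\pi_0 F(w)^{-1}$: this requires that any internal functor $H$ with $LH = F$ transports the canonical inverse $(1_W, wse)q$ of $L_1 \circ w$ in $\mCW$ to the chosen inverse $F(w)^{-1}$ in $\mX$, which is exactly the content of Lemma~\ref{lem internal functors preserve internal inverses}.
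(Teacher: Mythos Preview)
Your proposal is correct and follows essentially the same route as the paper. Both arguments use Lemma~\ref{lem factorization of equiv classes of spans} to factor $q$ as $\gamma\,(q\times q)\,c$, identify the two components of $\gamma$ after post-composing with $q$ as $\pi_0(1_W,wse)q$ and $\pi_1 L_1$, and then push these through the given functor using its preservation of composition. The paper applies this directly to $H=G$ with the specific choice $(LG)(w)^{-1}:=(1_W,wse)q\,G_1$, whereas you phrase it as a uniqueness statement (any $H$ with $LH=F$ equals $[F]$), which is a harmless and slightly cleaner repackaging. One small remark: Lemma~\ref{lem internal functors preserve internal inverses} only tells you that $(1_W,wse)q\,H_1$ is \emph{an} inverse of $wF_1$; to conclude it equals the chosen $F(w)^{-1}$ you also need uniqueness of internal inverses, which is the usual one-line argument $(z,x,y)c=z=(z,x,y)c=y$ but is not literally the content of that lemma.
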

\begin{proof}
Lemma~\ref{prop IntFrc 1-cell corresp. well-defined in one direction} implies the forward direction. Now notice that for any internal functor $G : \mCW \to \mX$, there is an internal functor $LG : \mC \to \mX$ given by pre-composing with the localization functor $L : \mC \to \mCW$. In Proposition~\ref{prop int loc functor inverts W} we saw that $L$ inverts $w : W \to \mC_1$, with $(wL)^{-1} = ( 1 , w s e) q : W \to \mCW_1$. Functoriality of $G$ implies $(wL)^{-1} G_1 : W \to \mX_1$ is an inverse of $wLG : W \to \mX_1$ in $\mX$ so this establishes the other direction of the correspondence. 

For any $F : \mC \to \mX$ inverting $w : W \to \mC_1$, let $[F] : \mCW \to \mX$ be the corresponding internal functor. Pre-composing with $L : \mC \to \mX$ gives 

\[ L [F] = F\]

\noi so composing the assignments in one direction gives an identity. On the other hand, for any $G : \mCW \to \mX$, if we pre-compose with $L : \mC \to \mCW$ and then find the corresponding internal functor $\mCW \to \mX$ we get that 

\[ [LG] : \mCW \to \mX \]

\noi where $q [LG] = [LG]' c : \spn \to \mX_1$. Now expanding this with the explicit definition of 

\[ [LG]' = (\pi_0 (LG)(w)^{-1} , \pi_1 (LG)_1 ) \]

\noi we can use the definition 

\[ (LG)(w)^{-1} = (wL)^{-1} G_1 = (1, wse)q G_1 ,\]

\noi functoriality of $L$ and $G$, the definition 

\[ L_1 = (s\alpha, (s \alpha w, 1)c )q ,\]

\noi a bit of factoring with pairing maps, the fact that $q_2 c = c' : \spn \tensor[_t]{\times}{_s} \spn \to \spn$, and Lemma~\ref{lem factorization of equiv classes of spans} in the last line to see: 

\begin{align*}
[LG]'c 
&= (\pi_0 (LG)(w)^{-1} , \pi_1 (LG)_1 )c \\ 
&= (\pi_0 (wL)^{-1} G_1 , \pi_1 L_1 G_1 )c \\
&=(\pi_0 (wL)^{-1} , \pi_1 L_1 )c G_1 \\
&=(\pi_0 (1 , w s e)q , \pi_1 (s\alpha, (s \alpha w, 1)c )q ) c G_1 \\
&=(( \pi_0 , \pi_0 w s e ) q , (\pi_1 s\alpha, (\pi_1 s \alpha w, \pi_1)c )q ) c G_1 \\
&=(( \pi_0 , \pi_0 w s e ) , (\pi_0 s\alpha, (\pi_0 s \alpha w, \pi_1)c ) q_2 c G_1 \\
&=(( \pi_0 , \pi_0 w s e ) , (\pi_0 s\alpha, (\pi_0 s \alpha w, \pi_1)c ) c' G_1 \\
&= q G_1 
\end{align*}

\noi This implies that $[LG] = G_1$ by the universal property of the coequalizer $\mCW_1$. We have shown that the assignments in either direction are inverses to one another, so this correspondence is unique. 
\end{proof}

\subsubsection{Correspondence Between 2-Cells}

Next we show the 2-cell correspondence between internal natural transformations for the internal functors in the 1-cell correspondence of Proposition~\ref{prop UPIntFrac 1-cell corresondence}. 

In this subsection we see that internal natural transformations, $\alpha : F \implies G$, between internal functors, $F,G: \mC \to \mX$, that invert $w : W \to \mC_1$ correspond uniquely to natural transformations, $[\alpha] : [F] \implies [G]$, between the uniquely corresponding internal functors $[F] , [G] : \mCW \to \mX$ from Section \ref{SS UPIntFrac 1-cells}. The main result of Section \ref{S UP of Internal Fractions} is the isomorphism of categories established in Theorem~\ref{thm 2-dimensional universal property of localization}. 

We begin with a lemma that shows one direction of the correspondence between the aforementioned natural transformations,

\begin{lem}\label{lem induced 2-cells `foreward direction'}
Every internal natural transformation,

\[ \begin{tikzcd}[column sep = huge, row sep = huge, every label/.append style={font=\scriptsize}]
\mC 
\arrow[r, bend left=50, "F"{name=f}] 
\arrow[r, bend right=50,"G"'{name=g}] 
& \mX 
 \arrow[Rightarrow, shorten <= 10pt, shorten >=10pt, from=f, to=g, "\alpha"]
\end{tikzcd},\]

\noi induces a canonical natural transformation:

\[ \begin{tikzcd}[column sep = huge, row sep = huge, every label/.append style={font=\scriptsize}]
\mCW
\arrow[r, bend left=50, "\lbrack F \rbrack"{name=f}] 
\arrow[r, bend right=50,"\lbrack G \rbrack"'{name=g}] 
& \mX 
 \arrow[Rightarrow, shorten <= 10pt, shorten >=10pt, from=f, to=g, "\lbrack \alpha \rbrack"]
\end{tikzcd}\]
\end{lem}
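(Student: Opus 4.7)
The plan is to define $[\alpha] \colon \mCW_0 \to \mX_1$ to be simply $\alpha$ itself, using the identifications $\mCW_0 = \mC_0$, $[F]_0 = F_0$, and $[G]_0 = G_0$. The source and target conditions $[\alpha] s = [F]_0$ and $[\alpha] t = [G]_0$ then reduce to $\alpha s = F_0$ and $\alpha t = G_0$, which hold by hypothesis on $\alpha$. The substance of the proof lies in verifying the naturality square
\[
([F]_1, t[\alpha])c = (s[\alpha], [G]_1)c \colon \mCW_1 \to \mX_1.
\]
Since $q \colon \spn \to \mCW_1$ is an epimorphism (being a coequalizer), it suffices to establish this equation after precomposing with $q$. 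Using $qs = \pi_0 wt$, $qt = \pi_1 t$, and $q[F]_1 = (\pi_0 F(w)^{-1}, \pi_1 F_1)c$ (and analogously for $G$) from Lemma~\ref{lem defining Phi_1}, both sides unwind into explicit triple composites in $\mX$ over the span object.

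The crux is an internal conjugation identity
\[
(F(w)^{-1}, ws\alpha)c = (wt\alpha, G(w)^{-1})c \colon W \to \mX_1,
\]
which internalises the classical computation $F(v)^{-1}\alpha_b = \alpha_a G(v)^{-1}$ for $v \colon b \to a$ in $W$. I would prove it by starting from the whiskered naturality equation $(wF_1, wt\alpha)c = (ws\alpha, wG_1)c$, precomposing by $F(w)^{-1}$ and postcomposing by $G(w)^{-1}$ to form the quadruple composite $(F(w)^{-1}, ws\alpha, wG_1, G(w)^{-1})c$, and then collapsing each end using the inverse relations $(F(w)^{-1}, wF_1)c = wtF_0 e$ and $(wG_1, G(w)^{-1})c = wsG_0 e$ together with the identity laws of $\mX$.

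With the conjugation identity in hand, the main equation follows by unwinding. Starting from $q([F]_1, t[\alpha])c$, associativity delivers the triple composite $(\pi_0 F(w)^{-1}, \pi_1 F_1, \pi_1 t\alpha)c$; naturality of $\alpha$ applied to the arrow $\pi_1 \colon \spn \to \mC_1$ rewrites its last two terms as $(\pi_1 s\alpha, \pi_1 G_1)c$; the pullback equality $\pi_1 s = \pi_0 ws$ brings the middle term under $\pi_0$, so that the first two terms can be collected as $\pi_0(F(w)^{-1}, ws\alpha)c$; the conjugation identity replaces this by $\pi_0(wt\alpha, G(w)^{-1})c$; and reassembly delivers $q(s[\alpha], [G]_1)c$. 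The only non-formal step is the conjugation identity; the remainder is associativity, factoring through pullback projections, and definition chasing.
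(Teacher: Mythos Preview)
Your proposal is correct and follows essentially the same approach as the paper: define $[\alpha]=\alpha$ via $\mCW_0=\mC_0$, reduce the naturality square to an equation on $\spn$ by precomposing with the epimorphism $q$, establish the conjugation identity $(F(w)^{-1}, ws\alpha)c = (wt\alpha, G(w)^{-1})c$ by sandwiching the whiskered naturality of $\alpha$ between $F(w)^{-1}$ and $G(w)^{-1}$, and then chase through the same chain of associativity, the pullback relation $\pi_1 s = \pi_0 ws$, and the definitions of $q[F]_1$ and $q[G]_1$. The paper's computation is line-for-line the one you outline.
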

\begin{proof}
Since $\mCW_0 = \mC_0$, define the components of $[\alpha]$ to be the components of $\alpha$: 

\[ \begin{tikzcd}[]
\mCW_0 \rar["\lbrack \alpha \rbrack"] & \mX_1 \\
\mC_0 \uar[equals, "L_0"] \ar[ur, "\alpha"']
\end{tikzcd}\]

\noi To see this is well-defined we need to show the (naturality) square 

\[ \begin{tikzcd}[column sep = large, row sep = large]
\mCW_1 \rar["(s [\alpha \rbrack {,} [ g \rbrack_1)"] 
\dar["( [ f \rbrack_1 {,} t[ \alpha \rbrack)"'] 
& \mX_2 \dar["c"] \\
\mX_2 \rar["c"'] & \mX_1 
\end{tikzcd}\]

\noi commutes in $\cE$. Let $F(w)^{-1}, G(w)^{-1} : W \to \mX_1$ denote the inverses of $w F_1 , w G_1: W \to \mX_1$. Naturality of $\alpha : F \implies G$ implies the diagram

\[ \begin{tikzcd}[column sep = huge, row sep = large]
W \rar["(w s \alpha {,} w G_1)"] \dar["(w F_1 {,} w t \alpha)"'] & \mX_2 \dar["c"] \\
\mX_2 \rar["c"'] & \mX_1
\end{tikzcd} \]

\noi commutes in $\cE$. Using internal composition in $\mX$ to compose with $F(w)^{-1} : W \to \mX_1$ on the left and $G(w)^{-1} : W \to \mX_1$ on the right on both sides gives a new commuting diagram, 

\[ \begin{tikzcd}[column sep = huge, row sep = large]
W \rar["(F(W)^{-1} {,}w s \alpha )"] \dar["( w t \alpha {,} g(W)^{-1})"'] & \mX_2 \dar["c"] \\
\mX_2 \rar["c"'] & \mX_1
\end{tikzcd}, \]

\noi by cancellation using the identity law in $\mX$. It will also be helpful to recall the following commuting diagrams from the definition of $\mCW$ and its universal property.

\[
\begin{tikzcd}[column sep = large]
\spn \rar[two heads, "q"] \dar["(\pi_0 F(w)^{-1} {,} \pi_1 F_1 )"'] & \mCW_1 \dar[dotted, "[F \rbrack_1"] \\
\mX_2 \rar["c"'] & \mX_1
\end{tikzcd}\qquad \qquad \]
\[
\begin{tikzcd}[column sep = large]
\spn \rar[two heads, "q"] \dar["\pi_0"'] & \mCW_1 \dar[dotted, "s"] \\
W \rar["wt"'] & \mC_0
\end{tikzcd}\qquad \qquad 
\begin{tikzcd}[column sep = large]
\spn \rar[two heads, "q"] \dar["\pi_1"'] & \mCW_1 \dar[dotted, "t"] \\
\mC_1 \rar["t"'] & \mC_0
\end{tikzcd}\]

\noi Now consider the following diagram: 

\[ \begin{tikzcd}[column sep = huge, row sep = huge]
\mCW_1 
\dar["( [F \rbrack_1 {,} t \lbrack \alpha \rbrack )"']
& \spn 
\lar[two heads, "q"'] \rar[two heads, "q"] 
\ar[dr, "(q s [\alpha\rbrack {,} q [G \rbrack_1 )"' near end] 
\ar[dl, "( q [F \rbrack_1 {,}q t [\alpha\rbrack )" near end]
& \mCW_1 \dar["(s [\alpha\rbrack {,} [G \rbrack_1 )"] \\
\mX_2 \ar[dr, "c"'] & & \mX_2 \ar[dl, "c"] \\
& \mX_1 &
\end{tikzcd}\]

\noi The inside commutes by the following calculation which uses associativity of composition along with naturality of $\alpha$ and the definitions of $[F]_1$, $[G]_1$, and the pullback $\spn = W \tensor[_{ws} ]{\times}{_s} \mC_1$:

\begin{align*}
(q [F]_1 , qt\alpha) c 
&=\big( ( \pi_0 F(w)^{-1} , \pi_1 f_1 ) c , q t \alpha \big)c & \text{Def. }[F]_1 \\
&= \big( \pi_0 F(w)^{-1} , \pi_1 f_1 , \pi_1 t \alpha \big)c & \text{Assoc.} \\
&= \big( \pi_0 F(w)^{-1} , \pi_1 (F_1 , t \alpha)c \big)c & \text{Assoc.} \\
&= \big( \pi_0 F(w)^{-1} , \pi_1 (s \alpha , G_1)c \big)c & \text{Nat. }\alpha \\
&= \big( \pi_0 F(w)^{-1} , \pi_1 s \alpha , \pi_1 G_1 \big)c & \text{Assoc.} \\
&= \big( \pi_0 F(w)^{-1} , \pi_0 w s \alpha , \pi_1 G_1 \big)c & \text{Def. }\spn \\
&= \big( \pi_0 (F(w)^{-1} , w s \alpha)c , \pi_1 G_1 \big)c & \text{Assoc.} \\
&= \big( \pi_0 (w t \alpha , G(w)^{-1} )c , \pi_1 G_1 \big)c & \text{Nat. }\alpha \\
&= \big( \pi_0 w t \alpha , \pi_0 G(w)^{-1} , \pi_1 G_1 \big)c & \text{Assoc.} \\
&= \big( q s \alpha , (\pi_0 G(w)^{-1} , \pi_1 G_1) c \big)c & \text{Assoc. }\\
&= \big( q s \alpha , q [G]_1 \big)c & \text{Def. }[g]_1 .
\end{align*}

\noi Since $q$ is an epi we can conclude that$[\alpha]$ satisfies the appropriate naturality condition: 

\[( [F \rbrack_1 {,} t \lbrack \alpha \rbrack ) c = ([\alpha\rbrack {,} [G \rbrack_1 ) c .\]

\noi It's source and target can be computed component-wise by the following commuting diagrams
\[ \begin{tikzcd}[column sep = large]
\mCW_0 \dar[equals] \rar["[\alpha\rbrack"] & \mX_1 \dar["s"] \\
\mC_0 \rar["F_0"'] \ar[ur, "\alpha"'] \dar[equals] & \mX_0\\
\mCW_0 \ar[ur, "[F\rbrack_0"'] & 
\end{tikzcd} \qquad \qquad \qquad \begin{tikzcd}[column sep = large]
\mCW_0 \dar[equals] \rar["[\alpha\rbrack"] & \mX_1 \dar["t"] \\
\mC_0 \rar["G_0"'] \ar[ur, "\alpha"'] \dar[equals] & \mX_0\\
\mCW_0 \ar[ur, "[G\rbrack_0"'] & 
\end{tikzcd}, \]

\noi It follows that $[\alpha] : [F] \implies [G]$ is an internal natural transformation. 
\end{proof}

\noi We continue with another lemma establishing the other direction of the correspondence between natural transformations. 

\begin{lem}\label{lem induced 2-cells `backward direction'}
Every internal natural tranformation,

\[ \begin{tikzcd}[column sep = huge, row sep = huge, every label/.append style={font=\scriptsize}]
\mCW
\arrow[r, bend left=50, "H"{name=f}] 
\arrow[r, bend right=50,"K"'{name=g}] 
& \mX 
 \arrow[Rightarrow, shorten <= 10pt, shorten >=10pt, from=f, to=g, " \beta "]
\end{tikzcd},\]

\noi induces a canonical natural transformation:

\[ \begin{tikzcd}[column sep = huge, row sep = huge, every label/.append style={font=\scriptsize}]
\mC 
\arrow[r, bend left=50, "LH"{name=f}] 
\arrow[r, bend right=50,"LK"'{name=g}] 
& \mX 
 \arrow[Rightarrow, shorten <= 10pt, shorten >=10pt, from=f, to=g, "L\beta"]
\end{tikzcd}\]
\end{lem}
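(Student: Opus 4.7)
The plan is to define $L\beta$ by pre-composing $\beta : \mCW_0 \to \mX_1$ with $L_0$ on the source side (i.e.\ whisker $\beta$ by $L$). Since $\mCW_0 = \mC_0$ by construction and $L_0 = 1_{\mC_0}$, this amounts simply to taking $L\beta := \beta : \mC_0 \to \mX_1$, viewed now as a morphism in $\cE$ whose domain we regard as $\mC_0$. The content of the lemma is then to verify that this map satisfies the source/target conditions and the (internal) naturality square for the pair $LH, LK : \mC \to \mX$.

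First I would verify the source and target. Since $\beta : H \implies K$ is an internal natural transformation, $\beta s_\mX = H_0$ and $\beta t_\mX = K_0$ as maps $\mCW_0 \to \mX_0$. Composing with $L_0 = 1$ immediately gives $(L\beta) s_\mX = L_0 H_0 = (LH)_0$ and $(L\beta) t_\mX = L_0 K_0 = (LK)_0$, as required.

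The main step is the naturality square, namely the equality
\[
\bigl(L_1 H_1,\ t_\mC\, L\beta\bigr) c \;=\; \bigl(s_\mC\, L\beta,\ L_1 K_1\bigr) c
\]
of maps $\mC_1 \to \mX_1$. I would start from naturality of $\beta$,
\[
\bigl(H_1,\ t_\mCW \beta\bigr) c \;=\; \bigl(s_\mCW \beta,\ K_1\bigr) c,
\]
and pre-compose both sides with $L_1 : \mC_1 \to \mCW_1$. Using the universal property of the pullback $\mX_2$, pre-composition distributes over pairing maps, giving
\[
L_1 \bigl(H_1,\ t_\mCW \beta\bigr) = \bigl(L_1 H_1,\ L_1 t_\mCW \beta\bigr),\quad L_1 \bigl(s_\mCW \beta,\ K_1\bigr) = \bigl(L_1 s_\mCW \beta,\ L_1 K_1\bigr).
\]
Functoriality of $L$ yields $L_1 s_\mCW = s_\mC L_0 = s_\mC$ and $L_1 t_\mCW = t_\mC$, so the pairings simplify to $(L_1 H_1,\ t_\mC \beta)$ and $(s_\mC \beta,\ L_1 K_1)$ respectively. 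Post-composing with $c : \mX_2 \to \mX_1$ and recalling $L\beta = \beta$ gives the desired naturality square.

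There is no real obstacle here: the entire argument is a routine whiskering computation that reduces, after functoriality of $L$, to the naturality of $\beta$. The only thing to be mildly careful about is distinguishing the source and target maps of $\mC$ from those of $\mCW$ and verifying that they are compatible under $L$, which is immediate from $L$ being an internal functor (Section~\ref{S internal localization functor}).
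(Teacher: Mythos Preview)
Your proposal is correct and takes essentially the same approach as the paper: both define $L\beta$ by whiskering $\beta$ with $L$, and both note that since $L_0 = 1_{\mC_0}$ the components of $L\beta$ coincide with those of $\beta$. The paper's proof is a terse one-liner that leaves the naturality verification implicit in the word ``whiskering,'' whereas you carry out that verification explicitly; your added detail is sound and does not diverge from the paper's argument.
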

\begin{proof}

The notation is suggestive of the fact that this these are given by composing with the internal functors $H, K : \mCW \to \mX$ and whiskering the internal natural transformation $\beta : H \implies K$ with the internal functor $L : \mC \to \mCW$. Note that the components of the whiskered transformation coincide with those of $\beta$ because $L$ is the identity on objects: 

\[ \begin{tikzcd}[]
\mC_0 \rar["L \beta"] & \mX_1 \\
\mCW_0 \uar[equals , "L_0"] \ar[ur, "\beta"'] 
\end{tikzcd}. \]
\end{proof}

\noi Lemmas \ref{lem induced 2-cells `foreward direction'} and \ref{lem induced 2-cells `backward direction'} show us the two directions of the correspondence we need to prove. Now we show the assignments described in the proofs of these lemmas are inverses to get the correspondence we need in the following lemma.

\begin{lem}\label{lem int.loc 2-cell correspondence} 
Let $F, G : \mC \to \mX$ be internal functors that invert $w : W \to \mC_1$ in $\mX$. Then the internal natural transformations 

\[ \begin{tikzcd}[column sep = huge, row sep = huge, every label/.append style={font=\scriptsize}]
\mC 
\arrow[r, bend left=50, "F"{name=f}] 
\arrow[r, bend right=50,"G"'{name=g}] 
& \mX 
 \arrow[Rightarrow, shorten <= 10pt, shorten >=10pt, from=f, to=g, "\alpha"]
\end{tikzcd}\] 

\noi (bijectively) correspond to the internal natural transformations

\[ \begin{tikzcd}[column sep = huge, row sep = huge, every label/.append style={font=\scriptsize}]
\mCW
\arrow[r, bend left=50, "[F\rbrack"{name=f}] 
\arrow[r, bend right=50,"[G\rbrack"'{name=g}] 
& \mX 
 \arrow[Rightarrow, shorten <= 10pt, shorten >=10pt, from=f, to=g, "\lbrack \alpha \rbrack"]
\end{tikzcd}. \]
\end{lem}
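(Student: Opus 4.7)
The plan is to verify that the assignments $\alpha \mapsto [\alpha]$ and $\beta \mapsto L\beta$, constructed in Lemmas~\ref{lem induced 2-cells `foreward direction'} and~\ref{lem induced 2-cells `backward direction'} respectively, are mutually inverse. The key observation that makes this almost trivial is that the localization functor $L : \mC \to \mCW$ is the identity on objects ($\mCW_0 = \mC_0$ and $L_0 = 1_{\mC_0}$), so both constructions act as the identity on the underlying map $\mC_0 \to \mX_1$ that represents the internal natural transformation.

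First, starting from $\alpha : F \implies G$, I would form $[\alpha] : [F] \implies [G]$, defined by the commuting triangle
\[
\begin{tikzcd}
\mCW_0 \rar["{[\alpha]}"] & \mX_1 \\
\mC_0 \uar[equals, "L_0"] \ar[ur, "\alpha"']
\end{tikzcd}
\]
and then whisker with $L$ to obtain $L[\alpha] : LF \implies LG$. Since whiskering by $L$ has components $L_0 \cdot [\alpha] = 1_{\mC_0} \cdot [\alpha] = \alpha$, and since $L[F] = F$ and $L[G] = G$ by Proposition~\ref{prop IntFrc 1-cell corresp. well-defined in one direction}, we get $L[\alpha] = \alpha$ on the nose.

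Conversely, starting from $\beta : H \implies K$ with $H = [F]$ and $K = [G]$ (noting that every internal functor $\mCW \to \mX$ is of the form $[L H]$ by Proposition~\ref{prop UPIntFrac 1-cell corresondence}), form $L\beta$ with components $L_0 \cdot \beta = \beta$, and then apply the forward construction to obtain $[L\beta] : [LH] \implies [LK]$. By definition, $[L\beta]$ has components $L_0 \cdot \beta = \beta$ as a map $\mCW_0 = \mC_0 \to \mX_1$. Since $[LH] = H$ and $[LK] = K$ by the uniqueness in Proposition~\ref{prop UPIntFrac 1-cell corresondence}, we conclude $[L\beta] = \beta$.

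The main potential obstacle is ensuring that nothing subtle goes wrong when identifying the source and target functors on both sides of the bijection; that is, one must invoke the 1-cell correspondence of Proposition~\ref{prop UPIntFrac 1-cell corresondence} to guarantee that $L[F] = F$, $L[G] = G$, and conversely $[LH] = H$, $[LK] = K$. Naturality of the induced transformations has already been established in Lemmas~\ref{lem induced 2-cells `foreward direction'} and~\ref{lem induced 2-cells `backward direction'}, so once the identifications of source and target are in place, the bijection is immediate from the fact that both directions act as the identity on the underlying map $\mC_0 \to \mX_1$.
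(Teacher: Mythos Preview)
Your proposal is correct and follows essentially the same approach as the paper's proof: both show that the assignments $\alpha \mapsto [\alpha]$ and $\beta \mapsto L\beta$ are mutually inverse by exploiting that $L_0 = 1_{\mC_0}$, so each direction acts as the identity on the underlying component map $\mC_0 \to \mX_1$, with the source/target functor identifications handled by Proposition~\ref{prop UPIntFrac 1-cell corresondence}.
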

\begin{proof}
Let $\alpha : F \implies G$ be an internal natural transformation between internal functors $\mC \to \mX$ that invert $w : W \to \mC_1$ in $\mX$. We will show whiskering the internal natural transformation $[\alpha] : [F] \implies [G]$ with $L : \mC \to \mCW$ recovers $\alpha : F \implies G$: 

\[ \begin{tikzcd}[column sep = huge, row sep = huge]
\mC 
\arrow[r, bend left=50, "L[F\rbrack"{name=f}] 
\arrow[r, bend right=50,"L[G\rbrack"'{name=g}] 
& \mX 
 \arrow[Rightarrow, shorten <= 10pt, shorten >=10pt, from=f, to=g, "L[\alpha\rbrack"]
\end{tikzcd}\quad = \quad 
\begin{tikzcd}[column sep = huge, row sep = huge, every label/.append style={font=\scriptsize}]
\mC 
\arrow[r, bend left=50, "F"{name=f}] 
\arrow[r, bend right=50,"G"'{name=g}] 
& \mX 
 \arrow[Rightarrow, shorten <= 10pt, shorten >=10pt, from=f, to=g, "\alpha"]
\end{tikzcd}\]

\noi By definition of $[F], [G] : \mCW \to \mX$ we have 

\[ L [F] = F \qquad \qquad L[G] = G \]

\noi and the following commuting diagram in $\cE$ shows the components of $L[\alpha] : F \implies G$ are precisely those of $\alpha$: 

\[ \begin{tikzcd}[column sep = large, row sep = large]
\mC_0 \rar["L [\alpha\rbrack"] & \mX_1\\
\mCW_0 \uar[equals, "L_0"] \rar[equals] \ar[ur, "[\alpha\rbrack"'] & \mC_0 \uar["\alpha"'] 
\end{tikzcd}\]

\noi On the other hand, for any natural transformation 

\[ \begin{tikzcd}[column sep = huge, row sep = huge, every label/.append style={font=\scriptsize}]
\mCW
\arrow[r, bend left=50, "H"{name=f}] 
\arrow[r, bend right=50,"K"'{name=g}] 
& \mX 
 \arrow[Rightarrow, shorten <= 10pt, shorten >=10pt, from=f, to=g, " \beta "]
\end{tikzcd}\]

\noi we can see that 

\[ \begin{tikzcd}[column sep = huge, row sep = huge, every label/.append style={font=\scriptsize}]
\mCW
\arrow[r, bend left=50, "[Lh\rbrack"{name=f}] 
\arrow[r, bend right=50,"[LK\rbrack"'{name=g}] 
& \mX 
 \arrow[Rightarrow, shorten <= 10pt, shorten >=10pt, from=f, to=g, " [L\beta \rbrack"]
\end{tikzcd} \quad = \quad 
\begin{tikzcd}[column sep = huge, row sep = huge, every label/.append style={font=\scriptsize}]
\mCW
\arrow[r, bend left=50, "H"{name=f}] 
\arrow[r, bend right=50,"K"'{name=g}] 
& \mX 
 \arrow[Rightarrow, shorten <= 10pt, shorten >=10pt, from=f, to=g, " \beta "]
\end{tikzcd}\]

\noi by first noticing that the triangles, 

\[ \begin{tikzcd}[column sep = huge, row sep = huge]
\mX & \mC \rar["L [LK\rbrack "] \lar["L [LH\rbrack "'] \dar["L"] & \mX\\
\mC \uar["LH"] \rar["L"'] 
& \mCW 
\ar[ur, dotted, "[LK\rbrack"] 
\ar[ul, dotted,"[LH \rbrack"'] 
& \mC \uar["LK"'] \lar["L"] 
\end{tikzcd},\]
\noi commute in $\cE$ and imply that $[LH\rbrack = H $ and $ [LK\rbrack = K$ by the 1-cell universal property of the internal localization in Proposition~\ref{prop UPIntFrac 1-cell corresondence}. The following commuting diagram shows that the components for the natural transformations agree too: 

\[\begin{tikzcd}[column sep = large, row sep = large]
\mCW_0 \rar["[L\beta\rbrack"] & \mX_1 \\
\uar[equals] \mC_0 \rar[equals, "L_0"'] \ar[ur, "L\beta"] & \mCW_0 \uar["\beta"'] 
\end{tikzcd}\]
\end{proof}

\noi The only other piece we need to prove Theorem~\ref{thm 2-dimensional universal property of localization} is that the correspondence between 2-cells in Lemma~\ref{lem int.loc 2-cell correspondence} is functorial. It suffices to show functoriality in one direction. We show it in the direction of Lemma~\ref{lem induced 2-cells `foreward direction'} and leave the other direction (involving whiskering) as an exercise to the reader who wants to take a break. 

\begin{lem}\label{lem 2-cell correspondence functoriality}
The assignment of natural transformations, $\alpha \mapsto [\alpha]$, in Lemma~\ref{lem induced 2-cells `foreward direction'} is functorial. 
\end{lem}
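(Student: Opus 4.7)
The plan is to exploit the fact that the assignment $\alpha \mapsto [\alpha]$ is essentially the identity on underlying data. Recall from the definition in Lemma~\ref{lem induced 2-cells `foreward direction'} that since $\mCW_0 = \mC_0$ and $L_0 = 1_{\mC_0}$, the internal natural transformation $[\alpha] : [F] \implies [G]$ is defined precisely so that its component map $\mCW_0 \to \mX_1$ is equal (as a morphism in $\cE$) to the component map $\alpha : \mC_0 \to \mX_1$ of $\alpha$. So the correspondence relabels the source/target functors but not the underlying morphism in $\cE$.

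First I would check preservation of identities. For an internal functor $F : \mC \to \mX$ inverting $w$, the identity internal natural transformation $1_F : F \implies F$ has component $F_0 e : \mC_0 \to \mX_1$, and similarly $1_{[F]}$ has component $[F]_0 e = F_0 e : \mCW_0 \to \mX_1$. Since $[1_F]$ is defined to have the same underlying component as $1_F$, namely $F_0 e$, we conclude $[1_F] = 1_{[F]}$ immediately.

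Next I would handle vertical composition. Given $\alpha : F \implies G$ and $\beta : G \implies H$ between internal functors $\mC \to \mX$ that invert $w$, their composite $\alpha \beta : F \implies H$ has component $(\alpha, \beta)c : \mC_0 \to \mX_1$, where the pairing uses the common source/target relation $\alpha t = \beta s = G_0$. The composite of the induced 2-cells $[\alpha][\beta] : [F] \implies [H]$ has component $([\alpha], [\beta])c : \mCW_0 \to \mX_1$, and since $[\alpha] = \alpha$, $[\beta] = \beta$ as morphisms in $\cE$, this equals $(\alpha, \beta)c$. Since $[\alpha\beta]$ is also defined to have underlying component $(\alpha, \beta)c$, we obtain $[\alpha \beta] = [\alpha][\beta]$.

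There is no real obstacle here: the entire proof is forced by the definition, which makes the component assignment transparent. The only subtle point is a bookkeeping check that the pairing maps defining vertical composition are literally the same morphism in $\cE$ in both categories, which follows because the composable relation $\alpha t = \beta s$ in the category of natural transformations $F \Rightarrow G \Rightarrow H$ is precisely the composable relation $[\alpha] t = [\beta] s$ (these equations coincide on the nose because $[F]_0 = F_0$, $[G]_0 = G_0$, $[H]_0 = H_0$). Functoriality follows.
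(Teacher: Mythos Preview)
Your proof is correct and follows essentially the same approach as the paper: both arguments exploit that $[\alpha]$ has literally the same underlying component map $\mC_0 \to \mX_1$ as $\alpha$ (since $\mCW_0 = \mC_0$ and $L_0 = 1_{\mC_0}$), so preservation of identities and vertical composition reduces to the observation that $F_0 e$ and $(\alpha,\beta)c$ are the same morphisms in $\cE$ on either side.
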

\begin{proof}
For any internal functor $f : \mC \to \mX$, we have 

\[ \begin{tikzcd}[column sep = huge, row sep = huge, every label/.append style={font=\scriptsize}]
\mC 
\arrow[r, bend left=50, "F"{name=f}] 
\arrow[r, bend right=50,"F"'{name=g}] 
& \mX 
 \arrow[Rightarrow, shorten <= 10pt, shorten >=10pt, from=f, to=g, "1_{F}"]
\end{tikzcd} 
\begin{tikzcd}[]
\ \rar[mapsto] & \
\end{tikzcd}
\begin{tikzcd}[column sep = huge, row sep = huge, every label/.append style={font=\scriptsize}]
\mCW
\arrow[r, bend left=50, "[F\rbrack"{name=f}] 
\arrow[r, bend right=50,"[F\rbrack"'{name=g}] 
& \mX 
 \arrow[Rightarrow, shorten <= 10pt, shorten >=10pt, from=f, to=g, "\lbrack 1_{F} \rbrack"]
\end{tikzcd}\] 

\noi where the commuting diagram

\[ \begin{tikzcd}[]
\mCW_0 \rar["\lbrack 1_{F} \rbrack"] & \mX_1 \\
\mC_0 \uar[equals] \ar[ur, bend right, "1_F"'] & \\
\mCW_0 \uar[equals] \ar[uur, bend right, "1_{[F\rbrack}"'] 
\end{tikzcd}\]

\noi shows that the components of $[1_F]$ coincide with those of $1_{[f]}$. This means $[1_F] = 1_{[F]}$ are the same natural transformation and so identities are preserved. 

\noi To see composition is preserved suppose we have two vertically composable internal natural transformations: 

\[ \begin{tikzcd}[column sep = huge, row sep = huge, every label/.append style={font=\scriptsize}]
\mC 
\arrow[r, shift left, bend left=60, "F"{name=f}] 
\arrow[r, "G"near start, ""{name=g}] 
\arrow[r, shift right, bend right=60,"H"'{name=h}] 
& \mX 
 \arrow[Rightarrow, shorten <= 5pt, shorten >=1pt, from=f, to=g, "\alpha"]
\arrow[Rightarrow, shorten <= 5pt, shorten >=5pt, from=g, to=h, "\beta"]
\end{tikzcd}
\begin{tikzcd}[]
\ \rar[mapsto] & \
\end{tikzcd}
\begin{tikzcd}[column sep = huge, row sep = huge, every label/.append style={font=\scriptsize}]
\mCW
\arrow[rr, bend left=80, "[F\rbrack"{name=f}] 
\arrow[rr, "[G\rbrack"near start, "\ "{name=g}] 
\arrow[rr, bend right=80,"[H\rbrack"'{name=h}] 
&& \ \ \ \mX \ \ \ 
 \arrow[Rightarrow, shorten <= 5pt, shorten >=1pt, from=f, to=g, "\lbrack \alpha \rbrack"]
 \arrow[Rightarrow, shorten <= 5pt, shorten >=5pt, from=g, to=h, "\lbrack \beta \rbrack"]
\end{tikzcd}\] 

\noi We can see 

\[\begin{tikzcd}[column sep = huge, row sep = huge, every label/.append style={font=\scriptsize}]
\mCW
\arrow[rr, bend left=80, "[F\rbrack"{name=f}] 
\arrow[rr, "[G\rbrack"near start, "\ "{name=g}] 
\arrow[rr, bend right=80,"[H\rbrack"'{name=h}] 
&& \ \ \ \mX \ \ \ 
 \arrow[Rightarrow, shorten <= 5pt, shorten >=1pt, from=f, to=g, "\lbrack \alpha \rbrack"]
 \arrow[Rightarrow, shorten <= 5pt, shorten >=5pt, from=g, to=h, "\lbrack \beta \rbrack"]
\end{tikzcd} \qquad = \qquad 
\begin{tikzcd}[column sep = huge, row sep = huge, every label/.append style={font=\scriptsize}]
\mCW
\arrow[r, bend left=50, "[F\rbrack"{name=f}] 
\arrow[r, bend right=50,"[H\rbrack"'{name=g}] 
& \mX 
 \arrow[Rightarrow, shorten <= 10pt, shorten >=10pt, from=f, to=g, "\lbrack \alpha \beta \rbrack"]
\end{tikzcd}\]

\noi by noticing that their components coincide via the following commuting diagram: 

\[ \begin{tikzcd}[]
\mCW_0 \rar["([\alpha \rbrack {,} [ \beta \rbrack)"] 
& \mX_2 \rar["c"] 
&\mX_1 \\
\mC_0 \uar[equals] \ar[ur, "(\alpha {,} \beta)"'] 
\ar[urr, bend right, "\alpha \beta"']
&& \\
\mCW_0 \uar[equals, "L_0"] \ar[uurr, bend right, "[\alpha \beta\rbrack"'] 
\end{tikzcd}\]
\end{proof}

\noi The following theorem is a direct consequence of all the lemmas that came before in this section and formalizes the universal property of the internal localization, $\mCW$. 

\begin{thm}\label{thm 2-dimensional universal property of localization}
There is an isomorphism of categories

\[ [\mC , \mX]_W^\cE \cong [\mCW , \mX]^{\cE}\]

\noi between internal functors $\mC \to \mX$ in $\cE$ that invert $w : W \to \mC_1$ and their internal natural transformations, and internal functors $\mCW \to \mX$ in $\cE$ and their internal natural transformations. 
\end{thm}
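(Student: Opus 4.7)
The plan is to package the preceding propositions and lemmas into two functors that are mutually inverse. Define $\Phi : [\mC, \mX]_W^{\cE} \to [\mCW, \mX]^{\cE}$ on objects by $F \mapsto [F]$, as constructed in Proposition~\ref{prop IntFrc 1-cell corresp. well-defined in one direction}, and on 2-cells by $\alpha \mapsto [\alpha]$, as constructed in Lemma~\ref{lem induced 2-cells `foreward direction'}. In the reverse direction, define $\Psi : [\mCW, \mX]^{\cE} \to [\mC, \mX]_W^{\cE}$ on objects by $H \mapsto LH$ (which inverts $w$ by Proposition~\ref{prop int loc functor inverts W} together with Lemma~\ref{lem internal functors preserve internal inverses}) and on 2-cells by $\beta \mapsto L\beta$, the whiskering described in Lemma~\ref{lem induced 2-cells `backward direction'}.

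The first step is to verify that $\Phi$ and $\Psi$ are indeed functors. For $\Phi$, preservation of identities and vertical composition of natural transformations is precisely the content of Lemma~\ref{lem 2-cell correspondence functoriality}. For $\Psi$, functoriality is immediate from the fact that whiskering by a fixed internal functor preserves identities and vertical composition of internal natural transformations componentwise; this amounts to the definitions of the structure maps for composites of internal natural transformations together with the identity/composition law in $\mX$.

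Next I would verify that $\Phi\Psi = \mathrm{id}$ and $\Psi\Phi = \mathrm{id}$. On objects, both identities were established in the proof of Proposition~\ref{prop UPIntFrac 1-cell corresondence}: $L[F] = F$ for any $F$ inverting $w$, and $[LH] = H$ for any $H : \mCW \to \mX$, the latter using Lemma~\ref{lem factorization of equiv classes of spans} and the universal property of the coequalizer $\mCW_1$. On 2-cells, the equalities $L[\alpha] = \alpha$ and $[L\beta] = \beta$ were verified in the proof of Lemma~\ref{lem int.loc 2-cell correspondence}, where in each case the check reduces to the observation that the $A$-components agree because $L_0 = 1_{\mC_0}$ and $\mCW_0 = \mC_0$.

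There is no real obstacle in this proof: the work is entirely contained in the results already established, and the theorem is a clean repackaging of Proposition~\ref{prop UPIntFrac 1-cell corresondence}, Lemma~\ref{lem int.loc 2-cell correspondence}, and Lemma~\ref{lem 2-cell correspondence functoriality}. The only point worth emphasizing is that $\Phi$ and $\Psi$ are inverse on the nose (strict equalities of functors) rather than merely up to isomorphism, which is what justifies stating the result as an isomorphism of categories.
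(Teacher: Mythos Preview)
Your proposal is correct and takes essentially the same approach as the paper: the paper's proof simply cites Proposition~\ref{prop UPIntFrac 1-cell corresondence} for the object bijection, Lemma~\ref{lem int.loc 2-cell correspondence} for the arrow bijection, and Lemma~\ref{lem 2-cell correspondence functoriality} for functoriality. Your version is a more explicit unpacking of the same argument, naming the two functors and spelling out why each composite is the identity, but the underlying content is identical.
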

\begin{proof}
The objects are in bijection by Proposition~\ref{prop UPIntFrac 1-cell corresondence}, the arrows are in bijection by Lemma~\ref{lem int.loc 2-cell correspondence}, and functoriality follows from Lemma~\ref{lem 2-cell correspondence functoriality}. 
\end{proof}

\section{Pseudocolimits of Small Filtered Diagrams of Internal Categories} \label{Ch pseudocolims}

\subsection{Internal Fractions Applied to the internal category of elements}\label{S IntFrc Applied to IntGroth}

Exercise 6.6, of Expos\'e VI in \cite{SGA4} states that the pseudocolimit of a filtered diagram $\cA^{op} \to \Cat$ can be obtained by localizing the Grothendieck construction with respect to the cartesian arrows. A current paper in progress, \cite{ThreeFsforBiCatsII}, by Bustillo-Vazquez, Pronk, and Szyld shows that with a weaker composition axiom for the category of fractions, the class of arrows one needs to invert to get the pseudocolimit can be reduced from all cartesian arrows to a convenient cleavage of them. For the rest of this chapter we consider an arbitrary but fixed filtered diagram, $D : \cA^{op} \to \Cat(\cE)$ so that every finite diagram in $\cA^{op}$ has a cone. The main theorem of this section states that, in a suitable context $\cE$, the pseudocolimit of a filtered diagram of internal categories, $\cA^{op} \to \Cat(\cE)$, can be computed by forming the internal category of (right) fractions of the internal category of elements with respect to the object representing the canonical cleavage of the cartesian arrows.

Note that the axioms we gave in Section \ref{S the axioms (of internal fractions)} are for a category of \textit{right} fractions, so we need to use the \textit{contravariant} form of the internal category of elements for a functor $D : \cA^{op} \to \Cat(\cE)$. In Section \ref{SS canonical cleavage of Cart arr's in Groth} we introduce the object representing the canonical cleavage and show that it satisfies the Internal Fractions Axioms in Definition~\ref{def Internal Fractions Axioms}. Section \ref{SS IntFrac of IntGroth is PseudoColim} is all about proving the main result of this thesis. Namely that, when it exists, the internal category of (right) fractions, $\mDW$, of the internal category of elements with respect to the canonical cleavage object, $(\mD , W)$, is the pseudocolimit of the original filtered diagram $D : \cA^{op} \to \Cat(\cE)$. 

\subsubsection{The Canonical Cleavage of the Internal Category of Elements}\label{SS canonical cleavage of Cart arr's in Groth}

The internal category of elements we need for a contravariant functor and a calculus of (right) fractions has an object of arrows defined by

\[ \mD_1 = \coprod_{\varphi \in \cA_1} D_\varphi \qquad \text{ where } \qquad \begin{tikzcd} 
D_\varphi \arrow[dr, phantom, "\usebox\pullback" , very near start, color=black]\dar["\pi_0"'] \rar["\pi_1"] 
& D(A)_1 \dar["t"] \\
D(B)_0 \rar["D(\varphi)_0"'] 
& D(A)_0 \end{tikzcd} \]

\noi whenever $\varphi : A \to B$ is an arrow in $\cA$. The subtle difference in this definition is that the vertical map on the right is a target rather than a source and that $D(\varphi) : D(B) \to D(A)$ for $\varphi : A \to B$ in $\cA$. Another subtle but important difference is the definition of cofiber composition for this version of the Grothendieck construction. For $\varphi : A \to B$ and $\psi : B \to C$ in $\cA$, the cofiber composition is given by

\[ \begin{tikzcd}[]
D_{\varphi ; \psi}
\ar[d, "\pi_1"'] 
\ar[r,"c'_{\varphi ; \psi ; \delta^{-1}}"] 
\ar[dr, dotted, "c_{\varphi ; \psi}"] 
& D(A)_3 \ar[dr, bend left, "c"] & \\
D_\psi 
\ar[dr, bend right, "\pi_0"'] 
& D_{\varphi \circ \psi} \rar["\pi_1"] 
\dar["\pi_0"']
\arrow[dr, phantom, "\usebox\pullback" , very near start, color=black] 
& D(A)_1 \dar["t"] \\
& D(C)_0 \rar["D(\varphi \circ \psi)_0"'] & D(A)_0 
\end{tikzcd}\]

\noi where $c'_{\varphi ; \psi ; \delta^{-1}}$ is the universal map 

\[\begin{tikzcd}[]
D_{\varphi ; \psi} 
\ar[ddr, bend right, "c'_{\varphi ; \psi}"'] 
\ar[drr, bend left, "c'_{(\varphi ; \psi) ; \delta^{-1}}"] 
\ar[dr, dotted, "c'_{\varphi ; \psi ; \delta^{-1}}"] 
& & \\
& D(A)_3 \rar["\pi_1"] \dar["\pi_0"'] & D(A)_2 \dar["\pi_0"] \\
& D(A)_2 \rar["\pi_1"'] & D(A)_1
\end{tikzcd}.\]

\noi given explicitly by the triple:

\[ \label{contravar cofiber composition arrow} c'_{\varphi ; \psi ; \delta^{-1}} = \big( \pi_0 \pi_1 , \pi_1 \pi_1 D(\varphi)_1 , \pi_1 \pi_0 \delta_{\varphi ; \psi}^{-1} \big) \tag{$\star$} \]

\noi in which $\delta_{\varphi ; \psi}^{-1} : D(C)_0 \to D(A)_1$ represents the inverse components for the structure isomorphism of the pseudofunctor, $D : \cA^{op} \to \Cat(\cE)$. 

When $\cE = \Set$, the arrows represented by $\mD_1$ are pairs $(\varphi, f) : ( A,a) \to (B,b)$ where $b \in D(B)_0$ and $f : a \to D(\varphi)(b)$ is in $D(A)_1$. The arrows being picked out by $w : W \to \mD_1$ should correspond to pairs $(\varphi, 1_{D(\varphi)}(b)) : (A,D(\varphi)(b)) \to (B,b)$ where $b \in D(B)_0$ and $1_{D(\varphi)(b)} : D(\varphi)(b) \to D(\varphi)(b)$ is the identity map in $D(A)_1$. The following definition describes an extra condition on $\cE$ that we need in order to work with an object of the canonical cleavage of cartesian arrows in $\mD$.

\begin{defn}\label{def admits a canonical cleavage of cartesian arrows}
Suppose $\cE$ admits an internal category of elements of $D : \cA^{op} \to \Cat(\cE)$. Then we say that $\mD$ {\em admits a canonical cleavage of cartesian arrows} if for each $\varphi : A \to B$ in $\cA$, the top pullback 

\[ 
\begin{tikzcd}[]
W_\varphi \arrow[dr, phantom, "\usebox\pullback" , very near start, color=black] \dar[tail, "w_\varphi"'] \rar["\pi_\varphi"] & D(A)_0 \dar[tail, "e"] \dar[dd, bend left = 45, equals] \\
D_\varphi \arrow[dr, phantom, "\usebox\pullback" , very near start, color=black]\dar["\pi_0"'] \rar["\pi_1"] 
& D(A)_1 \dar["t"] \\
D(B)_0 \rar["D(\varphi)_0"'] 
& D(A)_0 
\end{tikzcd}
\]

\noi exists and the coproduct 

\[ W = \coprod_{\varphi \in \cA_1} W_\varphi\]

\noi over all $\varphi \in \cA_1$ exists in $\cE$.
\end{defn}

\noi When $\mD$ admits an object of the canonical cleavage of cartesian arrows as in Definition \ref{def admits a canonical cleavage of cartesian arrows} we can use the universal property of the coproduct, $W$, to get the map $w : W \to \mD_1$ in $\cE$ as follows: 

\[\begin{tikzcd}
W \rar[dotted, "w"] & \mD_1 \\
W_\varphi \uar["\iota_\varphi"] \ar[ur, tail, bend right, "w_\varphi"'] & 
\end{tikzcd} \]

\noi This can be thought of as indexing the canonical cleavage of the cartesian arrows in the internal category $\mD$. From this point on we assume that $\mD$ admit a canonical cleavage of cartesian arrows. The first lemma we prove in this section shows that $(\mCW, W)$ satisfies \textbf{Int.Frc.1}. In the case when $\cE = \Set$, the sections of the target map are given by 
\[ (1_B , D(1_B)(b) : (B, D(1_B)(b)) \to (B, b) \]

\noi which are completely determined by $B \in \cA_0$ and the objects of $D(B)$.

\begin{lem}[\textbf{Int.Frc.1}]\label{lem IntFrc1 for internal groth}
There exists a section of the target map $w t : W \to \mD_0$. 
\end{lem}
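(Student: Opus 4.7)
The plan is to construct the section component-wise over the coproduct decomposition $\mD_0 = \coprod_{A \in \cA_0} D(A)_0$, sending the $A$-th summand $D(A)_0$ into the $1_A$-indexed summand $W_{1_A}$ of $W$, and then pasting these together via the universal property of the coproduct. Intuitively, the classical picture is that the identity on the object $(A,a) \in \mD$ can be represented by the cartesian arrow $(1_A , 1_{D(1_A)(a)}) : (A, D(1_A)(a)) \to (A, a)$, which lives in the $1_A$-cleavage.

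For each $A \in \cA_0$, the first step is to build a map $\hat{\sigma}_A : D(A)_0 \to D_{1_A}$ by the universal property of the pullback $D_{1_A}$ with components
\[
1_{D(A)_0} : D(A)_0 \to D(A)_0 \qquad \text{and} \qquad \eta_A := D(1_A)_0 \cdot e : D(A)_0 \to D(A)_1,
\]
where $\eta_A$ represents the identity arrow at $D(1_A)(a)$. Compatibility with the pullback square for $D_{1_A}$ reduces to $\eta_A \cdot t = D(1_A)_0 = 1_{D(A)_0} \cdot D(1_A)_0$, which holds since $e \cdot t = 1$ by the identity law for the internal category $D(A)$. Next, lift $\hat{\sigma}_A$ through the upper pullback in Definition~\ref{def admits a canonical cleavage of cartesian arrows} to get $\sigma_A : D(A)_0 \to W_{1_A}$, using the components $D(1_A)_0 : D(A)_0 \to D(A)_0$ and $\hat{\sigma}_A : D(A)_0 \to D_{1_A}$: these are compatible because $D(1_A)_0 \cdot e = \eta_A = \hat{\sigma}_A \cdot \pi_1$ by definition of $\eta_A$ and $\hat{\sigma}_A$. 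Set $\tau_A := \sigma_A \cdot \iota_{1_A} : D(A)_0 \to W$, and let $\tau : \mD_0 \to W$ be induced by the family $\{\tau_A\}_{A \in \cA_0}$ via the universal property of $\mD_0 = \coprod_{A \in \cA_0} D(A)_0$.

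To verify $\tau$ is a section of $wt$, it suffices, by the universal property of $\mD_0$, to check $\iota_A \cdot \tau \cdot wt = \iota_A$ for each $A$. Unwrapping, $\iota_A \cdot \tau \cdot w = \tau_A \cdot w = \sigma_A \cdot \iota_{1_A} \cdot w = \sigma_A \cdot w_{1_A} \cdot \iota_{1_A} = \hat{\sigma}_A \cdot \iota_{1_A}$, where the penultimate equality uses the definition of $w : W \to \mD_1$ as the universal map out of the coproduct $W = \coprod_\varphi W_\varphi$ together with compatibility of the cleavage inclusions $w_\varphi$ and coproduct coprojections. Composing with the target,
\[
\iota_A \cdot \tau \cdot wt = \hat{\sigma}_A \cdot \iota_{1_A} \cdot t = \hat{\sigma}_A \cdot t_{1_A} = \hat{\sigma}_A \cdot \pi_0 \cdot \iota_A = 1_{D(A)_0} \cdot \iota_A = \iota_A,
\]
where the third equality uses the contravariant target formula $t_{1_A} = \pi_0 \cdot \iota_A$ (the source projection of $D_{1_A}$ composed with the coprojection into $\mD_0$), and the fourth equality is just the defining first component of $\hat{\sigma}_A$.

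There is no substantive obstacle: the construction is dictated by the universal properties of the two nested pullbacks defining $W_{1_A}$ and of the coproducts defining $\mD_0$, $\mD_1$, and $W$. The only care required is to keep straight the contravariant conventions — that $t_\varphi$ in the contravariant Grothendieck construction is built from $\pi_0$ and the coprojection into the codomain component — and to apply the identity law $et = 1$ at the right place.
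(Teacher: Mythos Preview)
Your proof is correct and takes essentially the same approach as the paper: both construct the section component-wise by sending $D(A)_0$ into $W_{1_A}$ via the pair $(1_{D(A)_0}, D(1_A)_0)$ using the nested pullback definition of $W_{1_A}$, assemble via the coproduct universal property, and verify the section condition cofiber-wise using $t_{1_A} = \pi_0 \iota_A$. Your write-up is slightly more explicit about the intermediate factorisation through $D_{1_A}$, but this is exactly the content of the paper's commuting diagram with its two dotted arrows.
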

\begin{proof}
It suffices to show that the cofibers $w_\varphi \pi_0 : W_\varphi \to D(B)_0$ have sections. For each $B \in \cA_0$, the cofiber section, $\alpha_B: D(B)_0 \to W_{1_B}$, of the target map on, $w_{1_B}\pi_0 : W_{1_B} \to D(B)_0$, is induced by the pair of maps $1_{D(B)_0}, D(1_B)_0 : D(B)_0 \to D(B)_0$. This is shown in the following commuting diagram, where the outer square clearly commutes and induces the dotted arrows on the left by the universal property of the two pullback squares on the inside.

\[ 
\begin{tikzcd}[]
D(B)_0 
\ar[dddr, bend right, equals] 
\ar[ddr, bend right, dotted] 
\ar[dr, dotted, "\alpha_B"] 
\ar[drr, bend left, "D(1_B)_0"] 
&
&
\\
& W_{1_B} 
\arrow[dr, phantom, "\usebox\pullback" , very near start, color=black] 
\dar[tail, "w_{1_B}"'] 
\rar["\pi_{1_B}"] 
& D(B)_0 
\dar[tail, "e"] 
\dar[dd, bend left = 45, equals] \\
& D_{1_B} 
\arrow[dr, phantom, "\usebox\pullback" , very near start, color=black]
\dar["\pi_0"'] 
\rar["\pi_1"] 
& D(B)_1 
\dar["t"] \\
& D(B)_0 
\rar["D(1_B)_0"'] 
& D(B)_0 
\end{tikzcd}
\]

\noi Using the universal property of coproducts, the section $\alpha : \mD_0 \to \mD_1$ is induced by the family of maps $\{\alpha_B \iota_{1_B} : B \in \cA_0\}$. Since the map $wt : W \to \mD_0$ is induced by the family of maps $\{ w_{\varphi} \pi_0 : \varphi \in \cA_1\}$ we have that 
\[ \iota_B \alpha w t = \alpha_B \iota_{1_B} w t = \alpha_B w_{1_B} \pi_0 \iota_B.\]

\noi This means the diagram
\[ 
\begin{tikzcd}[]
\mD_0 \ar[dr, equals] \rar["\alpha"] & W \dar["wt"] \\
& \mD_0 
\end{tikzcd}\]

\noi commutes by the universal property of the coproduct $\mD_0$ and it follows that $\alpha :\mD_0 \to W$ is a section of $wt : W \to \mD_0$. 
\end{proof}

\noi Before we prove the second axiom, let us consider the case when $\cE = \Set$. Here one typically shows that any composable arrows

\[ \begin{tikzcd}[]
(A,a) \rar["( \varphi {,} 1)"] & ( B,b) \rar["( \psi {,} 1)"] & (C,c)
\end{tikzcd}\]

\noi in $W \subseteq \mD_1$ can be precomposed by an arrow, 

\[ \begin{tikzcd}[column sep = large]
\big( A , D(\varphi \circ \psi)(c) \big) \rar[rrr, " \big( 1_A {,} \delta_{1_A \circ (\varphi \circ \psi) {,} c} D(1_A) (\delta_{\varphi \circ \psi {,} c}) \big)"]
 &&& (A,a)
\end{tikzcd}\]

\noi in $\mD_1$ to make the diagram 

\[ \begin{tikzcd}[row sep = large, column sep = large]
\big( A , D(\varphi \circ \psi)(c) \big) 
\dar["\big( 1_A {,} \delta_{1_A \circ (\varphi \circ \psi) , c} D(1_A) (\delta_{\varphi \circ \psi , c}) \big)"'] 
\ar[drr, bend left = 15, "(\varphi \circ \psi {,} 1 )"] & & \\
(A ,a) \rar["( \varphi {,} 1)"'] & (B,b) \rar["( \psi {,} 1)"'] & (C,c)
\end{tikzcd} \]

\noi commute in $\mD$. A convenient way to show this is to first notice that $a = D(\varphi)(b)$ and $b = D(\psi)(c)$ by definition, $D(\varphi)(1_{D(\psi)(c)}) = 1_{D(\varphi) \circ D(\psi) (c)}$ by functoriality of $D(\varphi)$, and the composite of the two arrows in $W$ is: 

\[ 
\begin{tikzcd}[column sep = large, row sep = large]
(A,a) 
\rar["(\varphi {,} 1)"] 
\ar[dr, "(\varphi \circ \psi {,} \ \delta_{\varphi \circ \psi {,} c}^{-1} ) "' ]
& ( B,b) 
\dar["(\psi {,} 1)"] \\
& (C,c)
\end{tikzcd}
\]

\noi Now computing the composite 

\[ \begin{tikzcd}[column sep = large, row sep = large]
\big( A, D(\varphi \circ \psi) (c) \big)
\rar[rrr,"\big( 1_A {,} \delta_{1_A \circ (\varphi \circ \psi) {,} c} D(1_A) (\delta_{\varphi \circ \psi {,} c}) \big)"]
 \ar[drrr, "( \varphi \circ \psi \ {,} \ 1)"']
& & & 
(A,a) \dar[ "( \varphi \circ \psi \ {,} \ \delta_{\varphi \circ \psi {,} c}^{-1}) " ] \\
& & & (C,c) 
\end{tikzcd}  \]

\noi in $\mD$ is done by noting that $1_A \circ (\varphi \circ \psi) = \varphi \circ \psi$ in $\cA^{op}$ and checking that

\[ \label{dgm usual case weak comp cancellation} \begin{tikzcd}[column sep = tiny, row sep = large]
D(\varphi \circ \psi) (c) 
\dar["\delta_{1_A \circ (\varphi \circ \psi) {,} c}"'] 
\ar[rr, equals] 
&& D(\varphi \circ \psi)(c) 
 \\
D(1_A) \circ D(\varphi \circ \psi) (c) 
\ar[dr,"D(1_A)(\delta_{\varphi \circ \psi {,} c})"'] 
\ar[rr, equals] 
&& D(1_A) \circ D(\varphi \circ \psi) (c) 
\uar["\delta_{1_A \circ (\varphi \circ \psi) {,} c}^{-1}"'] 
& \\
&
D(1_A)\circ D(\varphi) \circ D(\psi) (c) 
\ar[ur, "D(1_A) (\delta_{\varphi \circ \psi {,} c}^{-1} )"'] 
& 
\end{tikzcd} \tag{$\star$} \]

\noi commutes in the category $D(A)$. The bottom left triangle commutes by functoriality of $D(1_A)$ and then the outer triangle commutes by definition of the natural isomorphism $\delta_{1_A \circ (\varphi \circ \psi)}$. We now give an internal version of this proof.

\begin{lem}[\textbf{Int.Frc.2}]\label{lem IntFrc2 for internal groth}
There exists a cover $\begin{tikzcd}[]
U \rar["/" marking, "u" near start] & W 
\tensor[_{wt}]\times{_{ws}}W
\end{tikzcd}$ and a lift $\ell : U \to W_\circ$ such that the diagram 

\[ \begin{tikzcd}[]
& W_\circ \dar["\pi_0 \pi_{12}"] \\
U \ar[ur, bend left = 15, dotted, "\ell"] \rar["/" marking, "u"' near end] &W \prescript{}{wt}{\times_{ws}} W
\end{tikzcd}\]

\noi commutes in $\cE$. 
\end{lem}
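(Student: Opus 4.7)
The plan is to take the cover $u$ to be the identity $1_{W \tensor[_{wt}]{\times}{_{ws}} W}$, which lies in the class of identity covers that we are working with for this application, and to construct the lift $\ell$ globally by gluing together cofiber-wise definitions. In this way the axiom is verified without actually needing to pass to a refinement; the pseudofunctor coherence isomorphisms supply all of the data internally.

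First I would use extensivity of $\cE$ and the definition of $W$ as the coproduct $\coprod_{\varphi \in \cA_1} W_\varphi$ to decompose the pullback as
\[
W \tensor[_{wt}]{\times}{_{ws}} W \;\cong\; \coprod_{(\varphi,\psi)} W_\varphi \tensor[_{w_\varphi t}]{\times}{_{w_\psi s}} W_\psi ,
\]
where the non-initial summands are indexed by pairs $(\varphi,\psi) \in \cA_2$ that are composable in $\cA$. By the universal property of the coproduct, defining $\ell$ reduces to defining $\ell_{\varphi;\psi}$ on each such cofiber and then verifying that the maps assemble into a single arrow landing in $W_\circ$.

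Next, for a fixed composable pair $\varphi : A \to B$ and $\psi : B \to C$ in $\cA$, I would internalize the classical recipe pictured in diagram $(\ref{dgm usual case weak comp cancellation})$: the element of $W_\circ$ that we wish to produce is the triple consisting of (i) a pre-composable arrow indexed by $1_A \in \cA_1$ whose $D(A)_1$-component is the composite $\delta^{-1}_{1_A;\,\varphi\circ\psi}\, D(1_A)(\delta^{-1}_{\varphi;\psi})$ (reindexed over the appropriate pullback), (ii) the arrow in $W_\varphi$ we started with, and (iii) the arrow in $W_\psi$ we started with, together with the datum that their $\mD$-composite lies in the cleavage, namely it equals the element of $W_{\varphi\circ\psi}$ at the same object of $D(C)_0$. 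Concretely $\ell_{\varphi;\psi}$ will be given by a pairing map into $\mD_1 \tensor[_t]{\times}{_{ws}} W \tensor[_{wt}]{\times}{_{ws}} W$ whose composite with $c : \mD_3 \to \mD_1$ factors through $w$; this factorisation is precisely what produces the component landing in $W$, using the pullback defining $W_\circ$.

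The heart of the argument, and the main obstacle, is verifying this factorisation internally. Using the explicit formula $(\ref{contravar cofiber composition arrow})$ for cofiber composition together with the coherence of the structure isomorphisms $\delta$ of the pseudofunctor $D$ and functoriality of each $D(1_A)$, I expect to obtain an equality of maps $W_\varphi \tensor[_{w_\varphi t}]{\times}{_{w_\psi s}} W_\psi \to D(A)_1$ showing that the triple composite collapses to $e_{D(\varphi\circ\psi)_0}$ on the relevant component of $D(C)_0$. This is the internal counterpart of the commuting triangle $(\ref{dgm usual case weak comp cancellation})$, and it is what allows $\ell_{\varphi;\psi}$ to land in $W_\circ$ rather than merely in $\mD_1 \tensor[_t]{\times}{_{ws}} W \tensor[_{wt}]{\times}{_{ws}} W$. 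Once this is established, gluing across composable pairs yields the global $\ell$, and the equality $\ell \pi_0 \pi_{12} = 1$ is automatic cofiber-by-cofiber since we built $\ell_{\varphi;\psi}$ to retain the original pair $(W_\varphi, W_\psi)$ in its last two coordinates.
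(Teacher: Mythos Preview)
Your plan is essentially the paper's proof: take the identity as the cover, use extensivity to reduce to cofibers $W_{\varphi;\psi}$ for composable $(\varphi,\psi)\in\cA_2$, build a pre-composable arrow in $D_{1_A}$ from the pseudofunctor coherence isomorphisms, and verify via the explicit cofiber composition formula that the resulting triple composite lands in $W_{\varphi\circ\psi}$, inducing $\ell_{\varphi;\psi}:W_{\varphi;\psi}\to W_\circ$ and hence $\ell$ by the coproduct universal property. One small caution: the $D(A)_1$-component of the pre-composing arrow should be the \emph{forward} isomorphisms $\delta_{1_A;\varphi\circ\psi}\cdot D(1_A)(\delta_{\varphi;\psi})$ rather than their inverses, since the cofiber composition already introduces $\delta^{-1}$'s that these must cancel---this is exactly the content of the paper's diagram $(\star)$ and the long internal computation that follows it.
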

\begin{proof}
By extensivity we have that $W \prescript{}{wt}{\times_{ws}} W \cong \coprod_{(\varphi, \psi) \in \cA_2} W_{\varphi ; \psi}$ where the cofibers are given by pullbacks 

\[ \begin{tikzcd}[column sep = large, row sep = large]
W_{\varphi ; \psi} 
\rar["\pi_1"] 
\dar["\pi_0"'] 
\arrow[dr, phantom, "\usebox\pullback" , very near start, color=black] 
& W_\psi 
\ar[dr, "\pi_\psi e"] 
\rar[tail, "w_\psi"] 
\dar["\pi_\psi"] 
& D_\psi 
\dar[ "\pi_1"] \\
W_\varphi 
\rar["t_\varphi"'] 
\dar[tail, "w_\varphi"'] 
& D(B)_0 
& D(B)_1 \lar["s"] \\
D_\varphi 
\ar[ur, bend right, "\pi_0"'] 
\end{tikzcd}\]

\noi Now using the component maps of the structure isomorphisms for the pseudofunctor $D: \cA^{op} \to \Cat(\cE)$ we represent the composable vertical maps on the left-hand side in Diagram (\ref{dgm usual case weak comp cancellation}) by the internally composable pair, $D(C)_0 \to D(A)_2$, determined by the unviersal property of the following pullback: 

\[ \begin{tikzcd}[column sep = large, row sep = large]
D(C)_0
 \ar[ddr, bend right, "\delta_{1_A \circ (\varphi \circ \psi)}"'] 
 \ar[rr, bend left, "\delta_{\varphi \circ \psi}"] 
 \ar[dr, dotted, "\tilde{\delta}^w_{\varphi \circ \psi}"] 
 & 
 & D(A)_1 
 \dar[bend left = 15,"D(1_A)_1"] \\
& D(A)_2
\arrow[dr, phantom, "\usebox\pullback" , very near start, color=black] 
\rar["\pi_1"] 
\dar["\pi_0"'] 
 & D(A)_1 \dar["s"] \\
& D(A)_1 \rar["t"'] & D(A)_0
\end{tikzcd}.\] 

\noi Specifying that such a pair comes from $W_{\varphi ; \psi}$ and composing with the composition structure map of $D(A)$ gives the internal version of one component of pre-composable map in $\mD$ that we need define the necessary lift:

\[ \begin{tikzcd}[column sep = large, row sep = large]
W_{ \varphi ; \psi} 
\rar["\pi_1 w_\psi \pi_0"] 
\ar[drr, dotted, "\delta^w_{\varphi \circ \psi}"'] 
& D(C)_0 
\rar["\tilde{\delta}^w_{\varphi \circ \psi}"] 
& D(A)_2
\dar["c"] \\
& & D(A)_1
\end{tikzcd}\] 

\noi To bring this together with the other component keeping track of the indexing, we need to map into the proper cofiber, $D_{1_A}$, which can be done using the universal property of the pullback:

\[ \begin{tikzcd}[column sep = large, row sep = large]
W_{\varphi ; \psi} 
\ar[drrr, bend left, "\delta^w_{\varphi \circ \psi}"] 
\ar[dd, bend right = 15, "\pi_1 w_\psi \pi_0"'] 
\ar[drr, dotted, "\delta^w_{1_A ; \varphi \circ \psi}"] & & & \\
& & D_{1_A} \rar["\pi_1"] \dar["\pi_0"'] 
\arrow[dr, phantom, "\usebox\pullback" , very near start, color=black] 
& D(A)_1 \dar["t"] \\
D(C)_0 \rar["D(\psi)_0"']
& D(B)_0\rar["D(\varphi)_0"']
& D(A)_0 \rar["D(1_A)_0"'] 
& D(A)_0 
\end{tikzcd}\]

\noi along with the fact that 

\begin{align*}
\delta^w_{\varphi \circ \psi} t 
&= \pi_1 w_\psi \pi_0 \tilde{\delta}^w_{\varphi \circ \psi} c t \\
&= \pi_1 w_\psi \pi_0 \delta_{\varphi \circ \psi} D(1_A)_1 t \\
&= \pi_1 w_\psi \pi_0 \delta_{\varphi \circ \psi} t D(1_A)_0 \\
&= \pi_1 w_\psi \pi_0 D(\psi)_0 D(\varphi)_0 D(1_A)_0. 
\end{align*}

\noi Now we need to compose the cofiber triple and show that it factors through $W_{\varphi \circ \psi}$ via some map $c^w_{\varphi ; \psi}$ in the following diagram. 

\[ \label{dgm weak composition triple factoring} \begin{tikzcd} 
W_{\varphi ; \psi} \dar[d, "c^w_{\varphi ; \psi}"'] \rar[rrr, "\big( \delta^w_{1_A ; \varphi \circ \psi} {,} \pi_0 w_\varphi {,}\pi_1 w_\psi \big)"] && & D_{1_A ; \varphi ; \psi} \dar["c_{1_A ; \varphi ; \psi}"] \\
W_{\varphi \circ \psi} \rar[rrr, "w_{\varphi \circ \psi}"'] & & & D_{\varphi \circ \psi} 
\end{tikzcd} \tag{$\star \star$} \] 

\noi We break this up into a couple steps using associativity of composition. First we compute the composite 

\[\begin{tikzcd}[]
W_{\varphi ;\psi} 
\ar[drrr, "(\pi_1 w_\psi \pi_0 {,} \pi_1 w_\psi \pi_0 \delta_{\varphi ; \psi}^{-1})"'] 
\rar[rrr,"(\pi_0 w_\varphi {,} \pi_1 \psi)"] &&& D_{\varphi ; \psi} \dar["c_{\varphi ; \psi} "] \\
&&& D_{\varphi \circ \psi} 
\end{tikzcd}\]

\noi by calculating 

\begin{align*}
 (\pi_0 w_\varphi {,} \pi_1 w_\psi) c'_{\varphi ; \psi ; \delta^{-1} } 
 &= (\pi_0 w_\varphi {,} \pi_1 w_\psi) (\pi_0 \pi_1 , \pi_1 \pi_1 D(\varphi)_1 , \pi_1 \pi_0 \delta_{\varphi ; \psi}^{-1} ) \\
 &= (\pi_0 w_\varphi \pi_1 , \pi_1 w_\psi \pi_1 D(\varphi)_1 , \pi_1 w_\psi \pi_0 \delta_{\varphi ; \psi}^{-1} )\\
 &= (\pi_0 \pi_\varphi e , \pi_1 \pi_\psi e D(\varphi)_1 , \pi_1 w_\psi \pi_0 \delta_{\varphi ; \psi}^{-1} )\\
 &= (\pi_0 \pi_\varphi e , \pi_0 w_\varphi \pi_0 e D(\varphi)_1 , \pi_1 w_\psi \pi_0 \delta_{\varphi ; \psi}^{-1} )\\
 &= (\pi_0 \pi_\varphi e , \pi_0 w_\varphi \pi_0 D(\varphi)_0 e , \pi_1 w_\psi \pi_0 \delta_{\varphi ; \psi}^{-1} )\\
 &= (\pi_0 \pi_\varphi e , \pi_0 \pi_\varphi e , \pi_1 w_\psi \pi_0 \delta_{\varphi ; \psi}^{-1} )
\end{align*}

\noi and then using the identity law in $D(A)$ twice in the last line below to see

\begin{align*}
(\pi_0 w_\varphi {,} \pi_1 w_\psi)c_{\varphi ; \psi} 
&= (\pi_0 w_\varphi {,} \pi_1 w_\psi) (\pi_1 \pi_0 , c'_{\varphi ; \psi ; \delta^{-1} } c) \\
&= \big( \pi_1 w_\psi \pi_0 ,  (\pi_0 w_\varphi {,} \pi_1 w_\psi) c'_{\varphi ; \psi ; \delta^{-1} } c \big) \\
&= \big( \pi_1 w_\psi \pi_0 ,  (\pi_0 \pi_\varphi e , \pi_0 \pi_\varphi e , \pi_1 w_\psi \pi_0 \delta_{\varphi ; \psi}^{-1} ) c \big) \\
&= \big( \pi_1 w_\psi \pi_0 ,  \pi_1 w_\psi \pi_0 \delta_{\varphi ; \psi}^{-1} \big). 
\end{align*}

\noi Now to see that we can pre-compose 
\[(\pi_0 w_\varphi , \pi_1 w_\psi) c_{\varphi ; \psi} : W_{\varphi ; \psi} \to D_{\varphi \circ \psi}\] 

\noi with $\delta^w_{1_A ; \varphi \circ \psi} : W_{\varphi ; \psi} \to D_{1_A}$ at the cofiber $D_{1_A ; (\varphi \circ \psi)}$, we check that

\begin{align*}
\delta^w_{1_A ; \varphi \circ \psi} \pi_0 
&= \pi_1 w_\psi \pi_0 D(\psi)_0 D(\varphi)_0 \\
&= \pi_1 w_\psi \pi_0 \delta_{\varphi ; \psi} t \\
&= \pi_1 w_\psi \pi_0 \delta_{\varphi ; \psi}^{-1} s \\
&= (\pi_1 w_\psi \pi_0 , \pi_1 w_\psi \pi_0 \delta_{\varphi ; \psi}^{-1}) \pi_1 s\\
&= (\pi_0 w_\varphi , \pi_1 w_\psi) c_{\varphi ; \psi} \pi_1 s.
\end{align*}

\noi To see this cofiber composition factors through $W_{\varphi \circ \psi}$ we need to show that the arrow given by post-composing with the first projection, $\pi_1 : D_{\varphi \circ \psi} \to D(A)_1$ is an identity. To break this up a bit we first calculate 

\begin{align*}
& \ \ \ \ \big( \delta^w_{1_A ; \varphi \circ \psi} , \ (\pi_0 w_\varphi , \pi_1 w_\psi) c_{\varphi ; \psi} \big) c'_{1_A ; (\varphi \circ \psi) ; \delta^{-1}} \\
&= \big( \delta^w_{1_A ; \varphi \circ \psi} , \ (\pi_0 w_\varphi , \pi_1 w_\psi) c_{\varphi ; \psi} \big) \big( \pi_0 \pi_1 , \ \pi_1 \pi_1 D(1_A)_1 , \ \pi_1 \pi_0 \delta_{1_A ; (\varphi \circ \psi)}^{-1} \big)\\
&= \big( \delta^w_{1_A ; \varphi \circ \psi} \pi_1 ,  \ \pi_1 w_\psi \pi_0 \delta_{\varphi ; \psi}^{-1} D(1_A)_1 , \ \pi_1 w_\psi \pi_0 \delta_{1_A ; (\varphi \circ \psi)}^{-1} \big)\\
&=\big( \delta^w_{\varphi \circ \psi} ,  \ \pi_1 w_\psi \pi_0 \delta_{\varphi ; \psi}^{-1} D(1_A)_1 , \ \pi_1 w_\psi \pi_0 \delta_{1_A ; (\varphi \circ \psi)}^{-1} \big)\\
&= \big( \pi_1 w_\psi \pi_0 \tilde{\delta}_{\varphi \circ \psi}^w c ,  \ \pi_1 w_\psi \pi_0 \delta_{\varphi ; \psi}^{-1} D(1_A)_1 , \ \pi_1 w_\psi \pi_0 \delta_{1_A ; (\varphi \circ \psi)}^{-1} \big)\\
\end{align*}

\noi and then substituting it into the following calculation along with the definition 

\[\tilde{\delta}_{\varphi \circ \psi}^w = (\delta_{1_A ; \circ(\varphi \circ \psi} , \delta_{\varphi ; \psi} D(1_A)_1)\]

\noi gives: 

\begin{align*}
& \ \ \ \ 
\big( \delta^w_{1_A ; \varphi \circ \psi} , (\pi_0 w_\varphi , \pi_1 w_\psi) c_{\varphi ; \psi} \big) c_{1_A ; (\varphi \circ \psi)} \pi_1 \\
&= \big( \delta^w_{1_A ; \varphi \circ \psi} , (\pi_0 w_\varphi , \pi_1 w_\psi) c_{\varphi ; \psi} \big) c'_{1_A ; (\varphi \circ \psi) ; \delta^{-1}} c\\
&= \big( \pi_1 w_\psi \pi_0 \tilde{\delta}_{\varphi \circ \psi}^w c ,  \ \pi_1 w_\psi \pi_0 \delta_{\varphi ; \psi}^{-1} D(1_A)_1 , \ \pi_1 w_\psi \pi_0 \delta_{1_A ; (\varphi \circ \psi)}^{-1} \big) c \\
&= \big( \pi_1 w_\psi \pi_0 \tilde{\delta}_{\varphi \circ \psi}^w c ,  \ \pi_1 w_\psi \pi_0 ( \delta_{\varphi ; \psi}^{-1} D(1_A)_1 , \delta_{1_A ; (\varphi \circ \psi)}^{-1})c  \big) c \\
&= \pi_1 w_\psi \pi_0\big( \delta_{1_A ; \circ(\varphi \circ \psi)} , \ \delta_{\varphi ; \psi} D(1_A)_1 , \ \delta_{\varphi ; \psi}^{-1} D(1_A)_1 , \ \delta_{1_A ; (\varphi \circ \psi)}^{-1} \big) c \\
&= \pi_1 w_\psi \pi_0\big( \delta_{1_A ; \circ(\varphi \circ \psi)} , \ (\delta_{\varphi ; \psi} D(1_A)_1 , \ \delta_{\varphi ; \psi}^{-1} D(1_A)_1)c , \ \delta_{1_A ; (\varphi \circ \psi)}^{-1} \big) c \\
&= \pi_1 w_\psi \pi_0\big( \delta_{1_A ; \circ(\varphi \circ \psi)} , \ (\delta_{\varphi ; \psi} , \ \delta_{\varphi ; \psi}^{-1})c D(1_A)_1 , \ \delta_{1_A ; (\varphi \circ \psi)}^{-1} \big) c \\
&= \pi_1 w_\psi \pi_0\big( \delta_{1_A ; \circ(\varphi \circ \psi)} , \ e D(\varphi\circ \psi)_1 D(1_A)_1 , \ \delta_{1_A ; (\varphi \circ \psi)}^{-1} \big) c \\
&= \pi_1 w_\psi \pi_0\big( \delta_{1_A ; \circ(\varphi \circ \psi)} , \ D(\varphi \circ \psi)_0 D(1_A)_0 e , \ \delta_{1_A ; (\varphi \circ \psi)}^{-1} \big) c \\
&= \pi_1 w_\psi \pi_0\big( \delta_{1_A ; \circ(\varphi \circ \psi)} , \delta_{1_A ; \circ(\varphi \circ \psi)} t e , \ \delta_{1_A ; (\varphi \circ \psi)}^{-1} \big) c \\
&= \pi_1 w_\psi \pi_0\big( \delta_{1_A ; \circ(\varphi \circ \psi)}, \ \delta_{1_A ; (\varphi \circ \psi)}^{-1} \big) c \\
&= \pi_1 w_\psi \pi_0 D(1_A \circ (\varphi \circ \psi) )_0 e \\
&= \pi_1 w_\psi \pi_0 D(\varphi \circ \psi)_0 e. 
\end{align*}

\noi This internalizes the commutativity of Diagram (\ref{dgm usual case weak comp cancellation}) and shows that for every composable $\varphi : A \to B $ and $ \psi : B \to C$ in $\cA$ there is a commuting diagram 

\[ 
\begin{tikzcd}[column sep = large, row sep = large]
W_{\varphi ; \psi} \ar[dr, dotted, "c^w_{\varphi ; \psi}"] \ar[dd, bend right, "\big( \delta^w_{1_A ; \varphi \circ \psi} {, }(\pi_0 w_\varphi {, }\pi_1 w_\psi) c_{\varphi ; \psi} \big)"'] \ar[drr, bend left, "\pi_1 w_\psi \pi_0 D(\varphi \circ \psi)_0"] & & \\
&W_{\varphi \circ \psi}  
\arrow[dr, phantom, "\usebox\pullback" , very near start, color=black] 
\dar[tail, "w_{\varphi \circ \psi} "'] 
\rar["\pi_{\varphi \circ \psi}"] 
& D(A)_0 
\dar[tail, "e"] \\
D_{\varphi ; \psi} \rar["c_{1_A ; (\varphi \circ \psi)}"'] &D_{\varphi \circ \psi} 
\rar["\pi_1"] 
& D(A)_1 
\end{tikzcd} \qquad \qquad .
\]

\noi The factorization we needed appears on the left of the diagram above and by associativity of composition in $\mD$ we can conclude that the diagram we originally wanted (\ref{dgm weak composition triple factoring}) involving composable triples commmutes. This allows us to compute

\begin{align*}
(\delta_{\varphi ; \psi}^w {,} 1) (\pi_0 \iota_{1_A} {,}\iota_{\varphi ; \psi}^w) (\pi_0 {,} \pi_1 \pi_0 w {,} \pi_1 \pi_1 w) c 
&= \big( \delta_{\varphi ; \psi}^w \iota_{1_A} , \iota_{\varphi ; \psi}^w \pi_0 w , \iota_{\varphi ; \psi}^w \pi_1 w \big) c \\
&= \big( \delta_{\varphi ; \psi}^w \iota_{1_A} , \pi_0 \iota_\varphi^w w , \pi_1 iota_{\psi}^w w \big) c \\
&= \big( \delta_{\varphi ; \psi}^w \iota_{1_A} , \pi_0 w_\varphi \iota_\varphi , \pi_1 w_\psi \iota_{\psi} \big) c \\
&= \big( \delta_{\varphi ; \psi}^w , \pi_0 w_\varphi , \pi_1 w_\psi \big) \iota_{1_A ; \varphi ; \psi} c \\
&= \big( \delta_{\varphi ; \psi}^w , \pi_0 w_\varphi , \pi_1 w_\psi \big) c_{1_A ; \varphi ; \psi} \iota_{\varphi \circ \psi} \\
&= c_{\varphi ; \psi}^w w_{\varphi \circ \psi} \iota_{\varphi \circ \psi} \\
&= c_{\varphi ; \psi}^w \iota_{\varphi \circ \psi}^w w 
\end{align*}

\noi and induce the unique cofiber lift, $\ell_{\varphi ; \psi} : W_{\varphi ; \psi} \to W_\circ$, by the universal property of the pullback, $W_\circ$, that makes the following diagram commute.

\[ \begin{tikzcd}[column sep = large, row sep= large]
W_{\varphi ; \psi} \ar[ddd, bend right = 55, equals] \ar[drr, dotted, "\ell_{\varphi ; \psi}"] \ar[dd, "(\delta_{\varphi ; \psi}^w {,} 1) "'] \rar[rrr, "c_{\varphi ; \psi}^w"] &&& W_{\varphi \circ \psi} \dar["\iota_{\varphi \circ \psi}^w"] \\
&&W_\circ \dar["\pi_0"'] \rar["\pi_1"] 
\arrow[dr, phantom, "\usebox\pullback" , very near start, color=black] 
& W \dar[tail, "w"] \\
D_{1_A} \prescript{}{t}{\times_{s}} (W_{\varphi ; \psi}) \rar[rr,"(\pi_0 \iota_{1_A} {,}\iota_{\varphi ; \psi}^w)"'] 
\ar[d,"\pi_1"']
&& \mD_1 \prescript{}{t}{\times_{ws}} (W \prescript{}{wt}{\times_{ws}} W) 
\rar["(\pi_0 {,} \pi_1 \pi_0 w {,} \pi_1 \pi_1 w) c"'] 
\dar["\pi_{12}"]
& \mD_1\\
W_{\varphi ; \psi} \rar[rr, " \iota_{\varphi l \psi}^w"'] &&
W \prescript{}{wt}{\times_{ws}} W
&
\end{tikzcd}\]

\noi The universal property of coproducts then gives us the desired lift

\[ \begin{tikzcd} 
W \prescript{}{wt}{\times_{ws}} W \ar[rr, bend left, equals] \rar[dotted,"\ell"] & W_\circ \rar["\pi_0 \pi_{12}"]& W \prescript{}{wt}{\times_{ws}} W \\
W_{\varphi ; \psi} \uar["\iota_{\varphi ; \psi}^w"]
 \ar[ur, "\ell_{\varphi ; \psi}"'] \ar[urr, bend right, "\iota_{\varphi ; \psi}^w"'] 
\end{tikzcd}\] 

\noi where we take the identity map 
\[ \begin{tikzcd} W \prescript{}{wt}{\times_{ws}} W \rar[rr, equals, "1_{ W \prescript{}{wt}{\times_{ws}} W} "] && W \prescript{}{wt}{\times_{ws}} W\end{tikzcd} \] 
\noi as our cover. 
\end{proof}

\noi The next thing we need to show is the right Ore condtition. Taking a look at the proof when $\cE = \Set$ will be useful for guiding the reader through the internal version. Start by assuming there exists a cospan in $\mD$ whose right leg is in $W$: 

\[ \begin{tikzcd}[ ]
& \big(C, D(\psi)(b) \big) \dar["\circ" marking, "( \psi {,} 1_{D(\psi)(b)} )"] \\
\big(A,a\big) \rar["( \varphi{,} f)"'] & \big( B,b\big)
\end{tikzcd}\]

\noi Since $\cA$ is filtered, there exists an object $E \in \cA_0$ and two maps $\varphi^* : E \to A$ and $\psi^* : E \to C$ such that the square

\[ \begin{tikzcd}[]
E \rar["\psi^*"] \dar["\varphi^*"'] & C \dar["\psi"] \\
A \rar["\varphi"'] & B
\end{tikzcd}\]

\noi commutes in $\cA$. Now letting $\star$ denote the composition of arrows in the non-indexing compenent of the Grothendieck construction we can consider the commuting diagram: 

\[ \begin{tikzcd}[column sep = huge]
D(\varphi^*)(a) \rar[equals] \dar[ddd,equals] 
\ar[drr, bend right = 20, "1_{D(\varphi^*)(b)} \star f"] 
& D(\varphi^*)(a) \rar["D(\varphi^*)(f)"] 
& D(\varphi^*) \circ D(\varphi)(b) \dar["\delta_{\varphi^* ; \varphi , b}^{-1}"] \\
&
& D(\varphi^* \circ \varphi)(b) \dar[equals] \\
&
& D(\psi^* \circ \psi)(b) \dar["\delta_{\psi^* ; \psi , b}"] \\
D(\varphi^*)(a) \rar["g"'] \ar[urr, bend left =20, "g \star 1_{D(\psi)(b)}"'] 
& D(\psi^*) \circ D(\psi)(b) 
\rar[equals, "D(\psi^*)(1_{D(\psi)(b)}"'] 
& D(\psi^*) \circ D(\psi)(b) 
 \end{tikzcd}\]

\noi where 

\[ g = D(\varphi^*)(f) \delta_{\varphi^* ; \varphi , b}^{-1} \delta_{\psi^* ; \psi , b}. \]

\noi In particular we have 
\[ g \star 1_{D(\psi)(b)} = 1_{D(\varphi^*)(b)} \star f\]

\noi and so the square 

\[ \begin{tikzcd}[ ]
\big( E , D(\varphi^*)(a) \big) 
\dar["\circ" marking, "( \varphi^* {,} 1_{D(\varphi^*)(a)} )"']
\rar["( \psi^* {,} g )"] & \big( C , D(\psi)(b) \big) 
\dar["\circ" marking, "(\psi {,} 1_{D(\psi)(b)} )"] \\
\big(A,a\big) \rar["( \varphi {,} f)"'] & \big(B,b\big)
\end{tikzcd}\]

\noi commutes in $\mD$. Now we show how to internalize this proof when $\cE$ is not necessarily $\Set$. 

\begin{lem}[\textbf{Int.Frc.3}]\label{lem IntFrc3 for internal groth}\

There exists a cover, $\begin{tikzcd}[]
U \rar["/" marking, "u" near start] & \mD_1 \prescript{}{t}{\times_{wt}} W\end{tikzcd}$ and a lift $\begin{tikzcd}[]
U\ar[r, "\ell"] & W_\square 
\end{tikzcd}$ such that the following diagram commutes: 

\[ \begin{tikzcd}
& W_\square \dar["(\pi_0 \pi_1 {,} \pi_1 \pi_1)"] \\
U \rar["/" marking, "u"' near end] \ar[ur, bend left, dotted, "\ell"] & \mD_1 \prescript{}{t}{\times_{wt}} W
\end{tikzcd}\]

\noi where 

\[W_\square = (W \prescript{}{wt}{\times_{s}} \mD_1 ) \prescript{}{c}{\times_c} (\mD_1 \prescript{}{t}{\times_{ws}} W). \]
\end{lem}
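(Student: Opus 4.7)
The plan is to internalize the classical proof sketched just before the lemma statement, and to show that the cover may be taken to be the identity $u = 1 : \mD_1 \prescript{}{t}{\times_{wt}} W \to \mD_1 \prescript{}{t}{\times_{wt}} W$, as was the case for Lemmas~\ref{lem IntFrc1 for internal groth} and~\ref{lem IntFrc2 for internal groth}. First I would use extensivity of $\cE$ together with the canonical cleavage description $W \cong \coprod_{\psi \in \cA_1} W_\psi$ and the coproduct description of $\mD_1$ to decompose
\[ \mD_1 \prescript{}{t}{\times_{wt}} W \;\cong\; \coprod_{(\varphi,\psi)} \, D_\varphi \, \prescript{}{t_\varphi}{\times}_{w t_\psi}\, W_\psi, \]
where the coproduct ranges over pairs $(\varphi : A \to B,\ \psi : C \to B)$ of arrows in $\cA$ sharing a common codomain $B$. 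Similarly, by the definitions of $W_\square$ and of the canonical cleavage, $W_\square$ decomposes as a coproduct over pairs of cospan fillers $(\varphi^*,\psi^*)$ with $\varphi^*\circ\varphi = \psi^*\circ\psi$ in $\cA$. Since $\cA$ is small and cofiltered, for each cospan $(\varphi,\psi)$ we choose once and for all a completion $(\varphi^* : E \to A,\ \psi^* : E \to C)$; this is a set-level choice that determines, cofiber by cofiber, the target index in the coproduct decomposition of $W_\square$, so no non-trivial cover of $\mD_1 \prescript{}{t}{\times_{wt}} W$ in $\cE$ is required.

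Next I would construct $\ell$ on each cofiber $D_\varphi \prescript{}{t_\varphi}{\times}_{w t_\psi} W_\psi$ by mimicking the classical construction of $(\varphi^* , g)$ and $(\varphi^* , 1)$. The right-hand leg, representing $(\varphi^*, 1)\in W_{\varphi^*}$, is induced by the universal property of $W_{\varphi^*}$, using the projection into $D(E)_0$ obtained by composing the cofiber projection $\pi_0 : D_\varphi \to D(B)_0$ with $D(\psi)_0$ and then $D(\psi^*)_0$ (equivalently $D(\varphi)_0 D(\varphi^*)_0$, these agreeing up to the structure isomorphism $\delta_{\varphi^*;\varphi}$). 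The top leg $(\psi^*, g)$ requires building the internal version of $g = D(\varphi^*)(f)\,\delta^{-1}_{\varphi^*;\varphi,b}\,\delta_{\psi^*;\psi,b}$: post-compose the cofiber arrow $\pi_1 : D_\varphi \to D(A)_1$ with the internal image $D(\varphi^*)_1$, then pair it internally with the families $\delta^{-1}_{\varphi^*;\varphi}\pi_0$ and $\delta_{\psi^*;\psi}\pi_\psi\pi_0$ obtained by precomposing with appropriate pullback projections, and finally apply the triple composition $c\circ(1\times c) : D(E)_3 \to D(E)_1$. This yields a map into $D_{\psi^*}$ by pairing with $\psi_0^* : D_\varphi \prescript{}{t_\varphi}{\times}_{wt_\psi} W_\psi \to D(C)_0$, then assembled into $\mD_1$ via $\iota_{\psi^*}$.

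The main obstacle will be checking that the resulting square commutes in $\mD$, i.e.\ that the two cofiber composites
\[\bigl((\varphi^*,1) \,,\, (\varphi,f)\bigr)c \;=\; \bigl((\psi^*,g)\,,\,(\psi,1)\bigr)c\]
agree after mapping into $D_{\varphi^*\circ\varphi} = D_{\psi^*\circ\psi}$. This requires internal manipulation of the cofiber composition map from (\ref{contravar cofiber composition arrow}), producing two triple composites of the form $(-,-,-)c$ in $D(E)$ whose equality is exactly the internal pseudofunctor coherence equating $\delta^{-1}_{\varphi^*;\varphi}\delta_{\psi^*;\psi}$ with its expanded three-fold form. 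This is analogous to the coherence manipulations carried out in the appendix version of the associativity proof (Proposition~\ref{Prop composition is associative - appendix}), with inverses of structure cells replacing structure cells. Once this cofiber identity is established, the universal property of the coproducts $W_\square$ and $\mD_1 \prescript{}{t}{\times_{wt}} W$ assembles the cofiber maps into the desired global lift $\ell$, and commutativity of the outer diagram against $(\pi_0\pi_1, \pi_1\pi_1)$ follows from how each cofiber lift was constructed to project onto $(\varphi,f)$ and $\psi\in W$ respectively.
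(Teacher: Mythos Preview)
Your overall strategy matches the paper's proof exactly: decompose $\csp$ cofiberwise via extensivity, choose a filtered completion $(\varphi^*,\psi^*)$ for each cospan $(\varphi,\psi)$ in $\cA$, build the two legs of the Ore square on each cofiber, verify that the two resulting composites into $D_{\varphi^*\circ\varphi}=D_{\psi^*\circ\psi}$ agree, and assemble via coproducts with the identity as cover.

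Two execution details need correction. First, the element of $W_{\varphi^*}$ representing $(\varphi^*,1_{D(\varphi^*)(a)})$ is determined by $a\in D(A)_0$, the \emph{source} of $f$; internally this is the projection $\pi_0\pi_1 s : D_\varphi \prescript{}{t_\varphi}{\times}_{wt_\psi} W_\psi \to D(A)_0$ followed by $D(\varphi^*)_0$, not the map you describe via $\pi_0 D(\psi)_0 D(\psi^*)_0$, which lands on $D(\psi^*)(D(\psi)(b))$ and does not factor through $W_{\varphi^*}$ in the required way. Second, the verification that the two composites agree does not use the pseudofunctor cocycle coherence: after unwinding the cofiber composition formula on both sides, the $\pi_1$-component reduces in each case to the pair composite $(\pi_0\pi_1 D(\varphi^*)_1,\,\pi_0\pi_0\,\delta^{-1}_{\varphi^*;\varphi})\,c$ in $D(E)$, with agreement coming from the identity law (since $(\varphi^*,1)$ and $(\psi,1)$ contribute identities) together with the cancellation of the $\delta_{\psi^*;\psi}$ built into $g$ against the $\delta^{-1}_{\psi^*;\psi}$ supplied by $c'_{\psi^*;\psi;\delta^{-1}}$.
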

\begin{proof}
Recall that $\csp = \mD_1 \prescript{}{t}{\times_{wt}} W$ and let $\csp(A)$ denote all the cospans in $\cA$ for simpler notation. Since $\cE$ is extensive we have the following isomorphisms: 

\begin{align*}
\csp & \cong 
\coprod_{(\varphi, \psi) \in \csp (A)} D_\varphi \prescript{}{t_\varphi}{\times_{w_\psi t_\psi}} W_\psi \\
 \mD_1 \prescript{}{t}{\times_{ws}} W & \cong \coprod_{(\psi^* , \psi) \in \cA_2)} D_{\psi^*} \prescript{}{t_{\psi^*}}{\times_{w_\psi s_\psi}} W_\psi\\
W \prescript{}{wt}{\times_{s}} \mD_1 & \cong \coprod_{(\varphi^* , \varphi) \in \cA_2)} W_{\varphi^*} \prescript{}{w_{\varphi^*}t_{\varphi^*}}{\times_{s_\varphi}} D_\varphi
 \end{align*}

\noi Now we will define two families of maps 

\[ \begin{tikzcd}[column sep = large]
W_{\varphi^*} \prescript{}{w_{\varphi^*}t_{\varphi^*}}{\times_{s_\psi}} D_\psi 
& 
D_\varphi \prescript{}{t_\varphi}{\times_{w_\psi t_\psi}} W_\psi \lar["\ell_{\varphi ; \psi , 0}"'] 
\rar["\ell_{\varphi ; \psi , 1}"]
& 
D_{\psi^*} \prescript{}{t_{\psi^*}}{\times_{w_\psi s_\psi}} W_\psi
\end{tikzcd}\] 

\noi before showing they agree after post-composing them each with the appropriate cofiber compositions. The left-hand side is simpler so we start there. Consider the following commuting pullback diagram: 

\[ \begin{tikzcd}[]
D_\varphi \prescript{}{t_\varphi}{\times_{w_\psi t_\psi}} W_\psi 
\rar["\pi_0"] 
\dar[dd, "\pi_0"'] 
\ar[drr, dotted, "\ell_{\varphi ; \psi , 0}^w"'] 
& W_\psi 
\rar["\pi_1"]
& D(A)_1 
\rar["s"] 
& D(A)_0 
\dar["D(\varphi^*)_0"] \\
& 
&W_{\varphi^*} 
\arrow[dr, phantom, "\usebox\pullback" , very near start, color=black] 
\dar[tail, "w_{\varphi^*}"']
\rar["\pi_{\varphi^*}"] 
&D(E)_0 
\dar[tail, "e"] 
\dar[dd, bend left = 40, equals] \\
D_\varphi 
\ar[dr,bend right = 20,"\pi_1"'] 
&
&D_{\varphi^*} 
\arrow[dr, phantom, "\usebox\pullback" , very near start, color=black] 
\dar[tail, "\pi_0"']
\rar["\pi_1"] 
&D(E)_1 
\dar["t"]\\ 
&
D(A)_1
\rar["s"']
&
D(A)_0 
\rar["D(\varphi^*)_0"'] 
& D(E)_0
\end{tikzcd}\]

\noi The lower left commuting square above then induces the map we need, $\ell_{\varphi ; \psi , 0}$, by the following commuting pullback diagram: 

\[ \begin{tikzcd}[]
D_\varphi \prescript{}{t_\varphi}{\times_{w_\psi t_\psi}} W_\psi 
\ar[dr, dotted, "\ell_{\varphi ; \psi , 0}"] 
\ar[drr, bend left, "\pi_0"] 
\ar[ddr, bend right, "\ell_{\varphi ; \psi , 0}^w"'] 
&
&
& \\
& W_{\varphi^*} \prescript{}{w_{\varphi^*}t_{\varphi^*}}{\times_{s_\varphi}} D_\varphi 
\arrow[dr, phantom, "\usebox\pullback" , very near start, color=black] 
\dar["\pi_0"'] 
\rar["\pi_1"] 
& D_\varphi 
\dar["s_\varphi"'] 
\rar["\pi_1" ]
& D(A)_1 \ar[dl, "s"] \\
& W_{\varphi^*} 
\dar[tail, "w_{\varphi^*}"'] 
\rar["w_{\varphi^*} t_{\varphi^*}"] 
& D(A)_0 
& \\
& D_{\varphi^*} \ar[ur, "\pi_0"'] 
&&
\end{tikzcd}\]\

\noi The map, $\ell_{\varphi ; \psi , 1}$, on the right-hand side is more involving to define, as we saw when $\cE = \Set$, because it requires defining the map `$g$' by composing with several other maps at hand. We first compute 

\begin{align*}
\pi_0 \pi_0 \delta_{\varphi^* ; \varphi}^{-1} s 
&= \pi_0 \pi_0 \delta_{\varphi^* ; \varphi} t \\
&= \pi_0 \pi_0 D(\varphi)_0 D(\varphi^*)_0 \\
&= \pi_0 \pi_1 t D(\varphi^*)_0 \\
&= \pi_0 \pi_1 D(\varphi^*)_1 t
\end{align*}

\noi to get one composable pair and then 

\begin{align*}
\pi_0 \pi_0 \delta_{\varphi^* ; \varphi}^{-1} t 
&= \pi_0 \pi_0 \delta_{\varphi^* ; \varphi} s \\
&= \pi_0 \pi_0 D(\varphi \circ \varphi^*)_0 \\
&= \pi_0 \pi_0 D(\psi \circ \psi^*)_0 \\
&= \pi_0 t_\varphi D(\psi \circ \psi^*)_0 \\
&= \pi_1 w_\psi t_\psi D(\psi \circ \psi^*)_0 \\
&= \pi_1 w_\psi \pi_0 D(\psi \circ \psi^*)_0 \\
&= \pi_1 w_\psi \pi_0 \delta_{\psi^* ; \psi} s 
\end{align*}

\noi to get another with the same map in the middle. This gives a unique map
\[ \begin{tikzcd}[]
D_\varphi \prescript{}{t_\varphi}{\times_{w_\psi t_\psi}} W_\psi 
\rar[rrrrrr, "( \pi_0 \pi_1 D(\varphi^*)_1 {,} \ \pi_0 \pi_0 \delta_{\varphi^* ; \varphi}^{-1} {,} \ \pi_1 w_\psi \pi_0 \delta_{\psi ; \psi^*} )"] 
&&&&&&
D(E)_3
\end{tikzcd}\]

\noi representing compsable triples in $D(E)$ whose composite we denote 

\[ \begin{tikzcd}[]
D_\varphi \prescript{}{t_\varphi}{\times_{w_\psi t_\psi}} W_\psi 
\rar[rrrrrr, "( \pi_0 \pi_1 D(\varphi^*)_1 {,} \ \pi_0 \pi_0 \delta_{\varphi^* ; \varphi}^{-1} {,} \ \pi_1 w_\psi \pi_0 \delta_{\psi^* ; \psi} )"] 
\ar[rrrrrrd, "\tilde{g}"'] 
&&&&&&
D(E)_3 \dar["c"] \\
&&&&&&D(E)_1
\end{tikzcd}.\]

\noi The target of this composite is 

\begin{align*}
\tilde{g} t = \pi_1 w_\psi \pi_0 \delta_{\psi^* ; \psi} t = \pi_1 w_\psi \pi_0 D(\psi)_0 D(\psi^*)_0
\end{align*}

\noi so there exists a unique map, $g$, in the commuting pullback diagram: 

\[ \begin{tikzcd}[column sep = large]
D_\varphi \prescript{}{t_\varphi}{\times_{w_\psi t_\psi}} W_\psi  
\dar["\pi_1"'] 
\ar[ddr, dotted, "g"] 
\ar[ddrr, bend left, "\tilde{g}"] 
& & \\
W_\psi 
\dar[tail, "w_\psi"'] 
\ar[ddr, "\pi_\psi"] 
& & \\
D_\psi \dar["\pi_0"'] 
&D_{\psi^*} 
\arrow[dr, phantom, "\usebox\pullback" , very near start, color=black] 
\rar["\pi_1"] 
\dar["\pi_0"'] 
& D(E)_1 \dar["t"] \\
D(B)_0 \rar["D(\psi)_0"'] 
& D(C)_0 \rar["D(\psi^*)_0"'] 
& D(E)_0
\end{tikzcd}\]

\noi The left side of the diagram above, along with the fact that $s_\psi = \pi_0 s$, allows us to finally define the cofiber lift by the universal property of the pullback: 

\[\begin{tikzcd}[]
D_\varphi \prescript{}{t_\varphi}{\times_{w_\psi t_\psi}} W_\psi  
\ar[ddr, bend right, "g"'] 
\ar[drr, bend left, "\pi_1"] 
\ar[dr, dotted, "\ell_{\varphi ; \psi , 1}"] & & & \\
& D_{\psi^*} \prescript{}{t_{\psi^*}}{\times_{w_\psi s_\psi}} W_\psi 
\rar["\pi_1"] 
\dar["\pi_0"']  
\arrow[dr, phantom, "\usebox\pullback" , very near start, color=black] 
& W_{\psi} 
\dar["\pi_\psi"'] 
\rar[tail , "w_\psi"] 
\ar[dr, "\pi_\psi e "] 
& D_\psi 
\dar["\pi_1"] \\
& D_{\psi^*} \rar["\pi_0"'] 
& D (C)_0 & D(C)_1 \lar["s"] 
\end{tikzcd}\]

\noi It only remains to show that the outside of the diagram, 

\[ \begin{tikzcd}[column sep = large, row sep = large]
&
D_\varphi \prescript{}{t_\varphi}{\times_{w_\psi t_\psi}} W_\psi  
\ar[d, dotted, "\ell_{\varphi ; \psi}"] 
\ar[dd, bend right, ""'] 
\ar[dr, bend left, ""] 
\ar[ddd, bend right = 45, "\ell_{\varphi ; \psi , 0}"'] 
\ar[drr, bend left = 20, "\ell_{\varphi ; \psi , 1}"] 
& 
\\
& W_\square \rar["\pi_1"] \dar["\pi_0"] 
\arrow[dr, phantom, "\usebox\pullback" , very near start, color=black]
& \mD_1 \prescript{}{t}{\times_{ws}} W \dar["c"] 
& D_{\psi^*} \prescript{}{t_{\psi^*}}{\times_{w_\psi s_\psi}} W_\psi 
\dar["1_{D_{\psi^*}} \times w_\psi"] 
\lar[tail, "\iota_{\psi^*} \times \iota_{\psi}^w"'] ''
\\
& W \prescript{}{wt}{\times_{s}} \mD_1 \rar["c"'] 
& \mD_1 
& D_{\psi^* ; \psi} \dar["c_{\psi^* ; \psi}"] 
\\
& W_{\varphi^*} \prescript{}{w_{\varphi^*}t_{\varphi^*}}{\times_{s_\varphi}} D_\varphi 
\uar[tail, "\iota_{\varphi^*}^w \times \iota_\varphi"'] 
\rar["w_{\varphi^*} \times 1_{D_\varphi}"'] 
& D_{\varphi^* ; \varphi} \rar["c_{\varphi^*; \varphi}"'] 
& D_{\varphi^* \circ \varphi } 
\ar[ul, tail, "\iota_{\varphi^* \circ \varphi}"]
\end{tikzcd},\] 

\noi commutes in $\cE$ in order to induce the cofiber lift $\ell_{\varphi ; \psi}$. Then the universal property of the coproduct, $\csp$, will give the lift we need with the cover taken to be the identity on $\csp$. All of the arrows involved have been defined by universal properties of pullbacks so we use pairing map notation to expand and manipulate them. Starting with the bottom composite, first we note, that by the universal properties of the pullbacks in the codomains of the following maps we have

\[ w_\varphi^* \times 1_{D_\varphi} = (\pi_0 w_{\varphi^*} , \pi_1 ), \qquad c_{\varphi^* ; \varphi} = (\pi_1 \pi_0 , c'_{\varphi^* ; \varphi ; \delta^{-1}} c)\]

\noi where similarly, by equation~(\ref{contravar cofiber composition arrow}) at the beginning of this section, 

\[ c'_{\varphi^* ; \varphi ; \delta^{-1}} = \big(\pi_0 \pi_1 , \pi_1 \pi_1 D(\varphi^*)_1 , \pi_1 \pi_0 \delta_{\varphi^* ; \varphi}^{-1} \big). \]

\noi Then in one component of the bottom composite we have 
\begin{align*}
\ell_{\varphi ; \psi , 0} (w_{\varphi^*} \times 1_{D_\varphi}) c_{\varphi^* ; \varphi} \pi_0 
&= \ell_{\varphi ; \psi , 0} (\pi_0 w_{\varphi^*} , \pi_1 ) c_{\varphi^* ; \varphi} \pi_0 \\
&= (\ell_{\varphi ; \psi , 0} \pi_0 w_{\varphi^*}, \ell_{\varphi ; \psi , 0} \pi_1 ) \pi_1 \pi_0 \\
&= \ell_{\varphi ; \psi , 0} \pi_1 \pi_0 \\
&= \pi_0 \pi_0 \\
&= \pi_0 t_\varphi. 
\end{align*}

\noi and in the other component we have

\begin{align*}
\ell_{\varphi ; \psi , 0} (w_{\varphi^*} \times 1_{D_\psi}) c_{\varphi^* ; \varphi} \pi_1 
&= \ell_{\varphi ; \psi , 0} (\pi_0 w_{\varphi^*} , \pi_1 ) c_{\varphi^* ; \varphi} \pi_1 \\
&= (\ell_{\varphi ; \psi , 0} \pi_0 w_{\varphi^*} , \ell_{\varphi ; \psi , 0} \pi_1 ) c_{\varphi^* ; \varphi} \pi_1 \\
&= (\ell_{\varphi ; \psi , 0} \pi_0 w_{\varphi^*}, \ell_{\varphi ; \psi , 0} \pi_1 ) c'_{\varphi^* ; \varphi ; \delta^{-1}} c\\
&= (\ell_{\varphi ; \psi , 0}^w w_{\varphi^*}, \pi_0 ) c'_{\varphi^* ; \varphi ; \delta^{-1}} c \\
&= (\ell_{\varphi ; \psi , 0}^w w_{\varphi^*}, \pi_0 )\big(\pi_0 \pi_1 , \pi_1 \pi_1 D(\varphi^*)_1 , \pi_1 \pi_0 \delta_{\varphi^* ; \varphi}^{-1}\big)c \\
&= \big( \ell_{\varphi ; \psi , 0}^w w_{\varphi^*} \pi_1 , \pi_0 \pi_1 D(\varphi^*)_1 , \pi_0 \pi_0 \delta_{\varphi^* ; \varphi}^{-1}\big)c. 
\end{align*}

\noi Now the first component in the triple of the last line above can be rewritten using the definition of the pullback $W_{\varphi^*}$ along with the definition of $\ell_{\varphi ; \psi , 0}^w$ and functoriality of $D(\varphi^*)$: 

\begin{align*}
\ell_{\varphi ; \psi , 0}^w w_{\varphi^*} \pi_1 
&= \ell_{\varphi ; \psi , 0}^w \pi_{\varphi^*} e \\ 
&= \pi_0 \pi_1 s D(\varphi^*)_0 e \\
&= \pi_0 \pi_1 s e D(\varphi^*)_1. 
\end{align*}

\noi Substituting this side calculation into the last line of the prior calculation and using associativity of composition, functoriality of $D(\varphi^*)$, and the identity law in $D(A)$ allows us to finally see that 

\begin{align*}
\ell_{\varphi ; \psi , 0} (w_{\varphi^*} \times 1_{D_\psi}) c_{\varphi^* ; \varphi} \pi_1 
= ... &= \big( \ell_{\varphi ; \psi , 0}^w w_{\varphi^*} \pi_1 , \pi_0 \pi_1 D(\varphi^*)_1 , \pi_0 \pi_0 \delta_{\varphi^* ; \varphi}^{-1}\big)c \\
&= \big( \pi_0 \pi_1 s e D(\varphi^*)_1 , \pi_0 \pi_1 D(\varphi^*)_1 , \pi_0 \pi_0 \delta_{\varphi^* ; \varphi}^{-1}\big)c \\
&= \big( (\pi_0 \pi_1 s e D(\varphi^*)_1 , \pi_0 \pi_1 D(\varphi^*)_1) c , \ \pi_0 \pi_0 \delta_{\varphi^* ; \varphi}^{-1}\big)c \\
&= \big( (\pi_0 \pi_1 s e , \pi_0 \pi_1 ) c D(\varphi^*)_1 , \ \pi_0 \pi_0 \delta_{\varphi^* ; \varphi}^{-1}\big)c \\
&= \big( \pi_0 \pi_1 (s e , 1_{D(A)_1}) c D(\varphi^*)_1 , \ \pi_0 \pi_0 \delta_{\varphi^* ; \varphi}^{-1}\big)c \\
&= \big( \pi_0 \pi_1 D(\varphi^*)_1 , \ \pi_0 \pi_0 \delta_{\varphi^* ; \varphi}^{-1}\big)c .
\end{align*}

\noi By the universal property of the pullback $D_{\varphi^* \circ \varphi} = D_{\psi^* \circ \psi}$, we can write the bottom composite as the following pairing map: 

\[ \ell_{\varphi ; \psi , 0} (w_{\varphi^*} \times 1_{D_\varphi}) c_{\varphi^* ; \varphi} = \big( \pi_0 t_\varphi , (\pi_0 \pi_1 D(\varphi^*)_1 , \ \pi_0 \pi_0 \delta_{\varphi^* ; \varphi}^{-1}) c \big) \]\

\noi For the top composite, we begin similarly by noting that 

\[1_{D_\psi^*} \times w_\psi = (\pi_0 , \pi_1 w_{\psi}), \qquad c_{\psi^* ; \psi} = (\pi_1 \pi_0 , c'_{\psi^* ; \psi ; \delta^{-1}} c)\]

\noi where similarly, by equation~(\ref{contravar cofiber composition arrow}) at the beginning of this section, 

\[ c'_{\psi^* ; \psi ; \delta^{-1}} = \big(\pi_0 \pi_1 , \pi_1 \pi_1 D(\psi^*)_1 , \pi_1 \pi_0 \delta_{\psi^* ; \psi}^{-1} \big). \]

\noi Then in one component of the top composite we have 
\begin{align*}
\ell_{\varphi ; \psi , 1} (1_{D_\psi^*} \times w_\psi) c_{\psi^* ; \psi} \pi_0 
&= \ell_{\varphi ; \psi , 1}(\pi_0 , \pi_1 w_{\psi}) c_{\psi^* ; \psi} \pi_0 \\
&= (\ell_{\varphi ; \psi , 1} \pi_0 , \ell_{\varphi ; \psi , 1} \pi_1 w_{\psi} ) c_{\psi^* ; \psi} \pi_0 \\
&= (\ell_{\varphi ; \psi , 1} \pi_0 , \ell_{\varphi ; \psi , 1} \pi_1 w_{\psi} ) \pi_1 \pi_0 \\
&= \ell_{\varphi ; \psi , 1} \pi_1 w_\psi \pi_0 \\
&= \pi_1 w_\psi \pi_0 \\
&= \pi_1 w_\psi t_\psi.
\end{align*}

\noi In the other component we get 

\begin{align*}
\ell_{\varphi ; \psi , 1} (1_{D_\psi^*} \times w_\psi) c_{\psi^* ; \psi} \pi_1
&= \ell_{\varphi ; \psi , 1}(\pi_0 , \pi_1 w_{\psi}) c_{\psi^* ; \psi} \pi_1 \\
&= (\ell_{\varphi ; \psi , 1} \pi_0 , \ell_{\varphi ; \psi , 1} \pi_1 w_{\psi} ) c'_{\psi^* ; \psi ; \delta^{-1}} c \\
&= (\ell_{\varphi ; \psi , 1} \pi_0 , \ell_{\varphi ; \psi , 1} \pi_1 w_{\psi} )
\big(\pi_0 \pi_1 , \pi_1 \pi_1 D(\psi^*)_1 , \pi_1 \pi_0 \delta_{\psi^* ; \psi}^{-1} \big) c \\
&= \big( \ell_{\varphi ; \psi , 1} \pi_0 \pi_1 , \ell_{\varphi ; \psi , 1} \pi_1 w_{\psi} \pi_1 D(\psi^*)_1 , \ell_{\varphi ; \psi , 1} \pi_1 w_{\psi} \pi_0 \delta_{\psi^* ; \psi}^{-1} \big) c \\
&= \big( g \pi_1 , \pi_1 \pi_{\psi} e D(\psi^*)_1 , \pi_1 w_{\psi} \pi_0 \delta_{\psi^* ; \psi}^{-1} \big) c \\
&= \big( \tilde{g} , \pi_1 \pi_{\psi} e D(\psi^*)_1 , \pi_1 w_{\psi} \pi_0 \delta_{\psi^* ; \psi}^{-1} \big) c \\
&= \big( \tilde{g} , \ (\pi_1 \pi_{\psi} e D(\psi^*)_1 , \pi_1 w_{\psi} \pi_0 \delta_{\psi^* ; \psi}^{-1}) c \big) c. 
\end{align*}

\noi Now looking at the last line above recall the definition of $\tilde{g}$:

\[ \tilde{g} = ( \pi_0 \pi_1 D(\varphi^*)_1 {,} \ \pi_0 \pi_0 \delta_{\varphi^* ; \varphi}^{-1} {,} \ \pi_1 w_\psi \pi_0 \delta_{\psi^* ; \psi} ) c\] 

\noi By definition of the pullback $W_\psi$, functoriality of $D(\psi^*)$, the definition of the structure isomorphism components $\delta_{\psi^* ; \psi}^{-1}$, and the identity law for internal composition in $D(E)$ we get

\begin{align*}
 (\pi_1 \pi_{\psi} e D(\psi^*)_1 , \pi_1 w_{\psi} \pi_0 \delta_{\psi^* ; \psi}^{-1}) c 
 &= (\pi_1 \pi_{\psi} 1_{D(C)_1} e D(\psi^*)_1 , \pi_1 w_{\psi} \pi_0 \delta_{\psi^* ; \psi}^{-1}) c \\
 &= (\pi_1 \pi_{\psi} 1_{D(C)_1} e D(\psi^*)_1 , \pi_1 w_{\psi} \pi_0 \delta_{\psi^* ; \psi}^{-1}) c \\
 &= (\pi_1 w_{\psi} \pi_0 D(\psi)_0 e D(\psi^*)_1 , \pi_1 w_{\psi} \pi_0 \delta_{\psi^* ; \psi}^{-1}) c \\
 &= (\pi_1 w_{\psi} \pi_0 D(\psi)_0D(\psi^*)_0 e , \pi_1 w_{\psi} \pi_0 \delta_{\psi^* ; \psi}^{-1}) c \\
 &= (\pi_1 w_{\psi} \pi_0 \delta_{\psi^* ; \psi}^{-1} s e , \pi_1 w_{\psi} \pi_0 \delta_{\psi^* ; \psi}^{-1}) c \\
 &= \pi_1 w_{\psi} \pi_0 \delta_{\psi^* ; \psi}^{-1} (s e , 1_{D(E)_1}) c\\
 &= \pi_1 w_{\psi} \pi_0 \delta_{\psi^* ; \psi}^{-1}.
\end{align*}

\noi Taking these side calculations into account and applying associativity of composition; the definition of the structure isomorphism components $\delta_{\psi^* ; \psi}$ and $\delta_{\psi^*; \psi}^{-1}$; the definitions of $t_\psi$, $t_\varphi$, and the pullback $D_\varphi \prescript{}{t_\varphi}{\times_{w_\psi t_\psi}} W_\psi$; the assumption that $\varphi^* \varphi = \psi^* \psi$ in $\cA$ which means $\varphi \circ \varphi^* = \psi \circ \psi^*$ in $\cA^{op}$; and the identity law for internal composition in $D(E)$ gives: 

\begin{align*}
& \ \ \ \ 
\big( \tilde{g} , \ (\pi_1 \pi_{\psi} e D(\psi^*)_1 , \pi_1 w_{\psi} \pi_0 \delta_{\psi^* ; \psi}^{-1}) c \big) c \\
&= \big( \pi_0 \pi_1 D(\varphi^*)_1 {,} \ \pi_0 \pi_0 \delta_{\varphi^* ; \varphi}^{-1} {,} \ \pi_1 w_\psi \pi_0 \delta_{\psi^* ; \psi} , \ \pi_1 w_{\psi} \pi_0 \delta_{\psi^* ; \psi}^{-1} \big) c \\
&= \big( \pi_0 \pi_1 D(\varphi^*)_1 {,} \ \pi_0 \pi_0 \delta_{\varphi^* ; \varphi}^{-1} {,} \ (\pi_1 w_\psi \pi_0 \delta_{\psi^* ; \psi} , \pi_1 w_{\psi} \pi_0 \delta_{\psi^* ; \psi}^{-1}) c \big) c \\
&= \big( \pi_0 \pi_1 D(\varphi^*)_1 {,} \ \pi_0 \pi_0 \delta_{\varphi^* ; \varphi}^{-1} {,} \ \pi_1 w_\psi \pi_0 (\delta_{\psi^* ; \psi} ,  \delta_{\psi^* ; \psi}^{-1}) c \big) c \\
&= \big( \pi_0 \pi_1 D(\varphi^*)_1 {,} \ \pi_0 \pi_0 \delta_{\varphi^* ; \varphi}^{-1} {,} \ \pi_1 w_\psi \pi_0 \delta_{\psi^* ; \psi} s e \big) c \\
&= \big( \pi_0 \pi_1 D(\varphi^*)_1 {,} \ \pi_0 \pi_0 \delta_{\varphi^* ; \varphi}^{-1} {,} \ \pi_1 w_\psi \pi_0 \delta_{\psi^* ; \psi} s e \big) c \\
&= \big( \pi_0 \pi_1 D(\varphi^*)_1 {,} \ \pi_0 \pi_0 \delta_{\varphi^* ; \varphi}^{-1} {,} \ \pi_1 w_\psi \pi_0 D(\psi \circ \psi^*)_0 e \big) c \\
&= \big( \pi_0 \pi_1 D(\varphi^*)_1 {,} \ \pi_0 \pi_0 \delta_{\varphi^* ; \varphi}^{-1} {,} \ \pi_1 w_\psi \pi_0 D(\varphi \circ \varphi^*)_0 e \big) c \\
&= \big( \pi_0 \pi_1 D(\varphi^*)_1 {,} \ \pi_0 \pi_0 \delta_{\varphi^* ; \varphi}^{-1} {,} \ \pi_1 w_\psi t_\psi D(\varphi \circ \varphi^*)_0 e \big) c \\
&= \big( \pi_0 \pi_1 D(\varphi^*)_1 {,} \ \pi_0 \pi_0 \delta_{\varphi^* ; \varphi}^{-1} {,} \ \pi_0 t_\varphi D(\varphi \circ \varphi^*)_0 e \big) c \\
&= \big( \pi_0 \pi_1 D(\varphi^*)_1 {,} \ \pi_0 \pi_0 \delta_{\varphi^* ; \varphi}^{-1} {,} \ \pi_0 \pi_0 D(\varphi \circ \varphi^*)_0 e \big) c \\
&= \big( \pi_0 \pi_1 D(\varphi^*)_1 {,} \ \pi_0 \pi_0 \delta_{\varphi^* ; \varphi}^{-1} {,} \ \pi_0 \pi_0 \delta_{\varphi^* ; \varphi}^{-1} t e \big) c \\
&= \big( \pi_0 \pi_1 D(\varphi^*)_1 {,} \ (\pi_0 \pi_0 \delta_{\varphi^* ; \varphi}^{-1} {,} \ \pi_0 \pi_0 \delta_{\varphi^* ; \varphi}^{-1} t e)c \big) c \\
&= \big( \pi_0 \pi_1 D(\varphi^*)_1 {,} \ \pi_0 \pi_0 \delta_{\varphi^* ; \varphi}^{-1} (1_{D(E)_1} {,} \ t e)c \big) c \\
&= \big( \pi_0 \pi_1 D(\varphi^*)_1 {,} \ \pi_0 \pi_0 \delta_{\varphi^* ; \varphi}^{-1} \big) c.
\end{align*}

\noi Putting all these calculations together along with the universal property of the pullback $D_{\varphi^* \circ \varphi} = D_{\psi^* \circ \psi}$ allows us to write the top composite as the following pairing map: 

\[ \ell_{\varphi ; \psi , 1} (1_{D_\psi^*} \times w_\psi) c_{\psi^* ; \psi} = \big( \pi_1 w_\psi t_\psi , (\pi_0 \pi_1 D(\varphi^*)_1 {,} \ \pi_0 \pi_0 \delta_{\varphi^* ; \varphi}^{-1} )c \big) .\]

\noi By definition of the pullback $D_\varphi \prescript{}{t_\varphi}{\times_{w_\psi t_\psi}} W_\psi$ we know that $\pi_0 t_\varphi = \pi_1 w_\psi t_\psi$ and so both components in the following pairing maps agree: 

\begin{align*}
\ell_{\varphi ; \psi , 0} (w_{\varphi^*} \times 1_{D_\varphi}) c_{\varphi^* ; \varphi} 
&= \big( \pi_0 t_\varphi , (\pi_0 \pi_1 D(\varphi^*)_1 , \ \pi_0 \pi_0 \delta_{\varphi^* ; \varphi}^{-1}) c \big) \\
&= \big( \pi_1 w_\psi t_\psi , (\pi_0 \pi_1 D(\varphi^*)_1 {,} \ \pi_0 \pi_0 \delta_{\varphi^* ; \varphi}^{-1} )c \big) \\
&= \ell_{\varphi ; \psi , 1} (1_{D_\psi^*} \times w_\psi) c_{\psi^* ; \psi}. 
\end{align*}

\noi This finally shows that the outside of the last diagram commutes and induces the cofiber lift 

\[ \begin{tikzcd}[]
D_\varphi \prescript{}{t_\varphi}{\times_{w_\psi t_\psi}} W_\psi 
\rar["\ell_{\varphi ; \psi}"] & W_\square
\end{tikzcd}. \]

\noi The universal property of the coproduct $\csp$ gives a candidate for the lift we need:

\[ \begin{tikzcd}[column sep = large]
\csp \rar[dotted, "\ell"] & W_\square 
\\
D_\varphi \prescript{}{t_\varphi}{\times_{w_\psi t_\psi}} W_\psi 
\ar[ur, "\ell_{\varphi ; \psi}"'] \uar["\iota_\varphi \times \iota_\psi^w"] & 
\end{tikzcd}.\]

\noi To see this is in fact the lift we need we need to see that the diagram 

\[ \begin{tikzcd}[]
W_\square \rar["(\pi_0 \pi_1 {,} \pi_1 \pi_1)"] & \csp 
\\
D_\varphi \prescript{}{t_\varphi}{\times_{w_\psi t_\psi}} W_\psi 
\uar["\ell_{\varphi ; \psi}"] \ar[ur, "\iota_\varphi \times \iota_\psi^w"'] & 
\end{tikzcd}\]

\noi also commutes. This can be done by considering the commuting diagram 

\[ \begin{tikzcd}[column sep = large, row sep = large]
D_\varphi \prescript{}{t_\varphi}{\times_{w_\psi t_\psi}} W_\psi  
\ar[d, dotted, "\ell_{\varphi ; \psi}"] 
\ar[dd, bend right, ""'] 
\ar[dr, bend left, ""] 
\ar[ddd, bend right = 45, "\ell_{\varphi ; \psi , 0}"'] 
\ar[drr, bend left, "\ell_{\varphi ; \psi , 1}"] 
&
& 
\\
 W_\square \rar["\pi_1"] \dar["\pi_0"] 
\arrow[dr, phantom, "\usebox\pullback" , very near start, color=black]
& \mD_1 \prescript{}{t}{\times_{ws}} W \dar["c"] 
\ar[dr, "\pi_1"] 
& D_{\psi^*} \prescript{}{t_{\psi^*}}{\times_{w_\psi s_\psi}} W_\psi 
\dar["\pi_1 \iota_\psi^w"] 
\lar[tail, "\iota_{\psi^*} \times \iota_{\psi}^w"'] 
\\
 W \prescript{}{wt}{\times_{s}} \mD_1 \rar["c"'] 
\ar[dr, "\pi_1"'] 
& \mD_1 
& W
\\
 W_{\varphi^*} \prescript{}{w_{\varphi^*}t_{\varphi^*}}{\times_{s_\varphi}} D_\varphi 
\uar[tail, "\iota_{\varphi^*}^w \times \iota_\varphi"'] 
\rar["\pi_1 \iota_\varphi"'] 
& \mD_1
& 
\end{tikzcd}\]

\noi and recalling that

\[ \ell_{\varphi ; \psi , 0} \pi_0 = \pi_0 \quad \text{ and } \quad \ell_{\varphi ; \psi , 1} \pi_1 = \pi_1. \]

\noi This allows us to see

\begin{align*}
\ell_{\varphi ; \psi} (\pi_0 \pi_1 , \pi_1 \pi_1) 
&= (\ell_{\varphi ; \psi} \pi_0 \pi_1 , \ell_{\varphi ; \psi}\pi_1 \pi_1) \\
&= (\ell_{\varphi ; \psi , 0} \pi_1 \iota_\varphi , \ell_{\varphi ; \psi , 1} \pi_1 \iota_\psi^w) \\
&= (\pi_0 \iota_\varphi ,  \pi_1 \iota_\psi^w) \\
&= \iota_\varphi \times \iota_\psi^w
\end{align*}

\noi and by the universal property of the coproduct $\csp$ we get the commuting diagram: 

\[ \begin{tikzcd}[column sep = huge]
\csp \rar[rr, bend left = 40, equals] \rar[dotted, "\ell"] & W_\square \rar["(\pi_0 \pi_1 {,} \pi_1 \pi_1)"] & \csp 
\\
D_\varphi \prescript{}{t_\varphi}{\times_{w_\psi t_\psi}} W_\psi 
\ar[ur, "\ell_{\varphi ; \psi}"'] \uar["\iota_\varphi \times \iota_\psi^w"]
\ar[urr, bend right = 20, "\iota_\varphi \times \iota_\psi^w"'] & 
& 
\end{tikzcd}.\]\ 

\noi The top triangle in the previous diagram shows that when taking the identity map $1_{\csp} : \csp \to \csp$ as our cover, the map $\ell : \csp \to W_\square$ is precisely the lift we need. 
\end{proof}

The last condition we need to check is the internal right-cancellation property which we have referred to as `zippering.' The objects in $\cE$ representing diagrams in $\mD$ that are important to recall for this part are those of parallel pairs, $P(\mD)$, parallel pairs that are coequalized by an arrow in $W$, $\cP_{cq}(\mD)$, parallel pairs that are equalized by an arrow in $W$, $\cP_{eq}(\mD)$, and parallel pairs that are simultaneously equalized and coequalized by arrows in $W$ respectively, $\cP(\mD)$. The explicit constructions of these can be reviewed in Section \ref{S context for fractions axioms}.

As is our tradition by now, we first review the usual proof for when $\cE = \Set$ before translating it internally to a more general category $\cE$. Consider the following commuting diagram in $\mD$: 

\[ \begin{tikzcd}[column sep = large]
(A,a) 
\rar[shift left, "( \varphi {,} f)"] 
\rar[shift right, "( \psi{,}g)"'] 
& (B,D(\gamma)(c)) 
\rar["\circ" marking, "(\gamma {,} 1_{D(\gamma)(c)})"] 
& (C,c) 
\end{tikzcd}.\]

\noi By definition of composition in $\mD$, this means $\varphi \circ \gamma = \psi \circ \gamma$ in $\cA^{op}$ and the diagram 

\[ \label{dgm coequalizing parallel pair square} \begin{tikzcd}[]
D(\psi) \circ D(\gamma) (c) 
\ar[d, equals, "D(\psi)(1_{D(\gamma)(c)})"'] 
& a 
\rar["f"] 
\lar["g"'] 
& D(\varphi) \circ D(\gamma)(c) 
\dar[equals, "D(\varphi)(1_{D(\gamma)(c)})"] \\
D(\psi) \circ D(\gamma)(c) 
\dar["\delta_{\psi ; \gamma, c}^{-1} "'] 
& 
&
 D(\varphi) \circ D(\gamma)(c) 
\dar["\delta_{\varphi ; \gamma, c}^{-1}"] \\
D(\psi \circ \gamma)(c)
\rar[rr,, equals]
&
& D (\varphi \circ \gamma) (c) 
\end{tikzcd} \tag{$\star$} \]

\noi commutes in the category $D(A)$. Since $\cA$ is filtered, there exists a map $\mu : E \to A$ such that the square 

\[ \begin{tikzcd}[]
E \rar["\mu"] \dar["\mu"'] & A \dar["\psi"] \\
A \rar["\varphi"'] & B
\end{tikzcd}\]

\noi commutes in $\cA$ and so $\mu \circ \psi = \mu \circ \varphi$ in $\cA^{op}$. There is an obvious candidate equalizing arrow in $W$ for the parallel pair, $(f,\varphi)$ and $(g, \psi)$, seen in the following diagram:

\[ \label{dgm zippered/equalized dgm} \begin{tikzcd}[column sep = large] 
( E , D(\mu)(a) ) \rar["\circ" marking, "( \mu {,} 1_{D(\mu)(a)} )"] 
& ( A,a) 
\rar[shift left, "( \varphi {,} f)"] 
\rar[shift right, "( \psi{,}g)"'] 
& (B, D(\gamma)(c)) 
\end{tikzcd} \tag{$\star \star$} \]

\noi To see this diagram commutes in $\mD$ first notice that the diagram

\[ \label{dgm pasting helper dgm for the zippered/equalized dgm}
\begin{tikzcd}[column sep = small, row sep = large]
D(\mu)\circ D(\psi) \circ D(\gamma) (c) 
\dar["\delta_{\mu ; \psi, D(\gamma)(c)}^{-1}"'] 
\ar[ddr, "D(\mu) (\delta_{\psi ; \gamma , c}^{-1})"' near end] 
& D(\mu) (a) 
\lar["D(\mu)(g)"'] 
\rar["D(\mu)(f)"] 
& D(\mu)\circ D(\varphi) \circ D(\gamma) (c) 
\dar["\delta_{\mu ; \varphi, D(\gamma)(c)}^{-1}"] 
\ar[dl, "D(\mu) (\delta_{\varphi ; \gamma , c}^{-1})"']\\
D(\mu \circ \psi) \circ D(\gamma)(c) 
\dar["\delta_{\mu \circ \psi ; \gamma , c }^{-1}"'] 
& D(\mu) \circ D(\varphi \circ \gamma) (c) 
\dar[equals]
\ar[ddr, "\delta_{\mu ; \varphi \circ \gamma, c}^{-1}" near start]
& D(\mu \circ \varphi) \circ D(\gamma)(c)
\dar["\delta_{\mu \circ \varphi ; \gamma , c }^{-1}"] \\
D( (\mu \circ \psi) \circ \gamma ) 
\ar[d, equals] 
& D(\mu) \circ D(\psi \circ \gamma) (c) 
\ar[dl,"\delta_{\mu ; \psi \circ \gamma, c}^{-1}"]
&D( (\mu \circ \varphi) \circ \gamma ) 
\ar[d, equals] 
\\
D( \mu \circ ( \psi \circ \gamma) ) (c)
\rar[rr, equals] 
& 
& 
D(\mu \circ (\varphi \circ \gamma)) (c)
\end{tikzcd}
\tag{$\star^3$}\]

\noi commutes in $D(E)$. The top square commutes by functoriality of $D(\mu)$ and commutativity of diagram (\ref{dgm coequalizing parallel pair square}) above, the left and right squares commute by coherence of the structure isomorphisms for the pseudofunctor, $D$, and the bottom square commutes trivially because $\psi \circ \gamma = \varphi \circ \gamma$ in $\cA$. Then the outside of the previous diagram commutes and implies that the following diagram commutes as well: 

\[ \label{dgm components for the zippered/equalized candidate agree}
\begin{tikzcd}[column sep = small, row sep = large]
D(\mu)(a) 
\ar[ddd, bend right = 70, "1_{D(\mu)(a)} * g"' near start] 
\ar[d, equals, "1_{D(\mu)(a)}"] 
\rar[rr, equals]
&
& D(\mu)(a) 
\ar[ddd, bend left=70, "1_{D(\mu)(a)} * f"near start] 
\ar[d, equals, "1_{D(\mu)(a)}"' ] 
\\
D(\mu)(a) 
\dar["D(\mu)(g)"]
& 
& D(\mu)(a) 
\dar["D(\mu)(f)"']
\\
D(\mu)\circ D(\psi) \circ D(\gamma) (c) 
\dar["\delta_{\mu ; \psi, D(\gamma)(c)}^{-1}"] 
&
& D(\mu)\circ D(\varphi) \circ D(\gamma) (c) 
\dar["\delta_{\mu ; \varphi, D(\gamma)(c)}^{-1}"'] \\
D(\mu \circ \psi) \circ D(\gamma)(c) 
\rar[rr, equals]
\ar[d, "\delta_{\mu \circ \psi ; \gamma , c }^{-1}"] 
& 
& 
D(\mu \circ \varphi) \circ D(\gamma)(c) 
\ar[d,"\delta_{\mu \circ \psi ; \gamma , c }^{-1}"'] \\
D ( (\mu \circ \psi) \circ\gamma)(c) 
\rar[rr, equals]
& 
& D ( (\mu \circ \varphi) \circ\gamma)(c) 
\end{tikzcd}
\tag{$\star^4 $}\]

\noi This shows that the original diagram (\ref{dgm zippered/equalized dgm}) commutes in $\mD$ and proves the desired property in the case $\cE = \Set$.

\begin{lem}[\textbf{Int.Frc.4}]\label{lem IntFrc4 for internal groth}
There exists a cover $\begin{tikzcd}[]
U \rar["/" marking, "u" near start] & \cP_{cq}(\mD)
\end{tikzcd}$ and a lift 

\[ \begin{tikzcd}[]
& \cP(\mD) \dar["\pi_1"] \\
U \ar[ur, bend left, dotted, "\ell"] \rar["/" marking, "u" near start] & \cP_{cq}(\mD)
\end{tikzcd}.\]
\end{lem}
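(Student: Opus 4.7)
The plan is to mirror the classical $\cE=\Set$ argument sketched just before the statement, using extensivity to reduce to a cofiber-level construction indexed by the relevant composable data in $\cA$, and then invoking the cofilteredness of $\cA^{op}$ (equivalently, filteredness of $\cA$) to produce the zippering arrow $\mu$ on each cofiber. First I would decompose $\cP_{cq}(\mD)$ via extensivity: the defining equalizer
\[
\cP_{cq}(\mD) \rightarrowtail P(\mD) \tensor[_t]{\times}{_{ws}} W
\]
sits over $\mD_1 \tensor[_{s,t}]{\times}{_{s,t}} \mD_1 \tensor[_{t}]{\times}{_{ws}} W$, and this last object is a coproduct indexed by triples $(\varphi,\psi,\gamma) \in \cA_1 \times \cA_1 \times \cA_1$ with $\varphi\gamma = \psi\gamma$ in $\cA^{op}$. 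Only these triples contribute a non-initial cofiber because cofiber composition in $\mD$ acts as composition in $\cA^{op}$ on the indexing coordinate and because coproducts are disjoint. So it suffices to build, for each such triple, a cofiber lift
\[
\cP_{cq}(\mD)_{\varphi,\psi,\gamma} \longrightarrow \cP(\mD)
\]
and then assemble these into a global map out of the coproduct; the cover in the statement can then be taken to be $1_{\cP_{cq}(\mD)}$.

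On each cofiber the strategy is to produce the equalizing arrow using the canonical cleavage object $W$. Since $\cA^{op}$ is cofiltered, for every $(\varphi,\psi,\gamma)$ as above there exists $\mu:E \to A$ with $\mu\varphi=\mu\psi$ in $\cA^{op}$; what I actually need is that the pullback of $\varphi$ and $\psi$ in $\cA^{op}$ exists (equivalently the pushout in $\cA$), or more simply that such $\mu$ can be chosen functorially at the level of the indexing category. The map out of the cofiber into $\cP(\mD)$ then has two components: the $\cP_{eq}(\mD)$-component, given by the cartesian arrow $w_\mu \circ \pi_1 \iota_{1_A} : \cP_{cq}(\mD)_{\varphi,\psi,\gamma} \to W_\mu \hookrightarrow W$ precomposed with the universal witness of $D(\mu)_1$ applied to the parallel pair, and the $\cP_{cq}(\mD)$-component which is the identity on the input parallel pair together with its witness $w$. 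These are built by repeated use of the universal property of the pullbacks defining $W_\mu$, $D_\mu$, $D(E)_n$ and the various fiber products occurring in $\cP_{eq}(\mD)$.

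The main obstacle, and the bulk of the work, is verifying internally that the proposed arrow $(\mu,1_{D(\mu)(a)})$ really does equalize the pair $(\varphi,f),(\psi,g)$ after the equation in $\cP_{cq}(\mD)$ is assumed. This is the internalization of the two diagrams $(\star^3)$ and $(\star^4)$ in the preamble. Concretely, one must show that post-composing the cofiber witness of $\cP_{cq}(\mD)_{\varphi,\psi,\gamma}$ with the cofiber lift gives an equalizer condition, i.e.\ that the two sides of
\[
(\pi_0 w_\mu,\pi_1\pi_0)c \;=\; (\pi_0 w_\mu,\pi_1\pi_1)c : \cP_{cq}(\mD)_{\varphi,\psi,\gamma} \to \mD_1
\]
agree. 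Expanding this via the cofiber composition formula \eqref{contravar cofiber composition arrow} reduces it to an equality in $D(E)_1$ of two composites built from $\delta^{-1}_{\mu;\psi}$, $\delta^{-1}_{\mu;\varphi}$, $\delta^{-1}_{\psi;\gamma}$, $\delta^{-1}_{\varphi;\gamma}$, the functor $D(\mu)_1$, and the input data $f,g$; coherence of the composition isomorphisms of $D$ (the hexagon pasted with the naturality of $\delta_{\mu;-}$ applied to the coequalizing hypothesis) collapses this to the tautology arising from $\mu\varphi\gamma=\mu\psi\gamma$. I expect this coherence verification, combined with careful bookkeeping of pairing maps into the nested pullbacks defining $\cP(\mD)$, to be the only technically delicate step; everything else is a formal consequence of extensivity and the universal properties already exploited in Lemmas~\ref{lem IntFrc1 for internal groth}, \ref{lem IntFrc2 for internal groth}, and \ref{lem IntFrc3 for internal groth}.

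Finally, once each cofiber lift is in place and shown to satisfy the equalizer condition, the universal property of the coproduct $\cP_{cq}(\mD) \cong \coprod_{(\varphi,\psi,\gamma)}\cP_{cq}(\mD)_{\varphi,\psi,\gamma}$ produces a global map $\ell:\cP_{cq}(\mD)\to\cP(\mD)$ with $\ell\pi_1=1_{\cP_{cq}(\mD)}$, which is the required lift (with $U=\cP_{cq}(\mD)$ and $u=1$). As with the earlier axioms, the filtered hypothesis on $\cA$ enters only to guarantee the existence of $\mu$; no nontrivial cover on $\cE$ is needed here, exactly as foreshadowed in the introduction when the thesis asserts that $(\mD,W)$ satisfies the Internal Fractions Axioms with respect to the canonical cleavage using only identities from the class $\cJ$.
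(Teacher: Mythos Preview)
Your proposal is correct and follows essentially the same approach as the paper: decompose $\cP_{cq}(\mD)$ into cofibers indexed by triples $(\varphi,\psi,\gamma)$ with $\varphi\gamma=\psi\gamma$, use filteredness of $\cA$ to choose an equalizing $\mu$ on each cofiber, verify the equalizer condition via the coherence calculation you flag (the paper's version of your $(\star^3)$--$(\star^4)$ argument), and take the cover to be the identity. Your brief hesitation about needing $\mu$ to be chosen ``functorially'' is unnecessary---a choice of $\mu$ for each indexing triple suffices, since you are only assembling a map out of a coproduct.
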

\begin{proof}

For a more general extensive category $\cE$ with a terminal object, products can be written as pullbacks over the terminal object and equalizers can then be written as pullbacks over products. In particular we have the pullback diagram

\[ \begin{tikzcd}[column sep = large]
\cP_{cq}(\mD) 
\rar[r,tail, "\iota_{cq}"] 
\dar[tail, "\iota_{cq}"'] 
\arrow[dr, phantom, "\usebox\pullback" , very near start, color=black]
& P(\mD) \prescript{}{t}{\times_{ws}} W 
\dar["\rho_1"] \\
 P(\mD) \prescript{}{t}{\times_{ws}} W 
 \rar[r,"\rho_0"'] 
 & \big( P(\mD) \prescript{}{t}{\times_{ws}} W \big) \times \mD_1 
\end{tikzcd},\]

\noi where 

\[ \rho_0 = (1_{P(\mD)} {,} (\pi_0 \pi_0 {,} \pi_1 w)c ) \]
\noi and 
\[ \rho_1 = (1_{P(\mD)} {,} (\pi_0 \pi_1 {,} \pi_1 w)c ) . \]

\noi Note that each object is a pullback of coproducts, and since $\cE$ is extensive each of these pullbacks can be expressed as a coproduct of pullbacks of their corresponding cofibers. The cofibers for the parallel pairs objects are denoted

\[ P(\mD)_{(\varphi , \psi)} = D_\varphi \prescript{}{(s,t)}{\times_{(s,t)}} D_\psi\]

\noi The corresponding pullback diagram of the cofiber corresponding to the maps $\varphi , \psi, $ and $\gamma$ in $\cA$ such that $\varphi \gamma = \psi \gamma$ is 
\[ \label{dgm P_cq(D) cofiber}
\begin{tikzcd}[]
\cP_{cq}(\mD)_{(\varphi ,\psi) ; \gamma} 
\arrow[dr, phantom, "\usebox\pullback" , very near start, color=black]
\dar["\pi_0"'] 
\rar[r,"\pi_1"] 
&
P(\mD)_{(\varphi , \psi)} \prescript{}{t}{\times_{w_\gamma s_\gamma}} W_\gamma
\ar[d,"\rho_{1 , (\varphi ; \psi) ; \gamma}"] 
\\
P(\mD)_{(\varphi , \psi)} \prescript{}{t}{\times_{w_\gamma s_\gamma}} W_\gamma
\ar[r,"\rho_{0 , (\varphi ; \psi) ; \gamma}"'] 
&
\big( P(\mD)_{(\varphi , \psi)} \prescript{}{t}{\times_{w_\gamma s_\gamma}} W_\gamma\big) \times D_{\varphi \circ \gamma}
\end{tikzcd}, \tag{$\star$} \]
\noi where 
\[ \rho_{0, (\varphi ; \psi) ; \gamma} = \big(1_{P(\mD)_{(\varphi , \psi)} \times W_\gamma} {,} (\pi_0 \pi_0 {,} \pi_1 w_\gamma)c_{\varphi ; \gamma} \big)\]
\noi and 
\[ \rho_{1, (\varphi ; \psi) ; \gamma} =\big(1_{P(\mD)_{(\varphi , \psi)} \times W_\gamma} {,} (\pi_0 \pi_1 {,} \pi_1 w_\gamma) c_{\psi ; \gamma} \big). \]

\noi Similarly we have the pullback diagram for the object of parallel pairs that are equalized by an arrow in $W$

\[ \begin{tikzcd}[]
\cP_{eq}(\mD) 
\rar[r,tail, "\iota_{eq}"] 
\dar[tail, "\iota_{eq}"'] 
\arrow[dr, phantom, "\usebox\pullback" , very near start, color=black]
& W \prescript{}{wt}{\times_{s}} P(\mD) 
\dar["\lambda_1"] \\
 W \prescript{}{wt}{\times_{s}} P(\mD) 
 \rar[r,"\lambda_0"'] 
 & \big( W \prescript{}{wt}{\times_{s}} P(\mD) \big) \times \mD_1 
\end{tikzcd},\]

\noi where 
\[ \lambda_0 = (1_{P(\mD)} {,} ( \pi_1 w {,} \pi_0 \pi_0)c ) \] 
\noi and
\[ \lambda_1 = (1_{P(\mD)} {,} (\pi_1 w {,} \pi_0 \pi_1 )c ). \]

\noi The corresponding pullback of a cofiber indexed by a maps $\mu, \varphi , $ and $\psi$ such that $\mu \varphi = \mu \psi$ in $\cA$ is 

\[ \label{dgm P_eq(D) cofiber} \begin{tikzcd}[]
\cP_{eq}(\mD)_{\mu ; (\varphi ,\psi) } 
\arrow[dr, phantom, "\usebox\pullback" , very near start, color=black]
\dar["\pi_0"'] 
\rar[r,"\pi_1"] 
& 
 W_\mu \prescript{}{w_\mu t_\mu}{\times_{s}} (P(\mD)_{(\varphi , \psi)} )
\ar[d,"\lambda_{1 , \mu ; ( \varphi , \psi) }"]
\\
 W_\mu \prescript{}{w_\mu t_\mu}{\times_{s}} (P(\mD)_{(\varphi , \psi)} )
\ar[r,"\lambda_{0 , \mu ; ( \varphi , \psi) } "'] 
&
\big( W_\mu \prescript{}{w_\mu t_\mu}{\times_{s}} P(\mD)_{(\varphi , \psi)} \big) \times D_{\mu \circ \varphi}
\end{tikzcd},\tag{$\star \star$} \]
\noi where 
\[ \lambda_{0 , \mu ; ( \varphi , \psi) } = \big(1_{W_\mu \times P(\mD)_{(\varphi , \psi)} } {,} (\pi_0 w_\mu {, } \pi_1 \pi_0)c_{\mu ; \varphi} \big)\]
\noi and 
\[ \lambda_{1 , \mu ; ( \varphi , \psi) } = \big(1_{ W_\mu \times P(\mD)_{(\varphi , \psi)}} {,} (\pi_0 w_\mu {, } \pi_1 \pi_1 ) c_{\mu ; \psi} \big). \]

\noi We use the cofibers in Diagrams (\ref{dgm P_cq(D) cofiber}) and (\ref{dgm P_eq(D) cofiber}) to translate the usual proof for when $\cE = \Set$ and then the universal property of coproducts will give us the result we want. Since $\cA$ is filtered, there exists a map $\mu : E \to A$ in $\cA$ such that the diagram

\[ \begin{tikzcd}[]
E \rar["\mu"] \dar["\mu"'] & A \dar["\psi"] \\
A \rar["\varphi"'] & B
\end{tikzcd}\]

\noi commutes in $\cA$. Picking out the arrow we need to precompose was done by taking the source of the parallel pair and applying $D(\mu)$ to it. Internally this is done at the level of cofibers by first considering the following commuting diagram, 

\[ \begin{tikzcd}[]
\cP_{cq}(\mD)_{(\varphi ,\psi) ; \gamma} 
\rar["\pi_0 \pi_0"] 
\dar[dd, "\pi_0 \pi_0"'] 
\ar[dddrr, dotted] 
\ar[ddrr, dotted, "\ell_\mu^w"] 
& D_\varphi \prescript{}{(s,t)}{\times_{(s,t)}} D_\psi 
\rar["\pi_0"] 
& D_\varphi \rar["\pi_1"] 
& D(A)_1 \dar["s"]
\\
& 
& 
& D(A)_0 \dar["D(\mu)_0"] 
\\
D_\varphi \prescript{}{(s,t)}{\times_{(s,t)}} D_\psi 
\dar["\pi_0"'] 
&
&W_\mu 
\dar["w_\mu"'] 
\rar["\pi_\mu"] 
\arrow[ddr, phantom, "\usebox\pullback" , very near start, color=black] 
& D(E)_0 
\dar["e"] 
\ar[dd, bend left = 40, equals] 
\\
D_\varphi 
\dar["\pi_1"'] 
& 
& D_\mu 
\arrow[dr, phantom, "\usebox\pullback" , very near start, color=black] 
\dar["\pi_0"'] 
\rar["\pi_1"] 
&D(E)_1 
\dar["t"] \\
D(A)_1 \rar[rr, "s"'] 
&
&D(A)_0 
\rar["D(\mu)_0"'] 
& D(A)_0
\end{tikzcd}\]

\noi The left side of the previous diagram then makes up the outside of the following pullback diagram: 

\[ \begin{tikzcd}[row sep = large]
\cP_{cq}(\mD)_{(\varphi ,\psi) ; \gamma}  
\ar[d, dotted, "\tilde{\ell}_{\mu , (\varphi , \psi)}"] 
\ar[dd, bend right = 80 , "\ell_\mu^w"'] 
\ar[dr, bend left, "\pi_0 \pi_0"] 
&&& \\
 W_\mu \prescript{}{w_\mu t_\mu}{\times_{s}} P(\mD)_{(\varphi , \psi)} 
\dar["\pi_0"'] 
\rar["\pi_1"] 
\arrow[dr, phantom, "\usebox\pullback" , very near start, color=black] 
& D_\varphi \prescript{}{(s,t)}{\times_{(s,t)}} D_\psi 
\rar["\pi_0"] 
\dar["s"'] 
& D_\varphi 
\dar["\pi_1"] \\
W_\mu 
\dar[tail, "w_\mu"'] 
\rar["w_\mu t_\mu"] 
& D(A)_0 
& D(A)_1 \lar["s"] \\
D_\mu \ar[ur, "\pi_0"'] 
& 
& 
\end{tikzcd}\]

\noi There are two ways to compose the arrows in the diagrams being represented by the previous universal map. To show they agree we show the the following diagram commutes 

\[ \begin{tikzcd}[]
\cP_{cq}(\mD)_{(\varphi ,\psi) ; \gamma}  
\rar["\tilde{\ell}_{\mu , (\varphi , \psi)}"] 
\dar["\tilde{\ell}_{\mu , (\varphi , \psi)}"'] 
& W_\mu \prescript{}{w_\mu t_\mu}{\times_{s}} P(\mD)_{(\varphi , \psi)} 
\ar[dr, bend left = 15, "(\pi_0 w_\mu {,} \pi_1 \pi_1)"] 
&
\\
W_\mu \prescript{}{w_\mu t_\mu}{\times_{s}} P(\mD)_{(\varphi , \psi)} 
\ar[dr, bend right = 15, "(\pi_0 w_\mu {,} \pi_1 \pi_0)"'] 
& 
& D_{\mu ; \psi} \dar[" c_{\mu ; \psi}"] 
\\
& D_{\mu ; \varphi}
\rar[" c_{\mu ; \varphi}"' ] 
& D_{\mu \circ \varphi}
\end{tikzcd}. \]

\noi By definition of $\tilde{\ell}_{\mu , (\varphi , \psi)}$ the first two maps on either both sides can be composed to give the top and left arrows in the following square: 

\[ \begin{tikzcd}[column sep = large, row sep = large]
\cP_{cq}(\mD)_{(\varphi ,\psi) ; \gamma}  
\rar["(\ell_{\mu}^w w_\mu {,} \pi_0 \pi_0 \pi_1)"] 
\dar["(\ell_{\mu}^w w_\mu {,} \pi_0 \pi_0 \pi_0)"'] 
& D_{\mu ; \psi} \dar[" c_{\mu ; \psi}"] 
\\
D_{\mu ; \varphi}
\rar[" c_{\mu ; \varphi}"' ] 
& D_{\mu \circ \varphi}
\end{tikzcd}\] 

\noi To see that this square commutes we use the universal property of the pullback $D_{\mu \circ \varphi}$. It will help to recall the definition of cofiber composition in $\mD$. In particular
\[ c_{\mu ; \varphi} = (\pi_1 \pi_0 , c'_{\mu ; \varphi ; \delta^{-1}} c) \quad \text{ and } \quad c_{\mu ; \psi} = (\pi_1 \pi_0 , c'_{\mu ; \psi ; \delta^{-1}} c) \]
\noi where 
\[ c'_{\mu ; \varphi ; \delta^{-1}} = ( \pi_0 \pi_1 , \pi_1 \pi_1 D(\mu)_1 , \pi_1 \pi_0 \delta_{\mu ; \varphi}^{-1} ) \]
\noi and similarly 
\[ c'_{\mu ; \psi ; \delta^{-1}} = ( \pi_0 \pi_1 , \pi_1 \pi_1 D(\mu)_1 , \pi_1 \pi_0 \delta_{\mu ; \psi}^{-1} ). \]

\noi To see both sides agree on the projection $\pi_0 : D_\mu \circ \varphi \to D(E)_0$ we can compute 

\begin{align*}
(\ell_{\mu}^w w_\mu {,} \pi_0 \pi_0 \pi_0) c_{\mu \circ \varphi} \pi_0 
&= (\ell_{\mu}^w w_\mu {,} \pi_0 \pi_0 \pi_0) \pi_1 \pi_0 \\
&= \pi_0 \pi_0 \pi_0 \pi_0
\end{align*}

\noi and 

\begin{align*}
(\ell_{\mu}^w w_\mu {,} \pi_0 \pi_0 \pi_1) c_{\mu \circ \psi} \pi_0 
&= (\ell_{\mu}^w w_\mu {,} \pi_0 \pi_0 \pi_1) \pi_1 \pi_0 \\
&= \pi_0 \pi_0 \pi_1 \pi_0 
\end{align*}

\noi and notice that that the last lines are equal by definition of the pullback $\cP_{cq}(\mD)_{(\varphi , \psi) ; \gamma)}$. To see that the other projection $\pi_1 : D_\mu \circ \varphi \to D(E)_0$ also coequalizes both sides of the square is more involving. First notice that the diagram 

\[ \label{dgm pi_0 pi_0 pi_0 = pi_0 pi_1 pi_0} \begin{tikzcd}[row sep = large]
P(\mD)_{(\varphi , \psi)} \prescript{}{t}{\times_{w_\gamma s_\gamma}} W_\gamma
\ar[d, "\pi_0"] 
\ar[dr, bend left = 20, "\pi_0 \pi_1"] 
\ar[dd, bend right = 60, "\pi_0 \pi_0"'] 
& & \\
D_\varphi \prescript{}{(s,t)}{\times_{(s,t)}} D_\psi 
\dar["\pi_0"'] 
\rar["\pi_1"] 
& D_\psi \dar["(\pi_1 s {,} \pi_0 )"'] 
\ar[dd, bend left = 80 , "\pi_0"]
& \\
 D_\varphi 
\rar["(\pi_1 s {,} \pi_0 )"'] 
\ar[dr, bend right, "\pi_0"'] 
& D(B)_0 \times D(A)_0 \ar[d, "\pi_1"] 
\\
 & D(A)_0
\end{tikzcd} \tag{A}\]

\noi commutes and precomposing with the projection 

\[ \begin{tikzcd} 
\cP_{cq}(\mD)_{(\varphi ,\psi) ; \gamma}
\rar["\pi_0"] 
& 
(D_\varphi \prescript{}{(s,t)}{\times_{(s,t)}} D_\psi ) \prescript{}{t}{\times_{w_\gamma s_\gamma}} W_\gamma
\end{tikzcd}\] 
\noi gives the last line in the following side-calculation:

\begin{align*}
\ell_\mu^w w_\mu \pi_1 
&= \ell_\mu^w \pi_\mu e \\
&= \pi_0 \pi_0 \pi_0 \pi_1 s D(\mu)_0 e \\
&= \pi_0 \pi_0 \pi_0 \pi_1 D(\mu)_1 s e .
\end{align*}

\noi We use this side calculation in the fourth equality of the following calculation: 

\begin{align*}
& \ \ \ \ 
(\ell_{\mu}^w w_\mu {,} \pi_0 \pi_0 \pi_0) c_{\mu \circ \varphi} \pi_1 \\
&= (\ell_{\mu}^w w_\mu {,} \pi_0 \pi_0 \pi_0) c'_{\mu ; \varphi ; \delta^{-1}} c \\
&= (\ell_{\mu}^w w_\mu {,} \pi_0 \pi_0 \pi_0)( \pi_0 \pi_1 , \pi_1 \pi_1 D(\mu)_1 , \pi_1 \pi_0 \delta_{\mu ; \varphi}^{-1} ) c \\
&= \big( \ell_{\mu}^w w_\mu \pi_1 , \ \pi_0 \pi_0 \pi_0 \pi_1 D(\mu)_1 , \ \pi_0 \pi_0 \pi_0 \pi_0 \delta_{\mu ; \varphi}^{-1} \big) c \\
&= \big( \pi_0 \pi_0 \pi_0 \pi_1 D(\mu)_1 s e , \ \pi_0 \pi_0 \pi_0 \pi_1 D(\mu)_1 , \ \pi_0 \pi_0 \pi_0 \pi_0 \delta_{\mu ; \varphi}^{-1} \big) c \\
&= \big( \pi_0 \pi_0 \pi_0 \pi_1 D(\mu)_1 (s e , 1) c , \ \pi_0 \pi_0 \pi_0 \pi_0 \delta_{\mu ; \varphi}^{-1} \big) c \\ 
&= \big( \pi_0 \pi_0 \pi_0 \pi_1 D(\mu)_1 , \ \pi_0 \pi_0 \pi_0 \pi_0 \delta_{\mu ; \varphi}^{-1} \big) c 
\end{align*}

\noi The last calculation we need requires a side calculation along with the coherences for the structure isomorphisms of the pseudo functor $D : \cA^{op} \to \Cat(\cE)$. We start similarly by noticing that the diagram 

\[ \label{dgm pi_0 pi_0 pi_1 s = pi_0 pi_1 pi_1 s} \begin{tikzcd}[row sep = large]
P(\mD)_{(\varphi , \psi)}  \prescript{}{t}{\times_{w_\gamma s_\gamma}} W_\gamma
\ar[d, "\pi_0"] 
\ar[dr, bend left = 20, "\pi_0 \pi_1"] 
\ar[dd, bend right = 60, "\pi_0 \pi_0"'] 
&  \\
D_\varphi \prescript{}{(s,t)}{\times_{(s,t)}} D_\psi 
\dar["\pi_0"'] 
\rar["\pi_1"] 
& D_\psi \dar["(\pi_1 s {,} \pi_0 )"'] 
\ar[dd, bend left = 65, "\pi_1 s "]
\\
D_\varphi 
\rar["(\pi_1 s {,} \pi_0 )"'] 
\ar[dr, bend right = 25, "\pi_1 s"'] 
& D(B)_0 \times D(A)_0 \ar[d, "\pi_0"] 
\\
& D(A)_0
\end{tikzcd} \tag{B} \]

\noi commutes in $\cE$ and noticing that pre-composing with the projection 
\[ \begin{tikzcd} 
\cP_{cq}(\mD)_{(\varphi ,\psi) ; \gamma}
\rar["\pi_0"] 
& 
(D_\varphi \prescript{}{(s,t)}{\times_{(s,t)}} D_\psi ) \prescript{}{t}{\times_{w_\gamma s_\gamma}} W_\gamma
\end{tikzcd}\] 
\noi gives the last line in the following side-calculation: 

\begin{align*}
\ell_\mu^w \pi_\mu e 
&= \pi_0 \pi_0 \pi_0 \pi_1 s D(\mu)_0 e \\
&= \pi_0 \pi_0 \pi_1 \pi_1 s D(\mu)_0 e \\
&= \pi_0 \pi_0 \pi_1 \pi_1 D(\mu)_1 s e . 
\end{align*}

\noi Another side-calculation we will need can be seen in the following commuting diagram:

\[ \label{dgm pi_0 pi_1 pi_0 = pi_1 w_gamma pi_0 D(gamma)_0} \begin{tikzcd}[]
& D(C)_0 \rar["D(\gamma)_0"] 
& D(B)_0 \\
& D_\gamma 
\uar["\pi_0"] 
\rar["\pi_1"] 
\arrow[ur, phantom, "\usebox\dlpullback" , very near start, color=black] 
& D(B)_1 \uar["t"'] \\
P(\mD)_{(\varphi , \psi)}  \prescript{}{t}{\times_{w_\gamma s_\gamma}} W_\gamma 
\arrow[ddrr, phantom, "\usebox\pullback" , very near start, color=black] 
\rar["\pi_1"] 
\dar["\pi_0"'] 
& 
W_\gamma 
\arrow[dr, phantom, "\usebox\pullback" , very near start, color=black] 
\arrow[ur, phantom, "\usebox\dlpullback" , very near start, color=black] 
\uar[tail, "w_\gamma"] 
\rar["\pi_\gamma"'] 
\dar[tail, "w_\gamma"'] 
& D(B)_0 
\ar[uu, bend right=40, equals]
\dar[tail, "e"] 
\uar[tail, "e"] 
\dar[dd, bend left=40, equals] \\
D_\varphi \prescript{}{(s,t)}{\times_{(s,t)}} D_\psi 
\dar["\pi_1"'] 
\ar[dr, "t"'] 
& D_\gamma 
\dar["s_\gamma"'] 
\rar["\pi_1"] 
& D(B)_1 \ar[d, "s"] \\
D_\varphi \rar["\pi_0"'] 
&D(B)_0 \rar[equals] 
& D(E)_0
\end{tikzcd} \tag{C} \]

\noi In particular we will use commutativity of the outside several times which says:
\[ \pi_0 \pi_1 \pi_0 = \pi_1 w_\gamma \pi_0 D(\gamma)_0 \]

\noi Coherence of the structure isomorphisms of the pseudofunctor $D$ says that the diagrams 

\[ 
\begin{tikzcd}[column sep = large, row sep = huge]
D(C)_0 
\dar["(\delta_{\mu ; \psi \circ \gamma} {,} \delta_{\psi ; \gamma} D(\mu)_1)"']
\rar[rr, "(\delta_{\mu \circ \psi ; \gamma} {,} D(\gamma)_0 \delta_{\psi ; \gamma)}"]
&& D(E)_2 \dar["c"] \\
D(C)_2 \rar[rr,"c"'] && D(E)_1
\end{tikzcd} \qquad \qquad \] 
\noi and 
\[
\begin{tikzcd}[column sep = large, row sep = huge]
D(C)_0 
\dar["(\delta_{\mu ; \varphi \circ \gamma} {,} \delta_{\varphi ; \gamma} D(\mu)_1)"']
\rar[rr,"(\delta_{\mu \circ \varphi ; \gamma} {,} D(\gamma)_0 \delta_{\varphi ; \gamma})"]
&& D(E)_2 \dar["c"] \\
D(C)_2 \rar[rr,"c"'] && D(E)_1
\end{tikzcd} \qquad \qquad 
 \]

\noi commute in $\cE$. Using the internal composition in $D(E)$ to pre-compose with the inverse structure isomorphism components $\delta_{\mu \circ \psi; \gamma}^{-1} : D(C)_0 \to D(E)_1$ and $\delta_{\mu \circ \varphi; \gamma}^{-1} : D(C)_0 \to D(E)_1$ respectively and then applying the identity law in $D(E)$ gives new commuting diagrams: 

\[ 
\begin{tikzcd}[column sep = huge, row sep = huge]
D(C)_0 
\dar["(\delta_{\mu \circ \psi ; \gamma}^{-1} {,} \delta_{\mu ; \psi \circ \gamma} {,} \delta_{\psi ; \gamma} D(\mu)_1)"']
\rar["D(\gamma)_0 "]
& D(E)_0 \dar["\delta_{\psi ; \gamma}"] \\
D(C)_3 \rar["c"'] & D(E)_1
\end{tikzcd} \qquad \qquad \] 
\noi and 
\[
\begin{tikzcd}[column sep = huge, row sep = huge]
D(C)_0 
\dar["(\delta_{\mu \circ \varphi ; \gamma}^{-1} {,} \delta_{\mu ; \varphi \circ \gamma} {,} \delta_{\varphi ; \gamma} D(\mu)_1)"']
\rar["D(\gamma)_0 "]
& D(E)_0 \dar["\delta_{\varphi ; \gamma}"] \\
D(C)_3 \rar["c"'] & D(E)_1
\end{tikzcd} \qquad \qquad\]

\noi Taking inverses in $D(E)$ then gives the commuting diagrams,

\[ \begin{tikzcd}[column sep = huge, row sep = huge]
D(C)_0 
\rar["D(\gamma)_0"]
\dar["(\delta_{\psi ; \gamma}^{-1} D(\mu)_1 {,} \delta_{\mu ; \psi \circ \gamma}^{-1} {,} \delta_{\mu \circ \psi ; \gamma} ) "']
& D(B)_0 \dar["\delta_{\mu ; \psi}^{-1}"] \\
D(C)_3 \rar["c"'] & D(E)_1
\end{tikzcd} \qquad \qquad\]
\[
\begin{tikzcd}[column sep = huge, row sep = huge]
D(C)_0 
\rar["D(\gamma)_0"]
\dar["(\delta_{\varphi ; \gamma}^{-1} D(\mu)_1 {,} \delta_{\mu ; \varphi \circ \gamma}^{-1} {,} \delta_{\mu \circ \varphi ; \gamma} ) "']
& D(B)_0 \dar["\delta_{\mu ; \varphi}^{-1}"] \\
D(C)_3 \rar["c"'] & D(E)_1
\end{tikzcd} \qquad \qquad \]

\noi which we will use in the calculation(s) below. The first of the latest side-calculations along with associativity of composition and the identity law for composition in $D(E)$ allows us to see 

\begin{align*}
& \ \ \ \ 
(\ell_{\mu}^w w_\mu {,} \pi_0 \pi_0 \pi_1) c_{\mu \circ \psi} \pi_1 \\
&= (\ell_{\mu}^w w_\mu {,} \pi_0 \pi_0 \pi_1) c'_{\mu ; \psi ; \delta^{-1}} c \\
&= (\ell_{\mu}^w w_\mu {,} \pi_0 \pi_0 \pi_1)( \pi_0 \pi_1 , \pi_1 \pi_1 D(\mu)_1 , \pi_1 \pi_0 \delta_{\mu ; \psi}^{-1} ) c \\
&= \big( \ell_{\mu}^w w_\mu \pi_1 , \pi_0 \pi_0 \pi_1\pi_1 D(\mu)_1 , \pi_0 \pi_0 \pi_1\pi_0 \delta_{\mu ; \psi}^{-1} \big) c \\
&= \big(\pi_0 \pi_0 \pi_1 \pi_1 D(\mu)_1 s e ,  \pi_0 \pi_0 \pi_1\pi_1 D(\mu)_1 , \pi_0 \pi_0 \pi_1\pi_0 \delta_{\mu ; \psi}^{-1} \big) c \\
&= \big( (\pi_0 \pi_0 \pi_1 \pi_1 D(\mu)_1 s e ,  \pi_0 \pi_0 \pi_1\pi_1 D(\mu)_1) c , \pi_0 \pi_0 \pi_1\pi_0 \delta_{\mu ; \psi}^{-1} \big) c \\
&= \big( \pi_0 \pi_0 \pi_1 \pi_1 D(\mu)_1 (s e , 1_{D(E)_1})c , \pi_0 \pi_0 \pi_1\pi_0 \delta_{\mu ; \psi}^{-1} \big) c \\
&= \big( \pi_0 \pi_0 \pi_1 \pi_1 D(\mu)_1 , \pi_0 \pi_0 \pi_1\pi_0 \delta_{\mu ; \psi}^{-1} \big) c 
\end{align*}

\noi The second of the latest side-calculations along with the left square deduced from the coherence diagrams allow us to see

\begin{align*} 
& \ \ \ \ \big( \pi_0 \pi_0 \pi_1 \pi_1 D(\mu)_1 , \pi_0 \pi_0 \pi_1\pi_0 \delta_{\mu ; \psi}^{-1} \big) c \\
&= \big( \pi_0 \pi_0 \pi_1 \pi_1 D(\mu)_1 , \pi_0 \pi_1 w_\gamma \pi_0 D(\gamma)_0 \delta_{\mu ; \psi}^{-1} \big) c \\
&= \big( \pi_0 \pi_0 \pi_1 \pi_1 D(\mu)_1 , \pi_0 \pi_1 w_\gamma \pi_0 (\delta_{\psi ; \gamma} D(\mu)_1 {,} \delta_{\mu ; \psi \circ \gamma}^{-1} {,} \delta_{\mu \circ \psi ; \gamma} )c \big) c
\end{align*}

\noi The next side calculation shows how we can replace the last line above. The first line below comes from the definition of $\cP_{cq}(\mD)_{(\varphi ; \psi) ; \gamma}$. The second and third lines follow from the definition of cofiber composition and the fourth line is a standard computation using the calculus of pairing maps by the universal property of pullbacks. The fifth, sixth, and seventh lines are consequences of the definition of $W_\gamma$ and the eighth line follows by definition of $\delta_{\psi ; \gamma}^{-1}$. The ninth line comes from associativity of composition, and in the tenth line we apply the identity law for composition in $D(A)$. 

\begin{align*}
& \ \ \ \ 
\pi_0 (\pi_0 \pi_0 , \pi_1 w_\gamma) c_{\varphi ; \gamma} \pi_1 D(\mu)_1 \\
&=\pi_1 (\pi_0 \pi_1 , \pi_1 w_\gamma) c_{\psi ; \gamma} \pi_1 D(\mu)_1 \\
&= \pi_1 (\pi_0 \pi_1 , \pi_1 w_\gamma) c'_{\psi ; \gamma; \delta^{-1}} c D(\mu)_1 \\
&= \pi_1 (\pi_0 \pi_1 , \pi_1 w_\gamma) (\pi_0 \pi_1 , \pi_1 \pi_1 D(\psi)_1 , \pi_1 \pi_0 \delta_{\psi ; \gamma}^{-1} ) c D(\mu)_1 \\
&= \big( \pi_1 \pi_0 \pi_1 \pi_1 , \pi_1 \pi_1 w_\gamma \pi_1 D(\psi)_1 , \pi_1 \pi_1 w_\gamma \pi_0 \delta_{\psi ; \gamma}^{-1} \big) c D(\mu)_1 \\
&= \big( \pi_1 \pi_0 \pi_1 \pi_1 , \pi_1 \pi_1 \pi_\gamma e D(\psi)_1 , \pi_1 \pi_1 w_\gamma \pi_0 \delta_{\psi ; \gamma}^{-1} \big) c D(\mu)_1 \\
&= \big( \pi_1 \pi_0 \pi_1 \pi_1 , \pi_1 \pi_1 \pi_\gamma D(\psi)_0 e , \pi_1 \pi_1 w_\gamma \pi0 \delta_{\psi ; \gamma}^{-1} \big) c D(\mu)_1 \\
&= \big( \pi_1 \pi_0 \pi_1 \pi_1 , \pi_1 \pi_1 w_\gamma \pi_0 D(\gamma)_0 D(\psi)_0 e , \pi_1 \pi_1 w_\gamma \pi_0 \delta_{\psi ; \gamma}^{-1} \big) c D(\mu)_1 \\
&= \big( \pi_1 \pi_0 \pi_1 \pi_1 , \pi_1 \pi_1 w_\gamma \pi_0 \delta_{\psi ; \gamma}^{-1} s e , \pi_1 \pi_1 w_\gamma \pi_0 \delta_{\psi ; \gamma}^{-1} \big) c D(\mu)_1 \\
&= \big( \pi_1 \pi_0 \pi_1 \pi_1 , \pi_1 \pi_1 w_\gamma \pi_0 \delta_{\psi ; \gamma}^{-1} (s e ,1_{D(A)_1}) c \big) c D(\mu)_1 \\ 
&= \big( \pi_1 \pi_0 \pi_1 \pi_1 , \pi_1 \pi_1 w_\gamma \pi_0 \delta_{\psi ; \gamma}^{-1} \big) c D(\mu)_1 \\ 
&= \big( \pi_1 \pi_0 \pi_1 \pi_1 D(\mu)_1 , \pi_1 \pi_1 w_\gamma \pi_0 \delta_{\psi ; \gamma}^{-1} D(\mu)_1 \big) c \\ 
\end{align*}

\noi Functoriality of $D(\mu)$ gives the final line in the computation above. Expanding the composition in the last line of the previous calculation and recalling that that the diagram 
\[ \begin{tikzcd}[]
\cP_{cq}(\mD)_{(\varphi , \psi) ; \gamma} \rar[r, shift left, "\pi_0 \pi_0"] \rar[shift right, "\pi_1 \pi_0"'] & \big( P(\mD)_{(\varphi , \psi)} \prescript{}{t}{\times_{w_\gamma s_\gamma}} W_\gamma\big) \times D_{\varphi \circ \gamma}
\end{tikzcd}\]

\noi commutes by definition of the pullback $\cP_{cq}(\mD)_{(\varphi , \psi) ; \gamma} $ allows us to see that up to this point we have: 

\begin{align*} 
(\ell_{\mu}^w w_\mu {,} \pi_0 \pi_0 \pi_1) c_{\mu \circ \psi} \pi_1 
&=\big( \pi_0 \pi_0 \pi_1 \pi_1 D(\mu)_1 , \pi_0 \pi_0 \pi_1\pi_0 \delta_{\mu ; \psi}^{-1} \big) c \\
&= \big(\pi_0 (\pi_0 \pi_0 , \pi_1 w_\gamma) c_{\varphi ; \gamma} \pi_1 D(\mu)_1 , \pi_1 \pi_1 w_\gamma \pi_0 \delta_{\mu ; \psi \circ \gamma}^{-1} \delta_{\mu \circ \psi ; \gamma} \big) c 
\end{align*}

\noi A similar side calculation to the last one, where we can cancel composition with an identity map in the middle, shows

\begin{align*}
& \ \ \ \ 
\pi_0 (\pi_0 \pi_0 , \pi_1 w_\gamma) c_{\varphi ; \gamma} \pi_1 D(\mu)_1\\ 
&= \pi_0 (\pi_0 \pi_0 , \pi_1 w_\gamma) c'_{\varphi ; \gamma; \delta^{-1}} c D(\mu)_1 \\
&= \pi_0 (\pi_0 \pi_0 , \pi_1 w_\gamma)(\pi_0 \pi_1 , \pi_1 \pi_1 D(\varphi)_1 , \pi_1 \pi_0 \delta_{\varphi ; \gamma}^{-1} ) c D(\mu)_1 \\
&= \big( \pi_0 \pi_0 \pi_0 \pi_1 ,  \pi_0 \pi_1 w_\gamma \pi_1 D(\varphi)_1 , \pi_0 \pi_1 w_\gamma \pi_0 \delta_{\varphi ; \gamma}^{-1} \big) c D(\mu)_1 \\
&= \big( \pi_0 \pi_0 \pi_0 \pi_1 ,  \pi_0 \pi_1 \pi_\gamma e D(\varphi)_1 , \pi_0 \pi_1 w_\gamma \pi_0 \delta_{\varphi ; \gamma}^{-1} \big) c D(\mu)_1 \\
&\vdots \\
&= \big( \pi_0 \pi_0 \pi_0 \pi_1 D(\mu)_1 , \pi_0 \pi_1 w_\gamma \pi_0 \delta_{\varphi ; \gamma}^{-1} D(\mu)_1 \big) c. 
\end{align*}

\noi Note that the parallel arrows 

\[ \begin{tikzcd}[column sep = large]
\cP_{cq}(\mD)_{(\varphi , \psi) ; \gamma} \rar[shift left, "\pi_1 \pi_1"] \rar[shift right, "\pi_0 \pi_1"'] 
& W_\gamma
\end{tikzcd}\]

\noi are equal by definition of the pullback $\cP_{cq}(\mD)_{(\varphi , \psi) ; \gamma}$. Since $\psi \circ \gamma = \varphi \circ \gamma$ and $\mu \circ \psi = \mu \circ \varphi$, composition in $D(E)$ is associative, the commuting square(s) deduced from the coherence of the structure isomorphisms for $D$, and by diagrams (\ref{dgm pi_0 pi_1 pi_0 = pi_1 w_gamma pi_0 D(gamma)_0}) and (\ref{dgm pi_0 pi_0 pi_0 = pi_0 pi_1 pi_0}), we have that 

\begin{align*} 
& \ \ \ \ (\ell_{\mu}^w w_\mu {,} \pi_0 \pi_0 \pi_1) c_{\mu \circ \psi} \pi_1  \\
&= \big(\pi_0 \pi_0 \pi_0 \pi_1 D(\mu)_1 , \pi_0 \pi_1 w_\gamma \pi_0 \delta_{\varphi ; \gamma}^{-1} D(\mu)_1 , \pi_1 \pi_1 w_\gamma \pi_0 \delta_{\mu ; \varphi \circ \gamma}^{-1} \delta_{\mu \circ \varphi ; \gamma} \big) c \\
&= \big(\pi_0 \pi_0 \pi_0 \pi_1 D(\mu)_1 , \pi_0 \pi_1 w_\gamma \pi_0 \delta_{\varphi ; \gamma}^{-1} D(\mu)_1 , \pi_0 \pi_1 w_\gamma \pi_0 \delta_{\mu ; \varphi \circ \gamma}^{-1} \delta_{\mu \circ \varphi ; \gamma} \big) c \\
&= \big(\pi_0 \pi_0 \pi_0 \pi_1 D(\mu)_1 , \pi_0 \pi_1 w_\gamma \pi_0 ( \delta_{\varphi ; \gamma}^{-1} D(\mu)_1 , \delta_{\mu ; \varphi \circ \gamma}^{-1} \delta_{\mu \circ \varphi ; \gamma})c \big) c \\
&= \big(\pi_0 \pi_0 \pi_0 \pi_1 D(\mu)_1 , \pi_0 \pi_1 w_\gamma \pi_0 D(\gamma)_0 \delta_{\mu ; \varphi}^{-1} \big) c \\
&= \big(\pi_0 \pi_0 \pi_0 \pi_1 D(\mu)_1 , \pi_0 \pi_0 \pi_1 \pi_0 \delta_{\mu ; \varphi}^{-1} \big) c\\
&= \big(\pi_0 \pi_0 \pi_0 \pi_1 D(\mu)_1 , \pi_0 \pi_0 \pi_0 \pi_0 \delta_{\mu ; \varphi}^{-1} \big) c\\
&= (\ell_{\mu}^w w_\mu {,} \pi_0 \pi_0 \pi_0) c_{\mu \circ \psi} \pi_1  
\end{align*}

\noi It follows that 

\[ (\ell_{\mu}^w w_\mu {,} \pi_0 \pi_0 \pi_0) c_{\mu \circ \psi} =(\ell_{\mu}^w w_\mu {,} \pi_0 \pi_0 \pi_1) c_{\mu \circ \psi} \]

\noi so there exists a unique map $\ell_{\mu ; (\varphi , \psi)} : \cP_{cq}(\mD)_{(\varphi , \psi) ; \gamma} \to \cP_{eq}(\mD)_{\mu ; (\varphi ,\psi) }$ at the cofiber level: 

\[ \begin{tikzcd}[column sep = tiny, row sep = large]
&\cP_{cq}(\mD)_{(\varphi , \psi) ; \gamma} 
\dar[dotted, "\ell_{\mu ; (\varphi ; \psi)} "] 
\ar[ddr, bend left= 15, "\tilde{\ell}_{\mu ; (\varphi , \psi)}"] 
\ar[ddl, bend right=15, "\tilde{\ell}_{\mu ; (\varphi , \psi)}"'] 
& 
\\
&\cP_{eq}(\mD)_{\mu ; (\varphi ,\psi) } 
\arrow[dd, phantom, "\usebox\vpullback" , very near start, color=black]
\ar[dl,"\pi_0"'] 
\ar[dr,"\pi_1"] 
& \\
W_\mu \prescript{}{w_\mu t_\mu}{\times_{s}} P(\mD)_{(\varphi , \psi)} 
\ar[dr,"\big(1_{P(\mD)_{(\varphi , \psi)}} {,} (\pi_0 w_\mu {, } \pi_1 \pi_0)c_{\mu ; \varphi} \big)"'] 
&
&
 W_\mu \prescript{}{w_\mu t_\mu}{\times_{s}} P(\mD)_{(\varphi , \psi)} 
\ar[dl,"\big(1_{P(\mD)_{(\varphi , \psi)}} {,} (\pi_0 w_\mu {, } \pi_1 \pi_1 ) c_{\mu ; \psi} \big)"] \\
&
\big( W_\mu \prescript{}{w_\mu t_\mu}{\times_{s}} P(\mD)_{(\varphi , \psi)} \big) \times D_{\mu \circ \varphi}
&
\end{tikzcd}\]

\noi Since this is true for arbitrary parallel pairs $(\varphi , \psi)$ in $\cA$ and an arbitrary $\gamma$ in $\cA$ which coequalizes them, the universal property of coproducts induces unique maps $\tilde{\ell}_0 : \cP_{cq}(\mD) \to W \prescript{}{wt}{\times_{s}} P(\mD) $ and $\ell_0 : \cP_{cq}(\mD) \to \cP_{eq}(\mD)$ such that the diagram 

\[ \begin{tikzcd}[column sep = large]
\cP_{cq}(\mD) 
\ar[dd, bend right = 60, "\tilde{\ell}_0"'] 
\ar[drr, bend left = 20, "\tilde{\ell}_0"] 
\ar[d, dotted, "\ell_0"] &&& \\
\cP_{eq}(\mD) 
\rar[rr,tail, "\iota_{eq}"] 
\dar[tail, "\iota_{eq}"'] 
\arrow[drr, phantom, "\usebox\pullback" , very near start, color=black]
&
& W \prescript{}{wt}{\times_{s}} P(\mD) 
\dar["(1_{P(\mD)} {,} (\pi_1 w {,} \pi_0 \pi_1 )c )"] \\
 W \prescript{}{wt}{\times_{s}} P(\mD) 
 \rar[rr,"(1_{P(\mD)} {,} ( \pi_1 w {,} \pi_0 \pi_0)c )"'] 
 &
 & \big( W \prescript{}{wt}{\times_{s}} P(\mD) \big) \times \mD_1 
\end{tikzcd}\]

\noi commutes in $\cE$. The lift we need is then given by the following pullback diagram 

\[ \begin{tikzcd}[]
 \cP_{cq}(\mD) 
 \ar[ddr, bend right, "\ell_0"'] 
 \ar[drr, bend left, equals] 
 \ar[dr, dotted, "\ell"] & &\\
& \cP(\mD) 
\dar["\pi_0"'] 
\rar["\pi_1"] 
\arrow[dr, phantom, "\usebox\pullback" , very near start, color=black]
& \cP_{cq}(\mD) \dar["\iota_{cq} \pi_0"] \\
& \cP_{eq}(\mD) \rar["\iota_{eq} \pi_1"'] & P(\mD) 
\end{tikzcd}\]

\noi where the upper triangle shows we can assume the cover to be the identity map $1_{\cP_{cq}(\mD)} : \cP_{cq}(\mD) \to \cP_{cq}(\mD)$. The outside of the pullback diagram commutes because by definition of $\ell_0$ we have

\[ \ell_0 \iota_{eq} \pi_1 = \ell_0^w \pi_1 \]

\noi and then the following diagram 

\[ \begin{tikzcd}[]
 \cP_{cq}(\mD) \rar[dotted, "\tilde{\ell}_0"] 
 \rar[rr, dotted, bend left, "\iota_{cq} \pi_0 "] 
 & W \prescript{}{wt}{\times_{s}} P(\mD) \rar[dotted, "\pi_1"] 
 & P(\mD) \\
 \cP_{cq}(\mD)_{(\varphi , \psi) ; \gamma} 
 \uar["\iota_{(\varphi , \psi) ; \gamma}"] 
 \rar["\tilde{\ell}_{\mu; (\varphi , \psi) }"']
 \rar[rr, bend right, "\pi_0 \pi_0 "'] 
 & W_\mu \prescript{}{w_\mu t_\mu}{\times_{s}} P(\mD)_{(\varphi , \psi)}
 \uar["\iota_\mu^w \times \iota_{(\varphi , \psi)}"']
 \rar["\pi_1"'] 
 & D_\varphi \prescript{}{(s,t)}{\times_{(s,t)}} D_\psi 
 \uar["\iota_{(\varphi , \psi)}"']
\end{tikzcd}\]

\noi commutes in $\cE$. 
\end{proof}

\noi The preceding lemmas in this section come together to show that the object of the convenient cleavage of cartesian arrows we consider for the internal category of elements can be formally inverted to give an internal category of (right) fractions, $\mDW$.

\begin{prop}\label{thm IntGroth satisfies IntFracxioms} 
Let $D : \cA^{op} \to \Cat(\cE)$ be a pseudofunctor such that $\cE$ is a candidate context for internal fractions and admits an internal category of elements, $\mD$, which is a candidate for internal fractions. Let $W = \coprod_{\varphi \in \cA_1} W_\varphi$ be the object of the canonical cleavage of the cartesian arrows we defined at the beginning of this section. Then $(\mD , W) $ admits an internal category of fractions. 
\end{prop}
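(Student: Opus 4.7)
The plan is to verify each of the four Internal Fractions Axioms of Definition \ref{def Internal Fractions Axioms} directly, one at a time, since these are precisely what $(\mD, W)$ admitting an internal category of fractions means. In each case, since the relevant domain (such as $\csp$, $W \tensor[_{wt}]{\times}{_{ws}} W$, or $\cP_{cq}(\mD)$) decomposes by extensivity of $\cE$ as a coproduct of cofibers indexed by objects, composable pairs, or parallel-pair data in $\cA$, my strategy is to build each lift at the level of a single cofiber and then take the identity on the coproduct as the cover, using the universal property of the coproduct to globalize.

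For \textbf{Int.Frc(1)}, I would assemble a section $\alpha : \mD_0 \to W$ from the cofiber sections $\alpha_B : D(B)_0 \to W_{1_B}$ induced by $(1_{D(B)_0}, D(1_B)_0)$ through the two nested pullback squares defining $W_{1_B}$. For \textbf{Int.Frc(2)}, given a composable pair $\varphi : A \to B$, $\psi : B \to C$ in $\cA$, the precomposable arrow lives in $W_{1_A}$ and is built internally from the structure isomorphism components $\delta_{1_A \circ (\varphi \circ \psi)}$ and $\delta_{\varphi; \psi}$ composed in $D(A)$; the factorization through $W_{\varphi \circ \psi}$ reduces via the coherence law for $D$ and the identity law in $D(A)$ to showing that $\delta_{\varphi;\psi}$ cancels with its inverse.

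For \textbf{Int.Frc(3)}, I would use filteredness of $\cA$ to obtain, for any cospan $\varphi : A \to B$, $\psi : C \to B$, a commuting square with legs $\varphi^* : E \to A$, $\psi^* : E \to C$; the top leg of the Ore-square is then the pairing of the $W_\psi$-projection with a map built from $D(\varphi^*)(f) \cdot \delta_{\varphi^*;\varphi}^{-1} \cdot \delta_{\psi^*;\psi}$, and commutativity of the resulting $W_\square$-diagram reduces via coherence to the equality $\varphi^* \circ \varphi = \psi^* \circ \psi$ in $\cA^{op}$. For \textbf{Int.Frc(4)}, filteredness gives, for parallel $\varphi, \psi : A \to B$ coequalized by some $\gamma : C \to B$ in $\cA^{op}$, a map $\mu : E \to A$ with $\mu \varphi = \mu \psi$; the lift to $\cP_{eq}(\mD)$ is witnessed at the cofiber level by $1_{D(\mu)(a)}$ in $W_\mu$, and the required equalization reduces, after expanding via the coherence squares and the identity laws in $D(E)$, to the manifest equality in $\cA$.

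The main obstacle in each case is the bookkeeping needed to verify the cofiber-level lifts via the universal properties of several nested pullbacks: each lift must be checked to land in the correct component, and because composition in $\mD$ twists in the structure isomorphisms $\delta_{-;-}$ and their inverses (cf.\ the formula $c'_{\varphi ; \psi ; \delta^{-1}} = (\pi_0\pi_1, \pi_1\pi_1 D(\varphi)_1, \pi_1\pi_0 \delta_{\varphi;\psi}^{-1})$), verifying that each required square commutes on both projections is a long pairing-map calculation that threads together re-associations, functoriality of the fibre functors, and coherence. Once all four lemmas are in place, however, the proof of the proposition is just assembly: collect Lemmas \ref{lem IntFrc1 for internal groth}, \ref{lem IntFrc2 for internal groth}, \ref{lem IntFrc3 for internal groth}, and \ref{lem IntFrc4 for internal groth}, which together establish the four clauses of Definition \ref{def Internal Fractions Axioms}, and conclude that $(\mD,W)$ admits an internal category of fractions.
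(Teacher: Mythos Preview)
Your proposal is correct and follows essentially the same approach as the paper: the proof of the proposition itself is just the assembly of Lemmas \ref{lem IntFrc1 for internal groth}--\ref{lem IntFrc4 for internal groth}, and your descriptions of how each lemma is established (cofiber-level lifts induced from filteredness of $\cA$ and the coherence data of $D$, globalized via the coproduct with the identity as cover) accurately summarize the paper's arguments for those lemmas.
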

\begin{proof}
Lemmas \ref{lem IntFrc1 for internal groth}, \ref{lem IntFrc2 for internal groth}, \ref{lem IntFrc3 for internal groth}, and \ref{lem IntFrc4 for internal groth} come together to show that the Internal Fractions Axioms of Definition~\ref{def Internal Fractions Axioms}
are satisfied and the result follows by Definition~\ref{def (mC,W) admits paths of fractions}. 
\end{proof}

\subsection{Pseudocolimits of Small Filtered Diagrams of Internal Categories }\label{SS IntFrac of IntGroth is PseudoColim}

The crux of our main theorem is an observation that in the correspondence between oplax natural transformations $D \implies \Delta \mX$ and internal functors $\mD \to \mX$ for an arbitrary internal category $\mX$ established in Theorem~\ref{thm IntGroth is OpLaxColim}, the components of the natural transformations factor through the family of arrows $w : W \to \mD_1$ that get inverted by the internal localization $L : \mD \to \mD[W^{-1}]$. The oplax natural transformations $D \implies \Delta$ are required here since we are dealing with a contravariant pseudofunctor and constructing a category of right fractions. 

\noi Recall that there is a canonical oplax natural transformation $D \implies \Delta \mD$ whose components are internal functors $\ell_B : D(B) \to \mD$ defined by: 

\[ \begin{tikzcd}[column sep = large]
D(B)_0 \rar[rr, tail, "(\ell_B)_0 = \iota_B"]& &\mD_0
\end{tikzcd}\qquad \qquad \qquad 
 \begin{tikzcd}[column sep = large]
D(B)_1 
\ar[drr, "(\ell_B)_1"'] 
\rar[rr, "\big(t {,} (1_{D(B)_1} {,} t \delta_B^{-1} ) c \big) "] 
&& D_{1_A} \dar[tail, "\iota_{1_A}"] \\
&& \mD_1
\end{tikzcd}.\]

\noi For each $\varphi : A \to B$ in $\cA$, the internal transformation , $\ell_\varphi : D(\varphi) \ell_A \implies \ell_B$ is defined by its components, $\ell_\varphi \iota_\varphi : D(B)_0 \to \mD_1$, which factor through $W_\varphi$ as a consequence of the commuting diagram:

\[ \begin{tikzcd}[]
D(B)_0 
\ar[dddr, bend right, equals] 
\ar[ddr, bend right, dotted, "\ell_\varphi"] 
\ar[dr, dotted, "\ell_\varphi^w"] 
\ar[drr, bend left, "D(\varphi)_0"] 
& & \\
& W_\varphi 
\dar[tail, "w_\varphi"'] 
\rar["\pi_\varphi"] 
\arrow[dr, phantom, "\usebox\pullback" , very near start, color=black] 
& D(A)_0 
\dar[tail, "e"] 
\dar[dd, bend left = 40, equals] \\
& D_\varphi 
\dar["\pi_0"'] 
\rar["\pi_1"] 
\arrow[dr, phantom, "\usebox\pullback" , very near start, color=black] 
& D(A)_1 
\dar["t"] \\
& D(B)_0 
\rar["D(\varphi)_0"'] 
& D(A)_0
\end{tikzcd}\]

\noi More precisely, the components of the natural transformation, $\ell_\varphi$, are picked out by the composite $\ell_\varphi \iota{\varphi} : D(B)_0 \to \mD_1$, which represents the arrows in the component $D_\varphi$ which are given by applying the identity structure map, $e : D(A)_0 \to D(A)_1$, after applying $D(\varphi)_0 : D(B)_0 \to D(A)_0$. 

\begin{defn}\label{def induced internal functor from oplax nat transfmn}
For an arbitrary oplax natural transformation $x : D \implies \Delta \mX$, the induced internal functor $\theta_x : \mD \to \mX$ is defined on components and then induced by the universal property of the coproduct as follows: 

\[ \begin{tikzcd}[column sep = large]
\mD_0 \rar[dotted, "(\theta_x)_0"] & \mX_0 \\
D(B)_0 \uar[tail, "\iota_B"] \ar[ur, "(x_B)_0"'] 
\end{tikzcd} \qquad \qquad \qquad 
\begin{tikzcd}[column sep = large]
\mD_1 \rar[rr, dotted, "(\theta_x)_1"] && \mX_1 \\
D_\varphi 
\uar[tail, "\iota_\varphi"] 
\rar[rr, "\big(\pi_1(x_A)_1 {,} \pi_0 x_\varphi \big)"'] 
&& \mX_2 
\uar["c"'] 
\end{tikzcd}\]
\end{defn}

\noi The subtle difference here from the induced internal functor in Section \ref{SS UP for 1-cells of laxtransfm} is the map 

\[\begin{tikzcd}[column sep = large]
D_\varphi 
\rar[rr, "\big(\pi_1(x_A)_1 {,} \pi_0 x_\varphi \big)"] 
&& \mX_2 
\end{tikzcd}\]

\noi which twists the order of composition in $\mX$ to account for working with a contravariant functor the covariant used in Lemma~\ref{thm IntGroth is OpLaxColim}. Next we review how every internal functor $\mD \to \mX$ induces an oplax natural transformation by whiskering. This is the same as in Section \ref{SS UP of 2-cells for laxtransfm} but we restate it for our reader's convenience and the fact that we're working with a contravariant functor and oplax transformations.

\begin{defn}\label{def induced oplax nat transfmn from internal functor} 
For an arbitrary internal functor $F : \mD \to \mX$, the induced oplax natural transformation $F^*: D \implies \Delta \mX$ has components that are internal functors $F^*_B : D(B) \to \mX$ defined by post-composition 

\[ \begin{tikzcd}[column sep = large ]
D(B) \ar[dr, "F^*"'] \rar["\ell_B"] & \mD \dar["F"] \\
& \mX
\end{tikzcd} \] 

\noi For each $\varphi : A \to B$ in $\cA$, the induced transformation $F^*_\varphi : D(\varphi) F^* \implies F^*$ is defined by whiskering. More precisely, the components are given by post-composing the components of $\ell_\varphi$ with $F_1$: 

\[ \begin{tikzcd}[column sep = large, ]
D(B)_0 
\rar["\ell_\varphi"] 
\ar[dr, "F^*_\varphi"'] 
&\mD_1 \dar["F_1"] \\
& \mX_1 
\end{tikzcd}\]
\end{defn}

\noi A similar proof to the one in Proposition~\ref{Prop 1-cell correspondence} shows the assignments in Definitions \ref{def induced internal functor from oplax nat transfmn} and \ref{def induced oplax nat transfmn from internal functor} are inverses. The following Lemma shows that the induced internal functor in Definition~\ref{def induced internal functor from oplax nat transfmn} inverts the cartesian arrows, $w : W \to \mD_1$.

\begin{lem}\label{Lem induced internal functor inverts object of cartesian arrows}
If $x : D \implies \Delta \mX$ is a natural isomorphism, then the induced internal functor $\theta_x : \mD \to \mX$ inverts the family of cartesian arrows, $w : W \to \mD$, as in Definition~\ref{def internal functor inverts a map}. 
\end{lem}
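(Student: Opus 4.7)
The plan is to reduce the invertibility condition to a cofiber-wise statement using the coproduct structure $W = \coprod_{\varphi \in \cA_1} W_\varphi$, and then exploit the fact that ``natural isomorphism'' means each internal natural transformation $x_\varphi$ is invertible as a 2-cell in $\mX$.

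First, I would unpack the composite $w (\theta_x)_1 : W \to \mX_1$ on each cofiber $W_\varphi$. Using the definitions $(\theta_x)_1 \iota_\varphi = (\pi_1 (x_A)_1, \pi_0 x_\varphi) c$ and the pullback identities $w_\varphi \pi_1 = \pi_\varphi e$ and $w_\varphi \pi_0 D(\varphi)_0 = \pi_\varphi$ for $W_\varphi$, I would compute
\[
  \iota_\varphi^w \, w \, (\theta_x)_1
  \;=\; w_\varphi \iota_\varphi (\theta_x)_1
  \;=\; \big( w_\varphi \pi_1 (x_A)_1,\; w_\varphi \pi_0 x_\varphi \big) c
  \;=\; \big( \pi_\varphi (x_A)_0 e,\; w_\varphi \pi_0 x_\varphi \big) c,
\]
where the last equality uses functoriality of $x_A$. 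The first coordinate is an identity in $\mX$, so by the identity law in $\mX$ the composite collapses to $w_\varphi \pi_0 x_\varphi$. Thus cofiber-wise, $w (\theta_x)_1$ is just the whiskering of $x_\varphi$ with the projection to $D(B)_0$.

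Next, the assumption that $x : D \implies \Delta\mX$ is a natural isomorphism gives, for each $\varphi : A \to B$, an internal natural transformation $x_\varphi^{-1}$ which is a 2-sided inverse of $x_\varphi$ in $\mX$. Concretely, this means we have $x_\varphi^{-1} : D(B)_0 \to \mX_1$ together with commutative diagrams
\[
  (x_\varphi, x_\varphi^{-1}) c \;=\; x_\varphi s e, \qquad
  (x_\varphi^{-1}, x_\varphi) c \;=\; x_\varphi t e,
\]
and compatible source/target data. I would then define the candidate inverse cofiber-wise by $W_\varphi \xrightarrow{w_\varphi \pi_0 x_\varphi^{-1}} \mX_1$ and glue these via the universal property of the coproduct $W$ to obtain $\theta_x(w)^{-1} : W \to \mX_1$.

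Finally, I would verify the four commuting diagrams of Definition~\ref{def invertibility of internal arrows}. Precomposing with each coproduct injection $\iota_\varphi^w$ reduces the source/target compatibility to checking that $w_\varphi \pi_0 x_\varphi$ and $w_\varphi \pi_0 x_\varphi^{-1}$ have matching endpoints in $\mX_0$, which follows from the source/target equations satisfied by $x_\varphi$ and $x_\varphi^{-1}$. Likewise, the composition equations
\[
  \iota_\varphi^w \big( w\theta_x, \theta_x(w)^{-1}\big) c
  \;=\; \iota_\varphi^w \, w\theta_x \, s e,
  \qquad
  \iota_\varphi^w \big(\theta_x(w)^{-1}, w\theta_x\big) c
  \;=\; \iota_\varphi^w \, w\theta_x \, t e
\]
become, after the cofiber simplification above, precisely the invertibility equations for $x_\varphi$ precomposed with $w_\varphi \pi_0$. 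The universal property of $W$ then promotes these cofiber-wise equalities to the required global identities. The main obstacle is purely organizational: keeping the source/target data for $x_\varphi$ aligned with the internal pullback structures $\mX_2$ and $W_\varphi$ when translating the ``$x_\varphi$ has a 2-sided inverse'' statement into the internal equations of Definition~\ref{def invertibility of internal arrows}; no deep new argument is needed, only careful bookkeeping with pairing maps and the identity law in $\mX$.
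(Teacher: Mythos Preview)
Your proposal is correct and follows essentially the same route as the paper: both reduce to cofibers via the coproduct $W = \coprod_\varphi W_\varphi$, simplify $\iota_\varphi^w w(\theta_x)_1$ to $w_\varphi \pi_0 x_\varphi$ using the identity law in $\mX$, define the inverse cofiber-wise as $w_\varphi \pi_0 x_\varphi^{-1}$, and verify the invertibility equations cofiber-wise before invoking the universal property of $W$. Your derivation of the key simplification is slightly more direct (using functoriality $e(x_A)_1 = (x_A)_0 e$ immediately), but the strategy and content are the same.
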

\begin{proof}
By Definition~\ref{def internal functor inverts a map} we need to show that the composite 

\[ \begin{tikzcd}[]
W \rar["w"] \ar[dr, "\theta_x(W)"'] & \mD_1 \dar["(\theta_x)_1"] \\
& \mX_1
\end{tikzcd}\]

\noi is invertible in $\mX$ as in Definition~\ref{def invertibility of internal arrows}. It suffices to produce a map $\theta_x(w)^{-1} : W \to \mX_1$ such that such that 

\[ ( \theta_x(w)^{-1} , \theta_x(w) ) c = \theta_x(w) t e , \quad (\theta_x(w) , \theta_x(w)^{-1}) c = \theta_x(w) s e. \]

Since $x : D \implies \Delta \mX$ is a natural isomorphism, for each $B \in \cA_0$ the components $x_\varphi : D(B)_0 \to \mX_1$ are invertible in $\mX$. By Definition~\ref{def invertibility of internal arrows} there exists $x_\varphi^{-1} : D(B)_0 \to \mX_1$ such that
\[ x_{\varphi}^{-1} s = x_\varphi t , \quad x_{\varphi}^{-1} t = x_\varphi s \]
\noi and 
\[ ( x_{\varphi}^{-1} , x_\varphi) c = x_\varphi t e , \quad (x_\varphi, x_{\varphi}^{-1}) c = x_\varphi s e .\]

\noi commutes in $\cE$. Define a candidate inverse for $w \theta_x : W \to X_1$ (with respect to composition in $\mX$) to be the universal map induced by the family of composites,

\[ \begin{tikzcd}[]
W_\varphi \rar[tail, "w_\varphi"] 
\ar[drr, "\theta_x(W_\varphi)^{-1}"'] 
& D_\varphi \rar["\pi_0"] 
& D(B)_0 \dar["x_\varphi^{-1}"] \\
& & \mX_1
\end{tikzcd},\]

\noi in $\cE$ for each arrow $\varphi : A \to B$ in $\cA$. Note that by definition of $x_\varphi$ and $\theta_x(W)^{-1}$

\begin{align*}
   \theta_x(W_\varphi)^{-1} s
   &= w_\varphi \pi_0 x_\varphi^{-1} s \\
   &= w_\varphi \pi_0 x_\varphi t \\
   &= w_\varphi ( \pi_1 (x_A)_1 , \pi_0 x_\varphi )c t\\
   &= w_\varphi (\theta_x)_1 t \\
   &= \iota_\varphi^w w (\theta_x)_1 \\
   &= \iota_\varphi^w \theta_x(W).
\end{align*}  

\noi For the rest of our argument we need a nicer characterization of $\theta_x(W) = w (\theta)_1 : W \to $ the following commuting diagram 

\[ \begin{tikzcd}[]
W_\varphi 
\arrow[dr, phantom, "\usebox\pullback" , very near start, color=black] 
\dar[tail, "w_\varphi"'] 
\rar["\pi_\varphi"] 
& D(A)_0 \dar[tail, "e"] \\
D_\varphi 
\arrow[dr, phantom, "\usebox\pullback" , very near start, color=black] 
\dar["\pi_0"'] 
\rar["\pi_1"] 
& D(A)_1 
\dar["t"] 
\rar["(x_A)_1"] 
& \mX_1 \dar["t"] \\
D(B)_0 
\dar["x_\varphi"'] 
\rar["D(\varphi)_0"'] 
&D(A)_0 \rar["(x_A)_0"] 
& \mX_0 \\
\mX_1 \ar[urr, bend right = 20, "s"'] 
\end{tikzcd}\]

\noi in $\cE$, where the pullback squares commute by definition, the bottom right square commutes by functoriality, and the bottom part of the diagram commutes by definition of the natural transformation $x_\varphi : D(\varphi) x_A \implies x_B$. Use the previous diagram's commutativity along with the identity law for internal composition in $\mX$ to compute the composite

\begin{align*}
w_\varphi \big( \pi_1(x_A)_1 {,} \pi_0 x_\varphi) c 
&= \big( w_\varphi \pi_1(x_A)_1 {,} w_\varphi \pi_0 x_\varphi) c \\
&= \big( \pi_\varphi e(x_A)_1 {,} w_\varphi \pi_0 x_\varphi) c \\
&= \big( \pi_\varphi e t e(x_A)_1 {,} w_\varphi \pi_0 x_\varphi) c \\
&= \big( w_\varphi \pi_0 D(\varphi)_0 e(x_A)_1 {,} w_\varphi \pi_0 x_\varphi) c \\
&= \big( w_\varphi \pi_0 D(\varphi)_0 (x_A)_0 e {,} w_\varphi \pi_0 x_\varphi) c \\
&= \big( w_\varphi \pi_0 x_\varphi s e {,} w_\varphi \pi_0 x_\varphi) c \\
&=w_\varphi \pi_0 x_\varphi (s e {,} 1_{\mX_1}) c \\
&= w_\varphi \pi_0 x_\varphi .
\end{align*}

\noi Now we can see the target of $\theta_x(W_\varphi)^{-1}$ is the source of $ \iota_\varphi^w \theta_x(W_\varphi) : W_\varphi \to \mX_0$,

\begin{align*}
  \theta_x(W_\varphi)^{-1} t 
  &= w_\varphi \pi_0 x_\varphi^{-1} t \\
  &= w_\varphi \pi_0 x_\varphi s \\
  &= w_\varphi ( \pi_1 (x_A)_1 , \pi_0 x_\varphi )c s\\
  &= w_\varphi \iota_\varphi (\theta_x)_1 s \\
  &= \iota_\varphi^w w (\theta_x)_1 s \\
  &= \iota_\varphi^w \theta_x(W) s,
\end{align*}

\noi and get a convenient description of the cofibers of the map $w (\theta_x)_1 : W \to \mX_1$ as shown in the commuting diagram: 

\[ \begin{tikzcd}[column sep = huge, row sep = huge]
W \rar[ dotted,"w"] 
& \mD_1
\rar[dotted, "(\theta_x)_1"] 
& \mX_1 \ar[dr, equals] & \\
W_\varphi \rar[tail, "w_\varphi"'] 
\uar[tail, "\iota_\varphi^w"] 
\ar[dr, tail, "w_\varphi"'] 
&D_\varphi 
\uar[tail, "\iota_\varphi"] 
\rar["\big( \pi_1 (x_A)_1 {,} \pi_0 x_\varphi \big)"'] 
& \mX_2 \rar["c"'] 
 \uar["c"'] 
& \mX_1
\\
&D_\varphi \rar["\pi_0"'] 
& D(B)_0 \ar[ur, bend right, "x_\varphi"'] 
\end{tikzcd}. \]

\noi Now we can compute

\begin{align*}
\iota_\varphi^w (\theta_x(W)^{-1} , \theta_x(W))c 
&= ( \iota_\varphi^w \theta_x(W)^{-1} , \iota_\varphi^w \theta_x(W) ) c \\
&= ( w_\varphi \pi_0 x_\varphi^{-1} , \iota_\varphi^w w (\theta_x)_1 ) c \\
&= ( w_\varphi \pi_0 x_\varphi^{-1} , w_\varphi \iota_\varphi (\theta_x)_1 ) c \\
&= ( w_\varphi \pi_0 x_\varphi^{-1} , w_\varphi (\pi_1 (x_A)_1 , \pi_0 x_\varphi)c ) c \\
&= ( w_\varphi \pi_0 x_\varphi^{-1} , w_\varphi (\pi_1 (x_A)_1 , \pi_0 x_\varphi)c ) c \\
&= ( w_\varphi \pi_0 x_\varphi^{-1} , w_\varphi \pi_0 x_\varphi) c \\
&= w_\varphi \pi_0 (x_\varphi^{-1} , x_\varphi) c \\
&= w_\varphi \pi_0 x_\varphi t e \\
&= \theta_x(W_\varphi)^{-1} s e \\
&= \iota_\varphi^w \theta_x(W) t e 
\end{align*}

\noi as well as 

\begin{align*}
\iota_\varphi^w (\theta_x(W) , \theta_x(W)^{-1} )c 
&= ( \iota_\varphi^w \theta_x(W) , \iota_\varphi^w \theta_x(W)^{-1} ) c \\
&= ( \iota_\varphi^w w (\theta_x)_1 , w_\varphi \pi_0 x_\varphi^{-1} ) c \\
&= ( w_\varphi \iota_\varphi (\theta_x)_1 , w_\varphi \pi_0 x_\varphi^{-1}) c \\
&= ( w_\varphi (\pi_1 (x_A)_1 , \pi_0 x_\varphi)c , w_\varphi \pi_0 x_\varphi^{-1} ) c \\
&= ( w_\varphi (\pi_1 (x_A)_1 , \pi_0 x_\varphi)c , w_\varphi \pi_0 x_\varphi^{-1} ) c \\
&= ( w_\varphi \pi_0 x_\varphi , w_\varphi \pi_0 x_\varphi^{-1}) c \\
&= w_\varphi \pi_0 ( x_\varphi , x_\varphi^{-1}) c \\
&= w_\varphi \pi_0 x_\varphi s e \\
&= \theta_x(W_\varphi)^{-1} t e \\
&= \iota_\varphi^w \theta_x(W)^{-1} t e \\
&= \iota_\varphi^w \theta_x(W) s e 
\end{align*}

\noi and by the universal property of the coproduct $W$ we get 

\[ (\theta_x(W)^{-1} , \theta_x(W))c = \theta_x(W) t e \]
\noi and 
\[(\theta_x(W) , \theta_x(W)^{-1} )c = \theta_x(W) s e . \]
\noi It follows that $\theta_x(W)$ has an inverse in $X$ and that $\theta_x$ inverts $w : W \to \mD_1$. 
\end{proof}

\noi The next lemma we will need in our main result will help us establish that every natural transformation induced by an internal functor that inverts $w : W \to \mD_1$ under the equivalence in Theorem~\ref{thm IntGroth is OpLaxColim} is a pseudonatural transformation. This is done by seeing that for each $\varphi : A \to B$ in $\cA$ the internal natural transformation obtained by whiskering $\ell_\varphi$ with $L$, 

\[ \begin{tikzcd}[column sep = huge, row sep = huge]
D(B) \dar["D(\varphi)"'] \ar[dr, bend left, "\ell_B"{name = lb}] & &\\
D(A) \rar["\ell_A"'] 
\arrow[Rightarrow, shorten <=10pt, shorten >=10pt, from = 2-1, to = lb, "\ell_\varphi"'] & \mD \rar["L"'] & \mDW
\end{tikzcd},\]

\noi gives a natural isomorphism. The key observation to make here is that the 2-cells from the canonical oplax natural transformation $\ell : D \implies \mD$ have components that factor through $w : W \to \mD_1$ so that whiskering with the internal localization functor $L : \mD \to \mDW$ inverts them to give a natural isomorphism after whiskering. 

\begin{lem}\label{lem whiskering with localization}
For each $\varphi : A \to B$ in $\cA$, the internal natural transformation 

\[ \ell_\varphi L : D(\varphi) \ell_A L \implies \ell_B L \]

\noi given by whiskering,

\[\begin{tikzcd}[]
D(B) 
\dar["D(\varphi)"'] 
\rar["\ell_B"] 
& \mD \rar["L"] \dar[equals]
& \mDW \dar[equals] \\
D(A) \rar["\ell_A"'] 
\arrow[ur, shorten <=10pt,shorten >=10pt,Rightarrow, " \ell_\varphi"] 
& \mD \rar["L"'] 
\arrow[ur, shorten <=10pt,shorten >=10pt,Rightarrow, " 1_L"] 
& \mDW 
\end{tikzcd}\quad = \quad \begin{tikzcd}[]
D(B) 
\dar["D(\varphi)"'] 
\rar["\ell_B L"] 
& \mDW \dar[equals] \\
D(A) \rar["\ell_A L"'] 
\arrow[ur, shorten <=10pt,shorten >=10pt,Rightarrow, " \ell_\varphi L"] 
& \mDW 
\end{tikzcd},\]

\noi is an isomorphism. 
\end{lem}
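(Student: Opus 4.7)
The plan is to exploit the fact that the components of $\ell_\varphi$, by construction in Section~\ref{SS canonical cleavage of Cart arr's in Groth}, factor through the object $W_\varphi$ of the canonical cleavage, and hence through $w : W \to \mD_1$. Explicitly, for each $\varphi : A \to B$ we have $\ell_\varphi = \ell_\varphi^w \iota_\varphi^w w : D(B)_0 \to \mD_1$, where $\ell_\varphi^w : D(B)_0 \to W_\varphi$ is the map constructed by pulling $D(\varphi)_0$ back along $e : D(A)_0 \to D(A)_1$. Thus the whiskered components become
\[
\ell_\varphi L_1 \;=\; \ell_\varphi^w \iota_\varphi^w\, w L_1 \;:\; D(B)_0 \to \mDW_1,
\]
and the key point is that by Proposition~\ref{prop int loc functor inverts W}, the localization functor $L$ inverts $w : W \to \mD_1$, with explicit inverse $(wL)^{-1} = (1,wse)q : W \to \mDW_1$.

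My next step would be to define the candidate inverse transformation $(\ell_\varphi L)^{-1} : \ell_B L \implies D(\varphi)\ell_A L$ component-wise by the composite
\[
(\ell_\varphi L)^{-1} \;:=\; \ell_\varphi^w \iota_\varphi^w (wL)^{-1} \;:\; D(B)_0 \to \mDW_1,
\]
and verify that it satisfies the four diagrams of Definition~\ref{def invertibility of internal arrows} when paired with $\ell_\varphi L$. The source and target compatibilities reduce, after factoring through $W$, to the corresponding source/target identities for $(wL)^{-1}$ versus $wL$ established in the proof of Proposition~\ref{prop int loc functor inverts W}; the two composition identities reduce similarly to the already-established equations $((wL)^{-1}, wL)c = wL\, te$ and $(wL, (wL)^{-1})c = wL\, se$, precomposed with $\ell_\varphi^w \iota_\varphi^w$.

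The remaining obligation is to check that $(\ell_\varphi L)^{-1}$ is itself an internal natural transformation, i.e., that it satisfies the naturality square for $\ell_B L$ and $D(\varphi)\ell_A L$. The plan is to derive this by "conjugating" the naturality square of $\ell_\varphi L$ (which holds because $\ell_\varphi$ is natural in $\mD$ and $L$ is a functor) with $(wL)^{-1}$ on both sides, using associativity and the identity law in $\mDW$ to cancel. This is the step I expect to be the main obstacle: although the argument is formally analogous to the classical fact that a component-wise inverse of a natural isomorphism is natural, executing it internally requires keeping track of how each component factors through $W$ and carefully invoking the pairing/composition calculus, much as in Section~\ref{S internal localization functor}.

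Finally, once $(\ell_\varphi L)^{-1}$ is shown to be a natural transformation satisfying the inverse relations at the level of components, invertibility in the 2-category $\Cat(\cE)$ follows immediately because horizontal and vertical composition of internal natural transformations are computed component-wise. Thus $\ell_\varphi L$ is an isomorphism, completing the proof.
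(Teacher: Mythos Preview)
Your approach is essentially the same as the paper's: you factor $\ell_\varphi$ through $w$ via $\ell_\varphi^w \iota_\varphi^w$, define the candidate inverse as $\ell_\varphi^w \iota_\varphi^w (wL)^{-1}$, and reduce the invertibility checks to those already established for $wL_1$ in Proposition~\ref{prop int loc functor inverts W}. The paper carries out exactly these component-wise verifications and then concludes.

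The one place you diverge is in flagging naturality of $(\ell_\varphi L)^{-1}$ as ``the main obstacle.'' The paper does not verify this at all, and for good reason: it is automatic. Once an internal natural transformation has component-wise inverses, the inverse is natural by the standard conjugation argument, which goes through verbatim in the internal setting. Concretely, from $(F_1, t\alpha)c = (s\alpha, G_1)c$ and the identities $(\alpha,\beta)c = F_0 e$, $(\beta,\alpha)c = G_0 e$, one computes
\[
(s\beta, F_1)c = (s\beta, F_1, t\alpha, t\beta)c = (s\beta, s\alpha, G_1, t\beta)c = (G_1, t\beta)c
\]
using only associativity, the identity laws in the target internal category, and functoriality. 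So your ``remaining obligation'' is not an obstacle but a formality; you can drop it or dispatch it in one line, as the paper implicitly does.
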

\begin{proof}
Recall that the internal localization functor, $L$, inverts $w : W \to \mD_1$. In particular $(wL_1)^{-1} = (1,wse)q : W \to \mDW_1$ is an inverse of $wL1 : W \to \mDW_1$ by Proposition~\ref{prop int loc functor inverts W}. Also recall from Definition~\ref{SS canonical transformation 1-cells} that the components of the natural transformation $\ell_\varphi$ are given by $\ell_\varphi \iota_\varphi : D(B)_0 \to \mD_1$ where $\ell_\varphi = (1_{D(B)_0} , D(\varphi)_0 e)$ is the unique pairing map induced by the universal property of the pullback $D_\varphi$. Notice this map factors through $w : W \to \mD_1$ since $\ell_\varphi : D(B)_0 \to D_\varphi$ factors through $W_\varphi$ via the composite:

\[ \begin{tikzcd}
D(B)_0 \ar[dr, "\ell_\varphi"'] \rar["\ell_\varphi^w"] & W_\varphi \dar[tail, "w_\varphi"] \\
& D_\varphi 
\end{tikzcd}.\]

\noi Then since $w_\varphi \iota_\varphi = \iota_\varphi^w w : W_\varphi \to \mD_1$, we have the following commuting diagram:

\[\label{dgm L inverts ell_varphi L} \begin{tikzcd}[column sep = large, row sep = large]
D(B)_0 \ar[dr, "\ell_\varphi \iota_\varphi"] 
\dar[dotted, "\ell_\varphi^w \iota_\varphi^w "'] 
\rar["\ell_\varphi^w w_\varphi"] 
& D_\varphi \ar[d, "\iota_\varphi"] \\
W 
\rar["w"] 
& \mD_1. 
\end{tikzcd} \tag{$\star$}\]

\noi Abusing notation by reusing the label $\ell_\varphi L$ we can see that the composite

\[ \begin{tikzcd}[column sep = large, row sep = large]
D(B)_0 \rar["\ell_\varphi^w w_\varphi"]
\ar[drr, "\ell_\varphi L"'] & D_\varphi \rar["\iota_\varphi"] & \mD_1 \dar["L_1"] \\
& & \mDW_1 
\end{tikzcd}\]

\noi represents the components of the transformation $\ell_\varphi L : D(\varphi) \ell_A L \implies \ell_B L$. Diagram (\ref{dgm L inverts ell_varphi L}) implies these components are all invertible in $\mDW$ via an inverse given by the composite

\[ \begin{tikzcd}[column sep = large, row sep = large]
D(B)_0 \rar["\ell_\varphi^w"] \ar[drr, "(\ell_\varphi L)^{-1}"']
& W_\varphi \rar[, "\iota_\varphi^w"] 
& W \dar["(wL_1)^{-1}"] \\
& & \mDW_1
\end{tikzcd}\]

\noi in $\cE$. To see this is really an inverse we can use the definitions and diagrams above in the proof of this lemma along with the fact that $(w L_1)^{-1}$ is an inverse of $w L_1$ to see 

\begin{align*}
  ( \ell_\varphi L , (\ell_\varphi L)^{-1} ) c 
  &= (\ell_\varphi^w w_\varphi \iota_\varphi L_1 , \ell_\varphi^w \iota_\varphi^w (w L_1)^{-1})c \\
  &= (\ell_\varphi^w \iota_\varphi^w L_1 , \ell_\varphi^w \iota_\varphi^w (w L_1)^{-1})c \\
  &= \ell_\varphi^w \iota_\varphi^w (L_1 , (w L_1)^{-1})c \\
  &= \ell_\varphi^w \iota_\varphi^w L_1 s e \\
  &= \ell_\varphi^w w_\varphi \iota_\varphi L_1 s e\\
  &= (\ell_\varphi L) s e
\end{align*}
\noi and a similar proof shows
\begin{align*}
  ( (\ell_\varphi L)^{-1} , \ell_\varphi L ) c 
  = (\ell_\varphi L) t e .
\end{align*}

\noi It follows that the whiskered transformation 

\[ \ell_\varphi L : D(\varphi) \ell_A L \implies \ell_B L\]

\noi has an inverse, $(\ell_\varphi L)^{-1}$ and is an internal natural isomorphism between internal functors for each $\varphi : A \to B$ in $\cA$. 
\end{proof}

\begin{lem}\label{lem internal functors inverting W correspond to pseudonatural transoformations via the grothendieck construction equivalence}
Every internal functor $F : \mD \to \mX$ that inverts $w : W \to \mD_1$ corresponds to a pseudonatural transformation $D \implies \Delta \mX$ via the oplax version of the isomorphism of categories in Theorem~\ref{thm IntGroth is OpLaxColim}
\end{lem}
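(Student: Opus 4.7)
The plan is to leverage the correspondence established in Theorem~\ref{thm IntGroth is OpLaxColim} (in its oplax variant) together with the observation from Lemma~\ref{lem whiskering with localization} that the components of $\ell_\varphi$ factor through $w : W \to \mD_1$. Under the isomorphism, an internal functor $F : \mD \to \mX$ corresponds to the oplax transformation $F^* : D \implies \Delta \mX$ given by $F^*_B = \ell_B F$ and $F^*_\varphi = \ell_\varphi F_1$. To show $F^*$ is pseudonatural, it suffices to exhibit a two-sided inverse for each component $F^*_\varphi : D(\varphi) F^*_A \implies F^*_B$ in $\mX$.

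First I would recall from Section~\ref{SS canonical transformation 1-cells} and the setup of Lemma~\ref{lem whiskering with localization} that for each $\varphi : A \to B$ in $\cA$ the map $\ell_\varphi : D(B)_0 \to \mD_1$ factors as $\ell_\varphi = \ell_\varphi^w \iota_\varphi^w w$, where $\ell_\varphi^w : D(B)_0 \to W_\varphi$ is the canonical lift into the cartesian cleavage object and $\iota_\varphi^w : W_\varphi \to W$ is the cofiber inclusion. Hence
\[ F^*_\varphi \;=\; \ell_\varphi F_1 \;=\; \ell_\varphi^w \iota_\varphi^w \,(w F_1). \]
Since $F$ inverts $w : W \to \mD_1$ by hypothesis, Definition~\ref{def internal functor inverts a map} provides a map $F(w)^{-1} : W \to \mX_1$ such that $(wF_1, F(w)^{-1}) c = wF_1 s e$ and $(F(w)^{-1}, wF_1) c = wF_1 t e$. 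I would then define the candidate inverse
\[ (F^*_\varphi)^{-1} \;:=\; \ell_\varphi^w \iota_\varphi^w F(w)^{-1} : D(B)_0 \to \mX_1. \]

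Next I would verify the two inverse identities required by Definition~\ref{def invertibility of internal arrows} applied to the maps $F^*_\varphi , (F^*_\varphi)^{-1} : D(B)_0 \to \mX_1$. Using the factorization above, factoring the pairing map through $\ell_\varphi^w \iota_\varphi^w$, and then invoking the fact that $F(w)^{-1}$ is the inverse of $wF_1$, one obtains
\[ \bigl(F^*_\varphi,\,(F^*_\varphi)^{-1}\bigr) c \;=\; \ell_\varphi^w \iota_\varphi^w (wF_1, F(w)^{-1}) c \;=\; \ell_\varphi^w \iota_\varphi^w\, wF_1\, s e \;=\; F^*_\varphi\, s e, \]
and symmetrically on the other side for $te$. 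Matching source/target data is routine from the definitions of $F^*_\varphi$ and the cleavage pullbacks $W_\varphi$. This mirrors exactly the pattern carried out in Lemma~\ref{lem whiskering with localization} (for the special case $F = L$) and in Lemma~\ref{Lem induced internal functor inverts object of cartesian arrows}, so no genuinely new calculations are required, only a re-packaging with $F$ in place of $L$ and $F(w)^{-1}$ in place of $(wL_1)^{-1} = (1, wse) q$.

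Finally, I would note that the coherence data of $F^*$ with respect to composition and identities in $\cA$ is automatic from Proposition~\ref{Prop 1-cell correspondence} (in its oplax version applied to the contravariant $D$): the correspondence already produces a well-defined oplax transformation, and invertibility of each 2-cell component is exactly the additional condition upgrading oplax to pseudo. The main obstacle, which is really the only nontrivial point, is verifying that the factorization $\ell_\varphi = \ell_\varphi^w \iota_\varphi^w w$ survives post-composition with $F_1$ cleanly enough that the candidate inverse $(F^*_\varphi)^{-1}$ satisfies the identity laws in $\mX$; but this was already accomplished in Lemma~\ref{lem whiskering with localization}, and the present statement is essentially a direct generalization obtained by replacing the universal inverting functor $L$ with an arbitrary $W$-inverting functor $F$.
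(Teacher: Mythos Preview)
Your proof is correct but takes a genuinely different route from the paper's. The paper factors $F$ through the internal localization: since $F$ inverts $w$, Proposition~\ref{prop UPIntFrac 1-cell corresondence} yields a unique $[F] : \mDW \to \mX$ with $L[F] = F$, and then whiskering $\ell_\varphi$ with $F$ decomposes as first whiskering with $L$ (giving an isomorphism by Lemma~\ref{lem whiskering with localization}) and then with $[F]$, which preserves invertibility by Lemma~\ref{lem internal functors preserve internal inverses}. Your approach instead bypasses the localization entirely: you use the factorization $\ell_\varphi = \ell_\varphi^w \iota_\varphi^w w$ directly and construct the inverse $(F^*_\varphi)^{-1} = \ell_\varphi^w \iota_\varphi^w F(w)^{-1}$ from the hypothesis that $F$ inverts $w$, essentially rerunning the computation inside Lemma~\ref{lem whiskering with localization} with a general $F$ in place of $L$. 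Your argument is more elementary and self-contained, avoiding the detour through $\mDW$ and its universal property; the paper's argument is shorter on the page because it reuses Lemma~\ref{lem whiskering with localization} as a black box, but at the cost of invoking the full localization machinery for what is really a direct invertibility check.
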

\begin{proof}
Suppose $F : \mD \to \mX$ is an internal functor that inverts $w : W \to \mD_1$. Then by the universal property of the internal localization, in Proposition~\ref{prop UPIntFrac 1-cell corresondence}, there exists a unique $[F] : \mDW \to \mX$ such that $L [F] = F$. The natural transformation corresponding to $F$ under the contravariant version of the isomorphism of categories in Theorem~\ref{thm IntGroth is OpLaxColim} is obtained by whiskering 

\[ \begin{tikzcd}[column sep = huge, row sep = huge]
D(B) \dar["D(\varphi)"'] \ar[dr, bend left, "\ell_B"{name = lb}] & &\\
D(A) \rar["\ell_A"'] 
\arrow[Rightarrow, shorten <=10pt, shorten >=10pt, from = 2-1, to = lb, "\ell_\varphi"'] & \mD \rar["F"] & \mX
\end{tikzcd}\]

\noi for each $\varphi : A \to B$ in $\cA$. Since $F = L [F]$ this whiskering can be done in two steps. Starting with 

\[ \begin{tikzcd}[column sep = huge, row sep = huge]
D(B) \dar["D(\varphi)"'] \ar[dr, bend left, "\ell_B"{name = lb}] & &\\
D(A) \rar["\ell_A"'] 
\arrow[Rightarrow, shorten <=10pt, shorten >=10pt, from = 2-1, to = lb, "\ell_\varphi"'] & \mD \rar["L"] & \mDW \rar["\lbrack F\rbrack"] & \mX
\end{tikzcd},\]

\noi we can use Lemma~\ref{lem whiskering with localization} to get a natural isomomorphism 

\[ \begin{tikzcd}[column sep = huge, row sep = huge]
D(B) \dar["D(\varphi)"'] \ar[dr, bend left, "\ell_B L "{name = lb}] & &\\
D(A) \rar["\ell_A L "'] 
\arrow[Rightarrow, shorten <=10pt, shorten >=10pt, from = 2-1, to = lb, "\ell_\varphi L"'] & \mDW \rar["\lbrack F \rbrack "] & \mX
\end{tikzcd}\]

\noi and then we can whisker once more to get 

\[ \begin{tikzcd}[column sep = large, row sep = large]
D(B) \dar["D(\varphi)"'] \ar[dr, bend left, "\ell_B L \lbrack F \rbrack "{name = lb}] & &\\
D(A) \rar["\ell_A L \lbrack F \rbrack "']
\arrow[Rightarrow, shorten <=10pt, shorten >=10pt, from = 2-1, to = lb, "\ell_\varphi L \lbrack F \rbrack "' near start] & \mX
\end{tikzcd} = \qquad 
\begin{tikzcd}[column sep = large, row sep = large]
D(B) \dar["D(\varphi)"'] \ar[dr, bend left, "\ell_B F "{name = lb}] & &\\
D(A) \rar["\ell_A F "'] 
\arrow[Rightarrow, shorten <=10pt, shorten >=10pt, from = 2-1, to = lb, "\ell_\varphi F "'near start] & \mX
\end{tikzcd}.
\]

\noi Recall that the components of $\ell_\varphi$ are $\ell_\varphi \iota_\varphi : D(B)_0 \to \mD_1$ and notice that the components of $\ell_\varphi F$ are precisely 

\[ \begin{tikzcd}[column sep = large, row sep = large]
D(B)_0 \rar["\ell_\varphi \iota_\varphi"] \rar[rr, bend left, "\ell_\varphi L"] \ar[dr, "\ell_\varphi F"'] & \mD_1 \dar["F_1"] \rar["L_1"] & \mDW_1 \ar[dl,"\lbrack F \rbrack_1"] \\
& \mX_1 & 
\end{tikzcd}.\]

\noi Lemma~\ref{lem whiskering with localization} shows that $\ell_\varphi L : D(B)_0 \to \mDW_1$ is invertible in $\mDW$ and since $[F] : \mDW \to \mX$ is an internal functor, the total composite in the diagram above is invertible in $\mX$ by Lemma~\ref{lem internal functors preserve internal inverses}. 
\end{proof}

\noi The next two lemmas allow us to contextualize the previous two lemmas more precisely in terms of the oplax version of the isomorphism of categories in Theorem~\ref{thm IntGroth is OpLaxColim}. This helps us avoid many explicit but unnecessary details in the proof of the isomorphism of categories in our main result. 

\begin{lem}\label{lem pseudo nat transfms are fully faithful subcat} 
The underlying-structure functor 

\[ \begin{tikzcd}
\lbrack D , \Delta \mX\rbrack _{ps} \rar["U"] & \lbrack D , \Delta \mX\rbrack_{\opl}
\end{tikzcd} \]

\noi is fully faithful. 
\end{lem}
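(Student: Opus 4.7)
The plan is to observe that pseudonatural transformations are precisely those oplax natural transformations whose structural 2-cells (the internal natural transformations $x_\varphi : x_A \implies D(\varphi) x_B$, or in the contravariant/oplax setting $D(\varphi) x_A \implies x_B$) are invertible, and that the notion of modification between them is defined by exactly the same data and axioms as between the underlying oplax transformations. In particular, the forgetful functor $U$ is the inclusion of a (non-full on objects but full on morphisms) subcategory, so faithfulness is immediate from the fact that $U$ acts as the identity on morphisms: a modification $\Gamma : x \Rrightarrow y$ between pseudonatural transformations is a family of components $\Gamma_A : (x_A)_0 \implies (y_A)_0$ of internal natural transformations, and this same family defines $U(\Gamma)$ between $U(x)$ and $U(y)$.

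For fullness, I would take an arbitrary modification $\Gamma : U(x) \Rrightarrow U(y)$ between the underlying oplax transformations of two pseudonatural $x, y : D \implies \Delta \mX$ and argue that the same family of 2-cell components defines a modification $\Gamma : x \Rrightarrow y$ in the pseudonatural category. The key point is that the axioms for a modification (compatibility with the identity-structure and with the transformation 2-cells indexed by $\cA_1$, analogous to those spelled out in Proposition~\ref{prop internal internal nat transfms induce a modifiction} and Proposition~\ref{prop modifications induce internal nat transfms}) are formulated as equalities of composites involving $x_\varphi$ and $y_\varphi$, and these equalities do not require either $x_\varphi$ or $y_\varphi$ to be invertible. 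Thus the modification axioms for $U(x) \Rrightarrow U(y)$ are \emph{exactly} the modification axioms for $x \Rrightarrow y$, and there is nothing extra to check.

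Consequently, on any pair of objects $x, y \in [D, \Delta\mX]_{ps}$ the map
\[ U_{x,y} : [D, \Delta \mX]_{ps}(x,y) \longrightarrow [D, \Delta \mX]_{\opl}(U(x), U(y)) \]
is a bijection, which by definition is fullness and faithfulness of $U$. The only real content is the observation that invertibility of the 2-cells $x_\varphi, y_\varphi$ plays no role in either the data or the axioms for a modification; this is the step I would write out explicitly, by pointing to the modification axioms established in the oplax case and noting that they are exactly what is required in the pseudonatural case. There is no genuine obstacle here — the lemma is essentially a bookkeeping statement, included so that later results about pseudonatural transformations can be transferred along the isomorphism of Theorem~\ref{thm IntGroth is OpLaxColim} by restricting to the appropriate full subcategory on the other side.
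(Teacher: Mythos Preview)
Your proposal is correct and follows essentially the same approach as the paper: both arguments rest on the observation that $U$ acts as the identity on morphisms and that the modification axioms make no reference to invertibility of the structural 2-cells $x_\varphi$, $y_\varphi$. The paper's version is simply terser, dispensing with the explicit reference back to the modification axioms that you helpfully spell out.
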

\begin{proof}
If two modifications, $\mu , \nu : \alpha \to \beta$ between pseudonatural transformations $\alpha, \beta : D \implies \Delta \mX$ are equal after forgetting the additional pseudonaturality structure then they are the same modification by definition. This implies $U$ is faithful. It is clearly full because any modification between pseudonatural transformations is what it is. 
\end{proof}

\begin{lem}\label{lem internal functors that invert W are fully faithful subcat} 
The underlying-structure functor 

\[ \begin{tikzcd}
\lbrack \mD , \mX\rbrack^{\cE}_W \rar["U'"] & \lbrack \mD , \mX\rbrack^{\cE}
\end{tikzcd} \]

\noi is fully faithful. 
\end{lem}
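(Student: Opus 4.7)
The plan is to observe that the distinction between $[\mD, \mX]^{\cE}_W$ and $[\mD, \mX]^{\cE}$ lies entirely at the level of objects: the former restricts the objects of the latter to those internal functors $F : \mD \to \mX$ admitting an inverse map $F(w)^{-1} : W \to \mX_1$ in the sense of Definition~\ref{def internal functor inverts a map}. Crucially, inverting $w$ is a \emph{property} of an internal functor rather than additional structure (since such an inverse, when it exists, is uniquely determined by the usual inverse-uniqueness argument for two-sided inverses with respect to internal composition in $\mX$), and no extra compatibility is demanded of the natural transformations between two such functors. Consequently, for any pair of internal functors $F, G : \mD \to \mX$ that both invert $w$, a morphism $F \implies G$ in $[\mD, \mX]^{\cE}_W$ is by definition nothing more than an internal natural transformation $F \implies G$ in $[\mD, \mX]^{\cE}$.

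First I would verify faithfulness. Suppose $\alpha, \beta : F \implies G$ are morphisms in $[\mD, \mX]^{\cE}_W$ with $U'\alpha = U'\beta$. Since $U'$ is the identity on morphisms by construction, $\alpha$ and $\beta$ have the same underlying component map $\mD_0 \to \mX_1$ and thus coincide as internal natural transformations, hence as morphisms in $[\mD, \mX]^{\cE}_W$. Then for fullness, given any internal natural transformation $\gamma : U'F \implies U'G$ in $[\mD, \mX]^{\cE}$ between the images of two objects $F, G$ of $[\mD, \mX]^{\cE}_W$, I simply regard $\gamma$ as a morphism $F \implies G$ in $[\mD, \mX]^{\cE}_W$: this is well-defined precisely because the hom-sets in $[\mD, \mX]^{\cE}_W$ are defined to be the full internal-natural-transformation hom-sets of $[\mD, \mX]^{\cE}$, with no side condition referencing $F(w)^{-1}$ or $G(w)^{-1}$.

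There is no real obstacle here — the argument is the exact analogue of Lemma~\ref{lem pseudo nat transfms are fully faithful subcat}, reflecting the general principle that the inclusion of a full subcategory determined by a property on objects is always fully faithful. The only thing to take care of in the write-up is to make clear that the definition of $[\mD, \mX]^{\cE}_W$ imposes no additional coherence on morphisms beyond what is required in $[\mD, \mX]^{\cE}$, so that the two hom-sets genuinely coincide rather than merely include into one another.
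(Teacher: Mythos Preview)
Your proposal is correct and follows essentially the same approach as the paper: both argue that inverting $w$ is a condition on objects only, with no extra condition on morphisms, so $U'$ is the identity on hom-sets and hence fully faithful. Your write-up is simply more explicit (e.g., noting uniqueness of two-sided inverses to justify that inverting $w$ is a property rather than structure), but the underlying argument is the same.
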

\begin{proof}
Any two natural transformations $\alpha , \beta : f \implies g$ between internal functors $f,g : \mD \to \mX$ that invert $w : W \to \mD_1$ which become equal after forgetting that $f$ and $g$ invert $w : W \to \mD_1$ must be the same natural transformations by definition. This implies $U'$ is faithful. Any natural transformation between internal functors $\mD \to \mX$ that invert $w : W \to \mD_1$ is precisely that, so $U'$ is also clearly full. 
\end{proof}

\noi The previous four lemmas come together in the following lemma which does most of the work for the proof of our main theorem which follows immediately after. 

\begin{lem}\label{lem pseudonatural transformations correspond to W-inverting internal functors}
There is an isomorphism of categories 

\[ [D , \Delta \mX]_{ps} \cong [\mD , \mX]^{\cE}_W \]

\noi between the category of pseudonatural transformations $D \implies \Delta \mX$ and their modifications; and internal functors, $\mD\to \mX$, that invert the cartesian arrows, $w: W \to \mD$, and their natural transformations.
\end{lem}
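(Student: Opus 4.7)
The plan is to obtain this isomorphism as a restriction of the oplax analog of Theorem~\ref{thm IntGroth is OpLaxColim}, which (applied to the contravariant pseudofunctor $D : \cA^{op} \to \Cat(\cE)$) supplies an isomorphism of categories
\[ [D, \Delta \mX]_{\opl} \;\cong\; [\mD, \mX]^{\cE}. \]
By Lemmas~\ref{lem pseudo nat transfms are fully faithful subcat} and~\ref{lem internal functors that invert W are fully faithful subcat}, the categories $[D, \Delta \mX]_{ps}$ and $[\mD, \mX]^{\cE}_W$ sit as full subcategories of the two ambient categories above. So once I verify that the object-level bijection of the ambient isomorphism restricts to a bijection between the distinguished classes of objects on either side, fullness of the inclusions will immediately upgrade that bijection to the desired isomorphism of categories.

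For the object-level restriction, I will argue both directions. Given a pseudonatural transformation $x : D \implies \Delta \mX$, Lemma~\ref{Lem induced internal functor inverts object of cartesian arrows} shows that the induced internal functor $\theta_x : \mD \to \mX$ inverts $w : W \to \mD_1$, so $\theta_x$ lies in $[\mD, \mX]^{\cE}_W$. Conversely, given an internal functor $F : \mD \to \mX$ inverting $w$, Lemma~\ref{lem internal functors inverting W correspond to pseudonatural transoformations via the grothendieck construction equivalence} establishes that the corresponding oplax natural transformation $F^\ast$ is actually pseudonatural; this uses the factorization $F = L[F]$ from Proposition~\ref{prop UPIntFrac 1-cell corresondence}, Lemma~\ref{lem whiskering with localization} to turn each whiskered 2-cell $\ell_\varphi L$ into an internal isomorphism, and Lemma~\ref{lem internal functors preserve internal inverses} to carry that invertibility through the subsequent whiskering by $[F]$. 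Together these two lemmas say precisely that the forward and backward assignments of the ambient isomorphism restrict to the two distinguished classes, giving the required bijection of objects.

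For the morphism level, the full-faithfulness noted in Lemmas~\ref{lem pseudo nat transfms are fully faithful subcat} and~\ref{lem internal functors that invert W are fully faithful subcat}, combined with the object-level bijection above, forces the corresponding bijection on hom-sets: a modification of pseudonatural transformations is the same data as a modification of the underlying oplax transformations, and dually an internal natural transformation between two $W$-inverting internal functors coincides with the corresponding natural transformation of the underlying internal functors in $[\mD, \mX]^{\cE}$. Functoriality (preservation of identities and composition) is inherited from the ambient isomorphism of Theorem~\ref{thm IntGroth is OpLaxColim}.

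The main obstacle here is largely organizational rather than technical, since all the substantive content has already been proved in the supporting lemmas. The one point requiring care is making sure that the object-level bijection really matches the two distinguished classes exactly — that no pseudonatural transformation produces a non-$W$-inverting internal functor, and no $W$-inverting internal functor produces a merely-oplax transformation — but this is precisely the content of Lemmas~\ref{Lem induced internal functor inverts object of cartesian arrows} and~\ref{lem internal functors inverting W correspond to pseudonatural transoformations via the grothendieck construction equivalence}, so the argument should reduce to assembling these pieces.
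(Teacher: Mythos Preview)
Your proposal is correct and takes essentially the same approach as the paper: both restrict the ambient oplax isomorphism of Theorem~\ref{thm IntGroth is OpLaxColim} to the two full subcategories using Lemmas~\ref{Lem induced internal functor inverts object of cartesian arrows} and~\ref{lem internal functors inverting W correspond to pseudonatural transoformations via the grothendieck construction equivalence} for the object-level match and Lemmas~\ref{lem pseudo nat transfms are fully faithful subcat} and~\ref{lem internal functors that invert W are fully faithful subcat} for the morphism level. The only cosmetic difference is that the paper cites Lemma~\ref{lem whiskering with localization} directly for the backward direction, whereas you (more precisely) invoke Lemma~\ref{lem internal functors inverting W correspond to pseudonatural transoformations via the grothendieck construction equivalence}, which already packages that argument.
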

\begin{proof}
By Lemma~\ref{Lem induced internal functor inverts object of cartesian arrows}, every pseudonatural transformation $D \implies \Delta \mX$ induces an internal functor $\mD \to \mX$ that inverts $w : W \to \mD$ by the following composition of functors

\[ \begin{tikzcd}[column sep = large, row sep = large]
\lbrack D , \Delta \mX \rbrack_{ps} \dar["U"'] \ar[dr, dotted] & \\
\lbrack D , \Delta \mX \rbrack_{\mbox{\scriptsize op$\ell$}} \rar["\cong"'] & \lbrack \mD , \mX \rbrack^{\cE}
\end{tikzcd}\]

\noi where the bottom isomorphism of categories is the oplax version of Theorem~\ref{thm IntGroth is OpLaxColim}. By Lemma~\ref{lem whiskering with localization}, the composite 

\[ \begin{tikzcd}[column sep = large, row sep = large]
& \lbrack \mD , \mX\rbrack^{\cE}_W \dar[" U' "] \ar[dl, dotted] \\
\lbrack D , \Delta \mX \rbrack_{\opl} \ar[from = r , " \cong "] & \lbrack \mD , \mX \rbrack^{\cE}
\end{tikzcd} \]

\noi factors through $\lbrack D , \Delta \mX \rbrack_{ps}$. By Lemmas \ref{lem pseudo nat transfms are fully faithful subcat} and \ref{lem internal functors that invert W are fully faithful subcat}, we know $[D , \Delta \mX]_{ps}$ and $[\mD , \mX]^{\cE}_W$ are both fully faithful subcategories of $[D , \Delta \mX]_{\opl}$ and $[\mD , \mX]^{\cE}$ respectively so the isomorphism of categories in Theorem~\ref{thm IntGroth is OpLaxColim} restricts to an isomorphism between these subcategories. 
\end{proof}

We can finally state and prove the main theorem of this paper. 

\begin{thm}\label{thm internal localization is a pseudocolim}
Let $\cA$ be a cofiltered category and let $\cE$ admit an internal Grothendieck constructoin, $\mD$, for the pseudofunctor $D : \cA^{op} \to \Cat(\cE)$. If $(\mD, W)$ admits an internal category of fractions, $\mDW$, then $\mDW$ is the pseudocolimit of $D: \cA^{op} \to \Cat(\cE)$. 
\end{thm}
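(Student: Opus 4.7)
The plan is to prove the pseudocolimit universal property by chaining together the two main isomorphisms of categories already established in the paper. Recall that $\mDW$ being the pseudocolimit of $D$ amounts to exhibiting, for each internal category $\mX \in \Cat(\cE)$, an isomorphism of categories
\[ [D, \Delta \mX]_{ps} \cong [\mDW, \mX]^{\cE} \]
between the category of pseudonatural transformations $D \implies \Delta \mX$ (with modifications) and the category of internal functors $\mDW \to \mX$ (with internal natural transformations), natural in $\mX$.

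First I would apply Lemma~\ref{lem pseudonatural transformations correspond to W-inverting internal functors}, which provides the isomorphism
\[ [D, \Delta \mX]_{ps} \cong [\mD, \mX]^{\cE}_W, \]
obtained by restricting the oplax version of Theorem~\ref{thm IntGroth is OpLaxColim} to the fully faithful subcategories identified in Lemmas~\ref{lem pseudo nat transfms are fully faithful subcat} and \ref{lem internal functors that invert W are fully faithful subcat}. This step uses that $\cE$ admits the internal category of elements, which is hypothesized, and that (by Lemma~\ref{Lem induced internal functor inverts object of cartesian arrows} together with Lemma~\ref{lem whiskering with localization}) an oplax transformation is pseudonatural precisely when its corresponding internal functor inverts $w : W \to \mD_1$.

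Next I would compose with the universal property of the internal category of fractions, Theorem~\ref{thm 2-dimensional universal property of localization}, which (since $(\mD, W)$ admits an internal category of fractions by hypothesis, and in the setting of Proposition~\ref{thm IntGroth satisfies IntFracxioms} which ensures the Internal Fractions Axioms hold for the canonical cleavage) yields
\[ [\mD, \mX]^{\cE}_W \cong [\mDW, \mX]^{\cE}. \]
Composing the two isomorphisms gives the required universal property. The canonical pseudonatural transformation $D \implies \Delta \mDW$ is recovered by sending the identity internal functor $1_{\mDW}$ through the chain of isomorphisms; concretely it is obtained by whiskering the canonical oplax transformation $\ell : D \implies \Delta \mD$ from Section~\ref{SS canonical transformation 1-cells} with the internal localization functor $L : \mD \to \mDW$, with the 2-cells $\ell_\varphi L$ being isomorphisms precisely by Lemma~\ref{lem whiskering with localization}.

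I do not expect any serious obstacle in this proof, as essentially all the technical work has already been done in the prior sections: the extensivity hypotheses on $\cE$ are encoded in the assumption that $\cE$ admits an internal category of elements; the fractions axioms are encoded in the assumption that $(\mD, W)$ admits an internal category of fractions; and the cofilteredness of $\cA$ is used only indirectly, through Lemmas~\ref{lem IntFrc1 for internal groth}--\ref{lem IntFrc4 for internal groth} which are what justify Proposition~\ref{thm IntGroth satisfies IntFracxioms} and thereby license the hypothesis. The only point deserving care is to check that the chain of isomorphisms is natural in $\mX$, so that it really does establish $\mDW$ as a (bicategorical) pseudocolimit rather than merely a weakly representing object; this naturality follows from the fact that both Theorem~\ref{thm IntGroth is OpLaxColim} and Theorem~\ref{thm 2-dimensional universal property of localization} are constructed by universal properties (of coproducts, coequalizers, and pullbacks) that are preserved by pre- and post-composition, so all the assignments are functorial in $\mX$ by inspection.
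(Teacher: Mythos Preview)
Your proposal is correct and follows essentially the same approach as the paper: compose the isomorphism of Lemma~\ref{lem pseudonatural transformations correspond to W-inverting internal functors} with that of Theorem~\ref{thm 2-dimensional universal property of localization} to obtain $[D, \Delta \mX]_{ps} \cong [\mD, \mX]^{\cE}_W \cong [\mDW, \mX]^{\cE}$. Your additional remarks on naturality in $\mX$ and on recovering the canonical pseudonatural transformation by whiskering $\ell$ with $L$ are accurate elaborations that the paper leaves implicit.
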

\begin{proof}
Under the given assumptions, we can apply Lemmas \ref{lem pseudonatural transformations correspond to W-inverting internal functors} and Theorem~\ref{thm 2-dimensional universal property of localization} to get a chain of isomorphisms (of categories) which can be composed to prove the result.

\[ [D , \Delta \mX]_{ps} \cong [\mD , \mX]^{\cE}_W \cong [\mDW , \mX]^{\cE}\]
\end{proof}

\section{Denouement}

Having given contexts for an internal category of elements and an internal category of (right) fractions, we have implicitly described a context for computing (op)lax colimits of small diagrams of internal categories and another for computing pseudocolimits of small filtered diagrams of internal categories. The purpose of doing this was to isolate and better understand the categorical constructions that are used when working in the context of $\Set$, with diagrams of small categories, and to give a new formalism for gluing constructions in categories of internal categories. 

For the internal category of elements of a pseudofunctor we required specific pullbacks along source (or target) maps of our internal categories, and certain disjoint coproducts that commute with these pullbacks. Any extensive category that has these pullbacks will satisfy these conditions, for example $\Set, \Cat,$ and $\Top$ all admit internal category of elementss for small diagrams of their internal categories this way. We state these conditions so carefully in order to include other possible examples of larger categories which may not be extensive overall, or which may not contain all pullbacks, but which have enough coproducts and pullbacks that behave well with one another to allow this construction to take place. For example, small diagrams of Lie groupoids whose source and target maps are surjective submersions. 

The internal category of fractions construction requires a collection of pullbacks and equalizers in order to define the objects involved in the internal description of (a weakened version) of the (right) fractions axioms, as well as the relations and quotient objects which are required to define objects of paths of arrows with an appropriate universal property.

An interesting part of our main result, Theorem~\ref{thm internal localization is a pseudocolim}, is that some of the internal fractions structure from the definition of the Internal Fractions Axioms (Definition~\ref{def Internal Fractions Axioms}) becomes trivial when the internal category being considered is an internal category of elements implying a kind of discreteness. In this case we proved a formal gluing construction for these internal categories by showing that the resulting internal category of fractions is the pseudocolimit of the original diagram. 

Exploring more examples of diagrams of internal categories arising in contexts that satisfy our conditions is ongoing work. We have plenty extensive categories that allow for an internal category of elements, and it would be interesting to find an example where the entire category is not extensive, but the pullbacks and coproducts we have interact nicely for other reasons. Increasing the dimension of the indexing, ambient, and internal categories with the goal of studying higher categorical colimit constructions is the topic of my PhD thesis proposal at Dalhousie.

\appendix
\section{Internal Category of Elements}

This section of the appendix contains technical lemmas used in Chapter \ref{Chaper Internal Grothendieck}. 

\subsection{Associativity of Composition}

This first lemma we need states that the source and target of a composite coincides with the source and target of the first and second map in the composite respectively.

\begin{lem}\label{Lem source and target of component composite}
For any composable pair $(\varphi, \psi) \in \cA(W,X) \times \cA(X, Y)$ in $\cA$ we have that `the source (target) of the composite is the source (target) of the first (second) map (respectively).'

\[ c_{\varphi ; \psi} t_{\varphi \psi} = p_1 t_{\psi} \qquad , \qquad c_{\varphi ; \psi} s_{\varphi \psi} = p_0 s_\varphi . \]
\end{lem}
\begin{proof}
By definition of $t_{\varphi \psi}$, $s_{\varphi \psi}$, $c'_{\delta ; \varphi ; \psi}$, $c'_{\varphi ; \psi}$, and $c_{\varphi ; \psi}$. 

\begin{align*}
c_{\varphi ; \psi} t_{\varphi \psi} 
&= c_{ \varphi ; \psi} \pi_1 t \iota_Y & c_{\varphi ; \psi} s_{\varphi \psi}& =c_{\varphi ; \psi} \pi_0 \iota_W\\
&= c_{ \varphi ; \psi} \pi_1 t \iota_Y &&= p_0 \pi_0 \iota_W  \\
&= c'_{\delta ; \varphi ; \psi} c t \iota_Y &&= p_0 s_\varphi \\
&= c'_{\delta ; \varphi ; \psi} q_{12} q_1 t \iota_Y && \\
&= c'_{\varphi ; \psi} q_1 t \iota_Y &&\\
&= p_1 \pi_1 t \iota_Y && \\
&= p_1 t_\psi &&
\end{align*}
\end{proof}\

\noi This next two lemmas contain calculations that show how to compute cofiber composition of the first and last two maps of a composable triple in the internal category of fractions, $\mD$. These results are used to prove associativity of composition in $\mD$ in Proposition~\ref{Prop composition is associative - appendix}.

\begin{lem}\label{Lem composing first two maps in a triple in Groth Const}
For any $\varphi, \psi, \gamma$ composable in $\cA$

\[ c'_{01} c'_{\delta ; \varphi \psi ;\gamma } = ( p_{01} p_0 \pi_0 \delta_{\varphi \psi ; \gamma} , p_{01} c'_{\delta ; \varphi ; \psi} c D(\gamma)_1 , p_{12} p_1 \pi_1 ) \]

\noi where

\[
c'_{\delta ; \varphi ; \psi} c D(\gamma)_1 = ( p_0 \pi_0 \delta_{\varphi ; \psi} D(\gamma)_1 , p_0 \pi_1 D(\psi)_1 D(\gamma)_1 , p_1 \pi_1 D(\gamma)_1 ) c
\]
\end{lem}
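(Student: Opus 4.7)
The plan is to prove both equations by appealing to the universal property of the pullback $D(C)_3$ of composable triples in $D(C)$. Recall that a map $X \to D(C)_3$ is uniquely determined by its three projections $q_{01}q_0$, $q_{01}q_1$, $q_{12}q_1$ to $D(C)_1$, and that by definition of $c'_{\delta;\varphi;\psi}$ in the main text these projections on $c'_{\delta;\varphi;\psi}$ itself compute as
\[
c'_{\delta;\varphi;\psi}\, q_{01}q_0 = c'_{\delta;(\varphi;\psi)}\,q_0 = p_0\pi_0\delta_{\varphi,\psi}, \quad
c'_{\delta;\varphi;\psi}\, q_{01}q_1 = c'_{\delta;(\varphi;\psi)}\,q_1 = p_0\pi_1 D(\psi)_1,
\]
\[
c'_{\delta;\varphi;\psi}\, q_{12}q_1 = c'_{\varphi;\psi}\,q_1 = p_1\pi_1.
\]

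For the first equation, I will compute the three projections of $c'_{01} c'_{\delta;\varphi\psi;\gamma}$ by the same method applied one level up. Namely, $c'_{\delta;\varphi\psi;\gamma} : D_{\varphi\psi;\gamma} \to D(D)_3$ is characterised by its projections being $p_0\pi_0\delta_{\varphi\psi,\gamma}$, $p_0\pi_1 D(\gamma)_1$, and $p_1\pi_1$ respectively. Composing with $c'_{01}$, which commutes with each projection by unfolding its definition via $c_{\varphi;\psi}$, yields
\[
c'_{01} c'_{\delta;\varphi\psi;\gamma}\, q_{01}q_0 = p_{01}p_0\pi_0\delta_{\varphi\psi,\gamma}, \
c'_{01} c'_{\delta;\varphi\psi;\gamma}\, q_{01}q_1 = p_{01} c_{\varphi;\psi} \pi_1 D(\gamma)_1, \
c'_{01} c'_{\delta;\varphi\psi;\gamma}\, q_{12}q_1 = p_{12}p_1\pi_1.
\]
The middle projection rewrites as $p_{01} c'_{\delta;\varphi;\psi}\, c\, D(\gamma)_1$ by applying the definition $c_{\varphi;\psi} = c'_{\delta;\varphi;\psi}\, c$ at the cofiber level, so by uniqueness we obtain the claimed triple.

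For the second equation, the key tool is internal functoriality of $D(\gamma)$, which says that $D(\gamma)_1$ preserves composition: $c\, D(\gamma)_1 = \bigl(q_0 D(\gamma)_1,\, q_1 D(\gamma)_1\bigr) c$ on $D(C)_2$, and by iterated application together with associativity of composition in $D(D)$, the same identity holds for the triple-composition map on $D(C)_3$, giving
\[
c\, D(\gamma)_1 = \bigl(q_{01}q_0 D(\gamma)_1,\, q_{01}q_1 D(\gamma)_1,\, q_{12}q_1 D(\gamma)_1\bigr) c.
\]
Pre-composing with $c'_{\delta;\varphi;\psi}$ and substituting the three projection formulas recalled at the start yields precisely the triple on the right-hand side of the second equation.

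The main obstacle is simply bookkeeping: there is no deep idea, but the identifications require keeping careful track of which $q_i$ refers to a projection of $D(C)_2$ versus of $D(C)_3$, and of the interplay between the definitions $c'_{\delta;(\varphi;\psi)}$, $c'_{\varphi;\psi}$, $c'_{\delta;\varphi;\psi}$, and $c_{\varphi;\psi}$. Once one writes all projections carefully, both equations reduce to applications of the universal property of $D(D)_3$ together with internal functoriality of $D(\gamma)$ and associativity in $D(D)$.
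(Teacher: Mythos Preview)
Your proposal is correct and follows essentially the same route as the paper: compute the three projections $q_{01}q_0$, $q_{01}q_1$, $q_{12}q_1$ of $c'_{01}c'_{\delta;\varphi\psi;\gamma}$ using the definitions of $c'_{\delta;(\varphi\psi;\gamma)}$, $c'_{\varphi\psi;\gamma}$, and $c'_{01}$, then invoke functoriality of $D(\gamma)$ for the second equation. One small slip: the identity you want is $c_{\varphi;\psi}\pi_1 = c'_{\delta;\varphi;\psi}c$, not $c_{\varphi;\psi} = c'_{\delta;\varphi;\psi}c$; the map $c_{\varphi;\psi}$ lands in $D_{\varphi\psi}$, and it is only after projecting via $\pi_1$ that you obtain the triple composite in $D(C)_1$.
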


\begin{proof}
By the universal property of the relevant pullback of `composable-triples,' it suffices to check that

\begin{align*} 
c'_{01} c'_{\delta ; \varphi \psi ;\gamma } q_{01} q_0 
&= c'_{01} c'_{\delta ; (\varphi \psi ;\gamma) } q_0 \\
&= c'_{01} p_0 \pi_0 \delta_{\varphi \psi ; \gamma} \\
&= p_{01} c_{\varphi ; \psi} \pi_0 \delta_{\varphi \psi ; \gamma} \\
&= p_{01} p_0 \pi_0 \delta_{\varphi \psi ; \gamma} ,
\end{align*}

\begin{align*}
c'_{01} c'_{\delta ; \varphi \psi ;\gamma } q_{01} q_1 
&= c'_{01} c'_{\delta ; (\varphi \psi ;\gamma) } q_1 \\
&= c'_{01} p_0 \pi_1 D(\gamma)_1 \\
&= p_{01} c_{\varphi ; \psi} \pi_1 D(\gamma)_1 \\
&= p_{01} c'_{\delta ; \varphi ; \psi} c D(\gamma)_1,
\end{align*}

\noi and 

\begin{align*}
c'_{01} c'_{\delta ; \varphi \psi ;\gamma } q_{12} q_1
&= c'_{01} c'_{\delta ; \varphi \psi ;\gamma } q_{12} q_1\\
&= c'_{01} c'_{\varphi \psi ; \gamma} q_1 \\
&= c'_{01} p_1 \pi_1 \\
&= p_{12} p_1 \pi_1 
\end{align*}

\noi respectively. By functoriality of $D(\gamma)$ and associativity of composition the middle component in that triple composite factors

\begin{align*} 
c'_{\delta ; \varphi ; \psi} c D(\gamma)_1 
&= c'_{\delta ; \varphi ; \psi} ( q_{01}q_1 D(\gamma)_1, q_{01} q_1 D(\gamma)_1 , q_{12} q_1 D(\gamma)_1 ) c \\
&= ( p_0 \pi_0 \delta_{\varphi ; \psi} D(\gamma)_1 , p_0 \pi_1 D(\psi)_1 D(\gamma)_1 , p_1 \pi_1 D(\gamma)_1 ) c.
\end{align*}

\end{proof}\

\begin{lem}\label{Lem composing last two maps of a triple in the Groth Const}
For any $\varphi, \psi, \gamma$ composable in $\cA$

\[ c'_{12} c'_{\delta ; \varphi ; \psi \gamma} = ( p_{01} p_0 \pi_0 \delta_{\varphi ; \psi \gamma} , p_{01} p_0\pi_1 D(\psi \gamma)_1 , p_{12} c'_{\delta ; \varphi ; \psi \gamma} c ) .\]
\end{lem}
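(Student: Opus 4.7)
The plan is to mirror the strategy of Lemma \ref{Lem composing first two maps in a triple in Groth Const}: invoke the universal property of the composable-triples pullback $D(C)_3$ and verify that the two sides agree after post-composing with each of the three projection maps $q_{01}q_0, q_{01}q_1, q_{12}q_1 : D(C)_3 \to D(C)_1$. Since $D(C)_3$ is (by construction) the limit of a diagram of three copies of $D(C)_1$, equality of maps into it is controlled componentwise by these projections.

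First I would unpack the definitions of $c'_{12}$ and $c'_{\delta ; \varphi ; \psi \gamma}$. Recall $c'_{12} : \mD_3 \to \mD_2$ is induced on cofibers by applying $c'_{\psi ; \gamma}$ to the last two maps of a composable triple while leaving the first alone; thus $c'_{12} p_{01} = p_0$ and $c'_{12} p_{12}$ agrees with $c'_{\psi ; \gamma}$ restricted to the relevant cofiber. Meanwhile $c'_{\delta ; \varphi ; \psi \gamma}$ is the universal map into $D(C)_3$ whose projections along $q_{01}q_0$, $q_{01}q_1$, $q_{12}q_1$ are respectively $p_0 \pi_0 \delta_{\varphi ; \psi \gamma}$, $p_0 \pi_1 D(\psi \gamma)_1$, and $c'_{\psi ; \gamma} \pi_1 = c'_{\delta; \psi; \gamma} c$ (after applying the middle-equality device used throughout Section~\ref{S Def Int Groth}).

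With these in hand, the three verifications go as follows. For $q_{01}q_0$, we compute
\[
c'_{12} c'_{\delta ; \varphi ; \psi \gamma} q_{01} q_0 = c'_{12} c'_{\delta ; (\varphi ; \psi \gamma)} q_0 = c'_{12} p_0 \pi_0 \delta_{\varphi ; \psi \gamma} = p_{01} p_0 \pi_0 \delta_{\varphi ; \psi \gamma},
\]
using that $c'_{12} p_0 = p_{01} p_0$. For $q_{01}q_1$, analogously,
\[
c'_{12} c'_{\delta ; \varphi ; \psi \gamma} q_{01} q_1 = c'_{12} p_0 \pi_1 D(\psi \gamma)_1 = p_{01} p_0 \pi_1 D(\psi \gamma)_1.
\]
For the final projection $q_{12}q_1$, we use that $c'_{\delta ; \varphi ; \psi \gamma} q_{12} q_1 = c'_{\varphi ; \psi \gamma} q_1 = p_1 \pi_1$ (restricted to the appropriate cofiber) and then that $c'_{12} p_1$ factors through $c'_{\psi ; \gamma}$ to rewrite the result as $p_{12} c'_{\delta ; \psi ; \gamma} c$, which is the third entry of the triple on the right-hand side.

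The potential obstacle is purely bookkeeping: making sure the indices on $\delta$, the direction of $D(-)$, and the subscripts $p_0, p_1, p_{01}, p_{12}$ on the various pullback projections line up consistently across the identifications $\mD_3 \cong \coprod_{(\varphi,\psi,\gamma) \in \cA_3} D_{\varphi ; \psi ; \gamma}$. I would do the verification first at cofiber level (dropping the outer coprojections) and then invoke the universal property of the coproduct $\mD_3$ to conclude. No new ideas beyond those in Lemma \ref{Lem composing first two maps in a triple in Groth Const} are required; the statement is its mirror image under swapping `first two' with `last two' in the triple.
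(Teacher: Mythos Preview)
Your proposal is correct and follows essentially the same approach as the paper: both argue via the universal property of the composable-triples pullback $D(C)_3$ and verify equality on the three projections $q_{01}q_0$, $q_{01}q_1$, $q_{12}q_1$, arriving at the same computations (with the key relations $c'_{12} p_0 = p_{01} p_0$ and $c'_{12} p_1 = p_{12} c_{\psi;\gamma}$). The only slip is your line ``$c'_{12} p_{01} = p_0$'', which has the projections reversed relative to what you (correctly) use in the displayed computations.
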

\begin{proof}
By the universal property of the relvant `composable-triples' pullback, it suffices to check that 

\begin{align*}
c'_{12} c'_{\delta ; \varphi ; \psi \gamma} q_{01} q_0 
&=c'_{12} c'_{\delta ; \varphi ; \psi \gamma} q_{01} q_0 \\
&=c'_{12} c'_{\delta ; (\varphi ; \psi \gamma)} q_0 \\
&= c'_{12} p_0 \pi_0 \delta_{\varphi ; \psi \gamma} \\
&= c'_{12} p_0 \pi_0 \delta_{\varphi ; \psi \gamma} \\
&= p_{01} p_0 \pi_0 \delta_{\varphi ; \psi \gamma} ,
\end{align*}

\begin{align*}
c'_{12} c'_{\delta ; \varphi ; \psi \gamma} q_{01} q_1
&=c'_{12} c'_{\delta ; \varphi ; \psi \gamma} q_{01} q_1 \\
&=c'_{12} c'_{\delta ; (\varphi ; \psi \gamma)} q_1 \\
&= c'_{12} p_0 \pi_1 D(\psi \gamma)_1 \\
&= p_{01} p_0\pi_1 D(\psi \gamma)_1,
\end{align*}

\noi and 

\begin{align*}
c'_{12} c'_{\delta ; \varphi ; \psi \gamma} q_{12} q_1 
&= c'_{12} c'_{\varphi ; \psi \gamma} q_1 \\
&= c'_{12} p_1 \pi_1 \\
&= p_{12} c_{\psi ; \gamma} \pi_1 \\
&= p_{12} c'_{\delta ; \psi ; \gamma} c.
\end{align*}
\end{proof}\

\noi The last lemma provides some calculations using the internal coherence for the composition natural isomorphisms associated to the pseudofunctor along with naturality and functoriality. In the classical Grothendieck construction (when $\cE = \Set$), Lemma~\ref{Lem Coherence + Naturality computation for associativity} internally encodes the intermediate step 

\[ \delta_{\varphi \psi ; \gamma , a} D(\gamma)(\delta_{\varphi ; \psi , a}) D(\gamma)( D(\psi)( D(\varphi)(f) ) ) = \delta_{\varphi; \psi \gamma ,a } D(\psi \gamma) (D(\varphi)(f) ) \delta_{\psi ; \gamma} \]\

\noi for each $a \in D(A)_0$ when proving associativity of composition. 

\begin{lem}\label{Lem Coherence + Naturality computation for associativity}
For any $\varphi, \psi, \gamma$ composable in $\cA$

\[ ( \pi_0 \delta_{\varphi \psi ; \gamma} , \pi_0 \delta_{\varphi ; \psi} D(\gamma)_1 , \pi_1 (D(\psi)_1 D(\gamma)_1)) c = ( \pi_0 \delta_{\varphi ; \psi \gamma} , \pi_1 D(\psi \gamma)_1, \pi_1 t \delta_{\psi ; \gamma} ) c\] 
\end{lem}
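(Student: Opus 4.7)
The plan is to chain together the two coherence/naturality identities stated just before the lemma, using internal associativity of composition in $D(C)$ to regroup terms and using the defining pullback equation $\pi_0 D(\varphi)_0 = \pi_1 s$ of $D_\varphi$ to slide maps between the two components of the pairing.

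First I would rewrite the left-hand side by grouping the first two entries of the ternary composite. By associativity this becomes a pairing of $\pi_0 (\delta_{\varphi \psi ; \gamma}, \delta_{\varphi ; \psi} D(\gamma)_1) c$ with $\pi_1 D(\psi)_1 D(\gamma)_1$, at which point the coherence identity
\[ ( \delta_{\varphi \psi ; \gamma} , \delta_{\varphi ; \psi} D(\gamma)_1 ) c = ( \delta_{\varphi ; \psi \gamma} , D(\varphi)_0 \delta_{\psi ; \gamma} ) c \]
replaces the inner composite with $(\delta_{\varphi ; \psi \gamma} , D(\varphi)_0 \delta_{\psi ; \gamma}) c$. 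Un-associating gives a triple composite whose middle entry is $\pi_0 D(\varphi)_0 \delta_{\psi ; \gamma}$.

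Next I would apply the $D_\varphi$ pullback identity $\pi_0 D(\varphi)_0 = \pi_1 s$ to rewrite this middle entry as $\pi_1 s \delta_{\psi ; \gamma}$, after which the last two entries both factor through $\pi_1$. Re-associating so as to group them inside a pairing, I can then invoke the naturality identity
\[ ( s \delta_{\psi ; \gamma} , D(\psi)_1 D(\gamma)_1 ) c = ( D(\psi \gamma)_1 , t \delta_{\psi ; \gamma} ) c \]
to replace the inner two-fold composite. Un-associating one final time yields exactly the right-hand side.

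The whole argument is therefore a bookkeeping exercise: two appeals to the internalized coherence/naturality square of $D$, sandwiched between multiple applications of associativity and one use of the pullback defining $D_\varphi$. I do not expect a genuine obstacle here; the only care needed is to track the associativity re-bracketings of the triple composites and to make sure the pullback equation is used in the correct direction when moving $D(\varphi)_0$ across $\pi_0$ and $\pi_1$. This mirrors exactly the classical one-line equality described informally just above the lemma, with the pointwise evaluation $(-)_a$ replaced by the pullback projection $\pi_0$ and the application of $D(\gamma)$ replaced by postcomposition with $D(\gamma)_1$.
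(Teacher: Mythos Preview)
Your proposal is correct and matches the paper's proof essentially step for step: group the first two entries and apply the coherence identity, un-associate, use the pullback equation $\pi_0 D(\varphi)_0 = \pi_1 s$ to move to $\pi_1$, regroup the last two entries and apply the naturality identity, then un-associate to obtain the right-hand side.
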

\begin{proof}
By coherence of composition isomorphisms for the original pseudofunctor, $D$, we have that 

\[ ( \delta_{\varphi ; \psi \gamma} , D(\varphi)_0 \delta_{\psi ; \gamma} ) c = ( \delta_{\varphi \psi ; \gamma} , \delta_{\varphi ; \psi} D(\gamma)_1 ) c, \]

\noi and by definition of the natural isomorphism $\delta_{\psi ; \gamma} : D(\psi \gamma) \implies D(\psi) D(\gamma)$ 

\[ ( D(\psi \gamma)_1 , t \delta_{\psi ; \gamma} ) c = ( s \delta_{\psi ; \gamma} , D(\psi)_1 D(\gamma)_1 ) c . \]\

\noi Putting coherence and naturality together with associativity we get the following equality of triple composites 

\begin{align*}
 & \ \ \ \ ( \pi_0 \delta_{\varphi \psi ; \gamma} , \pi_0 \delta_{\varphi ; \psi} D(\gamma)_1 , \pi_1 (D(\psi)_1 D(\gamma)_1)) c\\
 &= ( \pi_0 ( \delta_{\varphi \psi ; \gamma} , \delta_{\varphi ; \psi} D(\gamma)_1 ) c , (D(\psi)_1 D(\gamma)_1)) c\\
 &= ( \pi_0 ( \delta_{\varphi ; \psi \gamma} , D(\varphi)_0 \delta_{\psi ; \gamma} ) c , \pi_1 D(\psi)_1 D(\gamma)_1 ) c\\ 
 &= ( \pi_0 \delta_{\varphi ; \psi \gamma} , ( \pi_0 D(\varphi)_0 \delta_{\psi ; \gamma} , \pi_1 D(\psi)_1 D(\gamma)_1 ) c ) c\\ 
  &= ( \pi_0 \delta_{\varphi ; \psi \gamma} , ( \pi_1 s \delta_{\psi ; \gamma} , \pi_1 D(\psi)_1 D(\gamma)_1 ) c ) c\\ 
   &= ( \pi_0 \delta_{\varphi ; \psi \gamma} , \pi_1 ( s \delta_{\psi ; \gamma} , D(\psi)_1 D(\gamma)_1 ) c ) c\\ 
   &= ( \pi_0 \delta_{\varphi ; \psi \gamma} , \pi_1 ( D(\psi \gamma)_1 , t \delta_{\psi ; \gamma} ) c ) c\\ 
    &= ( \pi_0 \delta_{\varphi ; \psi \gamma} , \pi_1 D(\psi \gamma)_1, \pi_1 t \delta_{\psi ; \gamma} ) c\\ 
 \end{align*}
\end{proof}\

\noi We're now ready to prove associativity of composition in $\mD$. 

\begin{prop}\label{Prop composition is associative - appendix}
Composition in $\mD$ is associative. 
\end{prop}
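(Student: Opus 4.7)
The plan is to reduce associativity in $\mD$ to a cofiber-wise associativity on the coproduct decompositions of $\mD_2$ and $\mD_3$, and then verify the latter via the universal property of the pullback $D_{\varphi\psi\gamma}$. By extensivity of $\cE$, the coproduct $\mD_3$ decomposes as $\coprod_{(\varphi,\psi,\gamma)\in\cA_3} D_{\varphi;\psi;\gamma}$, and the two composition maps $c\times 1,\, 1\times c : \mD_3 \to \mD_2$ are induced by cofiber-level maps $c_{01},\, c_{12}$ which in turn factor through maps $c'_{01}, c'_{12}$ on the cofibers $D_{\varphi;\psi;\gamma}$ landing in $D_{\varphi\psi;\gamma}$ and $D_{\varphi;\psi\gamma}$ respectively. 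So it suffices to prove, for each composable triple $(\varphi,\psi,\gamma)$ in $\cA$, the cofiber equality
\[ c'_{01}\, c_{\varphi\psi;\gamma} = c'_{12}\, c_{\varphi;\psi\gamma} : D_{\varphi;\psi;\gamma} \longrightarrow D_{\varphi\psi\gamma}, \]
where we use $(\varphi\psi)\gamma = \varphi(\psi\gamma)$ in $\cA$ to identify targets.

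To check this equality I would use the universal property of the pullback $D_{\varphi\psi\gamma}$ and verify agreement on the two legs $\pi_0$ and $\pi_1$. Agreement on $\pi_0$ is the easy half: it follows by a short chain using Lemma~\ref{Lem source and target of component composite} (source of a composite is the source of the first arrow), together with the fact that the canonical monos $\iota_A : D(A)_0 \rightarrowtail \mD_0$ are jointly separating; so one reduces to showing $p_{01} c \pi_0 = p_{01} p_0 \pi_0$, which falls out of associativity of the pullback structure on composable pairs.

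The substantive half is checking agreement on $\pi_1$. Here I expand $c'_{01}\, c_{\varphi\psi;\gamma}\pi_1 = c'_{01}\, c'_{\delta;\varphi\psi;\gamma}\, c$ using Lemma~\ref{Lem composing first two maps in a triple in Groth Const}, which rewrites it as a triple $\langle p_{01}p_0\pi_0\delta_{\varphi\psi;\gamma},\ p_{01}c'_{\delta;\varphi;\psi}\,c\,D(\gamma)_1,\ p_{12}p_1\pi_1\rangle c$, and then further expands the middle factor via the second identity of the same lemma into a triple built from $\delta_{\varphi;\psi}D(\gamma)_1$, $D(\psi)_1D(\gamma)_1$, and $D(\gamma)_1$. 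Using associativity of composition in $D(C)$ this becomes a single quintuple composite. A symmetric expansion of $c'_{12}\, c_{\varphi;\psi\gamma}\pi_1$ via Lemma~\ref{Lem composing last two maps of a triple in the Groth Const} yields another quintuple composite. The core step is then Lemma~\ref{Lem Coherence + Naturality computation for associativity}, which packages exactly the coherence of the $\delta$ isomorphisms of $D$ together with naturality of $\delta_{\psi;\gamma}$ needed to equate the two triples appearing inside these composites; reassociating then aligns the two expressions.

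The main obstacle will be the bookkeeping in this second half, that is, carefully sequencing the associativity moves and the factoring of pairing maps so that the coherence identity of Lemma~\ref{Lem Coherence + Naturality computation for associativity} can be applied at exactly the right subterm, and tracking which component is $p_{01} p_0$, $p_{12} p_0$, or $p_{12} p_1$ after each reassociation (using the definition of $D_{\varphi;\psi;\gamma}$ to identify $p_{01}p_1 = p_{12}p_0$ on appropriate projections). Once the two quintuple composites agree, the universal property of $D_{\varphi\psi\gamma}$ gives $c'_{01} c_{\varphi\psi;\gamma} = c'_{12} c_{\varphi;\psi\gamma}$, and the universal property of the coproduct $\mD_3$ upgrades this cofiber-wise associativity to the required equality $(c\times 1)\, c = (1\times c)\, c : \mD_3 \to \mD_1$.
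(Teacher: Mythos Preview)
Your proposal is correct and follows essentially the same approach as the paper: reduce to the cofiber-wise equality $c'_{01}\,c_{\varphi\psi;\gamma} = c'_{12}\,c_{\varphi;\psi\gamma}$ via the coproduct decomposition of $\mD_3$, then verify it on the two legs of $D_{\varphi\psi\gamma}$ using Lemma~\ref{Lem source and target of component composite} for $\pi_0$ and the combination of Lemmas~\ref{Lem composing first two maps in a triple in Groth Const}, \ref{Lem composing last two maps of a triple in the Groth Const}, and \ref{Lem Coherence + Naturality computation for associativity} for $\pi_1$. Your anticipated bookkeeping obstacle (tracking the $p_{01}p_1 = p_{12}p_0$ identifications while reassociating the quintuple composite) is exactly what the paper's long chain of equalities on $\pi_1$ works through.
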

\begin{proof}
The object of composable triples is given by pulling back the pullback projections $\rho_0 , \rho_1 : \mD_2 \to \mD_1$. Denote its canonical maps by $\rho_0'$ and $\rho_1'$ respectively. By Definition~\ref{def E admits an internal category of elements of D} we have


\[ \mD_3 \cong \coprod_{(\varphi, \psi, \gamma) \in \cA_3} D_{\varphi ; \psi; \gamma} \]\

\noi where $D_{\varphi ; \psi ; \gamma}$ is given by pulling back the projections $p_1 : D_{\varphi; \psi} \to D_\psi$ and $p_0 : D_{\psi ; \gamma}\to D_\psi$. More precisely, for any composable triple 

\[ \begin{tikzcd}[]
W \rar["\varphi"] & X \rar["\psi"] & Y \rar["\gamma"]& Z
\end{tikzcd}\]

\noi we have the following commuting diagram where the squares on the front and back are all pullbacks. 

\begin{center}
\begin{tikzcd}[column sep = scriptsize] 
 & & & && &\mD_3 \dar[dd, "\rho_{01}"'] \rar[rr, "\rho_{12}"] & & \mD_2 \dar[dd, "\rho_0"] \rar[rr,"\rho_1"] && \mD_1 \dar[dd, "s"] & & \\
 
 & & && & &   & & && & & \\
 
 D_{\varphi ; \psi; \gamma} \ar[uurrrrrr, dotted, "\iota_{\varphi ; \psi; \gamma}"] \dar[dd, "p_{01}"' ] \rar[rr, "p_{12}"] & & D_{\psi ; \gamma} \ar[uurrrrrr, dotted, crossing over, "\iota_{\psi ; \gamma}"] \rar[rr, "p_1"] && D_\gamma \ar[uurrrrrr, crossing over, "\iota_\gamma"] & &    \mD_2 \dar[dd, "\rho_0"'] \rar[rr, "\rho_1"] & & \mD_1\dar[dd,"s"] \rar[rr, "t"] && \mD_0 & &\\
 
  & & && & & & & && & &     \\
  
 D_{\varphi ; \psi} \ar[uurrrrrr, dotted, "\iota_{\varphi ; \psi}"] \dar[dd, "p_0"'] \rar[rr, "p_1"] & & D_\psi \ar[uurrrrrr, crossing over ,"\iota_\psi"] \ar[from = uu, crossing over, "p_0"']  \rar[rr, "t_\psi"'] && \mD_0 \ar[from = uu, crossing over, "s_\gamma"]\ar[uurrrrrr, crossing over, equals ] & &    \mD_1 \rar[rr,"t"] && \mD_0 &&& &\\
 
  & & &&& &&\\
  
 D_\varphi \rar[rr, "t_\varphi"'] \ar[uurrrrrr, crossing over, "\iota_\varphi"] && \mD_0\ar[from = uu, crossing over, "s_\psi"'] \ar[uurrrrrr, equals ] &&& &   & & & && &
\end{tikzcd}
\end{center}

\noi By the universal property of the coproduct $\mD_3$, we have maps $c_{01}$ and $c_{12}$ which represent composing the first two and last two maps in a composable triple respectively. These are uniquely determined on cofibers by the maps $c'_{01}$ and $c'_{12}$ respectively. The following diagrams are pastings of commuting cubes that show how $c'_{01}$ and $c_{01}$ are related. The coproduct inclusions from left to right are suppressed for readability but are indicated with the bent dotted arrows. 
\[\label{dgm int groth cofiber comp associativity c'_01}
\begin{tikzcd}[]
&& && \mD_3 \ar[dr, dotted, "c_{01}"] \dar["\rho_{01}"'] \rar["\rho_{12}"] & \mD_2 \ar[dr, bend left, "\rho_1"]  & \\ 
 D_{\varphi ; \psi ; \gamma} \ar[urrrr, tail, dotted, bend left, "\iota_{\varphi ; \psi ; \gamma}"] \ar[dr, dotted, "c_{01}'"] \dar["p_{01}"'] \rar["p_{12}"] & D_{\psi ; \gamma} \ar[dr, bend left, "p_1"]  & 
 && 
 \mD_2 \ar[dr, bend right, "c"'] & \mD_2 \rar["\rho_1"] \dar["\rho_0"'] \arrow[dr, phantom, "\usebox\pullback" , very near start, color=black] & \mD_1 \dar["s"] \\
D_{\varphi ; \psi} \ar[dr, bend right, "c_{\varphi ; \psi}"'] & D_{(\varphi \psi); \gamma} \ar[urrr, dotted, "\iota_{(\varphi \psi) ;\gamma}"] \rar["p_1"] \dar["p_0"'] \arrow[dr, phantom, "\usebox\pullback" , very near start, color=black] & D_\gamma \dar["s_\gamma"]
 && 
 & \mD_1 \rar["t"'] & \mD_0 \\
& D_{\varphi \psi} \rar["t_{\varphi \psi}"'] & \mD_0 \ar[urrrr, dotted, bend right, equals] && &&\\
\end{tikzcd}
\tag{$c'_{01}$}
\]

\noi A similar diagram shows the relation between $c'_{12}$ and $c_{12}$ and in particular the following squares commute by the universal property of $\mD_2$. 

\begin{center}
\begin{tikzcd}[]
\mD_3 \rar["c_{01}"] & \mD_2 \\
D_{\varphi ; \psi ; \gamma} \uar[tail, "\iota_{\varphi ; \psi ; \gamma}"] \rar["c'_{01}"'] & D_{\varphi \psi ; \gamma} \uar[tail, "\iota_{\varphi \psi; \gamma}"'] 
\end{tikzcd}\qquad \qquad \qquad 
\begin{tikzcd}[]
\mD_3 \rar["c_{12}"] & \mD_2 \\
D_{\varphi ; \psi ; \gamma} \uar[tail, "\iota_{\varphi ; \psi ; \gamma}"] \rar["c'_{12}"'] & D_{\varphi ; \psi \gamma} \uar[tail, "\iota_{\varphi ; \psi \gamma}"'] 
\end{tikzcd}
\end{center}

To show that composition is associative, we need to show that the front of the commuting cube below commutes. 

\[ 
\begin{tikzcd}[]
& & & && \\
& \mD_3 \rar[rr, "c_{12}"] \dar[dd, near end, "c_{01}"'] & & \mD_2 \dar[dd, "c"] & & \\
D_{\varphi ; \psi ; \gamma} \ar[ur, tail, "\iota_{\varphi ; \psi ; \gamma}"] \dar[dd, "c_{01}'"'] \ar[rr, crossing over, near end, "c_{12}'"] && D_{\varphi ; \psi \gamma} \ar[ur, tail, "\iota_{\varphi ; \psi \gamma}"] &&& \\
& \mD_2 \rar[rr, near start, "c"'] & & \mD_1 & & \\
 D_{\varphi \psi ; \gamma} \ar[ur, tail, "\iota_{\varphi \psi ; \gamma}"']\rar[rr, "c_{\varphi \psi ; \gamma}"'] && D_{\varphi \psi \gamma} \ar[ur, tail, "\iota_{\varphi \psi \gamma}"'] \ar[from = uu, crossing over, near start, "c_{\varphi ; \psi \gamma}" ] &&&
\end{tikzcd}
\]

\noi We'll use the universal property of the pullback $D_{\varphi \psi \gamma}$. First notice that 

\[D_{(\varphi \psi) \gamma} = D_{\varphi \psi \gamma} = D_{\varphi (\psi \gamma)}\] 

\noi because of associativity in $\cA$. That is, 

\[(\varphi \psi) \gamma = \varphi (\psi \gamma)\]\ 

\noi so we drop the parentheses and just write $\varphi \psi \gamma$ for the triple composite in $\cA$ without loss of generality. On one hand by Lemma~\ref{Lem source and target of component composite} we have 

\[ p_{01} c \pi_0 \iota_{A} = p_{01} c s_{\varphi \psi} = p_{01} p_0 s_\varphi = p_{01} p_0 \pi_0 \iota_{A} \]\

\noi and since $\iota_A$ is monic, 

\[ \label{eq helper cofiber associativity pi_0 projection} p_{01} c \pi_0 = p_{01} p_0 \pi_0. \tag{*}\]

\noi Now recall by the definition of cofiber composition we have

\[ c_{\varphi \psi ; \gamma} = (p_0 \pi_0 , c'_{\delta ; \varphi \psi; \gamma} c) \qquad \qquad c_{\varphi ; \psi \gamma} = (p_0 \pi_0 , c'_{\delta ; \varphi ; \psi \gamma} c) \]

\noi and so for the $\pi_0$ projection we get: 
\begin{align*}
c'_{01} c_{\varphi \psi ; \gamma} \pi_0 
&= c'_{01} p_0 \pi_0 & \text{Def. } c_{\varphi \psi ; \gamma} \\
&= p_{01} c \pi_0 & \text{Dgm. (\ref{dgm int groth cofiber comp associativity c'_01}) } \\
&= p_{01} p_0 \pi_0 &\text{Eq. } \ref{eq helper cofiber associativity pi_0 projection} \\ 
&= c'_{12} p_0 \pi_0 & \text{Dgm. (\ref{dgm int groth cofiber comp associativity c'_01}) }  \\
&= c'_{12} c_{\varphi ; \psi \gamma} \pi_0 & \text{Def. } c_{\varphi ; \psi \gamma}
\end{align*}

\noi For the $\pi_1$ projection we have the following calculation split up on separate lines for readability. By definition of $c_{\varphi \psi ; \gamma}$:

\[c'_{01} c_{\varphi \psi ; \gamma} \pi_1 
= c'_{01} c'_{\delta ; \varphi \psi ;\gamma } c\]

\noi then by \text{Lemma } \ref{Lem composing first two maps in a triple in Groth Const} the right-hand side is: 
\[ ( p_{01} p_0 \pi_0 \delta_{\varphi \psi ; \gamma} , \ 
p_{01} c'_{\delta ; \varphi ; \psi} c D(\gamma)_1 , \
p_{12} p_1 \pi_1 ) c \] 

\noi The definition of $c'_{\delta ; \varphi ; \psi}$ says this is equal to 

\[( p_{01} p_0 \pi_0 \delta_{\varphi \psi ; \gamma} , 
p_{01} ( p_0 \pi_0 \delta_{\varphi ; \psi} D(\gamma)_1 , \ 
p_0 \pi_1 D(\psi)_1 D(\gamma)_1 , \ 
p_1 \pi_1 D(\gamma)_1 ) c ,
p_{12} p_1 \pi_1 ) c \]

\noi which, by associativity of internal composition (and factoring out a $p_0$ from the pairing map into the object of composable paths of length 4, $\mC_4$) is equal to

\[ ( ( p_{01} p_0 \pi_0 \delta_{\varphi \psi ; \gamma} , 
p_{01} p_0 ( \pi_0 \delta_{\varphi ; \psi} D(\gamma)_1 , \pi_1 D(\psi)_1 D(\gamma)_1 ) c ) c ,
p_{01} p_1 \pi_1 D(\gamma)_1 , \ 
p_{12} p_1 \pi_1  ) c . \]

\noi More associativity of internal composition and factoring $p_{01} p_0$ from the pairing map being post-composing with internal composition gives

\[( p_{01} p_0 ( \pi_0 \delta_{\varphi \psi ; \gamma} ,  ( \pi_0 \delta_{\varphi ; \psi} D(\gamma)_1 , \pi_1 D(\psi)_1 D(\gamma)_1 ) c ) c ,
p_{01} p_1 \pi_1 D(\gamma)_1 ,  p_{12} p_1 \pi_1  ) c  \]

\noi By associativity of internal composition and the definition of $D_{\varphi ; \psi ; \gamma}$ this becomes:

\[ ( p_{01} p_0 ( \pi_0 \delta_{\varphi \psi ; \gamma} ,  \pi_0 \delta_{\varphi ; \psi} D(\gamma)_1 , \pi_1 D(\psi)_1 D(\gamma)_1 ) c , 
p_{12} p_0 \pi_1 D(\gamma)_1 ,  p_{12} p_1 \pi_1  ) c \]

\noi By Lemma~\ref{Lem Coherence + Naturality computation for associativity} this is equal to: 

\[ ( p_{01} p_0 ( \pi_0 \delta_{\varphi ; \psi \gamma} , \pi_1 D(\psi \gamma)_1, \pi_1 t \delta_{\psi ; \gamma} ) c , p_{12} p_1 \pi_1  ) c \]

\noi By associativity of internal composition we get

\[ ( p_{01} p_0 \pi_0 \delta_{\varphi ; \psi \gamma} , \ 
p_{01} p_0 \pi_1 D(\psi \gamma)_1,\ p_{01} p_0 \pi_1 t \delta_{\psi ; \gamma} , 
p_{12} p_0 \pi_1 D(\gamma)_1 , \ 
 p_{12} p_1 \pi_1  ) c  \]
 
\noi and then by more associativity 
 
\[ ( p_{01} p_0 \pi_0 \delta_{\varphi ; \psi \gamma} , p_{01} p_0 \pi_1 D(\psi \gamma)_1, 
 ( p_{01} p_0 \pi_1 t \delta_{\psi ; \gamma} , p_{12} p_0 \pi_1 D(\gamma)_1 ,  p_{12} p_1 \pi_1 ) c  ) c  \]
 
\noi By definition of $D_{\varphi ; \psi}$ this becomes
 
\[ ( p_{01} p_0 \pi_0 \delta_{\varphi ; \psi \gamma} , 
p_{01} p_0 \pi_1 D(\psi \gamma)_1, 
( p_{01} p_1 \pi_0 \delta_{\psi ; \gamma} , p_{12} p_0 \pi_1 D(\gamma)_1 ,  p_{12} p_1 \pi_1 ) c  ) c  
 \]

\noi and by definition of $ D_{\varphi ; \psi ; \gamma}$ we get 
\[ ( p_{01} p_0 \pi_0 \delta_{\varphi ; \psi \gamma} , p_{01} p_0 \pi_1 D(\psi \gamma)_1, 
( p_{12} p_0 \pi_0 \delta_{\psi ; \gamma} , p_{12} p_0 \pi_1 D(\gamma)_1 , p_{12} p_1 \pi_1 ) c  ) c \]

\noi Factoring gives

\[ ( p_{01} p_0 \pi_0 \delta_{\varphi ; \psi \gamma} , p_{01} p_0 \pi_1 D(\psi \gamma)_1,
p_{12} ( p_0 \pi_0 \delta_{\psi ; \gamma} , p_0 \pi_1 D(\gamma)_1 ,  p_1 \pi_1 ) c  ) c p_{12} \]

\noi and the definition of $c'_{\psi ; \gamma} $ says this is equal to

\[ ( p_{01} p_0 \pi_0 \delta_{\varphi ; \psi \gamma} , p_{12} p_0 \pi_1 D(\gamma)_1 , p_{01} p_0 \pi_1 D(\psi \gamma)_1,
p_{12} ( c'_{\delta ; (\psi ; \gamma)} q_0 , c'_{\delta; (\psi ; \gamma} q_1 , c'_{\psi ; \gamma} q_1 ) c ) c \]

\noi The definitions of $ \text{Def. } c'_{\delta ; \psi ; \gamma}$ and $ c'_{\delta ; (\psi ; \gamma)} $ imply the last term is equal to

\[ ( p_{01} p_0 \pi_0 \delta_{\varphi ; \psi \gamma} , p_{01} p_0 \pi_1 D(\psi \gamma)_1, 
p_{12} ( c'_{\delta ; \psi ; \gamma} q_{01} q_0 , c'_{\delta ; \psi ; \gamma} q_{01} q_1 , c'_{\delta ; \psi ; \gamma} q_{12} q_1 ) c  ) c \]

\noi and the definition of $c'_{\delta ;\psi ; \gamma}$ makes it 

\[ ( p_{01} p_0 \pi_0 \delta_{\varphi ; \psi \gamma} , p_{01} p_0 \pi_1 D(\psi \gamma)_1, 
p_{12} c'_{\delta ; \psi ; \gamma} c  ) c  \]

\noi By Lemma~\ref{Lem composing last two maps of a triple in the Groth Const} this is equal to the left-hand side of the final equation
\[ c'_{12} c'_{\delta ; \varphi ; \psi \gamma} c = c'_{12} c_{ \varphi ; \psi \gamma} \pi_1 \]

\noi which follows from the definition of $c_{\varphi ; \psi \gamma}$. Then the universal property of pullbacks says

\[ c'_{01} c_{\varphi \psi ; \gamma} = c'_{12} c_{\varphi ; \psi \gamma} .\]
\noi This shows composition is associative on cofibers/components of the coproduct. Associativity of composition in $\mD$ now follows by the universal property of the coproduct $\mD_3$. 
\end{proof}\

\subsection{Lemmas for 1-cells of the Canonical Lax Transformation}

The following are technical lemmas used in Section \ref{S Int Groth as Oplax Colim} of Chapter \ref{Chaper Internal Grothendieck}. 

\begin{lem}\label{Lem 1-cells preserve composition at cofiber level} 
For any $A \in \cA_0$:

\[( q_0 (\ell_A)'_1 {,} q_1 (\ell_A)'_1 ) c_{1_A; 1_A} = c (\ell_A)'_1\]
\end{lem}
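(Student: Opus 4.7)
The plan is to verify this cofiber-level equality by using the universal property of the pullback $D_{1_A}$, so it suffices to check both sides agree after postcomposition with the two pullback projections $\pi_0 : D_{1_A} \to D(A)_0$ and $\pi_1 : D_{1_A} \to D(A)_1$. Recall that, by construction, $(\ell_A)'_1 = \langle s \ ,\ \langle s \delta_A , 1_{D(A)_1} \rangle c \rangle$, so $(\ell_A)'_1 \pi_0 = s$ and $(\ell_A)'_1 \pi_1 = \langle s \delta_A , 1_{D(A)_1} \rangle c$. The $\pi_0$-projection check is the easy half: Lemma~\ref{Lem source and target of component composite} gives $c_{1_A ; 1_A} \pi_0 = p_0 s_{1_A} = p_0 \pi_0$ on the cofiber, so after postcomposing the LHS with $\pi_0$ we pick out $q_0 (\ell_A)'_1 \pi_0 = q_0 s$; on the RHS we get $c (\ell_A)'_1 \pi_0 = c s = q_0 s$ by the compatibility of the composition structure with source.

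For the $\pi_1$-projection, the strategy is to unfold both sides into triple composites in $D(A)$ and reduce them via associativity, functoriality of $D(1_A)$, naturality of $\delta_A$, and above all the coherence law of the pseudofunctor that relates $\delta_{1_A ; 1_A}$ to $\delta_A$. The LHS expands, via the definitions of $c_{1_A ; 1_A}$ and $c'_{\delta ; 1_A ; 1_A}$ given in Section~\ref{S Def Int Groth}, into a composite of the form $\langle q_0 s \delta_{1_A ; 1_A} \ ,\ q_0 \langle s\delta_A , 1 \rangle c D(1_A)_1\ ,\ q_1 \langle s\delta_A , 1 \rangle c \rangle c$; using the functoriality identity $\langle x, y \rangle c D(1_A)_1 = \langle x D(1_A)_1 , y D(1_A)_1 \rangle c$, this becomes a four-fold composite that one can group appropriately. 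The RHS $c (\ell_A)'_1 \pi_1$ unfolds as $\langle c s \delta_A \ ,\ c \rangle c = \langle q_0 s \delta_A \ ,\ c \rangle c$.

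The heart of the argument, and the main obstacle, will be showing these two expanded composites coincide. The key reduction uses the identity-coherence law for the pseudofunctor $D$, namely $\langle \delta_{1_A ; 1_A} \ ,\ \delta_A D(1_A)_1 \rangle c = e_A D(1_A)_1$ together with its naturality counterpart $\langle e_A D(1_A)_1, D(1_A)_1 \rangle$ resolving back to $D(1_A)_1$ via the identity law in $D(A)$. Pairing these with naturality of $\delta_A$ (which exchanges $D(1_A)_1 \delta_A$-style instances with plain $\delta_A$ postcomposed by $1$) collapses the triple of structure isomorphisms on the LHS down to a single $\delta_A$. This is essentially the same bookkeeping as in Proposition~\ref{prop id laws for int groth}, and it mirrors the coherence manipulation used to prove the left identity law, which suggests rewriting both sides in a symmetric form and reading off equality rather than chasing components sequentially.

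Once the $\pi_1$-projections agree, the universal property of $D_{1_A}$ yields the claimed equality $\langle q_0 (\ell_A)'_1 , q_1 (\ell_A)'_1 \rangle c_{1_A ; 1_A} = c (\ell_A)'_1$. I expect the main technical friction to be keeping the three distinct coherence components ($\delta_A$, $\delta_{1_A ; 1_A}$, and the naturality square of $\delta_A$) straight while re-associating internal composition; a careful colour-coded labeling of which $\delta$ belongs to which identity law, together with explicit use of the identity laws $(se, 1)c = 1$ and $(1, te) c = 1$ in $D(A)$ at the very end, should be enough to close the calculation.
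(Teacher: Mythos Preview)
Your proposal is correct and follows essentially the same approach as the paper: both arguments reduce the equality to checking the two projections $\pi_0, \pi_1$ of $D_{1_A}$, dispatch the $\pi_0$-projection immediately via $c_{1_A;1_A}\pi_0 = p_0\pi_0$ and $cs = q_0 s$, and then unfold the $\pi_1$-projection through the definition $c_{1_A;1_A}\pi_1 = c'_{\delta;1_A;1_A}c$ into a triple composite in $D(A)$ which is collapsed using functoriality of $D(1_A)$, naturality of $\delta_A$, the coherence law relating $\delta_{1_A;1_A}$ and $\delta_A$, and the identity laws in $D(A)$. The paper's proof records this $\pi_1$ calculation as a single long chain of equalities, but the logical structure and the ingredients you list are exactly those used there.
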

\begin{proof}
First compute the $0$'th projection:

\begin{align*}
( q_0 (\ell_A)'_1 {,} q_1 (\ell_A)'_1 ) c_{1_A; 1_A} \pi_0 
&= ( q_0 (\ell_A)'_1 {,} q_1 (\ell_A)'_1 ) p_0 \pi_0 &\text{Def. } \\
&= q_0 (\ell_A)'_1 \pi_0 &\text{Def. } \\
&= q_0 s &\text{Def. } (\ell_A)'_1\\
&= c s &\text{Def. } c \\
&= c (\ell_A)'_1 \pi_0 &\text{Def. } (\ell_A)'_1
\end{align*}

\noi For the first projection we break up equalities on separate lines and provide justification for each step in between once again for readability. Starting with the equation, 

\[
( q_0 (\ell_A)'_1 {,} q_1 (\ell_A)'_1 ) c_{1_A; 1_A} \pi_1 
= ( q_0 (\ell_A)'_1 {,} q_1 (\ell_A)'_1 ) c'_{\delta ; 1_A ; 1_A} c, \] 

\noi the right-hand side is equal to 
\[ ( q_0 s \delta_{1_A ; 1_A} ,
q_0 ( s \delta_A ,  1_{D(A)_1} ) c D(1_A)_1 \ , 
q_1 ( s \delta_A ,  1_{D(A)_1} ) c ) c \]

\noi by definition of $ c'_{1_A ; 1_A}$. By functoriality of $D(1_A)$ the last term is equal to

\[ ( q_0 s \delta_{1_A ; 1_A} ,
q_0 ( s \delta_A D(1_A)_1 , D(1_A)_1 ) c ,
q_1 ( s \delta_A ,  1_{D(A)_1} ) c ) c \]

\noi which, by associativity of internal composition, is equal to

\[ ( q_0 s \delta_{1_A ; 1_A} , \ q_0 s \delta_A D(1_A)_1 ,
( q_0 D(1_A)_1 , q_1 s \delta_A ) c ,\  q_1 ) c . \]

\noi The definition of $D(A)_2$ makes this equal to 

\[ ( q_0 s \delta_{1_A ; 1_A} , \ q_0 s \delta_A D(1_A)_1 ,
( q_0 D(1_A)_1 , q_0 t \delta_A ) c ,\  q_1 ) c \]

\noi which, by factoring maps with respect to pairing maps, is equal to

\[ ( q_0 s \delta_{1_A ; 1_A} , \ q_0 s \delta_A D(1_A)_1 ,
q_0 ( D(1_A)_1 , t \delta_A ) c ,\  q_1 ) c .\]

\noi Naturality of $\delta_A$ makes this equal to 
\[ ( q_0 s \delta_{1_A ; 1_A} , \ q_0 s \delta_A D(1_A)_1 ,
q_0 ( s \delta_A , 1_{D(A)_1} ) c ,\  q_1 ) c \] 

\noi and by associativity we get 

\[ (  q_0 s \delta_{1_A ; 1_A} , 
( q_0 s \delta_A D(1_A)_1 , q_0 s \delta_A ) c , \ q_0  ,\  q_1 ) c .\] 

\noi Factoring with respect to pairing maps gives 

\[ ( q_0 s \delta_{1_A ; 1_A} , 
q_0 s ( \delta_A D(1_A)_1 , \delta_A ) c , \ q_0 , \  q_1 ) c \]

\noi and associativity then gives

\[ ( ( q_0 s\delta_{1_A ; 1_A}  q_0 s \delta_A D(1_A)_1 ) c , q_0 s \delta_A , \ q_0 ,\  q_1 ) c. \]

\noi By factoring again we get
\[ ( q_0 s ( \delta_{1_A ; 1_A} , 
\delta_A D(1_A)_1 ) c ,
q_0 s \delta_A , \ q_0 ,\  q_1 ) c \]

\noi and by coherence of the structure isomorphisms for the pseudofunctor $D$ this becomes 

\[ ( q_0 s e_A , \ q_0 s \delta_A , \ q_0  ,\  q_1 ) c. \]

\noi By associativity of internal composition we have equality with

\[ ( q_0 s ( e_A , \delta_A) c , \ q_0  ,\  q_1 ) c \]

\noi and by factoring with pairing maps we get equality with 
\[( q_0 s ( 1_{D(A)_0} ,  \delta_A ) ( e_A , 1_{D(A)_1} ) c ,
q_0 ,\  q_1 ) c]\]

\noi The identity law in $D(A)$ makes the last term equal to 

\[ ( q_0 s ( 1_{D(A)_0} ,  \delta_A ) p_1  , \ q_0  ,\  q_1 ) c \]

\noi and by definition of the pullback projections we get 

\[ ( q_0 s \delta_A  , \ q_0 \ ,\  q_1 ) c \]

\noi Definition of internal composition gives equality with 

\[ ( c s \delta_A  , \ q_0 \ ,\  q_1 ) c \]

\noi and associativity gives 

\[ ( c s \delta_A  , \ ( q_0 , q_1 ) c ) c . \]

\noi The universal property of the pullbacks $D(A)_2$ make this equal to 
\[ ( c s \delta_A  , \ 1_{D(A)_2} c ) c \]
\noi which becomes the left-hand side of the final equality:
\[ c ( s \delta_A  , \ 1_{D(A)_1} ) c = c (\ell_A)'_1 \pi_1\]

\noi The result follows by the universal property of the pullback $D_\varphi$. 
\end{proof}\

\subsection{Lemmas for 2-cells of the Canonical Lax Transformation}

\label{SS 2-cells of l}

\begin{lem}\label{Lem sidecalc 1}
\[ ( (\ell_A)'_1 \iota_{1_A} 
\ , \ 
 t ( 1_{D(A)_0} , D(\varphi)_0 e_B ) \iota_\varphi) 
 = ( (\ell_A)'_1 
\ ,\
 t ( 1_{D(A)_0} , D(\varphi)_0 e_B ) ) \iota_{1_A ; \varphi}\] 
\end{lem}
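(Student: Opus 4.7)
The plan is to invoke the universal property of the pullback $\mD_2 = \mD_1 \tensor[_t]{\times}{_s} \mD_1$. Both sides of the claimed equality are maps $D(A)_1 \to \mD_2$, so to show they coincide it suffices to check agreement after post-composing with the two pullback projections $\rho_0, \rho_1 : \mD_2 \to \mD_1$.

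First I would compute both sides against $\rho_0$ and $\rho_1$. On the left, by definition of a pairing into a pullback, we have $(\,(\ell_A)'_1 \iota_{1_A}\,,\,t(1_{D(A)_0}, D(\varphi)_0 e_B)\iota_\varphi\,)\rho_0 = (\ell_A)'_1 \iota_{1_A}$ and similarly $\rho_1$ picks out $t(1_{D(A)_0},D(\varphi)_0 e_B)\iota_\varphi$. On the right, the task reduces to understanding how the cofiber inclusion $\iota_{1_A;\varphi} : D_{1_A ; \varphi} \to \mD_2$ interacts with $\rho_0$ and $\rho_1$.

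The key fact I would use is that, by the extensivity condition baked into Definition~\ref{def E admits an internal category of elements of D} (the fourth axiom, stating that the coproducts $\mD_{\coprod(n)}$ are stable under pullback along the source and target maps), the coproduct inclusions of $\mD_2$ factor the cofiber projections through those of $\mD_1$. Explicitly, the cube showing $\iota_{1_A;\varphi} \rho_0 = p_0 \iota_{1_A}$ and $\iota_{1_A;\varphi} \rho_1 = p_1 \iota_\varphi$ (where $p_0,p_1$ are the projections of the cofiber $D_{1_A;\varphi}$) is precisely the extensivity cube appearing at the end of that definition. With these two identities in hand, composing the right-hand side with $\rho_0$ yields $((\ell_A)'_1, t(1_{D(A)_0}, D(\varphi)_0 e_B)) p_0 \iota_{1_A} = (\ell_A)'_1 \iota_{1_A}$, and composing with $\rho_1$ yields $((\ell_A)'_1, t(1_{D(A)_0}, D(\varphi)_0 e_B)) p_1 \iota_\varphi = t(1_{D(A)_0}, D(\varphi)_0 e_B)\iota_\varphi$.

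Since both sides agree under the pullback projections, uniqueness of the map into $\mD_2$ finishes the proof. The only genuinely non-routine step is isolating the extensivity identities $\iota_{1_A;\varphi} \rho_i = p_i \iota_{-}$; everything else is bookkeeping with pairing maps. I do not expect any substantial obstacle beyond carefully recording the cube that these identities come from.
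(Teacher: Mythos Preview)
Your proposal is correct and matches the paper's proof essentially line for line: the paper invokes the universal property of $\mD_2$, uses the identities $\iota_{1_A;\varphi}\rho_0 = p_0\iota_{1_A}$ and $\iota_{1_A;\varphi}\rho_1 = p_1\iota_\varphi$, and reads off the two components of the pairing map on the left-hand side. Your additional remark about where those identities come from (the extensivity cube in Definition~\ref{def E admits an internal category of elements of D}) is accurate and makes explicit what the paper leaves implicit.
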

\begin{proof}
By the universal property of $\mD_2$ it suffices to compute 

\begin{align*}
( (\ell_A)'_1 
\ ,\
 t ( 1_{D(A)_0} , D(\varphi)_0 e_B ) ) \iota_{1_A ; \varphi} \rho_0 
 &= ( (\ell_A)'_1 
\ ,\
 t ( 1_{D(A)_0} , D(\varphi)_0 e_B ) ) p_0 \iota_{1_A} \\
 &= (\ell_A)'_1 \iota_{1_A} 
\end{align*}
\noi and 
\begin{align*}
( (\ell_A)'_1 
\ ,\
 t ( 1_{D(A)_0} , D(\varphi)_0 e_B ) ) \iota_{1_A ; \varphi} \rho_1 
 &= ( (\ell_A)'_1 
\ ,\
 t ( 1_{D(A)_0} , D(\varphi)_0 e_B ) ) p_1 \iota_\varphi \\
 &= t ( 1_{D(A)_0} , D(\varphi)_0 e_B ) \iota_\varphi.
\end{align*}
\end{proof}

\noi Notice the following computation contains the first functoriality argument for the naturality proof above in the case $\cE = \Set$. 

\begin{lem}\label{Lem sidecalc 2}
\[ ( (\ell_A)'_1 
, \
t ( 1_{D(A)_0} , D(\varphi)_0 e_B ) 
) c'_{\delta ; 1_A ; \varphi} 
=
( s \delta_{1_A ; \varphi} 
, \ 
( s \delta_A D(\varphi)_1 , D(\varphi)_1 ) c 
, \ 
t e_A D(\varphi)_1 ) 
\]

\end{lem}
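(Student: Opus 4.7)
The plan is to use the universal property of the iterated pullback $D(B)_3 = D(B)_2 \tensor[_{q_1}]{\times}{_{q_0}} D(B)_2$. Since any map into $D(B)_3$ is determined by its three component projections into $D(B)_1$, namely $q_{01} q_0$, $q_{01} q_1 = q_{12} q_0$, and $q_{12} q_1$, verifying the displayed equality amounts to checking that both sides agree on each of these three projections. The right-hand side is presented precisely as such a triple, so I only need to compute the three projections of the left-hand side and match them with $s \delta_{1_A ; \varphi}$, $(s \delta_A D(\varphi)_1 , D(\varphi)_1) c$, and $t e_A D(\varphi)_1$ respectively.

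First I would substitute the definition $(\ell_A)'_1 = \langle s, \langle s\delta_A, 1_{D(A)_1}\rangle c\rangle$ into the pair on the left, and recall that $c'_{\delta ; 1_A ; \varphi}$ is, by construction earlier in the chapter, the unique factorization through $D(B)_3$ of the two maps $c'_{\delta ; (1_A ; \varphi)}$ and $c'_{1_A ; \varphi}$, each of which is itself a pairing into $D(B)_2$ arising from the cofiber-composition diagrams (\ref{dgm cofiber comp 1}) and (\ref{dgm cofiber comp 2}). Then for the $q_{01}q_0$-projection, the factorization through $c'_{\delta ; (1_A ; \varphi)}$ routes the first component of the input pair through the structure isomorphism $\delta_{1_A ; \varphi}$, which readily produces $s \delta_{1_A ; \varphi}$. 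For the $q_{12}q_1$-projection, the factorization through $c'_{1_A ; \varphi}$ applies $D(\varphi)_1$ to the $\pi_1$-component of the $\ell_A$-triangle data, yielding $t e_A D(\varphi)_1$ after identifying $\pi_1 \iota_{1_A} = t e_A$ via the definition of the $D_{1_A}$ pullback.

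The middle projection is the delicate one: both $c'_{\delta ; (1_A ; \varphi)} q_1$ and $c'_{1_A ; \varphi} q_0$ must agree on the $q_{01} q_1 = q_{12} q_0$ projection into $D(B)_1$, and I expect this common value to be $(s \delta_A D(\varphi)_1 , D(\varphi)_1) c$. The computation here uses (i) the definition of $(\ell_A)'_1$, giving the composite $\langle s \delta_A, 1\rangle c$ on the left component, (ii) functoriality of $D(\varphi)$, which commutes $D(\varphi)_1$ past the composition pairing, and (iii) the compatibility built into $c'_{\delta ; (1_A;\varphi)}$ between $\delta_A$ and $D(\varphi)_1$ embedded in the Ore square.

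The main obstacle will be bookkeeping: ensuring the universal maps are expanded correctly and that the compatibility equations defining each of $c'_{\delta ; (1_A;\varphi)}$, $c'_{1_A;\varphi}$, and the iterated factorization $c'_{\delta ; 1_A ; \varphi}$ are applied in the right order. No deep categorical input is required beyond functoriality of $D(\varphi)$, the definition of $D_{1_A}$, and the identity-law for internal composition in $D(B)$; the proof is essentially a careful chase through pairing-map calculus, analogous in spirit to Lemma~\ref{Lem sidecalc 1} but with an extra composable factor tracked through $D(B)_3$.
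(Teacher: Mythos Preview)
Your approach is exactly the paper's: check the three projections $q_{01}q_0$, $q_{01}q_1$, $q_{12}q_1$ of $D(B)_3$ and match them against the displayed triple. Two small corrections to your bookkeeping, though. For the $q_{12}q_1$-projection, $c'_{1_A;\varphi}q_1 = p_1\pi_1$ does \emph{not} apply $D(\varphi)_1$; it simply extracts $\pi_1$ of the \emph{second} component of the pair, namely $t(1_{D(A)_0}, D(\varphi)_0 e_B)\pi_1 = t\,D(\varphi)_0 e_B$, which then equals $t\,e_A D(\varphi)_1$ by functoriality of $D(\varphi)$ on identities --- the $\ell_A$ data and $\iota_{1_A}$ play no role here. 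For the middle projection, your points (i) and (ii) are exactly what is needed ($c'_{\delta;(1_A;\varphi)}q_1 = p_0\pi_1 D(\varphi)_1$ followed by unwinding $(\ell_A)'_1\pi_1 = \langle s\delta_A, 1\rangle c$ and functoriality); there is no ``Ore square'' in this chapter, so drop (iii).
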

\begin{proof}
By the universal property of $D(B)_3$ it suffices to check three equalities. First we have 

\begin{align*}
( (\ell_A)'_1 
, \
t ( 1_{D(A)_0} , D(\varphi)_0 e_B ) 
) c'_{\delta ; 1_A ; \varphi} q_{01} q_0 
&= ( (\ell_A)'_1 
, \
t ( 1_{D(A)_0} , D(\varphi)_0 e_B ) 
) p_0 \pi_0 \delta_{1_A ; \varphi} \\
&= (\ell_A)'_1 \pi_0 \delta_{1_A ; \varphi}  \\
&= s \delta_{1_A ; \varphi} 
\end{align*}

\noi where the first equality is by definition of $c'_{\delta ; 1_A ; \varphi}$, the second line is by definition of the pullback projection, $p_0$, and the third line is by definition of $ (\ell_A)'_1$. Second, 

\begin{align*}
( (\ell_A)'_1 
, \
t ( 1_{D(A)_0} , D(\varphi)_0 e_B ) 
) c'_{\delta ; 1_A ; \varphi} q_{01} q_1 
&= ( (\ell_A)'_1 
, \
t ( 1_{D(A)_0} , D(\varphi)_0 e_B ) 
) p_0 \pi_1 D( \varphi)_1 \\
&= (\ell_A)'_1 \pi_1 D( \varphi)_1 & \text{} \\
&= ( s \delta_A , 1_{D(A)_1}) c D( \varphi)_1 \\
&=( s \delta_A D(\varphi)_1 , D(\varphi)_1) c 
\end{align*}

\noi where the first line is by definition of $c'_{\delta ; 1_A}$, the second line is by definition of the pullback projection $p_0$ and the pairing map it is precomposed with, the third line is be definition of $(\ell_A)'_1$, and the last line is by functoriality of $D(\varphi)$. Finally we can see 

\begin{align*}
( (\ell_A)'_1 
, \
t ( 1_{D(A)_0} , D(\varphi)_0 e_B ) 
) c'_{\delta ; 1_A ; \varphi} q_{12} q_1 
&= ( (\ell_A)'_1 
, \
t ( 1_{D(A)_0} , D(\varphi)_0 e_B ) 
) c'_{1_A ; \varphi} q_1 \\
&= ( (\ell_A)'_1 
, \
t ( 1_{D(A)_0} , D(\varphi)_0 e_B ) 
) p_1 \pi_1 \\
&= t ( 1_{D(A)_0} , D(\varphi)_0 e_B ) \pi_1 \\
&= t D(\varphi)_0 e_B \\
&= t e_A D(\varphi)_1 \\
&= D(\varphi)_1 \\
\end{align*}

\noi where the first line is by definition of $ c'_{\delta ; 1_A ; \varphi}$, the second line is by definition of $ c'_{1_A ; \varphi}$, the third line is by definition of the pullback projection $p_1$, the third line is by definition of the pullback projection $p_1$, the fourth line is by functoriality of $D(\varphi)$ and the last line is by definition of the identity structure map, $e_A$, of $D(A)$. 
\end{proof}

\noi The previous calculation is an intermediate step for the following lemma which we use in our naturality computation at the end of this subsection. 

\begin{lem}\label{Lem sidecalc 3} 
\[ ( (\ell_A)'_1 \ , \ t ( 1_{D(A)_0} , D(\varphi)_0 e_B ) ) c_{1_A ; \varphi} = ( s , \ ( s \delta_{1_A ; \varphi} , s \delta_A D(\varphi)_1 , D(\varphi)_1 ) c ) \] 
\end{lem}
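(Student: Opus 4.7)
The plan is to invoke the universal property of the pullback $D_\varphi$, which reduces the claim to verifying that the two sides agree after post-composition with the pullback projections $\pi_0 : D_\varphi \to D(A)_0$ and $\pi_1 : D_\varphi \to D(B)_1$ separately. Both sides are maps $D(A)_1 \to D_\varphi$, and the defining equation $c_{1_A ; \varphi} = ( p_0 \pi_0,\ c'_{\delta ; 1_A ; \varphi}\, c)$ of cofiber composition lets me compute each projection independently.

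For the $\pi_0$-projection the calculation is immediate. Post-composing the left side with $\pi_0$ and unfolding $c_{1_A;\varphi}$ via its pairing description reduces it to $( (\ell_A)'_1,\ t(1_{D(A)_0}, D(\varphi)_0 e_B)) p_0 \pi_0 = (\ell_A)'_1 \pi_0 = s$, using the definition $(\ell_A)'_1 = ( s,\ (s\delta_A, 1_{D(A)_1}) c)$. The right-hand side projects to $s$ by definition of the pairing, so the two agree.

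For the $\pi_1$-projection I would first apply the defining equation of $c_{1_A;\varphi}$ to rewrite the left side as $( (\ell_A)'_1,\ t(1_{D(A)_0}, D(\varphi)_0 e_B))\, c'_{\delta ; 1_A ; \varphi}\, c$, and then invoke Lemma~\ref{Lem sidecalc 2} to expand the inner composable triple, obtaining $( s\delta_{1_A;\varphi},\ (s\delta_A D(\varphi)_1, D(\varphi)_1)c,\ t e_A D(\varphi)_1 ) c$. The remaining task is to show that the trailing factor $t e_A D(\varphi)_1$ is absorbed by the identity law in $D(B)$, leaving the composite $h := ( s\delta_{1_A;\varphi},\ s\delta_A D(\varphi)_1,\ D(\varphi)_1 ) c$ that appears as the second component of the right-hand side.

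I expect the main obstacle to be this identity-law cancellation, which requires recognizing $t e_A D(\varphi)_1$ as the ``identity on the target'' of $h$. Internal functoriality of $D(\varphi)$ gives $e_A D(\varphi)_1 = D(\varphi)_0 e_B$, and hence $t e_A D(\varphi)_1 = D(\varphi)_1 t e_B$; the target of the triple composite $h$ equals $D(\varphi)_1 t$, so $t e_A D(\varphi)_1 = h\, t e_B$. After re-associating the overall composite by associativity of internal composition to isolate $h$, the right-identity law in $D(B)$ yields $(h,\ h t e_B) c = h$, which matches the $\pi_1$-projection of the right-hand side. The lemma then follows from the universal property of $D_\varphi$.
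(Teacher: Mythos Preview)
Your proposal is correct and follows essentially the same approach as the paper: both reduce to the universal property of $D_\varphi$, handle $\pi_0$ identically, and for $\pi_1$ both invoke Lemma~\ref{Lem sidecalc 2} and then cancel the trailing identity factor. The only minor difference is the order of operations in that cancellation: the paper re-associates to isolate $(D(\varphi)_1,\, t e_A D(\varphi)_1)c$, factors out $D(\varphi)_1$ by functoriality, and applies the identity law in $D(A)$, whereas you re-associate to isolate the full composite $h$ first, push the trailing factor to $D(B)$ via functoriality, and apply the identity law there; both routes are equally valid.
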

\begin{proof}
By the universal property of $D_\varphi$ it suffices to compute the pullback projections and check that they're equal. First we can see 

\begin{align*}
( (\ell_A)'_1 \ , \ t ( 1_{D(A)_0} , D(\varphi)_0 e_B ) ) c_{1_A ; \varphi} \pi_0 
&= ( (\ell_A)'_1 \ , \ t ( 1_{D(A)_0} , D(\varphi)_0 e_B ) ) p_0 \pi_0 \\
&= (\ell_A)'_1 \pi_0 \\
&= s
\end{align*}

\noi by definition of the pullback projections, $p_0$ and $\pi_0$, and the map $(\ell_A)'_1$. Next we can see 

\begin{align*}
& \ \ \ \ 
( (\ell_A)'_1 \ , \ t ( 1_{D(A)_0} , D(\varphi)_0 e_B ) ) c_{1_A ; \varphi} \pi_1\\
&= ( (\ell_A)'_1 \ , \ t ( 1_{D(A)_0} , D(\varphi)_0 e_B ) ) c'_{\delta ' 1_A ; \varphi} c \\
&= ( s \delta_{1_A ; \varphi} 
, \ 
( s \delta_A D(\varphi)_1 , D(\varphi)_1 ) c 
, \ 
t e_A D(\varphi)_1 ) c \\
&= ( s \delta_{1_A ; \varphi} 
, \ 
 s \delta_A D(\varphi)_1 , \ ( D(\varphi)_1 , t e_A D(\varphi)_1 ) c ) c \\
 &= ( s \delta_{1_A ; \varphi} 
, \ 
 s \delta_A D(\varphi)_1 , \ ( 1_{D(A)_1} , t e_A ) c D(\varphi)_1 ) c \\
 &= ( s \delta_{1_A ; \varphi} 
, \ 
 s \delta_A D(\varphi)_1 , \ ( 1_{D(A)_1} , t ) ( p_0 , p_1 e_A ) c D(\varphi)_1 ) c \\ 
 &= ( s \delta_{1_A ; \varphi} 
, \ 
 s \delta_A D(\varphi)_1 , \ ( 1_{D(A)_1} , t ) p_0 D(\varphi)_1 ) c \\ 
 &= ( s \delta_{1_A ; \varphi} 
, \ 
 s \delta_A D(\varphi)_1 , \ D(\varphi)_1 ) c.
\end{align*}

where the first line is by definition of $c_{1_A; \varphi}$, the second line is by Lemma~\ref{Lem sidecalc 2}, the third line is by associativity of composition, the fourth line is by functoriality of $D(\varphi)$, the fifth line is given by factoring a pairing map, the sixth line is coming from the identity law in $D(A)$, and the last line is by definition of the pullback projection $p_0$.

\end{proof}

\noi The remaining lemmas are side calculations that show different ways of representing internal compositions involving certain pairing maps. We used them to prove results about the 1-cells of the canonical lax natural transformation, $\ell$. 

\begin{lem}\label{Lem sidecalc 4}

The cofiber composition, $D(A)_0 \to D_{\varphi}$, given by the term 
\[ ( 
  s ( 1_{D(A)_0} , e_A D(\varphi)_1 ) 
  , \ 
  ( s D(\varphi)_0 , D(\varphi)_1 (\ell_B)'_1 \pi_1 ) ) c'_{\delta ; \varphi ; 1_B} 
  \]
  \noi is equal to 
 \[ ( s \delta_{\varphi ; 1_B} , s e_A D(\varphi)_1 D(1_B)_1 , D(\varphi)_1 (\ell_B)'_1 \pi_1 ) \]
\end{lem}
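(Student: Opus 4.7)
The plan is to verify this equality by appealing to the universal property of the triple pullback $D(B)_3$ that serves as the codomain of $c'_{\delta ; \varphi ; 1_B}$. Since $D(B)_3 = D(B)_1 \tensor[_t]{\times}{_s} D(B)_1 \tensor[_t]{\times}{_s} D(B)_1$, it suffices to check that the two sides of the claimed equality agree after post-composing with the three canonical projections $q_{01}q_0$, $q_{01}q_1$, and $q_{12}q_1$ down to $D(B)_1$.

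First I would unpack the definition of $c'_{\delta ; \varphi ; 1_B}$, which by construction is the unique map induced by the cone consisting of $c'_{\delta ; (\varphi ; 1_B)} : D_{\varphi ; 1_B} \to D(B)_2$ (encoding the first two components of the composable triple) and $c'_{\varphi ; 1_B} : D_{\varphi ; 1_B} \to D(B)_2$ (encoding the last two). Explicitly, this forces
\[ c'_{\delta ; \varphi ; 1_B} q_{01} q_0 = p_0 \pi_0 \delta_{\varphi ; 1_B}, \quad c'_{\delta ; \varphi ; 1_B} q_{01} q_1 = p_0 \pi_1 D(1_B)_1, \quad c'_{\delta ; \varphi ; 1_B} q_{12} q_1 = p_1 \pi_1, \]
analogous to the computation used in Lemma~\ref{Lem composing first two maps in a triple in Groth Const}. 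I would then precompose with the pairing map $\bigl( s ( 1_{D(A)_0} , e_A D(\varphi)_1 ), \ ( s D(\varphi)_0 , D(\varphi)_1 (\ell_B)'_1 \pi_1 ) \bigr) : D(A)_0 \to D_{\varphi ; 1_B}$.

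For the first projection, the calculation reduces, after unfolding the projection $p_0 \pi_0$, to $s D(\varphi)_0 \cdot (\text{something landing in})\ D(A)_0 \xrightarrow{\delta_{\varphi ; 1_B}} D(B)_1$, which by naturality of $\delta_{\varphi ; 1_B}$ or the relation $p_0 \pi_0 \delta_{\varphi ; 1_B} \circ (\text{left factor}) = s \delta_{\varphi ; 1_B}$ yields the first component $s \delta_{\varphi ; 1_B}$. For the second, unfolding $p_0 \pi_1 D(1_B)_1$ and using the definition of $(\ell_A)'_1 = \langle s , \langle s \delta_A , 1 \rangle c \rangle$ applied at the level of $1$'s replaced by $e_A D(\varphi)_1$, together with functoriality of $D(1_B)$, will produce $s e_A D(\varphi)_1 D(1_B)_1$. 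For the third projection, the right-hand factor survives unchanged and gives $D(\varphi)_1 (\ell_B)'_1 \pi_1$, matching the third component.

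The main obstacle will be the bookkeeping: carefully tracking how the pairing map into $D_{\varphi ; 1_B}$ interacts with the pullback projections $p_0, p_1$ and then with $\pi_0, \pi_1$ of the cofibers $D_\varphi$ and $D_{1_B}$, while correctly applying the identity law $e_A$ for $D(A)$, functoriality of $D(\varphi)$ and $D(1_B)$, and the defining relations of $(\ell_B)'_1$. There is no conceptual difficulty beyond the universal property argument; the verification is essentially three diagram chases in $D(B)$, of the same flavor as Lemmas~\ref{Lem sidecalc 2} and~\ref{Lem sidecalc 3}, but one has to be very careful not to drop an $e_A$ or transpose a $D(\varphi)_1$ against a $D(1_B)_1$ without invoking functoriality.
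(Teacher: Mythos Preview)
Your approach is correct and matches the paper's proof exactly: verify the equality via the universal property of $D(B)_3$ by checking the three projections $q_{01}q_0$, $q_{01}q_1$ (equivalently $q_{12}q_0$), and $q_{12}q_1$. The actual computations are even simpler than you anticipate---each reduces to a one-line substitution of the pairing-map components (e.g.\ $p_0\pi_0$ applied to $s(1_{D(A)_0}, e_A D(\varphi)_1)$ is just $s$, so the first projection is $s\delta_{\varphi;1_B}$ immediately), with no need for naturality of $\delta_{\varphi;1_B}$ or the definition of $(\ell_A)'_1$.
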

\begin{proof}
By the universal property of $D(B)_3$, it suffices to check the three projections $D(B)_3 \to D(B)_1$. First we have 

\begin{align*}
& \quad \ ( 
  s ( 1_{D(A)_0} , e_A D(\varphi)_1 ) 
  , \ 
  ( s D(\varphi)_0 , D(\varphi)_1 (\ell_B)'_1 \pi_1 ) ) c'_{\delta ; \varphi ; 1_B} q_{01} q_0 \\
  &= ( 
  s ( 1_{D(A)_0} , e_A D(\varphi)_1 ) 
  , \ 
  ( s D(\varphi)_0 , D(\varphi)_1 (\ell_B)'_1 \pi_1 ) ) p_0 \pi_0 \delta_{\varphi ; 1_B} \\
  &= 
  s ( 1_{D(A)_0} , e_A D(\varphi)_1 ) 
  \pi_0 \delta_{\varphi ; 1_B} \\
  &= s \delta_{\varphi ; 1_B} & , 
\end{align*}

\noi Second we have 

\begin{align*}
& \quad \ ( 
  s ( 1_{D(A)_0} , e_A D(\varphi)_1 ) 
  , \ 
  ( s D(\varphi)_0 , D(\varphi)_1 (\ell_B)'_1 \pi_1 ) ) c'_{\delta ; \varphi ; 1_B} q_{12} q_0 \\
  &= ( 
  s ( 1_{D(A)_0} , e_A D(\varphi)_1 ) 
  , \ 
  ( s D(\varphi)_0 , D(\varphi)_1 (\ell_B)'_1 \pi_1 ) ) p_0 \pi_1 D(1_B)_1 \\
  &=
  s ( 1_{D(A)_0} , e_A D(\varphi)_1 ) \pi_1 D(1_B)_1 \\
  &= s e_A D(\varphi)_1 D(1_B)_1 &, 
\end{align*}

\noi and finally 

\begin{align*}
& \quad \ ( 
  s ( 1_{D(A)_0} , e_A D(\varphi)_1 ) 
  , \ 
  ( s D(\varphi)_0 , D(\varphi)_1 (\ell_B)'_1 \pi_1 ) ) c'_{\delta ; \varphi ; 1_B} q_{12} q_1 \\
  &= ( 
  s ( 1_{D(A)_0} , e_A D(\varphi)_1 ) 
  , \ 
  ( s D(\varphi)_0 , D(\varphi)_1 (\ell_B)'_1 \pi_1 ) ) p_1 \pi_1 \\
  &= 
  ( s D(\varphi)_0 , D(\varphi)_1 (\ell_B)'_1 \pi_1 )\pi_1 \\
  &= D(\varphi)_1 (\ell_B)'_1 \pi_1 
\end{align*}. 
\end{proof}

\noi 

\begin{lem}\label{Lem sidecalc 5}

The pairing map 
\[ 
( s 
, \
  ( s \delta_{\varphi ; 1_B} , s D(\varphi)_0e_B D(1_B)_1 , s D(\varphi)_0 \delta_B , D(\varphi)_1(\ell_B)'_1 \pi_1 ) c )\]
  \noi is equal to the cofiber composition
  \[ ( 
  s ( 1_{D(A)_0} , e_A D(\varphi)_1 ) 
  , \ 
  ( s D(\varphi)_0 , D(\varphi)_1 (\ell_B)'_1 \pi_1 ) ) c_{\varphi ; 1_B} \]
\end{lem}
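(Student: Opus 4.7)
The plan is to reduce the equality to a check on the two pullback projections of $D_\varphi$. By the universal property of the pullback
\[
D_\varphi = D(A)_0 \tensor[_{D(\varphi)_0}]{\times}{_s} D(B)_1,
\]
it suffices to verify that the two displayed maps $D(A)_1 \to D_\varphi$ agree after post-composing with $\pi_0$ and after post-composing with $\pi_1$.

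First I would dispatch the $\pi_0$ component. Since the left-hand side is built as a pair whose first coordinate is already $s$, we immediately get $(\mathrm{LHS})\pi_0 = s$. For the right-hand side, I invoke Lemma~\ref{Lem source and target of component composite}, which says $c_{\varphi;1_B}\pi_0 = p_0\pi_0$ (the source of a cofiber composite is the source of the first map). Thus
\[
(\mathrm{RHS})\pi_0 \;=\; \bigl(s(1_{D(A)_0}, e_A D(\varphi)_1),\; (sD(\varphi)_0,\, D(\varphi)_1(\ell_B)'_1\pi_1)\bigr) p_0 \pi_0 \;=\; s(1_{D(A)_0}, e_A D(\varphi)_1)\pi_0 \;=\; s,
\]
matching the left-hand side.

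The substantive step is the $\pi_1$ projection. Using the definition $c_{\varphi;1_B} = (p_0\pi_0,\; c'_{\delta;\varphi;1_B}\, c)$, I rewrite $(\mathrm{RHS})\pi_1 = \bigl(s(1_{D(A)_0}, e_A D(\varphi)_1),\; (sD(\varphi)_0,\, D(\varphi)_1(\ell_B)'_1\pi_1)\bigr) c'_{\delta;\varphi;1_B}\, c$ and apply Lemma~\ref{Lem sidecalc 4} to identify this pre-composite as the triple $(s\delta_{\varphi;1_B},\, s e_A D(\varphi)_1 D(1_B)_1,\, D(\varphi)_1(\ell_B)'_1\pi_1)\, c$. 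Meanwhile $(\mathrm{LHS})\pi_1$ is the quadruple composite $(s\delta_{\varphi;1_B},\, sD(\varphi)_0 e_B D(1_B)_1,\, sD(\varphi)_0 \delta_B,\, D(\varphi)_1(\ell_B)'_1\pi_1)\, c$. To reconcile the two, I first use internal functoriality of $D(\varphi)$ in the form $e_A D(\varphi)_1 = D(\varphi)_0 e_B$ to rewrite the middle factor of the triple as $sD(\varphi)_0 e_B D(1_B)_1$, so the two composites differ only by the extra factor $sD(\varphi)_0 \delta_B$ sitting in the left-hand quadruple. I would then factor out $sD(\varphi)_0$ from the middle, associate composition in $D(B)$, and use the naturality of $\delta_B : D(1_B) \Rightarrow 1_{D(B)}$ applied to the identity $e_B$ together with the identity laws for $D(B)$ to collapse $\bigl(e_B D(1_B)_1,\, \delta_B\bigr) c$ back to a form that absorbs into the surrounding composite, matching the triple produced by Lemma~\ref{Lem sidecalc 4}.

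The main obstacle is purely bookkeeping: the quadruple has to be rewritten using associativity in $D(B)$ into a form where one can isolate the adjacent pair of middle factors and then apply the naturality square of $\delta_B$ simultaneously with the identity law, so that the extra $\delta_B$-factor disappears into the pseudofunctor coherence already encoded by the triple of Lemma~\ref{Lem sidecalc 4}. Once both projections agree, the universal property of $D_\varphi$ finishes the proof.
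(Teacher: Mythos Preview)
Your approach is exactly the paper's: reduce to the two projections of $D_\varphi$, dispatch $\pi_0$ via $c_{\varphi;1_B}\pi_0 = p_0\pi_0$, and compute the $\pi_1$-component of the right-hand side by unfolding $c_{\varphi;1_B}\pi_1 = c'_{\delta;\varphi;1_B}\,c$, applying Lemma~\ref{Lem sidecalc 4}, and then using functoriality $e_A D(\varphi)_1 = D(\varphi)_0 e_B$ to arrive at the triple
\[
\bigl(\,s\delta_{\varphi;1_B},\ sD(\varphi)_0 e_B D(1_B)_1,\ D(\varphi)_1(\ell_B)'_1\pi_1\,\bigr)c.
\]
The paper's proof stops precisely there.

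The ``main obstacle'' you identify --- bridging this triple to the four-term composite in the displayed left-hand side --- is not a real obstacle but an artifact of a misprint in the lemma statement. As written, the four-term composite does not type-check: the target of $sD(\varphi)_0\delta_B$ is $sD(\varphi)_0$, whereas the source of $D(\varphi)_1(\ell_B)'_1\pi_1$ is $sD(\varphi)_0 D(1_B)_0$, and these differ for a non-normal pseudofunctor. Moreover, at the point in the main naturality calculation where this lemma is invoked, the left-hand side is already the three-term form above, the extra $sD(\varphi)_0\delta_B$ having been absorbed into $D(\varphi)_1(\ell_B)'_1\pi_1$ two lines earlier via $(\ell_B)'_1\pi_1 = \langle s\delta_B,1\rangle c$. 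So your proposed collapse via naturality of $\delta_B$ is not needed, and the paper does not attempt it; the intended statement has either the third factor removed or the fourth factor replaced by $D(\varphi)_1$. Modulo this transcription issue, your proof is complete and identical to the paper's.
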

\begin{proof}
By the universal property of $D_\varphi$, it suffices to check that 

\begin{align*}
& \ \ \ \ ( 
  s ( 1_{D(A)_0} , e_A D(\varphi)_1 ) 
  , \ 
  ( s D(\varphi)_0 , D(\varphi)_1 (\ell_B)'_1 \pi_1 ) ) c_{\varphi ; 1_B} \pi_0 \\
 &= ( 
  s ( 1_{D(A)_0} , e_A D(\varphi)_1 ) 
  , \ 
  ( s D(\varphi)_0 , D(\varphi)_1 (\ell_B)'_1 \pi_1 ) ) p_0 \pi_0 \\
  &= 
  s ( 1_{D(A)_0} , e_A D(\varphi)_1 ) \pi_0 \\
  &= s 1_{D(A)_0} \\
  &= s
\end{align*}

\noi and by Lemma~\ref{Lem sidecalc 4} and functoriality of $D(\varphi)$ we have

\begin{align*}
& \ \ \ \ 
( 
  s ( 1_{D(A)_0} , e_A D(\varphi)_1 ) 
  , \ 
  ( s D(\varphi)_0 , D(\varphi)_1 (\ell_B)'_1 \pi_1 ) ) c_{\varphi ; 1_B} \pi_1\\
 &= ( 
  s ( 1_{D(A)_0} , e_A D(\varphi)_1 ) 
  , \ 
  ( s D(\varphi)_0 , D(\varphi)_1 (\ell_B)'_1 \pi_1 ) ) c'_{\delta ; \varphi ; 1_B} c\\
 &= ( s \delta_{\varphi ; 1_B} , s e_A D(\varphi)_1 D(1_B)_1 , D(\varphi)_1 (\ell_B)'_1 \pi_1 ) c  \\
  &= ( s \delta_{\varphi ; 1_B} , s D(\varphi)_0 e_B D(1_B)_1 , D(\varphi)_1 (\ell_B)'_1 \pi_1 ) c 
\end{align*}.
\end{proof}

\begin{lem}\label{Lem sidecalc 6} 
\[ ( 
  s ( 1_{D(A)_0} , e_A D(\varphi)_1 ) 
  , \ 
 D(\varphi)_1 (\ell_B)'_1 ) \iota_{\varphi ; 1_B}
= ( 
  s ( 1_{D(A)_0} , e_A D(\varphi)_1 ) \iota_\varphi 
  , \ 
 D(\varphi)_1 (\ell_B)'_1 \iota_{1_B} ) \]
\end{lem}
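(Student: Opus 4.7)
The plan is to invoke the universal property of the pullback $\mD_2 = \mD_1 \tensor[_t]{\times}{_s} \mD_1$, exactly as in the side-calculations Lemma~\ref{Lem sidecalc 1}, Lemma~\ref{Lem sidecalc 4}, and Lemma~\ref{Lem sidecalc 5} above. The whole content of this lemma is that the cofiber inclusion $\iota_{\varphi ; 1_B} : D_{\varphi ; 1_B} \to \mD_2$ commutes with projections in the expected way, namely
\[ \iota_{\varphi ; 1_B} \rho_0 = p_0 \iota_\varphi \qquad \text{and} \qquad \iota_{\varphi ; 1_B} \rho_1 = p_1 \iota_{1_B}, \]
which is immediate from the cube used to define $\iota_{\varphi ; 1_B}$ in the cofiber decomposition of $\mD_2$ (the stability of coproducts under pullback that is part of Definition~\ref{def E admits an internal category of elements of D}).

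Given these two identities, I would simply post-compose both sides of the claimed equation with $\rho_0$ and $\rho_1$ in turn. On the left, post-composition with $\rho_0$ gives
\[ ( s ( 1_{D(A)_0} , e_A D(\varphi)_1 ) \ ,\ D(\varphi)_1 (\ell_B)'_1 ) \, p_0 \iota_\varphi = s ( 1_{D(A)_0} , e_A D(\varphi)_1 ) \iota_\varphi, \]
while on the right, post-composition with $\rho_0$ yields the same thing by definition of the pullback projection. An identical check against $\rho_1$ gives $D(\varphi)_1 (\ell_B)'_1 \iota_{1_B}$ from both sides. Uniqueness in the universal property of $\mD_2$ then forces the two pairing maps into $\mD_2$ to agree.

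The main (and only) obstacle is bookkeeping: one has to be sure that the dotted cofiber-inclusion $\iota_{\varphi ; 1_B}$ really does satisfy $\iota_{\varphi ; 1_B} \rho_i = p_i \iota_{(-)}$, but this is precisely the content of the commuting cube appearing right after Definition~\ref{def E admits an internal category of elements of D} (applied with $\varphi$ and $1_B$ in place of the generic pair), so no new input is required. Consequently the proof is short and reduces entirely to two applications of the universal property of $\mD_2$.
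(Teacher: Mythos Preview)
Your proposal is correct and matches the paper's proof essentially verbatim: the paper also invokes the universal property of $\mD_2$, post-composes the left-hand side with $\rho_0$ and $\rho_1$, and uses $\iota_{\varphi;1_B}\rho_i = p_i\iota_{(-)}$ to obtain the two components of the right-hand pairing map. There is nothing to add.
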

\begin{proof}
By the universal property of $\mD_2$, it suffices to check 

\begin{align*}
( 
  s ( 1_{D(A)_0} , e_A D(\varphi)_1 ) 
  , \ 
 D(\varphi)_1 (\ell_B)'_1 ) \iota_{\varphi ; 1_B} \rho_0 
 &= ( 
  s ( 1_{D(A)_0} , e_A D(\varphi)_1 ) 
  , \ 
 D(\varphi)_1 (\ell_B)'_1 ) p_0 \iota_\varphi \\
 &= 
  s ( 1_{D(A)_0} , e_A D(\varphi)_1 ) \iota_\varphi
\end{align*}

\noi and 

\begin{align*}
( 
  s ( 1_{D(A)_0} , e_A D(\varphi)_1 ) 
  , \ 
 D(\varphi)_1 (\ell_B)'_1 ) \iota_{\varphi ; 1_B} \rho_1
 &= ( 
  s ( 1_{D(A)_0} , e_A D(\varphi)_1 ) 
  , \ 
 D(\varphi)_1 (\ell_B)'_1 ) p_1 \iota_{1_B} \\
 &= 
  D(\varphi)_1 (\ell_B)'_1 \iota_{1_B}
\end{align*}
\end{proof}

\section{Internal Category of Fractions}\label{App.Ch Cat of Fracs}

\subsection{Defining Span Composition on Representatives}\label{App.S. Defining Span Comp on Reps}

This appendix consists of technical lemmas which are really just computations used in the proof of Lemma~\ref{lem witnessing c' well-defined} in Chapter \ref{Ch Internal Fractions}. We use these to define the composition structure of the internal category of fractions and prove it forms an internal category. They are heavily dependant on their context in that lemma so we restate the beginning of that proof and include the diagrams of covers that define the lifts from the fractions axioms being referred to in the lemmas.

First, pullbacks of $u : U \to \spn \tensor[_{t}]{\times}{_s} \spn$ are taken along $p_0^2$ and $p_1^2$ to get two covers of $\slb \tensor[_{t}]{\times}{_s}\slb$ that witness composition of the sailboat projections, $p_0^2$ and $p_1^2$: 

\[\label{dgm slb proj pb along span comp cover (appendix)}
\begin{tikzcd}[]
\bar{U}_0 \arrow[dr, phantom, "\usebox\pullback" , very near start, color=black] \dar["/" marking, "\bar{u}_0"' near end] \rar["\pi_1"] & U \dar["/" marking, "u" near start] & \bar{U}_1 \dar["/" marking, "\bar{u}_1" near start] \lar["\pi_1"'] \arrow[dl, phantom, "\usebox\urpullback" , very near start, color=black]\\
 \slb \tensor[_{t}]{\times}{_s} \slb \rar["p_0^2"'] & \spn \tensor[_{t}]{\times}{_s} \spn & \slb \tensor[_{t}]{\times}{_s} \slb \lar["p_1^2"]
\end{tikzcd} \tag{1}
\]

\noi A refinement 
\[ \label{dgm slb proj pb along span comp cover refinement (appendix)}
\begin{tikzcd}[]
\bar{U} \ar[d,"\pi_0"'] \ar[r, "\pi_1"] \ar[dr, "/" marking, "\bar{u}" near start] & \bar{U}_1 \dar["/" marking,"\bar{u}_1" near start] \\
\bar{U}_0 \rar["/" marking,"\bar{u}_0"' near end] & \slb \tensor[_{t}]{\times}{_s} \slb
\end{tikzcd} \tag{2}
\]

\noi is given by a pullback of $\bar{u}_0$ and $\bar{u}_1$ and provides us with a common cover domain for the cover. Next we need to describe composition for the intermediate pair of composable spans: 
\begin{center}
$\left[
\begin{tikzcd}[]
&& & \cdot \dar[dotted] \ar[dl, "\circ" marking] \ar[dr,] & \\
\cdot & \cdot \lar["\circ" marking] \rar & \cdot & \cdot \rar[dotted] & \cdot 
\end{tikzcd} 
\right]$
\end{center}

\noi The following figure shows the construction of three different composites being constructed. 

\[ \label{fig intermediate span comp well-defined (appendix)} \begin{tikzcd}[]
	&& \cdot \\
	& \cdot & \cdot \\
	& \cdot & \cdot & \cdot \\
	\cdot & \cdot & \cdot & \cdot & \cdot \\
	&& \cdot \\
	&& \cdot
	\arrow[from=4-2, to=4-1, "\circ" marking ]
	\arrow[from=4-2, to=4-3]
	\arrow[from=3-4, to=4-4]
	\arrow[from=4-4, to=4-3, "\circ" marking ]
	\arrow[from=4-4, to=4-5]
	\arrow[from=3-2, to=4-2]
	\arrow[from=3-2, to=4-1, "\circ" marking ]
	\arrow[from=3-4, to=4-3, "\circ" marking ]
	\arrow[from=2-3, to=3-4, color = teal]
	\arrow[from=3-3, to=4-2, "\circ" marking, color = purple ]
	\arrow[from=3-3, to=3-4, color = purple]
	\arrow[from=5-3, to=4-2, "\circ" marking , color = orange ]
	\arrow[from=5-3, to=4-4, color = orange ]
	\arrow[from=6-3, to=5-3, color = orange ]
	\arrow[from=1-3, to=2-3, color = teal]
	\arrow[from=2-2, to=3-3, color = purple]
	\arrow[from=2-3, to=3-2, crossing over, "\circ" marking, color = teal]
	\arrow[curve={height=6pt}, from=2-2, to=4-1, "\circ" marking, color = purple]
	\arrow[curve={height=30pt}, from=1-3, to=4-1, "\circ" marking , color = teal]
	\arrow[curve={height=-6pt}, from=6-3, to=4-1, "\circ" marking , color = orange ]
\end{tikzcd} \tag{A}\]

\noi We define composition for this intermediate span similarly to how we defined $\sigma_\circ$. This could actually have been done by taking a pullback of the cover, $u: U \to \spn \tensor[_{t}]{\times}{_s} \spn$, witnessing span composition in general and finding a common refinement for this with the previous refinement. The same result holds either way. Denote the comparison pair of composable spans by $\gamma$ and define it by the universal property in the following pullback diagram.

\[ \label{dgm intermediate compblespan (appendix)} 
\begin{tikzcd}[column sep = large, row sep = large]
 \bar{U} \ar[d, "/" marking, "\bar{u}"' near end] \ar[r, "/" marking, "\bar{u}" near end] \ar[dr, dotted, "\gamma"] 
& \slb \tensor[_{t}]{\times}{_s} \slb \rar["p_1^2"] 
& \spn \tensor[_{t}]{\times}{_s} \spn \dar["\pi_1"] 
\\
 \slb \tensor[_{t}]{\times}{_s} \slb \dar[ "p_0^2"'] 
& \spn \tensor[_{t}]{\times}{_s} \spn \dar["\pi_0"'] \rar["\pi_1"] 
& \spn \dar["s"] 
\\
 \spn \tensor[_{t}]{\times}{_s} \spn \rar["\pi_0"'] 
&\spn \rar["t"'] 
& \mC_0 
\end{tikzcd} \tag{3}
\]

\noi The following diagram of covers shows how the intermediate composite span is constructed for $\gamma$. 

\[ \label{dgm diagram of covers to include composite of intermediate comp'ble spans(appendix)}
\begin{tikzcd}[column sep = large ]
 W_\circ \rar[rr, "(\pi_0 \pi_1 {,} \pi_0 \pi_2)"] 
&& W \times_{\mC_0} W 
& \\
\tilde{U} 
\dar[shift left = 1.5, bend left =50, "\sigma_1", color = teal ] 
\dar[color = purple, "\sigma_\gamma"']
\dar[shift right = 1.5, bend right= 50, "\sigma_0"' , color = orange] 
\uar[color = purple, "\omega_\gamma"]
\rar[rr, "/" marking , "\tilde{u}_0" near start]
 &
 & \tilde{U}_0
 \dar[color = purple, "\theta_\gamma"']
 \uar[color = purple, "(\theta_\gamma \pi_0 \pi_0 {, } \tilde{u}_1 \gamma \pi_0 \pi_0) "'] 
 \rar["/" marking , "\tilde{u}_1" near start] 
 & \bar{U} 
 \dar[color = purple, "(\gamma \pi_0 \pi_1 {, }\gamma \pi_1 \pi_0)"] 
 \\
 \spn 
 &
 & W_\square 
 \rar["(\pi_0 \pi_1 {,} \pi_1 \pi_1)"'] 
 & \mC_1 \tensor[_{t}]{\times}{_{wt}} W 
\end{tikzcd} \tag{$\star$}
\]

\noi The left and right curved arrows, $\textcolor{orange}{\sigma_0}$ and $\textcolor{teal}{\sigma_1}$, into $\spn$ in the bottom left corner are defined by applying the composite of spans, $\sigma_\circ$, to the composable spans given by applying $p_0^2$ and $p_1^2$ to the pair of composable sailboats. Since $\sigma_\circ$ is only defined on $U$ we need to pass through the appropriate cover. The colours in the previous diagram and following equations indicate which of the three different span compositions in Figure (\ref{fig intermediate span comp well-defined}) the arrows in the following equations are witnessing. 

\begin{align}\label{def sigma_0 (appendix)} 
\begin{split}
 \textcolor{orange}{\sigma_0} 
 &= \textcolor{orange}{\tilde{u} \pi_0 \pi_1 \sigma_\circ} \\
 &= \textcolor{orange}{\tilde{u} \pi_0 \pi_1 \big( \omega \pi_1 , (\omega \pi_0 \pi_0 {,} u_0 \theta \pi_1 \pi_0 {,} u \pi_1 \pi_1 )c \big)} 
 \end{split}
 \end{align}
 \noi and 
 \begin{align}\label{def sigma_1 (appendix)}
 \begin{split}
 \textcolor{teal}{\sigma_1 }
 &= \textcolor{teal}{\tilde{u} \pi_1 \pi_1 \sigma_\circ} \\
 &= \textcolor{teal}{\tilde{u} \pi_1 \pi_1 \big( \omega \pi_1 , (\omega \pi_0 \pi_0 {,} u_0 \theta \pi_1 \pi_0 {,} u \pi_1 \pi_1 )c \big)}.
 \end{split}
 \end{align}

\noi The arrow into $\spn$ on the bottom left side of the cover diagram is the universal map
\[ \textcolor{purple}{\sigma_\gamma} = \big( \omega_\gamma \pi_1 {,} (\omega_\gamma \pi_0 \pi_0 {,} \tilde{u}' \theta_\gamma \pi_1 \pi_0{,} \tilde{u} \bar{u} p_1^2 \pi_1 \pi_1 )c \big).\]

\noi The data necessary to construct witnessing sailboats for the equivalences between the pairs of spans $\textcolor{orange}{\sigma_0}$, $\textcolor{teal}{\sigma_1}$, and $\textcolor{purple}{ \sigma_\gamma}$ can be obtained by applying the Ore condition, followed by the diagram-extension twice, and then weak composition three times. Internally this corresponds to a chain of six covers and lifts. All of this is color-coded below using \textcolor{olive}{olive} and \textcolor{brown}{brown} for the Ore condition and \textcolor{cyan}{cyan} and \textcolor{violet}{violet} for the zippering and weak composition step(s) that follow. Note that in both cases the first zippering is done to parallel pairs of composites that can be post-composed by the left leg of the bottom left span. The second zipper is done to parallel pairs of composites that can be post-composed with the left leg of the bottom right span in the pair of composale sailboats. Weak composition is then applied three times in to get comparison spans, $\textcolor{cyan}{\sigma_{0, \gamma}}$ and $\textcolor{violet}{\sigma_{1,\gamma}}$, whose left legs are in $W$.

\[ \label{fig Ore + zip + w.c. showing span comp is well def wrt equiv reln (appendix)}
\begin{tikzcd}
	\cdot & \cdot & \cdot & \cdot \\
	&&& \cdot \\
	&&& \cdot \\
	&& \cdot & \cdot & \cdot \\
	&&& \cdot & \cdot & \cdot \\
	{} && \cdot & \cdot & \cdot & \cdot & \cdot & \cdot & \cdot & \cdot \\
	&&&& \cdot &&&&& \cdot \\
	&&&& \cdot &&&&& \cdot \\
	&&&&&&&&& \cdot
	\arrow[from=6-4, to=6-3,"\circ" marking]
	\arrow[from=6-4, to=6-5]
	\arrow[from=6-6, to=6-5, "\circ" marking]
	\arrow[from=6-6, to=6-7]
	\arrow[from=5-6, to=6-6]
	\arrow[from=5-6, to=6-5, "\circ" marking]
	\arrow[from=5-5, to=6-4, purple, "\circ" marking]
	\arrow[from=5-5, to=5-6, purple]
	\arrow[curve={height=-90pt}, from=9-10, to=6-3, cyan, "\circ" marking]
	\arrow[from=9-10, to=8-10, cyan]
	\arrow[from=8-10, to=7-10, cyan]
	\arrow[from=7-10, to=6-10, cyan]
	\arrow[from=6-10, to=6-9, cyan, "\circ" marking]
	\arrow[from=6-9, to=6-8, cyan, "\circ" marking]
	\arrow[from=7-10, to=6-8, cyan, "\circ" marking]
	\arrow[from=7-5, to=6-4, orange, "\circ" marking]
	\arrow[from=7-5, to=6-6, orange]
	\arrow[from=8-5, to=7-5, orange]
	\arrow[curve={height=-12pt}, from=8-5, to=6-3, orange, "\circ" marking]
	\arrow[curve={height=-18pt}, from=6-8, to=8-5, olive, "\circ" marking]
	\arrow[curve={height=-12pt}, from=8-10, to=8-5, cyan, "\circ" marking]
	\arrow[from=1-4, to=2-4, violet, "\circ" marking]
	\arrow[curve={height=6pt}, from=4-3, to=6-3, teal, "\circ" marking]
	\arrow[from=4-4, to=5-6, teal]
	\arrow[from=4-5, to=5-5, purple, crossing over]
	\arrow[from=4-3, to=4-4, teal]
	\arrow[from=5-4, to=6-3, "\circ" marking]
	\arrow[from=5-4, to=6-4]
	\arrow[from=4-4, to=5-4, teal, "\circ" marking, near start]
	\arrow[curve={height=15pt}, from=4-5, to=6-3, purple, "\circ" marking, crossing over]
	\arrow[curve={height=18pt}, from=6-8, to=4-5, olive]
	\arrow[from=2-4, to=3-4, violet, "\circ" marking]
	\arrow[curve={height=6pt}, from=3-4, to=4-3, brown, "\circ" marking]
	\arrow[curve={height=-6pt}, from=3-4, to=4-5, brown]
	\arrow[curve={height=6pt}, from=1-3, to=3-4, violet, "\circ" marking]
	\arrow[curve={height=6pt}, from=1-2, to=4-3, violet, "\circ" marking]
	\arrow[curve={height=12pt}, from=1-1, to=6-3, violet, "\circ" marking]
	\arrow[from=1-1, to=1-2, violet]
	\arrow[from=1-2, to=1-3, violet]
	\arrow[from=1-3, to=1-4, violet]
\end{tikzcd} \tag{B}
\]

\noi The corresponding diagrams of covers and lifts which witness the arrows in the Ore squares and zippering in Diagram \ref{fig Ore + zip + w.c. showing span comp is well def wrt equiv reln} are:

\[\label{cover dgms for showing span comp well def : Ore + zip (appendix)} 
\begin{tikzcd}[column sep = large]
& \cP(\mC) \rar["\pi_1"] 
& \cP_{cq}(\mC)
&
\\
\hat{U}_3
\rar["/" marking, "\hat{u}_3" near start] 
\dar[shift left, "\delta_{\rho_0}", color = cyan] 
\dar[shift right, "\delta_{\rho_1}"', color = violet] 
&\hat{U}_4
\rar["/" marking, "\hat{u}_4" near start]
\uar[shift right, "\delta_{\lambda_0}"', color = cyan] 
\uar[shift left, "\delta_{\lambda_1}", color = violet] 
\dar[shift left, "\rho_0", color = cyan] 
\dar[shift right, "\rho_1"', color = violet] 
&\hat{U}_5
\rar["/" marking, "\hat{u}_5" near start] 
\dar[shift left, "\theta_{\gamma_0}", color = olive]
 \dar[shift right, "\theta_{\gamma_1}"', color = brown] 
 \uar[shift right, "\lambda_0"', color = cyan] 
 \uar[shift left, "\lambda_1", color = violet] 
&\tilde{U} 
\dar[shift right, "(\sigma_1 \pi_0 w {,} \sigma_\gamma \pi_0)"', color = brown]
 \dar[shift left, "(\sigma_0 \pi_0 w {,} \sigma_\gamma \pi_0 )", color = olive] 
\\ 
\cP(\mC) \rar["\pi_1"'] 
& \cP_{cq}(\mC)
& W_\square \rar["(\pi_0 \pi_1 {,} \pi_1 \pi_1)"'] 
& \mC_1 \tensor[_{t}]{\times}{_{wt}} W 
\end{tikzcd} \tag{$\star \star$}
\]

\noi The covers, $\hat{u}_2, \hat{u}_1,$ and $\hat{u}_0$, witness three applications of weak composition in each case as seen in the following continued sequence of covers:

\[\label{cover dgms for showing span comp well def : weak comp x3 (appendix)} 
\begin{tikzcd}[column sep = large]
W_\circ \rar[r, "(\pi_0 \pi_1 {,} \pi_0 \pi_2)"] 
& W \times_{\mC_0} W 
&
W_\circ \rar[r, "(\pi_0 \pi_1 {,} \pi_0 \pi_2)"] 
& W \times_{\mC_0} W 
& \\
\hat{U} 
\rar["/" marking, "\hat{u}_0" near start] 
\uar[shift right, "\omega_{0,0}"', color = cyan] 
\uar[shift left, "\omega_{1,0}", color = violet] 
\dar[shift left = 1.5,""] 
\dar[shift right= 1.5 ,""']
\dar[shift left=.5] 
\dar[shift right= .5] 
& \hat{U}_1
\rar["/" marking,"\hat{u}_1" near start]
\uar[shift right, "\omega_{0,0}'"', color = cyan] 
\uar[shift left, "\omega_{1,0}'", color = violet] 
\dar[shift left, "\omega_{0,1}", color = cyan] 
\dar[shift right, "\omega_{1,1}"', color = violet] 
& \hat{U}_2
\rar["/" marking,"\hat{u}_2" near start]
\uar[shift right, ""', color = cyan, "\omega_{0, 2}"'] 
\uar[shift left, "", color = violet, "\omega_{1, 2}"] 
\dar[shift right, ""', color = violet, "\omega_{1, 1}'"'] 
\dar[shift left, "", color = cyan, "\omega_{0, 1}'"] 
& \hat{U}_3
\uar[shift right, ""', color = cyan, "\omega_{0,2}'"'] 
\uar[shift left, "", color = violet, "\omega_{1,2}'"] 
 \\ 
\slb &
W_\circ \rar[r, "(\pi_0 \pi_1 {,} \pi_0 \pi_2)"'] 
& W \times_{\mC_0} W 
& 
\end{tikzcd} \tag{$\star \star \star$}
\]

\noi The following lemmas refer to the labeled diagrams and equations above.

\begin{lem}\label{appendix lem witness ore squares of span comp well def wrt equiv result (appendix)}
\noi The maps 

\[ \begin{tikzcd}[column sep = large]
\hat{U} \rar[shift right, cyan, "\lambda_0'"'] \rar[shift left, violet, "\lambda_1'"] & P(\mC) \tensor[_t]{\times}{_{ws}} W 
\end{tikzcd}\]

\noi are defined in a similar fashion to $\delta_0'$ in Lemma~\ref{lem defining c'}, namely by descending through the preceeding covers and expanding both sides of the Ore-square equations witnessed. 
\end{lem}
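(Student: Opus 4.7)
The plan is to construct $\textcolor{cyan}{\lambda_0'}$ and $\textcolor{violet}{\lambda_1'}$ by invoking the universal property of the pullback $P(\mC) \tensor[_t]{\times}{_{ws}} W$, mirroring the construction of $\delta_0'$ in Lemma~\ref{lem defining c'} almost verbatim. First I would start from the defining $W_\square$ equation encoded by $\textcolor{olive}{\theta_{\gamma_0}}$ and $\textcolor{brown}{\theta_{\gamma_1}}$ in Diagram (\ref{cover dgms for showing span comp well def : Ore + zip (appendix)}), namely that $(\theta_{\gamma_i}\pi_0\pi_0 w,\ -)c = (\theta_{\gamma_i}\pi_1\pi_0,\ -)c$ when post-composed with the left legs of the appropriate spans, one side ending at $\textcolor{purple}{\sigma_\gamma}\pi_0 w$ and the other ending at $\textcolor{orange}{\sigma_0}\pi_0 w$ (for $i=0$) or at $\textcolor{teal}{\sigma_1}\pi_0 w$ (for $i=1$).

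Next I would expand both sides of each equation by substituting the explicit definitions of $\textcolor{orange}{\sigma_0}$ and $\textcolor{teal}{\sigma_1}$ from (\ref{def sigma_0 (appendix)}) and (\ref{def sigma_1 (appendix)}), and of $\textcolor{purple}{\sigma_\gamma}$ from Diagram (\ref{dgm diagram of covers to include composite of intermediate comp'ble spans(appendix)}), unpacking the resulting internal composites using the Ore-square lift $\theta$ (respectively $\theta_\gamma$) and the weak-composition lift $\omega$ (respectively $\omega_\gamma$). Using associativity of composition in $\mC$, both expansions will contain a common trailing arrow in $W$, namely $\hat{u}_5\tilde{u}\bar{u}p_0^2\pi_1\pi_0$, representing the left leg of the bottom-left span of the composable pair of sailboats. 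Factoring this trailing $W$-arrow off both sides yields a parallel pair of composites into $\mC_1$ with common source and target, and together with the $W$-arrow this parallel pair is precisely a cone over the pullback defining $P(\mC) \tensor[_t]{\times}{_{ws}} W$; the universal property then delivers $\textcolor{cyan}{\lambda_0'}$ and $\textcolor{violet}{\lambda_1'}$ uniquely, with projections as listed in the proof of Lemma~\ref{lem witnessing c' well-defined}. By construction, the equalizer conditions $\lambda_i'(\pi_0\pi_0,\pi_1 w)c = \lambda_i'(\pi_0\pi_1,\pi_1 w)c$ hold, so the equalizer universal property of $\cP_{cq}(\mC)$ then induces the maps $\textcolor{cyan}{\lambda_0}$ and $\textcolor{violet}{\lambda_1}$ exactly as $\delta_0$ was induced in Lemma~\ref{lem defining c'}.

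The main obstacle is bookkeeping rather than conceptual: one has to track which refinement projection (the $\textcolor{orange}{\pi_0\pi_1}$ factor versus the $\textcolor{teal}{\pi_1\pi_1}$ factor of $\bar{U}$) feeds each original span into the Ore-square, and verify that expanding $\textcolor{purple}{\sigma_\gamma}$ through $\omega_\gamma$ and $\tilde{u}'\theta_\gamma$ produces composites which, after re-association, share the expected trailing $W$-arrow with the expansions of $\textcolor{orange}{\sigma_0}$ and $\textcolor{teal}{\sigma_1}$ through $\omega$ and $u_0\theta$. Once this color-matching is carried out by a direct calculation paralleling the one for $\delta_0'$, the two maps $\lambda_i'$ are completely determined, and the remainder of the construction in Diagram (\ref{cover dgms for showing span comp well def : Ore + zip (appendix)}) proceeds mechanically.
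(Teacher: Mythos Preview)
Your approach is correct and essentially identical to the paper's: start from the $W_\square$ equation encoded by $\textcolor{olive}{\theta_{\gamma_0}}$ (resp.\ $\textcolor{brown}{\theta_{\gamma_1}}$), expand both sides by substituting the explicit formulas for $\textcolor{orange}{\sigma_0}$, $\textcolor{teal}{\sigma_1}$, and $\textcolor{purple}{\sigma_\gamma}$, factor out a common trailing $W$-arrow, and invoke the universal property of $P(\mC)\tensor[_t]{\times}{_{ws}}W$ to obtain $\lambda_i'$; the equalizer condition then yields $\lambda_i$.

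One small bookkeeping correction: the common trailing $W$-arrow for the $\lambda_i'$ construction is the left leg of the \emph{first} span in the bottom composable pair, i.e.\ $\hat{u}_5\tilde{u}\,\pi_0\pi_0\,p_0^2\,\pi_0\pi_0$ (in the paper's indexing), not $p_0^2\pi_1\pi_0$ as you wrote. Your verbal description ``left leg of the bottom-left span'' is right; the index $\pi_1$ is a slip and would instead pick out the left leg of the second span, which is the trailing arrow used later for $\rho_i'$, not here for $\lambda_i'$.
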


\begin{proof}
To define \textcolor{cyan}{$\lambda_0 '$} we expand both sides of the Ore-square equation 

\[\big( \textcolor{olive}{\theta_{\gamma_0}} \pi_0 \pi_0 w \
{,} \ \textcolor{olive}{\sigma_{\gamma_0}} \pi_0 \pi_1 
\big) c 
= \big( \textcolor{olive}{\theta_{\gamma_0}} \pi_1 \pi_0 w \
{,} \ \textcolor{olive}{\sigma_{\gamma_0}} \pi_1 \pi_1 w
\big) c \\\] 

\noi On the left-hand side we have 

\begin{align}
\begin{split}
& \ \ \ \ \big( \textcolor{olive}{\theta_{\gamma_0}} \pi_0 \pi_0 w \
{,} \ \textcolor{olive}{\sigma_{\gamma_0}} \pi_0 \pi_1 
\big) c \\
&= \big( \textcolor{olive}{\theta_{\gamma_0}} \pi_0 \pi_0 w \
{,} \ \hat{u}_5 \textcolor{orange}{\sigma_0} \pi_0 w
\big) c \\
&= \big( \textcolor{olive}{\theta_{\gamma_0}} \pi_0 \pi_0 w \
{,} \ \hat{u}_5 \tilde{u} \textcolor{orange}{\pi_0 \pi_1} \sigma \pi_0 w
\big) c \\
&=
 \big( \textcolor{olive}{\theta_{\gamma_0}} \pi_0 \pi_0 w \
{,} \ \hat{u}_5 \tilde{u} \textcolor{orange}{\pi_0 \pi_1 \omega} \pi_1 w
\big) c \\
&= 
\big( 
\textcolor{olive}{\theta_{\gamma_0}} \pi_0 \pi_0 w \
{,} \ \hat{u}_5 \tilde{u} \textcolor{orange}{\pi_0 \pi_1 \omega} \pi_1 w
\big) c \\
&= 
\big( 
 \textcolor{olive}{\theta_{\gamma_0}} \pi_0 \pi_0 w \
{,} \ \hat{u}_5 \tilde{u} \textcolor{orange}{\pi_0 \pi_1}(\omega \pi_0 \pi_0 \
{,} \ u_0 \theta \pi_0 \pi_0 w \
{,} \ u \pi_0 \pi_0 w)c 
\big) c \\
&= 
\big( 
 \textcolor{olive}{\theta_{\gamma_0}} \pi_0 \pi_0 w \
{,} \ (\hat{u}_5 \tilde{u} \textcolor{orange}{\pi_0 \pi_1 \omega} \pi_0 \pi_0 \
{,} \ \hat{u}_5 \tilde{u} \textcolor{orange}{\pi_0 \pi_1 u_0 \theta} \pi_0 \pi_0 w \
{,} \ \hat{u}_5 \tilde{u} \pi_0 \pi_1 u \pi_0 \pi_0 w )c 
\big) c\\
&=
\big( 
 \textcolor{olive}{\theta_{\gamma_0}} \pi_0 \pi_0 w \
{,} \ \hat{u}_5 \tilde{u} \textcolor{orange}{\pi_0 \pi_1 \omega} \pi_0 \pi_0 \
{,} \ \hat{u}_5 \tilde{u} \textcolor{orange}{\pi_0 \pi_1 u_0 \theta} \pi_0 \pi_0 w \
{,} \ \hat{u}_5 \tilde{u} \pi_0 \pi_1 u \pi_0 \pi_0 w
\big) c \\
&=
\big( 
 \textcolor{olive}{\theta_{\gamma_0}} \pi_0 \pi_0 w \
{,} \ \hat{u}_5 \tilde{u} \textcolor{orange}{\pi_0 \pi_1 \omega} \pi_0 \pi_0 \
{,} \ \hat{u}_5 \tilde{u} \textcolor{orange}{\pi_0 \pi_1 u_0 \theta} \pi_0 \pi_0 w \
{,} \ \hat{u}_5 \tilde{u} \pi_0 \pi_0 p_0^2 \pi_0 \pi_0 w
\big) c \\
&= \big( 
 (\textcolor{olive}{\theta_{\gamma_0}} \pi_0 \pi_0 w \
{,} \ \hat{u}_5 \tilde{u} \textcolor{orange}{\pi_0 \pi_1 \omega} \pi_0 \pi_0 \
{,} \ \hat{u}_5 \tilde{u} \textcolor{orange}{\pi_0 \pi_1 u_0 \theta} \pi_0 \pi_0 w)c \
{,} \ \hat{u}_5 \tilde{u} \pi_0 \pi_0 p_0^2 \pi_0 \pi_0 w
\big) c 
\end{split}
\end{align}
\noi and on the right we have 
\begin{align}
\begin{split}
 & \ \ \ \  \big( \textcolor{olive}{\theta_{\gamma_0}} \pi_1 \pi_0 w \
{,} \ \textcolor{olive}{\sigma_{\gamma_0}} \pi_1 \pi_1 w
\big) c \\
&= \big( \textcolor{olive}{\theta_{\gamma_0}} \pi_1 \pi_0 w \
{,} \ \hat{u}_5 \textcolor{purple}{\sigma_\gamma} \pi_0 w
\big) c \\
&= \big( \textcolor{olive}{\theta_{\gamma_0}} \pi_1 \pi_0 w \
{,} \ \hat{u}_5 \textcolor{purple}{\omega_\gamma} \pi_1 w 
\big) c \\
&= \big( \textcolor{olive}{\theta_{\gamma_0}} \pi_1 \pi_0 w \
{,} \ \hat{u}_5 ( \textcolor{purple}{\omega_\gamma} \pi_0 \pi_0 \
{,} \ \textcolor{purple}{\omega_\gamma} \pi_0 \pi_1 w\ 
{,} \ \textcolor{purple}{\omega_\gamma} \pi_0 \pi_2 w)c
\big) c \\
&= \big( \textcolor{olive}{\theta_{\gamma_0}} \pi_1 \pi_0 w \
{,} \ \hat{u}_5 ( \textcolor{purple}{\omega_\gamma} \pi_0 \pi_0 \
{,} \ \tilde{u}' \textcolor{purple}{\theta_\gamma} \pi_0 \pi_0 w \ 
{,} \ \tilde{u} \gamma \pi_0 \pi_0w )c
\big) c \\
&= \big( \textcolor{olive}{\theta_{\gamma_0}} \pi_1 \pi_0 w \
{,} \ \hat{u}_5 ( \textcolor{purple}{\omega_\gamma} \pi_0 \pi_0 \
{,} \ \tilde{u}' \textcolor{purple}{\theta_\gamma} \pi_0 \pi_0 w \ 
{,} \ \tilde{u} \pi_0 \pi_1 u \pi_0 \pi_0 w )c
\big) c \\
&= \big( \textcolor{olive}{\theta_{\gamma_0}} \pi_1 \pi_0 w \
{,} \ \hat{u}_5 ( \textcolor{purple}{\omega_\gamma} \pi_0 \pi_0 \
{,} \ \tilde{u}' \textcolor{purple}{\theta_\gamma} \pi_0 \pi_0 w \ 
{,} \ \tilde{u} \pi_0 \pi_0 p_0^2 \pi_0 \pi_0 w )c
\big) c \\
&=  \textcolor{olive}{\theta_{\gamma_0}} \pi_1 \pi_0 w \
{,} \ \hat{u}_5 ( \textcolor{purple}{\omega_\gamma} \pi_0 \pi_0 \
{,} \ \tilde{u}' \textcolor{purple}{\theta_\gamma} \pi_0 \pi_0 w \ 
{,} \ \tilde{u} \pi_0 \pi_0 p_0^2 \pi_0 \pi_0 w )c
\big) c \\
&=  \textcolor{olive}{\theta_{\gamma_0}} \pi_1 \pi_0 w \
{,} \ \hat{u}_5 \textcolor{purple}{\omega_\gamma} \pi_0 \pi_0 \
{,} \ \hat{u}_5 \tilde{u}' \textcolor{purple}{\theta_\gamma} \pi_0 \pi_0 w \ 
{,} \ \hat{u}_5 \tilde{u} \pi_0 \pi_0 p_0^2 \pi_0 \pi_0 w 
\big) c \\
&= \big( (\textcolor{olive}{\theta_{\gamma_0}} \pi_1 \pi_0 w \
{,} \ \hat{u}_5 \textcolor{purple}{\omega_\gamma} \pi_0 \pi_0 \
{,} \ \hat{u}_5 \tilde{u}' \textcolor{purple}{\theta_\gamma} \pi_0 \pi_0 w )c \ 
{,} \ \hat{u}_5 \tilde{u} \pi_0 \pi_0 p_0^2 \pi_0 \pi_0 w 
\big) c 
\end{split}. 
\end{align}

\noi The last lines in equations $(1)$ and $(2)$ uniquely determine $\textcolor{cyan}{\lambda_0'}$ by 

\begin{align*}
  \textcolor{cyan}{\lambda_0'} \pi_1 
  &= \hat{u}_5 \tilde{u} \pi_0 \pi_0 p_0^2 \pi_0 \pi_0 w \\
  \textcolor{cyan}{\lambda_0'} \pi_0 \pi_0 
  &= (\textcolor{olive}{\theta_{\gamma_0}} \pi_0 \pi_0 w \
{,} \ \hat{u}_5 \tilde{u} \textcolor{orange}{\pi_0 \pi_1 \omega} \pi_0 \pi_0 \
{,} \ \hat{u}_5 \tilde{u} \textcolor{orange}{\pi_0 \pi_1 u_0 \theta} \pi_0 \pi_0 w)c \\
  \textcolor{cyan}{\lambda_0'} \pi_0 \pi_1 
  &=(\textcolor{olive}{\theta_{\gamma_0}} \pi_1 \pi_0 w \
{,} \ \hat{u}_5 \textcolor{purple}{\omega_\gamma} \pi_0 \pi_0 \
{,} \ \hat{u}_5 \tilde{u}' \textcolor{purple}{\theta_\gamma} \pi_0 \pi_0 w )c
\end{align*}

\noi The map $\textcolor{violet}{\lambda_1'}$ is similarly determined by expanding both sides of the Ore-square equation:

\[\big( \textcolor{brown}{\theta_{\gamma_1}} \pi_0 \pi_0 w \
{,} \ \textcolor{brown}{\sigma_{\gamma_1}} \pi_0 \pi_1 
\big) c 
= \big( \textcolor{brown}{\theta_{\gamma_1}} \pi_1 \pi_0 w \
{,} \ \textcolor{brown}{\sigma_{\gamma_1}} \pi_1 \pi_1 w
\big) c \] 

\noi On the left-hand side we get 

\begin{align}
\begin{split}
& \ \ \ \  \big( \textcolor{brown}{\theta_{\gamma_1}} \pi_0 \pi_0 w 
{,} \ \textcolor{brown}{\sigma_{\gamma_1}} \pi_0 \pi_1 
\big) c \\
&= \big( \textcolor{brown}{\theta_{\gamma_1}} \pi_0 \pi_0 w 
{,} \ \hat{u}_5 \textcolor{teal}{\sigma_1} \pi_0 w
\big) c \\
&= \big( \textcolor{brown}{\theta_{\gamma_1}} \pi_0 \pi_0 w 
{,} \ \hat{u}_5 \tilde{u} \textcolor{teal}{\pi_1 \pi_1} \sigma \pi_0 w
\big) c \\
&= \big( \textcolor{brown}{\theta_{\gamma_1}} \pi_0 \pi_0 w 
{,} \ \hat{u}5 \tilde{u} \textcolor{teal}{\pi_1 \pi_1 \omega} \pi_1 w
\big) c \\
&= 
\big( 
 \textcolor{brown}{\theta_{\gamma_1}} \pi_0 \pi_0 w 
{,} \ \hat{u}_5 \tilde{u} \textcolor{teal}{\pi_1 \pi_1 \omega} \pi_1 w
\big) c \\
&= 
\big( 
 \textcolor{brown}{\theta_{\gamma_1}} \pi_0 \pi_0 w 
{,} \\
& \ \ \ \ \hat{u}_5 \tilde{u} \textcolor{teal}{\pi_1 \pi_1}(\omega \pi_0 \pi_0 
{,} \ u_0 \theta \pi_0 \pi_0 w 
{,} \ u \pi_0 \pi_0 w)c 
\big) c \\
&= 
\big( 
 \textcolor{brown}{\theta_{\gamma_1}} \pi_0 \pi_0 w 
{,} \\
& \ \ \ \ (\hat{u}_5 \tilde{u} \textcolor{teal}{\pi_1 \pi_1 \omega} \pi_0 \pi_0 
{,} \ \hat{u}_5 \tilde{u} \textcolor{teal}{\pi_1 \pi_1 u_0 \theta} \pi_0 \pi_0 w 
{,} \ \hat{u}_5 \tilde{u} \pi_1 \pi_1 u \pi_0 \pi_0 w )c 
\big) c\\
&=
\big( 
 \textcolor{olive}{\theta_{\gamma_0}} \pi_0 \pi_0 w 
{,} \ \hat{u}_5 \tilde{u} \textcolor{teal}{\pi_1 \pi_1 \omega} \pi_0 \pi_0 
{,} \ \hat{u}_5 \tilde{u} \textcolor{teal}{\pi_1 \pi_1 u_0 \theta} \pi_0 \pi_0 w 
{,} \ \hat{u}_5 \tilde{u} \pi_1 \pi_1 u \pi_0 \pi_0 w
\big) c \\
&=
\big( 
 \textcolor{brown}{\theta_{\gamma_1}} \pi_0 \pi_0 w 
{,} \ \hat{u}_5 \tilde{u} \textcolor{teal}{\pi_1 \pi_1 \omega} \pi_0 \pi_0 
{,} \ \hat{u}_5 \tilde{u} \textcolor{teal}{\pi_1 \pi_1 u_0 \theta} \pi_0 \pi_0 w 
{,} \ \hat{u}_5 \tilde{u} \pi_1 \pi_0 p_1^2 \pi_0 \pi_0 w
\big) c \\
&= \big( 
( \textcolor{brown}{\theta_{\gamma_1}} \pi_0 \pi_0 w 
{,} \ \hat{u}_5 \tilde{u} \textcolor{teal}{\pi_1 \pi_1 \omega} \pi_0 \pi_0 
{,} \ \hat{u}_5 \tilde{u} \textcolor{teal}{\pi_1 \pi_1 u_0 \theta} \pi_0 \pi_0 w)c 
{,} \\
& \ \ \ \ 
\hat{u}_5 \tilde{u} \pi_1 \pi_0 p_1^2 \pi_0 \pi_0 w
\big) c \\
&= \big( 
(\textcolor{brown}{\theta_{\gamma_1}} \pi_0 \pi_0 w 
{,} \ \hat{u}_5 \tilde{u} \textcolor{teal}{\pi_1 \pi_1 \omega} \pi_0 \pi_0 
{,} \ \hat{u}_5 \tilde{u} \textcolor{teal}{\pi_1 \pi_1 u_0 \theta} \pi_0 \pi_0 w)c 
{,} \\
& \ \ \ \ 
\hat{u}_5 \tilde{u} \pi_1 \pi_0 \pi_0 \pi_0 \pi_1 w
\big) c \\
&= \big( 
( \textcolor{brown}{\theta_{\gamma_1}} \pi_0 \pi_0 w 
{,} \ \hat{u}_5 \tilde{u} \textcolor{teal}{\pi_1 \pi_1 \omega} \pi_0 \pi_0 
{,} \ \hat{u}_5 \tilde{u} \textcolor{teal}{\pi_1 \pi_1 u_0 \theta} \pi_0 \pi_0 w)c 
{,} \\ 
& \ \ \ \ 
\hat{u}_5 \tilde{u} \bar{u} \pi_0 \pi_0 \pi_1 w
\big) c \\
&= \big( 
(\textcolor{brown}{\theta_{\gamma_1}} \pi_0 \pi_0 w 
{,} \ \hat{u}_5 \tilde{u} \textcolor{teal}{\pi_1 \pi_1 \omega} \pi_0 \pi_0 
{,} \ \hat{u}_5 \tilde{u} \textcolor{teal}{\pi_1 \pi_1 u_0 \theta} \pi_0 \pi_0 w)c 
{,} \\
& \ \ \ \ 
(\hat{u}_5 \tilde{u} \bar{u} \pi_0 \pi_0 \pi_0 \pi_0  
{,} \ \hat{u}_5 \tilde{u} \bar{u} \pi_0 \pi_0 \pi_0 \pi_1 w)c 
\big) c \\
&=\big( 
(\textcolor{brown}{\theta_{\gamma_1}} \pi_0 \pi_0 w 
{,} \ \hat{u}_5 \tilde{u} \textcolor{teal}{\pi_1 \pi_1 \omega} \pi_0 \pi_0 
{,} \ \hat{u}_5 \tilde{u} \textcolor{teal}{\pi_1 \pi_1 u_0 \theta} \pi_0 \pi_0 w 
{,} \ \hat{u}_5 \tilde{u} \bar{u} \pi_0 \pi_0 \pi_0 \pi_0 )c 
{,} \\
& \ \ \ \ 
\hat{u}_5 \tilde{u} \bar{u} \pi_0 \pi_0 \pi_0 \pi_1 w
\big) c \\
&=\big( 
( \textcolor{brown}{\theta_{\gamma_1}} \pi_0 \pi_0 w 
{,} \ \hat{u}_5 \tilde{u} \textcolor{teal}{\pi_1 \pi_1 \omega} \pi_0 \pi_0 
{,} \ \hat{u}_5 \tilde{u} \textcolor{teal}{\pi_1 \pi_1 u_0 \theta} \pi_0 \pi_0 w 
{,} \ \hat{u}_5 \tilde{u} \bar{u} \pi_0 \pi_0 \pi_0 \pi_0 )c 
{,} \\
& \ \ \ \ 
\hat{u}_5 \tilde{u} \pi_0 \pi_0 \pi_0 \pi_0 \pi_0 \pi_1 w
\big) c \\
&= \big( 
( \textcolor{brown}{\theta_{\gamma_1}} \pi_0 \pi_0 w 
{,} \ \hat{u}_5 \tilde{u} \textcolor{teal}{\pi_1 \pi_1 \omega} \pi_0 \pi_0 
{,} \ \hat{u}_5 \tilde{u} \textcolor{teal}{\pi_1 \pi_1 u_0 \theta} \pi_0 \pi_0 w 
{,} \ \hat{u}_5 \tilde{u} \bar{u} \pi_0 \pi_0 \pi_0 \pi_0 )c 
{,} \\
& \ \ \ \ 
\hat{u}_5 \tilde{u} \pi_0 \pi_0 p_0^2 \pi_0 \pi_0 w
\big) c 
\end{split}
\end{align}

\noi and on the right-hand side we have

\begin{align}
\begin{split}
& \ \ \ \  \big( \textcolor{brown}{\theta_{\gamma_1}} \pi_1 \pi_0 w \
{,} \ \textcolor{brown}{\sigma_{\gamma_1}} \pi_1 \pi_1 w
\big) c \\
&= \big( \textcolor{brown}{\theta_{\gamma_1}} \pi_1 \pi_0 w \
{,} \ \hat{u}_5 \textcolor{purple}{\sigma_\gamma} \pi_0 w
\big) c \\
&= \big( \textcolor{brown}{\theta_{\gamma_1}} \pi_1 \pi_0 w \
{,} \ \hat{u}_5 \textcolor{purple}{\omega_\gamma} \pi_1 w 
\big) c \\
&= \big( \textcolor{brown}{\theta_{\gamma_1}} \pi_1 \pi_0 w \
{,} \ \hat{u}_5 ( \textcolor{purple}{\omega_\gamma} \pi_0 \pi_0 \
{,} \ \textcolor{purple}{\omega_\gamma} \pi_0 \pi_1 w\ 
{,} \ \textcolor{purple}{\omega_\gamma} \pi_0 \pi_2 w)c
\big) c \\
&= \big( \textcolor{brown}{\theta_{\gamma_1}} \pi_1 \pi_0 w \
{,} \ \hat{u}_5 ( \textcolor{purple}{\omega_\gamma} \pi_0 \pi_0 \
{,} \ \tilde{u}' \textcolor{purple}{\theta_\gamma} \pi_0 \pi_0 w \ 
{,} \ \tilde{u} \gamma \pi_0 \pi_0w )c
\big) c \\
&= \big( \textcolor{brown}{\theta_{\gamma_1}} \pi_1 \pi_0 w \
{,} \ \hat{u}_5 ( \textcolor{purple}{\omega_\gamma} \pi_0 \pi_0 \
{,} \ \tilde{u}' \textcolor{purple}{\theta_\gamma} \pi_0 \pi_0 w \ 
{,} \ \tilde{u} \pi_0 \pi_1 u \pi_0 \pi_0 w )c
\big) c \\
&= \big( \textcolor{brown}{\theta_{\gamma_1}} \pi_1 \pi_0 w \
{,} \ \hat{u}_5 ( \textcolor{purple}{\omega_\gamma} \pi_0 \pi_0 \
{,} \ \tilde{u}' \textcolor{purple}{\theta_\gamma} \pi_0 \pi_0 w \ 
{,} \ \tilde{u} \pi_0 \pi_0 p_0^2 \pi_0 \pi_0 w )c
\big) c \\
&= \big( \textcolor{brown}{\theta_{\gamma_1}} \pi_1 \pi_0 w \
{,} \ \hat{u}_5( \textcolor{purple}{\omega_\gamma} \pi_0 \pi_0 \
{,} \ \tilde{u}' \textcolor{purple}{\theta_\gamma} \pi_0 \pi_0 w \ 
{,} \ \tilde{u} \pi_0 \pi_0 p_0^2 \pi_0 \pi_0 w )c
\big) c \\
&= \big( \textcolor{brown}{\theta_{\gamma_1}} \pi_1 \pi_0 w \
{,} \ \hat{u}_5 \textcolor{purple}{\omega_\gamma} \pi_0 \pi_0 \
{,} \ \hat{u}_5 \tilde{u}' \textcolor{purple}{\theta_\gamma} \pi_0 \pi_0 w \ 
{,} \ \hat{u}_5 \tilde{u} \pi_0 \pi_0 p_0^2 \pi_0 \pi_0 w 
\big) c \\ 
&= \big( ( \textcolor{brown}{\theta_{\gamma_1}} \pi_1 \pi_0 w \
{,} \ \hat{u}_5 \textcolor{purple}{\omega_\gamma} \pi_0 \pi_0 \
{,} \ \hat{u}_5 \tilde{u}' \textcolor{purple}{\theta_\gamma} \pi_0 \pi_0 w )c \ 
{,} \ \hat{u}_5 \tilde{u} \pi_0 \pi_0 p_0^2 \pi_0 \pi_0 w 
\big) c 
\end{split}
\end{align}

\noi The last lines of equations (3) and (4) uniquely determine the $\textcolor{violet}{\lambda_1'}$ by

\begin{align*} 
\textcolor{violet}{\lambda_1'} \pi_1 &= \hat{u}_5 \tilde{u} \pi_0 \pi_0 p_0^2 \pi_0 \pi_0 , \\
\textcolor{violet}{\lambda_1'} \pi_0 \pi_0 &= (\textcolor{brown}{\theta_{\gamma_1}} \pi_0 \pi_0 w \
{,} \ \hat{u}_5 \tilde{u} \textcolor{teal}{\pi_1 \pi_1 \omega} \pi_0 \pi_0 \
{,} \ \hat{u}_5 \tilde{u} \textcolor{teal}{\pi_1 \pi_1 u_0 \theta} \pi_0 \pi_0 w \
{,} \ \hat{u}_5 \tilde{u} \bar{u} \pi_0 \pi_0 \pi_0 \pi_0 )c,\\
\textcolor{violet}{\lambda_1'} \pi_0 \pi_1 &=( \textcolor{brown}{\theta_{\gamma_1}} \pi_1 \pi_0 w \
{,} \ \hat{u}_5 \textcolor{purple}{\omega_\gamma} \pi_0 \pi_0 \
{,} \ \hat{u}_5 \tilde{u}' \textcolor{purple}{\theta_\gamma} \pi_0 \pi_0 w )c 
\end{align*}

\end{proof}\

\begin{lem}\label{appendix lem def for rho_0' for span comp well def wrt equiv result }
The equation

\begin{align*}
 (\textcolor{cyan} {\rho_0'} \pi_0 \pi_0 , \textcolor{cyan} {\rho_0'} \pi_1) c 
&= (\textcolor{cyan} {\rho_0'} \pi_0 \pi_1 , \textcolor{cyan} {\rho_0'} \pi_1) c 
\end{align*}\

\noi holds.
\end{lem}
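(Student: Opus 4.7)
The plan is to verify this equation by a long but mechanical composite manipulation, following the same template used for the analogous equation~(\ref{eq inside paths of diagram C}) in the proof of Lemma~\ref{lem defining c'}. The equation is precisely the statement that $\textcolor{cyan}{\rho_0'}$ lifts through the equalizer $\cP_{cq}(\mC) \rightarrowtail P(\mC) \prescript{}{t}{\times_{ws}} W$, i.e.\ that the common $W$-arrow $\textcolor{cyan}{\rho_0'}\pi_1 = \hat{u}_{4;5}\tilde{u}\bar{u}p_0^2\pi_1\pi_0$ (co-)equalizes the parallel pair encoded by $\textcolor{cyan}{\rho_0'}\pi_0\pi_0$ and $\textcolor{cyan}{\rho_0'}\pi_0\pi_1$.

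First, I would use associativity of composition in $\mC$ to factor out the common initial segment $\textcolor{cyan}{\delta_{\lambda_0}}\pi_0\iota_{eq}\pi_0 w$ from both $\textcolor{cyan}{\rho_0'}\pi_0\pi_0$ and $\textcolor{cyan}{\rho_0'}\pi_0\pi_1$. Having done so, the equation to verify reduces to a post-composition equation of two long composites ending in $\textcolor{cyan}{\rho_0'}\pi_1$. The key inputs are then: (i) the zippering equation carried by $\textcolor{cyan}{\delta_{\lambda_0}}$ via $\pi_0\iota_{eq}\in \cP_{eq}(\mC)$, which forces the initial $W$-arrow to equalize the two parallel composites in $\mC$; (ii) the Ore-square equation $(\textcolor{olive}{\theta_{\gamma_0}}\pi_0\pi_0 w, \textcolor{olive}{\theta_{\gamma_0}}\pi_0\pi_1)c = (\textcolor{olive}{\theta_{\gamma_0}}\pi_1\pi_0, \textcolor{olive}{\theta_{\gamma_0}}\pi_1\pi_1 w)c$ that defines $\lambda_0'$; (iii) the weak-composition equation $\textcolor{olive}{\omega_{\gamma_0}}\pi_1 = (\textcolor{olive}{\omega_{\gamma_0}}\pi_0\pi_0, \hat{u}_4\textcolor{olive}{\omega_{\gamma_0}}\pi_0\pi_1 w, \hat{u}_4\textcolor{olive}{\omega_{\gamma_0}}\pi_0\pi_2 w)c$; (iv) the defining equation of $\textcolor{purple}{\sigma_\gamma}$, which after expanding $\textcolor{purple}{\omega_\gamma}\pi_1$ and the Ore-square $\textcolor{purple}{\theta_\gamma}$ matches exactly the $\gamma$-side of the original Ore-square of composition; and (v) the pullback equalities coming from $\bar{U}$ and $\tilde{U}$ in Diagrams (\ref{dgm slb proj pb along span comp cover (appendix)})--(\ref{dgm intermediate compblespan (appendix)}), which identify $\tilde{u}\bar{u}p_0^2\pi_1$ with both $\tilde{u}\pi_0\pi_1 u$-style expressions (on the $\textcolor{orange}{\sigma_0}$ side) and $\gamma\pi_0\pi_1$-style expressions (on the $\textcolor{purple}{\sigma_\gamma}$ side).

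The strategy is then to expand the right-hand side using (iv) and (v), reducing the $\textcolor{purple}{\omega_\gamma}, \textcolor{purple}{\theta_\gamma}$ content back into $\textcolor{orange}{\omega}, \textcolor{orange}{u_0\theta}$ content through the common cover $\tilde{u}\bar{u}$; and to expand the left-hand side using (ii)--(iii) to line up the trailing factors in the same way. After both sides have been re-expressed as composites in $\mC$ with the same ending piece, the zippering equation (i) collapses the remaining difference, giving equality. This is exactly the pattern used to define $\delta_0', \delta_1'$ in Lemma~\ref{lem defining c'}, and it appears verbatim again (with $\textcolor{brown}{\omega_{\gamma_1}}$ and $\textcolor{violet}{\delta_{\lambda_1}}$ in place of the cyan/olive data) in Lemma~\ref{appendix lem def for rho_1' for span comp well def wrt equiv result}.

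The main obstacle will not be conceptual but purely a matter of bookkeeping: keeping track of which cover layer each projection lives on, correctly re-associating internal composition across four or five atomic steps, and correctly factoring pairing maps through the various $\pi_0, \pi_1$ projections at each stage. The computation is long enough that I would present it as a displayed chain of equalities with a justification for each line (Def.\ of $\textcolor{cyan}{\delta_{\lambda_0}}$, Def.\ of $\lambda_0$ via $\lambda_0'$, Ore-square of $\textcolor{olive}{\theta_{\gamma_0}}$, weak comp.\ of $\textcolor{olive}{\omega_{\gamma_0}}$, pullback equality along $\bar{u}$, definition of $\textcolor{purple}{\sigma_\gamma}$, etc.), mirroring the structure of equation~(\ref{eq inside paths of diagram C}).
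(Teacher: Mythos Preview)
Your overall strategy is correct and matches the paper's: the argument is a long chain of re-associations that pivots on the equalizer property of $\textcolor{cyan}{\delta_{\lambda_0}}\pi_0\iota_{eq}$, together with the Ore-square relations for $\textcolor{purple}{\theta_\gamma}$ (on the $\textcolor{purple}{\sigma_\gamma}$ side) and for the original $\textcolor{orange}{u_0\theta}$ (on the $\textcolor{orange}{\sigma_0}$ side), plus the pullback and sailboat identities you list under (v). One correction: there is no map $\omega_{\gamma_0}$ in this setup, and no weak-composition step is needed in \emph{this} lemma---the $\textcolor{cyan}{\omega_{0,j}}$ lifts only enter later when building the sailboats $\varphi_0,\varphi_{0,\gamma}$; so your item (iii) should be dropped, and the role you assign to it is actually played by the $W_\triangle$/sailboat relation $\hat u_{4;5}\tilde u\bar u\,\pi_1\pi_0\pi_1 w = (\hat u_{4;5}\tilde u\bar u\,\pi_1\pi_0\pi_0\pi_0,\,\hat u_{4;5}\tilde u\bar u\,p_0^2\pi_1\pi_0 w)c$ coming from the second sailboat in the composable pair.
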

\begin{proof}

This follows from equality between the first and last lines in the following straightforward but tedious calculation. We repeatedly use associativity for internal composition in $\mC$ along with the definitions of the arrows and objects in Diagrams (\ref{dgm diagram of covers to include composite of intermediate comp'ble spans}), (\ref{cover dgms for showing span comp well def : Ore + zip}), and \ref{cover dgms for showing span comp well def : weak comp x3}) of Lemma~\ref{lem witnessing c' well-defined} in this calculation. 

\begin{align*}
& \ \ \ \ 
\big(
(
\textcolor{cyan}{\delta_{\lambda_0}} \pi_0 \iota_{eq} \pi_0w , \
\hat{u}_4 \textcolor{olive}{\theta_{\gamma_0} } \pi_1 \pi_0 , \ 
\hat{u}_{4 ;5} \textcolor{purple}{\omega_\gamma} \pi_0 \pi_0 , \ 
\hat{u}_{4;5} \tilde{u}' \textcolor{purple}{\theta_\gamma} \pi_1 \pi_0 , \ 
\hat{u}_{4;5} \tilde{u} \bar{u} \pi_1 \pi_0 \pi_0 \pi_0 
)c , \\
& \ \ \ \ 
\hat{u}_{4;5} \tilde{u} \bar{u} p_0^2 \pi_1 \pi_0 w 
\big) c \\
&= 
\big(
(
\textcolor{cyan}{\delta_{\lambda_0}} \pi_0 \iota_{eq} \pi_0 w , \
\hat{u}_4 \textcolor{olive}{\theta_{\gamma_0} } \pi_1 \pi_0 , \ 
\hat{u}_{4;5}\textcolor{purple}{\omega_\gamma} \pi_0 \pi_0 , \ 
\hat{u}_{4;5} \tilde{u}' \textcolor{purple}{\theta_\gamma} \pi_1 \pi_0 
)c ,\\
& \ \ \ \ 
(\hat{u}_{4;5} \tilde{u} \bar{u} \pi_1 \pi_0 \pi_0 \pi_0 , \
\hat{u}_{4;5} \tilde{u} \bar{u} p_0^2 \pi_1 \pi_0 w
)c 
\big) c \\
&= 
\big(
(
\textcolor{cyan}{\delta_{\lambda_0}} \pi_0 \iota_{eq} \pi_0 w , \
\hat{u}_4 \textcolor{olive}{\theta_{\gamma_0} } \pi_1 \pi_0 , \ 
\hat{u}_{4;5} \textcolor{purple}{\omega_\gamma} \pi_0 \pi_0 , \ 
\hat{u}_{4;5} \tilde{u}' \textcolor{purple}{\theta_\gamma} \pi_1 \pi_0 
)c ,\\
& \ \ \ \ 
(\hat{u}_{4;5} \tilde{u} \bar{u} \pi_1 \pi_0 \pi_0 \pi_0 , \
\hat{u}_{4;5} \tilde{u} \bar{u} \pi_1 \pi_0 \pi_0 \pi_1 w
)c 
\big) c \\
&= 
\big(
(
\textcolor{cyan}{\delta_{\lambda_0}} \pi_0 \iota_{eq} \pi_0 w , \
\hat{u}_4 \textcolor{olive}{\theta_{\gamma_0} } \pi_1 \pi_0 , \ 
\hat{u}_{4;5} \textcolor{purple}{\omega_\gamma} \pi_0 \pi_0 , \ 
\hat{u}_{4;5} \tilde{u}' \textcolor{purple}{\theta_\gamma} \pi_1 \pi_0 
)c ,\\
& \ \ \ \ 
\hat{u}_{4;5} \tilde{u} \bar{u} \pi_1 \pi_0 \pi_1 w
\big) c \\
&= 
\big(
(
\textcolor{cyan}{\delta_{\lambda_0}} \pi_0 \iota_{eq} \pi_0 w , \
\hat{u}_4 \textcolor{olive}{\theta_{\gamma_0} } \pi_1 \pi_0 , \ 
\hat{u}_{4;5} \textcolor{purple}{\omega_\gamma} \pi_0 \pi_0 
)c , \\
& \ \ \ \ 
(
\hat{u}_{4;5} \tilde{u}' \textcolor{purple}{\theta_\gamma} \pi_1 \pi_0 ,\
\hat{u}_{4;5} \tilde{u} \bar{u} \pi_1 \pi_0 \pi_1 w )c 
\big) c \\
&= 
\big(
(
\textcolor{cyan}{\delta_{\lambda_0}} \pi_0 \iota_{eq} \pi_0 w , \
\hat{u}_4 \textcolor{olive}{\theta_{\gamma_0} } \pi_1 \pi_0 , \ 
\hat{u}_{4;5} \textcolor{purple}{\omega_\gamma} \pi_0 \pi_0 
)c , \\
& \ \ \ \ 
(
\hat{u}_{4;5} \tilde{u}' \textcolor{purple}{\theta_\gamma} \pi_1 \pi_0 ,\
\hat{u}_{4;5} \tilde{u} \bar{u} p_1^2 \pi_1 \pi_0 w )c 
\big) c \\
&= 
\big(
(
\textcolor{cyan}{\delta_{\lambda_0}} \pi_0 \iota_{eq} \pi_0 w  , \
\hat{u}_4 \textcolor{olive}{\theta_{\gamma_0} } \pi_1 \pi_0 , \ 
\hat{u}_{4;5} \textcolor{purple}{\omega_\gamma} \pi_0 \pi_0 
)c , \\
& \ \ \ \ 
(
\hat{u}_{4;5} \tilde{u}' \textcolor{purple}{\theta_\gamma} \pi_0 \pi_0 w ,\
\hat{u}_{4;5} \tilde{u} \bar{u} p_0^2 \pi_0 \pi_1 )c 
\big) c \\
&= 
\big(
(
\textcolor{cyan}{\delta_{\lambda_0}} \pi_0 \iota_{eq} \pi_0 w , \
\hat{u}_4 \textcolor{olive}{\theta_{\gamma_0} } \pi_1 \pi_0 , \ 
\hat{u}_{4;5} \textcolor{purple}{\omega_\gamma} \pi_0 \pi_0 
)c , \\
& \ \ \ \ 
(
\hat{u}_{4;5}\tilde{u}' \textcolor{purple}{\theta_\gamma} \pi_0 \pi_0 w ,\
\hat{u}_{4;5} \tilde{u} \bar{u} p_0^2 \pi_0 \pi_1 )c 
\big) c \\
&= 
\big(
(
\textcolor{cyan}{\delta_{\lambda_0}} \pi_0 \iota_{eq} \pi_0 w , \
\hat{u}_4 \textcolor{olive}{\theta_{\gamma_0} } \pi_1 \pi_0 , \ 
\hat{u}_{4;5} \textcolor{purple}{\omega_\gamma} \pi_0 \pi_0 , \ 
\hat{u}_{4;5} \tilde{u}' \textcolor{purple}{\theta_\gamma} \pi_0 \pi_0 w )c 
 ,\\
& \ \ \ \ 
\hat{u}_{4;5} \tilde{u} \bar{u} p_0^2 \pi_0 \pi_1
\big) c \\
&= 
\big(
(
\textcolor{cyan}{\delta_{\lambda_0}} \pi_0 \iota_{eq} \pi_0 w , \
\hat{u}_4 \textcolor{olive}{\theta_{\gamma_0} } \pi_0 \pi_0 , \ 
\hat{u}_{4 ;5} \tilde{u} \textcolor{orange}{\pi_0 \pi_1 \omega} \pi_0 \pi_0 , \ 
\hat{u}_{4;5} \tilde{u} \textcolor{orange}{\pi_0 \pi_1 u_0 \theta} \pi_0 \pi_0 w
)c ,\\
& \ \ \ \ 
\hat{u}_{4;5} \tilde{u} \bar{u} p_0^2 \pi_0 \pi_1 
\big) c \\
&= 
\big(
(
\textcolor{cyan}{\delta_{\lambda_0}} \pi_0 \iota_{eq} \pi_0 w , \
\hat{u}_4 \textcolor{olive}{\theta_{\gamma_0} } \pi_0 \pi_0 , \ 
\hat{u}_{4;5} \tilde{u} \textcolor{orange}{\pi_0 \pi_1 \omega} \pi_0 \pi_0 )c 
 , \\
& \ \ \ \ 
( \hat{u}_{4;5} \tilde{u} \textcolor{orange}{\pi_0 \pi_1 u_0 \theta} \pi_0 \pi_0 w ,\
\hat{u}_{4;5} \tilde{u} \bar{u} p_0 \pi_0 \pi_1 )c 
\big) c \\
&= 
\big(
(
\textcolor{cyan}{\delta_{\lambda_0}} \pi_0 \iota_{eq} \pi_0 w , \
\hat{u}_4 \textcolor{olive}{\theta_{\gamma_0} } \pi_0 \pi_0 , \ 
\hat{u}_{4;5} \tilde{u} \textcolor{orange}{\pi_0 \pi_1 \omega} \pi_0 \pi_0 )c 
 , \\
& \ \ \ \ 
( \hat{u}_{4;5} \tilde{u} \textcolor{orange}{\pi_0 \pi_1 u_0 \theta} \pi_1 \pi_0  ,\
\hat{u}_{4;5} \tilde{u} \bar{u} p_0^2 \pi_1 \pi_0 w )c 
\big) c \\
&=
\big(
(
\textcolor{cyan}{\delta_{\lambda_0}} \pi_0 \iota_{eq} \pi_0 w , \
\hat{u}_4 \textcolor{olive}{\theta_{\gamma_0} } \pi_0 \pi_0 , \ 
\hat{u}_{4;5} \tilde{u} \textcolor{orange}{\pi_0 \pi_1 \omega} \pi_0 \pi_0 , \ 
\hat{u}_{4;5} \tilde{u} \textcolor{orange}{\pi_0 \pi_1 u_0 \theta} \pi_1 \pi_0 )c 
 ,\\
& \ \ \ \ 
\hat{u}_{4;5} \tilde{u} \bar{u} p_0^2 \pi_1 \pi_0 w 
\big) c. 
\end{align*}

\noi uniquely determine the map $\textcolor{cyan}{\rho_0'}$, for which

\[ \textcolor{cyan}{\rho_0'} \pi_1 = \hat{u}_{4;5} \tilde{u} \bar{u} p_0^2 \pi_1 \pi_0 \]

\noi and $\textcolor{cyan}{\rho_0'} \pi_0 $ is the parallel pair with components 

\[\textcolor{cyan}{\rho_0'} \pi_0 \pi_0 = (
\textcolor{cyan}{\delta_{\lambda_0}} \pi_0 \iota_{eq} \pi_0w , 
\hat{u}_4 \textcolor{olive}{\omega_{\gamma_0} } \pi_1 \pi_0 ,  
\hat{u}_{4;5} \textcolor{purple}{\omega_\gamma} \pi_0 \pi_0 , 
\hat{u}_{4;5} \tilde{u}' \textcolor{purple}{\theta_\gamma} \pi_1 \pi_0 , 
\hat{u}_{4;5} \tilde{u} \bar{u} \pi_1 \pi_0 \pi_0 \pi_0 
)c \]

\noi and 
\[\textcolor{cyan}{\rho_0'} \pi_0 \pi_1 = (
\textcolor{cyan}{\delta_{\lambda_0}} \pi_0 \iota_{eq} \pi_0 w , 
\hat{u}_4 \textcolor{olive}{\omega_{\gamma_0} } \pi_0 \pi_0 , 
\hat{u}_{4;5} \tilde{u} \textcolor{orange}{\pi_0 \pi_1 \omega} \pi_0 \pi_0 ,
\hat{u}_{4;5} \tilde{u} \textcolor{orange}{\pi_0 \pi_1 u_0 \theta} \pi_1 \pi_0 )c . \]

\noi This means that the first and last terms of the calculation above precisely says

\begin{align*}
 (\textcolor{cyan} {\rho_0'} \pi_0 \pi_0 , \textcolor{cyan} {\rho_0'} \pi_1) c 
&= (\textcolor{cyan} {\rho_0'} \pi_0 \pi_1 , \textcolor{cyan} {\rho_0'} \pi_1) c .
\end{align*}
\end{proof}\

\begin{lem}\label{appendix lem def for rho_1' for span comp well def wrt equiv result} 
The equation 
\begin{align*}
 (\textcolor{violet} {\rho_1'} \pi_0 \pi_0 , \textcolor{violet} {\rho_1'} \pi_1) c 
&= (\textcolor{violet} {\rho_1'} \pi_0 \pi_1 , \textcolor{violet} {\rho_1'} \pi_1) c 
\end{align*}\

\noi holds. 
\end{lem}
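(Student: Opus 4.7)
The plan is to mirror the strategy of Lemma~\ref{appendix lem def for rho_0' for span comp well def wrt equiv result } almost verbatim, substituting $\textcolor{brown}{\theta_{\gamma_1}}$ for $\textcolor{olive}{\theta_{\gamma_0}}$, $\textcolor{brown}{\delta_{\lambda_1}}$ for $\textcolor{cyan}{\delta_{\lambda_0}}$, and the projections $\textcolor{teal}{\pi_1 \pi_1}$ (together with the extra factor $\bar u \pi_1 \pi_0 \pi_0 \pi_0$ coming from $p_1^2$) for $\textcolor{orange}{\pi_0 \pi_1}$. The key observation is that the proof of the previous lemma used only: associativity of internal composition in $\mC$; commutativity of the Ore square witnessed by the diagram-extension map $\theta_\gamma$ together with the appropriate $\theta_{\gamma_i}$; commutativity of the weak-composition triangle witnessed by $\omega$ and $\omega_\gamma$; and the definitions of the pullback projections factoring through $\tilde u$, $\bar u$ and their refinements. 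All of these ingredients are equally available on the ``$\textcolor{violet}{1}$-side,'' so the same structural computation goes through.

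First I would write out the definitions of $\textcolor{violet}{\rho_1'} \pi_1$, $\textcolor{violet}{\rho_1'} \pi_0 \pi_0$ and $\textcolor{violet}{\rho_1'} \pi_0 \pi_1$ (recalled already in the scope of Lemma~\ref{lem witnessing c' well-defined}) and then start with the expression
\[
\big( \textcolor{violet}{\rho_1'} \pi_0 \pi_0 \ ,\ \textcolor{violet}{\rho_1'} \pi_1 \big) c
\]
in the analogous unfolded form. The main chain of equalities proceeds in three stages: (i) reassociate internal composition so that the last two factors, namely $\hat u_{4;5} \tilde u \bar u \pi_1 \pi_0 \pi_0 \pi_0$ and $\hat u_{4;5} \tilde u \bar u p_0^2 \pi_1 \pi_0 w$, can be combined via the pullback definition of $\bar u$ and the definition of $p_1^2$; (ii) invoke commutativity of the Ore square $\textcolor{purple}{\theta_\gamma}$ to swap the roles of $\pi_0$ and $\pi_1$ on the comparison span legs, and commutativity of the weak-composition triangle $\textcolor{purple}{\omega_\gamma}$ to rewrite the $\textcolor{purple}{\omega_\gamma} \pi_0 \pi_0$ block; (iii) use the Ore square $\textcolor{brown}{\theta_{\gamma_1}}$ and the weak-composition triangle $\textcolor{teal}{\pi_1 \pi_1 \omega}$ to pull the equality back to the form $(\textcolor{violet}{\rho_1'} \pi_0 \pi_1, \textcolor{violet}{\rho_1'} \pi_1) c$.

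The principal obstacle, as in the preceding lemma, is bookkeeping: keeping track of which composite sits over which cover refinement, and which pullback projection is being used at each reassociation step. There is no new conceptual input beyond what was used for $\textcolor{cyan}{\rho_0'}$; the two Ore squares $\textcolor{olive}{\theta_{\gamma_0}}$ and $\textcolor{brown}{\theta_{\gamma_1}}$ play symmetric roles with respect to the two spans $\textcolor{orange}{\sigma_0}$ and $\textcolor{teal}{\sigma_1}$, and the structural identities $\bar u p_0^2 = \pi_0 \bar u_0$ and $\bar u p_1^2 = \pi_1 \bar u_1$ together with the definitions of the refinement covers supply the necessary substitutions. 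I therefore expect the proof to be essentially a ``color swap'' of the previous calculation, with the only real care needed at the step where the extra projection $\bar u \pi_1 \pi_0 \pi_0 \pi_0$ in $\textcolor{violet}{\rho_1'} \pi_0 \pi_0$ (absent in $\textcolor{cyan}{\rho_0'} \pi_0 \pi_0$) is absorbed via the Ore square coming from $p_1^2$.
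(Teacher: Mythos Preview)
Your proposal is correct and matches the paper's own proof: the paper explicitly says the computation ``is similar to the one in Lemma~\ref{appendix lem def for rho_0' for span comp well def wrt equiv result }'' and then writes out the long chain of equalities obtained by exactly the color-swap you describe (replacing $\textcolor{cyan}{\delta_{\lambda_0}}$, $\textcolor{olive}{\theta_{\gamma_0}}$, and the $\textcolor{orange}{\pi_0\pi_1}$ projections by $\textcolor{violet}{\delta_{\lambda_1}}$, $\textcolor{brown}{\theta_{\gamma_1}}$, and $\textcolor{teal}{\pi_1\pi_1}$, together with the extra $\hat u_{4;5}\tilde u\bar u\,\pi_1\pi_0\pi_0\pi_0$ factor coming from $p_1^2$). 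You have correctly identified both the structure of the argument and the one place where extra care is needed.
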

\begin{proof}
This follows from the first and last lines of the following computation which is saimilar to the one in Lemma~\ref{appendix lem def for rho_0' for span comp well def wrt equiv result }: 

\begin{align*}
&\ \ \ \ 
\big(
(
\textcolor{violet}{\delta_{\lambda_1}} \pi_0 \iota_{eq} \pi_0w , \
\hat{u}_4 \textcolor{brown}{\theta_{\gamma_1} } \pi_1 \pi_0 , \ 
\hat{u}_{4;5} \textcolor{purple}{\omega_\gamma} \pi_0 \pi_0 , \ 
\hat{u}_{4;5} \tilde{u}' \textcolor{purple}{\theta_\gamma} \pi_1 \pi_0 , \ 
\hat{u}_{4;5} \tilde{u} \bar{u} \pi_1 \pi_0 \pi_0 \pi_0 
)c , \\
& \ \ \ \ 
\hat{u}_{4;5} \tilde{u} \bar{u} p_0^2 \pi_1 \pi_0 w 
\big) c \\
&= 
\big(
(
\textcolor{violet}{\delta_{\lambda_1}} \pi_0 \iota_{eq} \pi_0 w , \
\hat{u}_4 \textcolor{brown}{\theta_{\gamma_1} } \pi_1 \pi_0 , \ 
\hat{u}_{4;5}\textcolor{purple}{\omega_\gamma} \pi_0 \pi_0 , \ 
\hat{u}_{4;5} \tilde{u}' \textcolor{purple}{\theta_\gamma} \pi_1 \pi_0 
)c ,\\
& \ \ \ \ 
(\hat{u}_{4;5} \tilde{u} \bar{u} \pi_1 \pi_0 \pi_0 \pi_0 , \
\hat{u}_{4;5} \tilde{u} \bar{u} p_0^2 \pi_1 \pi_0 w
)c 
\big) c \\
&= 
\big(
(
\textcolor{violet}{\delta_{\lambda_1}} \pi_0 \iota_{eq} \pi_0 w , \
\hat{u}_4 \textcolor{brown}{\theta_{\gamma_1} } \pi_1 \pi_0 , \ 
\hat{u}_{4;5} \textcolor{purple}{\omega_\gamma} \pi_0 \pi_0 , \ 
\hat{u}_{4;5} \tilde{u}' \textcolor{purple}{\theta_\gamma} \pi_1 \pi_0 
)c ,\\
& \ \ \ \ 
(\hat{u}_{4;5} \tilde{u} \bar{u} \pi_1 \pi_0 \pi_0 \pi_0 , \
\hat{u}_{4;5} \tilde{u} \bar{u} \pi_1 \pi_0 \pi_0 \pi_1 w
)c 
\big) c \\
&= 
\big(
(
\textcolor{violet}{\delta_{\lambda_1}} \pi_0 \iota_{eq} \pi_0 w , \
\hat{u}_4 \textcolor{brown}{\theta_{\gamma_1} } \pi_1 \pi_0 , \ 
\hat{u}_{4;5} \textcolor{purple}{\omega_\gamma} \pi_0 \pi_0 , \ 
\hat{u}_{4;5} \tilde{u}' \textcolor{purple}{\theta_\gamma} \pi_1 \pi_0 
)c ,\\
& \ \ \ \ 
\hat{u}_{4;5} \tilde{u} \bar{u} \pi_1 \pi_0 \pi_1 w
\big) c \\
&= 
\big(
(
\textcolor{violet}{\delta_{\lambda_1}} \pi_0 \iota_{eq} \pi_0 w , \
\hat{u}_4 \textcolor{brown}{\theta_{\gamma_1} } \pi_1 \pi_0 , \ 
\hat{u}_{4;5} \textcolor{purple}{\omega_\gamma} \pi_0 \pi_0 
)c , \\
& \ \ \ \ 
(
\hat{u}_{4;5} \tilde{u}' \textcolor{purple}{\theta_\gamma} \pi_1 \pi_0 ,\
\hat{u}_{4;5} \tilde{u} \bar{u} \pi_1 \pi_0 \pi_1 w )c 
\big) c \\
&= 
\big(
(
\textcolor{violet}{\delta_{\lambda_1}} \pi_0 \iota_{eq} \pi_0 w , \
\hat{u}_4 \textcolor{brown}{\theta_{\gamma_1} } \pi_1 \pi_0 , \ 
\hat{u}_{4;5} \textcolor{purple}{\omega_\gamma} \pi_0 \pi_0 
)c , \\
& \ \ \ \ 
(
\hat{u}_{4;5} \tilde{u}' \textcolor{purple}{\theta_\gamma} \pi_1 \pi_0 ,\
\hat{u}_{4;5} \tilde{u} \bar{u} p_1^2 \pi_1 \pi_0 w )c 
\big) c \\
&= 
\big(
(
\textcolor{violet}{\delta_{\lambda_1}} \pi_0 \iota_{eq} \pi_0 w  , \
\hat{u}_4 \textcolor{brown}{\theta_{\gamma_1} } \pi_1 \pi_0 , \ 
\hat{u}_{4;5} \textcolor{purple}{\omega_\gamma} \pi_0 \pi_0 
)c , \\
& \ \ \ \ 
(
\hat{u}_{4;5} \tilde{u}' \textcolor{purple}{\theta_\gamma} \pi_0 \pi_0 w ,\
\hat{u}_{4;5} \tilde{u} \bar{u} p_0^2 \pi_0 \pi_1 )c 
\big) c \\
&= 
\big(
(
\textcolor{violet}{\delta_{\lambda_1}} \pi_0 \iota_{eq} \pi_0 w , \
\hat{u}_4 \textcolor{olive}{\theta_{\gamma_1} } \pi_1 \pi_0 , \ 
\hat{u}_{4;5} \textcolor{purple}{\omega_\gamma} \pi_0 \pi_0 
)c , \\
& \ \ \ \ 
(
\hat{u}_{4;5}\tilde{u}' \textcolor{purple}{\theta_\gamma} \pi_0 \pi_0 w ,\
\hat{u}_{4;5} \tilde{u} \bar{u} p_0^2 \pi_0 \pi_1 )c 
\big) c \\
\end{align*}

\noi This calculation continues below, we just had to separate because it wouldn't fit on one page. 
\begin{align*}
& \ \ \ \ \big(
(
\textcolor{violet}{\delta_{\lambda_1}} \pi_0 \iota_{eq} \pi_0 w , \
\hat{u}_4 \textcolor{olive}{\theta_{\gamma_1} } \pi_1 \pi_0 , \ 
\hat{u}_{4;5} \textcolor{purple}{\omega_\gamma} \pi_0 \pi_0 
)c , \\
& \ \ \ \ 
(
\hat{u}_{4;5}\tilde{u}' \textcolor{purple}{\theta_\gamma} \pi_0 \pi_0 w ,\
\hat{u}_{4;5} \tilde{u} \bar{u} p_0^2 \pi_0 \pi_1 )c 
\big) c \\
&= 
\big(
(
\textcolor{violet}{\delta_{\lambda_1}} \pi_0 \iota_{eq} \pi_0 w , \
\hat{u}_4 \textcolor{brown}{\theta_{\gamma_1} } \pi_1 \pi_0 , \ 
\hat{u}_{4;5} \textcolor{purple}{\omega_\gamma} \pi_0 \pi_0 , \ 
\hat{u}_{4;5} \tilde{u}' \textcolor{purple}{\theta_\gamma} \pi_0 \pi_0 w )c 
 ,\\
 & \ \ \ \ 
\hat{u}_{4;5} \tilde{u} \bar{u} p_0^2 \pi_0 \pi_1
\big) c \\
&= 
\big(
(
\textcolor{violet}{\delta_{\lambda_1}} \pi_0 \iota_{eq} \pi_0 w , \
\hat{u}_4 \textcolor{brown}{\theta_{\gamma_1} } \pi_0 \pi_0 , \ 
\hat{u}_{4;5} \tilde{u} \textcolor{teal}{\pi_1 \pi_1 \omega} \pi_0 \pi_0 , \\
&\ \ \ \ 
\hat{u}_{4;5} \tilde{u} \textcolor{teal}{\pi_1 \pi_1 u_0 \theta} \pi_0 \pi_0 w , \ 
\hat{u}_{4;5} \tilde{u} \bar{u} \pi_0 \pi_0 \pi_0 \pi_0 
)c ,\\
& \ \ \ \ 
\hat{u}_{4;5} \tilde{u} \bar{u} p_0^2 \pi_0 \pi_1 
\big) c \\
&= 
\big(
(
\textcolor{violet}{\delta_{\lambda_1}} \pi_0 \iota_{eq} \pi_0 w , \
\hat{u}_4 \textcolor{brown}{\theta_{\gamma_1} } \pi_0 \pi_0 , \ 
\hat{u}_{4;5} \tilde{u} \textcolor{teal}{\pi_1 \pi_1 \omega} \pi_0 \pi_0 , \ 
\hat{u}_{4;5} \tilde{u} \textcolor{teal}{\pi_1 \pi_1 u_0 \theta} \pi_0 \pi_0 w 
)c , \\
& \ \ \ \ 
(\hat{u}_{4;5} \tilde{u} \bar{u} \pi_0 \pi_0 \pi_0 \pi_0  ,
\hat{u}_{4;5} \tilde{u} \bar{u} p_0^2 \pi_0 \pi_1 
)c
\big) c \\
&= 
\big(
(
\textcolor{violet}{\delta_{\lambda_1}} \pi_0 \iota_{eq} \pi_0 w , \
\hat{u}_4 \textcolor{brown}{\theta_{\gamma_1} } \pi_0 \pi_0 , \ 
\hat{u}_{4;5} \tilde{u} \textcolor{teal}{\pi_1 \pi_1 \omega} \pi_0 \pi_0 , \ 
\hat{u}_{4;5} \tilde{u} \textcolor{teal}{\pi_1 \pi_1 u_0 \theta} \pi_0 \pi_0 w 
)c , \\
& \ \ \ \ 
\hat{u}_{4;5} \tilde{u} \bar{u} p_1^2 \pi_0 \pi_1
\big) c \\
&= 
\big(
(
\textcolor{violet}{\delta_{\lambda_1}} \pi_0 \iota_{eq} \pi_0 w , \
\hat{u}_4 \textcolor{brown}{\theta_{\gamma_1} } \pi_0 \pi_0 , \ 
\hat{u}_{4;5} \tilde{u} \textcolor{teal}{\pi_1 \pi_1 \omega} \pi_0 \pi_0 )c 
 , \\
 & \ \ \ \ 
( \hat{u}_{4;5} \tilde{u} \textcolor{teal}{\pi_1 \pi_1 u_0 \theta} \pi_1 \pi_0 ,\
\hat{u}_{4;5} \tilde{u} \bar{u} p_1^2 \pi_1 \pi_0 w )c 
\big) c \\
&=
\big(
(
\textcolor{violet}{\delta_{\lambda_1}} \pi_0 \iota_{eq} \pi_0 w , \
\hat{u}_4 \textcolor{brown}{\theta_{\gamma_1} } \pi_0 \pi_0 , \ 
\hat{u}_{4;5} \tilde{u} \textcolor{teal}{\pi_1 \pi_1 \omega} \pi_0 \pi_0 , \ 
\hat{u}_{4;5} \tilde{u} \textcolor{teal}{\pi_1 \pi_1 u_0 \theta} \pi_1 \pi_0 )c 
 ,\\
 & \ \ \ \ 
\hat{u}_{4;5} \tilde{u} \bar{u} p_1^2 \pi_1 \pi_0 w 
\big) c \\
&=
\big(
(
\textcolor{violet}{\delta_{\lambda_1}} \pi_0 \iota_{eq} \pi_0 w , \
\hat{u}_4 \textcolor{brown}{\theta_{\gamma_1} } \pi_0 \pi_0 , \ 
\hat{u}_{4;5} \tilde{u} \textcolor{teal}{\pi_1 \pi_1 \omega} \pi_0 \pi_0 , \
\hat{u}_{4;5} \tilde{u} \textcolor{teal}{\pi_1 \pi_1 u_0 \theta} \pi_1 \pi_0 )c 
 ,\\
&\ \ \ \ 
(\hat{u}_{4;5} \tilde{u} \bar{u} \pi_1 \pi_0 \pi_0 \pi_0 , 
\hat{u}_{4;5} \tilde{u} \bar{u} \pi_1 \pi_0 \pi_0 \pi_1 w 
)c 
\big) c \\
&=
\big(
(
\textcolor{violet}{\delta_{\lambda_1}} \pi_0 \iota_{eq} \pi_0 w , \
\hat{u}_4 \textcolor{brown}{\theta_{\gamma_1} } \pi_0 \pi_0 , \ 
\hat{u}_{4;5} \tilde{u} \textcolor{teal}{\pi_1 \pi_1 \omega} \pi_0 \pi_0 , \\
& \ \ \ \ 
\hat{u}_{4;5} \tilde{u} \textcolor{teal}{\pi_1 \pi_1 u_0 \theta} \pi_1 \pi_0 ,\
\hat{u}_{4;5} \tilde{u} \bar{u} \pi_1 \pi_0 \pi_0 \pi_0 
)c , \\
& \ \ \ \ 
\hat{u}_{4;5} \tilde{u} \bar{u} \pi_1 \pi_0 \pi_0 \pi_1 w 
\big) c \\
&=
\big(
(
\textcolor{violet}{\delta_{\lambda_1}} \pi_0 \iota_{eq} \pi_0 w , \
\hat{u}_4 \textcolor{brown}{\theta_{\gamma_1} } \pi_0 \pi_0 , \ 
\hat{u}_{4;5} \tilde{u} \textcolor{teal}{\pi_1 \pi_1 \omega} \pi_0 \pi_0 , \\
& \ \ \ \ 
\hat{u}_{4;5} \tilde{u} \textcolor{teal}{\pi_1 \pi_1 u_0 \theta} \pi_1 \pi_0 ,\
\hat{u}_{4;5} \tilde{u} \bar{u} \pi_1 \pi_0 \pi_0 \pi_0 
)c , \\
& \ \ \ \ 
\hat{u}_{4;5} \tilde{u} \bar{u} p_0^2 \pi_1 \pi_0 w 
\big) c. 
\end{align*}

\noi Unsurprisingly we get the same coequalizing arrow in $W$ for \textcolor{violet}{$\rho_1'$} as for \textcolor{cyan}{$\rho_0'$}

\[ \textcolor{violet}{\rho_1'} \pi_1 = \hat{u}_{4;5} \tilde{u} \bar{u} p_0^2 \pi_1 \pi_0 \]

\noi and the parallel pair $\textcolor{violet}{\rho_1'} \pi_0 $ is given by the pair of components 

\[\textcolor{violet}{\rho_1'} \pi_0 \pi_0 = (
\textcolor{violet}{\delta_{\lambda_1}} \pi_0 \iota_{eq} \pi_0w , \
\hat{u}_4 \textcolor{brown}{\omega_{\gamma_1} } \pi_1 \pi_0 , \ 
\hat{u}_{4;5} \textcolor{purple}{\omega_\gamma} \pi_0 \pi_0 , \
\hat{u}_{4;5} \tilde{u}' \textcolor{purple}{\theta_\gamma} \pi_1 \pi_0 ,\ 
\hat{u}_{4;5} \tilde{u} \bar{u} \pi_1 \pi_0 \pi_0 \pi_0 
)c \]

\noi and 
\begin{align*}
  \textcolor{violet}{\rho_0'} \pi_0 \pi_1
  & = (
\textcolor{violet}{\delta_{\lambda_1}} \pi_0 \iota_{eq} \pi_0 w , \
\hat{u}_4 \textcolor{brown}{\theta_{\gamma_1} } \pi_0 \pi_0 , \ 
\hat{u}_{4;5} \tilde{u} \textcolor{teal}{\pi_1 \pi_1 \omega} \pi_0 \pi_0 , \\
& \ \ \ \ 
\hat{u}_{4;5} \tilde{u} \textcolor{teal}{\pi_1 \pi_1 u_0 \theta} \pi_1 \pi_0 ,\
\hat{u}_{4;5} \tilde{u} \bar{u} \pi_1 \pi_0 \pi_0 \pi_0 
)c 
\end{align*}  
\noi The first and last terms of the big equation above being equal then reduces to 

\begin{align*}
 (\textcolor{violet} {\rho_1'} \pi_0 \pi_0 , \textcolor{violet} {\rho_1'} \pi_1) c 
&= (\textcolor{violet} {\rho_1'} \pi_0 \pi_1 , \textcolor{violet} {\rho_1'} \pi_1) c .
\end{align*}\
\end{proof}

\begin{lem}\label{appendix lem def sigma_{0,gamma}}
There is a unique map $\textcolor{cyan}{\sigma_{0, \gamma}} : \hat{U} \to \spn $ determined by

\begin{align*}
   \textcolor{cyan}{\sigma_{0, \gamma}} \pi_0 
   &= \textcolor{cyan}{\omega_{0, 0}} \pi_1 \\
   \textcolor{cyan}{\sigma_{0, \gamma}} \pi_1 
   &= (
\textcolor{cyan}{\omega_0} , \ 
\hat{u}_{0;4} \textcolor{olive}{\theta_{\gamma_0} } \pi_0 \pi_0,\ 
\hat{u} \textcolor{orange}{\sigma_0} \pi_1
)c \\ 
  &= (\textcolor{cyan}{\omega_0} , \ 
\hat{u}_{0;4} \textcolor{olive}{\theta_{\gamma_0} } \pi_1 \pi_0\ ,  \
\hat{u} \textcolor{purple}{\sigma_\gamma} \pi_1
)c
\end{align*}

\noi where $\textcolor{cyan}{\omega_0} : \hat{U} \to W_\circ$ is defined by

\[\textcolor{cyan}{\omega_0} = ( \textcolor{cyan}{\omega_{0,0}} \pi_0 \pi_0 , \
 \hat{u}_0 \textcolor{cyan}{\omega_{0,1}} \pi_0 \pi_0, \
 \hat{u}_{0;1} \textcolor{cyan}{\omega_{0,2}} \pi_1 ) c \]
\end{lem}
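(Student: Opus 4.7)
The plan is to produce the map $\textcolor{cyan}{\sigma_{0,\gamma}}$ by the universal property of the pullback $\spn = W \tensor[_{ws}]{\times}{_{s}} \mC_1$. This requires three things: (i) a map $\hat U \to W$ to serve as $\sigma_{0,\gamma}\pi_0$, (ii) a map $\hat U \to \mC_1$ to serve as $\sigma_{0,\gamma}\pi_1$, and (iii) a verification that they agree after post-composing with $ws$ and $s$ respectively. For (i) we simply take $\textcolor{cyan}{\omega_{0,0}}\pi_1 : \hat U \to W$, which lands in $W$ by definition of $W_\circ$. For (ii) the statement itself offers two a priori different candidates; the heart of the proof is to show they coincide in $\cE$, and only then the resulting map $\hat U \to \mC_1$ is unambiguous.

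First I would address the equality of the two proposed expressions for $\sigma_{0,\gamma}\pi_1$. By definition of $\textcolor{olive}{\theta_{\gamma_0}} : \hat U_5 \to W_\square$ in Diagram (\ref{cover dgms for showing span comp well def : Ore + zip (appendix)}), the Ore-square equation
\[
\big(\textcolor{olive}{\theta_{\gamma_0}}\pi_0\pi_0 w \ ,\ \textcolor{olive}{\theta_{\gamma_0}}\pi_0\pi_1\big)c
\;=\;
\big(\textcolor{olive}{\theta_{\gamma_0}}\pi_1\pi_0 w \ ,\ \textcolor{olive}{\theta_{\gamma_0}}\pi_1\pi_1 w\big)c
\]
holds in $\mC$; the bottom cospan of that Ore-square is precisely $(\textcolor{orange}{\sigma_0}\pi_0 w, \textcolor{purple}{\sigma_\gamma}\pi_0)$. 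Pulling this equation up along $\hat u_5$ and post-composing (in $\mC$) with $\hat u\,\textcolor{orange}{\sigma_0}\pi_1$ on one side and $\hat u\,\textcolor{purple}{\sigma_\gamma}\pi_1$ on the other, together with associativity, reduces to showing
\[
(\textcolor{cyan}{\omega_0}\ ,\ \hat u_{0;4}\textcolor{olive}{\theta_{\gamma_0}}\pi_0\pi_0\ ,\ \hat u\,\textcolor{orange}{\sigma_0}\pi_1)c
\;=\;
(\textcolor{cyan}{\omega_0}\ ,\ \hat u_{0;4}\textcolor{olive}{\theta_{\gamma_0}}\pi_1\pi_0\ ,\ \hat u\,\textcolor{purple}{\sigma_\gamma}\pi_1)c,
\]
where the outside arrow $\textcolor{cyan}{\omega_0}$ supplies the common left factor; this left factor is well-defined and composable because of the weak-composition step witnessed by $\textcolor{cyan}{\omega_{0,0}}$ in Diagram (\ref{cover dgms for showing span comp well def : weak comp x3 (appendix)}), whose definition forces $\textcolor{cyan}{\omega_0}\,t = \hat u_{0;4}\textcolor{olive}{\theta_{\gamma_0}}\pi_0\pi_0\,w\,s$. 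Both outer composable chains therefore have a common source, and associativity together with the Ore-square equation gives the claimed equality.

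Next I would verify the pullback compatibility (iii). By the identity $\textcolor{cyan}{\omega_0}\,s = \textcolor{cyan}{\omega_{0,0}}\pi_1\,w\,s$ coming directly from the definitions of $W_\circ$ and $\textcolor{cyan}{\omega_{0,0}}$, the leftmost factor in the common expression for $\sigma_{0,\gamma}\pi_1$ has source matching $\textcolor{cyan}{\omega_{0,0}}\pi_1\,w\,s$; consequently the composite $\sigma_{0,\gamma}\pi_1$ has source $\textcolor{cyan}{\omega_{0,0}}\pi_1\,w\,s = \sigma_{0,\gamma}\pi_0\,w\,s$, which is exactly the compatibility condition for the pullback $\spn$. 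The universal property of $\spn$ then produces the unique $\textcolor{cyan}{\sigma_{0,\gamma}}:\hat U \to \spn$ with the prescribed projections.

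The main obstacle is the equality of the two candidate expressions for $\sigma_{0,\gamma}\pi_1$: tracking that the Ore-square witnessed by $\textcolor{olive}{\theta_{\gamma_0}}$ really does bridge between the $\textcolor{orange}{\sigma_0}$-side and the $\textcolor{purple}{\sigma_\gamma}$-side is a bookkeeping exercise, but it is the only nontrivial content, since once that is in hand both the well-definedness of the middle arrow and the pullback compatibility are immediate consequences of the weak-composition definition of $\textcolor{cyan}{\omega_0}$.
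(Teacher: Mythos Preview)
Your outline is fine on the routine parts---the pullback compatibility for $\spn$ really does come down to $\textcolor{cyan}{\omega_0}s = \textcolor{cyan}{\omega_{0,0}}\pi_1 ws$---but the heart of the lemma, the equality of the two candidates for $\textcolor{cyan}{\sigma_{0,\gamma}}\pi_1$, does not follow from the Ore square $\textcolor{olive}{\theta_{\gamma_0}}$ and associativity alone.

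The Ore square for $\textcolor{olive}{\theta_{\gamma_0}}$ only says
\[
(\textcolor{olive}{\theta_{\gamma_0}}\pi_0\pi_0 w,\ \hat u_5\textcolor{orange}{\sigma_0}\pi_0 w)c
\;=\;
(\textcolor{olive}{\theta_{\gamma_0}}\pi_1\pi_0,\ \hat u_5\textcolor{purple}{\sigma_\gamma}\pi_0 w)c,
\]
an equality of two composites whose common target is the \emph{left} end of the picture. The composites you need to compare,
\[
(\hat u_{0;4}\textcolor{olive}{\theta_{\gamma_0}}\pi_0\pi_0,\ \hat u\,\textcolor{orange}{\sigma_0}\pi_1)c
\quad\text{and}\quad
(\hat u_{0;4}\textcolor{olive}{\theta_{\gamma_0}}\pi_1\pi_0,\ \hat u\,\textcolor{purple}{\sigma_\gamma}\pi_1)c,
\]
end at the \emph{right} end of the picture, and nothing in the Ore square relates those. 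You cannot post-compose the two sides of an equation with two different arrows and expect the results to agree; that is exactly what ``post-composing with $\hat u\,\textcolor{orange}{\sigma_0}\pi_1$ on one side and $\hat u\,\textcolor{purple}{\sigma_\gamma}\pi_1$ on the other'' attempts.

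What actually makes the equality go through is the zippering data $\textcolor{cyan}{\delta_{\lambda_0}}$ and $\textcolor{cyan}{\delta_{\rho_0}}$, which are present in $\textcolor{cyan}{\omega_0}$ but must be \emph{used}, not merely carried along. The paper unpacks $\textcolor{cyan}{\omega_0}$ as $(\textcolor{cyan}{\omega_0'},\ \hat u_{0;2}\textcolor{cyan}{\delta_{\rho_0}}\pi_0\iota_{eq}\pi_0,\ \hat u_{0;3}\textcolor{cyan}{\delta_{\lambda_0}}\pi_0\iota_{eq}\pi_0)c$, rewrites each side in terms of $\textcolor{cyan}{\rho_0'}\pi_0\pi_0$ and $\textcolor{cyan}{\rho_0'}\pi_0\pi_1$ via equations~(\ref{appendix eq sigma_{0,gamma} side-calc sigma_0}) and~(\ref{appendix eq sigma_{0,gamma} side-calc sigma_gamma}), and then invokes the equalizer property of $\cP_{eq}(\mC)$ through $\textcolor{cyan}{\delta_{\rho_0}}$ to obtain the key identity
\[
(\textcolor{cyan}{\delta_{\rho_0}}\pi_0\iota_{eq}\pi_0,\ \hat u_3\textcolor{cyan}{\rho_0'}\pi_0\pi_0)c
\;=\;
(\textcolor{cyan}{\delta_{\rho_0}}\pi_0\iota_{eq}\pi_0,\ \hat u_3\textcolor{cyan}{\rho_0'}\pi_0\pi_1)c.
\]
This is the step your proposal is missing: the equality you want is a consequence of \textbf{In.Frc(4)} (zippering), not of \textbf{In.Frc(3)} (Ore) alone.
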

\begin{proof}
First, by definition of $W_\circ$ we have

\[ \textcolor{cyan}{\omega_0} s = \textcolor{cyan}{\omega_{0,0}} \pi_0 \pi_0 s = \textcolor{cyan}{\omega_{0,0}} \pi_1 s =\]

\noi showing that $\textcolor{cyan}{\omega_{0 , \gamma}} : \hat{U} \to \spn$ is well-defined. Now let $\textcolor{cyan}{\omega_{0}'}: \hat{U} \to \mC_1$ be defined by 

\[\textcolor{cyan}{\omega_{0}'} = ( \textcolor{cyan}{\omega_{0,0}} \pi_0 \pi_0 ,\
 \hat{u}_0 \textcolor{cyan}{\omega_{0,1}} \pi_0 \pi_0 , \
\hat{u}_{0;1} \textcolor{cyan}{\omega_{0,2}} \pi_0 \pi_0 )c . \]

\noi By definition of $W_\circ$, $\textcolor{cyan}{\omega_0} $, and $\textcolor{cyan}{\omega_0'}$ we have 

\begin{align}\label{appendix eq omega_0 for sigma_{0,gamma}}
\begin{split}
\textcolor{cyan}{\omega_0} 
&= 
( \textcolor{cyan}{\omega_{0,0}} \pi_0 \pi_0 , \
 \hat{u}_0 \textcolor{cyan}{\omega_{0,1}} \pi_0 \pi_0, \
 \hat{u}_{0;1} \textcolor{cyan}{\omega_{0,2}} \pi_1 ) c\\
&= 
( \textcolor{cyan}{\omega_{0,0}} \pi_0 \pi_0 ,\
 \hat{u}_0 \textcolor{cyan}{\omega_{0,1}} \pi_0 \pi_0, \
\hat{u}_{0;1} \textcolor{cyan}{\omega_{0,2}} \pi_0 \pi_0 , \ 
\hat{u}_{0;2} \textcolor{cyan}{\omega_{0,2}'}c ) c \\
&= 
( \textcolor{cyan}{\omega_{0,0}} \pi_0 \pi_0 ,\
 \hat{u}_0 \textcolor{cyan}{\omega_{0,1}} \pi_0 \pi_0, \
\hat{u}_{0;1} \textcolor{cyan}{\omega_{0,2}} \pi_0 \pi_0 , \ 
\hat{u}_{0;2} \textcolor{cyan}{\delta_{\rho_0}} \pi_0 \iota_{eq} \pi_0 , \ 
 \hat{u}_{0;3} \textcolor{cyan}{\delta_{\lambda_0}} \pi_0 \iota_{eq} \pi_0 ) c\\
 &= 
\big( (\textcolor{cyan}{\omega_{0,0}} \pi_0 \pi_0 ,\
 \hat{u}_0 \textcolor{cyan}{\omega_{0,1}} \pi_0 \pi_0, \
\hat{u}_{0;1} \textcolor{cyan}{\omega_{0,2}} \pi_0 \pi_0 , \ 
\hat{u}_{0;2} \textcolor{cyan}{\delta_{\rho_0}} \pi_0 \iota_{eq} \pi_0)c , \ 
 \hat{u}_{0;3} \textcolor{cyan}{\delta_{\lambda_0}} \pi_0 \iota_{eq} \pi_0 \big) c\\
 &= 
\big( \textcolor{cyan}{\omega_{0}'} , \ 
\hat{u}_{0;2} (\textcolor{cyan}{\delta_{\rho_0}} \pi_0 \iota_{eq} \pi_0 , \ 
 \hat{u}_{3} \textcolor{cyan}{\delta_{\lambda_0}} \pi_0 \iota_{eq} \pi_0)c \big) c 
 \end{split}
\end{align}

\noi Notice that by definition of $\textcolor{orange}{\sigma_0}$ and $\textcolor{cyan} {\rho_0'} \pi_0 \pi_1$ and the refinement of covers $\bar{u} : \bar{U} \to \slb \tensor[_t]{\times}{_s} \slb$, we have 
\begin{align}\label{appendix eq sigma_{0,gamma} side-calc sigma_0} 
\begin{split}
& \ \ \ \ (\textcolor{cyan} {\rho_0'} \pi_0 \pi_1 ,  \hat{u}_{4;5} \tilde{u} \bar{u} p_0^2 \pi_1 \pi_1 ) c \\ 
&=
 \big( 
(
\textcolor{cyan}{\delta_{\lambda_0}} \pi_0 \iota_{eq} \pi_0 w , \
\hat{u}_4 \textcolor{olive}{\theta_{\gamma_0} } \pi_0 \pi_0,\ 
\hat{u}_{4;5} \tilde{u} \textcolor{orange}{\pi_0 \pi_1 \omega \pi_0 \pi_0} ,\
\hat{u}_{4 ;5} \tilde{u} \pi_0 \pi_1 u_0 \theta \pi_1 \pi_0
)c ,\\
&\ \ \ \
\hat{u}_{4;5} \tilde{u} \textcolor{orange}{\pi_0 \pi_1 u \pi_1 \pi_1 }
\big)c \\
&= 
\big( 
\textcolor{cyan}{\delta_{\lambda_0}} \pi_0 \iota_{eq} \pi_0 w, \
\hat{u}_4 \textcolor{olive}{\theta_{\gamma_0} } \pi_0 \pi_0,\\
&\ \ \ \ 
(
\hat{u}_{4 ;5} \tilde{u} \textcolor{orange}{\pi_0 \pi_1 \omega \pi_0 \pi_0} ,\
\hat{u}_{4 ;5} \tilde{u} \textcolor{orange}{\pi_0 \pi_1 u_0 \theta \pi_1 \pi_0} ,\
\hat{u}_{4;5} \tilde{u} \pi_0 \pi_1 u \pi_1 \pi_1 
)c
\big)c \\
&= 
\big( 
\textcolor{cyan}{\delta_{\lambda_0}} \pi_0 \iota_{eq} \pi_0 w, \
\hat{u}_4 \textcolor{olive}{\theta_{\gamma_0} } \pi_0 \pi_0,\ 
\hat{u}_{4;5} \textcolor{orange}{\sigma_0} \pi_1 
\big)c
\end{split}
\end{align}\

\noi and similarly by definition of $\textcolor{purple}{\sigma_\gamma}$ and $\textcolor{cyan}{\rho_0'} \pi_0 \pi_0$

\begin{align}\label{appendix eq sigma_{0,gamma} side-calc sigma_gamma}
\begin{split}
& \ \ \ \ (\textcolor{cyan} {\rho_0'} \pi_0 \pi_0 , \hat{u}_{4;5} \tilde{u}' \textcolor{purple}{\theta_\gamma} \pi_1 \pi_0 )c \\ 
&= 
\big( 
(
\textcolor{cyan}{\delta_{\lambda_0}} \pi_0 \iota_{eq} \pi_0w, \
\hat{u}_4 \textcolor{olive}{\theta_{\gamma_0} } \pi_1 \pi_0\ ,  \
\hat{u}_{4;5} \textcolor{purple}{\omega_\gamma} \pi_0 \pi_0, \
\hat{u}_{4;5} \tilde{u}' \textcolor{purple}{\theta_\gamma} \pi_1 \pi_0 , \ 
\hat{u}_{4;5} \tilde{u} \bar{u} \pi_1 \pi_0 \pi_0 \pi_0 
)c , \\
&\ \ \ \ 
\hat{u}_{4;5} \tilde{u} \pi_0 \pi_1 u \pi_1 \pi_1 
\big) c \\
&= 
\big( 
(
\textcolor{cyan}{\delta_{\lambda_0}} \pi_0 \iota_{eq} \pi_0w, \
\hat{u}_4 \textcolor{olive}{\theta_{\gamma_0} } \pi_1 \pi_0\ ,  \
\hat{u}_{4;5} \textcolor{purple}{\omega_\gamma} \pi_0 \pi_0, \
\hat{u}_{4;5} \tilde{u}' \textcolor{purple}{\theta_\gamma} \pi_1 \pi_0
)c , \\
&\ \ \ \ 
(\hat{u}_{4;5} \tilde{u} \bar{u} \pi_1 \pi_0 \pi_0 \pi_0 , \ 
\hat{u}_{4;5} \tilde{u} \pi_0 \pi_1 u \pi_1 \pi_1 )c
\big) c \\
&= 
\big( 
(
\textcolor{cyan}{\delta_{\lambda_0}} \pi_0 \iota_{eq} \pi_0w, \
\hat{u}_4 \textcolor{olive}{\theta_{\gamma_0} } \pi_1 \pi_0\ ,  \
\hat{u}_{4 ;5} \textcolor{purple}{\omega_\gamma} \pi_0 \pi_0, \
\hat{u}_{4 ;5} \tilde{u}' \textcolor{purple}{\theta_\gamma} \pi_1 \pi_0
)c , \\
&\ \ \ \ 
\hat{u}_{4;5} \tilde{u} \bar{u} p_1 \pi_1 \pi_1 
\big) c \\
&= 
\big( 
\textcolor{cyan}{\delta_{\lambda_0}} \pi_0 \iota_{eq} \pi_0w, \
\hat{u}_4 \textcolor{olive}{\theta_{\gamma_0} } \pi_1 \pi_0\ ,  \\
&\ \ \ \
(
\hat{u}_{4 ;5} \textcolor{purple}{\omega_\gamma} \pi_0 \pi_0, \
\hat{u}_{4 ;5} \tilde{u}' \textcolor{purple}{\theta_\gamma} \pi_1 \pi_0
 , \ 
\hat{u}_{4;5} \tilde{u} \bar{u} p_1 \pi_1 \pi_1 
)c
\big) c \\
&= 
\big( 
\textcolor{cyan}{\delta_{\lambda_0}} \pi_0 \iota_{eq} \pi_0w, \
\hat{u}_4 \textcolor{olive}{\theta_{\gamma_0} } \pi_1 \pi_0\ ,  \
\hat{u}_{4 ;5} \textcolor{purple}{\sigma_\gamma} \pi_1 
\big) c . 
\end{split}
\end{align}

\noi Also notice since $\cP(\mC)$ is a pullback of $\cP_{eq}(\mC)$ and $\cP_{cq}(\mC)$ over the object of parallel pairs in $\mC$, $P(\mC)$, we have

\begin{align*}
  \textcolor{cyan} {\delta_{\rho_0}} \pi_0 \iota_{eq} \pi_1 
&= \textcolor{cyan} {\delta_{\rho_0}} \pi_1 \iota_{ceq} \pi_0\\ 
&= \hat{u}_3 \textcolor{cyan} {\rho_0} \iota_{ceq} \pi_0\\
&= \hat{u}_3 \textcolor{cyan} {\rho_0'} \pi_0
\end{align*}

\noi and then by definition of $\cP_{eq}(\mC)$ the composable pairs
\begin{align*}
\textcolor{cyan} {\delta_{\rho_0}} \pi_0 \iota_{eq} (\pi_0 , \pi_1 \pi_0) 
= 
(\textcolor{cyan} {\delta_{\rho_0}} \pi_0 \iota_{eq} \pi_0 , 
\textcolor{cyan} {\delta_{\rho_0}} \pi_0 \iota_{eq} \pi_1 \pi_0 ) 
= 
(\textcolor{cyan} {\delta_{\rho_0}} \pi_0 \iota_{eq} \pi_0, 
\hat{u}_3 \textcolor{cyan} {\rho_0'} \pi_0 \pi_0 )
\end{align*} 
\noi and 
\begin{align*}
\textcolor{cyan} {\delta_{\rho_0}} \pi_0 \iota_{eq} (\pi_0 , \pi_1 \pi_1) 
= 
(\textcolor{cyan} {\delta_{\rho_0}} \pi_0 \iota_{eq} \pi_0 , 
\textcolor{cyan} {\delta_{\rho_0}} \pi_0 \iota_{eq} \pi_1 \pi_1 ) 
= (\textcolor{cyan} {\delta_{\rho_0}} \pi_0 \iota_{eq} \pi_0, 
\hat{u}_3 \textcolor{cyan} {\rho_0'} \pi_0 \pi_1 ) 
\end{align*}

\noi are equal after post-composing with the composition structure map in $\mC$, $c : mC_2 \to \mC_1$: 

\begin{align}\label{appendix eq final helper lemma for sigma_{0,gamma}}
(\textcolor{cyan} {\delta_{\rho_0}} \pi_0 \iota_{eq} \pi_0, 
\hat{u}_3 \textcolor{cyan} {\rho_0'} \pi_0 \pi_0 ) c = (\textcolor{cyan} {\delta_{\rho_0}} \pi_0 \iota_{eq} \pi_0, 
\hat{u}_3 \textcolor{cyan} {\rho_0'} \pi_0 \pi_1 ) c
\end{align}

\noi Associativity of composition and equations ($\ref{appendix eq omega_0 for sigma_{0,gamma}} $); (\ref{appendix eq sigma_{0,gamma} side-calc sigma_0}); (\ref{appendix eq sigma_{0,gamma} side-calc sigma_gamma}); and (\ref{appendix eq final helper lemma for sigma_{0,gamma}}), allow us to see

\begin{align*}
(
\textcolor{cyan}{\omega_0} , \ 
\hat{u}_{0;4} \textcolor{olive}{\theta_{\gamma_0} } \pi_0 \pi_0,\ 
\hat{u} \textcolor{orange}{\sigma_0} \pi_1
)c 
&=
\big( 
\textcolor{cyan}{\omega_0'} , \ 
\hat{u}_{0;2} \textcolor{cyan}{\delta_{\rho_0}} \pi_0 \iota_{eq} \pi_0 , \\
&\ \ \ \
\hat{u}_{0;3} (\textcolor{cyan}{\delta_{\lambda_0}} \pi_0 \iota_{eq} \pi_0 w, \
\hat{u}_4 \textcolor{olive}{\theta_{\gamma_0} } \pi_0 \pi_0,\ 
\hat{u}_{4;5} \textcolor{orange}{\sigma_0} \pi_1 )c \big) c\\
&=
\big(
\textcolor{cyan}{\omega_0'} , \ 
\hat{u}_{0;2} \textcolor{cyan}{\delta_{\rho_0}} \pi_0 \iota_{eq} \pi_0 , \\
&\ \ \ \ 
\hat{u}_{0;3} (\textcolor{cyan} {\rho_0'} \pi_0 \pi_1 ,\ 
\hat{u}_{4;5} \tilde{u} \pi_0 \pi_1 u \pi_1 \pi_1 ) c
)c \\ 
&=
\big( 
\textcolor{cyan}{\omega_0'} , \ 
\hat{u}_{0;2} (\textcolor{cyan}{\delta_{\rho_0}} \pi_0 \iota_{eq} \pi_0 , \ 
\hat{u}_{3} \textcolor{cyan} {\rho_0'} \pi_0 \pi_1)c , \\
&\ \ \ \ 
\hat{u}_{0;5} \tilde{u} \pi_0 \pi_1 u \pi_1 \pi_1 
\big)c \\ 
&=
\big( 
\textcolor{cyan}{\omega_0'} , \ 
\hat{u}_{0;2} (\textcolor{cyan} {\delta_{\rho_0}} \pi_0 \iota_{eq} \pi_0 , \ 
\hat{u}_3 \textcolor{cyan} {\rho_0'} \pi_0 \pi_0 )c , \\
&\ \ \ \ 
\hat{u}_{0;5} \tilde{u} \pi_0 \pi_1 u \pi_1 \pi_1 
\big)c \\ 
&=
\big( 
(\textcolor{cyan}{\omega_0'} , \ 
\hat{u}_{0;2} \textcolor{cyan} {\delta_{\rho_0}} \pi_0 \iota_{eq} \pi_0)c , \\
&\ \ \ \ 
\hat{u}_{0;3} (\textcolor{cyan} {\rho_0'} \pi_0 \pi_0 , \ 
\hat{u}_{4;5} \tilde{u} \pi_0 \pi_1 u \pi_1 \pi_1 )c
\big)c \\ 
&= 
\big( 
(\textcolor{cyan}{\omega_0'} , \ 
\hat{u}_{0;2} \textcolor{cyan}{\delta_{\rho_0}} \pi_0 \iota_{eq} \pi_0) c , \\
&\ \ \ \ 
\hat{u}_{0;3} (\textcolor{cyan}{\delta_{\lambda_0}} \pi_0 \iota_{eq} \pi_0 w, \
\hat{u}_4 \textcolor{olive}{\theta_{\gamma_0} } \pi_1 \pi_0,\ 
\hat{u}_{4;5} \textcolor{purple}{\sigma_\gamma} \pi_1 )c \big) c\\
&=(\textcolor{cyan}{\omega_0} , \ 
\hat{u}_{0;4} \textcolor{olive}{\theta_{\gamma_0} } \pi_1 \pi_0\ ,  \
\hat{u} \textcolor{purple}{\sigma_\gamma} \pi_1
)c
\end{align*}
\end{proof}

\begin{lem}\label{appendix lem def sigma_{1,gamma}}
There is a unique map $\textcolor{violet}{\sigma_{1 , \gamma}} : \hat{U} \to \spn $ determined by
\begin{align*}
   \textcolor{violet}{\sigma_{1 , \gamma}} \pi_0 
   &= \textcolor{violet}{\omega_{1,0}} \pi_1 \\
   \textcolor{violet}{\sigma_{1, \gamma}} \pi_1 
   &= ( \textcolor{violet}{\omega_1} ,\ 
\hat{u}_{0;4} \textcolor{brown}{\omega_{\gamma_1} } \pi_0 \pi_0 ,\ 
\hat{u} \textcolor{teal}{\sigma_1}\pi_1 )c \\ 
   &= 
\big( 
\textcolor{violet}{\omega_{1}} , \
\hat{u}_{0;4} \textcolor{brown}{\omega_{\gamma_1} } \pi_1 \pi_0 , \ 
\hat{u}\textcolor{purple}{\sigma_\gamma} \pi_1
 \big)c
\end{align*}

\noi where $\textcolor{violet}{\omega_1} : \hat{U} \to W_\circ$ is defined by

\[\textcolor{violet}{\omega_1} = ( \textcolor{violet}{\omega_{1,0}} \pi_0 \pi_0 , \
 \hat{u}_0 \textcolor{violet}{\omega_{1,1}} \pi_0 \pi_0, \
 \hat{u}_{0;1} \textcolor{violet}{\omega_{1,2}} \pi_1 ) c \]
\end{lem}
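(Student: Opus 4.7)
The plan is to mirror the argument for Lemma~\ref{appendix lem def sigma_{0,gamma}} essentially verbatim, replacing every cyan/olive object with its violet/brown counterpart: trade $\textcolor{cyan}{\omega_{0,i}}$ for $\textcolor{violet}{\omega_{1,i}}$, $\textcolor{cyan}{\delta_{\rho_0}}$ and $\textcolor{cyan}{\delta_{\lambda_0}}$ for $\textcolor{violet}{\delta_{\rho_1}}$ and $\textcolor{violet}{\delta_{\lambda_1}}$, $\textcolor{olive}{\theta_{\gamma_0}}$ for $\textcolor{brown}{\theta_{\gamma_1}}$, and $\textcolor{orange}{\sigma_0}$ for $\textcolor{teal}{\sigma_1}$. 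The only genuine asymmetry with the cyan case is that $\textcolor{teal}{\sigma_1}$ factors through $\bar{\pi}_{1}$ of the cover refinement (\ref{dgm refinement of pb along composition of triples map}) rather than $\bar{\pi}_0$, which is why the explicit description of $\textcolor{violet}{\rho_1'}$ in Lemma~\ref{appendix lem def for rho_1' for span comp well def wrt equiv result} contains the extra term $\hat{u}_{4;5} \tilde{u} \bar{u} \pi_0 \pi_0 \pi_0 \pi_0$ (coming from an application of the definition of $p_1^2$). The cover refinement stability guarantees this term can be shuffled through the composite without consequence, so the structural argument is unchanged.

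First I would verify $\textcolor{violet}{\omega_1}$ lands in $W_\circ$ by checking composability of the triple $(\textcolor{violet}{\omega_{1,0}} \pi_0 \pi_0,\ \hat{u}_0 \textcolor{violet}{\omega_{1,1}} \pi_0 \pi_0,\ \hat{u}_{0;1} \textcolor{violet}{\omega_{1,2}} \pi_1)$; this is a direct source/target bookkeeping calculation using the definitions of $\textcolor{violet}{\omega_{1,j}}$ and $\textcolor{violet}{\omega_{1,j}'}$ from Diagram~(\ref{cover dgms for showing span comp well def : weak comp x3}), and it proceeds exactly as the corresponding well-definedness check for $\textcolor{cyan}{\omega_0}$. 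Then I would verify $\textcolor{violet}{\omega_1} s = \textcolor{violet}{\omega_{1,0}} \pi_1 s$ so that $\textcolor{violet}{\sigma_{1,\gamma}} = (\textcolor{violet}{\omega_{1,0}} \pi_1,\ \textcolor{violet}{\sigma_{1,\gamma}} \pi_1)$ is well-defined as a span with left leg in $W$.

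The main step — and the step that mirrors the bulk of Lemma~\ref{appendix lem def sigma_{0,gamma}} — is establishing the equality of the two proposed expressions for $\textcolor{violet}{\sigma_{1,\gamma}} \pi_1$. I would expand $\textcolor{violet}{\omega_1}$ by inserting the definitions of $\textcolor{violet}{\omega_{1,j}'}$ into the pairing-map decomposition, collecting terms so that $\textcolor{violet}{\omega_1}$ factors as $(\textcolor{violet}{\omega_1'},\ \hat{u}_{0;2}(\textcolor{violet}{\delta_{\rho_1}} \pi_0 \iota_{eq} \pi_0,\ \hat{u}_3 \textcolor{violet}{\delta_{\lambda_1}} \pi_0 \iota_{eq} \pi_0)c)c$, in direct parallel with equation~(\ref{appendix eq omega_0 for sigma_{0,gamma}}). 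Then I would record two side-calculations analogous to equations~(\ref{appendix eq sigma_{0,gamma} side-calc sigma_0}) and (\ref{appendix eq sigma_{0,gamma} side-calc sigma_gamma}): one expressing $(\textcolor{violet}{\rho_1'} \pi_0 \pi_1,\ \hat{u}_{4;5} \tilde{u} \bar{u} p_1^2 \pi_1 \pi_1)c$ as $(\textcolor{violet}{\delta_{\lambda_1}} \pi_0 \iota_{eq} \pi_0 w,\ \hat{u}_4 \textcolor{brown}{\theta_{\gamma_1}} \pi_0 \pi_0,\ \hat{u}_{4;5} \textcolor{teal}{\sigma_1} \pi_1)c$, and the other rewriting $(\textcolor{violet}{\rho_1'} \pi_0 \pi_0,\ \hat{u}_{4;5} \tilde{u}' \textcolor{purple}{\theta_\gamma} \pi_1 \pi_0)c$ as $(\textcolor{violet}{\delta_{\lambda_1}} \pi_0 \iota_{eq} \pi_0 w,\ \hat{u}_4 \textcolor{brown}{\theta_{\gamma_1}} \pi_1 \pi_0,\ \hat{u}_{4;5} \textcolor{purple}{\sigma_\gamma} \pi_1)c$. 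The only subtlety relative to the cyan proof is ensuring the appearance of $\hat{u}_{4;5} \tilde{u} \bar{u} \pi_0 \pi_0 \pi_0 \pi_0$ in $\textcolor{violet}{\rho_1'} \pi_0 \pi_0$ cancels correctly against the $\bar{u} p_1^2 \pi_1 \pi_1$ term on the right-hand side, via the definition of $p_1^2 = (\pi_0 p_1,\pi_1 p_1)$ and the pullback identities in Diagram~(\ref{dgm refinement of pb along composition of triples map}); this is routine.

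Finally I would assemble these pieces exactly as in the closing chain of equalities of Lemma~\ref{appendix lem def sigma_{0,gamma}}: associativity of $c$ in $\mC$ lets us rewrite $(\textcolor{violet}{\omega_1},\ \hat{u}_{0;4} \textcolor{brown}{\theta_{\gamma_1}} \pi_0 \pi_0,\ \hat{u} \textcolor{teal}{\sigma_1} \pi_1)c$ so that the zippering equality $(\textcolor{violet}{\delta_{\rho_1}} \pi_0 \iota_{eq} \pi_0,\ \hat{u}_3 \textcolor{violet}{\rho_1'} \pi_0 \pi_0)c = (\textcolor{violet}{\delta_{\rho_1}} \pi_0 \iota_{eq} \pi_0,\ \hat{u}_3 \textcolor{violet}{\rho_1'} \pi_0 \pi_1)c$ (which is the violet analogue of equation~(\ref{appendix eq final helper lemma for sigma_{0,gamma}}), coming from the pullback definition of $\cP(\mC)$ applied to $\textcolor{violet}{\delta_{\rho_1}}$) can be applied in the middle; then re-bracketing gives the second expression. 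Since no new geometric ingredient is introduced beyond what already appears in the cyan proof, I do not anticipate a serious obstacle — the only place where care is needed is tracking the additional $\bar{u}\pi_1\pi_0\pi_0\pi_0$ term in $\textcolor{violet}{\rho_1'}$ through the associativity manipulations, and verifying it meets its counterpart in the expansion of $\hat{u}\textcolor{teal}{\sigma_1}\pi_1$ via the $p_1^2$ projection identity.
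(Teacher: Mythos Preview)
Your proposal is correct and follows essentially the same approach as the paper: define $\textcolor{violet}{\omega_1'}$, factor $\textcolor{violet}{\omega_1}$ through it, run the two side-calculations analogous to equations~(\ref{appendix eq sigma_{0,gamma} side-calc sigma_0}) and (\ref{appendix eq sigma_{0,gamma} side-calc sigma_gamma}), invoke the zippering equality from $\textcolor{violet}{\delta_{\rho_1}}$, and reassemble. The only cosmetic difference is that the paper anchors both side-calculations at $\hat{u}_{4;5}\tilde{u}\bar{u}\,p_0^2\,\pi_1\pi_1$ rather than the $p_1^2$ and $\textcolor{purple}{\theta_\gamma}$ terms you propose, converting between them via the sailboat identities in Diagram~(\ref{dgm slb proj pb along span comp cover refinement (appendix)}) mid-computation; your choice of starting point is equivalent and arguably slightly more direct.
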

\begin{proof}

\noi Similarly define 

\[ \textcolor{violet}{\omega_{1}'} = (\textcolor{violet}{\omega_{1,0}} \pi_0 \pi_0  , \
\hat{u}_0 \textcolor{violet}{\omega_{1,1}} \pi_0 \pi_0 ,\
\hat{u}_{0;1} \textcolor{violet}{\omega_{1,2}} \pi_0 \pi_0)c \]

\noi and we have:

\begin{align*} 
\textcolor{violet}{\omega_1} 
&= ( \textcolor{violet}{\omega_{1,0}} \pi_0 \pi_0 , \
 \hat{u}_0 \textcolor{violet}{\omega_{1,1}} \pi_0 \pi_0, \
 \hat{u}_{0;1} \textcolor{violet}{\omega_{1,2}} \pi_1 ) c \\
&=
( \textcolor{violet}{\omega_{1,0}} \pi_0 \pi_0  , \
\hat{u}_0 \textcolor{violet}{\omega_{1,1}} \pi_0 \pi_0 ,\
\hat{u}_{0;1} \textcolor{violet}{\omega_{1,2}} \pi_0 \pi_0 ,\ 
\hat{u}_{0;2} \textcolor{violet}{\delta_{\rho_1}} \pi_0 \iota_{eq} \pi_0 ,\ 
 \hat{u}_{0;3} \textcolor{violet}{\delta_{\lambda_1}} \pi_0 \iota_{eq} \pi_0 ) c\\
 &= 
\big( \textcolor{violet}{\omega_{1}'} ,\ 
\hat{u}_{0;2} (\textcolor{violet}{\delta_{\rho_1}} \pi_0 \iota_{eq} \pi_0 ,\ 
 \hat{u}_{3} \textcolor{violet}{\delta_{\lambda_1}} \pi_0 \iota_{eq} \pi_0 ) c \big)c.
\end{align*}

\noi By definition of \textcolor{teal}{$\sigma_1$} and \textcolor{violet}{$\rho_1'$}$\pi_0 \pi_1 $:

\begin{align}\label{appendix eq sigma_{1,gamma} side-calc sigma_1}
\begin{split}
& \ \ \ \ ( \textcolor{violet}{\rho_1'} \pi_0 \pi_1 , 
 \hat{u}_{4;5} \tilde{u} \bar{u} p_0^2 \pi_1 \pi_1)c \\
&=
\big( 
(
\textcolor{violet}{\delta_{\lambda_1}} \pi_0 \iota_{eq} \pi_0 w ,\
\hat{u}_4 \textcolor{brown}{\theta_{\gamma_1} } \pi_0 \pi_0 ,\ 
\hat{u}_{4;5} \tilde{u} \textcolor{teal}{\pi_1 \pi_1 \omega} \pi_0 \pi_0 ,\ 
\hat{u}_{4;5} \tilde{u} \textcolor{teal}{\pi_1 \pi_1 u_0 \theta} \pi_1 \pi_0 ,\
\hat{u}_{4;5} \tilde{u} \bar{u} \pi_1 \pi_0 \pi_0 \pi_0 
)c ,\\
& \ \ \ \ 
 \hat{u}_{4;5} \tilde{u} \bar{u} p_0^2 \pi_1 \pi_1
 \big) c\\
&=
\big( 
\textcolor{violet}{\delta_{\lambda_1}} \pi_0 \iota_{eq} \pi_0 w ,\
\hat{u}_4 \textcolor{brown}{\theta_{\gamma_1} } \pi_0 \pi_0 ,\ 
\hat{u}_{4;5} \tilde{u} \textcolor{teal}{\pi_1 \pi_1 \omega} \pi_0 \pi_0 ,\ 
\hat{u}_{4;5} \tilde{u} \textcolor{teal}{\pi_1 \pi_1 u_0 \theta} \pi_1 \pi_0 ,\\
& \ \ \ \ 
(\hat{u}_{4;5} \tilde{u} \bar{u} \pi_1 \pi_0 \pi_0 \pi_0 ,\ 
 \hat{u}_{4;5} \tilde{u} \bar{u} p_0^2 \pi_1 \pi_1 )c
 \big) c\\
 &=
\big( 
\textcolor{violet}{\delta_{\lambda_1}} \pi_0 \iota_{eq} \pi_0 w ,\
\hat{u}_4 \textcolor{brown}{\theta_{\gamma_1} } \pi_0 \pi_0 ,\ 
\hat{u}_{4;5} \tilde{u} \textcolor{teal}{\pi_1 \pi_1 \omega} \pi_0 \pi_0 ,\ 
\hat{u}_{4;5} \tilde{u} \textcolor{teal}{\pi_1 \pi_1 u_0 \theta} \pi_1 \pi_0 ,\
\hat{u}_{4;5} \tilde{u} \bar{u} p_1^2 \pi_1 \pi_1 
 \big) c \\
 &=
\big( 
\textcolor{violet}{\delta_{\lambda_1}} \pi_0 \iota_{eq} \pi_0 w ,\
\hat{u}_4 \textcolor{brown}{\theta_{\gamma_1} } \pi_0 \pi_0 ,\\
& \ \ \ \  
\hat{u}_{4;5} (\tilde{u} \textcolor{teal}{\pi_1 \pi_1 \omega} \pi_0 \pi_0 ,\ 
 \tilde{u} \textcolor{teal}{\pi_1 \pi_1 u_0 \theta} \pi_1 \pi_0 ,\
 \tilde{u} \pi_1 \pi_0 p_1^2 \pi_1 \pi_1 )c 
 \big) c \\
 &=\big( 
\textcolor{violet}{\delta_{\lambda_1}} \pi_0 \iota_{eq} \pi_0 w ,\
\hat{u}_4 \textcolor{brown}{\theta_{\gamma_1} } \pi_0 \pi_0 ,\\
& \ \ \ \  
\hat{u}_{4;5} (\tilde{u} \textcolor{teal}{\pi_1 \pi_1 \omega} \pi_0 \pi_0 ,\ 
 \tilde{u} \textcolor{teal}{\pi_1 \pi_1 u_0 \theta} \pi_1 \pi_0 ,\
 \tilde{u} \pi_1 \pi_1 u \pi_1 \pi_1 )c 
 \big) c \\
 &=
\big( 
\textcolor{violet}{\delta_{\lambda_1}} \pi_0 \iota_{eq} \pi_0 w ,\
\hat{u}_4 \textcolor{brown}{\theta_{\gamma_1} } \pi_0 \pi_0 ,\ 
\hat{u}_{4;5} \textcolor{teal}{\sigma_1}\pi_1
 \big) c 
 \end{split}
\end{align}

\noi Similarly by definition of $\textcolor{purple}{\sigma_\gamma}$ and $\textcolor{violet}{\rho_1'} \pi_0 \pi_0$:
\begin{align}\label{appendix eq sigma_{1,gamma} side-calc sigma_gamma}
\begin{split}
& \ \ \ \ ( \textcolor{violet}{\rho_1'} \pi_0 \pi_0 , \hat{u}_{4;5} \tilde{u} \bar{u} p_0^2 \pi_1 \pi_1) c \\
&= 
\big( 
(
\textcolor{violet}{\delta_{\lambda_1}} \pi_0 \iota_{eq} \pi_0w , \
\hat{u}_4 \textcolor{brown}{\omega_{\gamma_1} } \pi_1 \pi_0 , \ 
\hat{u}_{4;5} \textcolor{purple}{\omega_\gamma} \pi_0 \pi_0 , \
\hat{u}_{4;5} \tilde{u}' \textcolor{purple}{\theta_\gamma} \pi_1 \pi_0 ,\ 
\hat{u}_{4;5} \tilde{u} \bar{u} \pi_1 \pi_0 \pi_0 \pi_0 
)c ,\\
& \ \ \ \ 
\hat{u}_{4;5} \tilde{u} \bar{u} p_0^2 \pi_1 \pi_1 
\big)c\\
&= 
\big( 
\textcolor{violet}{\delta_{\lambda_1}} \pi_0 \iota_{eq} \pi_0w , \
\hat{u}_4 \textcolor{brown}{\omega_{\gamma_1} } \pi_1 \pi_0 , \ 
\hat{u}_{4;5} \textcolor{purple}{\omega_\gamma} \pi_0 \pi_0 , \
\hat{u}_{4;5} \tilde{u}' \textcolor{purple}{\theta_\gamma} \pi_1 \pi_0 ,\\
& \ \ \ \ 
(\hat{u}_{4;5} \tilde{u} \bar{u} \pi_1 \pi_0 \pi_0 \pi_0 ,\ 
\hat{u}_{4;5} \tilde{u} \bar{u} p_0^2 \pi_1 \pi_1 )c
\big)c \\
&= 
\big( 
\textcolor{violet}{\delta_{\lambda_1}} \pi_0 \iota_{eq} \pi_0w , \
\hat{u}_4 \textcolor{brown}{\omega_{\gamma_1} } \pi_1 \pi_0 , \ 
\hat{u}_{4;5} \textcolor{purple}{\omega_\gamma} \pi_0 \pi_0 , \
\hat{u}_{4;5} \tilde{u}' \textcolor{purple}{\theta_\gamma} \pi_1 \pi_0 ,\ 
\hat{u}_{4;5} \tilde{u} \bar{u} p_1^2 \pi_1 \pi_1 
\big)c\\
&= 
\big( 
\textcolor{violet}{\delta_{\lambda_1}} \pi_0 \iota_{eq} \pi_0w , \
\hat{u}_4 \textcolor{brown}{\omega_{\gamma_1} } \pi_1 \pi_0 , \\
& \ \ \ \  
\hat{u}_{4;5} (\textcolor{purple}{\omega_\gamma} \pi_0 \pi_0 , \
\tilde{u}' \textcolor{purple}{\theta_\gamma} \pi_1 \pi_0 ,\ 
\tilde{u} \bar{u} p_1^2 \pi_1 \pi_1)c
\big)c \\
&= 
\big( 
\textcolor{violet}{\delta_{\lambda_1}} \pi_0 \iota_{eq} \pi_0w , \
\hat{u}_4 \textcolor{brown}{\omega_{\gamma_1} } \pi_1 \pi_0 , \ 
\hat{u}_{4;5} \textcolor{purple}{\sigma_\gamma} \pi_1
\big)c
\end{split}
\end{align}

\noi Since $\cP(\mC)$ is a pullback of $\cP_{eq}(\mC)$ and $\cP_{cq}(\mC)$ over the object of parallel pairs in $\mC$, $P(\mC)$, 

\begin{align*}
  \textcolor{violet}{\delta_{\rho_1}} \pi_0 \iota_{eq} \pi_0 
  &= \textcolor{violet}{\delta_{\rho_1}} \pi_1 \iota_{ceq} \pi_0\\
  &= \hat{u}_3 \textcolor{violet}{\rho_1} \iota_{ceq} \pi_0 \\
  &= \hat{u}_3 \textcolor{violet}{\rho_1'} \pi_0 
\end{align*}

\noi By definition of $\cP_{eq}(\mC)$ the composable pairs
\begin{align*}
\textcolor{violet} {\delta_{\rho_1}} \pi_0 \iota_{eq} (\pi_0 , \pi_1 \pi_0) 
= 
(\textcolor{violet} {\delta_{\rho_1}} \pi_0 \iota_{eq} \pi_0 , 
\textcolor{violet} {\delta_{\rho_1}} \pi_0 \iota_{eq} \pi_1 \pi_0 ) 
= 
(\textcolor{violet} {\delta_{\rho_1}} \pi_0 \iota_{eq} \pi_0, 
\hat{u}_3 \textcolor{violet} {\rho_1'} \pi_0 \pi_0 )
\end{align*}
\noi and 
\begin{align*}
\textcolor{violet} {\delta_{\rho_1}} \pi_0 \iota_{eq} (\pi_0 , \pi_1 \pi_1) 
= 
(\textcolor{violet} {\delta_{\rho_1}} \pi_0 \iota_{eq} \pi_0 , 
\textcolor{violet} {\delta_{\rho_1}} \pi_0 \iota_{eq} \pi_1 \pi_1 ) 
= (\textcolor{violet} {\delta_{\rho_1}} \pi_0 \iota_{eq} \pi_0, 
\hat{u}_3 \textcolor{violet} {\rho_1'} \pi_0 \pi_1 ) .
\end{align*}
\noi are coequalized (in $\cE$) by the composition structure map of $\mC$. This implies 

\begin{align}\label{appendix eq final helper lemma for sigma_{1,gamma}} 
\begin{split}
(\textcolor{violet} {\delta_{\rho_1}} \pi_0 \iota_{eq} \pi_0, 
\hat{u}_3 \textcolor{violet} {\rho_1'} \pi_0 \pi_0 ) c = (\textcolor{violet} {\delta_{\rho_1}} \pi_0 \iota_{eq} \pi_0, 
\hat{u}_3 \textcolor{violet} {\rho_1'} \pi_0 \pi_1 ) c 
\end{split}
\end{align}

\noi Now the span 

\[ \textcolor{violet}{\sigma_{1 , \gamma}} = (\textcolor{violet}{\omega_{0,1}} \pi_1 ,\ \textcolor{violet}{\sigma_{1,\gamma}} \pi_1) \]

\noi is well-defined because 
\[ \textcolor{violet}{\omega_{1,0}} \pi_1 w s = \textcolor{violet}{\omega_{1,0}} \pi_0 \pi_0 s \]
\noi where the right leg, $\textcolor{violet}{\sigma_{1,\gamma}} \pi_1$, is given by the composite 

\begin{align*}
( \textcolor{violet}{\omega_1} ,\ 
\hat{u}_{0;4} \textcolor{brown}{\omega_{\gamma_1} } \pi_0 \pi_0 ,\ 
\hat{u} \textcolor{teal}{\sigma_1}\pi_1 )c 
&= 
\big( 
\textcolor{violet}{\omega_{1}'} ,\ 
\hat{u}_{0;2} \textcolor{violet}{\delta_{\rho_1}} \pi_0 \iota_{eq} \pi_0 ,\\
& \ \ \ \ 
 \hat{u}_{0;3} (\textcolor{violet}{\delta_{\lambda_1}} \pi_0 \iota_{eq} \pi_0 ,\
 \hat{u}_{4} \textcolor{brown}{\omega_{\gamma_1} } \pi_0 \pi_0 ,\ 
\hat{u}_{4;5} \textcolor{teal}{\sigma_1}\pi_1)c \big)c \\
&= 
\big( 
\textcolor{violet}{\omega_{1}'} ,\ 
\hat{u}_{0;2} \textcolor{violet}{\delta_{\rho_1}} \pi_0 \iota_{eq} \pi_0 ,\\
& \ \ \ \ 
 \hat{u}_{0;3} ( \textcolor{violet}{\rho_1'} \pi_0 \pi_1,\
 \hat{u}_{4;5} \tilde{u} \bar{u} p_0^2 \pi_1 \pi_1)c 
 \big)c\\
 &= 
\big( 
\textcolor{violet}{\omega_{1}'} ,\\
& \ \ \ \ 
\hat{u}_{0;2} (\textcolor{violet}{\delta_{\rho_1}} \pi_0 \iota_{eq} \pi_0 ,\ 
 \hat{u}_3 \textcolor{violet}{\rho_1'} \pi_0 \pi_1)c ,\\
& \ \ \ \ 
 \hat{u}_{0;5} \tilde{u} \bar{u} p_0^2 \pi_1 \pi_1 
 \big)c\\
 &= 
\big( 
\textcolor{violet}{\omega_{1}'} ,\\
& \ \ \ \ 
\hat{u}_{0;2} (\textcolor{violet}{\delta_{\rho_1}} \pi_0 \iota_{eq} \pi_0 ,\ 
 \hat{u}_3 \textcolor{violet}{\rho_1'} \pi_0 \pi_0)c ,\\
& \ \ \ \ 
 \hat{u}_{0;5} \tilde{u} \bar{u} p_0^2 \pi_1 \pi_1 
 \big)c\\
 &= 
\big( 
\textcolor{violet}{\omega_{1}'} ,\ 
\hat{u}_{0;2} \textcolor{violet}{\delta_{\rho_1}} \pi_0 \iota_{eq} \pi_0 ,\\
& \ \ \ \ 
 \hat{u}_{0;3} (\textcolor{violet}{\rho_1'} \pi_0 \pi_0,\ 
 \hat{u}_{4;5} \tilde{u} \bar{u} p_0^2 \pi_1 \pi_1 )c
 \big)c\\
 &= 
\big( 
\textcolor{violet}{\omega_{1}'} ,\ 
\hat{u}_{0;2} \textcolor{violet}{\delta_{\rho_1}} \pi_0 \iota_{eq} \pi_0 ,\\
& \ \ \ \ 
 \hat{u}_{0;3} (\textcolor{violet}{\rho_1'} \pi_0 \pi_0,\ 
 \hat{u}_{4;5} \tilde{u} \bar{u} p_0^2 \pi_1 \pi_1 )c
 \big)c\\
 &= 
\big( 
\textcolor{violet}{\omega_{1}'} ,\ 
\hat{u}_{0;2} \textcolor{violet}{\delta_{\rho_1}} \pi_0 \iota_{eq} \pi_0 ,\\
& \ \ \ \ 
 \hat{u}_{0;3} ( 
\textcolor{violet}{\delta_{\lambda_1}} \pi_0 \iota_{eq} \pi_0w , \
\hat{u}_4 \textcolor{brown}{\omega_{\gamma_1} } \pi_1 \pi_0 , \ 
\hat{u}_{4;5} \textcolor{purple}{\sigma_\gamma} \pi_1)c
 \big)c\\
 &= 
\big( 
\textcolor{violet}{\omega_{1}} , \
\hat{u}_{0;4} \textcolor{brown}{\omega_{\gamma_1} } \pi_1 \pi_0 , \ 
\hat{u}\textcolor{purple}{\sigma_\gamma} \pi_1
 \big)c\\
\end{align*}

\end{proof}

\begin{lem}\label{appendix lem defining sailboat varphi_0}
There exists a sailboat $\varphi_0 : \hat{U} \to \slb$, uniquely determined by the pairing map

\begin{align*}
\varphi_0 
= 
\big( 
(
(
\mu_0 , \ 
\hat{u} \textcolor{orange}{\sigma_0} \pi_0
) , \
\textcolor{cyan}{\omega_{0,0}} \pi_1 
) , \
\hat{u} \textcolor{orange}{\sigma_0} \pi_1 
\big)
\end{align*}

\noi where

\[ \mu_0 =(\textcolor{cyan}{\omega_0} ,\
\hat{u}_{0;4} \textcolor{olive}{\theta_{\gamma_0}} \pi_0 \pi_0 
)c\]

\noi such that 

\begin{align*}
   \varphi_0 p_0 &= \hat{u} \textcolor{orange}{\sigma_0} \\
   \varphi_0 p_1 &= \textcolor{cyan}{\sigma_{0,\gamma}} . 
\end{align*}
\end{lem}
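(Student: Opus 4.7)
The strategy is to verify first that the proposed pairing map $\varphi_0$ really lands in the pullback $\slb = W_\triangle \tensor[_{\pi_0 \pi_1 w s}]{\times}{_s} \mC_1$, and then to compute $\varphi_0 p_0$ and $\varphi_0 p_1$ directly using the definitions of $p_0$ and $p_1$ as pairing maps out of $\slb$. The computation of $\varphi_0 p_0$ will be immediate from the outermost pairing, while $\varphi_0 p_1$ will require Lemma \ref{appendix lem def sigma_{0,gamma}} to identify the resulting composite with $\sigma_{0,\gamma} \pi_1$.

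For well-definedness, the plan is to work through the pullback structure of $\slb$ layer by layer. The outer pullback requires that the $\mC_1$ component, $\hat{u} \textcolor{orange}{\sigma_0} \pi_1$, has source matching the image in $\mC_0$ of the $W_\triangle$ component under $\pi_0 \pi_1 w s$; this reduces to a span-source compatibility $\hat{u} \textcolor{orange}{\sigma_0} \pi_1 s = \hat{u} \textcolor{orange}{\sigma_0} \pi_0 w t$ which holds because $\textcolor{orange}{\sigma_0}$ is a span. Next, the inner $W_\triangle$ component requires $(\mu_0, \hat{u} \textcolor{orange}{\sigma_0} \pi_0)$ to be composable in $\mC$ and the composite to equal the image of $\textcolor{cyan}{\omega_{0,0}} \pi_1$ in $\mC_1$. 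The composability reduces to verifying $\mu_0 t = \hat{u} \textcolor{orange}{\sigma_0} \pi_0 w s$, which follows by expanding $\mu_0 = (\textcolor{cyan}{\omega_0}, \hat{u}_{0;4} \textcolor{olive}{\theta_{\gamma_0}} \pi_0 \pi_0)c$ and using the definition of $W_\square$ for $\textcolor{olive}{\theta_{\gamma_0}}$ together with $\textcolor{cyan}{\omega_0}$ being a map into $W_\circ$. The commutativity of the resulting triangle, $(\mu_0, \hat{u} \textcolor{orange}{\sigma_0} \pi_0 w)c = \textcolor{cyan}{\omega_{0,0}} \pi_1 w$, follows from unfolding $\textcolor{cyan}{\omega_{0,0}} \pi_1$ via the weak-composition triangle built from Diagram \ref{cover dgms for showing span comp well def : weak comp x3 (appendix)}.

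For the span projections, I compute $\varphi_0 p_0 = \varphi_0(\pi_0 \pi_0 \pi_1, \pi_1) = (\hat{u} \textcolor{orange}{\sigma_0} \pi_0, \hat{u} \textcolor{orange}{\sigma_0} \pi_1) = \hat{u} \textcolor{orange}{\sigma_0}$ directly from the nested pairing. For the other projection,
\begin{align*}
\varphi_0 p_1 = \varphi_0\big(\pi_0 \pi_1, (\pi_0 \pi_0 \pi_0, \pi_1)c\big) = \big(\textcolor{cyan}{\omega_{0,0}} \pi_1, (\mu_0, \hat{u} \textcolor{orange}{\sigma_0} \pi_1)c\big),
\end{align*}
and by Lemma \ref{appendix lem def sigma_{0,gamma}} the second component equals $\textcolor{cyan}{\sigma_{0,\gamma}} \pi_1$, while the first component is $\textcolor{cyan}{\omega_{0,0}} \pi_1 = \textcolor{cyan}{\sigma_{0,\gamma}} \pi_0$ by definition. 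Thus $\varphi_0 p_1 = \textcolor{cyan}{\sigma_{0,\gamma}}$.

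The main obstacle will be the bookkeeping for well-definedness: ensuring that every projection in the deeply nested pairing agrees on the overlapping pullback projections, which requires tracking five or six compatibility conditions arising from the successive pullbacks defining $W_\triangle$, $W_\circ$, $W_\square$, and $\slb$. Each check itself is short and follows from the definition of the relevant cover or lift, but combining them correctly is where a careful, diagram-by-diagram argument is needed. The remainder of the proof, once well-definedness is secured, is essentially a mechanical application of the pullback projection formulas for $p_0$ and $p_1$ together with a single invocation of Lemma \ref{appendix lem def sigma_{0,gamma}}.
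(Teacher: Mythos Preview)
Your approach is essentially the same as the paper's: check well-definedness of the pairing layer by layer (composability of $(\mu_0,\hat u\sigma_0\pi_0)$, the $W_\triangle$ triangle identity via unfolding $\omega_{0,0}\pi_1$, and the outer $\slb$ condition), then read off $p_0$ and $p_1$ from the pairing. One small slip to fix: the outer compatibility should be $\hat u\,\sigma_0\pi_1\,s = \hat u\,\sigma_0\pi_0\,w\,s$ (the two legs of a span share their \emph{source}, not source with target), and with that correction your argument goes through exactly as in the paper; the paper computes $(\mu_0,\hat u\sigma_0\pi_1)c=\sigma_{0,\gamma}\pi_1$ inline rather than citing Lemma~\ref{appendix lem def sigma_{0,gamma}}, but the content is identical.
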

\begin{proof}
Recall that $\slb = W_\triangle \tensor[_{\pi_0 \pi_0 \pi_1 s}]{\times}{_s} \mC_1$, where $W_\triangle = (\mC_1 \tensor[_t]{\times}{_{ws}} W) \tensor[_c]{\times}{_w} W$ so to see $\varphi_0 : \hat{U} \to \slb$ is well-defined we need to show that 

\[ (
(
\mu_0 , \ 
\hat{u} \textcolor{orange}{\sigma_0} \pi_0
) , \
\textcolor{cyan}{\omega_{0,0}} \pi_1 
) : \hat{U} \to W_\triangle \]

\noi is well-defined and that 

\[ \hat{u} \textcolor{orange}{\sigma_0} \pi_1 s = \mu_0 t = \hat{u} \textcolor{orange}{\sigma_0} \pi_0 s.\]

By definition of $\mu_0$ and the lift $\textcolor{olive}{\theta_{\gamma_0}} : \hat{U}_5 \to W_\square$ we have 

\begin{align*}
  \mu_0 t
  = \hat{u}_{0;4} \textcolor{olive}{\theta_{\gamma_0}} \pi_0 \pi_0 w t 
  = \hat{u} \textcolor{orange}{\sigma_0} \pi_0 w s
  = \hat{u} \textcolor{orange}{\sigma_0} \pi_1 s 
\end{align*}

\noi showing that $(\mu_0 , \hat{u} \textcolor{orange}{\sigma_0} \pi_0) : \hat{U} \to \mC_1 \tensor[_t]{\times}{_{ws}} W$ are composable with respect to the internal composition structure of $\mC$ (after appliying $w : W \to \mC_1$ in the right-hand component) and that $\hat{u} \textcolor{orange}{\sigma_0} \pi_1$ is well-defined in the right-most component. It remains to see that the $\varphi_0 p_0 : \hat{U} \to W_\triangle$ is well-defined. For this we use the definitions of $\mu_0$ and the lifts in Diagrams (\ref{cover dgms for showing span comp well def : Ore + zip}) and (\ref{cover dgms for showing span comp well def : Ore + zip}) along with associativity of composition in $\mC$ to compute 

\begin{align*}
(
\mu_0 , \ 
\hat{u} \textcolor{orange}{\sigma_0} \pi_0
) , \
\textcolor{cyan}{\omega_{0,0}} \pi_1 
) \pi_1
&= \textcolor{cyan}{\omega_{0,0}} \pi_1 \\
&= (\textcolor{cyan}{\omega_{0,0}} \pi_0 \pi_0 , \textcolor{cyan}{\omega_{0,0}} \pi_0 \pi_1 , \textcolor{cyan}{\omega_{0,0}} \pi_0 \pi_2 ) c \\
&= (\textcolor{cyan}{\omega_{0,0}} \pi_0 \pi_0 , \hat{u}_0 \textcolor{cyan}{\omega_{0,1}} \pi_1 , \hat{u} \textcolor{orange}{\sigma_0} \pi_0) c \\
&= \big(\textcolor{cyan}{\omega_{0,0}} \pi_0 \pi_0 , \\
& \ \ \ \ 
\hat{u}_0 (\textcolor{cyan}{\omega_{0,1}} \pi_0 \pi_0 , 
\textcolor{cyan}{\omega_{0,1}} \pi_0 \pi_1 ,
\textcolor{cyan}{\omega_{0,1}} \pi_0 \pi_2 ) c , \\
& \ \ \ \  \hat{u} \textcolor{orange}{\sigma_0} \pi_0 \big) c \\
&= \big(\textcolor{cyan}{\omega_{0,0}} \pi_0 \pi_0 , \
\hat{u}_0 \textcolor{cyan}{\omega_{0,1}} \pi_0 \pi_0 , \
\hat{u}_0 \textcolor{cyan}{\omega_{0,1}} \pi_0 \pi_1 ,\
\hat{u}_0\textcolor{cyan}{\omega_{0,1}} \pi_0 \pi_2 , \ 
\hat{u} \textcolor{orange}{\sigma_0} \pi_0 \big) c \\
&= \big(\textcolor{cyan}{\omega_{0,0}} \pi_0 \pi_0 , \
\hat{u}_0 \textcolor{cyan}{\omega_{0,1}} \pi_0 \pi_0 , \
\hat{u}_{0;1} \textcolor{cyan}{\omega_{0,2}} \pi_1 ,\
\hat{u}_{0;4} \textcolor{olive}{\theta_{\gamma_0}} \pi_0 \pi_0 , \ 
\hat{u} \textcolor{orange}{\sigma_0} \pi_0 \big) c \\
&= \big(
(\textcolor{cyan}{\omega_{0,0}} \pi_0 \pi_0 , \
\hat{u}_0 \textcolor{cyan}{\omega_{0,1}} \pi_0 \pi_0 , \
\hat{u}_{0;1} \textcolor{cyan}{\omega_{0,2}} \pi_1 
)c ,\\
& \ \ \ \ 
\hat{u}_{0;4} \textcolor{olive}{\theta_{\gamma_0}} \pi_0 \pi_0 , \ 
\hat{u} \textcolor{orange}{\sigma_0} \pi_0 \big) c \\
&= \big( \textcolor{cyan}{\omega_0} ,\
\hat{u}_{0;4} \textcolor{olive}{\theta_{\gamma_0}} \pi_0 \pi_0 , \ 
\hat{u} \textcolor{orange}{\sigma_0} \pi_0 \big) c \\
&= \big( (\textcolor{cyan}{\omega_0} ,\
\hat{u}_{0;4} \textcolor{olive}{\theta_{\gamma_0}} \pi_0 \pi_0 
)c, \ 
\hat{u} \textcolor{orange}{\sigma_0} \pi_0 \big) c \\
&= ( \mu_0 , \hat{u} \textcolor{orange}{\sigma_0} \pi_0)c\\
&= (
\mu_0 , \ 
\hat{u} \textcolor{orange}{\sigma_0} \pi_0
) , \
\textcolor{cyan}{\omega_{0,0}} \pi_1 
) (\pi_0 \pi_0 , \pi_0 \pi_1) c.
\end{align*}

\noi This gives that

\[ \varphi_0 p_0 = (
\mu_0 , \ 
\hat{u} \textcolor{orange}{\sigma_0} \pi_0
) , \
\textcolor{cyan}{\omega_{0,0}} \pi_1 
) : \hat{U} \to W_\triangle \]

\noi is well-defined. Similar techniques allow us to see

\begin{align*}
( \mu_0 , \hat{u} \textcolor{orange}{\sigma_0} \pi_1)c 
&= \big( (\textcolor{cyan}{\omega_0} ,\
\hat{u}_{0;4} \textcolor{olive}{\theta_{\gamma_0}} \pi_0 \pi_0 
)c, \
\hat{u} \textcolor{orange}{\sigma_0} \pi_1 \big)c \\
&= (\textcolor{cyan}{\omega_0} ,\
\hat{u}_{0;4} \textcolor{olive}{\theta_{\gamma_0}} \pi_0 \pi_0 ,\
\hat{u} \textcolor{orange}{\sigma_0} \pi_1)c \\
&= \textcolor{cyan}{\sigma_{0,\gamma}} \pi_1 .
\end{align*}

\noi which shows 

\begin{align*}
  \varphi_0 p_0 
  &= \varphi_0 (\pi_0 \pi_0 \pi_1 , \pi_1)
  & \varphi_0 p_1 
  &= \varphi_0 \big( \pi_0 \pi_1 , ( \pi_0 \pi_0 \pi_0 , \pi_1)c \big) \\
  &= (\hat{u} \textcolor{orange}{\sigma_0} \pi_0 , \hat{u} \textcolor{orange}{\sigma_0} \pi_1) 
  & 
  &= \big( \textcolor{cyan}{\omega_{0,0}} \pi_1 , (\mu_0 , \hat{u} \textcolor{orange}{\sigma_0} \pi_1) c \big) \\
  &= \hat{u} \textcolor{orange}{\sigma_0}
  &
  &= ( \textcolor{cyan}{\omega_{0,0}} \pi_1 , \textcolor{cyan}{\sigma_{0,\gamma}} \pi_1 )\\
  &&
  &= \textcolor{cyan}{\sigma_{0,\gamma}} .
\end{align*}
\end{proof}

\begin{lem}\label{appendix lem defining sailboat varphi_{0,gamma}}
There exists a sailboat $\varphi_{0,\gamma} : \hat{U} \to \slb$, uniquely determined by the pairing map

\begin{align*}
\varphi_{0 , \gamma} = 
\big( 
( 
( \mu_{0, \gamma} , \
\hat{u} \textcolor{purple}{\sigma_\gamma} \pi_0
) , \
\textcolor{cyan}{\omega_{0,0}} \pi_1 ) , \
\hat{u} \textcolor{purple}{\sigma_\gamma} \pi_1 
 \big)
\end{align*}

\noi where

\[ \mu_{0, \gamma} = (\textcolor{cyan}{\omega_0} ,\
\hat{u}_{0;4} \textcolor{olive}{\theta_{\gamma_0}} \pi_1 \pi_0 )c \]

\noi such that 

\begin{align*}
   \varphi_{0,\gamma} p_0 &= \hat{u} \textcolor{purple}{\sigma_\gamma} \\
   \varphi_{0,\gamma} p_1 &= \textcolor{cyan}{\sigma_{0,\gamma}} 
\end{align*}
\end{lem}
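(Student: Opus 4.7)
The plan is to mirror the approach of Lemma~\ref{appendix lem defining sailboat varphi_0}, now playing $\hat{u}\textcolor{purple}{\sigma_\gamma}$ against the triple composite $\mu_{0,\gamma}$ in place of $\hat{u}\textcolor{orange}{\sigma_0}$ and $\mu_0$. First I would verify that the pairing map actually lands in $\slb = W_\triangle \tensor[_{\pi_0 \pi_0 \pi_1 s}]{\times}{_s} \mC_1$. This breaks into two checks. The source-target check $\hat{u}\textcolor{purple}{\sigma_\gamma}\pi_1 s = \mu_{0,\gamma} t = \hat{u}\textcolor{purple}{\sigma_\gamma}\pi_0 s$ follows from the definitions: $\mu_{0,\gamma} t$ reduces, via $c$-cancellation on the right factor, to $\hat{u}_{0;4}\textcolor{olive}{\theta_{\gamma_0}}\pi_1\pi_0 wt$, which by the definition of $W_\square$ equals $\hat{u}_{0;4}\textcolor{olive}{\theta_{\gamma_0}}\pi_1\pi_1 s$; this in turn matches $\hat{u}\textcolor{purple}{\sigma_\gamma}\pi_0 s$ through the Ore-square coherence $(\sigma_0 \pi_0 w , \sigma_\gamma \pi_0)$ that defines $\textcolor{olive}{\theta_{\gamma_0}}$.

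Next I would check that the triple $((\mu_{0,\gamma}, \hat{u}\textcolor{purple}{\sigma_\gamma}\pi_0), \textcolor{cyan}{\omega_{0,0}}\pi_1)$ factors through $W_\triangle$, i.e.\ that $(\mu_{0,\gamma}, \hat{u}\textcolor{purple}{\sigma_\gamma}\pi_0 w) c = \textcolor{cyan}{\omega_{0,0}}\pi_1$. The argument is to expand $\textcolor{cyan}{\omega_{0,0}}\pi_1$ the same way as in Lemma~\ref{appendix lem defining sailboat varphi_0}: by the definition of $W_\circ$, this is the composite $\big(\textcolor{cyan}{\omega_{0,0}}\pi_0\pi_0, \hat{u}_0\textcolor{cyan}{\omega_{0,1}}\pi_0\pi_0, \hat{u}_{0;1}\textcolor{cyan}{\omega_{0,2}}\pi_1\big) c$ continued through $\hat{u}_{0;2}\textcolor{cyan}{\omega_{0,2}'} c$, which by the definitions of the lifts in Diagrams~(\ref{cover dgms for showing span comp well def : Ore + zip}) and (\ref{cover dgms for showing span comp well def : weak comp x3}) reassembles as $(\textcolor{cyan}{\omega_0}, \hat{u}_{0;4}\textcolor{olive}{\theta_{\gamma_0}}\pi_1\pi_0 w, \hat{u}\textcolor{purple}{\sigma_\gamma}\pi_0 w) c$. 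Associativity then groups the first two entries into $\mu_{0,\gamma}$, yielding the desired equality. The main obstacle here is being careful with the Ore-square coherence for $\textcolor{olive}{\theta_{\gamma_0}}$, which replaces the first-leg cancellation used in Lemma~\ref{appendix lem defining sailboat varphi_0} with the second-leg version picking out $\hat{u}\textcolor{purple}{\sigma_\gamma}\pi_0$ instead of $\hat{u}\textcolor{orange}{\sigma_0}\pi_0$.

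Having established well-definedness, the two span identities are immediate from the definition of the pullback projections $p_0, p_1 : \slb \to \spn$. For $p_0 = (\pi_0\pi_0\pi_1, \pi_1)$ we read off $\varphi_{0,\gamma} p_0 = (\hat{u}\textcolor{purple}{\sigma_\gamma}\pi_0, \hat{u}\textcolor{purple}{\sigma_\gamma}\pi_1) = \hat{u}\textcolor{purple}{\sigma_\gamma}$. For $p_1 = (\pi_0\pi_1, (\pi_0\pi_0\pi_0, \pi_1) c)$ we get $\varphi_{0,\gamma} p_1 = \big(\textcolor{cyan}{\omega_{0,0}}\pi_1, (\mu_{0,\gamma}, \hat{u}\textcolor{purple}{\sigma_\gamma}\pi_1) c\big)$, and the second coordinate unfolds to $(\textcolor{cyan}{\omega_0}, \hat{u}_{0;4}\textcolor{olive}{\theta_{\gamma_0}}\pi_1\pi_0, \hat{u}\textcolor{purple}{\sigma_\gamma}\pi_1) c$, which is precisely the second equivalent description of $\textcolor{cyan}{\sigma_{0,\gamma}}\pi_1$ from Lemma~\ref{appendix lem def sigma_{0,gamma}}. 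Thus $\varphi_{0,\gamma} p_1 = \textcolor{cyan}{\sigma_{0,\gamma}}$.
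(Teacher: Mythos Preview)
Your proposal is correct and follows essentially the same approach as the paper: verify composability via the target of $\mu_{0,\gamma}$, use the Ore-square identity for $\textcolor{olive}{\theta_{\gamma_0}}$ to rewrite $\textcolor{cyan}{\omega_{0,0}}\pi_1$ as $(\mu_{0,\gamma},\hat u\textcolor{purple}{\sigma_\gamma}\pi_0 w)c$, and then read off $p_0,p_1$ using the second description of $\textcolor{cyan}{\sigma_{0,\gamma}}\pi_1$ from Lemma~\ref{appendix lem def sigma_{0,gamma}}. One small notational slip: $\textcolor{olive}{\theta_{\gamma_0}}\pi_1\pi_0$ already lands in $\mC_1$, so no $w$ is needed there (and correspondingly $\mu_{0,\gamma}t = \hat u_{0;4}\textcolor{olive}{\theta_{\gamma_0}}\pi_1\pi_0\,t$, not $wt$).
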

\begin{proof}
\noi First note that the components of the pairing map defining $\varphi_{0, \gamma} : \hat{U} \to \slb$ are appropriately composable with respect to the internal composition structure of $\mC$: 

\begin{align*}
  \mu_{0, \gamma} t 
  = \hat{u}_{0;4} \textcolor{olive}{\theta_{\gamma_0}} \pi_1 \pi_0 t
  = \hat{u} \textcolor{purple}{\sigma_\gamma} \pi_0 w s
  = \hat{u} \textcolor{purple}{\sigma_\gamma} \pi_1 s.
\end{align*}

\noi By Definition of $\mu_{0, \gamma}$ and the lifts, $\textcolor{cyan}{\omega_{0,0}} : \hat{U} \to W_\circ$ and $\textcolor{olive}{\theta_{\gamma_0}} : \hat{U}_5 \to W_\square$, we have 

\begin{align*}
\textcolor{cyan}{\omega_{0,0}} \pi_1 
&= \big( \textcolor{cyan}{\omega_0} ,\
\hat{u}_{0;4} \textcolor{olive}{\theta_{\gamma_0}} \pi_0 \pi_0 w , \ 
\hat{u} \textcolor{orange}{\sigma_0} \pi_0 w \big) c \\ 
&= \big( \textcolor{cyan}{\omega_0} ,\
(\hat{u}_{0;4} \textcolor{olive}{\theta_{\gamma_0}} \pi_0 \pi_0 w, \ 
\hat{u} \textcolor{orange}{\sigma_0} \pi_0 w)c \big) c \\
&= \big( \textcolor{cyan}{\omega_0} ,\
(\hat{u}_{0;4} \textcolor{olive}{\theta_{\gamma_0}} \pi_1 \pi_0 , \ 
\hat{u} \textcolor{purple}{\sigma_\gamma} \pi_0 w)c \big) c \\
&= \big( (\textcolor{cyan}{\omega_0} ,\
\hat{u}_{0;4} \textcolor{olive}{\theta_{\gamma_0}} \pi_1 \pi_0 )c , \ 
\hat{u} \textcolor{purple}{\sigma_\gamma} \pi_0 w \big) c \\
&= (\mu_{0, \gamma} , \hat{u} \textcolor{purple}{\sigma_\gamma} \pi_0 w)c .
\end{align*} 

\noi showing that the map 

\[ \varphi_{0,\gamma} \pi_0 = ( 
( \mu_{0, \gamma} , \
\hat{u} \textcolor{purple}{\sigma_\gamma} \pi_0
) , \
\textcolor{cyan}{\omega_{0,0}} \pi_1 ) 
: \hat{U} \to W_\triangle\]

\noi is well-defined. Then by associativity of composition in $\mC$ and the definitions of $\mu_{0,\gamma}$ and $\textcolor{cyan}{\sigma_{0,\gamma}}$ we get

\begin{align*}
( \mu_{0, \gamma} , \hat{u} \textcolor{purple}{\sigma_\gamma} \pi_1 ) c 
&= \big( (\textcolor{cyan}{\omega_0} ,\
\hat{u}_{0;4} \textcolor{olive}{\theta_{\gamma_0}} \pi_1 \pi_0 )c , \
\hat{u} \textcolor{purple}{\sigma_\gamma} \pi_1 \big) c\\
&= \big( \textcolor{cyan}{\omega_0} ,\
\hat{u}_{0;4} \textcolor{olive}{\theta_{\gamma_0}} \pi_1 \pi_0 , \
\hat{u} \textcolor{purple}{\sigma_\gamma} \pi_1 \big) c \\
&= \textcolor{cyan}{\sigma_{0,\gamma}} \pi_1 .
\end{align*}

\noi The previous equation implies the unique pairing, $\hat{U} \to \slb$, given by

\[ \varphi_{0 , \gamma} = 
\big( 
( 
( \mu_{0, \gamma} , \
\hat{u} \textcolor{purple}{\sigma_\gamma} \pi_0
) , \
\textcolor{cyan}{\omega_{0,0}} \pi_1 ) , \
\hat{u} \textcolor{purple}{\sigma_\gamma} \pi_1 
 \big)
\]
 
\noi is well-defined. From here it is straightforward to calculate
 
\begin{align*}
  \varphi_{0,\gamma} p_0 
  &= \varphi_{0 , \gamma} (\pi_0 \pi_0 \pi_1, \pi_1)
  & \varphi_{0,\gamma} p_1
  &=\varphi_{0 , \gamma} \big( \pi_0 \pi_1 , (\pi_0 \pi_0 \pi_0 , \pi_1)c \big) \\
  &= (\hat{u} \textcolor{purple}{\sigma_\gamma} \pi_0 , \hat{u} \textcolor{purple}{\sigma_\gamma} \pi_1 )
  &
  &= \big( \textcolor{cyan}{\omega_{0,0}} \pi_1 , 
  ( \mu_{0, \gamma} , \hat{u} \textcolor{purple}{\sigma_\gamma} \pi_1 ) c \big)\\
  &= \hat{u} \textcolor{purple}{\sigma_\gamma} 
  &
  &=(\textcolor{cyan}{\omega_{0,0}} \pi_1 , \textcolor{cyan}{\sigma_{0,\gamma}} \pi_1 )\\
  &&&= \textcolor{cyan}{\sigma_{0,\gamma}}.
\end{align*} 
\end{proof}

\begin{lem}\label{appendix lem defining sailboat varphi_1}
The sailboat, $\varphi_1 : \hat{U} \to \slb$, defined by 

\begin{align*}
\varphi_1
= 
\big( 
(
(
\mu_1 , \ 
\hat{u} \textcolor{teal}{\sigma_1} \pi_0
) , \
\textcolor{violet}{\omega_{1,0}} \pi_1 
) , \
\hat{u} \textcolor{teal}{\sigma_1} \pi_1 
\big)
\end{align*}

\noi where 
\[ \mu_1 = (\textcolor{violet}{\omega_1} , \hat{u}_{0;4}\textcolor{brown}{\gamma_1} \pi_0 \pi_0) c \] 
\noi is well-defined and relates the spans $\textcolor{teal}{\sigma_1}, \textcolor{violet}{\sigma_{1,\gamma}} : \hat{U} \to \spn$ in the sense that 

\begin{align*}
  \varphi_1 p_0 &= \hat{u}\textcolor{teal}{\sigma_1} 
  & \varphi_1 p_1 &= \textcolor{violet}{\sigma_{1,\gamma}}
\end{align*}
\end{lem}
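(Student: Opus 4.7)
The plan is to mirror the proof of Lemma~\ref{appendix lem defining sailboat varphi_0} essentially verbatim, with $\textcolor{orange}{\sigma_0}$ replaced by $\textcolor{teal}{\sigma_1}$, $\textcolor{olive}{\theta_{\gamma_0}}$ replaced by $\textcolor{brown}{\theta_{\gamma_1}}$, $\textcolor{cyan}{\omega_{0,i}}$ replaced by $\textcolor{violet}{\omega_{1,i}}$, and $\mu_0$ replaced by $\mu_1$. The definitions of the covers in Diagrams (\ref{cover dgms for showing span comp well def : Ore + zip}) and (\ref{cover dgms for showing span comp well def : weak comp x3}) are designed so the roles of the cyan/olive column and the violet/brown column are completely symmetric, so the structure of the argument transports without change.

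First I would verify that $\varphi_1 \pi_1 = \hat{u}\textcolor{teal}{\sigma_1} \pi_1$ is well-defined over $\varphi_1 \pi_0 = ((\mu_1, \hat{u}\textcolor{teal}{\sigma_1} \pi_0), \textcolor{violet}{\omega_{1,0}} \pi_1)$ by checking the source/target compatibility in $\mC$: by definition of $\textcolor{brown}{\theta_{\gamma_1}} : \hat{U}_5 \to W_\square$ one has
\[ \mu_1 t = \hat{u}_{0;4} \textcolor{brown}{\theta_{\gamma_1}} \pi_0 \pi_0 w t = \hat{u}\textcolor{teal}{\sigma_1} \pi_0 w s = \hat{u}\textcolor{teal}{\sigma_1} \pi_1 s, \]
so $(\mu_1, \hat{u}\textcolor{teal}{\sigma_1} \pi_0)$ lives in $\mC_1 \tensor[_t]{\times}{_{ws}} W$ and the right leg agrees with $\mu_1 t$, as needed for the outer pullback of $\slb$.

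Next I would show the inner triangle lives in $W_\triangle$, i.e.\ that $\textcolor{violet}{\omega_{1,0}} \pi_1$ really is the composite of $\mu_1$ with $\hat{u}\textcolor{teal}{\sigma_1}\pi_0 w$. Expanding $\textcolor{violet}{\omega_{1,0}} \pi_1$ via the definition of $W_\circ$ gives
\[ \textcolor{violet}{\omega_{1,0}} \pi_1 = (\textcolor{violet}{\omega_{1,0}} \pi_0 \pi_0, \textcolor{violet}{\omega_{1,0}} \pi_0 \pi_1, \textcolor{violet}{\omega_{1,0}} \pi_0 \pi_2) c, \]
and using the definitions of $\textcolor{violet}{\omega_{1,0}'}$, $\textcolor{violet}{\omega_{1,1}}$, $\textcolor{violet}{\omega_{1,1}'}$, $\textcolor{violet}{\omega_{1,2}}$, and $\textcolor{violet}{\omega_{1,2}'}$ from the Diagram (\ref{cover dgms for showing span comp well def : weak comp x3}) triangles (exactly as in Lemma~\ref{appendix lem defining sailboat varphi_0}), the chain of associativity rewrites collapses to
\[ \textcolor{violet}{\omega_{1,0}} \pi_1 = (\textcolor{violet}{\omega_1}, \hat{u}_{0;4}\textcolor{brown}{\theta_{\gamma_1}} \pi_0 \pi_0, \hat{u}\textcolor{teal}{\sigma_1} \pi_0) c = (\mu_1, \hat{u}\textcolor{teal}{\sigma_1} \pi_0 w) c. \]
This is the well-definedness of $\varphi_1 \pi_0 : \hat{U} \to W_\triangle$.

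Finally, computing the two projections is immediate once the above is in place. Clearly $\varphi_1 p_0 = \varphi_1 (\pi_0 \pi_0 \pi_1, \pi_1) = (\hat{u}\textcolor{teal}{\sigma_1} \pi_0, \hat{u}\textcolor{teal}{\sigma_1}\pi_1) = \hat{u}\textcolor{teal}{\sigma_1}$, and
\[ \varphi_1 p_1 = \varphi_1\bigl(\pi_0 \pi_1,\, (\pi_0\pi_0\pi_0, \pi_1)c\bigr) = \bigl(\textcolor{violet}{\omega_{1,0}}\pi_1,\, (\mu_1, \hat{u}\textcolor{teal}{\sigma_1}\pi_1)c\bigr), \]
where by Lemma~\ref{appendix lem def sigma_{1,gamma}} the second component unfolds as $(\textcolor{violet}{\omega_1}, \hat{u}_{0;4}\textcolor{brown}{\theta_{\gamma_1}}\pi_0\pi_0, \hat{u}\textcolor{teal}{\sigma_1}\pi_1)c = \textcolor{violet}{\sigma_{1,\gamma}} \pi_1$. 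Combined with $\textcolor{violet}{\omega_{1,0}}\pi_1 = \textcolor{violet}{\sigma_{1,\gamma}} \pi_0$, this gives $\varphi_1 p_1 = \textcolor{violet}{\sigma_{1,\gamma}}$. The only potential obstacle is keeping the bookkeeping on covers $\hat{u}_i$ consistent when rewriting $\textcolor{violet}{\omega_{1,0}}\pi_1$, but this is entirely parallel to the computation already carried out in Lemma~\ref{appendix lem defining sailboat varphi_0}, with no genuinely new input needed.
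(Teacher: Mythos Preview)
Your proposal is correct and follows essentially the same approach as the paper's proof: both verify composability via the definition of $\textcolor{brown}{\theta_{\gamma_1}}$, expand $\textcolor{violet}{\omega_{1,0}}\pi_1$ through the chain of $W_\circ$ lifts to show the $W_\triangle$ component is well-defined, and then invoke Lemma~\ref{appendix lem def sigma_{1,gamma}} to identify $(\mu_1, \hat{u}\textcolor{teal}{\sigma_1}\pi_1)c$ with $\textcolor{violet}{\sigma_{1,\gamma}}\pi_1$. Your framing of the argument as a direct transport of Lemma~\ref{appendix lem defining sailboat varphi_0} under the cyan/olive $\leftrightarrow$ violet/brown symmetry is exactly how the paper proceeds.
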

\begin{proof}

First notice that the components defining $\varphi_1$ are apporpriately composable by checking

\begin{align*}
  \mu_1 t 
  = \hat{u}_{0;4}\textcolor{brown}{\gamma_1} \pi_0 \pi_0 w t 
  =\hat{u} \textcolor{teal}{\sigma_1} \pi_1 s
  = \hat{u} \textcolor{teal}{\sigma_1} \pi_0 w s.
\end{align*}

\noi Now to see that the component 

\[ \varphi_1 p_0 = (
(
\mu_1 , \ 
\hat{u} \textcolor{teal}{\sigma_1} \pi_0
) , \
\textcolor{violet}{\omega_{1,0}} \pi_1 
) : \hat{U} \to W_\triangle\]

\noi is well-defined we use the definitions of the lifts $\textcolor{violet}{\omega_{1,0}} : \hat{U} \to W_\circ$ and $\textcolor{brown}{\theta_{\gamma_1}} : \hat{U}_5 \to W_\square$ to see

\begin{align*}
\textcolor{violet}{\omega_{1,0}} \pi_1 
&= 
(\textcolor{violet}{\omega_{1,0}} \pi_0 \pi_0 , 
\textcolor{violet}{\omega_{1,0}} \pi_0 \pi_1 , 
\textcolor{violet}{\omega_{1,0}} \pi_0 \pi_2 ) c \\
&= 
(\textcolor{violet}{\omega_{1,0}} \pi_0 \pi_0 , 
\hat{u}_0 \textcolor{violet}{\omega_{1,1}} \pi_1 , 
\hat{u} \textcolor{teal}{\sigma_1} \pi_0) c \\
&= \big(\textcolor{violet}{\omega_{1,0}} \pi_0 \pi_0 , \\
& \ \ \ \ 
\hat{u}_0 (\textcolor{violet}{\omega_{1,1}} \pi_0 \pi_0 , 
\textcolor{violet}{\omega_{1,1}} \pi_0 \pi_1 ,
\textcolor{violet}{\omega_{1,1}} \pi_0 \pi_2 ) c, \\
& \ \ \ \  
\hat{u} \textcolor{teal}{\sigma_1} \pi_0 \big) c \\
&= \big(\textcolor{violet}{\omega_{1,0}} \pi_0 \pi_0 , \
\hat{u}_0 \textcolor{violet}{\omega_{1,1}} \pi_0 \pi_0 , \
\hat{u}_0 \textcolor{violet}{\omega_{1,1}} \pi_0 \pi_1 ,\
\hat{u}_0\textcolor{violet}{\omega_{1,1}} \pi_0 \pi_2 , \ 
\hat{u} \textcolor{teal}{\sigma_1} \pi_0 \big) c \\
&= \big(\textcolor{violet}{\omega_{1,0}} \pi_0 \pi_0 , \
\hat{u}_0 \textcolor{violet}{\omega_{1,1}} \pi_0 \pi_0 , \
\hat{u}_{0;1} \textcolor{violet}{\omega_{1,2}} \pi_1 ,\
\hat{u}_{0;4} \textcolor{brown}{\theta_{\gamma_1}} \pi_0 \pi_0 , \ 
\hat{u} \textcolor{teal}{\sigma_1} \pi_0 \big) c \\
&= \big(
(\textcolor{violet}{\omega_{1,0}} \pi_0 \pi_0 , \
\hat{u}_0 \textcolor{violet}{\omega_{1,1}} \pi_0 \pi_0 , \
\hat{u}_{0;1} \textcolor{violet}{\omega_{1,2}} \pi_1 
)c ,\\
& \ \ \ \ 
\hat{u}_{0;4} \textcolor{brown}{\theta_{\gamma_1}} \pi_0 \pi_0 , \ 
\hat{u} \textcolor{teal}{\sigma_1} \pi_0 \big) c \\
&= \big( \textcolor{violet}{\omega_1} ,\
\hat{u}_{0;4} \textcolor{brown}{\theta_{\gamma_1}} \pi_0 \pi_0 , \ 
\hat{u} \textcolor{teal}{\sigma_1} \pi_0 \big) c \\
&= \big( (\textcolor{violet}{\omega_1} ,\
\hat{u}_{0;4} \textcolor{brown}{\theta_{\gamma_1}} \pi_0 \pi_0 
)c, \ 
\hat{u} \textcolor{teal}{\sigma_1} \pi_0 \big) c \\
&= ( \mu_1 , \hat{u} \textcolor{teal}{\sigma_1} \pi_0)c. 
\end{align*}

\noi This shows $\varphi_1 : \hat{U} \to \slb$ is well-defined. By definition of $\mu_1$ and $\textcolor{violet}{\sigma_{1,\gamma}}$ in Lemma~\ref{appendix lem def sigma_{1,gamma}} we have 

\begin{align*}
( \mu_1 , \hat{u} \textcolor{teal}{\sigma_1} \pi_1)c 
&= \big( (\textcolor{violet}{\omega_1} ,\
\hat{u}_{0;4} \textcolor{brown}{\theta_{\gamma_1}} \pi_0 \pi_0 
)c, \
\hat{u} \textcolor{teal}{\sigma_1} \pi_1 \big)c \\
&= (\textcolor{violet}{\omega_1} ,\
\hat{u}_{0;4} \textcolor{brown}{\theta_{\gamma_1}} \pi_0 \pi_0 ,\
\hat{u} \textcolor{teal}{\sigma_1} \pi_1)c \\
&= \textcolor{violet}{\sigma_{1,\gamma}} \pi_1 
\end{align*}

\noi which implies 

\begin{align*}
  \varphi_1 p_0 
  &= \varphi_1 (\pi_0 \pi_0 \pi_1 , \pi_1) 
  & \varphi_1 p_1 
  &= \varphi_1 (\pi_0 \pi_1 , (\pi_0 \pi_0 \pi_0 , \pi_1)c) \\
  &= (\hat{u} \textcolor{teal}{\sigma_1} \pi_0 , \hat{u} \textcolor{teal}{\sigma_1} \pi_1 ) c
  &
  &= ( \mu_1 , \hat{u} \textcolor{teal}{\sigma_1} \pi_1)c \\
  &= \hat{u} \textcolor{teal}{\sigma_1}
  &
  &= ( \textcolor{violet}{\omega_{1,0}} \pi_1 , \textcolor{violet}{\sigma_{1,\gamma}} \pi_1 ) \\
  &
  &
  &= \textcolor{violet}{\sigma_{1,\gamma}}
\end{align*} 
\end{proof}

\begin{lem}\label{appendix lem defining sailboat varphi_{1,gamma}}
The sailboat, $\varphi_{1,\gamma} : \hat{U} \to \slb$, defined by 

\[ \varphi_{1 , \gamma} = 
\big( 
( 
( \mu_{1, \gamma} , \
\hat{u} \textcolor{purple}{\sigma_\gamma} \pi_0
) , \
\textcolor{violet}{\omega_{1,0}} \pi_1 ) , \
\hat{u} \textcolor{purple}{\sigma_\gamma} \pi_1 
 \big)
\]

\noi where

\[ \mu_{1, \gamma} = (\textcolor{violet}{\omega_1} ,\
\hat{u}_{0;4} \textcolor{brown}{\theta_{\gamma_1}} \pi_1 \pi_0 )c \]

\noi relates the spans $\textcolor{purple}{\sigma_\gamma} , \textcolor{violet}{\sigma_{1,\gamma}} : \hat{U} \to \spn$ in the sense that 

\begin{align*}
  \varphi_{1,\gamma} p_0 &= \hat{u} \textcolor{purple}{\sigma_\gamma} 
  & \varphi_{1,\gamma} p_1 &= \textcolor{violet}{\sigma_{1,\gamma}}.
\end{align*}
\end{lem}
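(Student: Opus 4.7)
The plan is to mirror the proof of Lemma~\ref{appendix lem defining sailboat varphi_{0,gamma}} almost verbatim, substituting the teal/olive/cyan data tracking $\textcolor{orange}{\sigma_0}$ with the teal/brown/violet data tracking $\textcolor{teal}{\sigma_1}$. First I would check that the pairing map $\varphi_{1,\gamma}$ lands in $\slb = W_\triangle \tensor[_{\pi_0 \pi_0 \pi_1 s}]{\times}{_s} \mC_1$, which amounts to verifying the two composability conditions: that $(\mu_{1,\gamma}, \hat{u}\textcolor{purple}{\sigma_\gamma}\pi_0)$ defines a map to $\mC_1 \tensor[_t]{\times}{_{ws}} W$, and that the right-most component $\hat{u}\textcolor{purple}{\sigma_\gamma}\pi_1$ has source matching $\mu_{1,\gamma}t$. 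The key computation is $\mu_{1,\gamma} t = \hat{u}_{0;4}\textcolor{brown}{\theta_{\gamma_1}}\pi_1 \pi_0 t = \hat{u}\textcolor{purple}{\sigma_\gamma}\pi_0 w s = \hat{u}\textcolor{purple}{\sigma_\gamma}\pi_1 s$, where the middle equality uses the definition of $W_\square$ applied to $\textcolor{brown}{\theta_{\gamma_1}}$ together with the commuting square in Diagram~(\ref{cover dgms for showing span comp well def : Ore + zip}) linking $\textcolor{brown}{\theta_{\gamma_1}}$ to $(\sigma_1 \pi_0 w, \sigma_\gamma \pi_0)$.

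Next I would verify that $\varphi_{1,\gamma}\pi_0 = ((\mu_{1,\gamma}, \hat{u}\textcolor{purple}{\sigma_\gamma}\pi_0), \textcolor{violet}{\omega_{1,0}}\pi_1)$ defines a map to $W_\triangle = (\mC_1 \tensor[_t]{\times}{_{ws}} W) \tensor[_c]{\times}{_w} W$. This requires showing $\textcolor{violet}{\omega_{1,0}}\pi_1 = (\mu_{1,\gamma}, \hat{u}\textcolor{purple}{\sigma_\gamma}\pi_0 w)c$. I would do this by starting from the definition of $\textcolor{violet}{\omega_{1,0}}$ as a lift of a pair in $W \tensor[_{wt}]{\times}{_{ws}} W$, expanding its composite using the inductively defined $\textcolor{violet}{\omega_{1,1}}, \textcolor{violet}{\omega_{1,2}}$ through the chain of weak-composition covers in Diagram~(\ref{cover dgms for showing span comp well def : weak comp x3}), and then applying the commuting square for $\textcolor{brown}{\theta_{\gamma_1}}$ (the Ore condition) exactly once to swap $\hat{u}\textcolor{teal}{\sigma_1}\pi_0 w$ for $\hat{u}\textcolor{purple}{\sigma_\gamma}\pi_0 w$. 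This is the same rewriting that in Lemma~\ref{appendix lem defining sailboat varphi_{0,gamma}} turns $\textcolor{orange}{\sigma_0}$ into $\textcolor{purple}{\sigma_\gamma}$ via $\textcolor{olive}{\theta_{\gamma_0}}$.

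Once $\varphi_{1,\gamma}$ is known to be well-defined, the two projection identities are formal. For $\varphi_{1,\gamma}p_0 = \varphi_{1,\gamma}(\pi_0\pi_0\pi_1, \pi_1) = (\hat{u}\textcolor{purple}{\sigma_\gamma}\pi_0, \hat{u}\textcolor{purple}{\sigma_\gamma}\pi_1) = \hat{u}\textcolor{purple}{\sigma_\gamma}$, I just read off the pairing components. For $\varphi_{1,\gamma}p_1 = \varphi_{1,\gamma}(\pi_0\pi_1, (\pi_0\pi_0\pi_0, \pi_1)c)$, I would compute
\[
(\mu_{1,\gamma}, \hat{u}\textcolor{purple}{\sigma_\gamma}\pi_1)c = \big((\textcolor{violet}{\omega_1}, \hat{u}_{0;4}\textcolor{brown}{\theta_{\gamma_1}}\pi_1\pi_0)c, \hat{u}\textcolor{purple}{\sigma_\gamma}\pi_1\big)c = (\textcolor{violet}{\omega_1}, \hat{u}_{0;4}\textcolor{brown}{\theta_{\gamma_1}}\pi_1\pi_0, \hat{u}\textcolor{purple}{\sigma_\gamma}\pi_1)c
\]
using associativity of composition in $\mC$, and this is exactly the second description of $\textcolor{violet}{\sigma_{1,\gamma}}\pi_1$ given in Lemma~\ref{appendix lem def sigma_{1,gamma}}. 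Combined with $\textcolor{violet}{\omega_{1,0}}\pi_1 = \textcolor{violet}{\sigma_{1,\gamma}}\pi_0$ from the definition of $\textcolor{violet}{\sigma_{1,\gamma}}$, this gives $\varphi_{1,\gamma}p_1 = (\textcolor{violet}{\omega_{1,0}}\pi_1, \textcolor{violet}{\sigma_{1,\gamma}}\pi_1) = \textcolor{violet}{\sigma_{1,\gamma}}$.

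The main (and only) obstacle is the well-definedness step in paragraph two, where one must track how the Ore square encoded by $\textcolor{brown}{\theta_{\gamma_1}}$ interacts with the three stacked weak-composition lifts $\textcolor{violet}{\omega_{1,2}}, \textcolor{violet}{\omega_{1,1}}, \textcolor{violet}{\omega_{1,0}}$ to reshuffle $\hat{u}\textcolor{teal}{\sigma_1}\pi_0 w$ into $\hat{u}\textcolor{purple}{\sigma_\gamma}\pi_0 w$; everything else is formal pairing-map bookkeeping entirely parallel to the $\varphi_{0,\gamma}$ case already handled in Lemma~\ref{appendix lem defining sailboat varphi_{0,gamma}}.
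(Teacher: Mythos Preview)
Your proposal is correct and follows essentially the same approach as the paper's proof: verify composability via $\mu_{1,\gamma}t = \hat{u}\textcolor{purple}{\sigma_\gamma}\pi_0 ws = \hat{u}\textcolor{purple}{\sigma_\gamma}\pi_1 s$, check the $W_\triangle$ condition by expanding $\textcolor{violet}{\omega_{1,0}}\pi_1$ and applying the Ore square for $\textcolor{brown}{\theta_{\gamma_1}}$ to swap $\hat{u}\textcolor{teal}{\sigma_1}\pi_0 w$ for $\hat{u}\textcolor{purple}{\sigma_\gamma}\pi_0 w$, then read off the two projections using associativity and the description of $\textcolor{violet}{\sigma_{1,\gamma}}\pi_1$ from Lemma~\ref{appendix lem def sigma_{1,gamma}}. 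The paper's version is slightly terser because it reuses the expansion of $\textcolor{violet}{\omega_{1,0}}\pi_1$ already computed in Lemma~\ref{appendix lem defining sailboat varphi_1}, but the content is identical.
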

\begin{proof}
First use the definition of $\mu_{01}$, the lift $\textcolor{brown}{\theta_{\gamma_1}} : \hat{U}_5 \to W_\square$, and the span $\textcolor{purple}{\sigma_\gamma}$ to see that the components of $\varphi_{1,\gamma}$ are appropriately composable in $\mC$: 

\begin{align*}
  \mu_{1,\gamma} t
  = \hat{u}_{0;4} \textcolor{brown}{\theta_{\gamma_1}} \pi_1 \pi_0 t 
  = \hat{u} \textcolor{purple}{\sigma_\gamma} \pi_0 w s 
  = \hat{u} \textcolor{purple}{\sigma_\gamma} \pi_1 s .
\end{align*}

\noi Now use those definitions to compute 

\begin{align*}
\textcolor{violet}{\omega_{1,0}} \pi_1 
&= \big( \textcolor{violet}{\omega_1} ,\
\hat{u}_{0;4} \textcolor{brown}{\theta_{\gamma_1}} \pi_0 \pi_0 w , \ 
\hat{u} \textcolor{teal}{\sigma_1} \pi_0 w \big) c \\ 
&= \big( \textcolor{violet}{\omega_1} ,\
(\hat{u}_{0;4} \textcolor{brown}{\theta_{\gamma_1}} \pi_0 \pi_0 w, \ 
\hat{u} \textcolor{teal}{\sigma_1} \pi_0 w)c \big) c \\
&= \big( \textcolor{violet}{\omega_1} ,\
(\hat{u}_{0;4} \textcolor{brown}{\theta_{\gamma_1}} \pi_1 \pi_0 , \ 
\hat{u} \textcolor{purple}{\sigma_\gamma} \pi_0 w)c \big) c \\
&= \big( (\textcolor{violet}{\omega_1} ,\
\hat{u}_{0;4} \textcolor{brown}{\theta_{\gamma_1}} \pi_1 \pi_0 )c , \ 
\hat{u} \textcolor{purple}{\sigma_\gamma} \pi_0 w \big) c \\
&= (\mu_{1, \gamma} , \hat{u} \textcolor{purple}{\sigma_\gamma} \pi_0 w)c .
\end{align*} 

\noi This shows that the component 

\[ \varphi_{1, \gamma} \pi_0 = ( 
( \mu_{1, \gamma} , \
\hat{u} \textcolor{purple}{\sigma_\gamma} \pi_0
) , \
\textcolor{violet}{\omega_{1,0}} \pi_1 ) 
: \hat{U} \to W_\triangle\]

\noi is well-defined. Similarly,
\begin{align*}
( \mu_{1, \gamma} , \hat{u} \textcolor{purple}{\sigma_\gamma} \pi_1 ) c 
&= \big( (\textcolor{violet}{\omega_1} ,\
\hat{u}_{0;4} \textcolor{brown}{\theta_{\gamma_1}} \pi_1 \pi_0 )c , \
\hat{u} \textcolor{purple}{\sigma_\gamma} \pi_1 \big) c\\
&= \big( \textcolor{violet}{\omega_1} ,\
\hat{u}_{0;4} \textcolor{brown}{\theta_{\gamma_1}} \pi_1 \pi_0 , \
\hat{u} \textcolor{purple}{\sigma_\gamma} \pi_1 \big) c \\
&= \textcolor{violet}{\sigma_{1,\gamma}} \pi_1 .
\end{align*}

\noi implies

\begin{align*}
  \varphi_{1,\gamma} p_0 
  &= \varphi_{1 , \gamma} (\pi_0 \pi_0 \pi_1, \pi_1) 
  & \varphi_{1,\gamma} p_1 
  &= \varphi_{1 , \gamma} \big( \pi_0 \pi_1 , (\pi_0 \pi_0 \pi_0 , \pi_1)c \big) \\
  &= (\hat{u} \textcolor{purple}{\sigma_\gamma} \pi_0 , \hat{u} \textcolor{purple}{\sigma_\gamma} \pi_1 ) 
  &
  &= \big(\textcolor{violet}{\omega_{1,0}} \pi_1 , ( \mu_{1, \gamma} , \hat{u} \textcolor{purple}{\sigma_\gamma} \pi_1 ) c  \big) \\
  & = \hat{u} \textcolor{purple}{\sigma_\gamma} 
  &
  &= (\textcolor{violet}{\omega_{1,0}} \pi_1 , \textcolor{violet}{\sigma_{1,\gamma}} \pi_1 ) \\
  &&
  &= \textcolor{violet}{\sigma_{1,\gamma}}.
\end{align*} 

\end{proof}

\bibliographystyle{plain}
\bibliography{thesis}

\end{document}